\numberwithin{equation}{section}
\newtheorem{theorem}{Theorem}[section]
\newtheorem{definition}[theorem]{Definition}
\newtheorem{proposition}[theorem]{Proposition}
\newtheorem{corollary}[theorem]{Corollary}
\newtheorem{lemma}[theorem]{Lemma}
\newtheorem{remark}[theorem]{Remark}
\newtheorem{remarks}[theorem]{Remarks}
\newtheorem{example}[theorem]{Example}
\newtheorem{problem}[theorem]{Problem}
\newcommand{\cali}[1]{\mathscr{#1}}
\newcommand{\Tan}{\mathop{\mathrm{Tan}}\nolimits}
\newcommand{\Cotan}{\mathop{\mathrm{Cotan}}\nolimits}
\newcommand{\Nor}{\mathop{\mathrm{Nor}}\nolimits}
\newcommand{\Ebf}{\mathbf{E}}
\newcommand{\GL}{{\rm GL}}
\newcommand{\U}{{\rm U}}
\newcommand{\PSL}{{\rm PSL}}
\newcommand{\Aut}{{\rm Aut}}
\newcommand{\lof}{\mathop{\mathrm{{log^\star}}}\nolimits}
\newcommand{\supp}{{\rm supp}}
\newcommand{\const}{\mathop{\mathrm{const}}\nolimits}
\newcommand{\Nc}{\cali{N}}
\newcommand{\Dom}{{\rm Dom}}
\newcommand{\diam}{{\rm diam}}
\newcommand{\dist}{\mathop{\mathrm{dist}}\nolimits}
\renewcommand{\Re}{\mathop{\mathrm{Re}}\nolimits}
\renewcommand{\Im}{\mathop{\mathrm{Im}}\nolimits}
\newcommand{\vol}{\mathop{\mathrm{vol}}}
\newcommand{\Cf}{\mathfrak{C}}
\newcommand{\Df}{\mathfrak{D}}
\newcommand{\loc}{{loc}}
\newcommand{\ddc}{dd^c}
\newcommand{\ddcy}{{dd^c_{y}}}
\newcommand{\ddczw}{{dd^c_{z,w}}}
\newcommand{\dc}{d^c}
\newcommand{\dbar}{\overline\partial}
\newcommand{\ddbar}{\partial\overline\partial}
\renewcommand{\GL}{{\rm GL}}
\newcommand{\Hyp}{{\rm Hyp}}
\newcommand{\Par}{{\rm Par}}
\newcommand{\id}{{\rm id}}
\newcommand{\Ic}{\cali{I}}
\newcommand{\hol}{{\rm hol}}
\newcommand{\Sing}{{\rm Sing}}
\newcommand{\codim}{{\rm codim\ \!}}
\newcommand{\chac}{{\rm \mathbf{1}}}
\newcommand{\Ac}{\cali{A}}
\newcommand{\Bc}{\cali{B}}
\newcommand{\Cc}{\cali{C}}
\newcommand{\Dc}{\cali{D}}
\newcommand{\Ec}{\cali{E}}
\newcommand{\Fc}{\cali{F}}
\newcommand{\Gc}{\cali{G}}
\newcommand{\Hc}{\cali{H}}
\newcommand{\Lc}{\cali{L}}
\renewcommand{\Mc}{\cali{M}}
\newcommand{\Oc}{\cali{O}}
\newcommand{\Pc}{\cali{P}}
\newcommand{\Uc}{\cali{U}}
\newcommand{\Sc}{\cali{S}}
\newcommand{\Kc}{\cali{K}}
\newcommand{\A}{\mathbb{A}}
\newcommand{\B}{\mathbb{B}}
\newcommand{\C}{\mathbb{C}}
\newcommand{\D}{\mathbb{D}}
\newcommand{\N}{\mathbb{N}}
\newcommand{\Z}{\mathbb{Z}}
\newcommand{\K}{\mathbb{K}}
\newcommand{\R}{\mathbb{R}}
\renewcommand\P{\mathbb{P}}
\newcommand{\T}{\mathbb{T}}
\renewcommand{\U}{\mathbb{U}}
\renewcommand{\S}{\mathbb{S}}
\newcommand{\E}{\mathbb{E}}
\title[Ergodic theorems  for laminations and foliations]{Ergodic theorems   for    laminations and foliations: recent results and  perspectives}
 \author{Vi{\^e}t-Anh Nguy{\^e}n}
\address{Universit\'e de Lille, 
Laboratoire de math\'ematiques Paul Painlev\'e, 
CNRS U.M.R. 8524,  
59655 Villeneuve d'Ascq Cedex, 
France.}
\address{and 
Vietnam Institute for Advanced Study in Mathematics (VIASM),  157 Chua Lang Street, Hanoi, Vietnam.  }
\email{Viet-Anh.Nguyen@univ-lille.fr,
{\tt http://www.math.univ-lille1.fr/$\sim$vnguyen}}
\dedicatory{Dedicated to   My Beloved Father}
\date{May 22, 2020}
\begin{document}
\selectlanguage{english} 
\begin{abstract}
  This  report discusses    recent results as well as  new perspectives  in the  ergodic  theory for   Riemann  surface laminations,  with an emphasis    on
 singular  holomorphic  foliations by curves.
The central  notions of these  developments are  {\it leafwise Poincar\'e metric,} {\it  directed positive harmonic  currents,} %(or  directed  positive $\ddc$-closed currents in case of singular  holomorphic  foliations)
  {\it multiplicative cocycles} and {\it Lyapunov  exponents.}  We  deal with various  
ergodic  theorems  for such  laminations: Random and Operator Ergodic Theorems, (Geometric) Birkhoff Ergodic Theorems, Oseledec Multiplicative Ergodic Theorem and Unique Ergodicity Theorems.
Applications of these theorems are  also given. In particular,  we define and study  the canonical {\it Lyapunov exponents}
for  a large  family of singular holomorphic foliations on compact projective surfaces. Topological  and algebro-geometric   interpretations of these
characteristic numbers are also  treated.
 These results highlight the  strong  similarity as well as   the  fundamental  differences  between the ergodic  theory of maps
and that  of  Riemann  surface laminations. Most  of the  results  reported here are known. However, sufficient  conditions  for abstract  heat diffusions  to coincide with the  leafwise heat diffusions  
(Subsection \ref{SS:coincidence}) are new  ones.
	\end{abstract}

\maketitle

\medskip

\noindent
{\bf Classification AMS 2020}: Primary:  37A30, 57R30; Secondary: 58J35, 58J65,
60J65, 32J25.

\medskip

\noindent
{\bf Keywords:} Riemann surface lamination, singular holomorphic  foliation,  leafwise Poincar\'e metric,  positive harmonic  currents, multiplicative cocycles,  ergodic theorems, Lyapunov exponents.

\tableofcontents

%%%%%%%%%%%%%%%%%%%%%%%%%%%%%

% Contents of the papers are of course up to the authors. However, if I may ​suggest, the paper of a survey nature including (but not limited to) the following ​items would be helpful especially for younger researchers of the field:
%(a) A brief history and description of the main theme.
%(b) Author(s)’s achievements and other major theorems related.
%(c) Some open problems (and hopefully with some comments) for the further study
 
%%%%%%%%%%%%%%%%%%%%%%%%%%%%%%%%%%%%%%%%%%%%%
%%%%%%%%%%%%%%%%%%%%%%%%%%%%%%%%%%%%%%%%%%%%%
\section{Introduction} \label{S:introduction}
%%%%%%%%%%%%%%%%%%%%%%%%%%%%%%%%%%%%%%%%%%%%%
%%%%%%%%%%%%%%%%%%%%%%%%%%%%%%%%%%%%%%%%%%%%%

%%%%%%%%%%%%%%%%%%%%%%%%%%%%%%%%%%%%%%%%%%%%%%%%%%
\subsection{Prelude}
%%%%%%%%%%%%%%%%%%%%%%%%%%%%%%%%%%%%%%%%%%%%%%%%%%%%
The goal of these notes is to explain recent ergodic theorems for  laminations by   Riemann surfaces (without and with singularities), 
and particularly those for  singular holomorphic foliations by curves. We make an
emphasis on the analytic approach to the dynamical  theory of laminations and foliations. This illustrates 
a prominent role of the theory of currents in the field.

 There is  a  natural correspondence between  the dynamics of  Riemann  surface laminations  and  those of iterations  of continuous maps.
 More  specifically in the  meromorphic category, this correspondence becomes  a connection  between   the dynamics of  singular holomorphic  foliations   in dimension $k\geq 2$ and 
 those of iterations of meromorphic maps in dimension  $k-1.$
 Ergodic  theorems  for measurable  maps is  by now   well-understood, see for instance  the  monograph of Krengel \cite{Krengel}.
 Those for  the subclass of all  meromorphic maps  have been  studied  intensively  only during the last three decades, see  the survey of Dinh-Sibony
 \cite{DinhSibony10}. So a natural question  arises whether one can obtain  analogous ergodic theorems 
  for    Riemann surface laminations, and in particular, for  the subclass  of  singular holomorphic foliations by curves.
  In this  article we  try to answer this fundamental  question  by analyzing some  known results and  by  proving some  new ones.
  It is   worthy noting that    the ergodic theory  of laminations  and foliations requires many new ideas and presents many difficulties.
  Traditional dynamical systems techniques are based on a singly generated system, and these methods require the existence of invariant measures.
  For laminations and foliations, the dynamics are defined by the local actions of the holonomy maps, which provides a more complex system, 
  and often precludes the existence of invariant measures.  Therefore, the ergodic theory  of laminations  and foliations is rich in ideas and problems, where every insight opens new avenues of exploration.

    In the next subsection  we will recall  two basic  ergodic theorems: one for measurable maps and the other for holomorphic maps.
    These theorems will serve as our starting models in order to  look for  ergodic theorems in the context of laminations-foliations.
    The last subsection outlines the organization  of the paper.
These notes  may be  considered as the continuation  of our previous survey  \cite{NguyenVietAnh18d}. However, in  the latter  article,  we are interested  in the whole ergodic  theory of
Riemann surface laminations, which is clearly a  broader topic. In the present  work, we only specialize in ergodic theorems and related matters. So  some fundamental topics such as the entropies etc. are not treated here.
It should be noted that some  progress has been  made in this  area since
the publication of  our previous survey \cite{NguyenVietAnh18d}. Namely, Problem  4.7 (Zero Lelong numbers),  Problem 5.8 (Unique ergodicity) and Problem 7.7 (Negative Lyapunov exponent)  therein  
have  recently been  solved in  \cite{NguyenVietAnh19}, \cite{DinhNguyenSibony18} and 
\cite{NguyenVietAnh18c} respectively. 
Moreover, we try  to rewrite  several parts of \cite{NguyenVietAnh18d} in  a  somewhat more general context of (not necessarily  hyperbolic) Riemann surface  laminations with singularities. 
We hope that  the ideas  reviewed  in these two surveys  will  be    developed and expanded   in the  future. 
In writing the present  article,  we are  inspired  by   the  surveys and lecture notes of Deroin   
 \cite{Deroin13}, Dinh-Sibony \cite{DinhSibony18c},
    Forn\ae ss-Sibony \cite {FornaessSibony08},  Ghys \cite{Ghys}, Hurder \cite{Hurder}, Zakeri  \cite{Zakeri} etc.   
% In  this  article  we  work  mainly  with   compact laminations
%compact laminations by hyperbolic  Riemann  surfaces.
% using geometric  methods. 
 %%%%%%%%%%%%%%%%%%%%%%%%%%%%%%%%%%%%%%%%%%%%%%%%%%%%%%%%%%%%%%%%%%%%%%%%%%%%%%%
 \subsection{Two ergodic theorems in the dynamics of iterations of maps}
 %%%%%%%%%%%%%%%%%%%%%%%%%%%%%%%%%%%%%%%%%%%%%%%%%%%%%%%%%%%%%%%%%%%%%%%%%%%%%%%

 To state  the  first  ergodic theorem, we need to introduce some notations and terminology. Let $f : X \to X$ be a 
 map on a probability measure space $(X, \Ac, \mu)$  and suppose $\varphi$ is a $\mu$-integrable function, i.e.
 $\varphi\in  L^1(\mu).$
 Then we define the following averages:
\begin{itemize}
 \item[$\bullet$]   Time average (up to level $n\in\N$): This is defined as the average  over iterations of $f$ from  $0$ up to the $(n-1)$-iteration starting from some initial point $x\in X:$
$$  \frac {1}{n}\sum _{k=0}^{n-1}\varphi(f^{k} (x)). $$
This  sum can also be rewritten as  
$ \langle m^+_{x,n},\varphi\rangle,$  where $ m^+_{x,n}$  is a probability measure which is  the average of Dirac masses at forward orbit of $x$ from  time $0$ up to time $n-1:$
$$
 m^+_{x,n}:={1\over n}  \sum_{k=0}^{n-1} \delta_{f^k(x)}.
$$
Here, for $a\in X$  let $\delta_a$ denotes the Dirac mass at $a.$ The  sign $+$  in  $m^+_{x,n}$ emphasizes  that we are concerned  with the forward orbit of $x.$ 
  \item[$\bullet$]  Space average: 
      $$\langle \mu,\varphi\rangle =\int_X \varphi\,d\mu.$$
\end{itemize}
In general the limit of time averages (if exists)  as $n\to\infty$ and space average may be different. 

We say  that  $\mu$ is {\it ergodic} with respect to $f$ if for every element $A$ of the $\sigma$-algebra $\Ac$ with $f^{-1}(A) = A,$ then  either $\mu(A) = 0$ or $\mu(A) = 1.$ 
We say  that  $\mu$ is {\it invariant}  with respect to $f$ if  $f_*\mu=\mu,$ i.e. $\mu(f^{-1}(A))=\mu(
A)$ for $A\in\Ac.$ The following  theorem plays a fundamental role in the dynamics of maps-iterations.
\begin{theorem}{\rm (Birkhorff  Ergodic  Theorem  \cite{Birkhoff,Krengel,Walters})} \label{T:Birkhorff}
If $\mu$ is   invariant and ergodic, then 
$$\lim_{n\to\infty} \langle m^+_{x,n}, \varphi\rangle = \langle \mu,\varphi\rangle \qquad\text{for $\mu$-almost everywhere $x\in X$. }$$
In other words, as the time $n$ tends to infinity, the limit of  time averages  is equal to the space average $\mu$-almost everywhere.
In particular, $
m^+_{x,n}\to  \mu $ weakly as $n$  tends to infinity.

 \end{theorem}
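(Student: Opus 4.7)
The plan is to follow the classical two-stage strategy: first establish the pointwise convergence of the time averages to some $f$-invariant limit function $\tilde\varphi$ without invoking ergodicity, and then use ergodicity to identify $\tilde\varphi$ with the constant $\int_X\varphi\,d\mu$. For the first stage, I would set $S_n\varphi(x):=\sum_{k=0}^{n-1}\varphi(f^k(x))$ and $A_n\varphi:=S_n\varphi/n$, and introduce the upper and lower time-average functions $\overline\varphi(x):=\limsup_n A_n\varphi(x)$ and $\underline\varphi(x):=\liminf_n A_n\varphi(x)$. These are $f$-invariant $\mu$-a.e., which follows from the algebraic identity $A_{n+1}\varphi(x)=\tfrac{n}{n+1}A_n\varphi(f(x))+\tfrac{\varphi(x)}{n+1}$ on letting $n\to\infty$.

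The technical heart is the Maximal Ergodic Theorem in Garsia's form: if $E:=\{x\in X:\sup_{n\ge 1}S_n\varphi(x)>0\}$, then $\int_E\varphi\,d\mu\ge 0$. One proves this by a short telescoping computation applied to $M_n:=\max\{0,S_1\varphi,\dots,S_n\varphi\}$ together with the $f$-invariance $\int (M_n\circ f)\,d\mu=\int M_n\,d\mu$. Granted this input, for each pair of rationals $a<b$ the set $E_{a,b}:=\{\underline\varphi<a<b<\overline\varphi\}$ is $f$-invariant, and applying the Maximal Ergodic Theorem on $E_{a,b}$ successively to $\varphi-b$ and $a-\varphi$ yields
\[
 b\,\mu(E_{a,b})\;\le\;\int_{E_{a,b}}\varphi\,d\mu\;\le\;a\,\mu(E_{a,b}),
\]
which forces $\mu(E_{a,b})=0$. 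Taking a countable union over rational pairs gives $\overline\varphi=\underline\varphi=:\tilde\varphi$ almost everywhere, so $A_n\varphi\to\tilde\varphi$ $\mu$-a.e. A standard truncation-plus-dominated-convergence argument (first on $\varphi\in L^\infty$, then extended to $L^1$) then shows $\int\tilde\varphi\,d\mu=\int\varphi\,d\mu$.

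For the second stage, ergodicity of $\mu$ with respect to $f$ implies that every $f$-invariant measurable function is $\mu$-a.e.\ constant, because each of its essentially $f$-invariant sublevel sets must have $\mu$-measure $0$ or $1$. Applied to the $f$-invariant function $\tilde\varphi$, this yields $\tilde\varphi\equiv\int_X\varphi\,d\mu$ a.e., which is exactly the claimed equality $\lim_n\langle m^+_{x,n},\varphi\rangle=\langle\mu,\varphi\rangle$. The weak convergence $m^+_{x,n}\to\mu$ then follows by applying the pointwise statement to a countable dense family of bounded test functions and a diagonal argument. The main obstacle in the whole argument is the Maximal Ergodic Theorem: its proof is short once one writes down the correct combinatorial identity for $M_n$, but it is the genuinely non-trivial input that distinguishes this pointwise result from the substantially easier mean ($L^2$) ergodic theorem of von Neumann.
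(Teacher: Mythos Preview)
Your proof is the standard textbook argument via Garsia's Maximal Ergodic Theorem and is correct. Note, however, that the paper does not actually prove this statement: Theorem~\ref{T:Birkhorff} is quoted in the introduction as a classical result with references to \cite{Birkhoff,Krengel,Walters}, serving only as motivation for the lamination analogues developed later, so there is no ``paper's own proof'' to compare against.
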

 There are   many    ergodic  theorems for iterations  of maps (see \cite{Krengel}).
 
 Now  we turn to  the statement  of the unique ergodicity of  surjective holomorphic maps defined on compact K\"ahler manifolds.
 Let $X$  be  a  compact K{\"a}hler  manifold of dimension
$k$ and $\omega$ a K\"ahler form on $X$  so normalized  that $\omega^k$ defines a probability measure on $X$.  
 Let $f:X\rightarrow X$ be a surjective holomorphic map.  
 Let $d_p(f)$ (or $d_p$ if there is no possible confusion), $0\leq p\leq k$, be the 
dynamical degree   of  order $p$ of $f$. This is a bi-meromorphic invariant which  measures the  norm growth of the operators $(f^n)^*$
acting  on the Hodge cohomology group $H^{p,p}(X,\C)$ when $n$  tends
to infinity, that is,
\begin{equation*}
d_p=d_p(f):=\lim\limits_{n\to\infty} \| (f^n)^*\|^{1\over n},\qquad \text{where}\qquad  (f^n)^*:\  H^{p,p}(X,\C)\to H^{p,p}(X,\C).
\end{equation*}
We always have $d_0(f)=1$. The last dynamical degree $d_k(f)$ is {\it the topological degree} of $f$: it is equal to the number of points in a generic fiber of $f$.
We also denote it by $d_t(f)$ or simply by $d_t$. 

We say that $f$ is with {\it dominant topological degree}\footnote{In some references, such a map is said to be with large topological degree; we think the word ``dominant'' is more appropriate.}  
if $d_t>d_p$ for every $0\leq p\leq k-1$. 
It is well-known that for such  a map $f,$ the  following weak limit  of probability measures
 $$\mu:=\lim_{n\to\infty} {1\over d_t^n}(f^n)^*\omega^k$$
 exists.  The probability measure $\mu$ is  called  
  {\it the  equilibrium measure}  of $f.$   
It has no mass on proper analytic  subsets of $X$, is {\it totally invariant}: 
  $d_t^{-1}f^*(\mu)=f_*(\mu)=\mu$  and  is exponentially mixing. 
 The measure $\mu$ is also the unique invariant measure with maximal entropy $\log d_t$. 
 We refer the reader to \cite{DinhSibony04,DinhSibony06,Guedj} for details, see also  \cite{Vu} for a  recent result.

   The set of preimages $f^{-n}(x)$ of $f^n$ consists of $d_t^n$ points counted with multiplicity (see e.g.  \cite[Lemma 4.7]{DinhNguyenVu}). 
  So  for $n\in\N,$
 the probability  measure  $$
 m^-_{x,n}:={1\over d_t^n}  (f^n)^\ast \delta_x={1\over d_t^n}  \sum_{y\in f^{-n}(x)} \delta_y$$
 is the   average of Dirac masses at backward orbit of $x$ up to past-time $n$. The sign $-$ in the notation $ m^-_{x,n}$ emphasizes that we are concerned with the backward orbit of $x.$
 \begin{figure}[h]
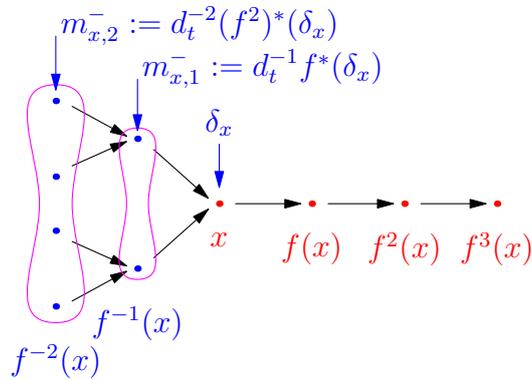

 \begin{center}
 
 \begin{asy}
size(7cm);
defaultpen(0.4);
arrowbar Fleche = Arrow(SimpleHead, 3, 30);
pen penna = black + 0.4;
pen punti = black + 2;
usepackage("amssymb");

dot((0,0),red);
label("$x$", (0, 0), 3*S, red);
draw((0.3,0)--(1.5,0), Arrow);
dot((1.7,0),red);
label("$f(x)$", (1.7, 0), 3*S,red);
draw((2,0)--(3.2,0), Arrow);
dot((3.4,0),red);
label("$f^2(x)$", (3.4, 0), 3*S,red);
draw((3.7,0)--(4.9,0), Arrow);
dot((5.1,0),red);
label("$f^3(x)$", (5.1, 0), 3*S,red);

dot((-1.5,1.2),blue);
draw((-1.2,1)--(-0.2,0.1), Arrow);
dot((-1.5,-1.2),blue);
label("$f^{-1}(x)$", (-1.5, -1.2), 4*S,blue);
draw((-1.2,-1)--(-0.2,-0.1), Arrow);

dot((-3,1.9),blue);
draw((-2.7,1.8)--(-1.7,1.25), Arrow);
dot((-3,0.5),blue);
draw((-2.7,0.7)--(-1.7,1.15), Arrow);

dot((-3,-1.9),blue);
draw((-2.7,-1.8)--(-1.7,-1.25), Arrow);
dot((-3,-0.5),blue);
draw((-2.7,-0.7)--(-1.7,-1.15), Arrow);
label("$f^{-2}(x)$", (-3, -1.9), 4*S,blue);

label("$\delta_x$", (0, 1.5), blue);
draw((0,1.1)--(0,0.3),blue, Arrow);

pair p1 = (-1.5, 1.4);
pair p2 = (-1.3, 0);
pair p3 = (-1.5, -1.4);
pair p4 = (-1.7,0);
guide U = p1{E}..{S}p2{S}..{W}p3{W}..{N}p4{N}..{E}cycle;
draw(U,magenta);
label("$m^-_{x,1}:=d_t^{-1}f^*(\delta_x)$", (-1.5, 2.5), 0.5*E, blue);
draw((-1.5,2.3)--(-1.5,1.35),blue,Arrow);

pair p5 = (-3, 2.2);
pair p6 = (-2.7, 0);
pair p7 = (-3, -2.2);
pair p8 = (-3.3,0);
guide U = p5{E}..{S}p6{S}..{W}p7{W}..{N}p8{N}..{E}cycle;
draw(U,magenta);
label("$m^-_{x,2}:=d_t^{-2}(f^2)^*(\delta_x)$", (-3, 3.3), 0.5*E, blue);
draw((-3,3.1)--(-3,2.1),blue,Arrow);
\end{asy}
 \caption{Schema to construct $m^-_{x,n}$ for $n=1,2$ of a map $f$ with $d_t=2.$}
\end{center}
\end{figure}
 The following theorem  gives a equidistribution of preimages of points by $f^n.$
 
\begin{theorem} {\rm (Unique ergodicity for holomorphic maps)} \label{T:DNT}
Let $f:X\to X$ be a surjective holomorphic map with  dominant topological  degree $d_t$ on a  compact K\"ahler manifold $(X,\omega)$ and  let $\mu$ be its equilibrium measure. 
Then  there is   a  (possibly empty) proper analytic  set $\Ec=\Ec_f$ of $X$ such that 
we have 
 $$m^-_{x,n} \to \mu \qquad \mbox{as} \quad n\to\infty$$
if and only if  $x\not\in \Ec.$ In fact, 
the exceptional set $\Ec$ is  characterized by the following two conditions: 
(1)  $f^{-1}(\Ec)\subset \Ec$; (2) any proper analytic subset of $X$ satisfying (1)  is contained in $\Ec$. Moreover, we have 
$$\Ec=f^{-1}(\Ec) =f(\Ec).$$
 \end{theorem}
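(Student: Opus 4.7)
The strategy is to interpret $m^-_{x,n}$ via the normalized pullback operator $\Lambda:=d_t^{-1}f^\ast$ (dually $\Lambda\varphi:=d_t^{-1}f_\ast\varphi$ on functions), so that $\langle m^-_{x,n},\varphi\rangle=(\Lambda^n\varphi)(x)$. Weak convergence $m^-_{x,n}\to\mu$ at $x$ is then equivalent to $(\Lambda^n\varphi)(x)\to\langle\mu,\varphi\rangle$ for enough test functions, and since differences of quasi-plurisubharmonic (qpsh) functions are dense in the space of continuous functions, the problem reduces to showing $(\Lambda^n\varphi)(x)-c_n\to 0$ for every qpsh $\varphi$, with normalizing constants $c_n:=\langle\omega^k,\Lambda^n\varphi\rangle$ tending to $\langle\mu,\varphi\rangle$ via the known convergence $\Lambda^n(\omega^k)\to\mu$.

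To construct $\Ec$, I would start from any proper analytic $Y\subsetneq X$ with $f^{-1}(Y)\subset Y$; the decreasing chain $Y\supset f^{-1}(Y)\supset f^{-2}(Y)\supset\cdots$ stabilizes by the Noetherian property of analytic sets, yielding a proper analytic set $Y_\infty$ with $f^{-1}(Y_\infty)=Y_\infty$. A cohomology/degree argument using dominance $d_t>d_{k-1}$ (applied to the dynamical action on the irreducible components of such $Y_\infty$) bounds the dimensions and Hodge classes that can occur, so the family of such $Y_\infty$ is contained in a maximal proper analytic union, which I define to be $\Ec$. By construction $f^{-1}(\Ec)=\Ec$, and surjectivity of $f$ gives $f(\Ec)=\Ec$. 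For $x\in\Ec$, $\supp m^-_{x,n}\subset f^{-n}(\Ec)=\Ec$, and since $\mu$ charges no proper analytic subset, no subsequence of $m^-_{x,n}$ can converge to $\mu$; this handles the easy direction.

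The content is the converse, which I would prove by contradiction: assume $(\Lambda^{n_j}\varphi)(x)-c_{n_j}\not\to 0$ for some qpsh $\varphi$ and some $x\notin\Ec$. Compactness of the cone of qpsh functions modulo $L^1$-normalization produces a cluster limit $\psi$, itself qpsh and a fixed point of $\Lambda$ modulo constants; the set where $\Lambda^n\varphi-c_n$ can fail to tend to zero is controlled by the polar locus $\{\psi=-\infty\}$. The main obstacle is to show that this polar locus is contained in a proper analytic set that is $f^{-1}$-invariant, hence in $\Ec$ by maximality. The tools are Siu's theorem, which states that upper level sets of Lelong numbers are analytic, together with the Dinh--Sibony contraction estimate: iterated pullbacks of positive closed $(1,1)$-currents lose their singular mass at rate $d_{k-1}/d_t<1$, so the singularities of $\Lambda^n\varphi$ accumulate only along analytic sets preserved by $f^{-1}$. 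Once this is shown, $\psi$ is locally bounded near any $x\notin\Ec$, and the $\Lambda$-invariance combined with a sub-mean-value estimate forces $\psi$ to be constant on the connected component of $X\setminus\Ec$ containing $x$, contradicting the initial assumption and completing the proof.
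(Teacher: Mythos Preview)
The paper does not give its own proof of this theorem; it simply records the statement and points to the literature (Briend--Duval, Dinh--Sibony, Forn{\ae}ss--Sibony for $\P^k$, and \cite{DinhNguyenTruong} for compact K\"ahler manifolds and even meromorphic maps). So there is no in-paper argument to compare against. Your sketch is in the spirit of the pluripotential approach developed in those references: reduce to qpsh test functions, study the normalized pullback $\Lambda=d_t^{-1}f^*$, and control where convergence can fail via Lelong-number level sets and Siu's analyticity. That overall plan is correct.

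Two places deserve sharpening. First, the construction of $\Ec$: you reduce an arbitrary backward-invariant $Y$ to a totally invariant $Y_\infty$ via a Noetherian chain, and then assert that the collection of all such $Y_\infty$ has a maximal proper analytic union. This is the crux and is not free; in the cited works one uses the dominance $d_t>d_{k-1}$ to bound the volumes (or cohomology classes) of the irreducible components of any totally invariant proper analytic set, which forces a uniform bound on the number and degree of components and hence guarantees a genuine maximal element. Your phrase ``a cohomology/degree argument'' gestures at this but does not carry it out.

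Second, in the hard direction you pass from an $L^1$/compactness limit $\psi$ of $\Lambda^{n_j}\varphi-c_{n_j}$ to the claim that failure of pointwise convergence is governed by $\{\psi=-\infty\}$. This is where the argument is delicate: one first shows $\Lambda^n\varphi-c_n\to 0$ in $L^1$ (using a contraction estimate on an appropriate space of currents or DSH functions), and then upgrades to pointwise convergence outside $\Ec$ by controlling the Lelong numbers $\nu(\Lambda^n\varphi,\cdot)$ via the transformation rule under $f^*$ and the rate $d_{k-1}/d_t<1$; Siu's theorem then shows the exceptional locus is analytic and, by invariance, contained in $\Ec$. Your last step (``$\psi$ constant by sub-mean-value'') is not quite the mechanism used; the invariance equation $\Lambda\psi=\psi+{\rm const}$ combined with $\mu$-integrability and the mixing/exactness of $\mu$ is what forces $\psi$ to be constant $\mu$-a.e., and the pointwise statement outside $\Ec$ comes from the Lelong-number control rather than a direct maximum-principle argument.
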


 The above  result describes the  dichotomous behaviour of the equidistribution of preimages of points by $f^n.$
 Namely,  the typical case  $x\not\in \Ec$  is  characterized by the fact that  $m^-_{x,n}$ tends to the  equilibrium measure $\mu.$
 The complementary case  (i. e.  $x\in\Ec$) is  non-typical  since  the exceptional set $\Ec$ is  small: it is a (possibly empty) proper analytic set. 
 The above theorem was obtained for holomorphic endomorphisms of
$\P^k$ in \cite{BriendDuval01, DinhSibony03,FornaessSibony94}. For the case of dimension 1, see \cite{Brolin,FreireLopesMane,Lyubich,Tortrat}.
  A proof of this theorem  even for the broader class of meromorphic maps is given in \cite[Theorem 1.3]{DinhNguyenTruong}. 
 The reader is invited to consult the survey  \cite{DinhSibony10}
  for a   comprehensive  treatment of the  equidistribution of preimages by a holomorphic map in $\P^k.$ 
  
  When $X=\P^k,$ the space of all surjective holomorphic maps  with a given  algebraic degree   $d>1$
  is canonically
  identified to a Zariski open set $\Omega_{k,d}$ of some $\P^N.$ It is  well-known that for  $f\in \Omega_{k,d},$  we have  $d_k(f)=d^k,$ and  hence  $f$ is with dominant topological degree.
  Moreover, we have    $\Ec_f=\varnothing$  for $f$ belonging to a Zariski open subset of $\Omega_{k,d}.$
  In particular, for a generic surjective holomorphic map $f$ of a given  degree $d>1$ on $\P^k$ of a given degree $d>1$   we have that   $\Ec_f=\varnothing.$

 %%%%%%%%%%%%%%%%%%%%%%%%%%%%%%%%%%%%%%%%%%%%%%%%%%%%%%%%%%%%%%%%%%%%%%%%%%%%%
 \subsection{Outline of the  article}
 %%%%%%%%%%%%%%%%%%%%%%%%%%%%%%%%%%%%%%%%%%%%%%%%%%%%%%%%%%%%%%%%%%%%%%%%%%%%
  In  this  article  we  undertake  the following two tasks.  First, we survey  various  ergodic theorems  in the context of laminations-foliations  which  follow  the models of the  above two ergodic theorems \ref{T:Birkhorff} and \ref{T:DNT}.  
  Second, in  Subsection \ref{SS:coincidence} we  prove  some new comparison results regarding  the heat diffusions.
  Many concepts  in the context of  maps-iterations will find their analogues  in the new  context of laminations. Some have even more than  one analogue.
  The work is  organized as follows.
  
  \medskip
  
  In Section  \ref{S:basics}, we  will recall  basic facts on Riemann  surface laminations (without  and  with  singularities),  singular holomorphic foliations.
  Throughout the paper, unless otherwise  stated we will refer to all these objects as  the common abridged name {\bf lamination}. Moreover, {\bf (singular) (holomorphic) foliations} mean 
  (singular) (holomorphic) foliations by curves.
 The hyperbolicity and the leafwise Poincar\'e metric as well as the  hyperbolic and  parabolic parts of a lamination  will be introduced. The leafwise  Poincar\'e metric is
 regarded as the  hyperbolic time  for the laminations.
Consequently, we will develop the leafwise heat diffusions
 and define various  concepts  of harmonic  measures
 for    laminations. Next, we  study companion  notions  for  harmonic measures,  namely,  directed  positive  harmonic current for  Riemann surface laminations 
 and   directed positive $\ddc$-closed  current for  holomorphically immersed  Riemann  surface laminations. The latter class of laminations  contains the family of
 singular holomorphic foliations. We investigate  the  relationship  {\it (positive quasi-) harmonic measures $\longleftrightarrow$ directed (positive) harmonic currents.}
 Historically, 
  Garnett in \cite{Garnett}   introduces the notion of  harmonic measures
  and  considers the diffusions of the  heat equation in the Riemannian context.
  Her idea is further  developed by Candel-Conlon  \cite{CandelConlon2}  and Candel \cite{Candel03}.
  The notions of directed positive $\ddc$-closed
  currents on singular holomorphic  foliations  and on singular laminations living  in a complex manifold are introduced  in the article  of Berndtsson-Sibony \cite{BerndtssonSibony} and the  survey of  Forn\ae ss-Sibony \cite {FornaessSibony08}
  respectively.
 We also  collect from \cite{DinhNguyenSibony12}  basic facts about  positive $\ddc$-closed  currents   on complex  manifolds. 
 The  sample-path space and the the holonomy  of a lamination  are  presented   in this  section. These typical  objects distinguish  the laminations  from the maps.
 A  short digression to the isolated singularities for singular holomorphic foliations is  given.
  Singular  holomorphic foliations by Riemann surfaces in $\P^k$ $(k>1)$ provide a large family of examples  where all the above notions  apply.
  We will  describe  the  properties of a generic  holomorphic foliation in $\P^k$  with  a  given degree $d>1.$
   The  section is ended with  a discussion on Sullivan's dictionary. This  is  a  kind of philosophical correpondence between the world of maps  (or roughly speaking, discrete dynamics)
   and  the world of laminations  (or more generally, continuous dynamics).
   
   \smallskip
   
     The Random Ergodic Theorem is presented in Section \ref{S:Random}.
    The first part of this  section   introduces the Wiener measure   for  the sample-path space  associated to a given point.  
    These play the  same role as  the counting measures do  for the   orbit of  a point in the context of map-iterations.
    The material for this  section is mainly  taken from  \cite{NguyenVietAnh17a}.
  
  \smallskip
  
  In the first part of Section  \ref{S:Regularity_Mass}
  we first will introduce  a function  $\eta$  which measures the ratio between the ambient metric and the leafwise  Poincar\'e metric of a lamination.
  This  function plays an important role in the study of  laminations.
  We  also  introduce  the class of  Brody hyperbolic  laminations.  This class contains not only all  compact laminations by hyperbolic Riemann surfaces,
  it also includes  many interesting singular  holomorphic  foliations.
  We then  state some recent results on the regularity  of Brody hyperbolic laminations which arise  from  our  joint-works with Dinh and Sibony in \cite{DinhNguyenSibony14a,DinhNguyenSibony14b}.  
  The second part of  Section  \ref{S:Regularity_Mass} is  devoted  to 
    the mas-distribution  of  directed  positive $\ddc$-closed  currents in both local and global settings.
    Understanding  the mass-distribution is one of the main  steps in  establishing ergodic theorems for singular holomorphic foliations.

    \smallskip
    
In Section \ref{S:Ergodic_theorems} we introduce the  abstract diffusions  of the  heat equation for two situations:

$\bullet$ Riemann surface laminations  (possibly
with singularities)   with  respect to a harmonic measure;

$\bullet$  for  holomorphically immersed Riemann surface laminations  (possibly
with singularities)    with  respect to a (not necessarily directed)  positive $\ddc$-closed current.

This approach  allows us in \cite{DinhNguyenSibony12} to extend the classical theory of  Garnett \cite{Garnett} and  Candel \cite{Candel03}
to Riemann surface laminations  (without or with singularities) or to singular holomorphic foliations with not
necessarily bounded geometry. Our  method is totally different from  those of  Garnett \cite{Garnett}, Candel-Conlon  \cite{CandelConlon2} and Candel \cite{Candel03}. We give two versions of
ergodic theorems for such currents: one associated to the abstract heat
diffusions and one of  geometric nature close to Birkhoff's averaging on orbits of a
dynamical system.   Another  consequence  of  this method  is  a  sufficient  condition  for the abstract  heat diffusions  to coincide with the leafwise heat diffusions (see Theorem \ref{T:diffusions_comparisons}).
This  result
and  its  consequences
are  new.

\smallskip

In Section   \ref{S:Unique_ergodicity}
we present some unique  ergodicity  theorems for compact Riemann surface laminations without singularities and for  singular holomorphic foliations.
In the first subsection we consider the case  when 
the lamination is compact and transversally conformal. We state a  unique  ergodicity theorem  due to Deroin-Kleptsyn \cite{DeroinKleptsyn} in this context.
The  second  subsection   is  devoted  to singular holomorphic  foliations in  $\P^2.$   
Then the works of Forn{\ae}ss-Sibony \cite{FornaessSibony10} and  Dinh-Sibony \cite{DinhSibony18}   describe   a dichotomous behaviour   of the unique  ergodicity    
when the singular holomorphic foliation  admits only hyperbolic  singularities. So  the panoramic picture in this special case is  rather  complete, 
at least when the singular holomorphic foliation  admits only hyperbolic  singularities.
In the last subsection  we state  our recent unique  ergodicity  theorems   for  Riemann surface laminations without singularities and for  singular holomorphic foliations
on compact K\"ahler surfaces. This result  is obtained in collaboration  with Dinh and  Sibony  \cite{DinhNguyenSibony18}.
Our  results  give  a  trichotomous behaviour in these general  settings.

\smallskip

In Section  \ref{S:Densities_Unique_Ergodicity} we   give a sketchy proof  of  our unique  ergodity theorems.  % for  singular holomorphic foliations on  compact K\"ahler surfaces.
We  outline  the  theory of densities for a class of  non $\ddc$-closed  currents developed in \cite{DinhNguyenSibony18}. This  theory is one of the main  ingredients
in our approach.

%A notion of metric entropy is also introduced for directed  positive harmonic measures.  
%This  section is  based on our  joint-works with Dinh and Sibony in \cite{DinhNguyenSibony14a,DinhNguyenSibony14b}.

\smallskip

  Section \ref{S:Lyapunov} is devoted to the Lyapunov--Oseledec theory for  Riemann surface laminations (without or with  singularities).
Here  we deal with the (multiplicative) cocycles  which   are modelled  on the  holonomy cocycle of a  foliation.
The  Oseledec multiplicative ergodic theorem  for laminations is  the main result of this theory.
We  apply this  theorem  to  smooth compact laminations by hyperbolic Riemann surfaces   and to  compact singular 
holomorphic foliations. The material for this section  is taken from our memoir \cite{NguyenVietAnh17a}. 

\smallskip

Finally,   Section \ref{S:Applications} discusses  some  applications of the  theory developed here. We define and study  the canonical  Lyapunov exponents
for  a large  family of singular holomorphic foliations on compact projective surfaces. We also  study the topological  and algebro-geometric    interpretations of these
characteristic numbers. When  the lamination in question  is hyperbolic, smooth  and compact, we  characterize    geometrically the Lyapunov exponents of a  smooth cocycle with respect to a harmonic measure.
This  section  is based on   our   works in \cite{NguyenVietAnh17b,NguyenVietAnh18b,NguyenVietAnh18c}.

\smallskip

Several  open  problems develop in the  course of the  exposition.

\smallskip
\noindent
{\bf Main notation.} Throughout the paper, 
\begin{itemize}
\item[$\bullet$]
$\R^+$  (resp. $\N$) denotes  $[0,\infty)$  (resp. $\{n\in\Z:\ n\geq 0\}$);
\item[$\bullet$]
$\D$ denotes the unit disc
in $\C$, $r\D$ denotes the disc of center 0 and of radius $r,$ and
$\D_R\subset\D$ is the disc of center $0$ and of radius $R$ with
respect to the Poincar{\'e} metric $g_P$ on $\D$,
i.e. $\D_R=r\D$ with $R:=\log[(1+r)/(1-r)]$.
Recall  that 
$$
g_P(\zeta)={2\over (1-|\zeta|^2)^2} id\zeta\wedge d\bar\zeta\qquad\text{for}\qquad \zeta\in\D.
$$
\end{itemize}
Poincar{\'e} metric on
a hyperbolic Riemann surface, in particular on $\D$ and on the  hyperbolic leaves of a   Riemann  surface lamination, is
given by a positive $(1,1)$-form that we often denote by the  same symbol $g_P$. The associated distance  is denoted by $\dist_P.$ 

Given a  Riemann surface lamination with singularities $\Fc=(X,\Lc,E),$ $E$ is  the set of singularities, 
a leaf through a point $x\in X\setminus E$ is often denoted by $L_x,$ $\Hyp(\Fc)$ (resp. $\Par(\Fc)$) denotes the hyperbolic part (resp. the  parabolic part) of $\Fc.$  

 Recall that $i:=\sqrt{-1}$ and $\dc:={i\over2\pi}(\dbar-\partial)$ and $\ddc = {i\over \pi}\ddbar$.

\smallskip

\noindent{\bf Acknowledgement.} 
This paper is an expanded version of my talk  presented at the Vietnam Institute for Advanced Studies in Mathematics (VIASM) at the  Annual  Meeting on August  17, 2019.
I would like to thank   the
VIASM, and in particular Professor Ng\^o Bảo Ch\^au and Professor L\^e Minh H\`a, as well as the staff
of the Institute, for their invitation and warm hospitality.  I am also grateful to Professor Nguyễn  Tử Cường,  Professor  Ph\`ung Hồ Hải, Professor  Ph\d{a}m Ho\`ang Hi\d{\^e}p  and Professor Đỗ Đức Thái
for 
their kind support.   It is a pleasure to thank   Professor  Nessim Sibony and Professor Đinh  Tiến Cường  for their long-term collaboration with me. Finally,   I  am thankful   to   
Dr. Fabrizio Bianchi  for drawing graciously  the figures.
%%%%%%%%%%%%%%%%%%%%%%%%%%%%%%%%%%%%%%%%%%%%%%%%%%%%%%%%%%%%%%%%%%%%
%%%%%%%%%%%%%%%%%%%%%%%%%%%%%%%%%%%%%%%%%%%%%%%%%%%%%%%%%%%%%%%%%%%%
\section{Basic laminations and  foliations concepts} \label{S:basics}
%%%%%%%%%%%%%%%%%%%%%%%%%%%%%%%%%%%%%%%%%%%%%%%%%%%%%%%%%%%%%%%%%%%%
%%%%%%%%%%%%%%%%%%%%%%%%%%%%%%%%%%%%%%%%%%%%%%%%%%%%%%%%%%%%%%%%%%%%

%Poincar{\'e} metric on
%a Riemann surface, in particular on $\D$ and on the leaves of a lamination, is
%given by a positive $(1,1)$-form that we denote by $\omega_P$. The associated distance and diameter are denoted by $\dist_P$ and $\diam_P$. 
%A leaf through a point $x$ of a lamination is often denoted by $L_x$ and $\phi_x:\D\to L_x$ denotes a universal covering map of $L_x$ such that $\phi_x(0)=x$.

%%%%%%%%%%%%%%%%%%%%%%%%%%%%%%%%%%%%%%%%%%%%%%%%%%%%%%%%%%%%%%%%%%%%%%%%%%%%%%
\subsection{Riemann surface laminations and singular foliations}\label{SS:RSL}
%%%%%%%%%%%%%%%%%%%%%%%%%%%%%%%%%%%%%%%%%%%%%%%%%%%%%%%%%%%%%%%%%%%%%%%%%%%%%%%

Let $X$ be a locally compact space.  A {\it   Riemann surface lamination}     $(X,\Lc)$   is  the  data of  a {\it (lamination)  atlas} $\Lc$ 
of $X$ with (laminated) charts 
$$\Phi_p:\U_p\rightarrow \B_p\times \T_p.$$
Here, $\T_p$ is a locally compact  metric space, $\B_p$ is a domain in $\C$,  $\U_p$ is  an open set in 
$X,$ and  
$\Phi_p$ is  a homeomorphism,  and  all the changes of coordinates $\Phi_p\circ\Phi_q^{-1}$ are of the form
$$x=(y,t)\mapsto x'=(y',t'), \quad y'=\Psi(y,t),\quad t'=\Lambda(t),$$
 where $\Psi,$ $\Lambda$ are continuous  functions and $\Psi$ is  holomorphic in  $y.$
%  $\Psi$ is   smooth with respect  to $y,$ and its partial  derivatives
% of any order with respect to $y$ are jointly continuous in $(y,t)$.

%\import{pictures/}{lamination_chart_2.tex}

The open set $\U_p$ is called a {\it flow
  box} and the Riemann surface $\Phi_p^{-1}\{t=c\}$ in $\U_p$ with $c\in\T_p$ is a {\it
  plaque}. The property of the above coordinate changes insures that
the plaques in different flow boxes are compatible in the intersection of
the boxes. Two plaques are {\it adjacent} if they have non-empty intersection.
A  {\it  transversal} in a flow box is  a closed  set of the box which intersects every plaque  in one point.
In particular, $\Phi_p^{-1}(\{x\}\times \T_p)$ is a transversal in $\U_p$ for any point $x\in\B_p.$
For the sake of simplicity, we often identify $\T_p$  with $\Phi_p^{-1}(\{x\}\times \T_p)$  for some $x\in\B_p,$ or even identify $\U_p$   with $\B_p\times\T_p$ via the map $\Phi_p.$ 
 
A {\it leaf} $L$ is a minimal connected subset of $X$ such
that if $L$ intersects a plaque, it contains that plaque. So a leaf $L$
is a  Riemann surface  immersed in $X$ which is a
union of plaques. For every point $x\in X,$  denote  by  $L_x$   the   leaf passing  through $x.$  % A leaf through a point $x$ of this foliation is often denoted by $L_x.$
 % A {\it transversal} is  a complex submanifold of $X$  which is  transverse to  the leaves  of $\Fc.$
  A subset $M\subset X$  is  called
  {\it leafwise  saturated} if $x\in M$ implies  $L_x\subset M.$

 We say  that a Riemann  surface lamination  $(X,\Lc)$ is {\it   $\Cc^k$-smooth}  (resp.   {\it smooth}) if
  each map $\Psi$ above is    $\Cc^k$-smooth (resp.  smooth)   with
respect to $y,$ and its partial derivatives of   any total order $\leq k$   (resp. any order)  with respect to $y$ and $\bar y$ are jointly continuous
with respect  to $(y,t).$
 
We are mostly interested in the case where the $\T_p$ are closed subsets of smooth real manifolds (resp. of some  complex manifolds)  and the functions $\Psi,\Lambda$ are $\Cc^k$-smooth (resp. smooth, 
holomorphic) in all variables. 
In this case, we say that the lamination $(X,\Lc)$ is  {\it $\Cc^k$-transversally smooth} (resp. {\it transversally smooth,}  {\it transversally holomorphic}).
If, moreover, $X$ is compact, we can embed it in an $\R^N$ in order to use the distance induced by a Riemannian metric on $\R^N$. 

 We say that a transversally smooth Riemann surface  lamination $(X,\Lc)$ is a  {\it smooth foliation} if
 $X$ is a  manifold and all leaves of $\Lc$ are Riemann surfaces immersed in $X.$

We  say that  a Riemann  surface lamination $(X,\Lc)$ is a {\it holomorphic  foliation} 
  if $X$ is a complex manifold (of dimension $k$) and
there is an atlas $\Lc$ of $X$ with (foliated) charts 
$$\Phi_p:\U_p\rightarrow \B_p\times \T_p,$$
where the $\T_p$'s  are open sets of  $\C^{k-1}$ and 
 all above maps  $\Psi,\Lambda $ are   holomorphic.

We call {\it Riemann surface lamination with singularities} the data
$\Fc=(X,\Lc,E)$ where $X$ is a locally compact space, $E$ a closed
subset of $X$ such that
 $\overline{X\setminus E}=X$  and $(X\setminus E,\Lc)$ is a Riemann surface
lamination. The set $E$ is {\it the singularity set} of the lamination.

\begin{figure}[h]%
\begin{center}
\def\svgwidth{0.6\columnwidth}
\resizebox{0.5\textwidth}{!}{\input{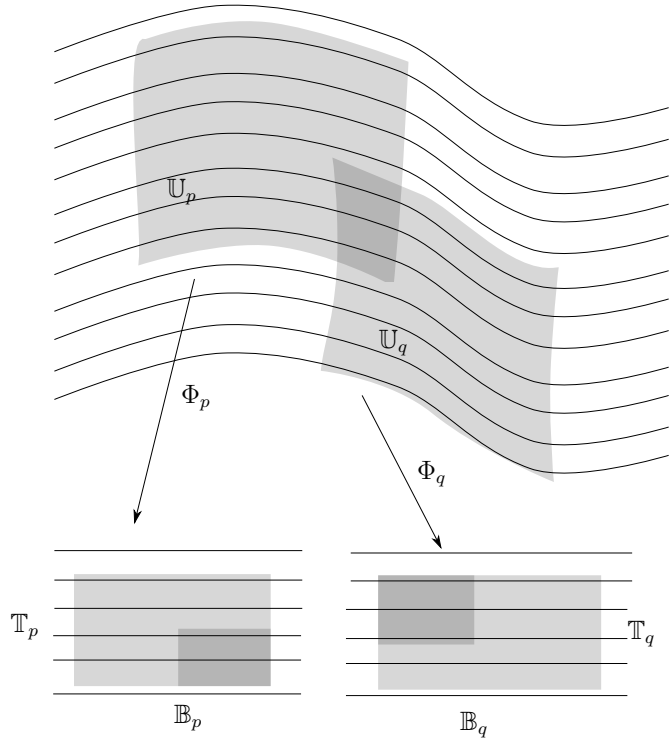}}%
\caption{Laminated charts $\Phi_p$ and $\Phi_q$ of a lamination.}
\label{fig:lamination}
\end{center}
\end{figure}

 We say that $\Fc:=(X,\Lc,E)$ is  a {\it singular  foliation} (resp.   {\it singular holomorphic foliation}) if $X$ is   a manifold (resp. a complex manifold) and $E\subset X$ is 
 a closed subset such that
 $\overline{X\setminus E}=X$ and  $(X\setminus E,\Lc)$ is a  smooth foliation (resp. a  holomorphic  foliation). $E$ is  said to be the  {\it set of singularities} of the  foliation $\Fc.$ 
 We  say that $\Fc$ is  compact if  $X$ is compact.

 Many examples of abstract compact Riemann  surface laminations are constructed in 
\cite{CandelConlon1} and \cite{Ghys}. Suspensions of  group actions give already a large class
of laminations without singularities.

 \begin{remark}\label{R:m-dim_laminations}
  \rm  In the  above definitions,  if we allow  $\B_p$ to be  a domain in $\R^N$ (resp. in $\C^N$) for 
  a fixed number $N\in\N$, then we obtain $N$-real (resp. $N$-complex) dimensional laminations/foliations.
  For a comprehensive recent exposition on $N$-dimensional laminations/foliations, the reader is invited to consult
  the textbooks by Candel-Conlon \cite{CandelConlon1,CandelConlon2}, by Walczak \cite{Walczak} etc.
 \end{remark}

 %%%%%%%%%%%%%%%%%%%%%%%%%%%%%%%%%%%%%%%%%%%%%%%%%%%%%%%%%%%%%%%%%%%%%%%%%%%
\subsection{Hyperbolicity and leafwise Poincar\'e metric}\label{SS:Poincare}
 %%%%%%%%%%%%%%%%%%%%%%%%%%%%%%%%%%%%%%%%%%%%%%%%%%%%%%%%%%%%%%%%%%%%%%%%%%%
 
 Let  $L$ be  an arbitrary,  not necessarily simply-connected, Riemann surface.
 Riemann's mapping theorem states that  its universal covering surface $\widetilde L,$ which is always simply connected, can be mapped conformally onto  
  a domain of exactly one of the following types:
 \begin{itemize}
 \item  the extended complex plane $\C\cup\{\infty\}=\P^1$ (the case of a Riemann surface of elliptic type);
 
 \item the finite complex plane $\C$ (is of parabolic type); 
 
 \item or — the unit disc $\D$ ( is of hyperbolic type). 
 \end{itemize}
 We  say  that $L$ is {\it uniformized } by the  corresponding  domain of  its type.
 Since the elliptic case differs from the others already from the topological point of view, the difficult problem of recognizing whether a given Riemann surface 
 is of hyperbolic or parabolic type is still left.  
 It is known that a closed Riemann surface of genus $g$ for $g=0$ is of elliptic type, for $g=1$ it is of parabolic type, and for $g>1$ of hyperbolic type; therefore,
 the problem of types is mainly important for open Riemann surfaces.

 Consider now a  Riemann surface lamination with singularities $\Fc=(X,\Lc,E)$. 
 \begin{definition}\label{D:hyperbolic}\rm 
A leaf $L$  of $\Fc$ is  said to be  {\it hyperbolic} if
it  is a   hyperbolic  Riemann  surface, i.e.  it is  of hyperbolic type, i.e. it is  uniformized   by 
$\D.$  Otherwise (i.e. when $L$ is  either of parabolic type or of elliptic type), $L$ is  called  {\it  parabolic.}   

$\Fc$   is  said to be {\it hyperbolic} if  
  its leaves   are all  hyperbolic. 
  
  The {\it hyperbolic part} of the lamination $\Fc$, denoted by $\Hyp(\Fc),$ is  the union of all hyperbolic leaves, whereas the union of all  parabolic leaves, denoted by $\Par(\Fc),$ 
  is  called the {\it parabolic part} (or equivalently, the 
  {\it non-hyperbolic part} of $\Fc$).
  These  are disjoint leafwise  saturated measureable sets of $X $ and $\Hyp(\Fc)\cup \Par(\Fc)=X\setminus E$    (see \cite[Proposition 3.1]{DinhNguyenSibony12}).
\end{definition}

  Consider also a Hermitian metric on $X$, i.e. Hermitian metrics on the
leaves of $L$ whose restriction to each flow box defines Hermitian
metrics on the plaques that depend continuously on the plaques. It is
not difficult to construct such a metric using a partition of
unity. Observe that all the Hermitian metrics on $X$ are locally
equivalent. From now on, fix a Hermitian metric on $X$.  

For every $x\in \Hyp(\Fc),$  consider a universal covering map
\begin{equation}\label{e:covering_map}
\phi_x:\ \D\rightarrow L_x\qquad\text{such that}\  \phi_x(0)=x.
\end{equation}
 This map is
uniquely defined by $x$ up to a rotation on $\D$. 
Then, by pushing   forward  the Poincar\'e metric $g_P$
on $\D$  
  via $\phi_x$ (see  Main notation), we obtain the  so-called {\it Poincar\'e metric} on $L_x$ which depends only on the leaf.  
  The latter metric is given by a positive $(1,1)$-form on $L_x$  that we also denote by $g_P$ for the sake of simplicity. 
So $g_P$ is a measurable  $(1,1)$-form defined on $\Hyp(\Fc)$  \cite[Proposition 3.1]{DinhNguyenSibony12}. 
 For a systematic exposition on the  Poincar\'e metric and its  generalizations,  see the book by Kobayashi \cite{Kobayashi}.
  
 %%%%%%%%%%%%%%%%%%%%%%%%%%%%%%%%%%%%%%%%%%%%%%%%%%%%%%%%%%
 \subsection{Leafwise heat diffusions and harmonic  measures}
 \label{ss:heat_diffusions}
  %%%%%%%%%%%%%%%%%%%%%%%%%%%%%%%%%%%%%%%%%%%%%%%%%%%%%%%%%%
     
Let  $\Fc=(X,\Lc,E)$ be  a  Riemann surface  lamination with singularities.  
     The leafwise Poincar\'e metric
$g_P$  induces  the corresponding 
Laplacian $\Delta_P$  on  hyperbolic leaves   (see formula \eqref{e:Laplacian_disc} below  for  the case of the Poincar\'e disc $(\D,g_P)$  and formula \eqref{e:Delta_commutation} for the case of a
hyperbolic leaf).  
 For  every point  $x\in \Hyp(\Fc),$
 consider  the   {\it heat  equation} on $L_x$
 \begin{equation}\label{e:heat-equation}
 {\partial p(x,y,t)\over \partial t}=\Delta_{P,y} p(x,y,t),\qquad  \lim_{t\to 0+} p(x,y,t)=\delta_x(y),\qquad   y\in L_x,\ t\in \R^+.
 \end{equation}
Here   $\delta_x$  denotes  the  Dirac mass at $x,$ $\Delta_{P,y}$ denotes the  Laplacian  $\Delta_P$ with respect to the  variable $y,$
 and  the  limit  is  taken  in the  sense of distribution, that is,
$$
 \lim_{t\to 0+
}\int_{L_x} p(x,y,t) f(y) g_P( y)=f(x)
$$
for  every  smooth function  $f$   compactly supported in $L_x.$   

The smallest positive solution of the  above  equation, denoted  by $p(x,y,t),$ is  called  {\it the heat kernel}. Such    a  solution   exists   because  $(L_x,g_P)$ is
complete and   of bounded  geometry  (see, for example,  \cite{CandelConlon2,Chavel}).  
 The  heat kernel  $p(x,y,t)$  gives  rise to   a one  parameter  family $\{D_t:\ t\geq 0\}$ of  {\it leafwise heat diffusion  operators}    defined on bounded measurable functions  on $\Hyp(\Fc)$ by
 \begin{equation}\label{e:diffusions}
 D_tf(x):=\int_{L_x} p(x,y,t) f(y) g_P (y),\qquad x\in \Hyp(\Fc).
 \end{equation}
 This  family is  a  semi-group, that is,  
\begin{equation}\label{e:semi_group}
D_0=\id\quad\text{and}\quad   D_t \mathbf{1} =\mathbf{1}\quad\text{and}\quad D_{t+s}=D_t\circ D_s \quad\text{for}\ t,s\geq 0,
\end{equation}
where $\mathbf{1}$ denotes the function which is identically equal to $1.$

We also  denote by  $\Delta_P$ the  Laplacian on the Poincar\'e disc $(\D,g_P),$ that is, for every function $f\in \Cc^2(\D),$
\begin{equation}\label{e:Laplacian_disc}
(\Delta_P  f) g_P=\pi\ddc f=i\ddbar f \qquad\text{on}\ \D.
\end{equation}
Let $\dist_P$ denote the Poincar\'e distance on $(\D,g_P).$
 For $\zeta\in\D$   write  $\rho:=\dist_P(0,\zeta).$ So
$$\rho:=\log {1+|\zeta|\over 1-|\zeta|}\cdot$$
Recall a formula in Chavel \cite[p.246]{Chavel} for the  heat kernel of the Poincar\'e disc $(\D,g_P):$
\begin{equation} \label{e:heat_kernel}p_\D(0,\zeta,t)={\sqrt{2}e^{-t/4} \over (2\pi t)^{3/2}}
\int_{\rho}^{\infty}{s  e^{-{s^2 \over 4t}}\over \sqrt{\cosh s-\cosh\rho}}ds.
\end{equation}
For every function $f\in \Cc^1(\D),$ we also denote by $|df|_P$  the length  of  the differential $df$ with respect to $g_P,$
that is,   $|df|_P=|df|\cdot g_P^{-1/2}$ on $\D,$ where $|df|$ denotes the Euclidean norm of $df.$  

%Using   the map $\phi_x:\  \D\to L_x$ given in  (\ref{e:covering_map}), the following identity relates the diffusion operators in  $L_x$ and those  in
%the  Poincar\'e disc $(\D,g_P):$

Let $x\in \Hyp(\Fc).$ We often identify  the fundamental group $\pi_1(L_x)$ of  $L_x$  with the group of  
deck-transformations
   of the  universal covering map
$\phi_x:\  \D\to L_x$  given in  (\ref{e:covering_map}). 
It is  well-known that  $\pi_1(L_x)$ is  at most  countable.  

 The  Laplace operator $\Delta_P$ on the leaf $(L_x,g_P|_{L_x})$ lifts, via $\phi_x,$
to $ \Delta_P$ on the Poincar\'e disc $(\D,g_P).$ More  precisely,   
for   $x\in \Hyp(\Fc)$ and for every  $\Cc^2$-smooth   function $f$ defined on $L_x,$  
\begin{equation}\label{e:Delta_commutation}
 \Delta_P(f\circ \phi_x)=(\Delta_P f)\circ\phi_x, \qquad \textrm{on}\qquad  \D. 
\end{equation}
The heat kernel $p(x,y,t)$  for $(L_x,g_P)$  is  related   to $p_\D(\tilde x,\tilde y,t)$  for  $(\D,g_P)$ by
\begin{equation}\label{e:heat_kernel2}
p(x,y,t)=\sum_{\gamma \in \pi_1(L_x)} p_\D(\tilde x, \gamma \tilde y,t) ,
\end{equation}
where $\tilde x$ (resp.  $\tilde  y$) is  a preimage of   $x$ (resp. $y$)  by the map $\phi_x.$
Moreover, %we infer from (\ref{e:heat_kernel2}) that
%the  heat kernel 
$p_\D$ is  invariant  under deck-transformations,
 that is,
\begin{equation}\label{e:heat_kernel3}
 p_\D(\gamma\tilde x, \gamma \tilde y,t)=  p_\D(\tilde x,  \tilde y,t)
\end{equation}
for  all $\gamma \in \pi_1(L_x)$ and $\tilde x,\tilde y\in \D$ and $t\geq 0.$
As an immediate consequence of identity (\ref{e:heat_kernel2}), we obtain the following relation  between $D_t$ given in  \eqref{e:diffusions} and the heat
diffusions $ D_t$ on $\D.$ 
For   $x\in X$ and for every  bounded measurable  function $f$ defined on $L_x,$   we have 
\begin{equation}\label{e:commutation}
 D_t(f\circ \phi_x)=(D_tf)\circ\phi_x, \qquad \textrm{on $\D$  for all $t\in\R^+.$} 
\end{equation}
%Here $\{D_t:\ t\geq 0\}$ on the left-hand side is the family of  the  heat diffusion associated to the  Poincar\'e disc,  
See \cite[Proposition 2.7]{NguyenVietAnh17a} for a proof.

 %%%%%%%%%%%%%%%%%%%%%%%%%%%%%%%%%%%%%%%%%%%%%%%%%%%%%%%%%%
 \subsection{Directed differential forms, directed positive forms, directed currents   and harmonic  measures}
 \label{ss:forms_measures}
  %%%%%%%%%%%%%%%%%%%%%%%%%%%%%%%%%%%%%%%%%%%%%%%%%%%%%%%%%%

We recall now the notion of currents on a manifold.
Let $M$ be a real oriented manifold of dimension $m$ (resp. a complex manifold of dimension $k$). We fix an atlas
of $M$ which is locally finite. Up to reducing slightly the charts, we
can assume that the local coordinate system associated to
each chart is defined on a neighbourhood of the closure of this chart. For $0\leq p
\leq m$ (resp. for $0\leq p,q\leq k$) and $l\in\N$, denote by $\Dc^p_l(M)$ (resp. $\Dc^{p,q}_l(M)$) the space of $p$-forms  (resp. $(p,q)$-forms)  of
class $\Cc^l$ with compact support in $M,$ and $\Dc^p(M)$ (resp.  $\Dc^{p,q}(M)$) their
intersection for $l\in\N.$ If $\alpha$ is a $p$-form (resp.  $(p,q)$-form) on $M$, denote by
$\|\alpha\|_{\Cc^l}$ the sum of the $\Cc^l$-norms of the coefficients
of $\alpha$ in the local coordinates. These norms induce a
topology on $\Dc^p_l(M)$ and $\Dc^p(M)$ (resp.  on $\Dc^{p,q}_l(M)$ and $\Dc^{p,q}(M)$). In particular, a
sequence $\alpha_j$ converges to $\alpha$ in $\Dc^p(M)$ (resp. in  $\Dc^{p,q}(M)$) if
these forms are supported in a fixed compact set and if
$\|\alpha_j-\alpha\|_{\Cc^l}\rightarrow 0$ for every $l$. 

 Let $M$ be a real oriented manifold of dimension $m.$ A {\it current of degree $p$} (or equivalently,   a {\it current  of dimension $m-p$} on $M,$ or a {\it $p$-current}
for short) is a continuous linear form
$T$ on $\Dc^{m-p}(M)$ with values in $\C$. 
The value of $T$ on a test form $\alpha$ in  $\Dc^{m-p}(M)$ is
denoted by $\langle T,\alpha\rangle$ or $T(\alpha)$.
The current $T$ is
of order $\leq l$ if it can be extended to a continuous linear form on 
$\Dc^{m-p}_l(M)$. The order of $T$ is the minimal integer $l\geq 0$ satisfying this condition.
It is not difficult to see that the restriction of $T$ to a relatively compact open
set of $M$ is always of finite order. Define
$$\|T\|_{-l,K}:=\sup\Big\{|\langle T,\alpha\rangle|,\quad
\alpha\in\Dc^{m-p}(M),\quad \|\alpha\|_{\Cc^l}\leq 1,\quad
\supp(\alpha)\subset K\Big\}$$
for $l\in\N$ and $K$ a compact subset of $M$. 
This quantity may be infinite when the order of $T$ is larger than $l$. 

Let $M$ be  a complex manifold of dimension $k.$ 
 A {\it $(p,q)$-current} on  $M$ (or equivalently, a {\it current  of bidegree $(p,q),$} or equivalently,   a {\it current  of bidimension  $(k-p,k-q)$}) is
 a  continuous linear form $T$  on $\Dc^{k-p,k-q}(M)$ with values in $\C.$ 

Consider now a Riemann surface lamination with singularities $\Fc=(X,\Lc,E).$
The notion of differential forms on manifolds can be extended
to laminations, see Sullivan \cite{Sullivan}. A {\it (directed) $p$-form} (resp. a {\it (directed) $(p,q)$-form}) on $\Fc$ can be seen on the flow box
$\U\simeq \B\times\T$ as
a $p$-form  (resp.  $(p,q)$-form) on $\B$ depending on the parameter $t\in \T$. 
For $0\leq p\leq 2$ (resp. for $0\leq p,q\leq 1$),  denote by $\Dc^p_l(\Fc)$  (resp. $\Dc^{p,q}_l(\Fc)$) the space of 
$p$-forms (resp.  $(p,q)$-form) $\alpha$ with compact support in $X\setminus E$ satisfying the following property: 
$\alpha$ restricted to each flow box $\U\simeq \B\times\T$ is a 
$p$-form (resp.  $(p,q)$-form) of class $\Cc^l$ on the plaques whose coefficients and
all their derivatives up to order $l$ depend continuously on the
plaque. The norm $\|\cdot\|_{\Cc^l}$ on
this space is defined as in the case of real manifold using a locally finite
atlas of $\Fc$. We also define $\Dc^p(\Fc)$ (resp.  $\Dc^{p,q}(\Fc)$)  as the intersection of
$\Dc^p_l(\Fc)$ (resp.  $\Dc^{p,q}_l(\Fc)$) for $l\geq 0$.
A {\it (directed) current of degree $p$} (or equivalently, a {\it  (directed) current  of dimension $2-p$})  % (resp. a current of bidegree $(p,q)$ and of bidimension $(1-p,1-q)$)} 
on $\Fc$ is a continuous
linear form on the space $\Dc^{2-p}(\Fc)$ %(resp.  $\Dc^{1-p,1-q}(\Fc)$)
with values in $\C$. A
$p$-current is of order $\leq l$ if it can be extended to a linear
continuous form on $\Dc^{2-p}_l(\Fc)$. The restriction of a current to a
relatively compact open set of $X\setminus E$ is always of finite order. 
The norm $\|\cdot\|_{-l,K}$ on currents is defined as in the case of
manifolds.  
  We often write for short $\Dc(\Fc)$ instead of the space of functions $\Dc^0(\Fc).$
  A (directed) $(p+q)$-current is said to be  of
{\it bidegree $(p,q)$} (or equivalently,  of bidimension $(1-p,1-q)$)  if it vanishes on forms of bidegree $(1-p',1-q')$ for $(p',q')\not=(p,q).$
  
   A form $\alpha \in \Dc^{1,1}(\Fc)$ is  said to be {\it positive} if its restriction to every plaque
 is  a  positive measure in the  usual  sense, that is, in every flow box  $\U\simeq \B\times \T,$
$$
\alpha(z,t)=a(z,t)i dz\wedge  d\bar z\quad\text{for}\quad  (z,t)\in \B\times \T,
$$
 where $a$ is  a  nonnegative-valued function.

 \begin{definition}\label{D:harmonic_measure}\rm 
 Let  $\Delta_P$  be   the  Laplacian  on $\Fc,$ that is, the  aggregate  of the  leafwise Laplacians  $\{\Delta_{P,x}\},$  where  $x\in\Hyp(\Fc)$ 
 (see \eqref{e:Laplacian_disc} and \eqref{e:commutation}).
 Let  $\mu$ be a  locally finite\footnote{ A  real-valued signed Borel  measure $\mu$ on a topological space $X$  is  said to be  {\it locally finite} if   every point $x\in X$   has a neighbourhood 
 $\U_x$
 such that  $|\mu|(\U_x)$ is  finite,  where $|\mu|$  is  the variation of $\mu.$ }
  real-valued signed  Borel measure  on $X$  whose variation  $|\mu|$   gives no mass to $\Par(\Fc)\cup E.$
 \begin{enumerate}
\item $\mu$ is {\it quasi-harmonic}  if
$$
\int_X  \Delta_P f \,d\mu=0
$$ 
 for all  functions  $f\in \Dc(\Fc).$
 
\item  $\mu$  is   called {\it very weakly   harmonic} 
(resp.  {\it weakly harmonic}) if  $\mu$ is finite positive and  the following    property  is satisfied
for $t=1$ (resp.  for all $t\in\R^+$):
$$\int_X  D_t f d\mu=\int_X fd\mu
$$   
for all  bounded  measurable functions $f$ defined on $X.$

 \item  $\mu$ is  said to be {\it harmonic }  if it is both weakly  harmonic and   quasi-harmonic.  
 \end{enumerate}
\end{definition}

     %  The  mass of positive  harmonic current $T$ is the total mass of  the measure $  T\wedge g_M.$ 

%%%%%%%%%%%%%%%%%%%%%%%%%%%%%%%%%%%%%%%%%%%%%%%%%%%%%%%%%%%%%%%%%%%%%%%%%%%%%%%%%%%%%%%%%%%%%%%%%%%%%%%%%%%
\subsection{Directed positive harmonic currents vs harmonic  measures}\label{SS:Directed_hamonic_currents}
%%%%%%%%%%%%%%%%%%%%%%%%%%%%%%%%%%%%%%%%%%%%%%%%%%%%%%%%%%%%%%%%%%%%%%%%%%%%%%%%%%%%%%%%%%%%%%%%%%%%%%%%%%

  Let $\Fc=(X,\Lc,E)$ be  a Riemann surface lamination with singularities.
  For each chart $\Phi:\U\rightarrow \B\times \T,$ the  complex   structure on $\B$ induces  a complex  structure on the leaves of $X.$
Therefore,  the operator $\partial,$ $\dbar,$ $d$  and $\dc$  can be  defined  so that they act leafwise on  forms
as in the case of complex manifolds.  Let
$T$ be  a directed $(p,q)$-current, then $\partial T$ and $\dbar T$
are defined  as follows.

If $T$  is a  $(0,q)$-current, then
$$\langle\partial T,\alpha\rangle:=(-1)^{q+1}\langle T,\partial \alpha\rangle
\quad \mbox{for all test forms } \alpha\in \Dc^{0,1-q}(\Fc).$$

If $T$ is a  $(1,q)$-current, then $\partial T:=0.$

If $T$ is a $(p,0)$-current, then 
$$\langle\dbar T,\alpha\rangle:=(-1)^{p+1}\langle T,\dbar \alpha\rangle
\quad \mbox{for all test forms } \alpha\in \Dc^{1-p,0}(\Fc).$$

If $T$ is a  $(p,1)$-current, then $\dbar T:=0.$

 So we get easily   that
 \begin{eqnarray*}d&:=&\partial +\dbar:\  \Dc^p(\Fc)\to\Dc^{p+1}(\Fc)\quad \text{for}\quad 0\leq p\leq 2\quad \text{with}\quad \Dc^3(\Fc)\equiv \{0\};\\ 
\ddc&:=& {i\over \pi}\ddbar:\  \Dc(\Fc)\to\Dc^{1,1}(\Fc).
\end{eqnarray*}
 \begin{definition}\label{D:Directed_hamonic_currents} {\rm (Garnett \cite{Garnett}, see also  Sullivan \cite{Sullivan}).}
\rm  Let $T$ be a directed current of bidimension $(1,1)$ on $\Fc.$ 
    
  $\bullet$  $T$ is  said to be  {\it positive} if  $T(\alpha)\geq 0$ for all positive forms $\alpha\in \Dc^{1,1}(\Fc).$

$\bullet$ $T$ is  said to be  {\it   closed}   if  $d T=0$ in the  weak sense (namely,  $T(d f)=0$ for all directed forms  $f\in  \Dc^1(\Fc)$).
    
$\bullet$ $T$ is  said to be  {\it   harmonic}   if  $\ddc T=0$ in the  weak sense (namely,  $T(\ddc f)=0$ for all functions  $f\in  \Dc(\Fc)$).
\end{definition}
  We have the  following decomposition.
  \begin{proposition}{\rm  \cite[Proposition  2.3]{DinhNguyenSibony12} and \cite[Proposition 2.10]{NguyenVietAnh17a}}\label{P:decomposition}
   Let $T$ be  a directed harmonic current on $\Fc.$  Let $\U\simeq \B\times \T$ be  a flow  box which is relatively compact in $X.$
   Then, there is a positive Radon measure $\nu$ on $\T$ and for $\nu$-almost every $t\in\T,$ there is a  harmonic   function $h_t$ on $\B$
   such that  if $K$  is compact in $\B,$  the integral $\int_K \|h_t\|_{L^1(K)} d\nu(t)$ is  finite and
   $$
   T(\alpha)=\int_\T\big( \int_\B  h_t(y)\alpha(y,t)  \big)d\nu(t)
   $$
   for every  form $\alpha\in\Dc^{1,1}(\Fc)$ compactly  supported  on $\U.$ 
   If  moreover, $T$ is  positive, then    for $\nu$-almost every $t\in\T,$ the  harmonic   function $h_t$ is  positive on $\B.$
    If  moreover, $T$ is  closed, then    for $\nu$-almost every $t\in\T,$ the  harmonic   function $h_t$ is  constant on $\B.$
  \end{proposition}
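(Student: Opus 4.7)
The plan is to work inside the flow box $\U\simeq\B\times\T$. A directed current of bidimension $(1,1)$ pairs only with $(1,1)$-forms, which in the chart all have the shape $a(y,t)\,idy\wedge d\bar y$. Thus $T$ restricted to $\U$ is fully described by the linear functional $\widehat T:a\mapsto \langle T,a\cdot idy\wedge d\bar y\rangle$ acting on test functions $a(y,t)$ smooth in $y$ with derivatives jointly continuous in $(y,t)$. Since $\U$ is relatively compact, $\widehat T$ has finite order. The harmonicity hypothesis $\ddc T=0$ translates, via $\ddc f=\tfrac{i}{\pi}\partial_y\bar\partial_y f\,dy\wedge d\bar y$ along leaves, into $\widehat T$ being annihilated by the leafwise Laplacian $\Delta_y$.

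The heart of the argument is a disintegration of $\widehat T$ along the projection $\pi:\U\to\T$. Fix a smooth nonnegative cutoff $\chi_0(y)$ compactly supported in $\B$ and take $\nu$ to be the total variation of the signed Radon measure $\phi\mapsto \widehat T(\chi_0\,\phi)$ on $\T$; when $T$ is positive this is already positive. A standard disintegration argument (Radon--Nikodym on $\T$ combined with a smoothing in $y$ to produce distributional slices depending measurably on $t$) then yields a weakly measurable family $\{S_t\}_{t\in\T}$ of distributions on $\B$ with
\[
\widehat T(a)\;=\;\int_\T \langle S_t,a(\cdot,t)\rangle\,d\nu(t)
\qquad\text{for every admissible test function } a.
\]
Applying this to products $f(y,t)=g(y)\phi(t)$ and invoking $\Delta_y\widehat T=0$ yields $\int_\T\phi(t)\langle S_t,\ddc g\rangle\,d\nu(t)=0$ for every $g\in\Dc(\B)$ and every continuous compactly supported $\phi$ on $\T$; letting $g$ vary in a countable dense family of $\Dc(\B)$ forces $\langle S_t,\ddc g\rangle=0$ for $\nu$-almost every $t$. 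By Weyl's lemma each such $S_t$ coincides with a harmonic function $h_t$ on $\B$, and the bound $\int_K \|h_t\|_{L^1(K)}\,d\nu(t)<\infty$ follows from mean-value estimates dominating $\|h_t\|_{L^1(K)}$ by the integral of $h_t$ against a fixed bump supported in a slightly larger relatively compact neighbourhood of $K$.

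The remaining assertions are then immediate. Positivity of $T$ on $\Dc^{1,1}(\Fc)$ transfers to $S_t\geq 0$ as a distribution on $\B$ for $\nu$-a.e.\ $t$, so $h_t\geq 0$. Closedness $dT=0$ combined with bidegree reasons gives $\partial T=\bar\partial T=0$; running the same disintegration for these equations produces $\partial_y h_t=\bar\partial_y h_t=0$ for $\nu$-a.e.\ $t$, so each $h_t$ is (locally, hence globally on the connected $\B$) constant. The main technical obstacle I foresee is the disintegration in the signed case, because a general directed harmonic current is not a priori a Radon measure on $\U$; the way to bypass it is to use the elliptic regularity hidden in $\Delta_y\widehat T=0$, which makes $\widehat T$ already smooth in the $y$-direction, reducing the problem to a measurable family of continuous $y$-densities to which Radon--Nikodym on $\T$ applies cleanly.
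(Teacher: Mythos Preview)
The paper does not contain a proof of this proposition; it is stated with citations to \cite[Proposition~2.3]{DinhNguyenSibony12} and \cite[Proposition~2.10]{NguyenVietAnh17a}. Your outline---reduce to a scalar functional $\widehat T$ on the flow box, push forward to $\T$ to obtain $\nu$, disintegrate, and apply Weyl's lemma on each slice---is the standard route taken in those references, and for positive $T$ (where $\widehat T$ is a genuine Radon measure on $\U$) your argument is complete.

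You have correctly isolated the one real difficulty: in the signed harmonic case $\widehat T$ is only of finite order, so ``standard disintegration'' does not apply directly, and your invocation of it in the body of the argument is premature. Your proposed fix via elliptic regularity is the right idea but needs one more step made explicit. Concretely: for each $\psi\in\Cc_c(\T)$ the distribution $g\mapsto\widehat T(g\otimes\psi)$ on $\B$ is harmonic, hence by Weyl a smooth function $h_\psi$, and interior estimates give $\sup_K|h_\psi|\le C_K\|\psi\|_{\Cc^0}$; thus for each fixed $y$ the map $\psi\mapsto h_\psi(y)$ is already a signed Radon measure $\mu_y$ on $\T$. What is still missing is a single positive $\nu$ dominating every $\mu_y$. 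One clean way to produce it: expand $h_\psi$ at a base point $y_0$ in the harmonic basis $\Re(y-y_0)^n,\ \Im(y-y_0)^n$; the coefficient functionals are countably many signed measures $\alpha_n$ on $\T$ with total variation controlled by Cauchy estimates, and $\nu:=\sum_n c_n|\alpha_n|$ with summable $c_n>0$ dominates every $\mu_y$. The Radon--Nikodym derivatives of the $\alpha_n$ then reassemble into the harmonic $h_t$. Without this (or an equivalent device) the disintegration step is a genuine gap.
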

  
 \begin{definition} \label{D:diffuse}\rm 
  A directed positive harmonic current $T$ on $\Fc=(X,\Lc,E)$  is said to be  {\it diffuse}  if   for any  decomposition of $T$ in any flow box  $\U\simeq \B\times \T$
  as in Proposition \ref{P:decomposition}, the  measure $\nu$ has no mass on each single point of the transversal $\T.$
 \end{definition}

  \begin{definition}\label{D:ergodicity}\rm
Recall that a positive finite  measure  $\mu$ on the $\sigma$-algebra of Borel sets in $X$ with $\mu(E)=0$  is  said  to be  {\it ergodic}
if for every  leafwise  saturated  Borel measurable set $Z\subset X,$
   $\mu(Z)$ is  equal to either $\mu(X)$ or $0.$  
   
   A   directed positive harmonic  current $T$ is said to be {\it extremal}
   if  $T=T_1+T_2$ for   directed positive harmonic  current $T_1,T_2$ implies that $T_1=\lambda T$  for some $ \lambda \in  [0, 1].$  
   
   Similarly,  a positive   quasi-harmonic  measure $\mu$ of finite mass is said to be {\it extremal}
   if  $\mu=\mu_1+\mu_2$ for positive   quasi-harmonic  measures $\mu_1,\mu_2$ implies that $\mu_1=\lambda \mu$  for some $ \lambda \in  [0, 1].$  
   \end{definition}
   \begin{definition}\rm \label{D:Phi}
  Let $\Fc=(X,\Lc,E)$ be  a  Riemann surface  lamination with singularities.  Consider  the map
$T\mapsto \Phi(T):= \mu$  which is defined  by the following  formula on the  convex  cone of all  directed positive harmonic  currents $T$ of $\Fc$ giving  no mass to $\Par(\Fc)$ 
 \begin{equation}\label{e:mu}  \mu :=T\wedge g_P\quad\text{on}\quad  X\setminus (E\cup\Par(\Fc))\quad\text{and}\quad  \mu(E\cup\Par(\Fc))=0.
  \end{equation}
   We call {\it Poincar{\'e}  mass} of $T$ the mass of $T$ with respect to
Poincar{\'e}  metric $g_P$ on $X\setminus E$, i.e. the mass of the
positive measure $\mu=\Phi(T)$.
\end{definition}
A priori, Poincar{\'e}  mass may be
infinite near the singular points. 
The  following result, which is  implicitly proved in  \cite{DinhNguyenSibony12}, relates the notions of harmonic  measures and  directed positive harmonic currents 
(see also \cite{NguyenVietAnh18b}). 
\begin{theorem}\label{thm_harmonic_currents_vs_measures} 
We keep the  assumption and notation of Definition \ref{D:Phi}.
\begin{enumerate}
\item 
 $\Phi$ is  a bijection   from the  convex  cone of  all  directed positive harmonic  currents $T$  of $\Fc$ giving no mass to $\Par(\Fc)$  onto   
the convex  cone of all positive quasi-harmonic  measures $\mu$. 

%\item   A quasi-harmonic    measure is  harmonic if and only if its mass is  finite.

\item
If $T$ is a    directed positive harmonic current such that $T$ gives no mass to $\Par(\Fc)$ and that its Poincar\'e mass  is  finite, then
$\mu:=\Phi(T)$  is an extremal positive quasi-harmonic measure iff $\mu$ is  ergodic
iff $T$ is   extremal.

\item %If $\Fc$ is a  compact  hyperbolic  without singularities (i.e.  $X$ is  compact and   $E=\varnothing$ and $\Hyp(\Fc)=X$),  then  each quasi-harmonic    measure is  harmonic.
  A positive quasi-harmonic    measure is  harmonic if and only if it is  finite.
\end{enumerate}
\end{theorem}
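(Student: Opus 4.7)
The overall plan is to work locally in flow boxes, using Proposition~\ref{P:decomposition} as the dictionary between harmonicity of a current and harmonicity of the leafwise density of the associated measure. For part~(1), given a directed positive harmonic current $T$ giving no mass to $\Par(\Fc)$, I would first check that $\mu := \Phi(T)$ is quasi-harmonic. In a flow box $\U\simeq\B\times\T$, Proposition~\ref{P:decomposition} produces a transversal measure $\nu$ and positive harmonic plaque-densities $h_t$ with $T|_\U = \int_\T h_t[\B\times\{t\}]\,d\nu(t)$, so $\mu|_\U$ is $h_t(y) g_P(y)\,d\nu(t)$ on the hyperbolic part. Since $(\Delta_P f) g_P = \pi\,\ddc f$ on leaves by~\eqref{e:Laplacian_disc} and $\mu$ is concentrated on $\Hyp(\Fc)$, one computes
\begin{equation*}
\int_X \Delta_P f\, d\mu \;=\; \bigl\langle T, (\Delta_P f)\, g_P\bigr\rangle \;=\; \pi\bigl\langle T, \ddc f\bigr\rangle \;=\; 0 \qquad (f\in\Dc(\Fc)),
\end{equation*}
using $\ddc T = 0$. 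Conversely, given a positive quasi-harmonic $\mu$, I disintegrate $\mu|_\U = \int_\T \mu_t\,d\nu(t)$ along a transversal; quasi-harmonicity tested against functions supported in the flow box forces $\mu_t$ to be $\Delta_P$-annihilated in the distributional sense on each plaque, and elliptic regularity then produces a smooth positive harmonic density $h_t$ with $\mu_t = h_t g_P$. Gluing the local data through Proposition~\ref{P:decomposition} assembles a global directed positive harmonic current $T$ with $\Phi(T)=\mu$; injectivity of $\Phi$ is immediate, since $T$ is recovered from $\mu$ by leafwise division by $g_P$.

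For part~(2), the bijection $\Phi$ of part~(1) is affine and cone-preserving, so $T$ is extremal iff $\mu=\Phi(T)$ is. To match extremality of $\mu$ with ergodicity: if $\mu$ is not ergodic, a leafwise saturated Borel set $A$ with $0<\mu(A)<\mu(X)$ gives the nontrivial decomposition $\mu=\mu|_A+\mu|_{X\setminus A}$, where each piece remains quasi-harmonic because $\Delta_P$ acts leafwise and saturation allows one to approximate $\chac_A$ by smooth leafwise-constant cutoffs $\phi_n$, noting that $\phi_n f\in\Dc(\Fc)$ and $\Delta_P(\phi_n f)=\phi_n\Delta_P f$. Conversely, if $\mu=\mu_1+\mu_2$ is a nontrivial decomposition with $\mu_i$ quasi-harmonic, then by part~(3) the $\mu_i$ are harmonic, so the Radon--Nikodym density $g:=d\mu_1/d\mu\in[0,1]$ makes $g\mu$ a $D_t$-invariant measure; a Garnett-type martingale argument then identifies bounded functions enjoying this invariance with leafwise-constant functions modulo $\mu$-null sets, and ergodicity of $\mu$ forces $g$ to be a global constant, contradicting nontriviality.

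For part~(3), harmonicity implies finiteness by definition, and differentiating the semigroup identity at $t=0^+$ gives quasi-harmonicity. Conversely, assume $\mu$ is finite and quasi-harmonic. I first extend the class of admissible test functions in the definition of quasi-harmonicity to all smooth bounded $g$ on $X\setminus E$ with $\Delta_P g$ bounded: multiply $g$ by Poincar\'e-adapted cutoffs $\chi_n\to 1$ supported away from $E$, apply quasi-harmonicity to $\chi_n g\in\Dc(\Fc)$, and pass to the limit using $\mu(E)=0$ and finiteness of $\mu$ together with the Leibniz expansion of $\Delta_P(\chi_n g)$. Setting $u(t):=\int_X D_tf\,d\mu$ for $f\in\Dc(\Fc)$, differentiating under the integral, and using $\partial_t D_t f = \Delta_P D_t f$ yields $u'(t)=0$; hence $u$ is constant and $\int_X D_tf\,d\mu=\int_X f\,d\mu$. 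A monotone-class argument extends this identity to all bounded measurable $f$, giving weak harmonicity, which combined with the hypothesis of quasi-harmonicity gives harmonicity.

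The principal obstacle will be the leafwise-constancy step in part~(2): the density $g$ is only measurable a priori and has no regularity across leaves, so promoting $D_t$-invariance of $g\mu$ to the statement ``$g$ is leafwise constant $\mu$-a.e.'' requires the full force of Garnett's ergodic theory for harmonic measures, together with the equivalence between ergodicity under the heat semigroup and ergodicity with respect to leafwise-saturated sets, itself proved via martingale convergence along Brownian sample paths. A secondary technical difficulty, affecting both parts~(2) and~(3), is that $D_tf$ fails to lie in $\Dc(\Fc)$ even for $f\in\Dc(\Fc)$, because of the singular set $E$ and the non-compactness of hyperbolic leaves; this is handled by leafwise cutoffs combined with heat-kernel estimates from~\eqref{e:heat_kernel}, needed both to differentiate under the integral and to broaden the test class for quasi-harmonicity.
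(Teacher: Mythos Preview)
Your treatment of parts~(1) and~(2) is essentially what the paper intends when it says these ``follow from the definitions''; you have simply unpacked those definitions, which is fine. One small simplification in~(2): to show that $\chac_A\mu$ is quasi-harmonic for a leafwise saturated Borel set $A$, you do not need smooth leafwise-constant approximants. In a flow box the decomposition of Proposition~\ref{P:decomposition} gives $\mu=\int_\T h_t g_P\,d\nu(t)$, and saturation of $A$ means $\chac_A$ depends only on the transversal parameter, so $\chac_A\mu=\int_{A'} h_t g_P\,d\nu(t)$ for some $A'\subset\T$; this is again quasi-harmonic by the same local description.

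For part~(3), your route is genuinely different from the paper's, and the gap you flag as ``secondary'' is in fact the main obstruction. The paper does \emph{not} differentiate $t\mapsto\int_X D_tf\,d\mu$. Instead it introduces the Poincar\'e-disc averaging operators $A_R$ of~\eqref{e:A_R} and proves directly (Lemma~\ref{L:mu_Dt_invariant}) that $\int A_Ru\,d\mu=\int u\,d\mu$ for all $u\in L^1(\mu)$, via a group-theoretic counting argument on the deck transformations of each leaf. The identity $\int D_tf\,d\mu=\int f\,d\mu$ then follows by writing the radial heat kernel in polar form~\eqref{e:polar-form} and expressing $D_t$ as a superposition of the operators $(\mathcal M_RA_R)'$. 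This route never requires $D_tf$ to belong to any enlarged test class, and it uses only that $\mu$ is finite.

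Your cutoff scheme, by contrast, asks for $\chi_n\in\Dc(\Fc)$ with $\chi_n\to 1$, $|\nabla\chi_n|_P$ and $|\Delta_P\chi_n|$ uniformly bounded, and $\mu(\supp\nabla\chi_n)\to 0$. In the generality of Theorem~\ref{thm_harmonic_currents_vs_measures} (arbitrary singular Riemann surface lamination, no control on $\eta$, $X$ not assumed compact) such cutoffs are not available: near $E$ the leafwise Poincar\'e metric degenerates and $|\nabla\chi_n|_P=\eta\,|d\chi_n|$ can blow up. The paper's own Subsection~\ref{SS:coincidence} makes this point explicit: Theorem~\ref{T:diffusions_comparisons} shows that the passage from quasi-harmonicity on $\Dc(\Fc)$ to the larger class you need (the ``membership test'') requires extra hypotheses --- local upper-boundedness of $\eta$ and, in the foliated case, control of Lelong numbers (Proposition~\ref{P:diffusions_comparisons}). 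None of these are assumed in Theorem~\ref{thm_harmonic_currents_vs_measures}. So while your semigroup-differentiation argument can be completed under those additional hypotheses, it does not prove part~(3) at the stated level of generality; the $A_R$ approach does.
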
 

Assertions (1) and (2) follows  from the definitions.

By definition, a  harmonic measure  is necessarily  finite. Therefore,
to complete  the proof of assertion (3), we need to show  that  a finite positive quasi-harmonic measure is weakly harmonic. % is  a  particular case of Theorem  \ref{T:diffusions_comparisons} and Corollary \ref{C:diffusions_comparisons} below.

For this  purpose, we introduce the following
 operator $A_R:\ L^\infty(\Hyp(\Fc))\to  L^\infty(\Hyp(\Fc))$ given by
\begin{equation} \label{e:A_R} A_Ru(x):=\frac{1}{\mathcal M_R}\int_{\D_R} (\phi_x)^*
(u g_P) \quad\text{for}\quad  x\in\Hyp(\Fc),\quad \mbox{where}\quad \mathcal M_R:=\int_{\D_R} (\phi_x)^*(g_P).
\end{equation}
Note that $\mathcal M_R$ is the Poincar{\'e} area of $\D_R$ which is also the Poincar\'e area of $\phi_x(\D_R)$ counted with multiplicity. 
It is  immediate from  the definition that the norm of $A_R$ is equal to $ 1.$
Since $\mu(E\cup\Par(\Fc))=0,$ we extends the domain of definition  of $A_R$ in  \eqref{e:A_R} in a natural way  so that  $A_R:\ L^\infty(\mu)\to  L^\infty(\mu)$  with norm $1.$

The following  result is  needed.
\begin{lemma}\label{L:mu_Dt_invariant} If  $\mu$ is a  finite positive quasi-harmonic measure, then 
\begin{equation*}%\label{e:A_R}
\int (A_R u) d\mu =\int u d\mu\quad\text{for}\quad u\in L^1(\mu). 
\end{equation*}
\end{lemma}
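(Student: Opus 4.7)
\emph{Plan.} The strategy is to realise $A_Ru-u$ as a leafwise Laplacian $\Delta_P G_R u$ of an explicit ``potential'' $G_Ru$, whence the quasi-harmonicity of $\mu$ yields $\int(A_Ru-u)\,d\mu=\int \Delta_P G_Ru\,d\mu=0$. Because $\|A_R\|_{L^\infty\to L^\infty}=1$, $\mu$ is finite, and $\mu(E\cup\Par(\Fc))=0$, a standard density and dominated convergence argument reduces matters to proving the identity for $u\in\Dc(\Fc)$ whose support $K\subset\Hyp(\Fc)$ has the additional property that its leafwise Poincar\'e $R$-neighbourhood is again a compact subset of $\Hyp(\Fc)$.

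The core analytic input is a radial Green-type kernel on the Poincar\'e disc. In geodesic polar coordinates $(\rho,\theta)$, one has $g_P=d\rho^2+\sinh^2\!\rho\,d\theta^2$ and $\mathcal M_R=2\pi(\cosh R-1)$. Solving the radial ODE $H_R''+\coth(\rho)H_R'=\mathcal M_R^{-1}$ on $(0,R)$, with boundary datum $H_R(R)=0$ and the singular datum $2\pi\sinh(\rho)H_R'(\rho)\to-1$ as $\rho\to 0^+$, produces a continuous non-negative radial function $H_R$ on $\D\setminus\{0\}$ with a logarithmic singularity at the origin, supported in $\overline{\D_R}$, and satisfying $\Delta_P H_R=\mathcal M_R^{-1}\chi_{\D_R}-\delta_0$ as distributions. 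The identity $\mathcal M_R=2\pi(\cosh R-1)$ forces $H_R'(R^-)=0$, which is exactly what allows $H_R$ to be extended by zero across $\partial\D_R$. Integration by parts against any $\tilde u\in\Cc^2(\D)$ then delivers
$$\tilde A_R\tilde u(0)-\tilde u(0)=\int_{\D_R} H_R(\zeta)\,(\Delta_P\tilde u)(\zeta)\,g_P(\zeta),$$
and, by M\"obius invariance, the operator identity $\tilde A_R-I=\Delta_P\tilde G_R$ on $\Cc^2_c(\D)$, where $\tilde G_R\tilde u(w):=\int_\D H_R(\dist_P(\zeta,w))\,\tilde u(\zeta)\,g_P(\zeta)$.

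I then transfer the identity to the lamination. Given $u\in\Dc(\Fc)$ as above and a leaf $L_x$, set $\tilde u:=u\circ\phi_x$. Since the deck transformations of $\phi_x:\D\to L_x$ are Poincar\'e isometries and $\tilde G_R$ commutes with them, $\tilde G_R\tilde u$ is $\pi_1(L_x)$-invariant and descends to a well-defined function $G_R u$ on $L_x$. A direct change of variables using $\phi_y=\phi_x\circ M_w$ for a M\"obius transformation $M_w$ with $M_w(0)=w$ and $\phi_x(w)=y$ shows that $\tilde A_R\tilde u$ similarly descends to $A_R u$. Combining these descents with \eqref{e:Delta_commutation} yields the pointwise identity
$$A_R u(x)-u(x)=(\Delta_P G_R u)(x)\qquad\text{for every }x\in\Hyp(\Fc).$$
The support hypothesis on $u$ ensures that $G_R u\in\Dc(\Fc)$, so the defining property of quasi-harmonic measures closes the argument.

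\emph{Main obstacle.} The principal subtlety lies in the reduction step: exhibiting a dense class of test functions $u\in\Dc(\Fc)$ whose leafwise Poincar\'e $R$-neighbourhoods remain inside a compact subset of $\Hyp(\Fc)$. The leafwise Poincar\'e metric typically degenerates near $E$ and on $\Par(\Fc)$, so ambient compactness of $\supp u$ in $\Hyp(\Fc)$ does not automatically control its Poincar\'e neighbourhoods. One resolves this by exhausting $\Hyp(\Fc)$ by an increasing sequence of such ``Poincar\'e-good'' compacts and exploiting $\mu(E\cup\Par(\Fc))=0$ together with the finiteness of $\mu$ to approximate an arbitrary $u\in L^1(\mu)$.
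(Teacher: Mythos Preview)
Your potential-theoretic approach---realise $A_R-I$ as $\Delta_P G_R$ via an explicit radial Green kernel $H_R$ on the disc, then invoke quasi-harmonicity---is elegant and genuinely different from the paper's. The paper constructs no Green kernel. Instead it disintegrates $\mu=T\wedge g_P$ along a transversal via Proposition~\ref{P:decomposition}, applies the Lusin-type Lemmas~\ref{L:Lusin1}--\ref{L:Lusin2} to pass to compact transversal sets $\S_n$ on which the covering maps $\phi_x$ vary continuously and $L_{x,R}\cap\S_n=\{x\}$, thereby reducing to the extremal case $T=h[L_{x,4\lambda R}]$ for a single leaf piece with harmonic weight $h$. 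It then finishes with a symmetry/counting argument on $\D^2$: lifting both sides to the cover and comparing $\#\{\gamma\in\pi_1(L_x):\dist_P(\gamma\cdot\zeta,\eta)<R\}$ with $\#\{\gamma:\dist_P(\zeta,\gamma\cdot\eta)<R\}$.

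There is, however, a real gap in your argument. You assert that ``the support hypothesis on $u$ ensures that $G_R u\in\Dc(\Fc)$,'' but membership in $\Dc(\Fc)$ requires not only compact support in $X\setminus E$ but \emph{transversal continuity of all leafwise derivatives}. Now $G_R u(x)=\int_{\D} H_R(\zeta)\,(u\circ\phi_x)(\zeta)\,g_P(\zeta)$ depends on the universal covering map $\phi_x:\D\to L_x$, which is determined by the global conformal structure of the entire leaf $L_x$, not merely the part within Poincar\'e distance $R$. In the generality of the lemma (arbitrary Riemann surface lamination with singularities), the assignment $x\mapsto\phi_x$ is only known to be \emph{measurable} with Lusin-type regularity---this is precisely the content of Lemmas~\ref{L:Lusin1}--\ref{L:Lusin2}, and the very reason the paper invokes them. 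Without continuity of $x\mapsto\phi_x$, neither $G_R u$ nor its leafwise derivatives need be transversally continuous, so you cannot apply the defining property $\int\Delta_P f\,d\mu=0$ (which is only postulated for $f\in\Dc(\Fc)$). The obstacle you flag---controlling the Poincar\'e $R$-neighbourhood of $\supp u$---is real but secondary; even granting it, this regularity gap remains. That said, your route \emph{does} go through whenever $x\mapsto\phi_x$ is known to be continuous, for instance on compact nonsingular hyperbolic laminations via Candel's theorem, and in that setting it is more direct than the paper's disintegration-and-counting argument.
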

Taking  for granted  the above lemma, we arrive at the
\proof[End  of the proof of  Theorem \ref{thm_harmonic_currents_vs_measures}.] 
Let $\mu$ be a    finite  positive quasi-harmonic measure. We  only need  to show  that $\mu$ is  weakly harmonic. Fix  an  arbitrary function $u$  in $\Dc(\Fc)$ and  a  time $t>0.$
So  it is sufficient  to show that 
$$
\int_X D_t u d\mu=\int_X ud\mu.
$$
For $z\in \D,$  write $r=|z|$ and  let  $R$  be defined  by  \eqref{e:radii_conversion}, that is,  $\D_R=r\D.$
Write the Poincar\'e metric in $\D$ as  follows:
\begin{equation}\label{e:polar-form}
dg_P(z)=d\sigma_R(z)dR,
\end{equation}
where  $d\sigma(z)$ is  the Poincar\'e-length  form  on  $\partial\D_R$  which is  the  circle of  center $0$ and of  Poincar\'e radius $R.$
It follows  from   \eqref{e:polar-form}  that the derivative  of $\mathcal M_R$ with respect to $R$  is  equal to
$$
\mathcal M'_R:=\lim\limits_{s\to 0}  {\mathcal M_{R+s} -\mathcal M_R\over s}=\int_{\partial \D_R}d\sigma_R(z) 
$$
Since
$p_\D(0,\cdot,t)$ is a radial  positive function  smooth  on $\D\setminus \{0\}$ and it satisfies
$\int_\D p_\D(0,y,t) g_P (y)=1$  for every $t\in\R^+_*,$  we  infer from  the last  line and \eqref{e:polar-form} that
\begin{equation}\label{e:M'_R=1}
\int_0^\infty    p_\D(0,r_R,t)\mathcal M'_R dR=1,
\end{equation}
where $R$ and $r_R$ are related by \eqref{e:radii_conversion}.
On the  one hand, since $u\in\Dc(\Fc),$  we deduce  from  \eqref{e:polar-form} that for $x\in\Hyp(\Fc),$ the  derivative of $(\mathcal M_R  A_R)  u(x)$ with respect to $R$ is  equal to
\begin{eqnarray*}
(\mathcal M_R  A_R)'  u(x)&:=&\lim\limits_{s\to 0}  {\mathcal M_{R+s} A_{R+s} u(x) -\mathcal M_R A_R u(x)\over s}=\lim\limits_{s\to 0}{1\over s}\int_{ \D_{R+s}\setminus \D_R}\phi_x^*(ug_P) \\
&=&\int_{ \partial \D_R}\phi_x^*(u \sigma_R).
\end{eqnarray*} 
Moreover,  by Lemma \ref{L:mu_Dt_invariant}, 
\begin{eqnarray*}
\int_X(\mathcal M_R  A_R)'  u(x)d\mu(x)&=&\lim\limits_{s\to 0}  {\int_X \mathcal M_{R+s} A_{R+s} u(x)d\mu(x) -\int_X\mathcal M_R A_R u(x)d\mu(x)\over s}\\
&=&
\lim\limits_{s\to 0}{\mathcal M_{R+s}- \mathcal M_R\over s}\cdot\,\int_X u(x)d\mu(x)\\
&=&  \mathcal M'_R  \int_X u(x)d\mu(x).
\end{eqnarray*}
On  the other hand, by  \eqref{e:commutation}, we have 
$$
(D_tu)(x):=\int_\D  p_\D(0,\cdot,t) (u\circ \phi_x) g_P\qquad\text{for}\qquad x\in \Hyp(\Fc).
$$ 
Therefore,  using    \eqref{e:polar-form} again  and  the   last expression for $(\mathcal M_R  A_R)'  u(x),$ we get that
$$
(D_tu)(x)=\int_0^\infty    p_\D(0,r_R,t)  \big( \int_{ \partial \D_R}\phi_x^*(u \sigma_R) \big) dR=\int_0^\infty    p_\D(0,r_R,t)  \big( (\mathcal M_R  A_R)'  u(x) \big) dR.
$$  
Integrating  the last equalities with respect to $d\mu$ and using Fubini theorem,  
we obtain 
$$
\int_X (D_tu)(x)d\mu(x)= \int_0^\infty    p_\D(0,r_R,t)  \big(\int_X (\mathcal M_R  A_R)'  u(x)d\mu(x) \big) dR.
$$
Using  the last expression  for   the inner  integral on   the  right hand side,  
it follows that
$$
\int_X (D_tu)(x)d\mu(x)= \big(\int_0^\infty    p_\D(0,r_R,t) \mathcal M'_RdR\big)  \big(\int_X  u(x)d\mu(x) \big).
$$
By  \eqref{e:M'_R=1}, the right hand side  is  equal to $\int_X  u(x)d\mu(x).$ 
Hence, 
$\mu$ is a  weakly  harmonic  measure.
\endproof

In order to prove Lemma \ref{L:mu_Dt_invariant}, we need some preparations.
Consider now a flow box $\Phi:\U\rightarrow \B\times \T$ as
above. Recall that for simplicity,  we identify $\U$ with $\B\times\T$
and $\T$ with the transversal
$\Phi^{-1}(\{z\}\times\T)$ for some point $z\in \B$.
We have the following result.

\begin{lemma} \label{L:Lusin1}  {\rm \cite[Proposition 3.2]{DinhNguyenSibony12} }
Let  $\nu$ be a positive Radon 
  measure on $\T$. Let
  $\T_1\subset \T$ be a measurable set such that $\nu(\T_1)>0$ and $L_x$ is
  hyperbolic for any $x\in\T_1$. Then, for every $\epsilon>0$ there
  is a compact set $\T_2\subset \T_1$ with $\nu(\T_2)>\nu(\T_1)-\epsilon$ 
and a family of universal covering maps $\phi_x:\D\rightarrow L_x$
with $\phi_x(0)=x$ and $x\in \T_2$ that depends continuously on $x$.
\end{lemma}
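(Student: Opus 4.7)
The plan is to combine the inner regularity of the Radon measure $\nu$ with Lusin's theorem applied to a Borel-measurable selection $x\mapsto\phi_x$ of normalized universal covering maps. First, by inner regularity of $\nu$, I would choose a compact set $\T_1'\subset\T_1$ with $\nu(\T_1\setminus\T_1')<\epsilon/2$; it will then suffice to extract a further compact $\T_2\subset\T_1'$ with $\nu(\T_1'\setminus\T_2)<\epsilon/2$ on which $x\mapsto\phi_x$ can be taken continuous, since then $\nu(\T_2)>\nu(\T_1)-\epsilon$.

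Next, I would fix a canonical normalization of the covers. For $x\in\T_1'$, hyperbolicity of $L_x$ gives a covering map $\phi_x:\D\to L_x$ with $\phi_x(0)=x$, unique up to post-composition with a rotation of $\D$. To kill this ambiguity, I would pick a distinguished unit (with respect to $g_P$) tangent vector $e_x\in T_xL_x$ at each $x$, obtained from the ambient flow box containing $x$ (for instance, a positive multiple of $\partial/\partial z$ on the plaque $\B\times\{x\}$, rescaled so that $|e_x|_P=1$), and require $\phi_x'(0)=e_x$. This pins down $\phi_x$ uniquely and, by construction, $e_x$ depends continuously (in fact smoothly) on $x$ within each flow box.

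The next step is to verify that $x\mapsto\phi_x$ is Borel measurable as a map from $\T_1'$ into the Polish space $C(\D,X)$ with the compact-open topology. For each $R>0$, the restriction $\phi_x|_{\D_R}$ can be reconstructed by analytic continuation along radial paths in $\D_R$, starting from the germ at $0$ determined by the plaque through $x$. Covering the image $\phi_x(\D_R)\subset L_x$ by a chain of adjacent plaques (a chain whose combinatorial length stays bounded as $x$ varies, since the Poincaré ball of radius $R$ in any leaf has bounded $g_P$-area and the leafwise geometry is controlled on compact subsets of $X\setminus E$), one sees that $x\mapsto\phi_x|_{\D_R}$ is Borel measurable for every $R$; letting $R\to\infty$ yields Borel measurability into $C(\D,X)$. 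Lusin's theorem then produces a compact $\T_2\subset\T_1'$ with $\nu(\T_1'\setminus\T_2)<\epsilon/2$ on which this map is continuous, as desired.

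The main obstacle I expect is the measurability step: one has to verify that the plaque-by-plaque analytic continuation can be organized uniformly in the transversal parameter $x$ so that the resulting global assignment $x\mapsto\phi_x$ is genuinely Borel-measurable. Concretely, this requires choosing a locally finite atlas of flow boxes, partitioning $\T_1'$ into Borel pieces according to which finite chain of boxes is used to cover $\phi_x(\D_R)$, and checking the compatibility of germs across plaque intersections as $x$ varies within each such piece. Once this bookkeeping is done, the Lusin step is routine and completes the proof.
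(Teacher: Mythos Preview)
The paper does not actually prove this lemma; it merely states it and cites \cite[Proposition 3.2]{DinhNguyenSibony12} as the source, so there is no in-text proof to compare against. Your strategy---normalize the covering maps by fixing a unit tangent direction, establish Borel measurability of $x\mapsto\phi_x$ into $C(\D,X)$ with the compact-open topology, then apply Lusin's theorem and inner regularity---is the standard one (and indeed the label \texttt{L:Lusin1} signals exactly this), and it is essentially the argument given in the cited reference.

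One small refinement: rather than wrestling with the combinatorics of plaque-chains to get measurability of the full map $x\mapsto\phi_x$, it is cleaner to observe that for each fixed $\zeta\in\D$ the evaluation $x\mapsto\phi_x(\zeta)$ is measurable (by analytic continuation along the single segment $[0,\zeta]$, which crosses only finitely many plaques and whose transition data depend measurably on $x$), and then use separability of $\D$ together with continuity of each $\phi_x$ on $\D$ to deduce measurability into $C(\D,X)$. This avoids the uniform-chain-length bookkeeping you flagged as the main obstacle.
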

 For  $x\in\Hyp(\Fc)$ and $R>0,$ denote
$L_{x,R}:=\phi_x(\D_R)$, where  we recall  from  \eqref{e:covering_map} that  $\phi_x:\D\rightarrow L_x$  is a universal  covering map  with $\phi_x(0)=x$, and $\D_R\subset \D$ is the disc of
center 0 and of radius $R$  (with respect to the
Poincar{\'e} metric $g_P$ on $\D$). Since $\phi_x$ is unique up to a
rotation on $\D$, $L_{x,R}$ is independent
of the choice of $\phi_x$. We will need the following result.

\begin{lemma} \label{L:Lusin2} {\rm \cite[Corollary 3.3]{DinhNguyenSibony12} }
Let $R>0$ be a positive constant.
Then, under the hypothesis of Lemma
  \ref{L:Lusin1}, there is  a countable family of compact sets $\S_n\subset \T_1$, $n\geq1$, with
  $\nu(\cup_n \S_n)=\nu(\T_1)$ such that $L_{x,R}\cap \S_n=\{x\}$ for every $x\in \S_n$. Moreover, there are universal covering maps $\phi_x:\D\rightarrow L_x$ with $\phi_x(0)=x$ 
which depend continuously on $x\in\S_n$.
\end{lemma}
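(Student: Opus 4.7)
My plan is to reduce, via Lemma \ref{L:Lusin1}, to a compact subset of $\T_1$ carrying a continuous family of universal coverings and then to carry out a local separation argument there. Applying Lemma \ref{L:Lusin1} with $\epsilon_k=2^{-k}$ gives compact sets $K_k\subset\T_1$ with $\nu(\T_1\setminus K_k)<2^{-k}$, each carrying a continuous assignment $x\mapsto\phi_x$, $\phi_x(0)=x$. Since their union has full $\nu$-measure in $\T_1$, it suffices to produce, for a single compact $K\subset\T_1$ of this type, a countable family of compact sets $\S_n\subset K$ with $L_{x,R}\cap\S_n=\{x\}$ for every $x\in\S_n$ and with $\nu(K\setminus\bigcup_n\S_n)=0$.

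For each $x\in K$, the set $I(x):=(L_{x,R}\cap\T)\setminus\{x\}$ is finite. Indeed, on any connected component of $\phi_x^{-1}(\U)$ the covering $\phi_x$ is a homeomorphism onto a simply connected plaque of the flow box $\U\simeq\B\times\T$, and since each such plaque meets $\T$ in exactly one point, $\phi_x^{-1}(\T)$ is discrete in $\D$; its intersection with the compact set $\overline{\D_R}$ is therefore finite. Consequently $r(x):=\dist_\T(x,I(x))>0$, with the convention $r(x)=+\infty$ when $I(x)=\emptyset$.

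I would then prove that $r$ is upper semicontinuous on $K$. The continuous dependence of $\phi_x$ on $x\in K$ implies that every $y\in I(x_0)$ admits a continuous ``holonomy continuation'' $x\mapsto y(x)\in L_{x,R}\cap\T$ near $x_0$ with $y(x_0)=y$: choosing a preimage $\zeta\in\D_R$ of $y$ under $\phi_{x_0}$, the restriction of $\phi_x$ to a small ball about $\zeta$ is a homeomorphism onto a small neighborhood of $y$ for $x$ close to $x_0$, producing a unique nearby transversal preimage $\zeta(x)\in\D_R\cap\phi_x^{-1}(\T)$, and $y(x):=\phi_x(\zeta(x))$ tends to $y$ as $x\to x_0$. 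Choosing $y\in I(x_0)$ to minimize $\dist_\T(x_0,\cdot)$, we get $r(x)\leq\dist_\T(x,y(x))\to r(x_0)$, whence $\limsup_{x\to x_0}r(x)\leq r(x_0)$. Consequently $K_n:=\{x\in K:\ r(x)\geq 1/n\}$ is closed in $K$, hence compact, and $\bigcup_n K_n=K$.

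Finally, for each $n$ I would cover the compact metric space $K_n$ by countably many closed balls $\overline{B_\T(y_i,1/(3n))}$, $i\in\N$, and set $\S_{n,i}:=K_n\cap\overline{B_\T(y_i,1/(3n))}$, a compact set of $\T$-diameter at most $2/(3n)$. For $x\in\S_{n,i}$, the bound $r(x)\geq 1/n$ combined with $\S_{n,i}\subset B_\T(x,1/n)$ forces $\S_{n,i}\cap I(x)=\emptyset$, and therefore $L_{x,R}\cap\S_{n,i}=\S_{n,i}\cap(I(x)\cup\{x\})=\{x\}$. The countable family $\{\S_{n,i}\}_{n,i\in\N}$ covers $K$ and satisfies the required property. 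I expect the main obstacle to lie in establishing the upper semicontinuity of $r$: extra points of $I(x)$ can appear close to $x$ under perturbation (from self-intersections of $\phi_{x_0}$ at $x_0$), so one must verify that the \emph{minimizing} point of $I(x_0)$ persists continuously, a fact that rests crucially on the continuous dependence of $\phi_x$ supplied by Lemma \ref{L:Lusin1}.
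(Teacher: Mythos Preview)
The paper does not supply its own proof of this lemma; it merely quotes the statement with a citation to \cite[Corollary 3.3]{DinhNguyenSibony12}. So there is nothing in the present paper to compare your argument against.

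Your proof is correct and well organized. The reduction to a single compact $K$ via Lemma \ref{L:Lusin1} is the right first move, and the function $r(x)=\dist_\T\big(x,(L_{x,R}\cap\T)\setminus\{x\}\big)$ is exactly the right gauge. Two minor remarks. First, your concern at the end is stated in the wrong direction: the appearance of \emph{new} return points near $x$ under perturbation can only make $r(x)$ smaller, which is harmless for upper semicontinuity; what matters is that the minimizing return point of $I(x_0)$ persists, and your holonomy-continuation argument (based on the continuous family $x\mapsto\phi_x$ from Lemma \ref{L:Lusin1}) gives precisely $\limsup_{x\to x_0}r(x)\le r(x_0)$. Second, since each $K_n$ is compact, finitely many balls of radius $1/(3n)$ suffice at each stage, so the resulting family $\{\S_{n,i}\}$ is genuinely countable; the continuity of $\phi_x$ on each $\S_{n,i}$ is inherited from $K$.
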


\proof[Proof of Lemma \ref{L:mu_Dt_invariant}]   (see \cite[Proposition 7.3]{DinhNguyenSibony12})
We can assume that $u$ is positive and using a partition of unity, 
we can also assume that $u$ has support in a compact set of a flow box
$\U\simeq \B\times\T.$ Let $\T_1$ be the set of $\T\cap \Hyp(\Fc).$
 We will use the decomposition of $T$ and the notation
as in Proposition 
\ref{P:decomposition}. By hypothesis, we can assume that the measure $\nu$ has total
mass on $\T_1$. 
Now, we apply Lemma \ref{L:Lusin2} to  $\nu$ and 
$4\lambda R$ instead of $R$ for a fixed constant $\lambda$ large enough. 
Let $\Sigma_n(R)$ denote the union of $L_{x,R}$
for $x\in \S_n$. 
Define by induction 
the function $u_n$ as follows: $u_1$ is the restriction of $u$ to
$\Sigma_1(R)$ 
and $u_n$ is the restriction of
$u-u_1-\cdots-u_{n-1}$ to $\Sigma_n(R)$. We have $u=\sum u_n$. So, it
is enough 
to prove the proposition for each $u_n$. 

We use now the properties of $\S_n$ given in Lemma
\ref{L:Lusin2}. 
The set $\Sigma_n(4\lambda R)$ is a smooth lamination and the restriction
$T_n$ 
of $T$ to $\Sigma_n(4\lambda R)$ is a positive harmonic current. 
Observe that $A_Ru_n$ vanishes outside $\Sigma_n(\lambda R)$ and
does not depend on the restriction of $T$ to $X\setminus\big(E\cup\Par(\Fc)\cup \Sigma_n(2\lambda R)\big)$. 
Since there is a natural projection from $\Sigma_n$ to the transversal $\S_n$, 
the extremal positive harmonic currents on $\Sigma_n$ are supported by
a leaf and defined by a harmonic function. Therefore, we can reduce the
problem 
to the case where $T=h[L_{x,4\lambda R}]$ with $x\in \S_n$ and $h$ is positive
harmonic on $L_{x,4\lambda R}$. 

Define $\widehat u:=u_n\circ \phi_x$, $\widehat h:=h\circ \phi_x$ and
$\widehat{A_R u}:=(A_R u)\circ\phi_x$. The function $\widehat h$ is
harmonic on $\D_{4\lambda R}$. 
Choose a 
measurable set $\Theta\subset \D_{2\lambda R}$ such that $\phi_x$ defines a
bijection 
between $\Theta$ and $L_{x,2\lambda R}$.   We first observe that 
$$\widehat{A_R u}(0):={1\over \mathcal M_R}\int_{\dist_P(\zeta,0)<R}\widehat u(\zeta)g_P(\zeta).$$
If $\eta$ is a point in $\D$ and $\tau:\D\to\D$ is an automorphism such that $\tau(0)=\eta$, then $\phi_x\circ\tau$ is also a covering map of $L_x$ but it sends 0 to $\phi_x(\eta)$. 
We apply the above formula to this covering map. Since $\tau$ preserves $g_P$ and $\dist_P$, we obtain
$$\widehat{A_R u}(\eta):={1\over \mathcal M_R}\int_{\dist_P(\zeta,\eta)<R}\widehat u(\zeta)g_P(\zeta).$$
Hence, we have to show the following identity
$$\int_\Theta \Big[{1\over \mathcal M_R}\int_{\dist_P(\zeta,\eta)<R} \widehat
u(\zeta)g_P(\zeta)\Big]\widehat h(\eta)g_P(\eta) =
\int_\Theta\widehat u(\zeta)\widehat h(\zeta)g_P(\zeta).$$

Let $W$ denote the set of points $(\zeta,\eta)\in\D^2$ such that
$\eta\in \Theta$ and $\dist_P(\zeta,\eta)<R$. Let $W'$ denote the symmetric
of $W$ with respect to the diagonal, i.e. the set of $(\zeta,\eta)$
such that $\zeta\in \Theta$ and $\dist_P(\zeta,\eta)<R$. Since $\widehat h$ is
harmonic, we have
$$\widehat h(\zeta)= {1\over \mathcal M_R}\int_{\dist_P(\zeta,\eta)<R} \widehat
h(\eta)g_P(\eta).$$
Therefore, our problem is to show that the integrals of $\Phi:=\widehat
u(\zeta)\widehat h(\eta) g_P(\zeta)\wedge g_P(\eta)$ on $W$ and
$W'$ are equal.

Consider the map 
$\phi:=(\phi_x,\phi_x)$ from $\D^2$ to $L_x^2$. The 
fundamental group $\Gamma:=\pi_1(L_x)$ can be identified with a group of
automorphisms of $\D$. Since, $\Gamma^2$ acts on
$\D^2$ and preserves the form $\Phi$, our problem is equivalent to
showing that each fiber of $\phi$ has the same number of points in $W$
and in $W'$. 
We only have to consider the fibers of points in $L_{x,2\lambda R}\times L_{x,2\lambda R}$ since $A_Ru_n$ is supported on $L_{x,\lambda R}$. 
Fix a point $(\zeta,\eta)\in \Theta^2$ and consider the
fiber $F$ of $\phi(\zeta,\eta)$. By definition of $\Theta$, the numbers of
points in $F\cap W$ and $F\cap W'$ are respectively equal to
$$\#\big\{ \gamma\in\Gamma,\ \dist_P(\gamma\cdot\zeta,\eta)<R\big\}
\quad \mbox{and}\quad \#\big\{ \gamma\in\Gamma,\ \dist_P(\zeta,\gamma\cdot\eta)<R\big\}.$$
Since $\Gamma$ preserves the Poincar{\'e} metric $g_P$ on $\D$, the first set is
equal to 
$$\{ \gamma\in\Gamma,\ \dist_P(\zeta,\gamma^{-1}\cdot\eta)<R\big\}.$$
It is now clear that the two numbers are equal. This
completes the proof.
\endproof

\begin{remark}
 \rm Theorem  \ref{thm_harmonic_currents_vs_measures} (3) gives a very effective necessary and  sufficient criterion for a positive quasi-harmonic  measure  to be  harmonic:
 the finiteness of the measure. This is a  percular  property of the leafwise Poincar\'e metric $g_P.$
% So the notion of  weak harmonicity seems  irrelevant in Definition \ref{D:harmonic_measure}.
 For  general families of leafwise  metrics  on  Riemann surface laminations  with singularities, or more  generally, for $N$-real-or complex dimensional laminations,
  it is  an important question to determine  when    a positive finite quasi-harmonic  measure  is  harmonic.
\end{remark}

 \begin{problem}
  \rm
  For  general families of leafwise  metrics  on  Riemann surface laminations  with singularities, or more  generally, for $N$-real-or complex dimensional laminations,
find  sufficient and  effective conditions  for  a   positive finite quasi-harmonic  measure  to be  harmonic.
 \end{problem}

 %%%%%%%%%%%%%%%%%%%%%%%%%%%%%%%%%%%%%%%%%%%%%%%%%%%%%%%%%%
 \subsection{Positive $\ddc$-closed currents on complex manifolds}
 \label{SS:Hamonic_currents}
  %%%%%%%%%%%%%%%%%%%%%%%%%%%%%%%%%%%%%%%%%%%%%%%%%%%%%%%%%%

 Let $M$ be a  complex manifold of dimension $k.$
A $(p,p)$-form on  $M$  is {\it
  positive} if it can be written at every point as a combination with
positive coefficients of forms of type
$$i\alpha_1\wedge\overline\alpha_1\wedge\ldots\wedge
i\alpha_p\wedge\overline\alpha_p$$
where the $\alpha_j$ are $(1,0)$-forms. A $(p,p)$-current or a $(p,p)$-form $T$ on $M$ is
{\it weakly positive} if $T\wedge\varphi$ is a positive measure for
any smooth positive $(k-p,k-p)$-form $\varphi$. A $(p,p)$-current $T$
is {\it positive} if $T\wedge\varphi$ is a positive measure for
any smooth weakly positive $(k-p,k-p)$-form $\varphi$. 
If $M$ is given with a Hermitian metric $\beta$ and $T$ is  a positive  $(p,p)$-current on $M,$
$T\wedge \beta^{k-p}$ is a positive measure on
$M$. The mass of  $T\wedge \beta^{k-p}$
on a measurable set $A$ is denoted by $\|T\|_A$ and is called {\it the mass of $T$ on $A$}.
{\it The mass} $\|T\|$ of $T$ is the total mass of  $T\wedge \beta^{k-p}$ on $M.$
A $(p, p)$-current $T$ on $M$  is {\it strictly positive} if  we have locally $T \geq \epsilon \beta^p ,$ i.e., $T -\epsilon \beta^p$
is positive, for some constant $\epsilon > 0.$ The definition does not depend on the
choice of $\beta.$

 A $(p,p)$-current on  $M$  is {\it
closed} if  $d T=0$ in the  weak sense (namely,  $T(d \alpha)=0$ for every  form $\alpha\in \Dc^{k-p,k-p-1}(M)\oplus\Dc^{k-p-1,k-p}(M)$). 
 A $(p,p)$-current on  $M$  is {\it
$\ddc$-closed} if  $\ddc T=0$ in the  weak sense (namely,  $T(\ddc \alpha)=0$ for every  form $\alpha\in \Dc^{k-p-1,k-p-1}(M)$). 

  For  every $r>0$ let $\B_r$ denote the ball of
  center $0$ and of radius $r$ in $\C^k$. 
The following local property of positive $\ddc$-closed
currents  has been discovered  by Skoda \cite{Skoda}.

\begin{proposition} {\rm (Skoda \cite{Skoda}).} \label{P:Skoda}
Let $T$ be a positive $\ddc$-closed $(p,p)$-current in
  a ball $\B_{r_0}$.
Define $\beta:=\ddc\|z\|^2$
  the standard K{\"a}hler form where $z$ is the canonical
  coordinates on $\C^n$.
Then the function $r\mapsto \pi^{-(k-p)} r^{-2(k-p)}\|T\wedge\beta^{k-p}\|_{\B_r}$
is increasing on $0<r\leq r_0$. In particular, it is bounded on 
$(0,r_1]$ for any  $0<r_1<r_0$.
\end{proposition}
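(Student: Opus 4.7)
The plan is to prove this by a Lelong--Jensen type integration by parts, after which the boundedness assertion is immediate. Set $\sigma(r) := \int_{\B_r} T \wedge \beta^{k-p}$ and $\Phi(r) := \pi^{-(k-p)} r^{-2(k-p)} \sigma(r)$; the goal is to show $\Phi(r_1) \leq \Phi(r_2)$ for $0 < r_1 < r_2 \leq r_0$. The key ingredients are that $\beta = \ddc \|z\|^2$ is globally $\ddc$-exact, that $T$ is $\ddc$-closed, and the standard comparison between the two natural plurisubharmonic potentials $\|z\|^2$ and $\log \|z\|$ on $\C^k \setminus \{0\}$.

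First I would introduce the family of bounded plurisubharmonic functions $u_r(z) := \max(\log \|z\|, \log r)$, equal to $\log r$ on $\overline{\B_r}$ and to $\log \|z\|$ outside. Using the identity $\beta^{k-p} = \ddc(\|z\|^2\, \beta^{k-p-1})$ (valid since $\beta^{k-p-1}$ is closed, so the cross terms in the Leibniz rule vanish), Stokes' theorem on $\B_r \setminus \B_\varepsilon$ combined with $\ddc T = 0$ produces, after the limit $\varepsilon \to 0$, an identity of the form
\[
\Phi(r) \;=\; c_{k,p}\, \int T \wedge (\ddc u_r)^{k-p}
\]
for an explicit positive constant $c_{k,p}$. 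The right-hand side has clear geometric content: $(\ddc u_r)^{k-p}$ is a positive closed current that vanishes on $\B_r$ and coincides with $(\ddc \log \|z\|)^{k-p}$ outside $\overline{\B_r}$. To prove that the right-hand side is increasing in $r$, I would invoke Bedford--Taylor theory: for $r_1 < r_2$ one has $u_{r_2} \geq u_{r_1}$ and
\[
(\ddc u_{r_2})^{k-p} - (\ddc u_{r_1})^{k-p} \;=\; \ddc \Bigl[ (u_{r_2} - u_{r_1}) \sum_{j=0}^{k-p-1} (\ddc u_{r_2})^{j} \wedge (\ddc u_{r_1})^{k-p-1-j} \Bigr].
\]
Pairing with $T$ and using $\ddc T = 0$, Stokes' theorem transforms the right-hand side into a nonnegative quantity: the factor $u_{r_2} - u_{r_1}$ is $\geq 0$, and each current $T \wedge (\ddc u_{r_2})^{j} \wedge (\ddc u_{r_1})^{k-p-1-j}$ is positive. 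This establishes $\Phi(r_1) \leq \Phi(r_2)$, and then the boundedness assertion follows at once: for any $0 < r_1 < r_0$ and $r \in (0, r_1]$ one has $\Phi(r) \leq \Phi(r_1) < \infty$, since $\Phi$ is manifestly finite at every fixed radius strictly less than $r_0$.

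The main obstacle is the rigorous handling of the Bedford--Taylor wedge products $T \wedge (\ddc u_{r_1})^{j} \wedge (\ddc u_{r_2})^{k-p-1-j}$ and the justification of Stokes' theorem in this nonsmooth setting, since $T$ is only a current and the $u_r$ are merely continuous plurisubharmonic. The standard route is to regularise each $u_r$ by a decreasing family of smooth plurisubharmonic functions (by convolution on the potentials, or via Richberg--Demailly smoothing), prove the required identities for the smooth approximants where everything is classical, and pass to the limit using Bedford--Taylor monotone convergence together with the positivity and $\ddc$-closedness of $T$. A minor additional issue is the blow-up of $\log \|z\|$ at the origin, handled by cutting off on $\B_\varepsilon$ and showing that the cutoff contributions vanish in the limit, a point where the monotonicity formula itself, applied inductively with smaller values of $k-p$, keeps everything under control.
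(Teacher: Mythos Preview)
The paper does not supply a proof of this proposition; it is stated with attribution to Skoda \cite{Skoda} and the text moves directly to the definition of the Lelong number. So there is no argument in the paper to compare against, and one must look to Skoda's original paper or to Demailly's book for the actual proof.

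Your sketch contains a genuine error at its core. The identity
\[
\Phi(r) \;=\; c_{k,p}\,\int T\wedge (\ddc u_r)^{k-p}
\]
cannot hold. To see this, fix $R$ with $r_1<r_2<R<r_0$ and integrate over $\B_R$. Near $\partial\B_R$ one has $u_{r_1}=u_{r_2}=\log\|z\|$, so the current $\Psi:=(u_{r_2}-u_{r_1})\sum_j(\ddc u_{r_2})^{j}\wedge(\ddc u_{r_1})^{k-p-1-j}$ has compact support in $\B_R$. Your own telescoping identity gives $(\ddc u_{r_2})^{k-p}-(\ddc u_{r_1})^{k-p}=\ddc\Psi$, and pairing with $T$ yields $\int_{\B_R}T\wedge\ddc\Psi=\pm\langle\ddc T,\Psi\rangle=0$ since $\ddc T=0$. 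Thus $\int_{\B_R}T\wedge(\ddc u_r)^{k-p}$ is \emph{constant} in $r<R$, whereas $\Phi(r)$ is generally not: already for $T=\beta$ in $\C^2$ one has $\Phi(r)\propto r^{2}$. Your description of $(\ddc u_r)^{k-p}$ is also incomplete: besides the smooth piece on $\{\|z\|>r\}$ there is a nontrivial contribution supported on $\partial\B_r$, and that is exactly where the $r$-dependence hides. Consequently the ``Stokes transforms the right-hand side into a nonnegative quantity'' step does not deliver what you need: with $\ddc T=0$ and $\Psi$ compactly supported it gives zero, not a sign.

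The route taken by Skoda (and reproduced in Demailly, Chapter~III) is different: one establishes a Lelong--Jensen formula expressing the \emph{difference}
\[
\Phi(r_2)-\Phi(r_1)\;=\;\int_{r_1<\|z\|<r_2} T\wedge\Theta
\]
for an explicitly positive smooth $(k-p,k-p)$-form $\Theta$ on the annulus (built from $\ddc\log\|z\|$), from which monotonicity is immediate by positivity of $T$. Your ingredients (the potentials $u_r$, Bedford--Taylor regularisation, Stokes) are the right ones, but they must be assembled to produce that annulus formula for the difference, not a global identity for $\Phi(r)$ itself.
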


\begin{definition}\label{D:Lelong}\rm
Under the  hypothesis and notation  of Proposition \ref{P:Skoda},
the limit of the above function when $r\rightarrow 0$ is called {\it
  the Lelong number} of $T$ at $0,$  and is  denoted by $\nu(T,x).$
  \end{definition}
 By Proposition  \ref{P:Skoda},  Lelong
number always  exists and is finite non-negative.

The  next  simple  result    allows  for extending  positive $\ddc$-closed  currents of bidimension $(1,1)$    through isolated points.

\begin{proposition} \label{P:extension}{\rm  (Dinh-Nguyen-Sibony \cite[Lemma 2.5]{DinhNguyenSibony12},  Forn{\ae}ss-Sibony-Wold \cite[Lemma 17]{FornaessSibonyWold})}
Let $T$ be a positive current of bidimension $(1,1)$ with compact support on a complex manifold $M$. Assume that $\ddc T$ is a negative measure on $M\setminus E$ where $E$ is a finite set. 
Then  $T$ is a positive  $\ddc$-closed  current on $M.$
\end{proposition}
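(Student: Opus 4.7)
The strategy is to show that $\ddc T$, an a priori distribution on $M$ with compact support whose restriction to $M\setminus E$ is the negative measure $-\sigma$, vanishes identically on $M$. First, pick a smooth cutoff $\chi$ with compact support in $M$ and $\chi\equiv 1$ on a neighborhood of $\supp T$. Since $\ddc\chi$ vanishes on $\supp T$, the identity $\langle T,\ddc\chi\rangle=0$ gives $\langle \ddc T,\chi\rangle=0$. Writing $\ddc T = -\sigma + \mu_E$ with $\mu_E$ a distribution of order $\leq 2$ supported on the finite set $E$, and noting that $\chi\equiv 1$ near $E\cap\supp T$ kills all derivatives of $\chi$ at those points, this identity yields
\[\sigma(M) = \sum_{p\in E} c_p,\]
where $c_p$ is the weight of the Dirac component of $\mu_E$ at $p$.

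The heart of the argument is to show every $c_p=0$. Fix $p=0$ in local coordinates, with a small ball $\B_R$ disjoint from $E\setminus\{0\}$, and use the smooth plurisubharmonic family $\phi_\epsilon(z):=\log(|z|^2+\epsilon^2)$, which satisfies $\ddc\phi_\epsilon\geq 0$ and $\phi_\epsilon(0)=2\log\epsilon$. The identity $\langle T,\ddc(\chi\phi_\epsilon)\rangle = \langle \ddc T,\chi\phi_\epsilon\rangle$ has left-hand side $\int T\wedge \ddc\phi_\epsilon$ (using $\chi\equiv 1$ on $\supp T$), which remains bounded as $\epsilon\to 0^+$ by a Chern--Levine--Nirenberg/Skoda-type estimate for the Lelong mass of the positive $(k-1,k-1)$-current $T$ near $0$. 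The right-hand side equals $-\int \chi\phi_\epsilon\,d\sigma + 2c_0\log\epsilon$, and since $\log(1/|z|)\in L^1(\sigma)$ near $0$ (an a priori bound derived by a similar pairing of $T$ against $\ddc\log|z|$), the first term converges as $\epsilon\to 0^+$ to a finite constant. Matching the two sides thus forces $2c_0\log\epsilon$ to stay bounded, so $c_0=0$. Analogous test-function arguments with polynomial multipliers $z^\alpha\chi$ eliminate any higher-order distributional components of $\mu_E$ at $0$, and repeating at each $p\in E$ gives $c_p=0$ uniformly.

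Combining the two stages, $\sigma(M) = \sum_p c_p = 0$; since $\sigma\geq 0$, $\sigma = 0$; hence $\ddc T = 0$ on $M$, so $T$ is a positive $\ddc$-closed current on $M$. The main technical obstacle is the Chern--Levine--Nirenberg/Skoda-type control on $\int T\wedge\ddc\phi_\epsilon$ and on $\int\log(1/|z|)\,d\sigma$, both of which reflect the Lelong growth $\|T\|_{\B_r(p)}=O(r^2)$ of the positive $(k-1,k-1)$-current $T$, following from Skoda's monotonicity (Proposition~\ref{P:Skoda}) after adaptation to the non-$\ddc$-closed setting treated here.
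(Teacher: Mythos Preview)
The paper does not give its own proof of this proposition; it is stated with references to \cite[Lemma 2.5]{DinhNguyenSibony12} and \cite[Lemma 17]{FornaessSibonyWold}. So there is no in-paper argument to compare against, and what follows is an assessment of your sketch on its own terms.

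Your overall strategy is right: show that $\ddc T$ is globally a negative measure, then use compact support to get total mass zero and hence $\ddc T=0$. But the execution has a genuine circular gap. You rely on the Lelong growth $\|T\|_{\B_r(p)}=O(r^2)$ and on $\log(1/|z|)\in L^1(\sigma)$, both attributed to ``Skoda's monotonicity (Proposition~\ref{P:Skoda}) after adaptation to the non-$\ddc$-closed setting.'' Proposition~\ref{P:Skoda}, however, is stated for positive $\ddc$-closed currents, and the monotonicity of $r^{-2}\|T\|_{\B_r}$ in its proof comes precisely from $\ddc T=0$ (or $\ddc T\geq 0$). Under the present hypothesis $\ddc T\leq 0$ on $M\setminus E$, the correction term in the Lelong--Jensen formula changes sign and the monotonicity is lost (or reversed), so you obtain no upper bound on $r^{-2}\|T\|_{\B_r}$ as $r\to 0$. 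Invoking that bound to prove $\ddc T=0$ is therefore circular: once $\ddc T=0$ is known, Skoda applies and the bound holds, but not before. This is the crux of your argument and it does not go through as written.

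There are secondary gaps as well. The decomposition $\ddc T=-\sigma+\mu_E$ presupposes that the positive measure $\sigma$ on $M\setminus E$ has locally finite mass near each $p\in E$; otherwise $-\sigma$ is not a well-defined distribution on $M$ and the identity $\sigma(M)=\sum_p c_p$ is purely formal. You do not justify this. Finally, the treatment of higher-order components of $\mu_E$ is hand-waved: for $\phi_\epsilon(z)=\log(|z|^2+\epsilon^2)$ the second derivatives at $0$ are of order $\epsilon^{-2}$, so any second-order piece of $\mu_E$ would contribute a term dominating the $2c_0\log\epsilon$ you isolate. These pieces must therefore be eliminated \emph{before} running the $\phi_\epsilon$ argument, and ``analogous test-function arguments with polynomial multipliers $z^\alpha\chi$'' does not explain how.
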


Now  we come to the notion of holomorphically immersed laminations. 
  
   \begin{definition}\label{D:immersed-lamination}   \rm 
   Let  $\Fc=(X,\Lc,E)$   be  a  Riemann surface lamination  with  singularities and  let $M$ be a complex  manifold.  % of dimension  $k.$ 
    We say that  $\Fc$ is  {\it holomorphically immersed in  $M$}  if 
$X$ is a closed
subset of $M$  and    the leaves of $(X\setminus E,\Lc)$ are Riemann surfaces holomorphically immersed in
$M$.  
\end{definition}
If $\Fc:=(X,\Lc,E) $ is   a  singular holomorphic foliation, then  it is clearly a Riemann surface lamination  with  singularities  which is holomorphically immersed in $X.$
\begin{remark}\rm  \label{R:immersed-lamination}
 Let $j:\  X\hookrightarrow M$   be   the canonical injection  from $X$ into $ M. $ The   condition in Definition \ref{D:immersed-lamination} means  the following properties (i)-(ii):
 \begin{itemize}
 \item[(i)]  $X$ is a closed
subset of $M$ and  $j$ is  continuous;   
 \item[(ii)]  the  restriction $j_x$ of $j$  on  each leaf $L_x,$ with $x\in X\setminus E,$  is a holomorphic immersion from $L_x$  into  $M.$
 \end{itemize}
In particular property (i)   implies that  the topology of the lamination $X$ coincides with the  topology induced on the closed set $X$ from  $M.$
\end{remark}
The aggregate of  the  pull-back  via $j_x,$ with $x\in X\setminus E,$  of  each test form $\alpha\in  \Dc^{1,1}(M\setminus E)$  defines
a form in  $\Dc^{1,1}(\Fc)$ denoted by $j^*\alpha.$
So  we obtain  a canonical map $$j^*:\   \Dc^{1,1}(M\setminus E)\to \Dc^{1,1}(\Fc)\qquad\text{given by}\qquad  \alpha\mapsto  j^*\alpha.$$
We see  easily that  the image $\Ic$  of $j^*$  is dense in  $\Dc^{1,1}(\Fc).$

The original  notions  of  directed positive $\ddc$-closed currents for  singular  holomorphic foliations (resp. for singular laminations which are holomorphically  immersed in a complex  manifold)
with  a small set of singularities were  introduced by  Berndtsson-Sibony \cite{BerndtssonSibony} and  Forn{\ae}ss-Sibony \cite{FornaessSibony05,FornaessSibony08}) respectively.
  We give  here another   notion of directed positive $\ddc$-closed currents for singular Riemann surface  laminations.
  Our  notion coincides with the previous ones  when  the lamination is $\Cc^2$-transversally smooth.  The  advantage  of our notion is that  it is  relevant even  when the set of singularities is  not small.

 \begin{definition}\label{D:Directed_hamonic_currents_with_sing}   \rm 
Let  $\Fc=(X,\Lc,E)$   be  a  Riemann surface lamination  with  singularities   which is holomorphically immersed in a  complex  manifold $M.$
A {\it directed positive $\ddc$-closed  current}  (resp.  a  {\it directed  positive closed current}) on $\Fc$ is  a  positive $\ddc$-closed current $T$  (resp.   a  positive closed current $T$)
of bidimension $(1,1)$ on $M$ such that 
the following properties (i)-(ii)-(iii) are satisfied:
\begin{itemize} \item [(i)] the  support of $T$  is contained in $X;$ 
 \item [(ii)]  $T$  does not give mass to $E,$  i.e. the mass $\|T\|_E$  of $T$ on $E$  is  zero;
\item[(iii)]
 $T$   is  a  directed positive harmonic  current (resp.  a  directed positive closed  current) on $\Fc$  in the sense of Definition \ref{D:Directed_hamonic_currents}.
\end{itemize}
Moreover, we say that $T$ is  {\it diffuse}  if  it is  diffuse in the sense of Definition \ref{D:diffuse} as   a  directed positive harmonic  current (resp.  a  directed positive closed  current) on $\Fc.$ 
  \end{definition}
  
  \begin{remark}\rm  \label{R:explanation}  
  Property (iii) of Definition \ref{D:Directed_hamonic_currents_with_sing}
 means  the  following two properties (iii-a)-(iii-b):
\begin{itemize}\item[(iii-a)] $\langle T,\alpha\rangle=\langle T,\beta\rangle$ for $\alpha,\beta\in  \Dc^{1,1}(M\setminus E)  $ such  that $j^*\alpha=j^*\beta;$
 so the current $$\widetilde T:\ \Ic\to\C\quad\text{given by}\quad \langle \widetilde T, j^*\alpha\rangle:=  \langle T,\alpha\rangle\quad \text{for}\quad \alpha\in \Dc^{1,1}(M\setminus E),  $$
  is  well-defined;
 \item[(iii-b)]  the current $\widetilde T$ defined in  (iii-a) can be uniquely   extended from $\Ic$ to   $\Dc^{1,1}(\Fc)$ by continuity  (as $\Ic$ is  dense in  $\Dc^{1,1}(\Fc)$)  to a current
 $ \widehat T $ of order zero, and $\widehat T$ is 
 a   directed positive harmonic  current (resp.   directed positive closed  current) on $\Fc$  in the sense of Definition \ref{D:Directed_hamonic_currents}.
 \end{itemize}
 Property (iii-b)  holds  automatically since  $T$ is a positive $\ddc$-closed current (resp.  positive closed current)  on $M.$ So property (iii) is  equivalent to  the  single property (iii-a).
 If there is no confusion,   we often denote $\widetilde T$ and $\widehat T$ simply by $T.$
  \end{remark}
   
  \begin{remark}\rm  \label{R:defi_BFS}
    In the original definitions  of directed positive $\ddc$-closed  current (resp.   directed  positive closed current)  which have been
    introduced by  Berndtsson-Sibony \cite{BerndtssonSibony} and  Forn{\ae}ss-Sibony \cite{FornaessSibony05,FornaessSibony08}),
    these  authors  assume  the following  conditions (i)'-(ii)'-(iii)' (see  \cite[Definition 7]{FornaessSibony08}:
    \begin{itemize}\item[(i)'] $T$ is  a  positive $\ddc$-closed current (resp.    positive closed current)    on $M$ with  support in $X;$ 
     \item[(ii)'] $E$ is  a  {\it small set} in the sense that %at least one of three  following situations happens: (1)
     $\Lambda_2(E)=0,$  where $\Lambda_2$ denotes the two dimensional Hausdorff measure;
     %     (2) $E$ is locally complete pluripolar in $M;$  (3) $\Fc$ is  a singular holomorphic foliation and  $E$ is  locally pluripolar in $X;$ 
 \item[(iii)']  a decomposition of $T$ as in Proposition \ref{P:decomposition} is 
    valid in any flow box outside the singularities.
    \end{itemize}
   In fact,  the assumption (ii)' ensures that $T$ does not give  mass to $E.$
    The reader may  consult the proof of Theorems 10 and 23 in \cite{FornaessSibony08} in order to  see that our definition of directed  positive closed  currents (resp. directed  positive 
    $\ddc$-closed  currents) is  equivalent to theirs when the lamination $\Fc$
     satisfies condition (ii)' and  $\Fc$ is  $\Cc^1$-transversally smooth (resp.  $\Cc^2$-transversally smooth).
    
    It is worthy noting that  these authors also introduce  a  notion of positive $\ddc$-closed  currents {\it weakly directed}  by a  lamination
    in a complex  manifold  (see \cite[Section 5]{FornaessSibony08}). Roughly speaking, this weaker notion means that  the considered current is  directed by a collection of continuous forms defining
    the considered lamination.
  \end{remark}

  The  existence of nonzero directed  positive harmonic  currents for  compact   nonsingular laminations was   proved by Garnett \cite{Garnett}.
   The case  of compact singular  holomorphic foliations was  proved by  
   Berndtsson-Sibony \cite[Theorem 1.4]{BerndtssonSibony} under  reasonable assumptions.  The  more general case of compact $\Cc^2$-transversally smooth  Riemann surface  laminations with singularities
   was proved by  Forn{\ae}ss-Sibony  \cite[Theorem 23]
   {FornaessSibony08},  see also Sibony  \cite{Sibony2} for  the existence of positive $\ddc$-closed currents directed by a Pfaff system.  
   
   Recall  that   a subset $E$ of  a complex  manifold $M$  is  said to be  {\it locally pluripolar} if, for every $a\in M,$ there is a plurisubharmonic function $u$ in some open neighborhood
   $\U$ of $a$ in $M$  such that  $\{u=-\infty\}\subset E\cap \U .$  Moreover, $E$ is  said to be 
    {\it locally complete  pluripolar} if,  for every $a\in M,$ there is a
   plurisubharmonic function $u$ in some open neighborhood
   $\U$ of $a$ in $M$  such that  $\{u=-\infty\}=E\cap \U .$
   Here is  a synthesis of the  above-mentioned existence results. 
   \begin{theorem}\label{T:existence_harmonic_currents} 
   Let $\Fc=(X,\Lc,E)$ be a singular  Riemann surface lamination which is  holomorphically immersed in a complex manifold $M.$
   Assume moreover that $X$ is  compact  %and $\Lambda_2(E)=0$ 
   and  we are  in  at least  one  of the  following two situations:
   
 \begin{enumerate}
   \item  {\rm  (Forn{\ae}ss-Sibony  \cite[Theorem 23]{FornaessSibony08})} $\Fc$ is   $\Cc^2$-transversally smooth  and  $E$ is  locally complete  pluripolar in $M;$
   %\footnote{This means that for every $a\in E,$ there is a
   %plurisubharmonic function $u$ in some open neighborhood
   %$\U$ of $a$ in $M$  such that  $\{u=-\infty\}=E\cap \U .$} 
   \item {\rm  (Berndtsson-Sibony \cite[Theorem 1.4]{BerndtssonSibony})} $\Fc$ is  a singular holomorphic  foliation (so $M=X$)  and  $E$ is  locally pluripolar in $M.$
   %\footnote{This means that for every $a\in E,$ there is a plurisubharmonic function $u$ in some open neighborhood
   %$\U$ of $a$ in $M$  such that  $\{u=-\infty\}\subset E\cap \U .$} 
     \end{enumerate}
    Then there is a nonzero positive $\ddc$-closed  current $T$ of bidimension $(1,1)$ supported on $X$ such that 
    the restriction of $T$ on $X\setminus E$ defines a directed positive harmonic  current on  $(X\setminus E,\Lc).$
   In particular,  if  there is  no  nonzero  positive $\ddc$-closed  current of bidimension $(1,1)$  which  gives mass to $E$ (e.g. if  $\Lambda_2(E)=0,$
   where $\Lambda_2$ denotes the two dimensional Hausdorff measure),
   then $T$ is a nonzero  directed   positive  $\ddc$-closed   current in the sense of Definition  \ref{D:Directed_hamonic_currents_with_sing}.
   \end{theorem}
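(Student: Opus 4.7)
The plan is to construct $T$ in two stages: first build a candidate on the nonsingular part $X\setminus E$ by averaging along leaves, then extend it across $E$ using the pluripolarity hypothesis. For the averaging, I fix a regular point $x_0\in X\setminus E$, a universal covering $\phi_{x_0}:\D\to L_{x_0}$ with $\phi_{x_0}(0)=x_0$, and a Hermitian metric $\omega$ on $M$. For each $R>0$ I form the positive bidimension $(1,1)$ current $T_R$ on $M$ obtained as the push-forward by $\phi_{x_0}$ of the integration current on $\D_R$, normalised so that its $\omega$-mass equals $1$. Each $T_R$ is a leafwise integration current, so it is directed by $\Fc$. By an Ahlfors-type length-area estimate, the ratio of the boundary length of $\phi_{x_0}(\partial\D_R)$ to its $\omega$-area can be made arbitrarily small along a sequence $R_n\to\infty$, and the boundary contribution to $\ddc T_{R_n}$ consequently tends to zero on $M\setminus E$. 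Extracting a weak limit $T:=\lim_n T_{R_n}$ yields a positive bidimension $(1,1)$ current on $M$, supported in $X$, with $\omega$-mass $1$, satisfying $\ddc T=0$ on $M\setminus E$ and directed by $\Fc$ there in the sense of Remark \ref{R:explanation}.

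The critical step is then upgrading $\ddc T=0$ from $M\setminus E$ to all of $M$. In case (2), where $\Fc$ is a singular holomorphic foliation and $E$ is locally pluripolar, I would follow Berndtsson-Sibony and argue locally at each singular point: a Skoda-type mass estimate combined with the local geometry of the foliation shows that $T$ has locally finite mass near $E$ and does not charge it, and an extension theorem for positive $\ddc$-closed currents across pluripolar sets (of El Mir / Alessandrini-Bassanelli type) then yields $\ddc T=0$ on $M=X$. In case (1), where $\Fc$ is $\Cc^2$-transversally smooth and $E$ is locally complete pluripolar, I would follow Forn\ae ss-Sibony and use the local defining plurisubharmonic function $u$ with $\{u=-\infty\}=E$ to construct plurisubharmonic cutoffs $\chi_\epsilon\circ u$ supported away from $E$; testing $\ddc T=0$ against these and letting $\epsilon\to 0$, the complete pluripolarity prevents residual mass from concentrating on $E$ and produces $\ddc T=0$ on all of $M$.

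For the conclusion, since each $T_R$ has $\omega$-mass $1$, weak convergence gives a nonzero limit $T$; and under the additional hypothesis that no nonzero positive $\ddc$-closed current of bidimension $(1,1)$ gives mass to $E$, property (ii) of Definition \ref{D:Directed_hamonic_currents_with_sing} is automatic, so $T$ is a nonzero directed positive $\ddc$-closed current as required. The hard step is clearly Step 2: a priori the approximating $T_{R_n}$ could concentrate an arbitrary amount of mass near $E$ in the limit, producing singular contributions that would destroy $\ddc$-closedness globally. Proposition \ref{P:extension} gives the flavour of what is needed (trivial extension across a finite set), but its analogue for a pluripolar $E$ is considerably more delicate: in case (2) it exploits the isolated-singularity structure of a holomorphic foliation, while in case (1) it requires the quantitative control afforded by complete pluripolarity. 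Once the extension is secured, both nontriviality and directedness follow cheaply from the construction.
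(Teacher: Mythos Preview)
Your approach differs from the proofs in the cited references. The paper does not itself prove this theorem; it records results of Berndtsson--Sibony and Forn{\ae}ss--Sibony, whose arguments are functional-analytic rather than constructive: assuming no directed positive $\ddc$-closed current exists, a Hahn--Banach separation produces a smooth function that is strictly plurisubharmonic along every leaf, and the maximum of this function on the compact $X$ (together with the pluripolarity hypothesis on $E$, used to exclude a maximum on the singular set) yields a contradiction. That argument never singles out an individual leaf and works uniformly in cases (1) and (2).

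Your route via leafwise averaging is instead the method the paper mentions immediately \emph{after} the theorem as ``another construction,'' via the Nevanlinna currents $\tau_{x,R}$ of \eqref{e:m_x,R}. It can be made to work, but two points need care that the Hahn--Banach proof avoids. First, writing $\phi_{x_0}:\D\to L_{x_0}$ presupposes the chosen leaf is hyperbolic; the theorem carries no such hypothesis, so the parabolic and elliptic cases (easier, but different) must be treated separately. Second, the Ahlfors subsequence with boundary-length/area ratio tending to zero follows from the length--area inequality only when the $\omega$-area $A_R\to\infty$, which is not guaranteed for an arbitrary hyperbolic leaf in $X$; the Nevanlinna weight $\log^+(r/|\zeta|)$ avoids the subsequence extraction since $\|\ddc\tau_{x,R}\|\le 2/M_R\to 0$ automatically, but then one must instead bound $\|\tau_{x,R}\|_\omega$ to extract a limit. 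The Hahn--Banach proof sidesteps all such case analysis; your construction, when its hypotheses are met, has the compensating advantage of exhibiting $T$ concretely as a leafwise average.
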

   When  a leaf $L_x$  is   hyperbolic, an exhaustion-average on $L_x$ (using the Nevanlinna current $\tau_{x,R}$ given in  formula \eqref{e:m_x,R})
   was introduced by Forn{\ae}ss-Sibony \cite{FornaessSibony05}  (see  also  \cite[Corolary 3]{FornaessSibony08}). 
   It provides another construction of  directed positive  $\ddc$-closed currents.
   
   By Burns-Sibony \cite{BurnsSibony}, if  a  singular holomorphic foliation  on a  compact complex manifold admits a  parabolic leaf in the sense of the potential theory,\footnote{Here,  a Riemann surface 
  $L$ is called  {\it parabolic in the sense of the potential theory} (or equivalently  {\it parabolic in the sense of Ahlfors}),   if bounded subharmonic functions on $L$ are constant.  If $L$ is parabolic
  (see Definition \ref{D:hyperbolic}) then it is parabolic in the sense of the potential theory.  But the converse statement is in general  not true.}
  then there is  a nonzero  positive closed  current weakly  directed by the  foliation (see Remark \ref{R:defi_BFS} for the notion of weakly-directed positive currents).  If  moreover, $\Fc$ satisfies 
  assumption (2) of Theorem \ref{T:existence_harmonic_currents}, then  this current is also  directed by the foliation  in the sense of Definition \ref{D:Directed_hamonic_currents_with_sing}.
   The reader is  invited  to  consult  P\u{a}un-Sibony \cite{PaunSibony} for a
  fruitful  discussion on the link between value distribution theory and     positive closed  currents directed by singular holomorphic foliations.

The  following notion will be   needed  later on.
\begin{definition}\label{D:invariant-curve}\rm
 Let $\Fc:=(X,\Lc,E) $ be a Riemann surface lamination  with  singularities which is  holomorphically immersed in a complex manifold $M.$
 A pure $1$-dimensional complex analytic set $L$ 
 in  $M\setminus E$  is  called an {\it invariant (analytic) curve} of $\Fc$ if $L$  is  itself  a leaf of $\Fc.$
 A pure $1$-dimensional complex analytic set $L$ 
 in  $M$  is  called an {\it invariant closed  (analytic) curve} of $\Fc$ if $L\setminus E$  is  itself  a leaf of $\Fc.$
 \end{definition}
 
If  the current of integration $[L]$ associated to an  invariant closed  analytic curve $L$ of $\Fc$ (as  it is a pure $1$-dimensional complex analytic set) does not give mass to $E,$ then $[L]$ is  a directed positive closed current which is clearly not diffuse
(see Definitions \ref{D:Directed_hamonic_currents_with_sing} and \ref{D:diffuse}). 

 By     Forn{\ae}ss-Sibony \cite[Proposition 3]{FornaessSibony08}, if $X$ is  compact and either $\Lambda_1(E)=0$  or  $\Lambda_2(E)=0$ and $E$ is locally complete pluripolar, then
 the closure $\overline L$ in $M$  of an  invariant (analytic) curve $L$  of $\Fc$ is  an invariant closed  (analytic) curve of $\Fc.$  Here, $\Lambda_k$ denotes the $k$-dimensional Hausdorff measure.
 
When $M$ is an algebraic manifold, an  invariant analytic curve  is  also called {\it invariant algebraic curve}.
%%%%%%%%%%%%%%%%%%%%%%%%%%%%%%%%%%%%%%%%%%%%%%%%%%%%%%%%%%%%%%%%%%%%%
\subsection{Sample-path space and shift-transformations}
 \label{SS:Sample-path_space}
%%%%%%%%%%%%%%%%%%%%%%%%%%%%%%%%%%%%%%%%%%%%%%%%%%%%%%%%%%%%%%%%%%%%%%%%

Let $\Fc=(X,\Lc,E)$ be  a   Riemann surface lamination with singularities.
Let  $\Omega:=\Omega(\Fc) $  be  the space consisting of  all continuous  paths  $\omega:\ [0,\infty)\to  X$ with image fully contained  in a  single   leaf. This  space  is  called {\it the sample-path space} associated to  $\Fc.$
  Observe that
$\Omega$  can be  thought of  as the  set of all possible paths that a 
Brownian particle, located  at $\omega(0)$  at time $t=0,$ might  follow as time  progresses.
For each $x \in X\setminus E,$
let $\Omega_x=\Omega_x(\Fc)$ be the  space  of all continuous
leafwise paths  starting at $x$ in $X\setminus E ,$ that is,
\begin{equation}\label{e:Omega_x}
\Omega_x:=\left\lbrace   \omega\in \Omega:\  \omega(0)=x\right\rbrace.
\end{equation}
  Consider 
  the   shift-transformations
  $\sigma_t:\  \Omega\to\Omega$  defined by 
   \begin{equation}\label{e:shift}  \sigma_t(\omega)(s):=\omega(s+t),\qquad  \omega\in \Omega,\  s\in\R^+.
   \end{equation}

%%%%%%%%%%%%%%%%%%%%%%%%%%%%%%%%%%%%%%%%%%%%%%%%%%%%%%%%%%%%%%%%%%%%%%%%%%%
\subsection{Holonomy and monodromy} \label{SS:Hol}
%%%%%%%%%%%%%%%%%%%%%%%%%%%%%%%%%%%%%%%%%%%%%%%%%%%%%%%%%%%%%%%%%%%%%%%%%%%

Let $\Fc=(X,\Lc,E)$ be a  Riemann surface  lamination with singularities and let $L$ be a leaf of $\Fc.$ Fix  two points $x,y\in L$
and  consider small transversals $\T_x,$ $\T_y$ to $L$ at $x,y$ respectively. If $\Fc$ is a singular holomorphic  foliation  and $X$ is a complex manifold of dimension $k,$ we can choose
$\T_x,\T_y\simeq  \D^{k-1}.$ Let $\gamma:\ [0,1]\to L$ be a continuous path with $\gamma(0)=x$ and $\gamma(1)=y.$
For each $z\in \T_x$  near $x$ one can travel on $L_z$ over $\gamma[0,1]$  to reach $\T_y$ at some point $z'.$ More precisely,  let $\{(\U_p,\Phi_p)\}_{0\leq p\leq n-1}$ be laminated charts of $\Fc$ and
$0=t_0 <t_1<\cdots<t_{n-1}<t_n=1$ be  a partition of $[0,1]$ such that if $\U_p\cap \U_q\not=\varnothing$  then $\U_p\cup\U_q$ is contained in a laminated chart, and $\gamma[t_p,t_{p+1}]\subset \U_p$
for $0\leq p\leq n-1.$  For each $1\leq p\leq n-1,$ choose a transversal $\T_p$ to $L$ at $\gamma(t_p),$ and set $\T_0:=\T_x$ and $\T_n:=\T_y.$
Then for each $z\in \T_p$ sufficiently close to $\gamma(t_p)$, the plaque of $\U_p$ passing through $z$ meets $\T_{p+1}$ in a unique point $f_p(z),$  and $z\mapsto f_p(z)$ is  homeomorphic with 
$f_p(\gamma(t_p))=\gamma(t_{p+1}).$ This map is smooth/holomorphic if $\Fc$ is a transversally smooth lamination/singular holomorphic  foliation. We see that  the composition
$$
\hol_\gamma:=f_{n-1}\circ\cdots\circ f_0
$$
  is well-defined for $z\in\T_x$ near $x,$ with $\hol_{\gamma}(x)=y$  (see Figure \ref{F:auxiliary}). 
  \begin{definition}\label{D:holonomy_map}\rm 
   The map $\hol_\gamma$ is called the {\it holonomy} associated with $\gamma.$
  \end{definition}

\begin{figure}[h]%
\begin{center}
\def\svgwidth{0.9\columnwidth}
\resizebox{0.8\textwidth}{!}{\input{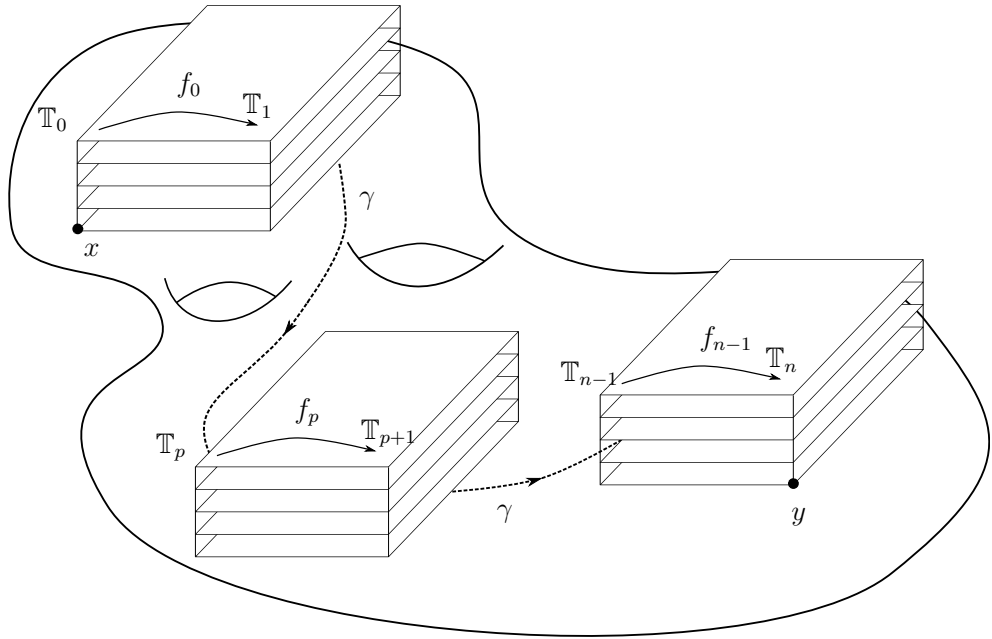}}%
\caption{The holonomy $\hol_\gamma$ associated  with  a path $\gamma$ going from $x$ to $y.$}  
\label{F:auxiliary}
\end{center}
\end{figure}

The following properties of the  holonomy  can be checked directly from the definition.

\begin{remarks}\rm
$\hol_\gamma$ is  independent of the chosen transversals $\T_p,$  $1\leq p\leq n-1,$ and the laminated/foliated  charts $\U_p.$ Hence, $\T_x,$ $\T_y$ and $\gamma$ determine the germ of $\hol_\gamma$
at $x.$

The germ of $\hol_\gamma$ at $x$ depends only on the homotopy class of $\gamma$ with  fixed end-points. More  concretely, if $\alpha:[0,1]\to L$ is another continuous  path  with $\alpha(0)=x$ and 
$\alpha(1)=y$ which is homotopic  to $\gamma$ in $L,$ then the germ of $\hol_\alpha$ coincides with that of $\hol_\gamma.$

If $\gamma^{-1}(t):=\gamma(1-t),$ then $\hol_{\gamma^{-1}}=(\hol_\gamma)^{-1}.$ Consequently, $\hol_\gamma$ represents  the germ of a local homeomorphism/diffeomorphism/biholomorphism.

Let $\T'_x$ and $\T'_y$ be  other transversals to $L$ at $x$ and $y,$ respectively.
Let $h:\ \T_x\to\T'_x$ and $\tilde h:\ \T_y\to\T'_y$ be projections along the plaques of $\Fc$ in a neighborhood of $x$ and $y,$ respectively. Then the holonomy
$\hol'_\gamma:\ \T'_x\to \T'_y$ satisfies  $\hol'_\gamma=\tilde h\circ \hol_\gamma\circ h^{-1}.$

\end{remarks}

\begin{figure}[h]%
\begin{center}
\def\svgwidth{0.6\columnwidth}
\resizebox{0.5\textwidth}{!}{\input{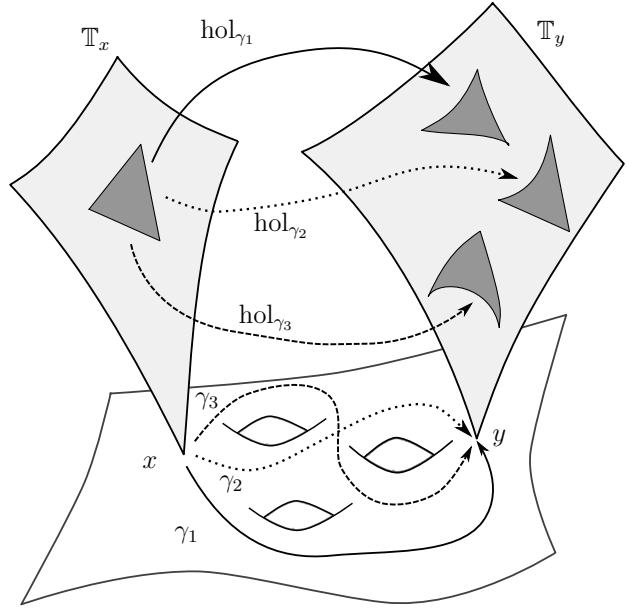}}%
\caption{The holonomy maps  associated with  pairwise non-homotopic  paths $\gamma_1,$ $\gamma_2$ and $\gamma_3$ are  all  different.}  
\label{fig:holonomy}
\end{center}
\end{figure}

Consider  the special case $x=y$ and $\T_x=\T_y.$  We obtain a generalization of  the Poincar\'e first return map.
The holonomy map $\hol_\gamma$ is called the {\it monodromy map} of $L$ associated with $\gamma.$ By the above remarks,
the germ of $\hol_\gamma$ at $x$ depends only on the homotopy class $[\gamma].$  We see that $[\gamma]\mapsto \hol_\gamma$ is a homomorphism from the first fundamental group
$\pi_1(L,p)$ into the group of germs of homeomorphisms/diffeomorphisms/biholomorphisms of $\T_x$ fixing $x:$ $[\gamma\circ\alpha]\mapsto \hol_\gamma\circ\hol_\alpha.$

%%%%%%%%%%%%%%%%%%%%%%%%%%%%%%%%%%%%%%%%%%%%%%%%%%%%%%%%%%%%%%%%%%%%%%%%%%%
\subsection{Holomorphic vector fields and  singular holomorphic  foliations} \label{SS:Local-theory}
%%%%%%%%%%%%%%%%%%%%%%%%%%%%%%%%%%%%%%%%%%%%%%%%%%%%%%%%%%%%%%%%%%%%%%%%%%%

 %To  present  the next example, we need 
 In order to review  the local theory of singular holomorphic foliations  we start with  holomorphic  vector fields.
 \begin{definition}
\label{D:singularities}\rm
  Let $Z=\sum_{j=1}^k F_j(z){\partial\over\partial  z_j}$ be  a holomorphic vector field  defined in a neighborhood $\U$ of $0\in\C^k.$ Consider  the holomorphic map
  $F:=(F_1,\ldots,F_k):\  \U\to\C^k.$ We say that $Z$ is 
\begin{enumerate}
\item {\it  singular  at $0$} if  $F(0)=0.$
\item {\it generic linear} if
it can be written as 
$$Z(z)=\sum_{j=1}^k \lambda_j z_j {\partial\over \partial z_j}$$
where $\lambda_j$ are non-zero complex numbers. The  $k$  hyperplanes $\{z_j=0\}$ for $1\leq j\leq k$ are said to be the {\it invariant  hypersurfaces.}

\item     {\it with non-degenerate singularity at $0$} if $Z$ is  singular at $0$ and the eigenvalues $\lambda_1,\ldots,\lambda_k$ of the Jacobian matrix $DF(0)$
 are all  nonzero. We say  that the singularity is in the Poincar\'e domain if the convex hull in $\C$ of
$\{\lambda_1 , \ldots , \lambda_k \}$ does not contain the origin, it is in the Siegel domain otherwise.

\item    {\it with weakly hyperbolic  singularity at $0$} if $Z$ is  singular at $0$ and the eigenvalues $\lambda_1,\ldots,\lambda_k$ of the Jacobian matrix $DF(0)$
 are all  nonzero and  there  are  some $1\leq j\not=l\leq k$ with $\lambda_j/\lambda_l\not\in\R.$   

\item    {\it with hyperbolic  singularity at $0$} if $Z$ is  singular at $0$ and the eigenvalues $\lambda_1,\ldots,\lambda_k$ of the Jacobian matrix $DF(0)$
 are all  nonzero and $\lambda_j/\lambda_l\not\in\R$  for all $1\leq j\not=l\leq k.$
\end{enumerate}
The integral curves of $Z$ define a  singular holomorphic foliation on
$\U$.
The condition $\lambda_j\not=0$ implies that the foliation   
has an isolated singularity at 0.
\end{definition}

 Let $\Fc=(X,\Lc,E)$ be a singular holomorphic foliation  such that $E$ is an  analytic subset of $X$  with $\codim(E)\geq 2.$ Then $\Fc$ is    given locally  by holomorphic  vector fields and its
 leaves  are locally, integral curves  of these vector fields, and the set of non-removable singularities of $\Fc$  coincide  with the union of the zero sets of these vector fields.
 Here  a point $z\in E$ is  called  a {\it removable singularity} of $\Fc$  if there is  a singular holomorphic  foliation $\Fc'=(X,\Lc',E')$    such that $E'\subset E\setminus \{z\}$ and that
 $\Lc=\Lc'|_{X\setminus E}.$  A point $a\in E$  is  said to be a {\it non-removable singularity} if   it is  not a removable singularity.
 So such a  foliation $\Fc$   is given by  an open covering $\{\U_j\}$ of $X$ and  holomorphic vector fields $v_j\in H^0(\U_j,\Tan(X))$  such that
$$
v_j=g_{j\ell}v_k\qquad\text{on}\qquad \U_j\cap \U_\ell
$$
for some non-vanishing holomorphic functions $g_{j\ell}\in H^0(\U_j\cap \U_\ell, \Oc^*_X).$
Its leaves are locally integral curves, of these vector fields.  The set of non-removable singularities  of $\Fc$ is  precisely the union of the zero  sets of these local vector fields.

 We say that a
singular point $a\in E $ is {\it linearizable} (resp.  {\it weakly hyperbolic}  or  {\it hyperbolic}) if 
there is a local holomorphic coordinate system of $X$ near $a$ on which the
leaves of $\Fc$ are integral curves of a generic linear vector field  (resp.  of a holomorphic vector field admitting $0$ as  a weakly  hyperbolic singularity  or a  hyperbolic singularity).
Clearly,  $a$ is  an  isolated point of $E.$ Moreover,  if $a$  is  a  hyperbolic singularity then  it is  clearly   a weakly hyperbolic singularity.
The converse is  true  only 
in   dimension $k=2$ (i.e.  $\dim X=2$).

Now  we focus  on the dimension $k=2.$  If  $a$ is  a hyperbolic singularity, then there is a local holomorphic coordinates system of $X$ near $a$ on which the
leaves of $\Fc$ are integral curves of a  vector field $Z(z_1,z_2) = \lambda_1 z_1 {\partial\over \partial z_1}
+ \lambda_2 z_2{\partial\over \partial z_2},$ where  $\lambda_1,\lambda_2$ are some nonzero complex numbers with $\lambda_1/\lambda_2\not\in \R.$ 
In particular, $a$ is a linearizable  singularity. Clearly, in dimension 2 
a hyperbolic singularity is always in the Poincar\'e domain.
The   analytic curves (invariant hypersurfaces) $\{  z_1=0\} $ and   $\{  z_2=0\}$ are  called  {\it separatrice}  at $a.$

The  following  result says roughly that for a   hyperbolic  singularity, the topological type of the foliation  around this  point is  determined by the eigenvalues of its linear part. 
\begin{theorem}{\rm  (Chaperon \cite{Chaperon})} Let $Z$ be a germ of a holomorphic vector field in $(\C^k , 0).$ If
$0$ is a hyperbolic singularity, then $Z$ is topologically linearizable.
This means that there is a homeomorphism  $\Phi:\ (\C^k , 0) \to (\C^k , 0)$ sending
the 
foliation defined by $Z$ to the foliation defined by the vector field $Z_0 = \sum\lambda_j {\partial\over \partial z_j}$
where the $\lambda_j $ are the eigenvalues of $DZ(0).$
\end{theorem}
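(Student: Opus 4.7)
The plan is to reduce the topological linearization of the vector field to the topological linearization of a single local biholomorphism — namely the time-one map of its flow — and then to promote this pointwise conjugacy to an orbit-by-orbit equivalence of the flows, which is precisely an equivalence of the associated (singular) foliations.

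First I would normalize so that the linearized flow is a uniform expansion. Let $\phi_t$ denote the local holomorphic flow of $Z$ near $0$ and $\psi_t$ the flow of $Z_0=\sum\lambda_j z_j\partial/\partial z_j$, which is diagonal: $\psi_t(z)=(e^{\lambda_1 t}z_1,\ldots,e^{\lambda_k t}z_k)$. By hyperbolicity $\lambda_j/\lambda_l\notin\R$ for all $j\ne l$, so $0$ lies outside the convex hull of $\{\lambda_1,\ldots,\lambda_k\}$ (the Poincar\'e domain). A separating real hyperplane then yields an angle $\theta\in\R$ such that $\Re(e^{i\theta}\lambda_j)>0$ for every $j$. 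Replacing $Z$ by $e^{i\theta}Z$ (and correspondingly $Z_0$ by $e^{i\theta}Z_0$) does not change the foliations — only the parametrization of integral curves — so we may assume all $\Re(\lambda_j)>0$. Consequently the derivatives at $0$ of the time-one maps $f:=\phi_1$ and $f_0:=\psi_1$ are both equal to $\mathrm{diag}(e^{\lambda_j})$, a linear expansion whose eigenvalues all lie strictly outside the unit disc.

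Next I would invoke the Hartman--Grobman theorem in its holomorphic/local form: since $Df(0)=f_0$ is a hyperbolic linear expansion, there exists a germ of homeomorphism $h:(\C^k,0)\to(\C^k,0)$, $h(0)=0$, such that
\[
h\circ f=f_0\circ h
\]
on a neighborhood of $0$. The standard construction solves $h=\mathrm{id}+u$ with $u$ bounded continuous, rewriting the conjugacy equation as a fixed-point problem $u=\Tc(u)$ on a suitable sup-norm ball; the contraction property of $\Tc$ comes from the fact that $f_0^{-1}$ is a strict contraction, which is exactly the hyperbolicity we arranged in the previous step. Injectivity of $h$ near $0$ is obtained by running the same argument for $f^{-1},f_0^{-1}$ and showing the two conjugacies are mutual inverses.

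Finally I would upgrade the time-one map conjugacy $h$ to a \emph{flow} equivalence $\Phi$. Choose a small topological transversal $\Sigma$ to the flow $\phi_t$ inside a fundamental domain bounded by $\Sigma$ and $\phi_{-1}(\Sigma)=f^{-1}(\Sigma)$; every point $z$ close to $0$ but not on the stable/unstable skeleton is written uniquely as $z=\phi_{s(z)}(\zeta)$ with $\zeta\in\Sigma$ and $s(z)\in[0,1)$. Define
\[
\Phi(z):=\psi_{s(z)}\bigl(h(\zeta)\bigr),\qquad \Phi(0):=0.
\]
The crucial verification is that $\Phi$ is well-defined and continuous across $\Sigma$: the two possible parametrizations of a point on $\Sigma$ differ by $\phi_1=f$, and the conjugacy $h\circ f=f_0\circ h$ guarantees the two definitions agree modulo $\psi_1=f_0$. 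Continuity at $0$ follows from the uniform hyperbolic contraction/expansion given by $\Re\lambda_j>0$, and bijectivity of $\Phi$ near $0$ follows from bijectivity of $h$ on $\Sigma$. By construction $\Phi$ sends $\phi_t$-orbits to $\psi_t$-orbits, i.e.\ it maps the foliation by integral curves of $Z$ to that of $Z_0$.

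The main obstacle is the last gluing step: one must check that the piecewise prescription on the fundamental domain yields a \emph{global} local homeomorphism, including across the transversal and through the fixed point $0$. The Hartman--Grobman conjugacy $h$ is only continuous — not differentiable — so the standard transversality / implicit function arguments are unavailable, and continuity must be extracted directly from the conjugacy identity $h\circ f=f_0\circ h$ together with uniform hyperbolic estimates. This is the technical heart of Chaperon's argument.
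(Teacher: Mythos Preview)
The paper does not prove this theorem; it is stated with a citation to Chaperon and no argument is given. So there is nothing in the paper to compare against, and the evaluation must be of your proposal on its own merits.

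Your reduction contains a genuine error at the very first step. You assert that hyperbolicity (all ratios $\lambda_j/\lambda_l\notin\R$) forces $0$ to lie outside the convex hull of $\{\lambda_1,\ldots,\lambda_k\}$, i.e.\ that the singularity is in the Poincar\'e domain. This is true only for $k=2$ (and the paper says exactly that, without claiming more). For $k\ge 3$ it fails: take $\lambda_j=e^{2\pi i j/3}$ for $j=0,1,2$. Every pairwise ratio is a nontrivial cube root of unity, hence non-real, so the singularity is hyperbolic in the sense of the paper; yet $\lambda_0+\lambda_1+\lambda_2=0$, so $0$ is the barycentre of the $\lambda_j$ and the singularity lies in the Siegel domain. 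Consequently there is no rotation $e^{i\theta}$ making all $\Re(e^{i\theta}\lambda_j)>0$, and your normalization to a ``uniform expansion'' is impossible in general.

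This breaks the rest of the argument as written. You can still choose $\theta$ so that all $\Re(e^{i\theta}\lambda_j)\ne 0$, and then the time-one map $f=\phi_1$ is hyperbolic in the map sense, so Hartman--Grobman furnishes a topological conjugacy $h$ between $f$ and $f_0$. But your gluing step relies essentially on the Poincar\'e-domain picture: a single transversal $\Sigma$, a fundamental domain between $\Sigma$ and $f^{-1}(\Sigma)$, and a unique return time $s(z)\in[0,1)$ for every nearby point. In the genuine saddle case there is no such global transversal; points on the stable manifold never cross $\Sigma$ in forward time, points on the unstable manifold never cross it in backward time, and a generic point has an orbit that leaves every neighbourhood of $0$ in both time directions. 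Promoting a time-one conjugacy to an orbit equivalence in the mixed (Siegel) case is exactly the hard part of Chaperon's theorem and requires substantially more than the fundamental-domain construction you sketch.
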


Now we discuss the biholomorphic   type  of a holomorphic foliation near a singularity. 
When  we write the formal conjugation of a holomorphic vector field near a singularity,
to its linear part, we have to divide by the quantities $\langle \lambda, m\rangle -\lambda_j .$ Here, 
$$ \lambda:= (\lambda_1,\ldots,\lambda_k)\in\C^k ,\quad m = (m_1 , \ldots, m_k ) \in
\N^k ,\quad |m| =\sum_{j=1}^k m_j\geq 2,$$ and  $\langle \lambda, m\rangle$ denotes the inner product
$\sum_{j=1}^k\lambda_j m_j .$ To
prove the convergence we need that these quantities are non zero and not too close
to zero.

The {\it resonances} of $\lambda \in \C^k$ are defined by
$$\mathcal R := \left\lbrace  (m, j ):\quad  m = (m_1 , \ldots , m_k )\in\N^k,\quad |m| \geq 2, \langle \lambda, m\rangle -\lambda_j = 0\right\rbrace.$$
Notice that the set $\{\langle \lambda, m\rangle -\lambda_j :\ |m| \geq 2\}$ has zero as a limit point if and only if $\lambda$ belongs to
the Siegel domain.

We  are in the position to  state some classical results of  the local theory of singular holomorphic  foliations  near an isolated  singular point in any dimension.

\begin{theorem} {\rm  (Poincar\'e \cite{Arnold})} A germ of a singular holomorphic vector field in $(\C^k , 0)$
with a non resonant linear part (i.e., $\mathcal R$ is empty) such that $\lambda$ is in the Poincar\'e domain
is holomorphically equivalent to its linear part.
\end{theorem}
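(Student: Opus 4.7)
The plan is to construct a formal holomorphic change of coordinates tangent to the identity that conjugates $Z$ to its linear part $Z_0 = \sum_{j=1}^k \lambda_j z_j \partial/\partial z_j$, and then to prove that this formal series actually converges in some neighborhood of the origin using the Poincaré domain hypothesis.

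First I would set up the cohomological equations. Write $Z = Z_0 + R$, where $R$ collects the terms of order $\geq 2$, and look for $\Phi(z) = z + \sum_{|m|\geq 2} \phi_m z^m$ (with $\phi_m \in \C^k$) such that $D\Phi \cdot Z_0 = Z \circ \Phi$. Expanding this identity in power series and matching coefficients of $z^m$ component by component, one obtains for each $(m,j)$ with $|m|\geq 2$ an equation of the form
\[
\bigl(\langle \lambda,m\rangle - \lambda_j\bigr)\,\phi_{m,j} \;=\; P_{m,j}\bigl(\phi_{m'}\colon |m'|<|m|\bigr),
\]
where $P_{m,j}$ is a universal polynomial in the lower-order coefficients of $\Phi$ and in the Taylor coefficients of $R$. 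The non-resonance assumption $\mathcal R = \varnothing$ guarantees that at every step the divisor $\langle\lambda,m\rangle - \lambda_j$ is nonzero, so $\phi_{m,j}$ is uniquely determined. This yields a unique formal solution $\Phi$.

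The heart of the proof is convergence, and here the Poincaré domain hypothesis enters decisively. Because $0$ lies outside the convex hull of $\{\lambda_1,\dots,\lambda_k\}$, there exists a real-linear functional $\ell$ on $\C$ and a constant $\kappa>0$ with $\Re \ell(\lambda_j) \geq \kappa$ for every $j$. This forces $|\langle\lambda,m\rangle - \lambda_j| \geq \kappa(|m|-1)$ for $|m|$ large, so only finitely many pairs $(m,j)$ with $|m|\geq 2$ can give small divisors; call this finite exceptional set $\mathcal R_0$. I would then run a two-step argument: (i) a preliminary polynomial change of coordinates kills all the finitely many resonant-like terms indexed by $\mathcal R_0$, reducing to a vector field whose cohomological equations have divisors uniformly bounded below by $c|m|$ for some $c>0$; (ii) for the reduced vector field, apply the classical majorant method: compare $\Phi$ with a scalar formal series $\widetilde \Phi(t) = t + \sum_{n\geq 2}\widetilde\phi_n t^n$ satisfying a dominating equation of the form $c\,n\,\widetilde\phi_n \leq \widetilde{P}_n(\widetilde\phi_2,\dots,\widetilde\phi_{n-1})$ derived from Cauchy estimates of $R$. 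The majorant series satisfies an implicit functional equation whose analytic solvability near $0$ is elementary (fixed-point argument), hence $\widetilde\Phi$ converges on a small disc. By the majoration, $\Phi$ converges in a polydisc around $0$.

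The main obstacle is the second step: producing a genuine majorant series whose convergence can be read off from an implicit-function-type equation, while keeping track of the lower bound $c|m|$ on the small divisors. The bookkeeping is delicate because each $\phi_{m,j}$ depends on all earlier multi-indices, so one must organize the recursion by $|m|$ and use multilinear Cauchy estimates on $R$ to dominate $P_{m,j}$. Once this estimate is in place, the linear growth of the divisors is \emph{better} than what is needed (it yields an extra factor $1/|m|$), so the majorant converges automatically; this is the analytic manifestation of the Poincaré domain being the ``easy'' case compared with the Siegel domain, where $0$ is an accumulation point of $\{\langle\lambda,m\rangle - \lambda_j\}$ and much deeper Diophantine conditions are required.
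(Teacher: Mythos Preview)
The paper does not prove this theorem at all: it is stated as a classical result and attributed to Poincar\'e with a reference to Arnold \cite{Arnold}, and the exposition immediately moves on to the Siegel domain and the Brjuno condition. So there is nothing to compare your proposal against in the paper itself.

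That said, your outline is the standard classical argument one finds in Arnold and elsewhere, and it is essentially correct. One small remark: your separating-functional estimate actually gives more than you claim. From $\Re\ell(\lambda_j)\geq\kappa$ for all $j$ you get $\Re\ell(\langle\lambda,m\rangle-\lambda_j)\geq\kappa(|m|-1)\geq\kappa$ for every $|m|\geq 2$, so \emph{all} divisors are uniformly bounded below from the start, not just all but finitely many. This means your preliminary polynomial step (i) is unnecessary; you can go directly to the majorant argument with a uniform lower bound $|\langle\lambda,m\rangle-\lambda_j|\geq c>0$ (or even $\geq c|m|$), which simplifies the bookkeeping.
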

To get linearization for $\lambda$ in the Siegel domain, the  following fundamental  result assumes the so called
{\it Brjuno condition}: condition (B) [1, 7]. Define for $n \in \N$
$$\Omega(n) := \inf\limits_{1\leq j\leq k} \left\lbrace |\langle \lambda, m\rangle -\lambda_j |:\quad m = (m_1 , \ldots , m_k ), |m| \leq 2^{n+1} \right\rbrace.$$

Condition (B) is satisfied when $\mathcal R$ is  empty and 
$$\sum_{n\geq 1}{\log{(1/\Omega(n))}\over 2^n}
< \infty.$$
 In dimension 2, if $\lambda\in \R$ and $\lambda<0,$ then 
the Brjuno condition becomes $\sum_{n\geq 1} {\log{ q_{n+1}}\over  q_n}  < \infty$  where ${p_n /q_n }$ is the nth approximant
of $-\lambda.$

\begin{theorem}{\rm (Brjuno \cite{Arnold,Brjuno})} A germ of a singular holomorphic vector field in $(\C^k , 0)$
with non resonant linear part and which satisfies the Brjuno condition is holomorphically linearizable.
\end{theorem}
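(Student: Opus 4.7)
\smallskip

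\textbf{Proof proposal.} My plan is to follow the classical Siegel--Brjuno strategy: first construct a formal linearizing transformation order by order, then control the accumulation of small divisors via the Brjuno sum. Write the vector field as
\[
Z = Z_0 + R, \qquad Z_0 = \sum_{j=1}^k \lambda_j z_j \frac{\partial}{\partial z_j}, \qquad R = \sum_{|m|\geq 2} \sum_{j=1}^k R_{m,j}\, z^m \frac{\partial}{\partial z_j}.
\]
I would seek a germ of biholomorphism $\Phi = \mathrm{id} + \phi$ with $\phi = O(|z|^2)$ satisfying $\Phi_* Z_0 = Z$, which after expansion reduces to the cohomological equation
\[
[Z_0,\phi] = R \circ \Phi - D\phi \cdot Z_0 \cdot (\text{correction of order} \geq 2\,\text{in } \phi).
\]
Since $[Z_0, z^m \partial/\partial z_j] = (\langle \lambda, m\rangle - \lambda_j)\, z^m \partial/\partial z_j$, non-resonance guarantees the operator $[Z_0,\cdot]$ is invertible on each homogeneous component, and so I can determine the coefficients $\phi_{m,j}$ inductively on $|m|\geq 2$ by a recursion of the shape
\[
\phi_{m,j} = \frac{1}{\langle \lambda, m\rangle - \lambda_j}\, P_{m,j}\!\left(\{R_{m',j'}\}_{|m'|\leq |m|}, \{\phi_{m'',j''}\}_{|m''|<|m|}\right),
\]
where $P_{m,j}$ is a universal polynomial with \emph{nonnegative} coefficients. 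This yields a unique formal solution $\Phi$.

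\smallskip

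The main task, and the main obstacle, is then \emph{convergence}. The standard device is a majorant argument: replace $R$ by its dominant series $\widehat{R}$ with all coefficients equal to $C\rho^{-|m|}$ (so that $|R_{m,j}|\leq C\rho^{-|m|}$ on some polydisc of radius $\rho$), and let $\widehat{\phi}$ denote the solution of the same recursion where each small divisor $\langle \lambda, m\rangle - \lambda_j$ is replaced by its modulus. Because the polynomials $P_{m,j}$ have nonnegative coefficients, one has $|\phi_{m,j}|\leq \widehat{\phi}_{m,j}$ termwise, so it suffices to prove convergence of $\widehat{\phi}$. The recursion can then be encoded as a sum over rooted trees, the value assigned to each tree being a product of factors $|\langle \lambda,m\rangle - \lambda_j|^{-1}$ over its internal vertices, times the majorant weight.

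\smallskip

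The hard part is to show that this tree sum converges on a possibly smaller polydisc. Here the Brjuno condition enters through the key combinatorial estimate of Brjuno (or, equivalently, the refinement by P\"{o}schel/Yoccoz): group the divisor factors by dyadic scales $|m|\in[2^n, 2^{n+1})$, note that in a tree with vertices of total weight $N$, only $O(N/2^n)$ vertices can carry a divisor smaller than $\Omega(n)$ (the \emph{Brjuno separation lemma}), and conclude
\[
\log\bigl(\widehat{\phi} \text{ of radius } r\bigr) \;\lesssim\; \sum_{n\geq 0} \frac{\log(1/\Omega(n))}{2^n} + O(1),
\]
which is finite precisely under condition (B). Therefore $\widehat{\phi}$, and hence $\phi$, converges on a polydisc of positive radius, yielding a holomorphic linearization $\Phi$. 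The whole delicate point lies in the separation lemma controlling how often resonances can stack inside a single tree; the non-resonance hypothesis removes the trivial obstruction, while the Brjuno sum provides the uniform bound that survives after summation over trees.
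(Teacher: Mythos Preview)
The paper does not prove this theorem; it is stated with attribution to Brjuno \cite{Arnold,Brjuno} and no argument is given, since it is a classical result being quoted as background for the local theory of singularities. So there is no ``paper's own proof'' to compare against.

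That said, your sketch is a faithful outline of the standard Siegel--Brjuno method and would, if carried out in full, constitute a correct proof. A couple of small points worth tightening: (i) the conjugacy condition is usually written $\Phi^* Z = Z_0$ (or $D\Phi \cdot Z_0 = Z\circ\Phi$), and the cohomological equation at each homogeneous degree is $[Z_0,\phi_d] = (\text{known terms of degree }d)$; your displayed equation mixes orders a bit loosely. (ii) The tree encoding and the Brjuno separation lemma you invoke are genuinely the heart of the matter, but note that Brjuno's original argument (and the exposition in Arnold) does not use trees explicitly---the tree formulation is closer to later presentations (P\"oschel, Yoccoz, Gallavotti). Either route works; just be aware that if you cite \cite{Arnold,Brjuno} as the paper does, the proof there is organized somewhat differently (direct majorant recursion with a clever regrouping of divisors rather than tree combinatorics). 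The essential analytic content---that condition~(B) bounds $\sum_n 2^{-n}\log(1/\Omega(n))$ and that at most $O(N/2^n)$ divisors in a term of degree $N$ can be as small as $\Omega(n)$---is the same.
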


%%%%%%%%%%%%%%%%%%%%%%%%%%%%%%%%%%%%%%%%%%%%%%%%%%%%%%%%%%%%%%%%%%%%%%%%%%%
\subsection{Examples} \label{SS:Examples}
%%%%%%%%%%%%%%%%%%%%%%%%%%%%%%%%%%%%%%%%%%%%%%%%%%%%%%%%%%%%%%%%%%%%%%%%%%%

%The main example we have in mind is a foliation by Riemann surfaces in
%the projective space $\P^k$ described in the introduction.

\begin{example}\label{Ex:Suspensions}\rm 
 {\bf Suspension:} The simplest examples of laminations are obtained by the
process of suspension.  Now  we present these examples in our context \cite{CamachoNetoSad85}. Let $S$ be a compact Riemann surface of genus $g \geq 2.$ Consider a homomorphism $h :\  \pi_1 (S)\to\Aut(\P^1 ).$
Let $\phi:\ \D\to S$ be a universal covering map. Note that     $\pi_1(S)$  may be considered as   a subgroup of  $\Aut(\D).$ Thus $h$ induces a natural
action $\tilde h$ on $\D\times \P^1 .$ More precisely:
\begin{eqnarray*} \tilde h: \pi_1  (S)&\to& \Aut(\D \times \P^1 ),\\
\tilde h [\alpha](z, w) &:=& ([\alpha] \cdot z, h[\alpha] \cdot w).
\end{eqnarray*}
Observe that the action of $\tilde h$ on $\D\times \P$ is free and properly discontinuous. Therefore, we can  consider the
manifold $M = M_h := \D \times \P^1 /\tilde h$ which is the quotient by the above action. The  map  $\phi:\ \D\to S$  induces a
natural projection $\pi : M \to S.$ Moreover,  the trivial foliation on $\D \times \P^1 ,$ with leaves $\D \times \{w\}$
induces a foliation $\Fc_h$ on $M,$ whose leaves are coverings of $S,$ hence hyperbolic
Riemann surfaces. So to any representation $h$ of $\pi_1 (S)$ into $\PSL(2, \C)$ corresponds
a foliation.  E. Ghys \cite{Ghys} shows  that $M_h$ is a projective surface.

\end{example}

\begin{example}\rm 
 {\bf Levi flats (or  equivalently  Cauchy-Riemann foliations):}
 Let $M$ be a complex surface, $\Tan (M)$ be its tangent bundle and $J$ be the
endomorphism of $\Tan( M)$ given by the complex structure (satisfying $J^2 = -\id$).
 Given a real
hypersurface  $X$
in  $M,$ we define the Cauchy-Riemann distribution on
 $X$ as follows. To each point
$x\in X$
we associate  the unique complex
line contained in $\Tan_x(X),$ i.e. the distribution $\Tan(X)\cap 
J\Tan(X).$ The hypersurface $X$
is called Levi-flat if the Cauchy-Riemann distribution is integrable in the sense of Frobenius. This means that through any point of
$X$
passes
a non-singular holomorphic curve of $M$
that is completely contained in $X.$ These curves
correspond then to the leaves of a foliation on $X,$ called the Cauchy-Riemann foliation or CR foliation. The condition that ensures a real hypersurface to be
Levi-flat can be characterized by the vanishing of the Levi form of $X.$

\end{example}

 %Now  we   discuss some example of 
%laminations.

\begin{example}\rm 
 {\bf Singular holomorphic  foliations  on $\P^k$ with $k\geq 2$ :}  Let $\pi:\ \C^{k+1}\setminus\{0\}\to\P^k$ denote  the  canonical projection.  Let $\Fc$ be a
 singular holomorphic  foliations  on $\P^k,$ 
 It can be shown that  $\pi^*\Fc$ is  a singular foliation on $\C^{k+1}$ associated to a vector field $Z$  of the  form
 $$
 Z:=\sum_{j=0}^k F_j(z){\partial\over\partial  z_j},
 $$
 where the  $F_j$ are homogeneous  polynomials of degree $d\geq 1.$ We call  $d$ the {\it degree} of the  foliation. 
 Here $\pi^*\Fc$ should be understood  as a foliation  such that  each leave  is contained in  the preimage of a leaf of $\Fc$ by  $\pi,$ and  that  the foliation
 is   invariant under  all  dilatation  maps $A_\lambda:\ \C^{k+1}\setminus\{0\}\to\C^{k+1}\setminus\{0\}$ 
 defined by  $z\mapsto\lambda z,$  with $\lambda\in\C\setminus \{0\}.$

 In dimension $k=2$, the number of tangencies of a generic line with a foliation is  exactly  its degree.
%In higher dimension $k\geq 2,$ the tangencies of a foliation of degree $d$ with a
%generic hyperplane is an algebraic hypersurface of degree $d$ in the hyperplane.

 For $d\geq 2,$ let $\Fc_d(\P^k)$ be the  space of singular holomorphic  foliations of degree $d$ in $\P^k.$
 Using  the above  form of $Z,$  we can show that  $\Fc_d(\P^k)$  can be canonically identified  with a Zariski open subset of $\P^N,$ where
 $N:=(d+k+1){(d+k-1)!\over (k-1)! d!} -1 $ (see \cite{Brunella06}).

 A point $x\in \P^k$
 is a   singularity of $\Fc$ if $F(x)$ is colinear with $x,$ i.e., if $x$ is either an indeterminacy point or a fixed  point  of
 $f=[F_0:\ldots: F_k]$ as a meromorphic map in $\P^k.$
 If $ f$ is non
holomorphic, then its indeterminacy set is analytic of codimension $\geq  2,$ it can be of
positive dimension when $k \geq  3.$ Assume that $f$ is holomorphic. To count the fixed
points we only need to apply the B\'ezout theorem to the equations $F_j (z) - t^{d-1} z_j = 0$
in $\P^{k+1}$ with homogeneous coordinates $[z : t]$ and observe that $[0 : \ldots : 0 : 1]$ is a
solution. The number of fixed points counted with multiplicity is ${d^{k+1}-1\over  d-1} -1.$ So the singularity set, $\Sing(\Fc),$ of any holomorphic foliation $\Fc\in \Fc_d(\P^k)$ is always nonempty.

 \end{example}

 The  next result describes  some typical properties of a generic foliation $\Fc\in\Fc_d(\P^k).$
 
 \begin{theorem}\label{T:generic} Let $d,k>1$.

  \begin{enumerate}
  \item {\rm (Jouanolou \cite{Jouanolou}, Lins Neto-Soares \cite{NetoSoares}).}
  There is a real Zariski dense open set $\mathcal H(d)\subset  \Fc_d(\P^k)$ such that for every $\Fc\in \mathcal H(d),$
  all  the singularities of $\Fc$ are    hyperbolic and  $\Fc$ does not possess any invariant algebraic  curve.
  \item       {\rm (Glutsyuk  \cite{Glutsyuk},  Lins Neto \cite{Neto}).}
  If all  the singularities of a  foliation $\Fc\in \Fc_d(\P^k)$ are    non-degenerate, then $\Fc$ is hyperbolic.
   \item  {\rm (Brunella \cite{Brunella06}).}   If 
  all  the singularities of a  foliation $\Fc\in \Fc_d(\P^k)$ are    hyperbolic and  $\Fc$ does not possess any invariant algebraic  curve, then $\Fc$ admits no  nontrivial  directed positive closed current.
  \end{enumerate}
\end{theorem}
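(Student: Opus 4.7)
The statement bundles three independent genericity and rigidity results attributed to different authors, so the plan is to treat the three assertions in turn, importing the method of each cited paper. The overall strategy is: (1) exhibit each defining condition as cut out by a (real) Zariski open, nonempty locus and intersect; (2) propagate hyperbolicity of the leafwise uniformization from the local normal form at non-degenerate singularities; (3) use the very rigid local structure of a directed positive closed current near a hyperbolic singularity to conclude that its support must be an invariant algebraic curve.

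For part (1), I would first recall that $\Fc_d(\P^k)$ sits as a Zariski open subset of $\P^N$. Hyperbolicity of a singularity $a$ is determined by the eigenvalues $\lambda_1,\ldots,\lambda_k$ of the Jacobian of the defining vector field at $a$: requiring all $\lambda_j\neq 0$ and all ratios $\lambda_j/\lambda_l\notin\R$ for $j\neq l$ is a real Zariski open condition on the coefficients of $\Fc$, and the singularities themselves depend algebraically on the coefficients thanks to the B\'ezout count recalled in the text. It therefore suffices to exhibit a single $\Fc\in\Fc_d(\P^k)$ with all singularities hyperbolic; a small deformation of a generic linear foliation will do. For the second condition I would follow Jouanolou: the set of foliations in $\Fc_d(\P^k)$ admitting an invariant algebraic curve of a fixed degree $\leq D$ is a constructible subset of $\P^N$; Lins Neto--Soares then bound $D$ a priori in terms of $d$ and $k$, which reduces matters to a finite union of algebraic conditions, and one constructs one explicit $\Fc$ (e.g.\ of Jouanolou type $z_0^d\partial/\partial z_1+\cdots+z_{k-1}^d\partial/\partial z_k$) with no invariant algebraic curve to show the open complement is nonempty. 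The main obstacle here is precisely the a priori degree bound.

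For part (2), the plan is to argue by contradiction: suppose some leaf $L$ of $\Fc\in\Fc_d(\P^k)$ is parabolic, and take an entire parametrization $f\colon\C\to L$. Non-degeneracy of each singularity means that near every singular point $a\in\Sing(\Fc)$ a holomorphic linearization or at worst Poincar\'e--Dulac normal form is available (in the Poincar\'e domain it is genuine linearization; in the Siegel domain one controls resonances), so the local separatrices and the local Poincar\'e metric on nearby leaves are explicit. I would then build a global leafwise conformal metric of negative curvature on $X\setminus\Sing(\Fc)$ by patching a Fubini--Study component on the complement of neighborhoods of $\Sing(\Fc)$ with the linearized Poincar\'e metric on these neighborhoods, and apply a Liouville/Ahlfors-type estimate to $f$: the pullback by $f$ of this metric would have to be flat on $\C$, contradicting the strictly negative curvature coming from the linearized model. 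The delicate step is to show that the local model at each non-degenerate singularity contributes enough negative curvature to win globally.

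For part (3), suppose for contradiction that $T$ is a nonzero directed positive closed current of bidimension $(1,1)$ on $\P^k$. The plan is to combine a local classification of such currents at a hyperbolic singularity with the hypothesis that no invariant algebraic curve exists. Near a hyperbolic singular point, after linearization we have $Z=\lambda_1 z_1\partial_{z_1}+\lambda_2 z_2\partial_{z_2}$ with $\lambda_1/\lambda_2\notin\R$; a Fourier/disintegration analysis on the $(\R^\ast)^2$-action generated by the real and imaginary flows of $Z$, together with the requirement that $T$ be directed and $d$-closed (not merely $\ddc$-closed), forces $T$ to be a combination of the two currents of integration along the separatrices $\{z_1=0\}$ and $\{z_2=0\}$. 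Thus the support of $T$ consists, in a neighbourhood of each hyperbolic singularity, of at most two local analytic curves, and gluing these along leaves gives a pure one-dimensional analytic set $Y\subset\P^k$ invariant by $\Fc$; by Remmert--Stein (or by direct use of Forn\ae ss--Sibony extension through isolated singularities), $Y$ extends to an algebraic invariant curve of $\Fc$, contradicting the hypothesis. The hardest step here is the local classification of directed positive closed currents at the hyperbolic singularity, which requires a careful use of Skoda's inequality and the flow-invariance of $T$ to rule out any diffuse transverse component.
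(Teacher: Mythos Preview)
The paper does not prove Theorem~\ref{T:generic}: it is stated as a compendium of results from the cited references (Jouanolou, Lins~Neto--Soares, Glutsyuk, Lins~Neto, Brunella) and no argument is given in the text. There is therefore no ``paper's own proof'' to compare your proposal against; what can be assessed is whether your sketches match the methods of the cited sources.

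Your outlines for (1) and (2) are broadly faithful to the cited works. For (1), the hyperbolicity of singularities is indeed a real Zariski open condition, and the Jouanolou-type example shows non-emptiness; your identification of the a~priori degree bound as the crux is accurate. For (2), constructing a leafwise metric of negative curvature by patching a Fubini--Study piece with the model metric near each non-degenerate singularity is exactly Lins~Neto's strategy; note however that non-degenerate singularities need not be linearizable, so one works with a metric modeled on the linear part rather than on an actual linearization.

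For (3) your local claim is correct in spirit but overcomplicated, and the global step has a genuine gap. Locally, the point is simply that a directed positive \emph{closed} current is a transverse invariant measure, and near a hyperbolic singularity the holonomy around each separatrix is a genuine contraction or dilation (since $|e^{2\pi i\lambda_j/\lambda_l}|\neq 1$), so any invariant measure on a small transversal is a Dirac mass at the separatrix; no Fourier analysis is needed. The gap is in your passage from ``near each singularity the support of $T$ lies in the separatrices'' to ``the support of $T$ is a global analytic set''. This requires knowing that every leaf in $\supp(T)$ accumulates on $\Sing(\Fc)$, which you have not justified. One must rule out the possibility of a compact minimal set of $\Fc$ in $\P^k\setminus\Sing(\Fc)$ carrying part of $T$; this uses an index-theoretic argument (in $\P^2$, via Camacho--Sad or Baum--Bott) showing that any closed invariant set must meet the singular locus. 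Without this step, your Remmert--Stein/Chow conclusion does not follow.
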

  
  Moreover, Loray-Rebelo \cite{LorayRebelo}  constructed a nonempty  open  set  $\mathcal U(d)$ of $\Fc_d(\P^k)$
   such that every leaf of $\Fc\in \mathcal U(d)$ is  dense.
 By  Theorem \ref{T:generic}, Theorem \ref{T:existence_harmonic_currents}  applies to every generic foliation in  $\P^k$ with a given degree $d>1.$

 %%%%%%%%%%%%%%%%%%%%%%%%%%%%%%%%%%%%%%%%%%%%%%%%%%%%%%%%%%%%%%%%%%%
 \subsection{Sullivan's dictionary}\label{SS:Sullivan}
 %%%%%%%%%%%%%%%%%%%%%%%%%%%%%%%%%%%%%%%%%%%%%%%%%%%%%%%%%%%%%%%%%%%

   Here is  part of the  correspondence between the  world of maps and that of laminations/foliations.

%\begin{tabular}{|l|l|l|l|}
%\hline  Notion & Maps    &   Laminations/Foliations     \\
%\hline
% Orbit of  a point $x$ &  the sequence $x,\ f(x),\ f^2(x), \ldots,$                     &  Leaf passing through  $x$: $L_x$\\
%\hline
%   &   Only one  orbit starting at a given point   &  infinitely many Brownian trajectories  \\
%\hline
%\end{tabular}

  \begin{tabular}{|l|l|l|l|}
  \hline
  \multicolumn{3}{|c|}{Dictionary} \\
  \hline
  Notion  & Maps &  Laminations/Foliations   \\ 
  \hline
  \multirow{2}{*}{Orbit of  a point $x\in X\setminus E$ } & the sequence $x,\ f(x), f^2(x),$            & Leaf  $L_x$ \\
    & $\ldots,f^n(x),\ldots$& Leafwise continuous paths $\Omega_x$ \\% $\omega\in\Omega_x$ \\
    & (only one orbit) & (infinitely many paths $\omega\in\Omega_x$) \\
    \hline
  \multirow{2}{*}{Orbit of  a point $x\in X\setminus E$}
    & Dirac mass at $x$ & Wiener measure $W_x$  on $\Omega_x$ \\
    &  &  $\quad$ at $x\in\Hyp(\Fc)$\\
      \hline
  Counting time & linear time $t\in \N$ & hyperbolic $t\in\D$ \\ \hline
  \multirow{2}{*} {Nonsingular}& continuous map & Riemann surface lamination \\
  &&\\
   objects & smooth map  & smooth  lamination  \\
 \hline
  \multirow{2}{*} {Singular holomorphic}& meromorphic map & singular holomorphic foliation \\
  &&\\
   objects & indeterminacy point/set & singular point/set \\
 \hline
 \multirow{3}{*} {Invariant dynamical}& invariant measures & directed posi. closed currents \\
  &&\\
   objects &  & directed posi. harmonic currents \\
   \ldots &\ldots&\ldots\\
 \hline
\end{tabular}
 
 \medskip
 
 By  this  dictionary,  the notion  of  {\it orbit  of a  point $x$} has only one entry in the world of maps (that is, the unique  sequence of points  $x,\ f(x), f^2(x),$  $\ldots,f^n(x),\ldots$). 
 However, this notion
 has two entries in the world of laminations/foliations. The first  one is the  whole  leaf $L_x$  which is  a Riemann surface. The second  entry is  the space $\Omega_x$
 of leafwise continuous  paths $\omega:\ \R^+\to L_x$  starting  from $x,$  i.e.   $\omega(0)=x.$
 
 Similarly,   the  entry {\it invariant positive measures} in the world of maps  possesses 
 two entries in the world of laminations/foliations. The first  one is  the concept of directed positive closed currents, the second entry is the notion of
 directed positive harmonic currents. Althouth  the  first entry is very close  to the original notion of  invariant measures, it turns out that the second entry is
 more appropriate because  laminations/foliations which have invariant measures are rather scarce.  Moreover,   directed positive harmonic currents provide
 a natural framework for  a good theory to be developed.
  
%%%%%%%%%%%%%%%%%%%%%%%%%%%%%%%%%%%%%%%%%%%%%%%%%%%%%%%%%
%%%%%%%%%%%%%%%%%%%%%%%%%%%%%%%%%%%%%%%%%%%%%%%%%%%%%%%%%  
 \section{Random and Operator Ergodic Theorems}
 \label{S:Random}
%%%%%%%%%%%%%%%%%%%%%%%%%%%%%%%%%%%%%%%%%%%%%%%%%%%%%%%%%
%%%%%%%%%%%%%%%%%%%%%%%%%%%%%%%%%%%%%%%%%%%%%%%%%%%%%%%%% 
In this  section we follow the   expositions  given in Sections 2.2, 2.4 and 2.5 in \cite{NguyenVietAnh17a} and in Subsection 2.4 in \cite{NguyenVietAnh18b}. We are partly inspired by the constructions
given in 
\cite{CandelConlon2, Candel03}.
The $\sigma$-algebra  generated  by a family $\Sc$ of subsets  of $\Omega$ is, by definition, the  smallest  $\sigma$-algebra  
containing  $\Sc.$ 
  %%%%%%%%%%%%%%%%%%%%%%%%%%%%%%%%%%%%%%%%%%%%%%%%%%%%%%%%%%
  \subsection{Wiener measures}\label{SS:Wiener}
  %%%%%%%%%%%%%%%%%%%%%%%%%%%%%%%%%%%%%%%%%%%%%%%%%%%%%%%%%%

%Let  $(\Omega,\Bc,\nu)$ be  a probability  measure space,  the  completion of  the  $\sigma$-algebra $\Bc$
%is, by definition, the $\sigma$-algebra $\Ac$  generated  by all sets  of the form
%$A= B\cup  N,$  where $B\in \Bc$ and  $N\subset N_0,$ $N_0\in\Bc$ and $\nu(N_0)=0.$ 

Let $\Fc=(X,\Lc,E)$ be  a   Riemann surface lamination with singularities endowed  with the leafwise Poincar\'e metric $g_P.$
Recall  from Subsection \ref{SS:Sample-path_space}  that  $\Omega:=\Omega(\Fc) $  is    the sample-path space associated to  $\Fc$
  and that  for each $x \in X\setminus E,$
 $\Omega_x=\Omega_x(\Fc)$ denotes the  space  of all continuous
leafwise paths  starting at $x$ in $X\setminus E .$ 
Garnett developed in \cite{Garnett} a  theory of leafwise  Brownian  motion in this  context  by constructing
a $\sigma$-algebra  $(\Omega,\widetilde \Ac)$ together  with a family of Wiener measures (see also \cite{Candel03, CandelConlon2}).
Now  recall briefly her construction. 
A {\it  cylinder  set (in $\Omega$)} is a 
 set of the form
$$
C=C(\{t_i,B_i\}:1\leq i\leq m):=\left\lbrace \omega \in \Omega:\ \omega(t_i)\in B_i, \qquad 1\leq i\leq m  \right\rbrace,
$$
where   $m$ is a positive integer  and the $B_i$ are Borel subsets of $X\setminus E,$ 
and $0\leq t_1<t_2<\cdots<t_m$ is a  set of increasing times.
In other words, $C$ consists of all paths $\omega\in  \Omega$ which can be found within $B_i$ at time $t_i.$
For  each point $x\in \Hyp(\Fc),$ let
\begin{equation}\label{eq_formula_W_x_without_holonomy}
W_x(C ) :=\Big (D_{t_1}(\chac_{B_1}D_{t_2-t_1}(\chac_{B_2}\cdots\chac_{B_{m-1}} D_{t_m-t_{m-1}}(\chac_{B_m})\cdots))\Big) (x),
\end{equation}
where $C:=C(\{t_i,B_i\}:1\leq i\leq m)$ as  above,  $\chac_{B_i}$
is the characteristic function of $B_i$ and $D_t$ is the leafwise diffusion operator
given  by  (\ref{e:diffusions}).
Let  $\widetilde\Ac=\widetilde\Ac (\Fc)$ be  the $\sigma$-algebra   generated by all cylinder sets.
It can be proved that $W_x$ extends  to a probability measure on  $(\Omega,\widetilde\Ac).$
  
In the monograph \cite{NguyenVietAnh17a} we  introduce  another   $\sigma$-algebra  $\Ac$ on $\Omega,$ which is bigger (finer) than $\widetilde\Ac.$ In fact, 
$\Ac$  takes into account the  holonomy phenomenon,  whereas $\widetilde\Ac$ does  not so.  Here is our construction in the present context.
The  {\it covering lamination} $\widetilde \Fc=(\widetilde X,\widetilde\Lc)$  of  a  Riemann surface lamination with singularities $\Fc$  is, in some  sense, its  universal cover.
More  specifically,
for  every leaf $L$ of $\Fc$ (hyperbolic or  parabolic) and every point  $x\in L,$  let $\pi_1(L,x)$ denotes   the first  fundamental  group of
 all continuous closed paths $\gamma:\  [0,1]\to L$ based  at $x,$ i.e. $\gamma(0)=\gamma(1)=x.$  Let   $[\gamma]\in \pi_1(L,x)$ be  the   class of   a    closed path $\gamma$ based  at $x.$
 Then the pair  $(x,[\gamma])$ represents      a point  of   $ \widetilde X.$
  Thus
the set of points  $ \widetilde X$ of $\widetilde \Fc$  is well-defined. The  leaf  $\widetilde L$ passing through a given point $(x,[\gamma])\in\widetilde X,$ is  by definition, the set
$$
 \widetilde L:=\left\lbrace (y, [\delta]):\ y\in L_x,\  [\delta]\in \pi_1(L,y)  \right\rbrace,
$$
which is the universal cover of $L_x.$
 We  put the following  topological structure on  $\widetilde X$ by describing
  a  basis of open  sets. Such a  
   basis  consists of all  sets $\Nc(\U,\alpha).$  Here,
  $\U$ is an open subset of  $X\setminus E$ and $\alpha:\ \U\times [0,1]\to X\setminus E$ is   a  continuous  function such that  $\alpha_x:=\alpha(x,\cdot)$ 
  is a  closed  path in $L_x$ based at  $x$ for each $x\in \U,$ and  
  $$
  \Nc(\U,\alpha):=  \left\lbrace (x,[\alpha_x]):\ x\in \U \right\rbrace.
  $$
   The projection $\pi : \widetilde X\to  X\setminus E$ is defined by $\pi(x,[\gamma]) :=x. $  It is  clear that $\pi$ is  locally homeomorphic and  is  a leafwise map.
   By pulling-back the lamination atlas $\Lc$ of $ \Fc $ via  $\pi,$   we obtain  a  natural  lamination atlas $\widetilde \Lc$ for 
   the Riemann surface lamination
$  \widetilde \Fc .$ 
 Denote  by $\widetilde \Omega$ the sample-path space $\Omega( \widetilde \Fc)$ associated with the  Riemann surface lamination   
 $ \widetilde \Fc .$   Similarly, by pulling-back the the leafwise Poincar\'e metric $g_P$  defined on $\Hyp(\Fc)$  via  $\pi,$   we obtain  a  natural  leafwise metric $\pi^*g_P$  defined  
   on the hyperbolic part $\Hyp(\widetilde \Fc)$ of 
$  \widetilde \Fc .$

Let 
$ x\in X\setminus E$ and  $\tilde x$ an arbitrary point in $\pi^{-1}(x)\subset \widetilde X.$
%Recall that $\Omega_x $ is the  space  of all   paths $\omega$ in $\Omega$  starting at $x,$  i.e.,
%$ \omega(0)=x.$
 Similarly as in (\ref{e:Omega_x}), let   $\widetilde\Omega_{\tilde x}=\Omega_{\tilde x}( \widetilde\Fc)$ be the  space  of all  paths  
in $\widetilde \Omega$
starting at $\tilde x.$ 
Every  path $\omega\in \Omega_x$ lifts uniquely  to
a path $\tilde\omega\in \widetilde\Omega_{\tilde x}$  in the sense  that $\pi\circ \tilde\omega=\omega.$
In what follows this bijective lifting  is  denoted by $\pi^{-1}_{\tilde x}:\  \Omega_x\to \widetilde\Omega_{\tilde x}.$ So  $\pi\circ (\pi^{-1}_{\tilde x}(\omega))=\omega,$
 $\omega\in \Omega_x.$ 
 % locally  homeomorphic.
  % We  construct  a $\sigma$-algebra   $\widetilde\Ac$ on $\widetilde \Omega:=\Omega(\widetilde M,\widetilde \Fc)$
  %which is  the $\sigma$-algebra generated by all cylinder sets in  $\widetilde \Omega.$
   %which is  the $\sigma$-algebra generated by all cylinder sets in  $\widetilde \Omega.$
\begin{definition} \label{defi_algebras_Ac} Let   
  $ \Ac=\Ac(\Fc)$ be  the  $\sigma$-algebra generated by
all sets  of  following family
 $$ \left  \lbrace \pi\circ \tilde A:\ \text{cylinder set}\ \tilde A\  \text{in} \ \widetilde \Omega    \right\rbrace,$$ 
  where  $\pi\circ \tilde A:= \{ \pi\circ \tilde \omega:\ \tilde\omega\in \tilde A\}.$
    \end{definition}
 Observe that    $\widetilde \Ac\subset  \Ac$ and that the  equality holds if  every   leaf  of  the foliation is
homeomorphic to the  disc $\D.$

 Now  we  construct  a family  $\{W_x\}_{x\in \Hyp(\Fc)}$ of probability Wiener measures   on $(\Omega,\Ac).$ 
  Let $x\in \Hyp(\Fc)$ and % $\tilde C:=C(\{t_i,\tilde B_i\})$  a   cylinder on  $\tilde \Omega$
  %and  $C:=\pi(\tilde C)$ its projection  on $\Omega.$
$C$ an element of $\Ac.$ Then  we  define the  so-called {\it  Wiener measure}  $W_x$ by the following formula
\begin{equation}\label{eq_formula_W_x}
 W_x(C):= W_{\tilde x}( \pi^{-1}_{\tilde x} C),
  \end{equation}
  where   
   $\tilde x$ is  an arbitrary point in $\pi^{-1}(x),$ and  
   $$\pi^{-1}_{\tilde x}C:=\left\lbrace \pi^{-1}_{\tilde x}\omega:\  \omega\in C\cap \Omega_x\right\rbrace,
$$
and  $ W_{\tilde x}$ is the  probability measure  on  $(\widetilde \Omega,\widetilde \Ac(\widetilde \Fc)) $ which was  defined by (\ref{eq_formula_W_x_without_holonomy}).
Given  a positive  finite Borel  measure $\mu$  on $X$ which gives no mass to $\Par(\Fc)\cup E,$  consider the   measure  $\bar\mu$ on $(\Omega,\Ac)$
defined by   
   \begin{equation}\label{eq_formula_bar_mu}
   \bar\mu(A):=\int_X\left ( \int_{\omega\in A\cap   \Omega_x}  dW_x \right ) d\mu(x),\qquad  A\in\Ac. 
\end{equation}
The  measure $\bar\mu$ is  called  the  {\it Wiener measure with initial distribution $\mu$.}

 Here  are some  important  properties of $\bar\mu.$
\begin{proposition}\label{P:Wiener_measure}
We keep the above  hypotheses and notation. 
\begin{enumerate}
\item [(i)]    The value  of $W_x(C)$ defined  in (\ref{eq_formula_W_x}) is  independent of the choice
 of $\tilde x.$  Moreover, $W_x$ is a  probability  measure     
on $(\Omega,\Ac).$

\item[(ii)]   $\bar\mu$ given in (\ref{eq_formula_bar_mu}) is  a positive finite  measure on $(\Omega,\Ac)$ and
$\bar\mu(\Omega)=\mu(X).$
\end{enumerate}
 
 \end{proposition}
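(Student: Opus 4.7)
The proof naturally splits along the two claims, with the key conceptual content living in part (i); once $W_x$ is shown to be a well-defined probability measure on $(\Omega,\Ac)$, part (ii) reduces to a Fubini-type verification.

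For part (i), I would first unpack the geometric setup: two choices $\tilde x_1,\tilde x_2\in\pi^{-1}(x)$ differ by a unique deck transformation $\gamma$ of the covering $\pi:\widetilde L\to L_x$, where $\widetilde L$ is the leaf of $\widetilde\Fc$ through either $\tilde x_j$. By the uniqueness of leafwise lifts one has $\pi^{-1}_{\tilde x_2}(\omega)=\gamma\circ \pi^{-1}_{\tilde x_1}(\omega)$ for every $\omega\in\Omega_x$, so the bijection $\gamma_{*}:\tilde\omega\mapsto\gamma\circ\tilde\omega$ identifies $\pi^{-1}_{\tilde x_1}C$ with $\pi^{-1}_{\tilde x_2}C$. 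The desired equality $W_{\tilde x_1}(\pi^{-1}_{\tilde x_1}C)=W_{\tilde x_2}(\pi^{-1}_{\tilde x_2}C)$ then reduces to the deck-transformation equivariance $W_{\tilde x_2}(\gamma_{*}\tilde A)=W_{\tilde x_1}(\tilde A)$, which I would verify on the generating family of cylinder sets in $\widetilde\Omega$. On such a set, formula \eqref{eq_formula_W_x_without_holonomy} expresses $W_{\tilde x_j}$ as an iterated integral against the heat kernel of $\widetilde L$; since $\widetilde L$ is simply connected and isometric to $(\D,g_P)$ via the uniformization, this heat kernel is simply $p_\D$ (there is no summation as in \eqref{e:heat_kernel2}, a point worth emphasizing). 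The isometric action of $\gamma\in\pi_1(L_x)$ on $\D$ together with the invariance identity \eqref{e:heat_kernel3} and the change of variables $\tilde y_i=\gamma(\tilde z_i)$ then yields the desired equality. The probability claim $W_x(\Omega)=1$ drops out by inserting $B_i=\widetilde X\setminus(\text{parabolic locus})$ (equivalently, successive applications of $D_t\mathbf{1}=\mathbf{1}$ from \eqref{e:semi_group}) into \eqref{eq_formula_W_x_without_holonomy}, and $\sigma$-additivity of $W_x$ on $\Ac$ is inherited from that of $W_{\tilde x}$ on $\widetilde\Ac(\widetilde\Fc)$ via the bijection $\pi^{-1}_{\tilde x}$, which maps the generators of $\Ac$ described in Definition~\ref{defi_algebras_Ac} onto cylinder sets in $\widetilde\Omega$.

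For part (ii), I would first show that $x\mapsto\int_{A\cap\Omega_x}dW_x$ is $\mu$-measurable: on a cylinder $A=C(\{t_i,B_i\})$ it is the iterated heat-diffusion expression \eqref{eq_formula_W_x_without_holonomy} evaluated at $x$, which is measurable in $x$, and a monotone class argument propagates the measurability to all of $\Ac$. Countable additivity of $\bar\mu$ then follows by monotone convergence under the integral sign. Finally, the total mass is the cleanest step: using $W_x(\Omega)=1$ for $x\in\Hyp(\Fc)$ from (i) together with the hypothesis $\mu(\Par(\Fc)\cup E)=0$, one obtains $\bar\mu(\Omega)=\int_{\Hyp(\Fc)}W_x(\Omega)\,d\mu(x)=\mu(X)$.

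The main technical obstacle, in my view, is packaging the deck-transformation invariance in part (i) correctly: one must carefully distinguish the heat kernel of a leaf of $\Fc$ (which is the series \eqref{e:heat_kernel2}) from that of a leaf of $\widetilde\Fc$ (which is just $p_\D$), keep track of which $\sigma$-algebras one is working with, and verify that $\gamma_{*}$ sends cylinder sets to cylinder sets so that \eqref{eq_formula_W_x_without_holonomy} is genuinely applicable on both sides. Once this foundational bookkeeping is sorted, the remaining measure-theoretic content is routine.
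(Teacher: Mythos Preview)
The paper does not give a self-contained argument here; it simply cites Theorems~2.15 and~2.16 of the author's earlier monograph \cite{NguyenVietAnh17a}. Your outline is a correct reconstruction of what such a proof must contain: in part~(i) the independence of the lift $\tilde x$ indeed reduces, on cylinder sets, to the deck-transformation invariance \eqref{e:heat_kernel3} of $p_\D$ (the leaves of $\widetilde\Fc$ being simply connected, the series \eqref{e:heat_kernel2} collapses to a single term, as you note), and the measure-theoretic extension to all of $\Ac$ via $\pi^{-1}_{\tilde x}$ is the standard push-forward construction. Part~(ii) is, as you say, a routine monotone-class and Fubini verification once (i) is in place. One minor point of bookkeeping worth making explicit when you write it up: a deck transformation $\gamma$ acts only on the single leaf $\widetilde L\subset\widetilde X$, so $\gamma_*\tilde A$ is not literally a cylinder set in $\widetilde\Omega$; it agrees with one only after intersecting with $\widetilde\Omega_{\tilde x_2}$, which is harmless since $W_{\tilde x_2}$ is supported there.
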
     
  \begin{proof}
 Assertion (i)  has been proved in    \cite[Theorem 2.15]{NguyenVietAnh17a}. Assertion (ii)  has been established in
  \cite[Theorem 2.16]{NguyenVietAnh17a}.
  
\end{proof}
  
  %%%%%%%%%%%%%%%%%%%%%%%%%%%%%%%%%%%%%%%%%%%%%%%%%%%%%%%%%%%%%%%%%%%%%%%
  \subsection{Random and operator ergodic  theorems}
  \label{SS:Random_ergodic}
  %%%%%%%%%%%%%%%%%%%%%%%%%%%%%%%%%%%%%%%%%%%%%%%%%%%%%%%%%%%%%%%%%%%%%%%

Let $\Fc=(X,\Lc,E)$ be  a   Riemann surface lamination with singularities.  Recall  from \eqref{e:shift} the shift-transformations  $\sigma_t,$ $t\in\R^+$.

 \begin{theorem}\label{T:Random}
  
  \begin{enumerate}
\item [(i)] If  $\mu$ is a very weakly  harmonic measure (resp.  weakly harmonic measure), then  $\bar\mu$ is  unit time-invariant (resp.  time-invariant), that is,
$$
\int_\Omega F(\sigma_t(\omega))d\bar\mu(\omega)=\int_\Omega F(\omega)d\bar\mu(\omega),
$$
for $t=1$ (resp. for all $t\in\R^+$) and $F\in L^1(\Omega,\bar\mu).$  
   \item[(ii)]
   If  $\mu$   is a very  weakly harmonic measure, then $\mu$ is ergodic if and only if $\bar \mu$ is ergodic for $\sigma_1.$
   If moreover $\mu$ is  weakly harmonic  and  is ergodic, then  $\bar \mu$ is ergodic for  all   $\sigma_t$ with $t \in \R^+ \setminus  \{0\}.$

  \end{enumerate}
 \end{theorem}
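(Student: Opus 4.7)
My plan is to exploit the semigroup structure of the leafwise heat diffusion, and the fact that the Wiener measures $W_x$ are built from that semigroup through formula \eqref{eq_formula_W_x_without_holonomy}, then transfer invariance from the measure $\mu$ on $X$ to $\bar\mu$ on $\Omega$.

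For assertion (i), I would first verify the identity on the generating class of cylinder sets. Fix a cylinder set $C=C(\{t_i,B_i\}:1\le i\le m)$. Then $\sigma_t^{-1}(C)$ is (up to identifications on $\widetilde X$) the cylinder set $C(\{t_i+t,B_i\}:1\le i\le m)$. Using the semigroup property \eqref{e:semi_group} and the definition \eqref{eq_formula_W_x_without_holonomy}, a direct computation gives
\begin{equation*}
W_x(\sigma_t^{-1}C)=D_t\bigl(y\mapsto W_y(C)\bigr)(x),\qquad x\in\Hyp(\Fc).
\end{equation*}
Integrating against $\mu$, the definition \eqref{eq_formula_bar_mu} of $\bar\mu$ yields
\begin{equation*}
\bar\mu(\sigma_t^{-1}C)=\int_X D_t\bigl(y\mapsto W_y(C)\bigr)(x)\,d\mu(x).
\end{equation*}
When $t=1$ and $\mu$ is very weakly harmonic, or when $t\in\R^+$ is arbitrary and $\mu$ is weakly harmonic, the right-hand side equals $\int_X W_x(C)\,d\mu(x)=\bar\mu(C)$. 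To pass from cylinder sets to the full $\sigma$-algebra $\Ac$, I would invoke a monotone class argument: the collection of $A\in\Ac$ with $\bar\mu(\sigma_t^{-1}A)=\bar\mu(A)$ is a Dynkin class containing the $\pi$-system of cylinder sets (the lift to $\widetilde\Omega$ accommodates the holonomy), hence it equals $\Ac$. Standard approximation by simple functions then upgrades this to $L^1(\Omega,\bar\mu)$.

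For assertion (ii), one direction is straightforward. Suppose $\bar\mu$ is $\sigma_1$-ergodic and let $Z\subset X$ be a leafwise saturated Borel set. Since a Brownian path stays in its leaf, $A:=\{\omega\in\Omega:\omega(0)\in Z\}$ is $\sigma_t$-invariant for every $t\ge 0$; moreover $\bar\mu(A)=\mu(Z)$ and $\bar\mu(\Omega)=\mu(X)$ by Proposition~\ref{P:Wiener_measure}(ii). Ergodicity of $\bar\mu$ forces $\mu(Z)\in\{0,\mu(X)\}$.

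Conversely, assume $\mu$ is ergodic and very weakly harmonic, and let $A\in\Ac$ be $\sigma_1$-invariant. Put $f(x):=W_x(A)$ for $x\in\Hyp(\Fc)$. By the Markov property of the family $\{W_x\}$ — itself a consequence of \eqref{eq_formula_W_x_without_holonomy} and \eqref{e:semi_group} — the invariance $\sigma_1^{-1}A=A$ translates into $D_1 f = f$ on $\Hyp(\Fc)$, hence $f$ belongs to the fixed-point set of $D_1$ on $L^\infty(\mu)$. The key step is to show that this fixed-point set consists of leafwise-constant functions modulo $\mu$-null sets. Once that is established, ergodicity of $\mu$ (applied to the leafwise saturated level sets $\{f>c\}$) yields $f=$ const $\mu$-a.e., so $\bar\mu(A)\in\{0,\mu(X)\}$. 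For the weakly harmonic case, the same scheme applies to arbitrary $t>0$: any $\sigma_t$-invariant $A$ produces $f$ with $D_sf=f$ for all $s\ge 0$, equivalently $\Delta_P f=0$ leafwise.

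The main obstacle is precisely the step ``$D_1 f = f \Rightarrow f$ leafwise constant $\mu$-a.e.''. I would tackle it by lifting to the covering lamination $\widetilde\Fc$, where each leaf becomes $\D$, and invoking the classical fact that bounded $D_1$-harmonic functions on $(\D,g_P)$ are $\Delta_P$-harmonic, together with the Liouville-type property supplied by the averaging operator $A_R$ of \eqref{e:A_R}: the identity $\int A_R f\,d\mu=\int f\,d\mu$ (Lemma~\ref{L:mu_Dt_invariant}, extended here from quasi-harmonicity to the weakly harmonic setting via the computations in Subsection~\ref{SS:Directed_hamonic_currents}) combined with the mean-value property on $(\D,g_P)$ forces $f\circ\phi_x$ to coincide $\mu$-a.e.\ with its Poincar\'e averages at every radius, which is only possible if $f\circ\phi_x$ is constant. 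Handling measurability under holonomy, and carefully justifying the passage from the $\sigma_1$-ergodic statement in the very weakly harmonic setting to the $\sigma_t$-ergodic statement for all $t>0$ in the weakly harmonic setting, will be the delicate parts of the argument.
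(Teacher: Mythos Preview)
The paper does not give a self-contained argument here; it simply cites Theorems~2.20 and~4.6 of the memoir \cite{NguyenVietAnh17a}. Your outline for part~(i) is the standard cylinder-set-plus-monotone-class reduction and is correct.

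For part~(ii) your framework is right, but the hard direction has two genuine gaps. First, there is no Liouville property on $(\D,g_P)$: the Poincar\'e disc carries a wealth of non-constant bounded harmonic functions (Poisson extensions of arbitrary $L^\infty(\partial\D)$ data), so showing that $f\circ\phi_x$ is $\Delta_P$-harmonic yields no constancy whatsoever, and the global identity $\int A_Rf\,d\mu=\int f\,d\mu$ --- whose proof in Lemma~\ref{L:mu_Dt_invariant} actually requires $\mu$ to be quasi-harmonic, not merely (very) weakly harmonic --- does not furnish the pointwise Liouville phenomenon you invoke. Second, and independently, even granting $f\equiv c$ constant $\mu$-a.e.\ you would only obtain $\bar\mu(A)=c\,\mu(X)$; nothing you wrote rules out $c\in(0,1)$. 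What is missing is a $0$--$1$ law for $f=W_\cdot(A)$: from $\sigma_1$-invariance of $A$ and the Markov property one has that $n\mapsto f(\omega(n))$ is a bounded martingale under each $W_x$ converging to $\mathbf{1}_A(\omega)$; since $\omega(n)$ has law $\mu$ under $\bar\mu$ by part~(i), dominated convergence gives $\int f^2\,d\mu=\int f\,d\mu$, whence $f\in\{0,1\}$ $\mu$-a.e. Only then does $D_1f=f$, together with strict positivity of the leafwise heat kernel and the strong maximum principle, make $\{f=1\}$ leafwise saturated, so that ergodicity of $\mu$ finishes the argument.
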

\proof
 Part (i) follows from  \cite[Theorem  2.20]{NguyenVietAnh17a}  where  the general case of an  $N$-real or complex dimensional  lamination  with a general leafwise metric was treated.
 
Part (ii)  is a consequence of   \cite[Theorem 4.6]{NguyenVietAnh17a}.
 \endproof
   The following Operator  Ergodic Theorem     may be  regarded as Akcoglu's ergodic theorem %
   (see Theorem 2.6 in \cite[p. 190]{Krengel})
in the  context of laminations.
\begin{theorem}\label{T:Akcoglu}
 Let $\mu$ be a    very weakly  harmonic  measure which  is  ergodic. Then the following properties hold.
 \begin{enumerate}
\item[(i)] If  $D_1f=f$  $\mu$-almost everywhere for some $f\in L^1(X,\mu),$  then  $f=\const$ $\mu$-almost everywhere.

\item[(ii)] For every $f\in L^1(X,\mu),$   ${1\over n}\sum_{j=0}^{n-1} D_jf$ converges  to   $\int_X f d\mu $  $\mu$-almost everywhere.
\end{enumerate}
If  moreover   $\mu$ is  weakly  harmonic, then  the following two properties hold for all $t_0>0.$
 \begin{enumerate}
\item[(i')] If  $D_{t_0}f=f$  $\mu$-almost everywhere for some $f\in L^1(X,\mu),$  then  $f=\const$ $\mu$-almost everywhere.

\item[(ii')] For every $f\in L^1(X,\mu),$   ${1\over n}\sum_{j=0}^{n-1} D_{jt_0}f$ converges  to   $\int_X f d\mu $  $\mu$-almost everywhere.
\end{enumerate}
\end{theorem}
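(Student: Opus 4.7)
The strategy is to transfer the question from the heat diffusions on $X$ to the Wiener shift dynamical system $(\Omega,\bar\mu,\sigma_t)$, whose invariance and ergodicity are supplied by Theorem~\ref{T:Random}, and then to invoke the classical Birkhoff ergodic theorem on $\Omega$. The key bridge is the identity $\int_{\Omega_x} h(\omega(t))\,dW_x(\omega)=D_t h(x)$, which is an immediate consequence of \eqref{eq_formula_W_x_without_holonomy}.

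For part~(i), assume $D_1 f=f$ $\mu$-a.e.\ with $f\in L^1(X,\mu)$. I first reduce to the case $f\in L^\infty(X,\mu)$: since $\mu$ is $D_1$-invariant, $D_1 f^\pm\geq f^\pm$ combined with equality of $\mu$-integrals forces $D_1 f^\pm=f^\pm$, so one may assume $f\geq 0$; the truncations $f_n:=f\wedge n$ then satisfy $0\leq D_1 f_n\leq f_n$ and $\int D_1 f_n\,d\mu=\int f_n\,d\mu$, whence $D_1 f_n=f_n$ $\mu$-a.e. Once $f$ is bounded, set $F(\omega):=f(\omega(0))$ and compute
\[
\int_\Omega (F\circ\sigma_1-F)^2\,d\bar\mu
=\int_X\bigl(D_1(f^2)-2f\,D_1 f+f^2\bigr)d\mu
=\int_X\bigl(D_1(f^2)-f^2\bigr)d\mu=0,
\]
the first equality by Fubini via \eqref{eq_formula_bar_mu}, the second by $D_1 f=f$, the third by $D_1$-invariance of $\mu$. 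Hence $F\circ\sigma_1=F$ $\bar\mu$-a.e., and the $\sigma_1$-ergodicity of $\bar\mu$ (Theorem~\ref{T:Random}(ii)) forces $F$, and therefore $f$, to be $\mu$-a.e.\ constant.

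For part~(ii), apply the classical Birkhoff ergodic theorem to the ergodic system $(\Omega,\bar\mu,\sigma_1)$ with observable $F(\omega):=f(\omega(0))\in L^1(\bar\mu)$: for $\bar\mu$-a.e.\ $\omega$ one has $\tfrac{1}{n}\sum_{j=0}^{n-1}f(\omega(j))\to\int_X f\,d\mu$. Unpacking via \eqref{eq_formula_bar_mu}, this a.e.\ convergence holds for $\mu$-a.e.\ $x$ and $W_x$-a.e.\ $\omega\in\Omega_x$. If $f\in L^\infty(\mu)$, bounded convergence under $W_x$ then gives $\tfrac{1}{n}\sum_{j=0}^{n-1}D_j f(x)\to\int f\,d\mu$ for $\mu$-a.e.\ $x$. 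To extend to $f\in L^1(\mu)$, I invoke the Dunford--Schwartz maximal inequality for $D_1$, which is a positive $L^1$--$L^\infty$ contraction of $L^1(\mu)$ (it is an $L^1$-contraction since $\mu$ is $D_1$-invariant, and an $L^\infty$-contraction since $D_1\mathbf 1=\mathbf 1$); combined with density of $L^\infty(\mu)$ in $L^1(\mu)$, the bounded case propagates to $L^1$.

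For (i')~and~(ii'), the arguments go through verbatim upon replacing $\sigma_1$ by $\sigma_{t_0}$ and $D_1$ by $D_{t_0}$, using that by Theorem~\ref{T:Random}(i),(ii) for weakly harmonic $\mu$ the measure $\bar\mu$ is both $\sigma_{t_0}$-invariant and $\sigma_{t_0}$-ergodic for every $t_0>0$, and that $D_{t_0}$ satisfies the analogous contraction/Markov properties by the semigroup relation~\eqref{e:semi_group}. The step I expect to require the most care is the truncation in~(i): the fixed-point property $D_1 f=f$ must be propagated to bounded cutoffs, which relies crucially on $D_1$-invariance of $\mu$; once this reduction is achieved, the remainder is a streamlined application of Birkhoff and Dunford--Schwartz.
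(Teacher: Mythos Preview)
Your argument is correct. The paper itself gives no proof here: it simply cites \cite[Theorem~B.16]{NguyenVietAnh17a}, so your write-up actually supplies what the paper defers to a reference. Your route---lifting to the shift system $(\Omega,\bar\mu,\sigma_1)$ via Theorem~\ref{T:Random}, proving~(i) through the $L^2$-identity $\|F\circ\sigma_1-F\|_{L^2(\bar\mu)}^2=\int_X(D_1(f^2)-f^2)\,d\mu=0$, and then obtaining~(ii) from Birkhoff on $\Omega$ together with the Dunford--Schwartz maximal inequality---is the standard way to pass from the random ergodic theorem to its operator version, and is almost certainly what the cited reference does as well.

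Two minor remarks. First, once~(i) is established, part~(ii) follows more directly by applying the Dunford--Schwartz (or Akcoglu) ergodic theorem to the positive $L^1$--$L^\infty$ contraction $D_1$: the averages converge a.e.\ and in $L^1$ to a $D_1$-invariant limit, which by~(i) is the constant $\mu(X)^{-1}\int f\,d\mu$; this bypasses the intermediate passage through $\Omega$ for bounded~$f$. Second, your truncation step in~(i) shows that each $f\wedge n$ is constant $\mu$-a.e.; you should add one sentence observing that this forces $f$ itself to be constant (if $f\wedge n=c_n$ a.e.\ with $c_n<n$ for some $n$, then $f=c_n$ a.e.; the alternative $c_n=n$ for all $n$ is ruled out by $f\in L^1$ on a finite measure space).
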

\proof
It  follows from  \cite[Theorem B.16]{NguyenVietAnh17a}   where  the general case of an  $N$-real or complex dimensional  lamination  with a general leafwise metric was investigated.
\endproof

\begin{problem}
 \rm  It  seems of interest to find  sufficient conditions to ensure that  a very weakly harmonic measure (resp. a weakly harmonic measure) is  harmonic.   
\end{problem}

%%%%%%%%%%%%%%%%%%%%%%%%%%%%%%%%%%%%%%%%%%%%%%%%%%%%%%%%%%%%%%%%%%%%%%%%%%%%
%%%%%%%%%%%%%%%%%%%%%%%%%%%%%%%%%%%%%%%%%%%%%%%%%%%%%%%%%%%%%%%%%%%%%%%%%%%%
\section{Regularity of the leafwise Poincar\'e metric and mass-distribution  of currents}
\label{S:Regularity_Mass}
%%%%%%%%%%%%%%%%%%%%%%%%%%%%%%%%%%%%%%%%%%%%%%%%%%%%%%%%%%%%%%%%%%%%%%%%%%%%%
%%%%%%%%%%%%%%%%%%%%%%%%%%%%%%%%%%%%%%%%%%%%%%%%%%%%%%%%%%%%%%%%%%%%%%%%%%%%%%

%%%%%%%%%%%%%%%%%%%%%%%%%%%%%%%%%%%%%%%%%%%%%%%%%%%%%%%%%%%%%%%%%%%%%%%%%
\subsection{Regularity of the leafwise Poincar\'e metric} \label{SS:Regularity_Poincare}
%%%%%%%%%%%%%%%%%%%%%%%%%%%%%%%%%%%%%%%%%%%%%%%%%%%%%%%%%%%%%%%%%%%%%%%%%

Let $\Fc=(X,\Lc,E)$ be a   Riemann surface  lamination  with singularities. Let $g_P$ be  as usual   the leafwise Poincar\'e metric for the  lamination   given in Subsection  
\ref{SS:Poincare}.
Let $g_X$ be a Hermitian metric on the leaves which is transversally continuous. We can construct such a metric on flow boxes and glue them using a partition of unity.
When  $\Fc$ is  holomorphically  immersed in  a complex manifold $M,$ we often fix  an ambient   Hermitian  metric $g_M$ on $M$ and consider its restriction   to the leaves.
We  denote by $g_X$  the Hermitian metric on the leaves obtained  by this   way.
Consider the function $\eta:\  X\setminus E\to [0,\infty]$  given by
\begin{equation}\label{e:eta_bis} \eta(x)=\sup\big\{\|(D\phi)(0)\|,\quad \phi:\D\to L_x \mbox{ holomorphic such that } \phi(0)=x\big\}.
 \end{equation}
 Here, and for the norm of the differential $D\phi$ we use the Poincar\'e metric on $\D$ and the Hermitian metric $g_X$ restricted to $L_x$.
Using a map sending $\D$ to a plaque, we see that the function $\eta$ is locally bounded from below on $X\setminus E$ by a strictly positive constant. 
Moreover, when $\Fc$ is  holomorphically  immersed  in a complex  manifold,   we can show that $\eta$ is  lower-semi continuous  on $X\setminus E$ %on the  set  $\{x\in  X\setminus E:\ \eta(x)<\infty\}$
(see \cite[Theorem 20]{FornaessSibony08}).
Note  that  $\{x\in  X\setminus E:\ \eta(x)=\infty\}=\Par(\Fc).$
When $X$ is  compact and $\Par(\Fc)=E=\varnothing,$ the classical Brody lemma (see \cite[p.100]{Kobayashi}) implies that $\eta$ is also bounded from above.

The extremal property of the Poincar\'e metric implies that, for $x\in\Hyp(\Fc),$
\begin{equation}\label{e:eta} g_X=\eta^2g_P\quad \mbox{where}\quad 
\eta(x):=\|D\phi_x(0)\| \quad\mbox{(see  \eqref{e:covering_map}).} 
\end{equation}
Given  a point $x\in \Hyp(\Fc)$ and  a differentiable function $f:\ L_x\to\C,$ we define $|df|_P:\ L_x\to \R^+$ by
\begin{equation}  \label{e:length-eucl-vs-Poincare}|df|_P(y):=  \eta(y) |df(y)|\qquad\text{for}\qquad y\in L_x,
\end{equation}
 where for the norm $|df(y)|$  we use the   Hermitian metric $g_X$ on $L_x$ and the  Euclidean norm $|\cdot|$ of $\C.$

The continuity of the function  $\eta$ was studied by Candel, Ghys, Verjovsky,   see \cite{Candel,Ghys,Verjovsky}. The survey \cite{FornaessSibony08} establishes this  result as a consequence of Royden's lemma. 
Indeed with his lemma, Royden proved the upper-semicontinuity of the infinitesimal Kobayashi metric in a Kobayashi hyperbolic manifold (see \cite[p.91 and p.153]{Kobayashi}).
The  following theorem gives refinements of  the previous results.

\begin{theorem} \label{T:Poincare}{\rm (Dinh-Nguyen-Sibony \cite{DinhNguyenSibony14a}). }
Let $\Fc=(X,\Lc)$ be a transversally smooth compact lamination by hyperbolic Riemann surfaces.
Then the Poincar\'e metric on the leaves  is H\"older continuous, that is, the function $\eta$ defined in \eqref{e:eta} is H\"older continuous on $X$. 
Moreover, the exponent of   H\"{o}lder continuity can be estimated in geometric terms.
\end{theorem}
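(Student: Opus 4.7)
The plan is to exploit the extremal characterization \eqref{e:eta_bis} of $\eta$: for two nearby points $x,x'\in X$, I would construct a holomorphic map $\psi\colon r\D\to L_{x'}$ with $\psi(0)=x'$ that closely approximates the universal covering $\phi_x\colon\D\to L_x$ on $r\D$. Rescaling $r\D$ to $\D$ and invoking the extremal property of $\eta(x')$ would then give a lower bound of the form $\eta(x')\gtrsim r\,\eta(x)$ modulo an error controlled by $\dist(x,x')/(1-r)$. Choosing $r=1-\dist(x,x')^\alpha$ for a small $\alpha>0$ would produce $\eta(x')\ge\eta(x)-C\,\dist(x,x')^\beta$, and swapping $x$ and $x'$ would yield H\"older continuity with an exponent $\beta$ expressible in terms of $\alpha$ and of the transverse-smoothness constants of the atlas.

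To construct $\psi$ I would proceed as follows. The set $\phi_x(\overline{r\D})$ is a compact subset of $L_x$, and it can be covered by a finite chain of flow boxes $\U_1,\dots,\U_N$ whose cardinality $N$ is controlled by the Poincar\'e area of $r\D$, hence by $(1-r)^{-1}$. Since $\Fc$ is transversally smooth, the transition holonomies between consecutive boxes are smooth, uniformly so once the atlas is fixed; composing them starting from $x$ transports each plaque of $\phi_x$ to a corresponding plaque sitting in $L_{x'}$. Every plaque being a local holomorphic disc and the gluing data respecting leafwise holomorphy, the patched object is a holomorphic map $\psi\colon r\D\to L_{x'}$ with $\psi(0)=x'$ and
\[
\sup_{\zeta\in r\D}\dist\!\bigl(\psi(\zeta),\phi_x(\zeta)\bigr)\le C_N\,\dist(x,x'),
\]
where $C_N$ is essentially exponential in $N$ through the product of transverse Lipschitz norms of the holonomies along the chain.

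A Cauchy estimate on $\psi-\phi_x$ read inside a single flow box around $x$ and $x'$ then gives $\|D\psi(0)-D\phi_x(0)\|\le C_N\,\dist(x,x')/(1-r)$. Applying the extremal property \eqref{e:eta_bis} of $\eta(x')$ to the map $\zeta\mapsto\psi(r\zeta)\colon\D\to L_{x'}$ and combining, one obtains
\[
\eta(x')\ge r\Bigl(\eta(x)-\frac{C_N\,\dist(x,x')}{1-r}\Bigr).
\]
Balancing $1-r=\dist(x,x')^\alpha$ against the blow-up of $C_N$ as $r\uparrow 1$ converts this into the desired estimate with an explicit exponent $\beta$, and symmetry yields the H\"older bound stated in the theorem.

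The main obstacle is the consistency of the plaque-transport in the second step: the covering $\phi_x$ typically revisits the same flow box many times, so the chain $(\U_j)$ encodes a discrete monodromy pattern that must be transported from $x$ to $x'$ without inconsistencies. The cleanest way to handle this is to lift the argument to the covering lamination $\widetilde\Fc$ of Subsection \ref{SS:Wiener}, where $\phi_x$ becomes an embedding and each plaque is visited only once, perform the construction and estimates there, and then push the resulting disc back to $L_{x'}$. The genuine quantitative difficulty is to keep sharp control of $C_N$ and $N$ as $r\uparrow 1$: crude bounds would produce only a logarithmic modulus of continuity, and obtaining a true geometric H\"older exponent requires exploiting that the chain of holonomies can be chosen of bounded combinatorial complexity thanks to the bounded geometry of $(L_x,g_P)$.
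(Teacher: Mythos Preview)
Your overall strategy---transporting $\phi_x$ to a nearby leaf and invoking the extremal characterization of $\eta$---matches the paper's, but there is a genuine gap in the key construction: the map $\psi\colon r\D\to L_{x'}$ you build by patching plaque-to-plaque projections is \emph{not} holomorphic. In a single flow box $\U\simeq\B\times\T$ the projection $(z,t)\mapsto(z,t')$ is indeed holomorphic in $z$, but these local projections do not agree on overlaps: in a second chart with transition $(z,t)\mapsto(\Psi(z,t),\Lambda(t))$, the image of the $\U$-projection is $(\Psi(z,t'),\Lambda(t'))$ while the $\U'$-projection gives $(\Psi(z,t),\Lambda(t'))$, and $\Psi(z,t)\ne\Psi(z,t')$ in general since the lamination is only transversally smooth, not transversally holomorphic. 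So either your $\psi$ is multi-valued, or (if you force a single choice along a chain) it is piecewise holomorphic with jumps, or (if you blend with a partition of unity) it is smooth but not holomorphic. Lifting to $\widetilde\Fc$ resolves the monodromy ambiguity you flagged, but not this local incompatibility.

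This is precisely why the paper's proof takes a different route at this step: it first constructs a \emph{non-holomorphic} smooth parametrization $\psi\colon\D_R\to L_y$ close to $\phi_x$ (which can be glued consistently), obtains precise estimates on its Beltrami coefficient, and then solves the Beltrami equation to correct $\psi$ into a genuinely holomorphic map. Only after this quasiconformal correction can one compare with $\phi_y$ via the extremal property. Your balancing of $1-r$ against the chain length is in the right spirit for the final estimate, but the missing ingredient---controlling and then killing the $\dbar$-defect of $\psi$ via Beltrami---is the heart of the argument.
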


The main tool of the proof of Theorem  \ref{T:Poincare}  is
to use  Beltrami's equation in order to
compare universal covering maps of  any leaf $L_y$ near a given leaf $L_x$.
More precisely, 
for  $R>0$ let $\D_R$ be  the  disc of center $0$ with  radius $R$  with respect to  the  Poincar\'e metric on $\D$ (see Main Notation). 
We first  construct a non-holomorphic parametrization $\psi$ from $\D_R$ to $L_y$ which is close to a universal covering map 
$\phi_x:\D\to L_x$ for each $R$ large enough. Next, precise geometric estimates on $\psi$ allow us to modify it, using Beltrami's equation. 
We then obtain a holomorphic map that we can explicitly compare with a universal covering map $\phi_y:\D\to L_y$.

 Next,    we investigate the regularity  of the leafwise Poincar\'e metric $g_P$  of a compact singular holomorphic  foliation. Here  
  an important difficulty   emerges: a leaf of the foliation may visit singular flow boxes without any obvious rule. 
  We introduce  the following class of laminations.
 
\begin{definition}\label{D:uniform_hyperbolic_laminations} {\rm (Dinh-Nguyen-Sibony \cite{DinhNguyenSibony14b}). }  \rm
 A  hyperbolic Riemann surface  lamination  with singularities   $\Fc=(X,\Lc,E)$  with $X$ compact
 is  said  to be {\it Brody hyperbolic}  if  there is  a constant  $c_0>0$  such that
$$\|D\phi(0)\| \leq  c_0$$ 
for all  holomorphic maps  $\phi$ from $\D$  into  a leaf,
in other  words,  if the  function $\eta$ is  uniformly  bounded  from above.
\end{definition}

 \begin{remark}\rm \label{R:Brody_sufficiency} 
It is clear that if the lamination is Brody hyperbolic then its leaves are hyperbolic in the sense of Kobayashi.
Conversely, the  Brody hyperbolicity is  a consequence of
the non-existence  of  holomorphic non-constant maps
$\C\rightarrow X$  such that out of  $E$ the image of $\C$ is  locally contained in leaves, 
see \cite[Theorem 15]{FornaessSibony08}.

On the other hand,   Lins Neto proved  in   \cite{Neto} that  for every   holomorphic foliation of degree larger than 1  in $\P^k$, with non-degenerate singularities (see Definition
\ref{D:singularities}),  
there is a smooth metric with negative curvature on its tangent bundle, see also Glutsyuk \cite{Glutsyuk}.
Hence, these foliations are Brody hyperbolic.
Consequently,  holomorphic foliations  in $\P^k$  are generically  Brody hyperbolic, see Theorem \ref{T:generic} (1). The reader may find  in \cite{SibonyWold}
a nice discussion on the topology and the conformal  structures of leaves of a singular holomorphic  foliation which is  Brody hyperbolic.
\end{remark}

 Denote by   $\lof(\cdot):=1+|\log(\cdot)|$ a log-type function, and  by $\dist$  the distance on $X$ induced by  the Hermitian metric $g_X.$
The following result is   a  counterpart of Theorem \ref{T:Poincare} in the context of singular holomorphic foliations. 

\begin{theorem} \label{T:Poincare_bis} {\rm (Dinh-Nguyen-Sibony \cite{DinhNguyenSibony14b}). } 
Let $\Fc=(X,\Lc,E)$ be a Brody hyperbolic singular holomorphic foliation  on a Hermitian compact complex manifold $X$. Assume that the singular set $E$  is 
finite  and  that all  points of $E$  are  linearizable. 
 Then, there are constants  $c>0$ and $0<\alpha<1$   such that
 $$|\eta(x)-\eta(y)|\leq  c\Big ( {\max\{\lof \dist(x,E), \lof\dist(y,E)\}\over \lof\dist(x,y)}\Big)^\alpha$$
 for all $x,y$ in $X\setminus E$. 
\end{theorem}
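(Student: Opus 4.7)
The plan is to localize the estimate near the singular set $E$. On any compact subset of $X \setminus E$ the lamination is transversally smooth and the Beltrami-equation argument underlying Theorem \ref{T:Poincare} produces ordinary H\"older continuity of $\eta$ with some exponent $\alpha \in (0,1)$; this takes care of the regime in which both $x$ and $y$ are bounded away from $E$. Since $E$ is finite, it suffices to establish the bound when $x$ and $y$ both lie in a small coordinate ball around a fixed $p \in E$, and one may further restrict to the regime $\dist(x,y) \ll \dist(x,E)$, the complementary range being absorbed into the constant $c$.

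Near $p$ I would choose linearizing holomorphic coordinates $(z_1,\dots,z_k)$ in which leaves of $\Fc$ are integral curves of $Z = \sum_j \lambda_j z_j \partial_{z_j}$. For a base point $z^0$ not lying on the coordinate hyperplanes the leaf through $z^0$ admits the explicit parametrization
\[
\Psi_{z^0}(\zeta) = \bigl(z^0_1 e^{\lambda_1 \zeta},\dots, z^0_k e^{\lambda_k \zeta}\bigr),\qquad \Psi_{z^0}(0)=z^0,
\]
and $\Psi_{z^0}^{-1}$ of a fixed small ball centered at $p$ contains a Euclidean disc in $\zeta$-space of radius $R \asymp \lof \dist(z^0,E)$. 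This exhibits a canonical holomorphic chart on $L_{z^0}$ whose natural scale is of order $R$; on the separatrices, where the linear chart degenerates, one works directly with $\D^{\ast}$ endowed with its explicit Poincar\'e metric. The logarithmic scale $R$ is the ultimate source of the $\lof$ factors appearing in the statement.

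To compare $\eta(x)$ with $\eta(y)$ for $x, y$ close together I would transplant the Beltrami-equation scheme of \cite{DinhNguyenSibony14a} into this singular setting. Starting from the universal covering $\phi_x \colon \D \to L_x$, pulled back through $\Psi_x$ to a disc of radius $R \asymp \lof \dist(x,E)$, one constructs a smooth, not necessarily holomorphic, map into $L_y$ by translating $\Psi_x$ via the explicit linear flow near $p$ and via smooth plaque-projection further out. Its Beltrami coefficient $\mu$ is supported in a transition annulus and satisfies $\|\mu\|_\infty \lesssim \dist(x,y) \cdot R$, the extra factor $R$ being the price of working on a disc of logarithmic size. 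Solving $\overline{\partial} \psi = \mu\, \partial \psi$ on the rescaled disc, composing with the inverse of the resulting quasi-conformal homeomorphism, and invoking Koebe-type distortion (valid precisely because $\|\mu\|_\infty$ is small, whence the restriction $\dist(x,y) \ll \dist(x,E)$) produces a genuine holomorphic map $\D \to L_y$ comparable to $\phi_y$. Reading off derivatives at the origin and using the Brody upper bound on $\eta$ then yields the stated inequality with exponent $\alpha$ equal to the quasi-conformal H\"older exponent; the symmetry $x \leftrightarrow y$ accounts for the $\max$ in the numerator.

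The hard part is that a single leaf may enter and leave the linearized neighborhood of $p$ many times, while the chart $\Psi_{z^0}$ describes only a single passage; the universal cover $\phi_x$ may wrap many such passages into $\D$. To patch the local Beltrami comparisons, I would use Brody hyperbolicity to obtain a uniform positive lower bound on the hyperbolic distance between successive returns of a leaf to a fixed small sphere around $p$, and the explicit exponential form of $\Psi_{z^0}$ to control entry/exit geometry in terms of the eigenvalues $\lambda_j$. Together these prevent the dilatations contributed by different passages from accumulating uncontrollably, and they allow one to extract an effective exponent $\alpha$ depending only on the $\lambda_j$, the Brody constant, and the ambient geometry outside $E$.
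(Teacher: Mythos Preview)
Your proposal is correct in spirit and aligns with the approach of \cite{DinhNguyenSibony14b}, which the present survey only sketches by pointing to Proposition~\ref{P:Poincare} as the crucial ingredient. The Beltrami-equation comparison of universal covers, transplanted near each singularity via the linearizing parametrization $\Psi_{z^0}(\zeta)=(z^0_1 e^{\lambda_1\zeta},\dots,z^0_k e^{\lambda_k\zeta})$ on a $\zeta$-disc of logarithmic size, is indeed the backbone of the argument, and your identification of the ``multiple passages'' difficulty and its resolution through Brody hyperbolicity (uniform lower bound on hyperbolic return times) is exactly the obstacle that \cite{DinhNguyenSibony14b} must overcome.

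Two minor points of precision. First, the estimate $\eta(x)\asymp s\,\lof s$ of Proposition~\ref{P:Poincare} is what makes your heuristic ``natural scale $R\asymp\lof\dist(z^0,E)$'' rigorous, and in the actual proof it is established first and then fed into the comparison argument; you are implicitly using it without stating it. Second, the smallness regime needed for the quasiconformal correction is not merely $\dist(x,y)\ll\dist(x,E)$ but rather a condition ensuring $\|\mu\|_\infty<1$ after rescaling to the logarithmic disc; in \cite{DinhNguyenSibony14b} this is handled by a careful bookkeeping that couples the size of the Beltrami coefficient to both $\dist(x,y)$ and the local geometry governed by Proposition~\ref{P:Poincare}, and ultimately produces the ratio $\lof\dist(\cdot,E)/\lof\dist(x,y)$ in the final bound. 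Your outline captures the mechanism but slightly understates the role of Proposition~\ref{P:Poincare} in closing the estimates.
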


  To prove  this  theorem, we analyze the behavior and get an explicit estimate on the modulus of continuity of the Poincar\'e metric on leaves.
The following estimates are crucial in our method.  They  are also  useful in other  problems.

\begin{proposition} \label{P:Poincare}{\rm (Dinh-Nguyen-Sibony \cite{DinhNguyenSibony14b}). }
Under the hypotheses
of Theorem  \ref{T:Poincare_bis},  there exists  a  constant $c_1>1$  such that 
$$c_1^{-1} s \lof s \leq\eta(x)  \leq c_1  s \lof s$$
for $x\in X\setminus E$  and $s:=\dist(x,E)$.  
\end{proposition}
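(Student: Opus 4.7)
My plan is to reduce to a neighborhood of each singular point and then to obtain matching two-sided bounds on $\eta$ by combining an explicit geometric construction (for the lower bound) with an Ahlfors--Schwarz type comparison (for the upper bound). Since $E$ is finite and $\Fc$ is Brody hyperbolic, compactness of $X$ together with the lower semicontinuity of $\eta$ forces both $\eta$ and $s\lof s$ to be comparable to positive constants away from a fixed neighborhood of $E$, so the estimate is trivial there. I therefore fix $p\in E$ and work in linearization coordinates $(z_1,\dots,z_k)$ on a polydisc $V$ centered at $p$ in which $\Fc$ is given by the integral curves of $Z=\sum_{j=1}^k \lambda_j z_j\,\partial/\partial z_j$. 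For $x=(z_1,\dots,z_k)\in V\setminus\{p\}$ I parametrize the local orbit by $\Psi_x\colon \Omega_x\to L_x^{loc}$, $\Psi_x(\zeta)=(e^{\lambda_1\zeta}z_1,\dots,e^{\lambda_k\zeta}z_k)$, with $\Omega_x\subset\C$ the connected component of $\{\zeta:\Psi_x(\zeta)\in V\}$ containing $0$. The domain $\Omega_x$ is an intersection of translated half-planes; its Euclidean distance from $0$ to its boundary is $\asymp \min_j |\log|z_j||/|\lambda_j|\asymp \lof s$, and the same order holds for its conformal radius at $0$.

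For the lower bound I compose $\Psi_x$ with the dilation $w\mapsto R_x w$, with $R_x$ of order $\lof s$ chosen so that $\{|\zeta|<R_x\}\subset \Omega_x$. This produces a holomorphic map $\phi\colon\D\to L_x$ with $\phi(0)=x$, and since $|Z(x)|_{g_X}\asymp s$ near $p$ while the Poincar\'e norm of $\partial/\partial w$ at $0\in\D$ is a fixed constant, the chain rule gives $\|D\phi(0)\|\asymp R_x\,|Z(x)|_{g_X}\asymp s\lof s$. Passing to the supremum in the definition of $\eta$ yields $\eta(x)\geq c_1^{-1} s\lof s$.

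For the upper bound I plan to construct a smooth leafwise Hermitian metric $g^*$ on $X\setminus E$ whose restriction to each hyperbolic leaf has Gaussian curvature bounded above by $-1$ and which satisfies $g^*\geq c(s\lof s)^{-2}g_X$ in a fixed neighborhood of each singular point. Near $p$ I set $g^*=A(s\lof s)^{-2}g_X|_L$ for a suitable constant $A>0$ and verify the curvature bound by direct computation; this reduces to controlling the leafwise Laplacian of $\log(s\lof s)$ in the metric $g_X|_L$, which is feasible because along the flow of $Z$ the function $s^2=\sum|z_j|^2$ is an explicit sum of exponentials in the parameter $\zeta$. Away from $E$, Brody hyperbolicity of $\Fc$ together with compactness of $X$ outside a smaller neighborhood $V'$ of $E$ permit a smooth patching of $g^*$ with the local model while preserving $K_{g^*}\leq -1$. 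Applying the Ahlfors--Schwarz lemma on the universal cover of each hyperbolic leaf then gives $g^*\leq g_P$ on leaves, and the identity $\eta^2 g_P=g_X$ rearranges to $\eta^2\leq g_X/g^*\asymp (s\lof s)^2$, which yields $\eta(x)\leq c_1 s\lof s$.

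The main obstacle is the curvature computation near $p$: checking that a conformal factor of size $(s\lof s)^{-2}$ really produces leafwise Gaussian curvature at most $-1$ requires both the linearizability of $\Fc$ at $p$ (so that the leafwise Laplacian can be made explicitly computable) and the double-logarithmic correction $\lof s$ (a naive factor $s^{-2}$ would fail to give enough negativity against the bounded, but not negative, Gaussian curvature of $g_X|_L$). The correct constants depend delicately on the eigenvalues $\lambda_j$, and this is where the hyperbolic linearizable normal form at $p$ enters the argument in an essential way.
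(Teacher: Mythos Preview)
The paper does not prove this proposition; it is quoted from \cite{DinhNguyenSibony14b}. So I compare your plan against a correct argument rather than against the text.

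Your lower bound is fine and standard: the inscribed disc $\{|\zeta|<R_x\}\subset\Omega_x$ with $R_x\asymp\lof s$ produces a test map whose derivative has $g_X$-norm $\asymp s\lof s$.

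The upper bound is where the plan has a real gap. You write that ``Brody hyperbolicity of $\Fc$ together with compactness \dots\ permit a smooth patching of $g^*$ with the local model while preserving $K_{g^*}\le -1$.'' Brody hyperbolicity says $\eta\le c_0$, i.e.\ $g_P\ge c_0^{-2}g_X$; it does \emph{not} hand you a smooth leafwise metric of curvature $\le -1$ away from $E$ that you can glue to $A(s\lof s)^{-2}g_X$. The leafwise Poincar\'e metric has curvature $-1$ on each leaf, but it is not transversally smooth and, more to the point, interpolating it with your local conformal factor via a cut-off will in general destroy the curvature bound. Lins Neto's construction of a negatively curved metric on $\Tan(\Fc)$ is for foliations on $\P^k$ with non-degenerate singularities, not for an arbitrary Brody hyperbolic foliation on a compact $X$; you cannot invoke it here without further work.

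There is a cleaner route that uses Brody hyperbolicity exactly once and avoids any global metric construction. Let $\phi_x:\D\to L_x$ be the universal cover with $\phi_x(0)=x$, and let $U\subset\D$ be the connected component of $\phi_x^{-1}(V)$ containing $0$, where $V$ is the singular flow box. Since $\|D\phi_x(\zeta)\|_{g_P\to g_X}=\eta(\phi_x(\zeta))\le c_0$, any path in $\D$ from $0$ to $\partial U$ has $g_X$-image-length at least $r_0-s$, hence hyperbolic length at least $(r_0-s)/c_0$. Thus $U$ contains the hyperbolic ball $\D_{\rho_0}$ with $\rho_0=(r_0-s)/c_0$ bounded below by a fixed positive constant. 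Now apply Schwarz--Pick to $\Psi_x^{-1}\circ\phi_x|_U:U\to\Omega_x$: at $0$ this gives
\[
g_P^{\Omega_x}(0)\,\bigl|D(\Psi_x^{-1}\circ\phi_x)(0)\bigr|^2\ \le\ g_P^{U}(0)\ \le\ g_P^{\D_{\rho_0}}(0)=C(\rho_0).
\]
Since $\Omega_x$ is convex with $\dist(0,\partial\Omega_x)\asymp\lof s$, one has $g_P^{\Omega_x}(0)\asymp(\lof s)^{-2}$, so $|D(\Psi_x^{-1}\circ\phi_x)(0)|\lesssim\lof s$. Combining with $|D\Psi_x(0)|=|Z(x)|_{g_X}\asymp s$ yields $\eta(x)=\|D\phi_x(0)\|\lesssim s\lof s$. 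This is the argument actually used in \cite{DinhNguyenSibony14b}, and it sidesteps both the global curvature estimate and the patching.
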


The  Poincar\'e metric on  the leaves of a hyperbolic  foliation is a fundamental  object  which is
extremely delicate  to understand.   As  we  see in Theorem \ref{T:Poincare_bis}, the  regularity in the direction  transverse to the foliation is  quite  weak.
This is  partly  due to   the presence  of the  singularities.
We  end the subsection with the  following open  question.
\begin{problem}  \rm  
Let $(X,\Lc,E)$ be a compact  singular  holomorphic  foliation.
Assume that every point $a\in E$ is   a non-degenerate singularity. 
Study the regularity  of the function $\eta.$
In case  $\dim(X)=2,$  we may investigate the  problem   where the singularities are not necessarily non-degenerate. 
\end{problem}

 %%%%%%%%%%%%%%%%%%%%%%%%%%%%%%%%%%%%%%%%%%%%%%%%%%%%%%%%%%%%%%%%%%%%%%%%%%%%%%%%%%%%%%%
 \subsection{Mass-distribution of undirected and directed positive $\ddc$-closed currents}\label{SS:Mass}
 %%%%%%%%%%%%%%%%%%%%%%%%%%%%%%%%%%%%%%%%%%%%%%%%%%%%%%%%%%%%%%%%%%%%%%%%%%%%%%%%%%%%%%

   Let  $\Fc=(X,\Lc,E)$   be  a  Riemann surface lamination  with  singularities which is  holomorphically immersed in a
complex Hermitian manifold $(M,g_M).$  
Let 
 $T$ be a  positive $\ddc$-closed  current of bidimension $(1,1)$  on $M$  whose   support is in $X.$ Here, $T$ may or may not be directed by $\Fc.$ If $T$  is  not directed
 by $\Fc,$ we say  that it is  undirected. %By  Proposition  \ref{P:extension} 
 Clearly, the mass of $T$ with
respect to the  Hermitian metric $g_M$ (i.e. the mass of the positive measure $T\wedge g_M$)  is locally finite on $X\setminus E. $ 
If moreover,  $T$ can be  extended trivially through $E$ to a undirected  (resp. directed) positive $\ddc$-closed  current    (see Definition \ref{D:Directed_hamonic_currents_with_sing}),
then  its mass is locally finite on $X .$  Consider the  following concept  (see Definition  \ref{D:Phi} for a similar notion in the context of directed positive harmonic currents).
\begin{definition} \rm \label{D:Poincare_mass}
   Consider  the map
$T\mapsto \Phi(T):= \mu$  which is defined  by the following  formula on the  convex  cone of all   positive $\ddc$-closed currents $T$ of bidimension $(1,1)$ on $M$  whose support is in $X$ : 
 \begin{equation}\label{e:mu_bis}  \mu :=T\wedge g_P\quad\text{on}\quad  X\setminus (E\cup\Par(\Fc))\quad\text{and}\quad  \mu(E\cup\Par(\Fc))=0.
  \end{equation}
   We call {\it Poincar{\'e}  mass} of $T$ the mass of $T$ with respect to
Poincar{\'e}  metric $g_P$ on $X\setminus E$, i.e. the mass of the
positive measure $\mu=\Phi(T)$.
\end{definition}

For many  ergodic problems, the local finiteness mass of   the Poincar\'e mass of $T$  
is  very important.
The following proposition
gives us a criterion for this local finiteness. It can be applied
to generic foliations in $\P^k$  (see Theorem  \ref{T:generic}).
 
\begin{proposition} \label{P:Poincare_mass} {\rm (Dinh-Nguyen-Sibony \cite{DinhNguyenSibony12}).}
Let $\Fc=(X,\Lc,E)$ be a singular  holomorphic  foliation. If $a\in E$ is a linearizable singularity, 
then any positive $\ddc$-closed  current on $X$ has locally finite
Poincar{\'e}  mass near $a$.
\end{proposition}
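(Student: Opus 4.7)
The plan is to combine an explicit linearization-based lower bound for the function $\eta$ near $a$ with Skoda's mass estimate for positive $\ddc$-closed currents.

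First I would fix linearizing coordinates $(z_1,\ldots,z_k)$ in a ball $\B_{r_0}\subset\C^k$ centered at $a=0$ in which the foliation is generated by the vector field $Z=\sum_{j=1}^k \lambda_j z_j \partial_{z_j}$ with $\lambda_j\neq 0$. Fix $x\in\B_{r_0/2}\setminus\{0\}$ whose leaf $L_x$ is hyperbolic and consider the holomorphic flow $F_t(x):=(e^{\lambda_1 t}x_1,\ldots,e^{\lambda_k t}x_k)$ for $t\in\C$. Setting $\Lambda:=\max_j|\lambda_j|$ and $T_x:=\Lambda^{-1}\log\bigl(r_0/(2|x|)\bigr)$, one has $|F_t(x)|\leq r_0$ whenever $|t|\leq T_x$, and the image lies in $L_x$ because $Z$ is tangent to the foliation. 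Rescaling, $\psi_x(\zeta):=F_{T_x\zeta}(x)$ defines a holomorphic map $\D\to L_x$ with $\psi_x(0)=x$ and
\[
\|D\psi_x(0)\|\;=\;T_x\,\|Z(x)\|\;\geq\; T_x(\min_j|\lambda_j|)\,|x|.
\]
The extremal property of $\eta$ then yields the uniform lower bound
\[
\eta(x)\;\geq\; c\,|x|\,\log\bigl(r_0/|x|\bigr)\qquad\text{for all } x\in\B_{r_0/2}\cap\Hyp(\Fc)\setminus\{0\},
\]
for some constant $c>0$.

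Next I would turn this pointwise estimate into a mass bound. Since $g_X=\eta^2 g_P$ leafwise on the hyperbolic part (see \eqref{e:eta}), the positive measure $\mu=\Phi(T)$ of Definition \ref{D:Poincare_mass} satisfies $\mu=\eta^{-2}\sigma_T$ on $\Hyp(\Fc)\setminus\{a\}$, where $\sigma_T:=T\wedge g_X$ is the trace measure of $T$. This identity is first verified for directed currents using the local structure of Proposition \ref{P:decomposition}, and then extended to general positive $\ddc$-closed currents supported in $X$ by the same leafwise decomposition. Combining with the lower bound on $\eta$, it suffices to estimate
\[
I\;:=\;\int_{\B_{r_1}\setminus\{0\}}\frac{d\sigma_T(x)}{|x|^2\,\bigl(\log|x|\bigr)^2}
\]
for some $r_1<r_0$.

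Finally, Proposition \ref{P:Skoda} applied to the bidegree-$(k-1,k-1)$ current $T$ gives $\sigma_T(\B_r)\leq C r^2$ for every $r\leq r_1$. Partitioning the punctured ball into dyadic annuli $A_n:=\B_{2^{-n}r_1}\setminus \B_{2^{-n-1}r_1}$ and using $|x|^{-2}(\log|x|)^{-2}\lesssim 2^{2n}n^{-2}$ on $A_n$, one obtains
\[
I\;\lesssim\;\sum_{n\geq 1}\frac{2^{2n}}{n^{2}}\,\sigma_T(A_n)\;\lesssim\;\sum_{n\geq 1}\frac{1}{n^2}\;<\;\infty,
\]
proving that the Poincaré mass is locally finite near $a$.

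I expect two technical points to require the most care. The first is a rigorous verification that $\psi_x$ is holomorphic as a map into the abstract Riemann surface $L_x$ rather than merely into the ambient manifold; this follows because $Z$ is a holomorphic leafwise vector field, hence its flow is leafwise biholomorphic onto its image inside the linearization neighbourhood, but the argument must be uniform in $x$ as $x\to a$. The second, more conceptual issue is that $T$ is not assumed to be directed by $\Fc$, so the identification $\mu=\eta^{-2}\sigma_T$ on the hyperbolic part needs justifying the local slicing of $T$ in flow boxes in this slightly more general setting; this relies on $\supp T\subset X$ together with the $\ddc$-closedness of $T$ and the fact that $X$ is a union of complex curves.
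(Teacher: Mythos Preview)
Your argument is correct and matches the approach indicated in the paper, which simply says the proof ``is based on the finiteness of the Lelong number of $T$ at $a$'': you derive the local lower bound $\eta(x)\gtrsim |x|\,\lof|x|$ from the explicit linearized flow (this is the easy half of Proposition~\ref{P:Poincare}, and does not require Brody hyperbolicity), then feed Skoda's estimate $\sigma_T(\B_r)\lesssim r^2$ into the dyadic sum $\sum n^{-2}$.

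Both of your flagged technical points are, however, non-issues. For the first, $\psi_x$ is the time-$T_x\zeta$ flow of a holomorphic vector field tangent to the foliation, so it is automatically a holomorphic map into the leaf $L_x$ with its intrinsic complex structure; no separate uniform argument is needed. For the second, your proposed justification is actually wrong: a general positive $\ddc$-closed current on $X$ (here $X=M$ is the whole surface) has \emph{no} leafwise slicing or decomposition \`a la Proposition~\ref{P:decomposition}---think of $[\{z=0\}]$ for the foliation by horizontal lines in $\C^2$. Fortunately nothing of the sort is required: since $\Fc$ is a foliation on $X$, the relation $g_X=\eta^2 g_P$ lets one regard $g_P$ as the ambient $(1,1)$-form $\eta^{-2}g_X$ on $\Hyp(\Fc)$, and then $\mu:=T\wedge g_P=\eta^{-2}\,T\wedge g_X$ is simply the definition of the Poincar\'e mass in Definition~\ref{D:Poincare_mass}. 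Drop the appeal to leafwise decomposition and the proof is complete as written.
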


The proof of this  result is based on the finiteness of the Lelong number of $T$  at $a$  (see Proposition \ref{P:Skoda}).
We have  a more precise result when   $T$ is  directed and the singular point is weakly  hyperbolic (see Definition \ref{D:singularities}). 
\begin{theorem} \label{T:Lelong}
{\rm (Nguyen \cite{NguyenVietAnh18a,NguyenVietAnh19}).} Let $\Fc=(X,\Lc,E)$ be a singular  holomorphic  foliation  with $\dim X=k\geq 2.$ 
If $a\in E$ is a linearizable  singularity which is also a weakly hyperbolic singularity, 
then for any directed positive $\ddc$-closed  current $T$ on $X$  which does  not give mass to any of the $k$ invariant  hypersurfaces at $a,$    the   Lelong  number of $T$ at $a$  vanishes.  
 \end{theorem}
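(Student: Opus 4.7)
The plan is to work in local linearizing coordinates $(z_1,\ldots,z_k)$ on a polydisc neighborhood of $a=0$, where $\Fc$ is generated by the holomorphic vector field $Z=\sum_{j=1}^k \lambda_j z_j\,\partial/\partial z_j$ with all $\lambda_j\neq 0$. Relabeling if needed, we may assume that the ratio $\mu:=\lambda_1/\lambda_2$ lies in $\C\setminus\R$. The invariant hypersurfaces at $0$ are the coordinate hyperplanes $H_j:=\{z_j=0\}$, and the assumption on $T$ says that $T$ gives no mass to any $H_j$; hence $T$ is carried by the open set $U$ where every coordinate $z_j$ is nonzero. On $U$ the leaves of $\Fc$ are precisely the orbits of the complex flow $\psi_t(z):=(e^{\lambda_1 t}z_1,\ldots,e^{\lambda_k t}z_k)$, $t\in\C$.

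First, I would fix the goal using Proposition \ref{P:Skoda}: the Lelong number $\nu(T,0)$ exists as $\lim_{r\to 0^+}\pi^{-1}r^{-2}\|T\wedge\beta\|_{\B_r}$, with $\beta=\ddc\|z\|^2$, and the question is to show that this limit vanishes. Each biholomorphism $\psi_t$ maps leaves of $\Fc$ to leaves and preserves $U$, so on $U$ the pull-back $\psi_t^*T$ is again a positive $\ddc$-closed current directed by $\Fc$; since $T$ does not charge $\bigcup H_j$, these pull-backs extend trivially through the singularity via Proposition \ref{P:extension}. The weakly hyperbolic condition $\mu\notin\R$ permits a choice of $\tau_0\in\C$ such that the family $\psi_{s\tau_0}$ with $s\in\R^+$ contracts toward $0$ as $s\to+\infty$, while its transverse action rotates $z_1$ and $z_2$ with incommensurable frequencies determined by $\Im(\mu)$.

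The second step is to compare $\|T\wedge\beta\|_{\B_r}$ with an average $\tfrac{1}{S}\int_0^S \|\psi_{s\tau_0}^*T\wedge\beta\|_{\B_r}\,ds$, and to estimate the latter using weighted plurisubharmonic test functions of the form $u(z)=\sum_j c_j\log|z_j|$, where the weights $c_j$ are chosen so that $u$ is leafwise harmonic, i.e. $\sum_j c_j\lambda_j=0$. The $\ddc$-closedness of $T$ together with the vanishing of $T$ on the coordinate hyperplanes allows Stokes-type integrations against such $u$ on shells $\{r_1<\|z\|<r_2\}$, turning the mass density into an angular integral along the $\psi_{s\tau_0}$-orbits. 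Because $\mu\notin\R$, the rotation of the orbits implies that this angular integral is an average of $T$ over a non-degenerate circle of directions, which by the monotonicity in Proposition \ref{P:Skoda} collapses the limiting density to zero.

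The main obstacle is that $T$ is only $\ddc$-closed, not closed, so mass cannot be transported along leaves by Stokes' theorem in the naive way: the integration by parts above generates boundary contributions both along the spheres $\|z\|=r_i$ and along $H_j\cap\B_r$. The hypothesis that $T$ gives no mass to $H_j$ is exactly what kills the second kind of boundary term, and its necessity is illustrated by the current of integration along an invariant hyperplane, which has Lelong number $1$ at $0$. Controlling the remaining spherical boundary terms requires the quantitative Skoda estimate together with the genuine spiral structure provided by $\mu\notin\R$; the resonant or purely real-eigenvalue case is structurally different, and the averaging argument would degenerate there, matching the fact that the theorem truly requires the weakly hyperbolic hypothesis.
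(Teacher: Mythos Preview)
The paper does \emph{not} contain a proof of Theorem~\ref{T:Lelong}: it is stated as a result of \cite{NguyenVietAnh18a,NguyenVietAnh19} and only its consequences (Corollary~\ref{C:Lelong}, and its use in Corollary~\ref{C:diffusions_comparisons_bis} and in Subsection~\ref{SS:Lyapunov_bis_bis}) are developed. So there is no in-paper argument to compare your proposal against; the comparison can only be with what the survey hints at, namely the local leaf parametrizations $\psi_x(\zeta)=(ze^{i\zeta},we^{i\lambda\zeta})$ of \eqref{e:leaf_equation} and the sector geometry $\Pi_x$, together with the flow-box decomposition of $T$ from Proposition~\ref{P:decomposition}.

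On the substance of your sketch, the set-up is correct: the flow $\psi_t(z)=(e^{\lambda_1 t}z_1,\ldots,e^{\lambda_k t}z_k)$ is a linear automorphism defined on all of $\C^k$ (not only on $U$), it preserves $\Fc$ and fixes $0$, hence $\psi_t^\ast T$ is again directed, positive and $\ddc$-closed with $\nu(\psi_t^\ast T,0)=\nu(T,0)$. You also correctly observe that $u=\sum_j c_j\log|z_j|$ with $\sum_j c_j\lambda_j=0$ is leafwise \emph{constant} (not merely harmonic), so that $\ddc u\wedge T=0$ as a measure, consistently with the no-mass hypothesis on the $H_j$ via the Lelong--Poincar\'e equation.

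The genuine gap is the last step. Averaging $\psi_{s\tau_0}^\ast T$ over $s$ does not change the Lelong number, since each term already has Lelong number $\nu(T,0)$; so the sentence ``the angular integral is an average of $T$ over a non-degenerate circle of directions, which by the monotonicity in Proposition~\ref{P:Skoda} collapses the limiting density to zero'' is not an argument. Monotonicity of $r\mapsto r^{-2}\|T\wedge\beta\|_{\B_r}$ only gives existence of the limit; it gives no mechanism for the limit to vanish after averaging. What is actually needed is a quantitative estimate showing that the trace measure $T\wedge\beta$ on $\B_r$ is $o(r^2)$, and this requires using the directedness to write $T\wedge\beta$ as an integral over a transversal of leafwise area forms (via Proposition~\ref{P:decomposition}), and then exploiting the fact that when $\Im(\lambda_1/\lambda_2)\neq 0$ the piece of each leaf inside $\B_r$ has Euclidean area that is $o(r^2)$ uniformly in a way that survives integration against the transverse measure weighted by the positive harmonic densities $h_t$. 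Your Stokes-type identity with $u=\sum c_j\log|z_j|$ can be part of this, but by itself it only gives $\int_{\B_r}T\wedge\ddc u=0$, which says nothing about $\int_{\B_r}T\wedge\beta$; converting one into the other is precisely where the weak hyperbolicity and the leafwise geometry must be used in a sharp way, and that conversion is missing from the proposal.
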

% This  result  seems  also  to be true for  singular  holomorphic  foliations in any dimension  \cite{NguyenVietAnh19}.

An  immediate consequence  of Theorem \ref{T:Lelong} is the following result on the Lelong numbers of  a directed positive $\ddc$-closed current.  
\begin{corollary}\label{C:Lelong}
Let $\Fc=(X,\Fc,E)$    be  
a     singular holomorphic foliation   with $X$ a    compact complex manifold. 
Assume that all the singularities  are not only linearizable  but also   hyperbolic and  that the foliation  has no invariant analytic  curve.
Then for every   positive $\ddc$-closed   current $T$  directed by $\Fc,$   the   Lelong  number of $T$ vanishes everywhere   in $X.$  
 \end{corollary}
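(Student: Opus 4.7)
The plan is to show $\nu(T,x)=0$ for every $x\in X$, splitting the argument according to whether $x$ lies in $E$ or not. Regular points will be handled via Proposition~\ref{P:decomposition} together with a direct Lelong-number computation, while singular points will be reduced to Theorem~\ref{T:Lelong}. In both cases the contradiction with the no-invariant-curve hypothesis is produced by converting a putative positive Lelong number (or positive mass on an invariant hypersurface) into a leaf whose closure in $X$ is an analytic curve.

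First I would treat a regular point $x\in X\setminus E$. Choose a flow box $\U\simeq\B\times\T$ with $x=(y_0,t_0)$ and apply Proposition~\ref{P:decomposition} to write $T|_\U=\int_\T h_t\,[P_t]\,d\nu(t)$ with $h_t$ positive harmonic on $\B$ and $\nu$ a positive Radon measure on $\T$. A direct computation with the standard K\"ahler form $\beta=\ddc\|z\|^2$ yields $\nu(T,x)=h_{t_0}(y_0)\,\nu(\{t_0\})$, so if $\nu(T,x)>0$ then $\nu$ has an atom of mass $c:=\nu(\{t_0\})>0$ at $t_0$ and $T$ contains the summand $c h_{t_0}[P_{t_0}]$. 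Patching such atomic summands along a covering of the leaf $L_{t_0}$ by flow boxes produces a globally defined current $c\widetilde{h}\,[L_{t_0}]\leq T$ with $\widetilde h$ positive harmonic on $L_{t_0}$. Since $T$ has finite mass on the compact manifold $X$, so does this summand, which forces $L_{t_0}$ to have finite area in $X$; by Bishop's (Remmert--Stein type) theorem, $\overline{L_{t_0}}$ is therefore an analytic curve in $X$, and it is $\Fc$-invariant by construction, contradicting the hypothesis. Hence $\nu(T,x)=0$.

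Next I would treat a singular point $a\in E$. Since $a$ is linearizable and hyperbolic it is in particular linearizable and weakly hyperbolic, so Theorem~\ref{T:Lelong} yields $\nu(T,a)=0$ provided $T$ gives no mass to any of the $k$ invariant hypersurfaces $H_j=\{z_j=0\}$ at $a$ in the linearizing coordinates. Suppose for contradiction that $T$ puts positive mass on some $H_j$. Since $H_j$ is foliation-invariant, the slice/restriction $T|_{H_j}$ is a nontrivial positive $\ddc$-closed current directed by the induced foliation on $H_j$. Extracting the atomic part of its transversal decomposition, or equivalently applying the regular-point argument above at any regular point $b\in H_j\setminus E$ whose leaf $L_b$ lies in $H_j$ and genuinely carries the atomic contribution, produces an $\Fc$-invariant analytic curve $\overline{L_b}$, again a contradiction. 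Consequently $T$ puts no mass on any $H_j$, and Theorem~\ref{T:Lelong} applies to give $\nu(T,a)=0$.

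The main obstacle is the closure-of-leaves step in a genuinely $\ddc$-closed (as opposed to $d$-closed) setting. For $d$-closed positive currents this is Siu's decomposition theorem; in the $\ddc$-closed case one must verify both that the local atomic pieces $c h_t[P_t]$ glue into a globally defined current and that the positive harmonic weight $\widetilde h$ on $L_{t_0}$ can be controlled from below, for instance via Harnack's inequality on compact portions of the leaf, so that the finite mass of the summand translates into finite Euclidean area of the leaf. A secondary subtlety, relevant when $k\geq 3$, is that the invariant hypersurfaces $H_j$ are $(k-1)$-dimensional rather than curves, so the reduction in the singular-point case requires locating a leaf of $\Fc$ inside $H_j$ that genuinely carries an atomic part of $T|_{H_j}$, rather than a purely diffuse contribution.
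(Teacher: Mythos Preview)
The paper gives no proof beyond ``an immediate consequence of Theorem~\ref{T:Lelong}'', and your two-case split with Proposition~\ref{P:decomposition} and Theorem~\ref{T:Lelong} is exactly the intended route; the paper later uses your regular-point Lelong computation verbatim (Subsection~\ref{SS:positive_ddc_closed_currents}) and packages ``mass on a leaf $\Rightarrow$ closure is analytic'' as Theorem~\ref{T:auxiliary}(a). Your Harnack argument for that last implication does not work, though: Harnack controls $\widetilde h$ from below only on compact pieces of the leaf, so finite mass of $c\,\widetilde h\,[L_{t_0}]$ does not give finite Euclidean area of $L_{t_0}$, and Bishop does not apply. The argument behind Theorem~\ref{T:auxiliary}(a) is different: the atomic summand is itself positive and $\ddc$-closed on $X\setminus E$ (since $\widetilde h$ is leafwise harmonic), extends across the finite set $E$ by Proposition~\ref{P:extension}, and a support/structure theorem for positive $\ddc$-closed bidimension-$(1,1)$ currents then forces $\overline{L_{t_0}}$ to be analytic.

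Your secondary subtlety for $k\geq 3$ is a genuine gap, and the paper's one-line justification does not close it either. In dimension $2$ the invariant hypersurfaces at a hyperbolic singularity are the two separatrices, hence leaves, so ``no mass on any single leaf'' already yields the hypothesis of Theorem~\ref{T:Lelong}. For $k\geq 3$ the local $H_j$ has dimension $k-1$, and a \emph{diffuse} directed $T$ can still charge $H_j$: the transversal measure need only put mass on a codimension-one slice of the $(k-1)$-dimensional transversal, not on a single point. Your proposed extraction of an atomic leaf from $T|_{H_j}$ then fails precisely when $T|_{H_j}$ is itself diffuse. Closing this requires a sharper input from \cite{NguyenVietAnh19} than the version of Theorem~\ref{T:Lelong} recorded here.
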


The  above   corollary can be   applied  to  every generic  foliation in $\P^k$ with a given degree $d>1$
(see Theorem  \ref{T:generic}).

  \begin{remark}
   \rm 
   Theorem \ref{T:Lelong}   answers positively  Problem  4.7 raised in \cite{NguyenVietAnh18d}.
   In fact,   the  special case  $\dim X= 2$  of this theorem was  proved in  \cite{NguyenVietAnh18a}.
  \end{remark}

When the singularities  are  linearizable but not    weakly hyperbolic singularity, the study  of Lelong numbers seems  difficult, see  Chen's recent article \cite{Chen} for a partial result.

We  end the section with the  following open  question.
\begin{problem}  \rm  
Let $\Fc=(X,\Lc,E)$ be a compact  singular  holomorphic  foliation and let
 $T$ be a directed positive $\ddc$-closed  current for $\Fc.$
Find  sufficient conditions  on the nature of the set of  singularities $ E$  to ensure that  the Poincar\'e mass of $T$ is  finite. 
\end{problem}

%%%%%%%%%%%%%%%%%%%%%%%%%%%%%%%%%%%%%%%%%%%%%%%%%%%%%%%%%%%%%%%%%%%%%%%%%
%%%%%%%%%%%%%%%%%%%%%%%%%%%%%%%%%%%%%%%%%%%%%%%%%%%%%%%%%%%%%%%%%%%%%%%%%
\section{Heat  equation and  ergodic theorems} \label{S:Ergodic_theorems}
%%%%%%%%%%%%%%%%%%%%%%%%%%%%%%%%%%%%%%%%%%%%%%%%%%%%%%%%%%%%%%%%%%%%%%%%%
%%%%%%%%%%%%%%%%%%%%%%%%%%%%%%%%%%%%%%%%%%%%%%%%%%%%%%%%%%%%%%%%%%%%%%%%%

Let $\Fc=(X,\Lc,E)$ be  a  Riemann surface lamination with singularities. 
In collaboration with  Dinh and Sibony  \cite{DinhNguyenSibony12},  we introduce the heat equation relative to 
\begin{enumerate}
\item[$\bullet$] a  harmonic measure  $\mu$  of $\Fc;$
 \item [$\bullet$]
a positive $\ddc$-closed  current $T$  on  a complex  manifold $M$   
in the case where  $\Fc$ is  holomorphically immersed  in $M,$  the current $T$ is not necessarily  directed,  but its support is assumed  to be  in $X$ and its  Poincar\'e mass is  assumed to be finite.

\end{enumerate}   This permits us  to construct the abstract heat diffusion with respect to
various Laplacians that could be defined
almost everywhere with respect to the  quasi-harmonic measure/positive $\ddc$-closed  current. 
In this  section we follow  closely  the exposition of  \cite{DinhNguyenSibony12}.  Note however  that there are two differences.
The  first one is  that in the  present article  we only   consider laminations   by  Riemann  surfaces  and their  leafwise Poincar\'e metric, whereas  in \cite{DinhNguyenSibony12}
the case of  $N$-dimensional  laminations endowed with a general leafwise Riemannian metrics was  studied. So this difference limits the scope of the present article.
The  second difference  is  that the  Riemann  surface laminations  considered in this  article  may be  not compact and their  singularities may be neither  isolated    nor finite.
%% This  point is an  advantage of the  present  article  over   the work in \cite{DinhNguyenSibony12}.
 This  new  situation %%(i.e. the lamination may not be compact and the singularities may be complicated) 
 leads us to introduce  some   spaces of test functions slightly more general than  those given in   \cite{DinhNguyenSibony12}.

Recall some classical results of functional analysis.
 The reader will find an exposition in Brezis \cite{Brezis}. A
linear operator $A$ on a Hilbert space $L$ is called {\it monotone} if
$\langle Au,u\rangle\geq 0$ for all $u$ in the domain $\Dom(A)$ of
$A$. Such an operator is {\it maximal monotone} if moreover for any
$f\in L$ there is a $u\in\Dom(A)$ such that $u+Au=f$. In this case,
the domain of $A$ is always dense in $L$ and the graph of $A$ is closed. 

A family $S(t):L\rightarrow L$, $t\in\R_+$, is {\it a semi-group of
  contractions} if $S(t+t')=S(t)\circ S(t')$ and if $\|S(t)\|\leq 1$
for all $t,t'\geq 0$. We will apply the following theorem 
to our Laplacian operators.  %  and for $L:=L^2(\mu)$.
It says that any maximal 
monotone operator is the infinitesimal generator of a semi-group of contractions.

\begin{theorem}[Hille-Yosida] \label{th_hille_yosida}
Let $A$ be a maximal monotone operator on a Hilbert space $L$. Then
there is a semi-group of contractions $S(t):L\rightarrow L$,
$t\in\R^+$, such that for $u_0\in\Dom(A)$, $u(t,\cdot):=S(t)u_0$ is
the unique  function 
$$ u\in\Cc^1(\R^+,L)\cap \Cc(\R^+,
\Dom(A))$$ which satisfies
$${\partial u(t,\cdot)\over \partial t}+Au(t,\cdot)=0\quad
\mbox{and}\quad u(0,\cdot)=u_0.$$ 
When $A$ is self-adjoint and $u_0\in L$, then 
$$u\in\Cc(\R^+,L) \cap \Cc^1(\R^+_*,L)\cap 
\Cc(\R^+_*,\Dom(A)),$$
where $\R^+_*:=\R^+\setminus\{0\},$ and we have the estimate
$$\Big\|{\partial u\over\partial t}\Big\|\leq {1\over t}\|u_0\|\quad \mbox{for } t>0.$$ 
\end{theorem}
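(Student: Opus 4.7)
The plan is to follow the classical proof via Yosida approximations. First I would establish that for each $\lambda > 0$ the operator $I + \lambda A$ is a bijection from $\Dom(A)$ onto $L$ with contractive inverse $J_\lambda := (I+\lambda A)^{-1}$. Surjectivity follows from maximal monotonicity (applied after rescaling), while $\|J_\lambda\|\leq 1$ follows from the monotonicity inequality $\langle A u - A v, u - v\rangle \geq 0$. Then I would introduce the Yosida regularization $A_\lambda := \lambda^{-1}(I - J_\lambda) = A J_\lambda$, which is a bounded monotone operator on all of $L$ with $\|A_\lambda u\|\leq\|Au\|$ for $u\in\Dom(A)$ and $A_\lambda u \to A u$ as $\lambda\to 0^+$, while $J_\lambda f\to f$ for every $f\in L$ (whence $\Dom(A)$ is dense).

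Since $A_\lambda$ is bounded, I would define $S_\lambda(t) := e^{-t A_\lambda}$ by its exponential power series; each $S_\lambda(t)$ is a contraction semi-group solving the approximate equation classically. The main obstacle, and the key technical step, is showing that $\{S_\lambda(t)u_0\}_\lambda$ is Cauchy as $\lambda\to 0^+$, uniformly on compact $t$-intervals, for $u_0\in\Dom(A)$. The standard trick is to differentiate
\[
\tfrac{d}{dt}\|S_\lambda(t)u_0 - S_\mu(t)u_0\|^2 = -2\langle A_\lambda S_\lambda u_0 - A_\mu S_\mu u_0,\ S_\lambda u_0 - S_\mu u_0\rangle,
\]
then use $S_\lambda = J_\lambda S_\lambda + \lambda A_\lambda S_\lambda$ and monotonicity of $A$ to bound the right-hand side by a constant multiple of $(\lambda+\mu)\|Au_0\|^2$. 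This yields a uniform Cauchy estimate, and hence a limit $S(t)u_0 := \lim_{\lambda\to 0^+} S_\lambda(t)u_0$, which extends to all of $L$ by density of $\Dom(A)$ together with the uniform contraction bound $\|S_\lambda(t)\|\leq 1$.

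The semi-group property and contractivity of $S(t)$ pass to the limit. For $u_0\in\Dom(A)$, passing to the limit in
\[
S_\lambda(t)u_0 - u_0 = -\int_0^t A_\lambda S_\lambda(s) u_0\, ds,
\]
using the commutation $A_\lambda S_\lambda = S_\lambda A_\lambda$ and the bound $\|A_\lambda S_\lambda(s) u_0\|\leq\|A u_0\|$, gives $u\in\Cc^1(\R^+,L)\cap\Cc(\R^+,\Dom(A))$ and the identity $\partial_t u + A u = 0$. Uniqueness follows from one more monotonicity argument: any other solution $v$ satisfies $\tfrac{d}{dt}\|u-v\|^2 = -2\langle A(u-v), u-v\rangle \leq 0$ with vanishing initial data.

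For the self-adjoint case with $u_0\in L$ only, the slickest route is the spectral theorem: since $A$ is self-adjoint and monotone, its spectrum lies in $[0,\infty)$, so $A = \int_0^\infty \mu\, dE_\mu$ and $S(t) = \int_0^\infty e^{-t\mu}\, dE_\mu$. Then $\partial_t u(t) = -\int_0^\infty \mu e^{-t\mu}\, dE_\mu u_0$, and the elementary maximum $\sup_{\mu\geq 0}\mu e^{-t\mu} = 1/(et)$ combined with the Plancherel identity for the spectral measure yields $\|\partial_t u\|\leq \|u_0\|/(et) \leq \|u_0\|/t$ for $t>0$. The claimed regularity $u\in\Cc(\R^+,L)\cap\Cc^1(\R^+_*,L)\cap\Cc(\R^+_*,\Dom(A))$ follows from dominated convergence applied to the spectral integrals, together with the fact that on $\R^+_*$ the integrands $\mu e^{-t\mu}$ and $\mu^2 e^{-t\mu}$ are uniformly bounded in $\mu$.
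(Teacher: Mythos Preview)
Your sketch of the Yosida-approximation argument is correct and is essentially the standard textbook proof. However, the paper does not prove this theorem at all: it is stated as a classical result of functional analysis and the reader is referred to Brezis \cite{Brezis} for an exposition. So there is nothing in the paper to compare your argument against; your outline is a faithful summary of what one finds in the cited reference.
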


In order to check that our operators are maximal monotone, we will
apply the following result. % to $H:=H^1(T)$.

\begin{theorem}[Lax-Milgram] \label{th_lax_milgram}
Let $e(u,v)$ be a continuous bilinear form on a Hilbert space
$H$. Assume that $e(u,u)\geq \|u\|^2_H$ for $u\in H$. Then for every $f$
in the dual $H^*$ of $H$ there is a unique $u\in H$ such that 
$e(u,v)=\langle f,v\rangle$ for $v\in H$. 
\end{theorem}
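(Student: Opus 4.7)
The plan is to reduce the problem to an operator equation by representing the bilinear form via Riesz representation, and then show the associated operator is bijective. First, I would fix $u\in H$ and note that $v\mapsto e(u,v)$ is a continuous linear functional on $H$ by the assumed continuity of $e$. By the Riesz representation theorem, there is a unique element $Au\in H$ with $e(u,v)=\langle Au,v\rangle_H$ for all $v\in H$. Linearity of $e$ in the first argument together with uniqueness of the Riesz representation gives that $A:H\to H$ is linear, and continuity of $e$ gives $\|Au\|_H\le C\|u\|_H$ for some constant $C>0$.

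Next, I would exploit coercivity. From the hypothesis $e(u,u)\ge\|u\|^2_H$ we get
\[
\|u\|_H^2\;\le\;e(u,u)\;=\;\langle Au,u\rangle_H\;\le\;\|Au\|_H\,\|u\|_H,
\]
so $\|Au\|_H\ge\|u\|_H$. This estimate is the key point: it immediately implies that $A$ is injective and that its range $R(A)$ is closed (any Cauchy sequence $Au_n$ forces $u_n$ to be Cauchy, and the limit lies in $R(A)$ by continuity of $A$).

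To finish, I would show $R(A)=H$. If $w\in R(A)^{\perp}$, then in particular $\langle Aw,w\rangle_H=0$, hence by coercivity $\|w\|_H^2\le 0$, so $w=0$. Since $R(A)$ is closed, this yields $R(A)=H$, and thus $A$ is a bijection from $H$ onto $H$ with bounded inverse. Given $f\in H^\ast$, Riesz representation produces a unique $f^\sharp\in H$ with $\langle f,v\rangle=\langle f^\sharp,v\rangle_H$ for all $v\in H$. Setting $u:=A^{-1}f^\sharp$ gives $e(u,v)=\langle Au,v\rangle_H=\langle f^\sharp,v\rangle_H=\langle f,v\rangle$, and uniqueness of $u$ follows from the injectivity of $A$.

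The main subtlety (although in this symmetric-looking statement it is mild) is the surjectivity step: one must not assume $e$ is symmetric, so the orthogonal-complement argument above needs the estimate $\langle Aw,w\rangle\ge\|w\|^2$ applied to the \emph{same} vector $w$ on both sides, rather than a self-adjointness property of $A$. Everything else is a direct application of Riesz representation and the closed-range criterion.
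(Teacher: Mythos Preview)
Your proof is correct and is the standard argument for Lax--Milgram via Riesz representation and the closed-range/coercivity estimate. Note, however, that the paper does not actually give its own proof of this theorem: it is stated as a classical result of functional analysis and attributed to Brezis \cite{Brezis}, so there is nothing to compare against beyond observing that your argument is the textbook one.
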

 
%%%%%%%%%%%%%%%%%%%%%%%%%%%%%%%%%%%%%%%%%%%%%%%%%%%%%%%%%%%%%%%%%%%%%%%%%%%%%%%%%%
\subsection{Heat  equation  on  Riemann surface laminations with singularities} 
\label{SS:heat_equation_CHRSL}
%%%%%%%%%%%%%%%%%%%%%%%%%%%%%%%%%%%%%%%%%%%%%%%%%%%%%%%%%%%%%%%%%%%%%%%%%%%%%%%%%%%

Consider a  Riemann surface lamination with singularities $\Fc=(X,\Lc,E)$  
 and  a positive quasi-harmonic  measure $\mu$.

 In a flow box
$\U\simeq \B\times\T$, by Proposition \ref{P:decomposition}, the current $T$ can be
written as 
\begin{equation} \label{e:local_presentation}T=\int h_a [\B\times\{a\}]d\nu(a),
\end{equation}
where $h_a$ is a  positive harmonic function on $\B$  and $\nu$ is
a  positive  measure  on the transversal $\T$.
 
 By  Theorem  \ref{thm_harmonic_currents_vs_measures}  (1), there is a unique  directed positive harmonic current $T$ giving no mass to $\Par(\Fc)$  such that 
 \begin{equation}\label{e:m-T} \mu=T\wedge g_P\quad\text{on}\quad \Hyp(\Fc)\quad\text{and}\quad \mu=0\quad\text{on}\quad \Par(\Fc)\cup E.\end{equation}
 It follows  from  \eqref{e:Laplacian_disc} and  \eqref{e:Delta_commutation} that  the following  identity  holds  for  $u\in\Dc(\Fc),$   
\begin{equation}\label{e:Laplacian_bis}
\int_X (\Delta_P u)d\mu=\int_X i\ddbar u\wedge T . 
 \end{equation}
 
In what follows, the differential operators $\nabla$, $\Delta_P$ and $\widetilde\Delta_P$ are considered in $L:=L^2(\mu)$. We introduce the  Hilbert  space $H:=H^1(\mu)$ as
the completion of $\Dc(\Fc)$ with  respect to  the  norm
\begin{equation}\label{e:norm_H1} \|u\|_{H}^2:=\int  |u|^2 d\mu+\int | \nabla u|^2 d\mu. 
\end{equation}
Recall that the gradient $\nabla$ is defined by 
\begin{equation}\label{e:grad}\langle \nabla u,\xi\rangle_{g_P}=du(\xi)
\end{equation}
for all tangent vector $\xi$ along a leaf and for $u\in\Dc(\Fc)$. 
 In  comparison  with \eqref{e:length-eucl-vs-Poincare}, we  see that 
 \begin{equation}\label{e:nabla-vs-dP}
 |\nabla u|=|du|_P.
 \end{equation}
We consider $\nabla$ as a operator in $L^2(\mu)$ and $H^1(\mu)$ is  its domain of definition.

Define in a flow box $\U\simeq \B\times \T$ as above the Laplace type operator
\begin{equation}\label{e:widetilde-Laplacian}\widetilde \Delta_P u= \Delta_P u+\langle h_a^{-1}\nabla h_a,\nabla u\rangle_{g_P}=\Delta_P u+F u,
 \end{equation}
where $F$ is a vector field.
The uniqueness of $h_a$ and $\mu$ implies that $F$ does
not depend on the choice of the flow box. Therefore, $Fu$ and
$\widetilde\Delta_P u$ are defined globally $\mu$-almost everywhere when $u\in\Dc(\Fc)$.

\begin{remark}
 \label{R:comparison-two-laplacians}
 {\rm
 We say  that a  quasi-harmonic measure $\mu$ is {\it invariant} if   for  every flow box $\U$  the functions $h_a$ given  in \eqref{e:local_presentation} are constants $c(a)$
 for $\nu$-almost every $a\in\T.$ If  $\mu$ is  invariant  then the two laplacians $\Delta_P $  and $\widetilde\Delta_P $ coincide.  
 }
\end{remark}

Define for $u,v\in\Dc(\Fc)$
$$q(u,v):=- \int (\Delta_P u)v d\mu, \qquad
e(u,v) := q(u,v)  +\int uvd\mu$$
and
$$\widetilde q(u,v):=-\int (\widetilde{\Delta}_Pu)v d\mu=q(u,v)-\int
(Fu)vd\mu, \qquad
\widetilde e(u,v):=\widetilde q(u,v)+\int uvd\mu.$$
Note that these identities still hold for $v\in L^2(\mu)$ and $u$ in the
domain of $\Delta_P$ and of $\widetilde\Delta_P$ that we will
define later.

The  main properties  of these  bilinear forms  are  described  in the following   lemma. In   particular,   the lemma says that $\widetilde\Delta_P $ is  self-adjoint. 
This  is  the main  advantage  of $\widetilde\Delta_P $ over $\Delta_P.$
\begin{lemma} \label{lemma_e_tilde}
We have 
for $u,v\in\Dc(\Fc),$
\begin{eqnarray*}\widetilde q(u,v)&=&\int \langle\nabla u,\nabla v\rangle_{g_P} d\mu=\int i\partial u\wedge \dbar v\wedge T,\\  
 \int (\widetilde\Delta_P u)v d\mu &=& \int
u(\widetilde\Delta_P v) d\mu.
\end{eqnarray*}
In particular, $\widetilde q(u,v)$ and $\widetilde e(u,v)$ are symmetric in $u,v.$ 
Moreover,
$$\int \widetilde\Delta_P u d\mu=\int\Delta_P u d\mu=\int Fu d\mu=0 \quad \mbox{for}\quad u\in\Dc(\Fc).$$
\end{lemma}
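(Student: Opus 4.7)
The plan is to localize via a partition of unity to a flow box $\U\simeq\B\times\T$ where, by Proposition~\ref{P:decomposition}, one has $T=\int_\T h_a[\B\times\{a\}]\,d\nu(a)$ with $h_a$ positive harmonic on the plaque $\B$, and consequently $d\mu = h_a g_P\,d\nu(a)$ on plaques. Test functions $u,v\in\Dc(\Fc)$ are compactly supported on each plaque. The heart of the proof is the ``harmonic regrouping'': substituting $\widetilde\Delta_P u = \Delta_P u + h_a^{-1}\langle\nabla h_a,\nabla u\rangle_{g_P}$ and applying \eqref{e:Laplacian_disc} together with the pointwise identification of $\langle\nabla f,\nabla g\rangle_{g_P}\cdot g_P$ with $i\partial f\wedge\bar\partial g$ (the Hermitian convention consistent with \eqref{e:length-eucl-vs-Poincare}), one obtains
\[
\widetilde\Delta_P u\cdot h_a g_P \;=\; h_a\cdot i\partial\bar\partial u + i\partial h_a\wedge\bar\partial u \;=\; i\partial(h_a\bar\partial u),
\]
where the second equality uses the leafwise harmonicity $\partial\bar\partial h_a = 0$.

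Given this, the two equalities for $\widetilde q(u,v)$ fall out at once. Pairing with $v$ and integrating on the plaque, Stokes applied to $d(v\cdot i h_a\bar\partial u) = ih_a\partial v\wedge\bar\partial u + v\cdot i\partial(h_a\bar\partial u)$ (compact support kills the boundary term) gives $-\int_\B v\widetilde\Delta_P u\cdot h_a g_P = \int_\B i h_a\partial v\wedge\bar\partial u$. Integrating against $d\nu(a)$ and recognizing $T=\int h_a[\B\times\{a\}]\,d\nu$ together with $\mu=T\wedge g_P$ recovers simultaneously $\widetilde q(u,v) = \int i\partial u\wedge\bar\partial v\wedge T$ and $\widetilde q(u,v) = \int\langle\nabla u,\nabla v\rangle_{g_P}\,d\mu$. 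The symmetry $\int(\widetilde\Delta_P u)v\,d\mu = \int u(\widetilde\Delta_P v)\,d\mu$, and thus the symmetry of $\widetilde e$, is then immediate, as the right-hand side $\int\langle\nabla u,\nabla v\rangle_{g_P}\,d\mu$ is manifestly invariant under $u\leftrightarrow v$.

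For the final display, $\int\Delta_P u\,d\mu = 0$ is exactly the quasi-harmonicity assumption on $\mu$ (Definition~\ref{D:harmonic_measure}(1)). For $\int Fu\,d\mu$: unfolding gives $\int Fu\,d\mu = \int_\T\int_\B i\partial h_a\wedge\bar\partial u\,d\nu$, and Stokes applied to $d(u\cdot i\partial h_a) = -i\partial h_a\wedge\bar\partial u$ (using $\partial\bar\partial h_a = 0$ and $\partial u\wedge\partial h_a = 0$ in leaf-dimension one) forces each plaque integral to vanish, giving $\int Fu\,d\mu = 0$ and hence $\int\widetilde\Delta_P u\,d\mu = 0$. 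The main obstacle is the harmonic regrouping identity above: it is precisely the reason $\widetilde\Delta_P$, rather than $\Delta_P$, is the natural self-adjoint operator on $L^2(\mu)$, the $F$-term being designed so that the weight $h_a$ is absorbed into an exact $\partial$-derivative whose integration by parts yields the symmetric bilinear form.
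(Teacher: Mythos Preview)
Your proof is correct and follows essentially the same approach as the paper: localize to a flow box, use the decomposition $T=\int h_a[\B\times\{a\}]\,d\nu$, and integrate by parts on each plaque exploiting the harmonicity of $h_a$. Your ``harmonic regrouping'' $\widetilde\Delta_P u\cdot h_a g_P = i\partial(h_a\dbar u)$ is just a clean repackaging of the paper's computation, which instead adds the Stokes-vanishing term $-\int id(vh_a\partial u)$ and simplifies. The one genuine organizational difference is the final display: the paper obtains all three vanishing integrals at once by applying the already-proved self-adjointness with $v\equiv 1$ on a neighborhood of $\supp u$ (so $u\widetilde\Delta_P v=0$), whereas you invoke the quasi-harmonicity of $\mu$ directly for $\int\Delta_P u\,d\mu=0$ and run a separate Stokes argument on $\int i\partial h_a\wedge\dbar u$ for $\int Fu\,d\mu=0$. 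Both routes are valid; the paper's is slightly slicker, yours makes the individual mechanisms more visible. One cosmetic slip: your Stokes step literally produces $\int ih_a\partial v\wedge\dbar u$, the conjugate of the stated $\int i\partial u\wedge\dbar v\wedge T$, but since $\widetilde q(u,v)$ is real and you establish symmetry anyway, this is harmless.
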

\proof
Using a partition of unity, we can assume that $u$ and $v$ have
compact support in a flow box as above. Using \eqref{e:local_presentation} it is then enough to consider
the case where $T$ is supported by a plaque $\B\times\{a\}$ and given
by a harmonic function $h_a$. Using    \eqref{e:m-T},  \eqref{e:grad} and \eqref{e:Laplacian_disc}, we have that
\begin{eqnarray*}
 \widetilde q(u,v)&=&q(u,v)-\int
(Fu)vd\mu=-\int_{\B\times\{a\}} (i\ddbar u)v h_a-\int_{\B\times\{a\}}  iv\partial u \wedge \dbar h_a\\
&=&-\int_{\B\times\{a\}} (i\ddbar u)v h_a-\int_{\B\times\{a\}}  iv\partial u\wedge  \dbar h_a  -\int_{\B\times\{a\}}  id\big(vh_a \partial u  \big),
\end{eqnarray*}
where the last  equality  holds  because  the last integral  in the  last line  is  equal  to $0$ by Stokes' theorem.
After expanding  the  differential expression $d\big(vh_a \partial u  \big)$ in the  last term  in the last line, and  then simplifying   the last line,  we get  that
 $$\widetilde q(u,v)=\int_{\B\times\{a\}} ih_a\partial u\wedge \dbar v.$$ 
 This  proves  the first  identity of the lemma.

It follows from this identity  that $\widetilde q$
and $\widetilde e$ are
symmetric.

  The second identity of the lemma  ($\widetilde\Delta_P$ is  self-adjoint) is an immediate  consequence of the first one.
 Applying   the  second identity  to the case where  $v=1$  on a neighborbood of the support of $u,$  so   $ u\widetilde\Delta v=0$ and we obtain  the  other identities of the lemma.

Note that the lemma still holds for
$u,v$ in the domain of $\Delta$ and $\widetilde\Delta$ that we will
define later.
\endproof

\begin{lemma} \label{lemma_e_e_tilde}
The bilinear forms $\widetilde q$ and $\widetilde e$ extend
  continuously to $H^1(\mu)\times H^1(\mu)$. If  the  measure $\mu$ is  finite,  then  $q$ and $e$ also extend
  continuously to $H^1(\mu)\times H^1(\mu)$.  Moreover, we have
$q(u,u)=\widetilde q(u,u)$ and $e(u,u)=\widetilde e (u,u)$ for $u\in H^1(\mu)$. 
\end{lemma}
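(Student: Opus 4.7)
The plan is to first extend $\widetilde q$ and $\widetilde e$, then establish the diagonal identities, and finally use those identities to extend $q$ and $e$ themselves.

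First, I would extend $\widetilde q$ using the representation from Lemma~\ref{lemma_e_tilde}: for $u,v \in \Dc(\Fc)$, $\widetilde q(u,v) = \int \langle \nabla u, \nabla v\rangle_{g_P}\,d\mu$. Applying the Cauchy--Schwarz inequality pointwise in the leafwise tangent inner product and then in $L^2(\mu)$ gives
$$|\widetilde q(u,v)| \leq \|\nabla u\|_{L^2(\mu)} \|\nabla v\|_{L^2(\mu)} \leq \|u\|_H \|v\|_H,$$
so $\widetilde q$ is bounded on $\Dc(\Fc)\times\Dc(\Fc)$ in the $H$-norm and, since $\Dc(\Fc)$ is dense in $H^1(\mu)$ by definition, extends uniquely to a continuous bilinear form on $H\times H$. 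For $\widetilde e(u,v) = \widetilde q(u,v) + \langle u,v\rangle_{L^2(\mu)}$, the extra $L^2$-pairing satisfies $|\langle u,v\rangle_{L^2(\mu)}| \leq \|u\|_{L^2}\|v\|_{L^2} \leq \|u\|_H\|v\|_H$, so $\widetilde e$ extends likewise. Notice that this part does not require $\mu$ to be finite.

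Next, I would prove the diagonal identity $q(u,u) = \widetilde q(u,u)$ on $\Dc(\Fc)$ via the leafwise Leibniz rule $\Delta_P(u^2) = 2u\,\Delta_P u + 2|\nabla u|_{g_P}^2$. Since $u^2 \in \Dc(\Fc)$ whenever $u \in \Dc(\Fc)$, the quasi-harmonicity of $\mu$ yields $\int \Delta_P(u^2)\,d\mu = 0$, whence $-\int u\,\Delta_P u\,d\mu = \int |\nabla u|_{g_P}^2\,d\mu$. This is precisely $q(u,u) = \widetilde q(u,u)$, and adding $\|u\|_{L^2(\mu)}^2$ gives $e(u,u) = \widetilde e(u,u)$. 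By polarization this is equivalent to $q(u,v) + q(v,u) = 2\widetilde q(u,v)$, so the symmetric part of $q$ on $\Dc(\Fc)$ already coincides with $\widetilde q$.

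For the extensions of $q$ and $e$ themselves, I decompose $q = \widetilde q + A$, where $A(u,v) := \int (Fu)\,v\,d\mu$ is the antisymmetric part; the antisymmetry $A(u,v) = -A(v,u)$ follows by applying the identity $\int F(\varphi)\,d\mu = 0$ from Lemma~\ref{lemma_e_tilde} to $\varphi = uv$. The symmetric part of $q$ is already handled by the first paragraph, so the problem reduces to bounding $A$ on $\Dc(\Fc)\times\Dc(\Fc)$ in the $H$-norm. The hard part will be this estimate: after a leafwise integration by parts exploiting the harmonicity of the density $h_a$ (so that $\partial\bar\partial h_a = 0$ on each plaque), one seeks to absorb the drift factor $Fu = \langle h_a^{-1}\nabla h_a, \nabla u\rangle_{g_P}$ into $\|\nabla u\|_{L^2(\mu)}$ through a Cauchy--Schwarz argument; the finiteness of $\mu$ enters precisely here, controlling the global integrals of the logarithmic derivative $\nabla\log h_a$ against test functions and closing the estimate $|A(u,v)| \leq C\,\|u\|_H\|v\|_H$. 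Once $q$ is known to be bounded in the $H$-norm on $\Dc(\Fc)\times\Dc(\Fc)$, it extends uniquely by density to a continuous bilinear form on $H\times H$, and the diagonal identities $q(u,u) = \widetilde q(u,u)$ and $e(u,u) = \widetilde e(u,u)$ then propagate from $\Dc(\Fc)$ to all of $H^1(\mu)$ by density and the joint continuity of both sides.
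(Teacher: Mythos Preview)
Your treatment of $\widetilde q$ and $\widetilde e$ is correct and matches the paper exactly: the identity $\widetilde q(u,v)=\int\langle\nabla u,\nabla v\rangle_{g_P}\,d\mu$ from Lemma~\ref{lemma_e_tilde} plus Cauchy--Schwarz does the job. Your proof of the diagonal identity via the Leibniz rule on $u^2$ and quasi-harmonicity is also fine, and is equivalent to the paper's computation (which in the analogous Lemma~\ref{lemma_e_e_tilde_c} writes $q(u,u)-\widetilde q(u,u)=\frac12\Re\int i\partial u^2\wedge\dbar T=0$).

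The gap is in the last paragraph. You correctly isolate the antisymmetric part $A(u,v)=\int (Fu)v\,d\mu$ with $Fu=\langle h_a^{-1}\nabla h_a,\nabla u\rangle_{g_P}$, but you misidentify the mechanism that bounds it. The control of $\nabla\log h_a$ does \emph{not} come from the finiteness of $\mu$; it comes from a pointwise Harnack-type inequality for positive harmonic functions with respect to the Poincar\'e metric, namely $|h_a^{-1}\nabla h_a|_{g_P}\le 1$ on each plaque (this is the Poincar\'e-regularity estimate \eqref{e:Poincare-reg}, proved via Lemma~\ref{L:curvature}). With that pointwise bound in hand, Cauchy--Schwarz gives directly
\[
|A(u,v)|\le \int |\nabla u|_{g_P}\,|v|\,d\mu\le \|\nabla u\|_{L^2(\mu)}\|v\|_{L^2(\mu)}\le \|u\|_H\|v\|_H,
\]
and the extension of $q$ follows. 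The paper does exactly this (deferring to the proof of Lemma~\ref{lemma_e_e_tilde_c}), so your overall architecture is right; what is missing is the crucial observation that the drift $h_a^{-1}\nabla h_a$ is uniformly bounded in the Poincar\'e norm, which is a consequence of hyperbolicity of the leaves and not of any global mass condition on $\mu$.
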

\proof
The first identity in Lemma \ref{lemma_e_tilde} implies that
$\widetilde q$ and $\widetilde e$
extend continuously to $H^1(\mu)$ and the identity is still valid for
the extension of $\widetilde q$. 
In order to prove the same property
for $q$ and $e$  when $\mu$ is  of finite mass, it is enough to show that $q-\widetilde q$ is bounded on
$H^1(\mu)\times H^1(\mu)$. For this we  follow  the proof of  Lemma \ref{lemma_e_e_tilde_c} below.
\endproof

\begin{definition}\label{D:Dom-Delta}\rm 
Define the domain $\Dom(\pm\Delta_P)$ of $\pm\Delta_P$
(resp. $\Dom(\pm\widetilde\Delta_P)$ of $\pm\widetilde\Delta_P$) as the space of $u\in
H^1(\mu)$ such that $q(u,\cdot)$ (resp. $\widetilde q(u,\cdot)$) extends to a linear
continuous form on $L^2(\mu)$. 
\end{definition}
\begin{remark}\label{R:Dom-Delta} \rm 
 Using     Lemmas  \ref{lemma_e_tilde} and \ref{lemma_e_e_tilde}  we can show that when the measure $\mu$ is  finite,
$\Dom(\pm\Delta_P)=\Dom(\pm\widetilde\Delta_P)$  (see  also Remark  \ref{R:Dom}). 
%In a flow box, we can show using Federer's version of Lusin's theorem \cite[Th. 2.3.5]{Federer} that a function $u$ in $L^2(m)$
%belongs to $H^1(m)$ if the gradient $\nabla u$, defined as a vector field with distribution coefficients on generic plaques, is in $L^2(m)$.
%An analogous property holds for $\Delta$ and $\widetilde\Delta$. 
Moreover,  we can prove that  $\Dom(\pm\Delta_P)$ is equal to  $ H_P(\mu),$  where  $H_P(\mu)$ is the completion of $\Dc(\Fc)$ for the norm 
\begin{equation}\label{e:norm_P}
\|u\|_{H_P(\mu)}:=\sqrt{\|u\|^2_{L^2(\mu)}+\|\Delta_P u\|_{L^2(\mu)}^2}. 
\end{equation}
For more  details see \eqref{e:Delta_P_monotone-bis} and  \eqref{e:Delta_P_monotone} in the proof of  Proposition \ref{prop_delta_max} below.
It is clear that if $u\in\Dom(-\Delta_P)$ then $\Delta_P u$ in the sense of distributions with respect to $\Dc(\Fc)$ as test functions, is in $L^2(\mu)$. This allows us to extend Lemma 
\ref{lemma_e_tilde} to $u,v$ in $\Dom(-\Delta_P),$  or more generally to $u\in \Dom(-\Delta_P) $ and $v\in H^1(\mu).$ 
\end{remark}
The existence of abstract heat diffusions are given by  the following proposition.

\begin{proposition} \label{prop_delta_max}
Let $\Fc=(X,\Lc,E)$ be a   Riemann surface lamination with singularities   endowed
with the leafwise Poincar\'e metric $g_P.$ Let $\mu$ be
a positive quasi-harmonic  measure.
Then 
the associated operator $-\widetilde\Delta_P$ is maximal monotone on $L^2(\mu)$. 
If, moreover,  the  measure  $\mu$ is  finite,  then the associated operator 
$-\Delta_P$ is also maximal monotone on $L^2(\mu)$. In particular,
they are infinitesimal generators of semi-groups of contractions on $L^2(\mu)$ and their graphs are closed.
\end{proposition}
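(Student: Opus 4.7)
The strategy is the standard one for constructing heat semigroups from Dirichlet forms: verify monotonicity by computing the quadratic form on the diagonal, verify maximality via Lax--Milgram applied to the coercive form $\widetilde e$ (respectively $e$), and then invoke Hille--Yosida.

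First, I would establish monotonicity of $-\widetilde\Delta_P$. For $u\in\Dc(\Fc)$, Lemma \ref{lemma_e_tilde} gives
\begin{equation}\label{e:Delta_P_monotone-bis}
 \langle -\widetilde\Delta_P u,u\rangle_{L^2(\mu)}=\widetilde q(u,u)=\int|\nabla u|^2\,d\mu\geq 0,
\end{equation}
and the same identity survives for $u\in\Dom(-\widetilde\Delta_P)$ by a standard density/extension argument (Lemma \ref{lemma_e_tilde} extends to pairs in $\Dom(-\widetilde\Delta_P)\times H^1(\mu)$ as noted in Remark \ref{R:Dom-Delta}). In particular $-\widetilde\Delta_P$ is monotone.

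Second, I would prove maximality, i.e.\ surjectivity of $\mathrm{Id}-\widetilde\Delta_P$ onto $L^2(\mu)$. Fix $f\in L^2(\mu)$. On the Hilbert space $H:=H^1(\mu)$ with norm \eqref{e:norm_H1}, the form $\widetilde e(u,v)=\widetilde q(u,v)+\int uv\,d\mu$ is continuous and symmetric by Lemma \ref{lemma_e_e_tilde}, and by Lemma \ref{lemma_e_tilde}
\[
 \widetilde e(u,u)=\int|\nabla u|^2d\mu+\int|u|^2d\mu=\|u\|_H^2,
\]
so $\widetilde e$ is coercive with constant $1$. The functional $v\mapsto\int fv\,d\mu$ is continuous on $H$ because it is already continuous on $L^2(\mu)\supset H$. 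Hence Theorem \ref{th_lax_milgram} (Lax--Milgram) produces a unique $u\in H$ with $\widetilde e(u,v)=\int fv\,d\mu$ for all $v\in H$. Rearranging,
\begin{equation}\label{e:Delta_P_monotone}
 \widetilde q(u,v)=\int(f-u)v\,d\mu\qquad\text{for all }v\in H,
\end{equation}
which shows that $\widetilde q(u,\cdot)$ extends to a continuous linear form on $L^2(\mu)$; by Definition \ref{D:Dom-Delta} this means $u\in\Dom(-\widetilde\Delta_P)$, and the identity reads $(\mathrm{Id}-\widetilde\Delta_P)u=f$. Together with monotonicity this gives the maximal monotonicity of $-\widetilde\Delta_P$.

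Third, when $\mu$ is finite I would repeat the argument with $e$ and $q$ in place of $\widetilde e$ and $\widetilde q$. Lemma \ref{lemma_e_e_tilde} says that $e$ extends continuously to $H\times H$ in the finite-mass setting, and that $e(u,u)=\widetilde e(u,u)=\|u\|_H^2$, giving simultaneously continuity and coercivity. The non-symmetric version of Lax--Milgram (which does not require symmetry, only continuity and coercivity) yields $u\in H$ with $e(u,v)=\int fv\,d\mu$ for every $v\in H$; then $q(u,\cdot)$ extends to a bounded linear form on $L^2(\mu)$, so $u\in\Dom(-\Delta_P)$ and $(\mathrm{Id}-\Delta_P)u=f$. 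Monotonicity of $-\Delta_P$ follows again from $q(u,u)=\widetilde q(u,u)\geq 0$ (Lemma \ref{lemma_e_e_tilde}). Finally, applying Theorem \ref{th_hille_yosida} (Hille--Yosida) to each of these maximal monotone operators yields the associated semigroups of contractions on $L^2(\mu)$ with closed graphs, which is the last conclusion of the proposition.

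The main obstacle is the treatment of $-\Delta_P$ when $\mu$ is infinite: the vector field $F=h_a^{-1}\nabla h_a$ and the cross term $\int(Fu)v\,d\mu$ need not be controllable by the $H^1(\mu)$ norm without the global $L^2$-integrability provided by $\mu(X)<\infty$, which is precisely why only $-\widetilde\Delta_P$ (whose quadratic form is intrinsically $\int|\nabla u|^2d\mu$) is known to be maximal monotone in general. A subsidiary technical point is to make sure that the extension of Lemma \ref{lemma_e_tilde} from $\Dc(\Fc)$ to $\Dom(-\widetilde\Delta_P)$ and $H^1(\mu)$ used to verify \eqref{e:Delta_P_monotone-bis}--\eqref{e:Delta_P_monotone} is rigorous; this is handled by density of $\Dc(\Fc)$ in $H^1(\mu)$ combined with the continuity of $\widetilde q$ already established in Lemma \ref{lemma_e_e_tilde}.
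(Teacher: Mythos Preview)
Your proposal is correct and follows essentially the same route as the paper: monotonicity via the identity $\widetilde q(u,u)=\int|\nabla u|^2\,d\mu\geq 0$ from Lemma \ref{lemma_e_tilde} (and $q(u,u)=\widetilde q(u,u)$ from Lemma \ref{lemma_e_e_tilde}), coercivity $\widetilde e(u,u)=e(u,u)=\|u\|_{H^1}^2$, Lax--Milgram for surjectivity of $\mathrm{Id}-\widetilde\Delta_P$ (resp.\ $\mathrm{Id}-\Delta_P$), and Hille--Yosida for the semigroup conclusion. Your added remark explaining why the finiteness of $\mu$ is needed to control the cross term $\int(Fu)v\,d\mu$ and hence to run the argument for $-\Delta_P$ is a useful gloss not spelled out in the paper's proof.
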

\proof
The last assertion is the consequence of the first one, Theorem
\ref{th_hille_yosida} and the properties of maximal monotone operators. 
So, we only have to prove the first assertion.
By  Lemma  \ref{lemma_e_tilde}
we have  
\begin{equation}\label{e:Delta_P_monotone-bis} \langle - \widetilde \Delta_P u ,u \rangle_{g_P}=\tilde q(u,u)  
=\int i\partial u \wedge \dbar u \wedge T  \geq 0.
\end{equation}
This, combined  with Lemma  \ref{lemma_e_e_tilde}, yields that
\begin{equation}\label{e:Delta_P_monotone}
\langle - \Delta_P u ,u \rangle_{g_P}  = q(u,u)=\tilde q(u,u)=\int i\partial u \wedge \dbar u \wedge T  \geq 0.
\end{equation}
By continuity, we can extend the inequalities to $u$ in $\Dom(-\Delta_P)=\Dom(-\widetilde\Delta_P)$.
So, $-\Delta_P$ and $-\widetilde\Delta_P$ are monotone. 
Pick $u\in \Dc(\Fc).$
By Lemmas \ref{lemma_e_tilde}
 and \ref{lemma_e_e_tilde}, we have for $u\in H^1(\mu)$
\begin{equation} \label{e:lower-bound-e} \widetilde e(u,u)\geq \|u\|^2_{H^1} \quad \mbox{and}\quad
e(u,u)\geq \|u\|^2_{H^1}.\end{equation}
By Theorem \ref{th_lax_milgram}, for any $f\in L^2(\mu)$, there is $u\in
H^1(\mu)$ such that
$$e(u,v)=\langle f,v\rangle_{L^2(\mu)} \quad \mbox{for}\quad v\in
H^1(\mu).$$
So, $u$ is in $\Dom(\Delta_P)$ and the last equation is equivalent
to $u-\Delta_P u=f$. 
Hence, $-\Delta_P$ is maximal monotone. The case of $-\widetilde\Delta_P$ 
is treated in the same way. Note that since $-\widetilde\Delta_P$ is
symmetric and maximal monotone, it is self-adjoint but $-\Delta_P$ is not symmetric. 
\endproof
 
When the measure  $\mu$ is a  finite,   we obtain  the following ergodic theorem for abstract heat diffusions which is    stronger  than  Proposition  \ref{prop_delta_max}.

\begin{theorem} \label{th_heat_real} 
Let $\Fc=(X,\Lc,E)$ be a   Riemann surface lamination with singularities   endowed
with the leafwise Poincar\'e metric $g_P.$ Let $\mu$ be
a harmonic  measure.  Let $S(t)$,
$t\in\R^+$, denote  the semi-group of contractions associated with the operator
$-\Delta$ or  $-\widetilde\Delta$ which is given by the Hille-Yosida
theorem. Then 
the  
measure $\mu$ is  $S(t)$-invariant and
$S(t)$ is a positive contraction in $L^p(\mu)$ for all $1\leq p\leq\infty.$
\end{theorem}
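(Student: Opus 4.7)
The plan is to establish $\mu$-invariance first, then derive positivity and $L^\infty$-contraction via a Markov property, and finally combine the two to obtain $L^p$-contraction for all $p\in[1,\infty]$ by interpolation. I handle both generators simultaneously; the argument is the same for $-\Delta_P$ and $-\widetilde\Delta_P$, with the non-symmetric operator requiring only slightly more care.

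For invariance, Remark \ref{R:Dom-Delta} tells me that $\Dc(\Fc)$ is dense in $\Dom(\Delta_P)=\Dom(\widetilde\Delta_P)$ for the graph norm, while Lemma \ref{lemma_e_tilde} gives $\int_X\Delta_P u\,d\mu=\int_X\widetilde\Delta_P u\,d\mu=0$ for $u\in\Dc(\Fc)$. Since a harmonic measure is finite, the linear form $u\mapsto\int_X \Delta_P u\,d\mu$ is bounded with respect to the graph norm by $\mu(X)^{1/2}\|\Delta_P u\|_{L^2}$, so the vanishing extends to every $u\in\Dom(\Delta_P)$ by density. For $u\in\Dom(\Delta_P)$, Hille-Yosida yields $S(t)u\in\Dom(\Delta_P)$ with $\frac{d}{dt}S(t)u=\Delta_P S(t)u$ in $L^2(\mu)$; integrating against $d\mu$ gives $\frac{d}{dt}\int_X S(t)u\,d\mu=0$, hence $\int_X S(t)u\,d\mu=\int_X u\,d\mu$, and density of $\Dom(\Delta_P)$ in $L^2(\mu)$ extends this to all of $L^2(\mu)$.

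For positivity and $L^\infty$-contraction, I invoke Beurling-Deny theory. In the self-adjoint case, the form $\widetilde q(u,v)=\int\langle\nabla u,\nabla v\rangle_{g_P}\,d\mu$ from Lemma \ref{lemma_e_tilde} is a Dirichlet form: the pointwise orthogonality $\langle\nabla u^+,\nabla u^-\rangle_{g_P}\equiv 0$ gives $\widetilde q(u^+,u^-)\leq 0$, and the leafwise chain rule yields $\widetilde q(u\wedge 1,(u-1)^+)\geq 0$, so the semigroup generated by $-\widetilde\Delta_P$ is Markov, i.e.\ $0\leq u\leq 1$ forces $0\leq S(t)u\leq 1$; this simultaneously delivers positivity and $L^\infty$-contraction. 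For the non-self-adjoint generator $-\Delta_P$, Lemma \ref{lemma_e_e_tilde} identifies $\widetilde q$ as the symmetric part of $q$, hence $q$ is sectorial and the non-symmetric version of the Beurling-Deny criterion produces the same Markov property. With both properties at hand, any $u\in L^1(\mu)\cap L^2(\mu)$ satisfies $|S(t)u|\leq S(t)|u|$ by positivity, and invariance gives $\|S(t)u\|_{L^1}\leq\int_X S(t)|u|\,d\mu=\|u\|_{L^1}$; density extends $S(t)$ to a contraction on $L^1(\mu)$, and Riesz-Thorin interpolation between $L^1$ and $L^\infty$ covers the remaining $p\in(1,\infty)$.

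The main obstacle I anticipate is verifying the Markov property for the non-symmetric generator $-\Delta_P$. For $-\widetilde\Delta_P$ the cut-off inequalities can be read off directly from the gradient representation, but for $-\Delta_P$ one must check that the zeroth-order drift $Fu$ appearing in $\widetilde\Delta_P u=\Delta_P u+Fu$ respects truncations at the level of the form. This can be handled either by applying the sectorial Beurling-Deny theorem to $q$, or more concretely by comparing the abstract semigroup for $-\Delta_P$ with the leafwise heat diffusion $D_t$ on each plaque (the theme of Subsection \ref{SS:coincidence}), where positivity and the sub-Markov bound follow from the classical maximum principle for the heat equation on $(\D,g_P)$.
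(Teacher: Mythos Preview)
Your invariance argument and the $L^1$/$L^\infty$/interpolation endgame match the paper's exactly. For positivity the paper does not invoke Beurling--Deny; it runs Stampacchia's trick directly: for $u_0\le K$ set $\xi(t)=\int_X H(u(t,\cdot)-K)\,d\mu$ with $H'=G$ smooth, bounded, nonnegative and vanishing on $(-\infty,0]$, and compute $\xi'(t)$. For $A=-\widetilde\Delta_P$ this yields $\xi'(t)=-\int G'(v)|\nabla v|^2\,d\mu\le 0$, which is your Dirichlet-form contraction inequality written out by hand. So for the self-adjoint generator your route and the paper's are the same argument in different packaging; the paper's version has the minor technical advantage that it never needs to check that the abstract completion $H^1(\mu)$ is stable under $u\mapsto u^+$ or $u\mapsto u\wedge 1$, since the smooth cut-off $G$ handles this.

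For $A=-\Delta_P$ there is a real gap in your sketch. In the Stampacchia computation an extra drift term $\int G(v)\,Fv\,d\mu=\int F\bigl(H(v)\bigr)\,d\mu$ appears, and the paper kills it with the identity $\int Fw\,d\mu=0$ from Lemma~\ref{lemma_e_tilde} (the drift $F=h_a^{-1}\nabla h_a$ integrates to zero against $\mu$ precisely because $\mu$ is quasi-harmonic). This identity is exactly what one would have to feed into the non-symmetric Beurling--Deny criterion to get both the sub-Markov condition for $q$ and for its dual, and you never isolate it. Your fallback---borrowing positivity from the leafwise diffusion $D_t$ via Subsection~\ref{SS:coincidence}---is not available here: Theorem~\ref{T:diffusions_comparisons} requires the extra hypotheses~(i)--(ii) (local boundedness of $\eta$ and a membership test for $H^1(\mu)$) which are not assumed in Theorem~\ref{th_heat_real}, and that subsection is placed logically after the present result. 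So the non-symmetric case genuinely hinges on the vanishing-drift identity, and the paper's direct Stampacchia calculation is the cleanest way to use it.
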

 
 \proof
We prove that $\mu$ is invariant, that is 
$$\langle \mu,S(t)u_0\rangle = \langle \mu,u_0\rangle \quad\mbox{for}\quad u_0\in \Dc(\Fc).$$
We will see later that this identity holds also for $u_0\in L^1(\mu)$
because $S(t)$ is a contraction in $L^1(\mu)$ and $\Dc(\Fc)$ is dense in $L^1(\mu)$. Define 
$u:=S(t)u_0$ and 
$$\eta(t):=\langle \mu,S(t)u_0\rangle=\langle \mu,u(t,\cdot)\rangle\quad \text{for}\quad t\in\R^+.$$
We deduce from Theorem \ref{th_hille_yosida} that $\eta$ is of class $\Cc^1$ on $\R^+$ and that
$$\eta'(t)=\langle \mu,S'(t)u_0\rangle =\langle \mu,Au(t,\cdot)\rangle$$
where $A$ is the operator $-\Delta_P$ or $-\widetilde\Delta_P$. By Lemma \ref{lemma_e_tilde},
the last integral vanishes. So, $\eta$ is constant and hence $\mu$ is invariant.

In order to prove the positivity of $S(t)$, it is enough to show the following {\it maximum principle}: if $u_0$ is a function in $\Dc(\Fc)$ such that $u_0\leq K$ for some constant $K$, then $u(t,x)\leq K$. 
To show  the maximum  principle  we  use a trick due to  Stampacchia \cite{Brezis}.  Fix a smooth bounded function
 $G:\R\rightarrow \R^+$ with bounded first derivative such that  $G(t)=0$ for  $t\leq 0$
and   $G'(t)>0$ for  $t>0.$  Put
$$H(s):=\int_0^s G(t) dt. $$
Consider  the  non-negative  function $\xi:\R^+\rightarrow \R^+$  given by 
$$\xi(t):=\int H(u(t,\cdot)-K)d\mu.$$
Here we make use of the  assumption that $\mu$ is finite.
By Theorem \ref{th_hille_yosida}, $\xi$ is of class $\Cc^1$.
  
We  want to show that it is  identically zero. Define $v(t,x):=u(t,x)-K$. We have $Av(t,x)=Au(t,x)$. Using in particular that $G$ is bounded, 
we obtain
\begin{eqnarray*}
\xi'(t)&=&\int  G(u(t,\cdot)-K)\frac{\partial u(t,\cdot)}{\partial t} d\mu\\
&=&-\int  G(u(t,\cdot)-K) A u(t, \cdot) d\mu\\
&=&-\int  G(v(t,\cdot))Av(t,\cdot)d\mu.
\end{eqnarray*}
When $A=-\widetilde \Delta_P$, by Theorem \ref{th_hille_yosida} $ v(t,\cdot)\in H^1(\mu),$ hence by Lemma \ref{lemma_e_tilde}, the last integral is equal to
$$-\int \langle \nabla  G(v),\nabla v\rangle_{g_P} d\mu =-\int G'(v)|\nabla v|^2d\mu\leq 0.$$
Thus, $ \xi'(t)\leq  0$. This, combined  with  $\xi(0)=0$ and  $\xi(t)\geq 0$ for $t\in\R^+,$ implies that $\xi=0.$ Hence
$u(x,t)\leq  K$. 

When $A=-\Delta_P$, by Theorem \ref{th_hille_yosida} $ v(t,\cdot)\in H^1(\mu),$  the considered integral is equal to
$$-\int G'(v)|\nabla v|^2d\mu+\int G(v)Fvd\mu=-\int G'(v)|\nabla v|^2d\mu+\int FH(v)d\mu.$$
By Lemma \ref{lemma_e_tilde}, the last integral vanishes.
So, we also obtain that $\xi'(t)\leq 0$. This completes the proof of
the maximum principle which implies the positivity of $S(t)$. 

The positivity of $S(t)$ together with the invariance of $\mu$ imply that 
$$\|S(t)u_0\|_{L^1(\mu)}\leq \|u_0\|_{L^1(\mu)} \quad \mbox{for} \quad u_0\in\Dc(\Fc).$$
It follows that $S(t)$ extends continuously to a positive contraction in $L^1(\mu)$ since $\Dc(\Fc)$ is dense in $L^1(\mu)$. 
The uniqueness of the solution in  Theorem \ref{th_hille_yosida} implies that $S(t)\chac=\chac$. This together with 
the positivity of $S(t)$ imply that $S(t)$ is a contraction in $L^\infty(\mu)$. Finally, the classical theory of interpolation between the Banach spaces $L^1(\mu)$ and $L^\infty(\mu)$ 
implies that $S(t)$ is a contraction in $L^p(\mu)$ for all $1\leq p\leq\infty$, see Triebel \cite{Triebel}.
\endproof

An important consequence of Theorem  \ref{th_heat_real} is  the following ergodic theorem.

\begin{theorem} \label{T:heat-real-ergodic}
Under the hypothesis of Theorem \ref{th_heat_real}, for all $u_0\in  L^p(\mu),$  $1\leq p<\infty$, the average 
$$\frac{1}{R}\int_0^R S(t)u_0 dt$$ 
converges pointwise  $\mu$-almost everywhere
 and also in $L^p(\mu)$ to 
an $S(t)$-invariant function  $u_0^*$ when $R$ goes to infinity.
Moreover, $u_0^*$ is constant on the leaf $L_a$ for $\mu$-almost every $a$.
If $\mu$ is an ergodic harmonic measure, then $u$ is constant $\mu$-almost everywhere.
\end{theorem}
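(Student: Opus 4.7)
The plan is to deduce the statement from the continuous‑parameter Dunford–Schwartz ergodic theorem and then identify the limit $u_0^*$ via the infinitesimal generator of $\{S(t)\}$.

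By Theorem \ref{th_heat_real}, $\{S(t)\}_{t\in\R^+}$ is a strongly continuous semi‑group of positive contractions on $L^p(\mu)$ for every $1\le p\le\infty$, and $\mu$ is $S(t)$‑invariant. Because $\mu$ is finite, this places us exactly in the Dunford–Schwartz setting. I would therefore invoke the continuous version of the Dunford–Schwartz pointwise ergodic theorem (see Krengel \cite{Krengel}) to conclude that for every $u_0\in L^p(\mu)$, $1\le p<\infty$, the Cesàro averages $A_R u_0:=\tfrac{1}{R}\int_0^R S(t)u_0\,dt$ converge both $\mu$‑a.e.\ and in $L^p(\mu)$ to some $u_0^*\in L^p(\mu)$. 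The $S(t)$‑invariance of $u_0^*$ is obtained from the identity
$$S(s)A_R u_0=A_R u_0+\tfrac{1}{R}\Bigl(\int_R^{R+s}-\int_0^s\Bigr)S(t)u_0\,dt,$$
whose remainder has $L^p$‑norm at most $2s\|u_0\|_{L^p}/R$, combined with the continuity of $S(s)$ on $L^p(\mu)$.

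To prove that $u_0^*$ is leafwise constant $\mu$‑a.e., I would first reduce to $u_0\in \Dc(\Fc)$: leafwise constant functions form a closed subspace of $L^p(\mu)$, and $A_R$ is an $L^p$‑contraction, so the property passes to $L^p$‑limits in $u_0$. For $u_0\in\Dc(\Fc)$ the limit $f:=u_0^*$ lies in $L^\infty(\mu)\cap L^2(\mu)$ because each $A_R$ is simultaneously an $L^\infty$‑contraction by Theorem \ref{th_heat_real}. Working with the semi‑group generated by $-\widetilde\Delta_P$, which is self‑adjoint on $L^2(\mu)$, the regularising part of the Hille–Yosida theorem yields $S(t)f\in\Dom(-\widetilde\Delta_P)$ for $t>0$; since $S(t)f=f$ for every $t\geq 0$, we obtain $f\in\Dom(-\widetilde\Delta_P)$ with $\widetilde\Delta_P f=-\tfrac{\partial}{\partial t}S(t)f=0$. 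Lemma \ref{lemma_e_tilde} extended to $\Dom(\widetilde\Delta_P)$ (cf.\ Remark \ref{R:Dom-Delta}) then gives
$$0=-\int(\widetilde\Delta_P f)\,f\,d\mu=\widetilde q(f,f)=\int|\nabla f|^2\,d\mu,$$
hence $\nabla f=0$ $\mu$‑a.e. Using the flow‑box decomposition of the associated directed harmonic current $T$ from Proposition \ref{P:decomposition} together with $\mu=T\wedge g_P$ on $\Hyp(\Fc)$, a disintegration argument shows that, for $\nu$‑a.e.\ transversal parameter $a$, the restriction of $f$ to the plaque $\B\times\{a\}$ has zero leafwise gradient and is therefore constant on that plaque. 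A countable covering of $X\setminus E$ by flow boxes together with the connectedness of leaves then gives the leafwise constancy of $f$ $\mu$‑a.e. The same argument applies verbatim to the semi‑group generated by $-\Delta_P$, since by Lemma \ref{lemma_e_e_tilde} we have $q(f,f)=\widetilde q(f,f)$ on $H^1(\mu)$.

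Finally, if $\mu$ is ergodic, then for every $c\in\R$ the super‑level set $\{u_0^*>c\}$ coincides $\mu$‑a.e.\ with a leafwise saturated Borel set; Definition \ref{D:ergodicity} forces its $\mu$‑measure to be $0$ or $\mu(X)$, so $u_0^*$ equals a constant $\mu$‑a.e. The main obstacle is the leafwise‑constancy step: one must bridge between the general $L^p$ setting and the $L^2$ energy identity, and then carefully disintegrate $\mu$ along plaques so as to upgrade the $\mu$‑a.e.\ vanishing of $\nabla f$ into a genuine pointwise leafwise constancy, while controlling the behaviour on the zero‑$\mu$‑measure set $E\cup\Par(\Fc)$.
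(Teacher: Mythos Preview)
Your overall strategy is essentially the paper's: invoke Dunford--Schwartz for the pointwise and $L^p$ convergence, reduce the leafwise-constancy claim to $u_0\in\Dc(\Fc)$, show the limit $u_0^*$ is annihilated by the generator, and then use the energy identity $\widetilde q(u_0^*,u_0^*)=\int|\nabla u_0^*|^2\,d\mu=0$ together with Lemma~\ref{lemma_e_e_tilde}. The ergodicity step is also the same.

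The one genuine gap is in how you obtain $u_0^*\in\Dom(A)$ with $Au_0^*=0$. Your argument (``the regularising part of Hille--Yosida yields $S(t)f\in\Dom(A)$ for $t>0$'') uses the self-adjoint clause of Theorem~\ref{th_hille_yosida}, which applies to $-\widetilde\Delta_P$ but \emph{not} to $-\Delta_P$, since $-\Delta_P$ is not symmetric. Hence your sentence ``the same argument applies verbatim to the semi-group generated by $-\Delta_P$'' is not justified: for a merely maximal monotone generator there is no reason that $S(t)$ maps $L^2$ into $\Dom(A)$, and citing $q(f,f)=\widetilde q(f,f)$ does not help until you already know $f\in\Dom(-\Delta_P)$ with $\Delta_P f=0$.

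The paper closes this gap with a device that works for \emph{both} generators at once: for $u_0\in\Dc(\Fc)\subset\Dom(A)$ one has $u_R:=\tfrac{1}{R}\int_0^R S(t)u_0\,dt\in\Dom(A)$ and
\[
Au_R=\frac{1}{R}\int_0^R AS(t)u_0\,dt=-\frac{1}{R}\int_0^R\frac{\partial}{\partial t}S(t)u_0\,dt=\frac{1}{R}\bigl(u_0-S(R)u_0\bigr)\longrightarrow 0\quad\text{in }L^2(\mu),
\]
since $S(R)$ is an $L^2$-contraction. As $u_R\to u_0^*$ in $L^2(\mu)$ and the graph of any maximal monotone operator is closed, this yields $u_0^*\in\Dom(A)$ and $Au_0^*=0$ for $A=-\Delta_P$ or $-\widetilde\Delta_P$ alike. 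Replacing your regularising step by this closed-graph argument makes the proof complete and uniform in the two cases.
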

  
\proof
By Theorem  \ref{th_heat_real} $S(t)$ is a positive contraction in $L^p(\mu)$ for all $1\leq p\leq \infty.$
Consequently,  the pointwise $\mu$-almost everywhere convergence   is a consequence of the ergodic theorem as in Dunford-Schwartz
\cite[Th. VIII.7.5]{DunfordSchwartz}. We get a   function $u_0^*$  which is  $S(t)$-invariant. The $L^p$ convergence follows from the $L^p$ ergodic   theorem of Von Neumann.
For the rest of the proof, since $S(t)$ is a contraction 
in $L^p(\mu)$  for , it is enough to consider the case where 
$u_0$ is in $\Dc(\Fc)$.

 We first prove  that $u_0^*$
is 
in the domain of $A$ and 
\begin{equation}\label{e:Au_0=0}
Au_0^*=0.
\end{equation}
Define
$$u_R:=\frac{1}{R}\int_0^R S(t)u_0 dt.$$ 
This function belongs to $\Dom(A)$. 
Since $u_R$ converges to $u_0^*$ in $L^2(\mu)$ and the graph of $A$ is closed in $L^2(\mu)\times L^2(\mu)$, it is enough to show
that 
$Au_R\rightarrow 0$ in $L^2(\mu)$. We have
$$Au_R=\frac{1}{R}\int_0^RAu(t,\cdot)dt=-\frac{1}{R}\int_0^R{\partial\over \partial t}u(t,\cdot)dt
={1\over R}u_0-{1\over R}u(R,\cdot).$$
Since $S(t)$ is a contraction in $L^2(\mu)$, the last expression tends to 0 in $L^2(\mu)$. This  proves \eqref{e:Au_0=0}. 

By Lemmas \ref{lemma_e_tilde} and \ref{lemma_e_e_tilde}, we deduce from  equality \eqref{e:Au_0=0}that
$$\int |\nabla u_0|^2d\mu=-\int(\widetilde\Delta_P u_0)u_0d\mu =-\int (\Delta_P u_0)u_0d\mu=0.$$
It follows that $\nabla u_0=0$ almost everywhere with respect to $\mu$. 
Thus, $u_0$ is constant on the leaf $L_a$ for $\mu$-almost every $a$. 
When $\mu$ is extremal, this property implies that $u_0$ is constant $\mu$-almost everywhere, since every measurable set of leaves has zero or full $\mu$ measure.
\endproof

We will need the following lemmas.

\begin{lemma} \label{lemma_harm_pos}
 Let $\widehat \mu=\theta \mu$ be a (signed) quasi-harmonic measure, not necessarily positive, where $\theta$ is a function in $L^2(\mu)$  (see Definition \ref{D:harmonic_measure}). 
Let $\widehat \mu=\widehat \mu^+ - \widehat \mu^-$ be the minimal decomposition 
of $\widehat \mu$ as the difference of two positive measures. Then $\widehat \mu^\pm$ are harmonic.

\end{lemma}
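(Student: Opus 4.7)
The plan is to reduce the lemma to a single key claim: the density $\theta$ is constant on leaves for $\mu$-almost every point. Once this is in hand, the proof finishes quickly. Both $\theta^+:=\max(\theta,0)$ and $\theta^-:=\max(-\theta,0)$ are then leafwise constant $\mu$-a.e. Representing $\mu$ by its associated directed positive harmonic current $T$ via Theorem~\ref{thm_harmonic_currents_vs_measures}(1), and using the local decomposition $T = \int h_a\,[\B\times\{a\}]\,d\nu(a)$ from Proposition~\ref{P:decomposition}, the product $\theta^\pm T$ is again a directed positive harmonic current (on each plaque $\theta^\pm$ is constant, so $\theta^\pm(a)\,h_a$ remains harmonic in the plaque variable). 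Hence $\widehat{\mu}^\pm = (\theta^\pm T)\wedge g_P$ is a positive quasi-harmonic measure; it is finite because $\theta \in L^2(\mu) \subset L^1(\mu)$, the inclusion using that $\mu$ is harmonic and hence finite. Theorem~\ref{thm_harmonic_currents_vs_measures}(3) then upgrades quasi-harmonicity to harmonicity for $\widehat{\mu}^\pm$.

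To prove the key claim, I would start from the quasi-harmonicity hypothesis $\int \Delta_P f \cdot \theta\,d\mu = 0$ for every $f \in \Dc(\Fc)$. Since the left-hand side is bounded by $\|\Delta_P f\|_{L^2(\mu)}\,\|\theta\|_{L^2(\mu)}$, and $\Dc(\Fc)$ is a core for the operator $-\Delta_P$ on $L^2(\mu)$, this extends by density to $\int \Delta_P u \cdot \theta\,d\mu = 0$ for all $u \in \Dom(\Delta_P)$. In operator-theoretic terms this means $\theta \in \Dom(\Delta_P^*)$ with $\Delta_P^*\theta = 0$.

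Next I would invoke the classical fact that for any contraction $T$ on a Hilbert space one has $\ker(I-T)=\ker(I-T^*)$, a one-line computation using that equality in the Cauchy--Schwarz inequality forces colinearity. Applied to each $S(t)$ of the contraction semi-group generated by $-\Delta_P$ on $L^2(\mu)$, which exists by Proposition~\ref{prop_delta_max} since $\mu$ is harmonic and hence finite, this yields $\ker\Delta_P = \ker\Delta_P^*$ as subspaces of $L^2(\mu)$. Consequently $\theta\in\Dom(\Delta_P)$ with $\Delta_P\theta=0$, whence $q(\theta,\theta)=0$; combined with Lemmas~\ref{lemma_e_tilde} and \ref{lemma_e_e_tilde} this equals $\widetilde q(\theta,\theta) = \int |\nabla\theta|^2\,d\mu$. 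Hence $\nabla\theta=0$ $\mu$-a.e.\ and $\theta$ is leafwise constant $\mu$-a.e., as claimed.

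The principal obstacle is the verification that $\Dc(\Fc)$ is a core for $\Delta_P$ in the graph norm, which is what bridges the quasi-harmonicity identity on test functions and the Hilbert-space statement $\theta\in\ker\Delta_P^*$. This should follow from the Lax--Milgram construction of $\Dom(\Delta_P)$ in the proof of Proposition~\ref{prop_delta_max}: any $u\in\Dom(\Delta_P)$ may be written as $u=(I-\Delta_P)^{-1}f$ with $f=u-\Delta_P u\in L^2(\mu)$, and approximating $f$ in $L^2(\mu)$ by $f_n\in\Dc(\Fc)$ yields $u_n:=(I-\Delta_P)^{-1}f_n\to u$ in graph norm. Some care is nevertheless needed in the present lamination framework because the resolvent's mapping properties involve the transverse measure $\nu$. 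A minor secondary issue is the definition of $\theta^\pm T$ when $\theta^\pm$ is only in $L^2(\mu)$ rather than $L^\infty$, which is handled by truncation and monotone convergence.
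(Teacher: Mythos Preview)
Your argument is correct in outline but takes a genuinely different route from the paper's. The paper does not prove that $\theta$ is leafwise constant; instead it works directly with the semi-group $S(t)$ generated by $-\Delta_P$ acting on measures. From $\eta(t):=\langle\widehat\mu,S(t)u_0\rangle$ and quasi-harmonicity one gets $\eta'\equiv 0$, hence $S(t)\widehat\mu=\widehat\mu$; since $S(t)$ is a \emph{positive} contraction (Theorem~\ref{th_heat_real}) it automatically preserves the Jordan decomposition, so $S(t)\widehat\mu^\pm=\widehat\mu^\pm$, and reversing the computation of $\eta'$ yields $\langle\widehat\mu^\pm,\Delta_P u_0\rangle=0$. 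Your approach via $\ker(I-S(t))=\ker(I-S(t)^*)$ for Hilbert-space contractions, leading to $\Delta_P\theta=0$ and then $\int|\nabla\theta|^2\,d\mu=0$, is more operator-theoretic and delivers the extra structural information that $\theta$ is leafwise constant; the paper's positivity argument is shorter and purely measure-theoretic. Both approaches rely on the same extension from $\Dc(\Fc)$ to $\Dom(\Delta_P)$ (the paper passes over it with ``use a partition of unity and the local description of $\widehat\mu$''), so your flagging of the core property is apt. Note, however, that your proposed justification of it is circular: the $u_n=(I-\Delta_P)^{-1}f_n$ lie in $\Dom(\Delta_P)$, not in $\Dc(\Fc)$. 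The core property is instead supplied by Remark~\ref{R:Dom-Delta}, which identifies $\Dom(\Delta_P)$ with $H_P(\mu)$, the completion of $\Dc(\Fc)$ in graph norm.
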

\proof
We start with  assertion (1). Let $S(t)$ be the semi-group of contractions in $L^1(m)$ associated with $-\Delta_P$ as above. Define the action of $S(t)$ on measures by
$$\langle S(t)\widehat \mu,u_0\rangle:=\langle \widehat \mu, S(t)u_0\rangle\quad \mbox{for}\quad u_0\in L^2(\mu).$$ 
Consider a function $u_0\in \Dc(\Fc)$ and define $\eta(t):=\langle S(t)\widehat \mu,u_0\rangle$. By Theorem \ref{th_hille_yosida}, this is a $\Cc^1$ function on $\R^+$. We have since 
$\widehat \mu$ is quasi-harmonic and $\theta$ is in $L^2(\mu)$
$$\eta'(t)=\langle \widehat \mu, S'(t)u_0\rangle=\langle \widehat \mu, -\Delta_P (S(t)u_0)\rangle =0.$$
To see the last equality, we can use a partition of unity and the local description of $\widehat \mu$ on a flow box. 
So, $\eta$ is constant. It follows that $S(t)\widehat \mu=\widehat
\mu$. Since $S(t)$ is a positive  
contraction, we deduce that $S(t)\widehat \mu^\pm = \widehat \mu^\pm$. So, the functions $\eta^\pm(t):=\langle \widehat \mu^\pm,S(t)u_0\rangle$ are constant.
As above, we have
$$\langle \widehat \mu^\pm,\Delta_P u_0\rangle = -\langle \widehat \mu^\pm, S'(0)u_0\rangle=(\eta^\pm)'(0)=0.$$
Hence, $\widehat \mu^\pm$ are quasi-harmonic. Since they are finite positive,  they are also  harmonic by Theorem \ref{thm_harmonic_currents_vs_measures} (3).
\endproof
 
 Let $\mu_1$ and $\mu_2$ be  two  finite positive measures.
  Define $\mu:=\mu_1+\mu_2$ and $\theta_i$ a function in $L^1(\mu)$, $0\leq \theta_i\leq 1$, such that $\mu_i=\theta_i\mu$. 
Define also $\mu_1\vee \mu_2:=\max\{\theta_1,\theta_2\}\mu$ and $\mu_1\wedge \mu_2:=\min\{\theta_1,\theta_2\}\mu$.

\begin{lemma}\label{L:simplex}
 Let $\mu_1$ and $\mu_2$ be  two  harmonic measures.
 Then 
 $\mu_1\vee \mu_2$ and $\mu_1\wedge \mu_2$ are  also  harmonic.
\end{lemma}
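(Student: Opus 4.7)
The plan is to reduce the statement to Lemma \ref{lemma_harm_pos} applied to the signed measure $\mu_1-\mu_2$, and then translate the Jordan decomposition back into min/max language by elementary pointwise identities. Note first that $\mu_1\vee\mu_2$ and $\mu_1\wedge\mu_2$ are both nonnegative and bounded above by $\mu_1+\mu_2$, so they are finite positive Borel measures giving no mass to $\Par(\Fc)\cup E$. By Theorem \ref{thm_harmonic_currents_vs_measures}(3), it is therefore enough to prove that each of them is quasi-harmonic in the sense of Definition \ref{D:harmonic_measure}(1).

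First I would form the signed measure $\hat\mu:=\mu_1-\mu_2$. Its variation satisfies $|\hat\mu|\leq \mu_1+\mu_2$, which is finite and charges neither $\Par(\Fc)$ nor $E$; and linearity gives $\int\Delta_P f\,d\hat\mu=\int\Delta_P f\,d\mu_1-\int\Delta_P f\,d\mu_2=0$ for every $f\in\Dc(\Fc)$, so $\hat\mu$ is quasi-harmonic. With $\mu:=\mu_1+\mu_2$ and $\theta_i\in[0,1]$ such that $\mu_i=\theta_i\mu$, we have $\hat\mu=(\theta_1-\theta_2)\mu$ with $|\theta_1-\theta_2|\leq 1$; since $\mu$ is finite, the density $\theta_1-\theta_2$ lies in $L^\infty(\mu)\subset L^2(\mu)$, so the hypothesis of Lemma \ref{lemma_harm_pos} is satisfied. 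That lemma then yields that the Jordan components $\hat\mu^+$ and $\hat\mu^-$ are both harmonic.

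Using the elementary pointwise identities $\max(a,b)=b+(a-b)^+$ and $\min(a,b)=a-(a-b)^+$ with $a=\theta_1$, $b=\theta_2$, I would rewrite
\[
\mu_1\vee\mu_2=\mu_2+\hat\mu^+,\qquad \mu_1\wedge\mu_2=\mu_1-\hat\mu^+.
\]
The first identity displays $\mu_1\vee\mu_2$ as a sum of two harmonic measures, hence (by linearity in the definition of quasi-harmonic) as a finite positive quasi-harmonic measure, i.e.\ harmonic. For the second, positivity is automatic because on the set $\{\theta_1\geq\theta_2\}$ the density of $\hat\mu^+$ is $\theta_1-\theta_2\leq\theta_1$, while elsewhere it vanishes, so $\hat\mu^+\leq\mu_1$; then $\mu_1-\hat\mu^+$ is again a finite positive quasi-harmonic measure, and Theorem \ref{thm_harmonic_currents_vs_measures}(3) upgrades it to harmonic. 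There is no real obstacle: the only points to check with care are that $\hat\mu$ meets the $L^2$ requirement of Lemma \ref{lemma_harm_pos} (which is automatic from finiteness of $\mu_1+\mu_2$) and the bookkeeping step that turns the Jordan decomposition of $\hat\mu$ into $\mu_1\vee\mu_2$ and $\mu_1\wedge\mu_2$.
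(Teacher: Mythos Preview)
Your proof is correct and follows essentially the same route as the paper: form the signed quasi-harmonic measure $\mu_1-\mu_2$, apply Lemma~\ref{lemma_harm_pos} to extract its positive part $\mu'=\max\{\theta_1-\theta_2,0\}\mu$, and then write $\mu_1\vee\mu_2=\mu_2+\mu'$ and $\mu_1\wedge\mu_2=\mu_1-\mu'$. You are more careful than the paper in verifying the $L^2$ hypothesis of Lemma~\ref{lemma_harm_pos} and in invoking Theorem~\ref{thm_harmonic_currents_vs_measures}(3) to pass from finite positive quasi-harmonic to harmonic, but the argument is the same.
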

\proof
We use the notation introduced just before  the lemma.
By Lemma \ref{lemma_harm_pos}, since the signed finite measure $\mu_1-\mu_2$ is quasi-harmonic, $\mu':=\max\{\theta_1-\theta_2,0\}\mu$ is harmonic.
It follows that $\mu_1\vee \mu_2=\mu'+\mu_2$ and $\mu_1\wedge \mu_2=\mu_1-\mu'$ are harmonic. 
\endproof

We also obtain the following result, see Candel-Conlon \cite{CandelConlon2}.

\begin{corollary} \label{cor_choquet}
Under the hypothesis of Theorem \ref{th_heat_real}, the family $\Hc$ of harmonic probability measures of $\Fc$  is a
non-empty compact and  for any $\mu\in\Hc$ there 
is a unique probability measure $\nu$ on the set of extremal elements $\Ec$ in $\Hc$ such that 
$\mu=\int_\Ec m d\nu(m)$. Moreover, two different extremal harmonic probability measures are mutually singular.
\end{corollary}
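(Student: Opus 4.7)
I would establish four facts about $\Hc$: non-emptiness, weak-$*$ compactness, the Choquet simplex property yielding the unique representing measure $\nu$, and mutual singularity of distinct extremal members. Facts one and two come from standard functional-analytic considerations, fact three follows from Lemma \ref{L:simplex} via the classical characterization of a Choquet simplex by the lattice property of its ordered cone, and fact four is a short extremality argument combining Lemmas \ref{lemma_harm_pos} and \ref{L:simplex}.

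\emph{Non-emptiness and compactness.} Non-emptiness is already part of the standing hypothesis of Theorem \ref{th_heat_real}; alternatively, when $X$ is compact and the hypotheses of Theorem \ref{T:existence_harmonic_currents} are met, existence follows from that theorem together with the bijection $\Phi$ of Theorem \ref{thm_harmonic_currents_vs_measures}(1), after normalizing the Poincar\'e mass. For compactness I would embed $\Hc$ into the weak-$*$ compact space of Borel probability measures on the compact space $X$ and check that $\Hc$ is weak-$*$ closed. If $\mu_n \to \mu$ weakly with $\mu_n \in \Hc$, then for every test function $f \in \Dc(\Fc)$ the function $\Delta_P f$ is continuous and compactly supported in $X \setminus E$, so the quasi-harmonicity identity $\int \Delta_P f \, d\mu_n = 0$ passes to the limit; moreover $\mu(X) = 1$ is preserved.

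\emph{Unique Choquet representation.} Viewing $\Hc$ as a weak-$*$ compact convex subset of the locally convex space of signed measures on $X$, Choquet's theorem provides, for every $\mu \in \Hc$, at least one probability measure $\nu$ on the set of extremal points $\Ec$ such that $\mu = \int_{\Ec} m \, d\nu(m)$. Uniqueness of $\nu$ is equivalent to $\Hc$ being a Choquet simplex, which by the classical criterion of Choquet and Meyer is in turn equivalent to the positive cone $\R^+\Hc$ being a lattice under the natural vector ordering on measures. But this is exactly what Lemma \ref{L:simplex} provides: for any $m_1, m_2 \in \Hc$, both $m_1 \vee m_2$ and $m_1 \wedge m_2$ are harmonic, and they are, respectively, the least upper bound and greatest lower bound of $m_1, m_2$ in the lattice of positive finite measures. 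Hence $\Hc$ is a Choquet simplex and $\nu$ is unique.

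\emph{Mutual singularity of extremals.} Suppose $m_1, m_2 \in \Ec$ are not mutually singular, so $m_1 \wedge m_2 \neq 0$. Setting $\mu := m_1 + m_2$ and writing $m_i = \theta_i \mu$ with $\theta_i \in [0,1]$, decompose
\[
m_1 = (m_1 \wedge m_2) + \bigl(m_1 - m_1 \wedge m_2\bigr) = (m_1 \wedge m_2) + (m_1 - m_2)^+.
\]
The first summand is harmonic by Lemma \ref{L:simplex}, and the second is harmonic by Lemma \ref{lemma_harm_pos} applied to the signed quasi-harmonic finite measure $m_1 - m_2$, whose density with respect to $\mu$ lies in $L^\infty(\mu) \subset L^2(\mu)$. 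Extremality of $m_1$ then forces $m_1 \wedge m_2 = \lambda_1 m_1$ with $\lambda_1 \in (0,1]$, and symmetrically $m_1 \wedge m_2 = \lambda_2 m_2$ with $\lambda_2 \in (0,1]$. Equating and using that both $m_i$ are probability measures gives $\lambda_1 = \lambda_2$ and $m_1 = m_2$, contradicting the assumption $m_1 \neq m_2$.

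\emph{Main obstacle.} The subtlest point is compactness of $\Hc$: the weak-$*$ limit of probability measures that give no mass to $\Par(\Fc) \cup E$ need not a priori share this property. When $\Par(\Fc) \cup E$ is a small enough set (e.g., an analytic subset in the singular holomorphic setting, or controlled via the mass estimates of Section \ref{SS:Mass} together with Proposition \ref{P:Poincare_mass}), this is automatic, but the general abstract statement may require either additional hypotheses or a refined argument that exploits the Poincar\'e mass comparison with the directed harmonic current associated to $\mu$ through the bijection $\Phi$ of Theorem \ref{thm_harmonic_currents_vs_measures}.
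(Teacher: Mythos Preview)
Your approach is essentially the same as the paper's: existence via Choquet's representation theorem, uniqueness via the Choquet--Meyer simplex criterion, with the lattice property supplied by Lemma~\ref{L:simplex}. The paper's proof is much terser---it asserts compactness without argument, cites Choquet for existence and Choquet--Meyer for uniqueness via Lemma~\ref{L:simplex}, and does not spell out mutual singularity at all (leaving it as a standard consequence of the simplex property). Your explicit argument for mutual singularity via $m_1 \wedge m_2$ is correct and more informative than what the paper provides.

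Your ``main obstacle'' paragraph identifies a genuine gap that the paper glosses over with ``Clearly, $\Hc$ is compact'': the weak-$*$ limit of measures in $\Hc$ might a priori charge $\Par(\Fc)\cup E$, and Definition~\ref{D:harmonic_measure} requires that harmonic measures give no mass there. The paper does not address this point, so your caution is warranted; in the concrete situations where the corollary is applied later (e.g.\ Brody hyperbolic foliations with finite linearizable $E$, where $\Par(\Fc)=\varnothing$ and Proposition~\ref{P:Poincare_mass} controls the mass near $E$), the issue disappears, but in the stated generality your reservation is legitimate rather than the paper's bald assertion.
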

 \proof
 Under the hypothesis of Theorem \ref{th_heat_real}, the family $\Hc$ of harmonic probability measures of $\Fc$  is a
non-empty. Clearly, $\Hc$ is  compact.   %  To prove  that  $\Hc$ is  a  simplex,  we need 
By Choquet's representation theorem \cite{Choquet}, we can decompose $\mu$ into extremal measures as in the corollary. 
 Using  
Lemma \ref{L:simplex},  the uniqueness of the decomposition is  a consequence of the Choquet-Meyer theorem \cite[p.163]{Choquet}.
 \endproof

 The following result  will be needed in Subsection \ref{SS:Geometric-ergodic-thms}.

\begin{proposition} \label{prop_dense_laplace}
Let $\mu=\int_{m\in\Ec} md\nu(m)$ be  as in Corollary \ref{cor_choquet}. 
Then, the closures of $\Delta_P(\Dc(\Fc))$ and of $\widetilde\Delta_P(\Dc(\Fc))$ in $L^p(\mu)$, 
$1\leq p\leq 2$, are the space of functions 
$u_0\in L^p(\mu)$ such that $\int u_0dm=0$ for $\nu$-almost every $m$.  In particular, if $\mu$ is an 
ergodic harmonic probability measure, then this space is the hyperplane of $L^p(\mu)$ defined by the equation 
$\int u_0d\mu=0$.
\end{proposition}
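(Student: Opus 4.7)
I aim to prove both inclusions via the ergodic projection. Recall from Corollary \ref{cor_choquet} the ergodic decomposition $\mu=\int_\Ec m\,d\nu(m)$ with distinct elements of $\Ec$ mutually singular; this allows us to define, $\mu$-almost everywhere, a measurable map $x\mapsto m_x$ assigning to $x$ its extremal harmonic component. By Fubini, every $u_0\in L^p(\mu)$ lies in $L^p(m)$ for $\nu$-almost every $m$. Applying Theorem \ref{T:heat-real-ergodic} to $\mu$ and separately to each ergodic $m\in\Ec$ (which is a harmonic probability to which the theorem applies), one gets $\mu$-a.e.\ convergence $u_R := \frac{1}{R}\int_0^R S(t)u_0\,dt\to u_0^*$ as well as, for $\nu$-a.e.\ $m$, $m$-a.e.\ convergence $u_R\to\int u_0\,dm$ (a constant, by ergodicity of $m$). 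Comparing the two pointwise limits on the mutually singular supports yields $u_0^*(x)=\int u_0\,dm_x$ for $\mu$-a.e.\ $x$, so $u_0^*=0$ in $L^p(\mu)$ iff $\int u_0\,dm=0$ for $\nu$-almost every $m$. The proposition therefore reduces to the equivalence: $u_0$ lies in the $L^p$-closure of $L(\Dc(\Fc))$, for $L\in\{\Delta_P,\widetilde\Delta_P\}$, if and only if $u_0^*=0$.

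\textbf{Forward implication (closure $\Rightarrow u_0^*=0$).} Write $A:=-L$ and let $S(t)=e^{-tA}$ be the contraction semigroup from Proposition \ref{prop_delta_max}. For any $v\in\Dom(A)$, the heat equation $\frac{d}{dt}S(t)v=-AS(t)v$ yields, after averaging in $t$,
$$ \frac{1}{R}\int_0^R S(t)(Av)\,dt \;=\; \frac{v-S(R)v}{R} \;\longrightarrow\; 0 \quad\text{in } L^p(\mu)\text{ as }R\to\infty,$$
since $S(t)$ is an $L^p$-contraction by Theorem \ref{th_heat_real}. Thus $(Av)^*=0$. As Cesàro means of $L^p$-contractions, the projection $u_0\mapsto u_0^*$ is itself an $L^p$-contraction, so $u_0^*=0$ persists on the $L^p$-closure of $A(\Dom(A))$, which contains $L(\Dc(\Fc))$.

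\textbf{Backward implication ($u_0^*=0\Rightarrow$ closure).} Using the closedness of $A$, one checks the identity
$$ u_0 - u_R \;=\; A w_R, \qquad w_R \;:=\; \frac{1}{R}\int_0^R (R-s)\,S(s)u_0\,ds \;\in\; \Dom(A),$$
equivalently $Lw_R = u_R - u_0 \to -u_0$ in $L^p(\mu)$ by the hypothesis $u_0^*=0$. Hence $u_0$ lies in the $L^p$-closure of $L(\Dom(L))$. To descend to $\Dc(\Fc)$, recall from Remark \ref{R:Dom-Delta} that $\Dom(L)$ is the completion of $\Dc(\Fc)$ under the graph norm $\|\cdot\|_{H_P(\mu)}$ (equivalent graph norms for $\Delta_P$ and $\widetilde\Delta_P$ by the closed graph theorem). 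Since $\mu$ is a probability measure and $p\leq 2$, the continuous embedding $L^2(\mu)\hookrightarrow L^p(\mu)$ converts the graph-norm approximations $\tilde w_R\in\Dc(\Fc)$ of $w_R$ into $\|Lw_R-L\tilde w_R\|_{L^p(\mu)}\to 0$, so $u_0$ is exhibited as an $L^p$-limit of elements of $L(\Dc(\Fc))$.

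\textbf{Main obstacle and the ``in particular'' case.} The subtlest step is the pointwise identification $u_0^*(x)=\int u_0\,dm_x$, which rests essentially on the mutual singularity of distinct extremal harmonic probabilities (Corollary \ref{cor_choquet}) to make sense of the measurable selection $x\mapsto m_x$ and to reconcile the $\mu$-a.e.\ and $m$-a.e.\ limits. When $\mu$ itself is ergodic, $\Ec=\{\mu\}$ modulo $\nu$-null sets and $\nu=\delta_\mu$, so the condition $\int u_0\,dm=0$ for $\nu$-a.e.\ $m$ collapses to the single hyperplane equation $\int u_0\,d\mu=0$, yielding the final assertion of the proposition.
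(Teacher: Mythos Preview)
Your argument correctly establishes the equivalence $\overline{L(\Dom L)}^{L^p}=\{u_0:u_0^*=0\}$ via the mean ergodic theorem and the neat identity $u_0-u_R=Aw_R$, and your descent from $\Dom(L)$ to $\Dc(\Fc)$ is fine. The problem is the identification step $u_0^*(x)=\int u_0\,dm_x$. When you ``apply Theorem \ref{T:heat-real-ergodic} separately to each ergodic $m$'', you are invoking the semigroup $S^m(t)$ generated by $-\Delta_P$ (or $-\widetilde\Delta_P$) on $L^2(m)$, whose Ces\`aro averages $u_R^m$ are \emph{a priori} different from the $u_R$ built with $S^\mu(t)$ on $L^2(\mu)$; there is no reason the two pointwise limits coincide. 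For $\widetilde\Delta_P$ the situation is worse: the drift term $F$ in \eqref{e:widetilde-Laplacian} is determined by the local decomposition of the current associated with $\mu$, so $\widetilde\Delta_P^{\,\mu}$ and $\widetilde\Delta_P^{\,m}$ are different operators and comparing their semigroups makes no sense. Equality $S^\mu(t)=D_t$ (the intrinsic leafwise diffusion) would rescue you, but that is exactly Theorem \ref{T:diffusions_comparisons}, which carries extra hypotheses not assumed here. Mutual singularity of the $m$'s lets you define $x\mapsto m_x$, but it does not by itself force $u_0^*$ to be the conditional expectation $E[u_0\mid\mathcal I]$; for that you would also need to know that every bounded leafwise-constant function lies in the fixed space of $S^\mu(t)$, which is not established.

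The paper's proof avoids this difficulty altogether. It argues by duality: a $\theta\in L^q(\mu)$ orthogonal to $\Delta_P(\Dc(\Fc))$ makes $\theta\mu$ a signed quasi-harmonic measure; then Lemma \ref{lemma_harm_pos} (positive and negative parts of a quasi-harmonic measure are harmonic) and Lemma \ref{L:simplex} are used, together with disintegration of $\mu$ along the level sets of $\theta$ and the uniqueness in Corollary \ref{cor_choquet}, to show that $\theta$ is constant $m$-a.e.\ for $\nu$-almost every $m$. Thus the paper never compares semigroups attached to different measures; it extracts the needed rigidity from the lattice structure of harmonic measures instead.
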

\proof
We only consider the case of $\Delta_P$; the case of $\widetilde\Delta_P$ is treated in the same way. 
It is clear that $\Delta_P(\Dc(\Fc))$ is a subset of the space of $u_0\in L^p(\mu)$ such that $\int u_0dm=0$ for $\nu$-almost every $m$ and the last space is closed in $L^p(\mu)$.
Consider a function $\theta\in L^q(\mu)$, with $1/p+1/q=1$, which is orthogonal to $\Delta_P(\Dc(\Fc))$. So, $\theta \mu$ is a quasi-harmonic signed measure.
Since $p\leq 2$, we have $\theta\in L^2(\mu)$. 
We have to show that $\theta$ is constant with respect to $\nu$-almost every $m$. 

Consider the disintegration of $\mu$ along the fibers of $\theta$. There are a probability measure $\nu'$ 
on $\R$ and a probability measures $\mu_c$ on $\{\theta=c\}$ such that $\mu=\int_{c\in\R} \mu_c d\nu'(c)$.  
By Lemma \ref{L:simplex}, for any $c\in\R$, the measure $\max\{\theta,c\}\mu$ is quasi-harmonic. Therefore, $\mu_c$ is harmonic for $\nu'$-almost every $c$. 
If $\nu_c$ is the probability measure on  $\Ec$ associated with $\mu_c$ as in Corollary \ref{cor_choquet}, we deduce from the uniqueness in this corollary that 
$$\nu=\int_{c\in\R}\nu_c d\nu'(c).$$
Now, since $\theta$ is constant $\mu_c$-almost everywhere, it is constant with respect to $\nu_c$-almost every $m.$
So
$$
\int_{c\in\R}\int_{m\in  \Ec}\int_X  \big |\theta - \int_X\theta dm\big| d\nu_c(m)  d\nu'(c)=0.
$$
We deduce from the last two equalities  that $\theta$ is equal to the  constant $ \int_X\theta dm$ for $\nu$-almost every $m$. This completes the proof.
\endproof

The following result gives  a version of the mixing property in our
context. 
The classical case is due to 
Kaimanovich \cite{Kaimanovich} who uses in particular the smoothness
of the Brownian motion, see also Candel \cite{Candel03} who relies on a
version of the zero-two law due to Ornstein and Sucheston \cite{Ornstein}.

\begin{theorem} \label{th_mixing} {\rm  (\cite[Theorem 5.12]{DinhNguyenSibony12})}
Under the hypothesis of Theorem \ref{th_heat_real}, assume moreover that $\mu$ is ergodic. If $S(t)$ is associated to $-\widetilde\Delta_P$,  then
$S(t)u_0$ converge to $\langle \mu,u_0\rangle$ in $L^p(\mu)$ when $t\rightarrow\infty$ 
for $u_0\in L^p(\mu)$ with $1\leq p<\infty$. In particular, $S(t)$ is mixing, i.e.
$$\lim_{t\rightarrow\infty} \langle S(t)u_0,v_0\rangle=\langle \mu,u_0\rangle \langle \mu,v_0\rangle\quad \mbox{for}\quad u_0,v_0\in L^2(\mu).$$
\end{theorem}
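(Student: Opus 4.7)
The plan is to exploit the self-adjointness of $-\widetilde\Delta_P$ on $L^2(\mu)$, which was established in Lemma \ref{lemma_e_tilde} and Proposition \ref{prop_delta_max}, and apply the spectral theorem. The argument proceeds in three stages: reduction to mean-zero functions, $L^2$-convergence via spectral decomposition, and extension to $L^p$ using the finiteness of $\mu$. First, since $S(t)\chac=\chac$ and $\mu$ is $S(t)$-invariant (both from Theorem \ref{th_heat_real}), setting $c:=\langle\mu,u_0\rangle$ reduces the statement to showing $S(t)(u_0-c)\to 0$ in $L^p(\mu)$. Hence I may assume $\langle\mu,u_0\rangle=0$ throughout.

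For $u_0\in L^2(\mu)$ of mean zero, the spectral theorem applied to the positive self-adjoint operator $-\widetilde\Delta_P$ provides a resolution of the identity $\{E_\lambda\}_{\lambda\geq 0}$ with
\begin{equation*}
-\widetilde\Delta_P=\int_0^\infty\lambda\, dE_\lambda,\qquad S(t)=\int_0^\infty e^{-t\lambda}\, dE_\lambda.
\end{equation*}
The projection $E_{\{0\}}$ projects onto $\ker\widetilde\Delta_P\cap L^2(\mu)$. Applying Theorem \ref{T:heat-real-ergodic} with $A=-\widetilde\Delta_P$ together with the ergodicity of $\mu$ shows that every such kernel element is $\mu$-a.e.\ constant, so $\ker\widetilde\Delta_P\cap L^2(\mu)=\C\chac$. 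Since $\langle\mu,u_0\rangle=0$, we have $E_{\{0\}}u_0=0$, and
\begin{equation*}
\|S(t)u_0\|_{L^2}^2=\int_{(0,\infty)}e^{-2t\lambda}\,d\langle E_\lambda u_0,u_0\rangle\longrightarrow 0\quad\text{as}\quad t\to\infty
\end{equation*}
by dominated convergence.

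To bootstrap to $1\leq p<\infty$, I would use that $\mu(X)<\infty$ and that $S(t)$ is a contraction on every $L^q(\mu)$ (Theorem \ref{th_heat_real}). For $p\leq 2$, approximate a mean-zero $u_0\in L^p(\mu)$ by mean-zero $v\in L^2(\mu)$ and estimate by H\"older's inequality
\begin{equation*}
\|S(t)u_0\|_{L^p}\leq\|u_0-v\|_{L^p}+\mu(X)^{1/p-1/2}\|S(t)v\|_{L^2},
\end{equation*}
whose right-hand side becomes arbitrarily small in the limit. For $p>2$, approximate $u_0$ by $v\in L^\infty(\mu)$ of mean zero; then $|S(t)v|\leq\|v\|_\infty$ is uniformly bounded in $t$ while $S(t)v\to 0$ in $\mu$-measure by the $L^2$ case, so the bounded convergence theorem on the finite measure $\mu$ yields $\|S(t)v\|_{L^p}\to 0$, and the approximation argument completes the claim. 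The mixing statement then follows by Cauchy--Schwarz: $|\langle S(t)u_0,v_0\rangle-\langle\mu,u_0\rangle\langle\mu,v_0\rangle|\leq\|S(t)u_0-\langle\mu,u_0\rangle\chac\|_{L^2}\,\|v_0\|_{L^2}\to 0$.

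The only real subtlety is the spectral identification $\ker\widetilde\Delta_P\cap L^2(\mu)=\C\chac$, but Theorem \ref{T:heat-real-ergodic} furnishes this at once from ergodicity. The self-adjointness of $\widetilde\Delta_P$ is indispensable here: for the non-symmetric operator $\Delta_P$ the spectral theorem is unavailable, and this is precisely why the mixing statement is restricted to the semigroup generated by $\widetilde\Delta_P$, whereas the corresponding assertion for $\Delta_P$ would require substantially different tools such as the Ornstein--Sucheston zero--two law employed by Candel \cite{Candel03}.
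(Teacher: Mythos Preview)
Your argument is correct. The paper does not supply a proof of Theorem~\ref{th_mixing}; it merely cites \cite[Theorem~5.12]{DinhNguyenSibony12} and remarks that earlier treatments by Kaimanovich and Candel relied on the smoothness of Brownian motion and the Ornstein--Sucheston zero--two law, respectively. Your spectral-theoretic route is precisely the one that the self-adjointness of $-\widetilde\Delta_P$ (established in Proposition~\ref{prop_delta_max}) makes available, and it is the expected approach here: the key identity $\ker\widetilde\Delta_P\cap L^2(\mu)=\C\chac$ follows immediately from $\widetilde q(v,v)=\int|\nabla v|^2\,d\mu$ (Lemma~\ref{lemma_e_tilde}) together with ergodicity, exactly as in the proof of Theorem~\ref{T:heat-real-ergodic}. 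The $L^p$ extension and the deduction of mixing are routine, and your final remark correctly isolates why the statement is restricted to $-\widetilde\Delta_P$.
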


\begin{remark}\rm
 When  the lamination $\Fc$ is  compact nonsingular  hyperbolic (i.e. $E=\varnothing,$ $X$ is compact and $\Hyp(\Fc)=X$) several results in this  subsection for $\Delta_P$ can be deduced from
Candel-Conlon \cite{CandelConlon2} and Garnett \cite{Garnett}.    But
results on  Riemann surface laminations  with  singularities and on $\widetilde\Delta_P$ are new.
 \end{remark}
 %%%%%%%%%%%%%%%%%%%%%%%%%%%%%%%%%%%%%%%%%%%%%%%%%%%%%%%%%%%%%%%%%%%%%%%%%%%%%%%%%%%%%%%%%%%
 
 \subsection{When  do  the abstract diffusions coincide with the leafwise diffusions?} \label{SS:coincidence}
 
 %%%%%%%%%%%%%%%%%%%%%%%%%%%%%%%%%%%%%%%%%%%%%%%%%%%%%%%%%%%%%%%%%%%%%%%%%%%%%%%%%%%%%%%%%%%%%
 As mentioned in the Introduction, the results of this subsection are new. 
 As  observed  in Subsection  \ref{SS:heat_equation_CHRSL},  the  abstract heat diffusions  associated  to the operator $A:=-\Delta_P$  and the  harmonic measure $\mu$ enjoy many  important  ergodic properties.
 The only  drawback is that  these diffusions  are  not  so  concrete. On the other hand,  the leafwise  heat diffusions given in  \eqref{e:diffusions}   have  the  advantage of being  quite
 explicit because  of concrete  formulas \eqref{e:heat_kernel}--\eqref{e:heat_kernel3}.  So the natural  question arises  whether  these  two heat  diffusions  coincide.
 The  following result gives an effective  criterion for  this coincidence. 
 
 % says that in good  cases  the  abstract  heat diffusions   given by  Proposition \ref{prop_delta_max} coincides with   the heat diffusions given in \eqref{e:diffusions}.
 
 \begin{theorem}\label{T:diffusions_comparisons}
 Let $\Fc=(X,\Lc,E)$ be a   Riemann surface lamination with singularities   endowed
with the leafwise Poincar\'e metric $g_P.$ Let $g_X$ be  a Hermitian  metric on the  leaves  which is  transversally continuous. Let $\mu$ be a  harmonic measure.
  Suppose that
  \begin{enumerate}
  \item[(i)] {\rm (local upper-boundedness of $\eta$)}
the  function $\eta$ given in \eqref{e:eta_bis} is locally  bounded from   above by strictly positive constants on $X\setminus E;$  
 \item[(ii)] {\rm  (membership test)}  if $u$ is a measurable function on $X\setminus E$  such that $\|u\|_{L^\infty}<\infty,$   $\| |du|_P\|_{L^\infty}<\infty$ and $\|\Delta_Pu\|_{L^\infty}<\infty,$
 then  $u$ belongs necessarily to  $H^1(\mu)$ (for  the notation $|du|_P$  see \eqref{e:length-eucl-vs-Poincare}  and  for $H^1(\mu)$ see \eqref{e:norm_H1}).
  \end{enumerate}
  Then the  abstract heat  diffusions  associated to $\mu$ coincide with the leafwise heat diffusions. 
 
 \end{theorem}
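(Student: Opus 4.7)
The plan is to verify, for any $u_0 \in \Dc(\Fc)$, that the curve $t \mapsto u(t,\cdot):=D_tu_0$ is an $L^2(\mu)$-valued solution of the abstract Cauchy problem $\partial_t u + (-\Delta_P)u = 0$ with $u(0,\cdot) = u_0$ in the class $\Cc^1(\R^+,L^2(\mu)) \cap \Cc(\R^+,\Dom(-\Delta_P))$, and then invoke the uniqueness clause of Theorem \ref{th_hille_yosida}. Since $\mu$ is harmonic and hence finite (Theorem \ref{thm_harmonic_currents_vs_measures}(3)), Proposition \ref{prop_delta_max} provides the abstract semi-group $S(t)$ generated by $-\Delta_P$, and uniqueness will force $D_tu_0 = S(t)u_0$ on $\Dc(\Fc)$. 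A density/contraction argument then extends this equality to the whole of $L^2(\mu)$ (and, by interpolation, to $L^p(\mu)$ for $1 \leq p \leq \infty$).

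The technical heart of the argument is to produce the three uniform bounds needed by hypothesis (ii) for $u:=D_tu_0$. The $L^\infty$-bound $\|D_tu_0\|_{L^\infty} \leq \|u_0\|_{L^\infty}$ is immediate because $p(x,\cdot,t)g_P$ is a probability density on each leaf. For $\Delta_PD_tu_0$, I would use the commutation $\Delta_PD_tu_0 = D_t(\Delta_Pu_0)$, which follows from uniqueness of bounded smooth solutions of the leafwise heat equation; hypothesis (i) ensures $\eta$ is bounded above on the compact support of $u_0$, so $\Delta_Pu_0 \in L^\infty$, and then contractivity in $L^\infty$ gives $\|\Delta_PD_tu_0\|_{L^\infty} \leq \|\Delta_Pu_0\|_{L^\infty}$. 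The gradient bound is obtained by lifting to the universal cover: set $\tilde u_0 := u_0\circ\phi_x$ on $\D$; since $\phi_x$ is a leafwise isometry for $g_P$, the Bakry--\'Emery estimate on $(\D,g_P)$ — whose Ricci curvature is a negative constant $-K$ — yields
\[
|\nabla D_t\tilde u_0|_P^2 \leq e^{2Kt}D_t\bigl(|\nabla\tilde u_0|_P^2\bigr) \leq e^{2Kt}\||du_0|_P\|_{L^\infty}^2,
\]
so $\||dD_tu_0|_P\|_{L^\infty} \leq e^{Kt}\||du_0|_P\|_{L^\infty} < \infty$ uniformly in the leaf. Applying hypothesis (ii) now places $D_tu_0$ in $H^1(\mu)$ for every $t \geq 0$.

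To finish membership in $\Dom(-\Delta_P)$, note that $\Delta_PD_tu_0 \in L^\infty(\mu) \subset L^2(\mu)$ since $\mu$ is finite; by Lemma \ref{lemma_e_tilde} and approximation by $\Dc(\Fc)$, the bilinear form $v \mapsto q(D_tu_0,v) = -\int(\Delta_PD_tu_0)\,v\,d\mu$ is continuous on $L^2(\mu)$, which is exactly Definition \ref{D:Dom-Delta}. The regularity $u \in \Cc^1(\R^+,L^2(\mu)) \cap \Cc(\R^+,\Dom(-\Delta_P))$ with the equation $\partial_t u = \Delta_Pu$ then reduces to the two identities $\partial_tD_tu_0 = \Delta_PD_tu_0 = D_t(\Delta_Pu_0)$ combined with continuity in $t$ of the semi-group $D_t$ applied, respectively, to $u_0$ and to $\Delta_Pu_0$ (both in $\Dc(\Fc)$), which follows from dominated convergence using the uniform $L^\infty$-bounds established above. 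Finally, $\Dc(\Fc)$ is dense in $L^2(\mu)$ since $\mu(E \cup \Par(\Fc)) = 0$, and $D_t$ extends to a contraction on $L^2(\mu)$ by Cauchy--Schwarz against $p(x,\cdot,t)g_P$ combined with the weak harmonicity of $\mu$; so the identification $D_t = S(t)$ propagates from $\Dc(\Fc)$ to all of $L^2(\mu)$.

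The main obstacle I anticipate is the uniform leafwise gradient bound: while the Bakry--\'Emery estimate on the Poincar\'e disc is classical, one must be careful that the constant $K$ depends only on the (normalization of the) Poincar\'e metric $g_P$ and not on the leaf, so that the bound $\||dD_tu_0|_P\|_{L^\infty}$ is truly global on $X \setminus E$. A secondary subtlety is that hypothesis (ii) is used only qualitatively — we need $D_tu_0 \in H^1(\mu)$ but not with any quantitative control on $\|D_tu_0\|_{H^1}$ — which is fortunate, since the $H^1(\mu)$ norm could a priori grow with $t$ and is not directly accessible from the three $L^\infty$-bounds alone.
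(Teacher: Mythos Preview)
Your proof is correct and shares the same overall architecture as the paper's: reduce to $u_0\in\Dc(\Fc)$, set $u(t,\cdot):=D_tu_0$, establish the three uniform $L^\infty$-bounds so that hypothesis~(ii) places $u(t,\cdot)$ in $H^1(\mu)$, and then conclude by uniqueness for the abstract Cauchy problem. Where you genuinely diverge is in how the bounds are obtained. The paper works by hand: it localises $u_0$ to a single flow box, lifts via $\phi_{x_0}$, decomposes $\phi_{x_0}^{-1}(\U)$ into components $\Omega_j$, and proves summable pointwise estimates on $p_\D$, $\partial_t p_\D$, $(\Delta_P)_x p_\D$ and $|d_xp_\D|_P$ directly from the explicit formula \eqref{e:heat_kernel} (Lemmas~\ref{L:estimates_kernel} and~\ref{L:estimates_kernel-bis}, with the gradient case handled through the elliptic estimate Lemma~\ref{L:Poisson}). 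Your route replaces all of this by two structural facts: the commutation $\Delta_P D_t u_0 = D_t(\Delta_P u_0)$ for the Laplacian bound, and the Bakry--\'Emery inequality on constant-curvature leaves for the gradient bound. This is considerably cleaner and makes transparent why the constants are leaf-independent, at the cost of importing Bakry--\'Emery as a black box; the paper's computation is more laborious but entirely self-contained, and its kernel lemmas may be reusable elsewhere. One small slip: you write that $\Delta_P u_0\in\Dc(\Fc)$, but since $g_P$ need not vary smoothly in the transverse direction this is generally false; what you actually use (and what suffices) is only $\Delta_P u_0\in L^\infty$ with compact support, which you correctly derived from hypothesis~(i). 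Finally, note that the paper does not invoke the Hille--Yosida uniqueness clause directly but instead reproves it via the energy inequality $\partial_t\|U-u\|_{L^2}^2\le 0$; your direct appeal to uniqueness is equivalent and slightly more economical.
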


   Let $S(t)$,
$t\in\R_+$, denote  the semi-group of contractions associated with the operator
   $-\Delta_P$ given by    Proposition \ref{prop_delta_max}.
   Then    the conclusion of Theorem  \ref{T:diffusions_comparisons}    says that
   \begin{equation*} S(t)u=D_tu\qquad \text{for}\qquad  u\in \Dom(-\Delta_P)\qquad\text{and}\qquad  t\in\R^+,
   \end{equation*}
 where $D_t$ is  the leafwise heat diffusion given in \eqref{e:diffusions}.
   
We  start the proof with  the following observation.
Since    $\Dc(\Fc)$ is  dense in $\Dom(-\Delta_P)=H_P(\mu)\subset L^2(\mu)$  by  Remark  \ref{R:Dom-Delta} and both diffusions  are positive contractions,  we only need to  prove that 
for   an arbitray  non-negative function $u_0\in\Dc(\Fc),$  
\begin{equation}\label{e:reduction_coincidence}
S(t)u_0=D_tu_0 \qquad\text{for}\qquad  t\in\R^+.
\end{equation}
Fix  such  a function $u_0.$
Since $\eta$  is locally  bounded from above on $X\setminus E,$  we infer that $\Par(\Fc)=\varnothing.$
  By Theorem  \ref{th_hille_yosida},
there is  a  unique  
 $\Cc^1$ function  $U$ from $\R^+$ to $L^2(m)$ with values in
$\Dom(-\Delta_P)$ which satisfies
\begin{equation}\label{e:heat-equation_bis}
{\partial U(t,\cdot)\over \partial t}-\Delta_P U (t,\cdot)=0\quad
\mbox{and}\quad U(0,\cdot)=u_0.
\end{equation}
Consider  the function $u:\ \R^+\times (X\setminus E)\to\R$  defined by
\begin{equation}\label{e:heat_diffusion_sol}
u(t, \cdot):=D_tu_0\end{equation} 
 To  complete the proof of \eqref{e:reduction_coincidence} it suffices to show   that
 \begin{equation}\label{e:U=u}
  U=u.
 \end{equation}
 
 We will prove \eqref{e:U=u} using the proof of  the  uniqueness in Theorem \ref{th_hille_yosida}. The  proof relies on  three facts.
 The first fact is  that 
 the function $u(t,\cdot):\  X\setminus E\to\R$ constructed in \eqref{e:heat-equation_bis}  is  measurable and it satisfies
\begin{equation}\label{e:Step1}
  \| |u(\cdot,\cdot)|_P\|_{L^\infty}<\infty,\quad\text{and for each $t>0,$}\quad  \| |du(t,\cdot)|_P\|_{L^\infty}<\infty,\quad \|\Delta_Pu(t,\cdot)\|_{L^\infty}<\infty.
 \end{equation}
The  second  fact is  that
\begin{equation} \label{e:Step2}
u\in\Cc^1(\R^+_*,L^2(\mu)) \quad \text{and}\quad {\partial u(t,\cdot)\over \partial t}-\Delta_Pu(t,\cdot)=0\quad\mathrm{for}\quad t\in\R^+_*
\end{equation}
The third fact is  the limit
\begin{equation}\label{e:Step3}
\lim_{t\to 0} u(t,\cdot)=u_0\qquad\mathrm{in}\qquad L^2(\mu).
\end{equation}
Taking for granted  these three facts,  we arrive at the

\proof[End of the  proof of Theorem  \ref{T:diffusions_comparisons}]
Since the   function $u:\ \R^+\times (X\setminus E)\to\R$ enjoys the properties stated in the first fact \eqref{e:Step1},
we infer from   the  membership test (ii) that  $u(t,\cdot)$ belongs  to  $H^1(\mu)$ for each $t\in\R^+_*.$
So  does $U(t,\cdot)-u(t,\cdot).$ Consequently, we  infer from  the  second fact  \eqref{e:Step2} %and the second estimate in \eqref{e:Delta_P_monotone}
that
\begin{equation*}
\langle {\partial \over \partial t}(U(t,\cdot)-u(t,\cdot) ) ,  U(t,\cdot)-u(t,\cdot)\rangle=\langle -\Delta_P(U(t,\cdot)-u(t,\cdot)) ,U(t,\cdot)-u(t,\cdot)\rangle\quad\text{for}\quad t\in\R^+_*.
\end{equation*}
Fix a $t\in\R^+_*$  and  consider the function    $\tilde u:=U(t,\cdot)-u(t,\cdot)\in H^1(\mu).$ By  \eqref{e:norm_H1}, for  every $\epsilon>0$  there is  $\tilde u_\epsilon\in\Dc(\Fc)$
such that $\|\tilde u-\tilde u_\epsilon\|_{H^1(\mu)} <\epsilon.$
On the one hand,  we have  $\tilde u_\epsilon\in  H_P(\mu)$  (see  \eqref{e:norm_P}) and $\tilde u\in H^1(\mu).$ On the other hand,   by \eqref{e:heat-equation_bis} and \eqref{e:Step1}--\eqref{e:Step2}  
we get
$\Delta_P\tilde u=\Delta_P U(t,\cdot)-\Delta_Pu(t,\cdot)\in L^2(\mu).$
Consequently,  by Lemma \ref{lemma_e_tilde} (see Remark \ref{R:Dom-Delta}) and  the inequality $\int i\partial \tilde u\wedge \dbar \tilde u\wedge T\geq 0,$ we infer that
$$
\tilde q(\tilde u,\tilde u_\epsilon)=\int i\partial \tilde u\wedge \dbar \tilde u_\epsilon\wedge T=\int i\partial \tilde u\wedge \dbar \tilde u\wedge T+O(\epsilon)\geq O(\epsilon).
$$
Letting $\epsilon$ tend to $0,$  the  above  inequality  together  with  the estimate $\|\tilde u-\tilde u_\epsilon\|_{H^1(\mu)} <\epsilon$ imply that 
$\tilde q(\tilde u,\tilde u)\geq 0.$  Hence, by  Lemma \ref{lemma_e_e_tilde} we get that  $  q(\tilde u,\tilde u)=\tilde q(\tilde u,\tilde u)\geq 0.$ So  for $ t\in\R^+_*,$
\begin{equation*}
\langle {\partial \over \partial t}(U(t,\cdot)-u(t,\cdot) ) ,  U(t,\cdot)-u(t,\cdot)\rangle=\langle -\Delta_P(U(t,\cdot)-u(t,\cdot)) ,U(t,\cdot)-u(t,\cdot)\rangle\leq 0.
\end{equation*}
Hence,
$$
{1\over 2} {\partial  \over \partial t} \|U(t,\cdot)-u(t,\cdot) \|^2_{L^2(\mu)}=\langle {\partial  \over \partial t} (U(t,\cdot)-u(t,\cdot)),  U(t,\cdot)-u(t,\cdot)\rangle\leq 0.
$$
So the  function $\R^+_*\ni t\mapsto  \|U(t,\cdot)-u(t,\cdot) \|^2_{L^2(\mu)}$ is  decreasing.
Using  the  third fact \eqref{e:Step3} we see that
 $U(0,\cdot)-u(0,\cdot)=u_0-u_0=0$ and  that $\lim_{t\to 0} U(t,\cdot)-u(t,\cdot)=0$ in $L^2(\mu).$
 Hence,  $U(t,\cdot)-u(t,\cdot)=0$ for all $t\in\R^+.$ 
The proof of \eqref{e:U=u} is thereby completed.
\endproof

  To  finish the  proof of Theorem  \ref{T:diffusions_comparisons} we need  to prove  the three facts. To this end   we do some simplifications.
  Using  a countable partition of unity of $X\setminus E$, we may  assume  without loss of generality that
the function $u_0$ is compactly  supported  in a given flow box $\U\simeq\B\times \T,$  where $\B$ is  simply the unit disc $\D.$
The  center of  a plaque $\B\times a$ with $a\in\T$ is  the point $0 \times a,$ where $0$ is  the center of $\D.$ 
Fix  a point $x_0\in  X\setminus E$ and  a  finite time $t_0>0.$
Let $(\Omega_j)_{j\in J}$  be all  connected components of $\phi_{x_0}^{-1}(\U)\subset \D.$
So the  (eventually  empty) index set $J$ (depending on $\U$ and $x_0$) is at most countable. 
Since  $\eta$ is  locally bounded from above and below by strictly positive constants outside $E,$    there is    a  positive constant $c_1>1$ such that
 $c_1^{-1} g_X\leq g_P\leq c_1g_X$ on plaques of  
 $\U.$
Observe   that  to travel from  the center of a plaque of $\U$ to the center of another plaque, we cover a distante $\geq c_2>0$  with respect to $g_X,$  hence  a distance $\geq c_1^{-1}c_2$
with respect to    $g_P.$
Consequently,  there are constants $c_3,c_4> 0$ (which depends only on $\U$ and which does not depend on $x_0$) such that 
\begin{equation}\label{e:property_Omega} \diam_P(\Omega_j)\leq c_3\quad\text{and}\quad  \dist_P(\Omega_p,\Omega_q)\geq c_4\quad\text{  for}\quad p\not=q,\quad p,q\in J.
 \end{equation}
Let $v:\ \R^+\times\D\to \R$ be the function defined on  by  
\begin{equation}\label{e:v} v(t,\zeta)=u(t,\phi_{x_0}(\zeta))\qquad\text{for}\qquad t\in\R^+,\ \zeta\in\D.
\end{equation}
For $j\in J$ let 
\begin{equation}\label{e:uj} u_j:=u_0\circ\phi_{x_0}\qquad\mathrm{on}\qquad \Omega_j.
\end{equation} Since  $u_0$ is compactly  supported  in the flow box $\U\simeq\B\times \T,$ it follows that $u_j\in\Dc(\Omega_j).$ 
Now $u_j$ can be regarded as  a nonnegative  smooth function compactly supported  on $\D$ by simply setting $u_j=0$ outside $\Omega_j.$
Set  \begin{equation}\label{e:vj} v_j(t,z)=(D_t u_j)(z)\qquad\text{ for}\qquad z\in \D.
\end{equation} So   we have
\begin{equation}
\label{e:v-vj}v=\sum_{j\in J} v_j.
 \end{equation}
Since  $u_j\in\Dc(\D),$  we deduce from  \eqref{e:heat-equation}  that on $\D$
\begin{equation}\label{e:heat-equation_bisbis}
{\partial v_j(t,\cdot)\over \partial t}-\Delta_P v_j(t,\cdot)=0\quad
\mbox{and}\quad v_j(0,\cdot)=u_j.
\end{equation}
\begin{lemma}\label{L:estimates_kernel}   For every $t_0>0,$
there is a constant $c_5$  which depends only on $\U$ and $t_0$ (so $c_5$ does not depend on $x_0\in X\setminus E$)  such that
\begin{equation*}
\sum_{j\in J} \sup\limits_{t\in[t_0/2,2t_0] }\big( \sup\limits_{ (x,y)\in\D^2: \ \dist_P(x,0)<1,\ y\in \Omega_j} \big|\Phi(x,y,t)\big|\big) <c_5
\end{equation*}
in the  following cases:
\begin{enumerate}
 \item $\Phi(x,y,t)=p_\D(x,y,t);$
 \item $\Phi(x,y,t)= {\partial p_\D(x,y,t)\over \partial t};$
 \item $\Phi(x,y,t)=(\Delta_P)_x p_\D(x,y,t);$
 \item $\Phi(x,y,t)=|d_x p_\D(x,y,t)|_P.$
\end{enumerate}
\end{lemma}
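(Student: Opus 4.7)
The plan is to bound each quantity uniformly in $x_0$ by reducing to the explicit integral formula \eqref{e:heat_kernel} for $p_\D$ and then summing a geometric series using the separation of the $\Omega_j$. First, from \eqref{e:property_Omega} and the triangle inequality, for any $x$ with $\dist_P(x,0)<1$ and any $y\in\Omega_j$, one has $\dist_P(x,y)\geq \rho_j-1$ where $\rho_j:=\dist_P(0,\Omega_j)$. Moreover, since the $\Omega_j$ have Poincar\'e diameter $\leq c_3$ and are pairwise at Poincar\'e distance $\geq c_4$, a volume/packing argument (balls of Poincar\'e radius $c_4/3$ around a chosen point in each $\Omega_j$ are disjoint) shows that the number $N(R)$ of indices $j$ with $\rho_j\leq R$ satisfies $N(R)\leq C_1 e^R$, since the Poincar\'e area of $\D_R$ grows like $e^R$. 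Here $C_1$ depends only on $c_3,c_4$, hence only on $\U$.

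Next, I establish Gaussian-type decay for each of the four choices of $\Phi$, uniformly for $t\in[t_0/2,2t_0]$. Using M\"obius invariance \eqref{e:heat_kernel3} and the elementary estimate
$$\int_{\rho}^{\infty}\frac{s\,e^{-s^2/(4t)}}{\sqrt{\cosh s-\cosh\rho}}\,ds\;\leq\; C(t_0)\,(1+\rho)\,e^{-\rho^2/(8t_0)}\qquad(\rho\geq 0,\ t\in[t_0/2,2t_0]),$$
applied to \eqref{e:heat_kernel}, I obtain
$$p_\D(x,y,t)\leq C_2(t_0)\,(1+\dist_P(x,y))\,e^{-\dist_P(x,y)^2/(8t_0)}.$$
For case (2), the heat equation gives $\partial_tp_\D=(\Delta_P)_x p_\D=(\Delta_P)_y p_\D$, so cases (2) and (3) coincide; differentiating the explicit formula with respect to $t$ yields an analogous bound (of the same Gaussian type, with a possibly worse polynomial prefactor). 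For case (4), the gradient estimate follows from interior parabolic regularity: writing $p_\D(\cdot,y,t)=D_{t_0/4}[p_\D(\cdot,y,t-t_0/4)]$ and using that the Poincar\'e-Laplacian has bounded coefficients on a fixed Poincar\'e ball of radius $1$, one gets
$$|d_xp_\D(x,y,t)|_P\leq C_3(t_0)\sup_{x'\in \D_1(x)} p_\D(x',y,t-t_0/4),$$
which by the previous bound also satisfies a Gaussian estimate in $\dist_P(x,y)$.

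Putting everything together, for each of the four choices of $\Phi$, I get
$$\sup_{t\in[t_0/2,2t_0]}\sup_{\dist_P(x,0)<1,\,y\in\Omega_j}|\Phi(x,y,t)|\;\leq\; C_4(t_0)\,(1+\rho_j)^2\,e^{-(\rho_j-1)^2/(16t_0)}.$$
Summing with the packing bound and writing the right-hand side as a sum over the dyadic annuli $\rho_j\in[n,n+1)$,
$$\sum_{j\in J}\sup|\Phi|\;\leq\;C_4(t_0)\sum_{n\geq 0}\big(N(n+1)-N(n)\big)(1+n)^2e^{-(n-1)^2/(16t_0)}\;\leq\;C_1C_4(t_0)\sum_{n\geq 0}e^{n+1}(1+n)^2e^{-(n-1)^2/(16t_0)},$$
which is finite and depends only on $\U$ and $t_0$ (not on $x_0\in X\setminus E$), giving the desired constant $c_5$.

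The main obstacle will be Step 2 (case 4): obtaining a clean pointwise bound on $|d_xp_\D|_P$ that retains the Gaussian factor in $\dist_P(x,y)$. The semigroup trick above works cleanly because one composes with a fixed-time heat diffusion, but one must check that the interior $L^\infty$-gradient estimate holds with a constant independent of $y$; this is standard since on fixed-radius Poincar\'e balls the metric and Laplacian are comparable to the Euclidean ones uniformly by the transitivity of $\Aut(\D)$ on $\D$. Cases (1)--(3) reduce directly to elementary estimates on the integrand in \eqref{e:heat_kernel}.
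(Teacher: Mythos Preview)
Your argument is correct and follows the same overall strategy as the paper (pointwise decay of the kernel in $\rho=\dist_P(x,y)$, combined with the separation property \eqref{e:property_Omega} of the $\Omega_j$), but the execution differs in two places worth noting. For the kernel decay, you extract a Gaussian bound $e^{-\rho^2/(8t_0)}$ from \eqref{e:heat_kernel} and then need an explicit packing count $N(R)\lesssim e^R$ to sum over dyadic annuli; the paper instead uses the cruder but simpler observation that for $t\in[t_0/2,2t_0]$ one has $e^{-s^2/(4t)}\leq C(N,t_0)e^{-Ns}$ with $N$ arbitrarily large, which gives $|\Phi(x,y,t)|\leq c(N,t_0)e^{-N\rho}$ directly, and then the sum $\sum_j e^{-N\rho_j}$ converges for $N$ large relative to $c_3,c_4$ without a careful volume computation. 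For Case (4), you invoke interior parabolic regularity (the semigroup rewriting $p_\D(\cdot,y,t)=D_{t_0/4}[p_\D(\cdot,y,t-t_0/4)]$ is not really what gives the localized bound $\sup_{x'\in\D_1(x)}$; what you need is the interior gradient estimate for heat solutions on a space-time cylinder, which is indeed uniform after a M\"obius transformation), whereas the paper proves an elementary elliptic lemma on the Euclidean disc, namely $\sup_{\D}|df|\lesssim \sup_{2\D}|f|+\sup_{2\D}|\ddc f|$ via the Newton potential, and after M\"obius this reduces Case (4) directly to Cases (1) and (3). Both routes are valid; the paper's choices are more elementary and self-contained, while yours yield sharper quantitative bounds.
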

\proof
Since the proof of Case (1) is similar  and even  simpler that that of Case (2),  we  treat directly Case (2).  

Denote the Poincar{\'e} distance between $x$ and $y$ by $\rho:=\dist_P(x,y).$ 
We  deduce from formula 
\eqref{e:heat_kernel} and  identity \eqref{e:heat_kernel3} that
$$p_\D(x,y,t)={\sqrt{2}e^{-t/4} \over (2\pi t)^{3/2}}
\int_{\rho}^{\infty}{s  e^{-{s^2 \over 4t}}\over \sqrt{\cosh s-\cosh\rho}}ds\qquad\text{for}\qquad t\in\R^+.$$
It can be checked that $\sqrt{\cosh s-\cosh\rho}\gtrsim e^{s/2}$ when $s\geq \rho+1$ and 
$\sqrt{\cosh s-\cosh\rho}\gtrsim e^{s/2}\sqrt{s-\rho}$ for $\rho<s\leq \rho+1$.
Consequently, 
\begin{equation*}
  {\partial p_\D(x,y,t)\over \partial t}\lesssim  e^{-t/4} (  t^{-3/2}+t^{-5/2}+ t^{-7/2})
\big( \int_{\rho+1}^{\infty}s^3  e^{-{s^2 \over 4t}-{s\over 2}}ds   +   \int_\rho^{\rho+1}{ s^3 e^{-{s^2 \over 4t}-{s\over 2}}  \over \sqrt{ s-\rho}}ds\big).
\end{equation*}
We have the elementary inequality  for $t\in[t_0/2,2t_0]$ and $s>0,$
$$
 e^{-{s^2 \over 4t}} \leq  C(N,t_0)  e^{-Ns},
$$ 
where $N=N(t_0)$ can be made arbitrarily large and $c(N),t_0$ is a constant which depends on $N$ and $t_0.$
Since both integrands  on the right hand side of the  estimate for $ {\partial p_\D(x,y,t)\over \partial t}$ contain  the term  $e^{-{s^2\over 4t}},$  
we infer  from the  last line  that  for $t\in[t_0/2,2t_0],$
\begin{equation*}
 {\partial p_\D(x,y,t)\over \partial t}\leq  c(N,t_0) e^{-N\rho}.
\end{equation*}
So Case (2) will follow if one can show that
\begin{equation*}
\sum_{j\in J}  \sup\limits_{\dist_P(x,0)<1,\ y\in \Omega_j} e^{-N\dist_P(x,y)} <c_5.
\end{equation*}
Using  \eqref{e:property_Omega}  we  see that  when $N$ is chosen  large  enough with respect to $c_3$ and $c_4,$ the  above  inequality holds.
This completes the proof of Case (2).

Using  the  first identity of \eqref{e:heat-equation} and  the  symmetry $p_\D(x,y,t)=p_\D(y,x,t)$  (see \cite{Chavel}),  Case (3) is  equivalent to Case (2).

To  prove  Case (4) the following elementary result is needed
\begin{lemma}  \label{L:Poisson}
 Let $f\in \Cc^2(2\D).$ Then
 $$  \sup_{\D}\|df\|\leq c_3 ( \sup_{2\D}\|f\|+ \sup_{2\D}\|\ddc f\|).$$
 Here  we recall  that $\D_R$ denotes the Poincar\'e disc with center $0$ and radius $R.$
\end{lemma}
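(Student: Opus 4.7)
\textbf{Proof plan for Lemma \ref{L:Poisson}.}

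The inequality is a standard interior $\Cc^1$-estimate for $\Cc^2$ functions: on a ball compactly contained in the domain, the supremum of the gradient is controlled by the suprema of the function and of its Laplacian. The plan is to reduce $\ddc f$ to the Euclidean Laplacian, and then split $f$ locally on a small ball around each $x_0\in\D$ as a harmonic part (bounded via the Poisson integral) plus a Newtonian potential (bounded via the Green function).

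First I reduce to Euclidean data. In the coordinate $z=x+iy$ on $\C$, a direct computation gives
$$\ddc f = \tfrac{1}{2\pi}(\Delta_{\mathrm{eucl}} f)\, dx\wedge dy,$$
so $\|\ddc f\|$ is comparable, up to a universal constant, with $|\Delta f|$; the norm $\|df\|$ is just the Euclidean norm of the gradient. The Poincar\'e metric plays no role in this auxiliary statement. Since $\dist(x_0,\partial(2\D))\geq 1$ for every $x_0\in\D$, I fix once and for all a radius $r_0\in(0,1]$ so that the Euclidean ball $B(x_0,r_0)$ is relatively compact in $2\D$ for every $x_0\in\D$.

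Next, fix $x_0\in\D$ and set $V:=B(x_0,r_0)$. Decompose $f=f_1+f_2$ on $V$, where $f_2$ is the harmonic extension of $f|_{\partial V}$ and $f_1:=f-f_2$ satisfies $\Delta f_1=\Delta f$ on $V$ with $f_1|_{\partial V}=0$. For the harmonic part, differentiating the Poisson representation at the center $x_0$ yields
$$|df_2(x_0)|\leq c_1\sup_{\partial V}|f| \leq c_1\sup_{2\D}|f|,$$
with $c_1$ depending only on $r_0$. For the Newtonian part, the Green representation
$$f_1(x)=-\int_V G_V(x,y)\,\Delta f(y)\,dy$$
together with differentiation under the integral sign gives
$$|df_1(x_0)|\leq \Big(\sup_V|\Delta f|\Big)\cdot\int_V|\nabla_x G_V(x_0,y)|\,dy \leq c_2\sup_{2\D}\|\ddc f\|,$$
where $c_2$ again depends only on $r_0$ by translation invariance of the Green kernel of $V$. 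Adding the two bounds and taking the supremum over $x_0\in\D$ yields the asserted inequality with $c_3:=c_1+c_2$ (after absorbing the $\tfrac{1}{2\pi}$ factor relating $\ddc$ and $\Delta$).

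There is no real obstacle: the only point to watch is that both constants $c_1,c_2$ depend only on the fixed inner radius $r_0$, hence only on the geometry of $\D\Subset 2\D$, not on $x_0$ or $f$; in particular the estimate is uniform in $x_0\in\D$.
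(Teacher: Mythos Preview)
Your proposal is correct and follows essentially the same approach as the paper: both split $f$ into a harmonic part plus a logarithmic/Newtonian potential and bound the gradient of each piece separately. The only cosmetic difference is that the paper performs the decomposition globally on $2\D$ via the free logarithmic kernel $\Pc[\mu](z)=\tfrac{1}{\pi}\int_{2\D}\log|z-\zeta|\,d\mu(\zeta)$ (so that $f-\Pc[\mu]$ is harmonic on all of $2\D$), whereas you do it locally via the Green function of a small ball around each point; the resulting estimates and constants are of the same nature.
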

\proof[Proof of Lemma \ref{L:Poisson}]
Let $f$ be  a   function  on $2\D$ such that $\mu:=\ddc f$ is a Radon measure of total finite mass $\|\mu\|.$
 Let  $\Pc[\mu]$ be the  function  defined on $2\D$  by
 \begin{equation*}%\label{e:Poisson}
 \Pc[\mu](z):={1\over\pi} \int_{\zeta\in 2\D} \log|z-\zeta| d\mu(\zeta),\qquad z\in 2\D.
 \end{equation*}
 Then we  deduce from this  integral  formula that
  $ \|  d \Pc[\mu]\|_{L^\infty(\D)}\leq  c\|\mu\|_{L^\infty}.$
  Moreover,   $u-\Pc[\mu]$ is a  harmonic  function on $2\D$ and   
  $$\|f-\Pc[\mu]\|_{\Cc^1(\D)}\leq c' \|f-\Pc[\mu]\|_{L^1((3/2)\D)}  \leq c \big(\|\mu\|+   \|f\|_{L^1(2\D)}\big).$$
   Here $c,$ $c'$ are constants  independent of  $f.$ 
  This completes the proof of the lemma.
\endproof
Using  this lemma,  Case (4) follows from combining  Case (1) and Case (3).
\endproof

\proof[Proof  of the first fact \eqref{e:Step1}] 
Since the norm of $D_t$ on $L^\infty(X)$ is  1,  we get that $\| u(\cdot,\cdot)\|_{L^\infty}\leq \|u_0\|_{L^\infty}.$
This proves  the  first  inequality in  \eqref{e:Step1}.

Fix  $t_0>0.$  We prove    the  following  stronger  inequality than  the  second  inequality in  \eqref{e:Step1}
$\sup_{t\in[t_0/2,2t_0]}\| |du(t,\cdot)|_P\|_{L^\infty}<\infty.$
In fact,  it is  enough to  show that 
$$\sup_{t\in[t_0/2,2t_0],\  x\in\phi_{x_0}(\D_1)}\| |d_xu(t,x)|_P\|_{L^\infty}<c,$$
where $c$ is a constant  independent of $x_0.$
Fix a  point $x_0\in X\setminus E.$
Using  \eqref{e:uj}, \eqref{e:vj}, \eqref{e:v-vj}, we get for  $t\in[t_0/2,2t_0]$ and $x\in\D_1$  that
\begin{equation}\label{e:sum-v}
 v(t,x)=\sum_{j\in J} v_j(t,x)= \sum_{j\in J}\int_{y\in\Omega_j} p_\D(x,y,t) u_0(\phi_{x_0}(y))dg_P(y).
\end{equation}
On the  other hand,  the first inequality of \ref{e:property_Omega} implies that there is a constant $c$ independent of $j\in J$ such that
$\int_{\Omega_j} dg_P(y)<c.$
Therefore, applying   Lemma \ref{L:estimates_kernel}     to 
  $\Phi(x,y,t)=|d_x p_\D(x,y,t)|_P$ and using Lebesgue dominated convergence in order   to  differentiate the sum in \eqref{e:sum-v}, we  get that
  $$
  |d_xv(t,x)|_P\leq c_5\|u_0\|_{L^\infty}\qquad\mathrm{for}\qquad t\in[t_0/2,2t_0]\qquad\mathrm{and}\qquad x\in\D_1.
  $$
 This,  combined  with \eqref{e:heat_diffusion_sol} and \eqref{e:v}, implies the desired  conclusion.
 
 For the last inequality  in  \eqref{e:Step1} it is  enough to  show that 
$$ \sup_{t\in[t_0/2,2t_0],\  x\in\phi_{x_0}(\D_1)}\| \Delta_Pu(t,x)\|_{L^\infty}<c,$$
where $c$ is a constant  independent of $x_0.$ We argue  as  above 
using   this time   Lemma \ref{L:estimates_kernel} applied   to 
    $\Phi(x,y,t)= (\Delta_P)_xp_\D(x,y,t).$  
  \endproof

\proof[Proof  of the second fact \eqref{e:Step2}]
We  use  \eqref{e:v-vj}, \eqref{e:vj}, \eqref{e:uj}.   We also
apply   Lebesgue's dominated convergence and Lemma \ref{L:estimates_kernel} for $\Phi(x,y,t)= {\partial p_\D(x,y,t)\over \partial t}.$ Consequently, we infer from equality
\eqref{e:heat-equation_bisbis} that
$$
{\partial v(t,x)\over \partial t}=\sum_{j\in J} {\partial v_j(t,x)\over \partial t}= \sum_{j\in J} \Delta_P v_j(t,x)=\Delta_P\sum_{j\in J}  v_j(t,x)=\Delta_Pv(t,x)
$$
 for  $t\in[t_0/2,2t_0]$ and $x\in\D_1.$
So  $$ 
{\partial u(t,x)\over \partial t} =\Delta_Pu(t,x).$$  
 This,  combined  with \eqref{e:heat_diffusion_sol} and \eqref{e:v}, implies the desired  conclusion.
 \endproof

To prove  the  third fact, we need  a counterpart of Lemma \ref{L:estimates_kernel} when  the time   $t$  is small
\begin{lemma}\label{L:estimates_kernel-bis} 
There is a constant $c_6$  which depends only on $\U$ and $t_0$ (so $c_6$ does not depend on $x_0\in X\setminus E$)  such that   for every $0<t_0<1,$ we have
\begin{equation*}
\sum_{j\in J} \sup\limits_{t\in[0, t_0]}  \big(  \sup\limits_{(x,y)\in\D^2: \ \dist_P(x,0)<1,\ \dist_P(y,0)>2,\ y\in \Omega_j} p_\D(x,y,t) \big)<c_6 t_0.
\end{equation*}
\end{lemma}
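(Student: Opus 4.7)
\textbf{Proof plan for Lemma \ref{L:estimates_kernel-bis}.} The plan is to push the integral bounds used in the proof of Lemma \ref{L:estimates_kernel} to the small-time regime, then combine them with the hyperbolic volume-growth estimate coming from \eqref{e:property_Omega}. First, note that if $\dist_P(x,0)<1$ and $y\in\Omega_j$ with $\dist_P(y,0)>2$, then the triangle inequality gives $\rho:=\dist_P(x,y)>1$. Starting from the Chavel formula \eqref{e:heat_kernel} and the auxiliary estimates $\sqrt{\cosh s-\cosh\rho}\gtrsim e^{s/2}$ for $s\geq\rho+1$ and $\sqrt{\cosh s-\cosh\rho}\gtrsim e^{s/2}\sqrt{s-\rho}$ for $\rho\leq s\leq \rho+1$ (already exploited in Lemma \ref{L:estimates_kernel}), I substitute $s=\rho+u$, factor out $e^{-\rho^2/(4t)-\rho/2}$, and evaluate the residual integrals by completing the square and using $u\rho/(2t)\geq 1/(2t_0)$ on the relevant range. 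This yields the Gaussian-type estimate
\begin{equation*}
p_\D(x,y,t)\;\leq\; C\bigl(t^{-1/2}+\rho^{-1/2}\bigr)\,e^{-\rho^2/(4t)-\rho/2}\qquad\text{for }\rho\geq 1,\ 0<t\leq t_0<1,
\end{equation*}
with an absolute constant $C>0$.

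Next, I show that the sup in $t$ is attained at the right endpoint. The derivative of $t\mapsto t^{-1/2}e^{-\rho^2/(4t)}$ has the sign of $\rho^2/(4t)-1/2$, which is positive whenever $t<\rho^2/2$. Since $\rho\geq 1$ and $t_0<1\leq \rho^2/2$, the function is monotone increasing on $(0,t_0]$, and the same obviously holds for $t\mapsto e^{-\rho^2/(4t)}$. Hence
\begin{equation*}
\sup_{t\in(0,t_0]} p_\D(x,y,t)\;\leq\; C\,t_0^{-1/2}\,e^{-\rho^2/(4t_0)-\rho/2}.
\end{equation*}

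The final step is to sum this bound over $j\in J$. The geometric estimate \eqref{e:property_Omega} together with the fact that the hyperbolic area of a Poincar\'e ball of radius $R$ grows like $e^R$ implies that the number of indices $j$ for which $\Omega_j$ meets the Poincar\'e annulus of radii $R$ and $R+1$ centered at $0$ is at most $c\,e^R$, with $c$ depending only on $c_3,c_4$. Choose in each $\Omega_j$ a point $y_j$ minimizing the distance to $x$, and set $R_j:=\dist_P(0,y_j)$; then $\rho(x,y)\geq R_j-1$ for all $y\in\Omega_j$ and $x\in\D_1$. Consequently,
\begin{equation*}
\sum_{j\in J}\sup_{x\in\D_1,\,y\in\Omega_j} e^{-\rho^2/(4t_0)-\rho/2}\;\lesssim\;\int_1^\infty e^{-R^2/(4t_0)+R/2}\,dR.
\end{equation*}
Completing the square $-R^2/(4t_0)+R/2=-(R-t_0)^2/(4t_0)+t_0/4$ and using the standard Gaussian tail estimate $\int_x^\infty e^{-v^2}dv\lesssim e^{-x^2}/x$ with $x=(1-t_0)/\sqrt{4t_0}$, the right-hand side is bounded by $C\,t_0\,e^{-1/(4t_0)+1/2}$. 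Multiplying by the $t_0^{-1/2}$ from the kernel bound produces
\begin{equation*}
\sum_{j\in J}\sup_{t\in[0,t_0]}\sup_{x\in\D_1,\,y\in\Omega_j} p_\D(x,y,t)\;\leq\; C'\,t_0^{1/2}\,e^{-1/(4t_0)}\;\leq\; c_6\,t_0,
\end{equation*}
where in the last step we use $t_0<1$ to absorb the super-exponentially small factor $t_0^{-1/2}e^{-1/(4t_0)}$ into a universal constant. The constants $C,C',c_6$ depend only on $c_3,c_4$ (hence only on $\U$).

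The main obstacle is the bookkeeping in Step~1: the bound for $p_\D$ carries a factor $t^{-1/2}$ that blows up as $t\to 0$, and one has to confirm that the exponential $e^{-\rho^2/(4t_0)}$ gained in the summation step absolutely dominates this blowup and produces the claimed linear decay $c_6 t_0$. Once the Gaussian calculation is organised via completion of squares, this is straightforward, but the pointwise kernel estimate alone is \emph{not} integrable in $t$ near $0$, so the argument must integrate in the spatial variable \emph{first} and exploit the fact that all points of $\bigcup_j\Omega_j$ in question lie at Poincar\'e distance $\geq 1$ from the origin.
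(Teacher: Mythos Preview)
Your overall strategy is sound and does prove the lemma, but your Step~1 kernel estimate is miscalculated: the prefactor $C\bigl(t^{-1/2}+\rho^{-1/2}\bigr)$ cannot be right. Carrying out the substitution $s=\rho+u$ in the two pieces of the Chavel integral and using the gamma-type bound $\int_0^1 u^{-1/2}e^{-\rho u/(2t)}\,du\lesssim\sqrt{t/\rho}$ gives a prefactor of order $t^{-1/2}+t^{-1}\rho^{1/2}$ (consistent with the Davies--Mandouvalos asymptotic $p_t(\rho)\asymp t^{-1}(1+\rho)e^{-t/4-\rho^2/(4t)-\rho/2}$), not one that decays in $\rho$. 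Fortunately this slip is harmless: any polynomial factor $t^{-a}\rho^{b}$ is annihilated by the Gaussian $e^{-\rho^2/(4t_0)}$ in your summation step, so your completion-of-squares argument still yields the super-exponentially small bound $t_0^{1-a}e^{-1/(4t_0)}\leq c_6 t_0$. Your monotonicity step also needs $t_0<\rho^2/(4a)$ to place the maximum at the right endpoint; with the correct exponent $a\geq 1$ this requires $t_0$ to be bounded away from $1$, but the remaining range of $t_0$ is covered trivially by Lemma~\ref{L:estimates_kernel}.

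The paper's route is shorter and reuses earlier work. Instead of tracking the sharp Gaussian profile, it invokes the elementary inequality $e^{-t/4}t^{-3/2}e^{-s^2/(4t)}\leq C(N,t_0)\,t^4\,e^{-Ns}$ for $s>1$ and $0<t<t_0$ (with $N=N(t_0)$ arbitrarily large as $t_0\to 0$); plugging this into the Chavel integral gives $p_\D(x,y,t)\leq c(N,t_0)\,t_0\,e^{-N\rho}$ in one stroke, and the residual sum $\sum_j\sup e^{-N\rho}$ is precisely the quantity already bounded at the end of the proof of Lemma~\ref{L:estimates_kernel}. Your approach buys a much sharper final estimate (of order $e^{-1/(4t_0)}$ rather than $t_0$) at the cost of redoing the volume-growth summation from scratch; the paper's approach is more economical because it recycles that lemma wholesale.
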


%and  $\rho=\dist_P(x,y) >c_6$  for a fixed  constant $c_6>0,$ 
\proof 
Denote the Poincar{\'e} distance between $x$ and $y$ by $\rho:=\dist_P(x,y).$ We deduce from $ \dist_P(x,0)<1$ and  $\dist_P(y,0)>2$ that $\rho>1.$
As in the  proof of Lemma \ref{L:estimates_kernel}, using  formula 
\eqref{e:heat_kernel} and  identity \eqref{e:heat_kernel3}  as well as the estimates $\sqrt{\cosh s-\cosh\rho}\gtrsim e^{s/2}$ when $s\geq \rho+1$ and 
$\sqrt{\cosh s-\cosh\rho}\gtrsim e^{s/2}\sqrt{s-\rho}$ for $\rho<s\leq \rho+1,$ we get that
\begin{equation*}
  p_\D(x,y,t) \lesssim  e^{-t/4}   t^{-3/2}
\big( \int_{\rho+1}^{\infty}s  e^{-{s^2 \over 4t}-{s\over 2}}ds   +   \int_\rho^{\rho+1}{ s e^{-{s^2 \over 4t}-{s\over 2}}  \over \sqrt{ s-\rho}}ds\big).
\end{equation*}
We use  the following elementary  estimate: for $0<t<t_0\ll 1$ and  $s>1$
$$
  e^{-t/4}   t^{-3/2} e^{-{s^2 \over 4t}} \leq  C(N,t_0)  t^{4}e^{-Ns},
$$
 where $N=N(t_0)$ can be made arbitrarily large when $t_0>0$ is  sufficiently  small, and $c(N,t_0)$ is a constant which depends on $N$ and $ t_0.$
Since both integrands  on the right hand side of  the  estimate  for $p_\D(x,y,t)$  contain  the term  $e^{-{s^2\over 4t}},$ we infer from the last line that for $0<t<t_0\ll 1$ and  $\rho>1$
that
\begin{equation*}
 p_\D(x,y,t)\leq  c(N,t_0) t_0e^{-N\rho},
\end{equation*}
So the lemma  follows from the  inequality
\begin{equation*}
\sum_{j\in J}  \sup\limits_{(x,y)\in\D^2: \ \dist_P(x,0)<1,\ y\in \Omega_j} e^{-N\dist_P(x,y)} <c_5,
\end{equation*} 
which  has been estiablished  at the  end of the proof of Lemma \ref{L:estimates_kernel}.
\endproof

Now we complete the  proof of Theorem \ref{T:diffusions_comparisons}.

\proof[Proof  of the third fact \eqref{e:Step3}]
Let $j_0\in J$  be the (unique if exists)  index  such that $\Omega_{j_0}$ contains $0.$  So the  existence of  such $j_0$ is equivalent  to  the condition that $x_0\in\U.$  We will see that
if there is  no such an index $j_0$ then the rest of the proof is  trivially finished.
Using  Lemma  \ref{L:estimates_kernel-bis},  we  see that
$$
\sum_{j\in J,\ j\not=j_0}|v_j(t,\zeta)|\leq Ct_0\quad \mathrm{for}\quad t\in(0,t_0],\quad \zeta\in\D.
$$
On the other hand, applying  the  
second  equation in \eqref{e:heat-equation} to $u_{j_0}\in\Dc(\Omega_{j_0})$ 
yields  that 
$\lim_{t\to 0}v_{j_0}(t,\cdot)=u_{j_0}$ on $\D.$  
 Combining this with  the  previous  estimate, and  using  \eqref{e:v-vj} and  \eqref{e:uj},
 we  see that $\lim_{t\to 0}v(t,\cdot)=u_0\circ \phi_{x_0}$ on $\D.$ 
 So  by  \eqref{e:v}
 and \eqref{e:heat_diffusion_sol}, $\lim_{t\to 0}u(t,\cdot)=u_0$ everywhere on $X\setminus E.$
 By   the first fact \eqref{e:Step1},  $u(t,\cdot)$ is uniformly bounded. Therefore, by Lebesgue dominated convergence 
we get the conclusion \eqref{e:Step3} of the  third fact.
\endproof
 
 \begin{corollary}\label{C:diffusions_comparisons}
 Let $\Fc=(X,\Lc)$ be   a  compact  hyperbolic Riemann surface lamination (without singularities). Let $\mu$ be a harmonic measure.
 Then the  abstract heat  diffusions associated to $\mu$ coincide with the leafwise heat diffusions. 
 \end{corollary}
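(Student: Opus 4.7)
The idea is to apply Theorem \ref{T:diffusions_comparisons}, so it suffices to verify its two hypotheses in the present setting where $E=\varnothing$, $X$ is compact, and every leaf of $\Fc$ is hyperbolic. In this setting $\Hyp(\Fc)=X$ and, since $\mu$ is harmonic, $\mu$ is a finite measure (see Definition \ref{D:harmonic_measure}).

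For hypothesis (i), the claim that $\eta$ is locally bounded above on $X$ is in fact a global statement here: $\eta$ is uniformly bounded on $X$. This follows from the Brody-type principle recalled in Remark \ref{R:Brody_sufficiency}, since on a compact lamination a non-constant holomorphic map $\C\to X$ locally tangent to leaves is incompatible with every leaf being hyperbolic in the sense of Kobayashi. Equivalently, one may invoke the classical continuity of the leafwise Poincar\'e metric for compact hyperbolic laminations due to Candel, Ghys and Verjovsky. Either way, $\Fc$ is Brody hyperbolic in the sense of Definition \ref{D:uniform_hyperbolic_laminations} and (i) holds.

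For hypothesis (ii), one must show that any measurable $u$ with $\|u\|_{L^\infty}$, $\||du|_P\|_{L^\infty}$, $\|\Delta_P u\|_{L^\infty}$ all finite lies in $H^1(\mu)$. The finiteness of $\mu$ together with the identity $|\nabla u|=|du|_P$ from \eqref{e:nabla-vs-dP} already gives $\int|u|^2d\mu+\int|\nabla u|^2d\mu<\infty$, so the remaining task is to approximate $u$ in the $H^1(\mu)$-norm by elements of $\Dc(\Fc)$. The plan is a two-stage mollification: cover $X$ by a finite atlas of flow boxes $\U_k\simeq \B_k\times\T_k$, fix a partition of unity $\{\chi_k\}\subset\Dc(\Fc)$ subordinate to it, and reduce to $\chi_k u$. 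In $\U_k$ one first convolves along the leaf coordinate $y\in\B_k$ against a smooth compactly supported kernel at a scale $\epsilon$, then restores continuous transversal dependence by averaging in $t\in\T_k$ with a kernel obtained from an embedding of the compact $X$ into some $\R^N$. The $L^\infty$ bounds on $u$, $|du|_P$ and $\Delta_P u$, combined with the uniform upper bound on $\eta$ from (i), give enough uniform control to pass to the limit $\epsilon\to 0$ and conclude $H^1(\mu)$-convergence.

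The main obstacle will be controlling the transversal smoothing in the $H^1(\mu)$-norm, since $u$ has no transversal regularity a priori. The cleanest way I see to handle this is to disintegrate $\mu$ along the transversals via the local decomposition of Proposition \ref{P:decomposition}, reducing the convergence of $|\nabla(u-u_\epsilon)|$ in $L^2(\mu)$ to a leafwise statement that follows from standard Euclidean mollification estimates applied plaque by plaque, and then to assemble the leafwise convergences by dominated convergence using the uniform bounds. Once (i) and (ii) are in place, Theorem \ref{T:diffusions_comparisons} applies verbatim and yields the coincidence of the abstract heat diffusion associated to $\mu$ with the leafwise heat diffusion on $\Fc$.
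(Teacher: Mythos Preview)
Your proposal is correct and follows essentially the same route as the paper: verify hypotheses (i) and (ii) of Theorem \ref{T:diffusions_comparisons} and apply it. For (i) the paper simply invokes the classical Brody lemma (already recalled in Subsection \ref{SS:Regularity_Poincare}) to get a uniform upper bound on $\eta$, which is the same content as your appeal to Brody hyperbolicity or to the Candel--Ghys--Verjovsky continuity result. For (ii) the paper likewise uses a finite cover by flow boxes, a partition of unity, and leafwise convolution to build approximants in $\Dc(\Fc)$, referring to the proof of Proposition \ref{P:diffusions_comparisons} for the details.

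The one point where you are more explicit than the paper is the transversal regularity of the approximants: functions in $\Dc(\Fc)$ must have leafwise derivatives depending \emph{continuously} on the plaque, and pure leafwise mollification of a merely measurable $u$ does not obviously produce this. Your two-stage plan (leafwise convolution followed by a transversal average via an embedding of $X$ in $\R^N$) addresses this directly; the paper instead phrases the step as ``glue the obtained approximants on these plaques together in a uniform way'' and points to the singular-foliation case (where ambient convolution in $\C^k$ handles both directions at once). Both resolutions work here, and your use of the disintegration from Proposition \ref{P:decomposition} together with dominated convergence is a clean way to justify the $H^1(\mu)$-limit.
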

\proof
%By  Theorem \ref{thm_harmonic_currents_vs_measures} (1)  we can write $\mu=\Phi(T)$
%for a directed  positive harmonic  current $T.$
Observe that the function  $\eta$ is uniformly  bounded from above  and below by strictly positive constants. So  condition (i) is  satisfied.
To prove the  membership test (ii), pick  a  function $u$ as in (ii). Since  $X$ is compact, we  cover it by a finite number of flow boxes $\U\simeq \B\times \T,$
where $\B$ is  the unit-disc $\D.$
On each flow box $\U,$ we use   standard  convolution  on $\B$ in order to regularize $u$  on plaques  and glue the obtained approximants on these plaques  together  in  a uniform  way.
Using a  finite partition of unity associated to the finite cover $\U,$ we  obtain  a  sequence $u_n\in\Dc(\Fc)$ of approximants of $u$   such that  $\|u_n-u\|_{H^1(\mu)}\to 0$  as $n$
tends to infinity.  %  see \eqref{e:norm_P} for the norm $\|\cdot\|_{H_P(\mu)}.$ 
The interested reader may also see the proof of Proposition \ref{P:diffusions_comparisons} below for more details. This proves (ii).  

Applying now  Theorem \ref{T:diffusions_comparisons} gives the result.
\endproof
 
 \begin{proposition}\label{P:diffusions_comparisons}
  Let $\Fc=(X,\Lc,E)$ be  
 a   singular holomorphic foliation (not necessarily compact)  such  that $E$ is a finite set. Assume that  the complex manifold $X$ is  endowed with a  Hermitian metric $g_X$   and    
the  function $\eta$ given in \eqref{e:eta_bis} is locally  bounded from   above by strictly positive constants on $X\setminus E$
and that there is a constant $c>0$ such that  $\eta(x)\geq c  \dist(x,E)$ for $x\in X.$  Here $\dist$ is  the distance associated to the   Hermitian metric on  $X.$
 Let  $T$ be a  directed positive   harmonic current   such that the Lelong number $\nu(T,x)=0$ for all $x\in E.$ Let $\mu$ be the positive quasi-harmonic measure given by  \eqref{e:m-T}.
 Suppose that $\mu$
  finite. Then the  abstract heat  diffusions associated to $\mu$ coincide with the leafwise heat diffusions.
 \end{proposition}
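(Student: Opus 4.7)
The plan is to deduce the conclusion from Theorem \ref{T:diffusions_comparisons} by verifying its two hypotheses for the measure $\mu$. Hypothesis (i) is literally among the assumptions, so the real content is hypothesis (ii), the membership test. Let $u$ be measurable on $X\setminus E$ with $M_0:=\|u\|_{L^\infty}$, $M_1:=\||du|_P\|_{L^\infty}$ and $M_2:=\|\Delta_P u\|_{L^\infty}$ all finite. Since $\mu$ is finite and $\mu(E)=0$, the quantities $\|u\|_{L^2(\mu)}$ and $\||du|_P\|_{L^2(\mu)}$ are automatically finite, so the only genuine task is to approximate $u$ in the $H^1(\mu)$-norm by elements of $\Dc(\Fc)$, i.e., by leafwise-smooth functions with compact support in $X\setminus E$.

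The approximation will proceed in two stages: first truncate near $E$ by a logarithmic cut-off, then mollify. Fix $R>0$ small enough that the balls $B_R(p)$, $p\in E$, are pairwise disjoint and lie in holomorphic charts on which $g_X$ is equivalent to the flat K\"ahler form $\beta$. For $\epsilon\in(0,R)$ set
\[
\chi_\epsilon(x):=\prod_{p\in E}\psi_\epsilon\bigl(\dist(x,p)\bigr),
\]
where $\psi_\epsilon$ is a smooth regularization of $s\mapsto\min\{1,\max\{0,\log(s/\epsilon)/\log(R/\epsilon)\}\}$. Then $\chi_\epsilon u$ has support compact in $X\setminus E$ and is bounded with bounded leafwise derivative. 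The convergence $\chi_\epsilon u\to u$ in $L^2(\mu)$ follows from $\mu(E)=0$ and dominated convergence, and writing $\nabla(\chi_\epsilon u)-\nabla u=(\chi_\epsilon-1)\nabla u+u\,\nabla\chi_\epsilon$, the first summand also tends to $0$ in $L^2(\mu)$ for the same reason. The delicate part is
\[
\int |u|^{2}\,|\nabla\chi_\epsilon|_P^{2}\,d\mu \;\leq\; M_0^{2}\int |\nabla\chi_\epsilon|_P^{2}\,d\mu.
\]

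The identity $g_X=\eta^{2}g_P$ rewrites $\mu=T\wedge g_P$ as $\eta^{-2}T\wedge g_X$ and turns $|d\chi_\epsilon|_P^{2}$ into $\eta^{2}|d\chi_\epsilon|_{g_X}^{2}$, so
\[
\int |\nabla\chi_\epsilon|_P^{2}\,d\mu \;=\; \int |\nabla\chi_\epsilon|_{g_X}^{2}\,T\wedge g_X,
\]
and the $\eta$-weights cancel. Near $p\in E$ one has $|\nabla\chi_\epsilon|_{g_X}\leq C(r\log(R/\epsilon))^{-1}$ on the annulus $\epsilon\leq r\leq R$, $r:=\dist(\cdot,p)$, so the problem reduces to bounding $\log^{-2}(R/\epsilon)\int_{\epsilon\leq r\leq R}r^{-2}\,T\wedge g_X$. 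Writing $N(r):=\|T\wedge g_X\|_{B_r(p)}$ and integrating by parts in $r$ gives
\[
\int_\epsilon^R \frac{dN(r)}{r^{2}} \;=\; \frac{N(R)}{R^{2}}-\frac{N(\epsilon)}{\epsilon^{2}}+2\int_\epsilon^R\frac{N(r)}{r^{3}}\,dr.
\]
This is where the vanishing-Lelong-number assumption becomes decisive: by Proposition \ref{P:Skoda} and $\nu(T,p)=0$ we have $N(r)=o(r^{2})$, hence $N(\epsilon)/\epsilon^{2}\to 0$ and, for every $\delta>0$, $\int_\epsilon^{r_0}N(r)/r^{3}\,dr\leq\delta\log(r_0/\epsilon)$ with $r_0$ independent of $\epsilon$. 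After division by $\log^{2}(R/\epsilon)$ every resulting term tends to $0$ as $\epsilon\to 0$, proving $\chi_\epsilon u\to u$ in $H^1(\mu)$.

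It remains, for each fixed $\epsilon$, to approximate $\chi_\epsilon u$ by elements of $\Dc(\Fc)$. Since its support is compact in $X\setminus E$, a finite partition of unity subordinate to a cover by flow boxes reduces the task to a single box $\U\simeq\B\times\T$ on which $\eta$, $\eta^{-1}$, and the harmonic densities $h_a$ of Proposition \ref{P:decomposition} are bounded. A standard leafwise convolution in $\B$ combined with a transverse regularization obtained by mollifying the Radon measure $\nu$ then produces a sequence in $\Dc(\Fc)$ converging to $\chi_\epsilon u$ in $H^1(\mu)$; this step is routine away from singularities. The main obstacle of the whole argument is the mass estimate above, whose delicacy lies entirely in the use of $\nu(T,p)=0$: without it Skoda's inequality only gives $N(r)=O(r^{2})$, which would leave behind an unbounded $O(\log(R/\epsilon))$ remainder after division by $\log^{2}(R/\epsilon)$.
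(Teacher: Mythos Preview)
Your proof is correct and follows the paper's overall strategy of verifying condition~(ii) of Theorem~\ref{T:diffusions_comparisons} via a cut-off near $E$ followed by mollification in flow boxes away from $E$. The paper, however, uses a \emph{sharp} cut-off $v_\epsilon:=(1-\chi(\cdot/\epsilon))u$ and expands $i\partial(u-v_\epsilon)\wedge\dbar(u-v_\epsilon)\wedge T$ directly into three pieces $I_1+I_2+I_3$; the cross term $I_3=\epsilon^{-1}\int_{\B_\epsilon\setminus\B_{\epsilon/2}}|du|_{g_X}\,T\wedge\beta$ is precisely where the hypothesis $\eta(x)\ge c\,\dist(x,E)$ enters, via $|du|_{g_X}=|du|_P/\eta\lesssim M_1/\epsilon$ on that annulus. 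Your splitting $\nabla(\chi_\epsilon u)-\nabla u=(\chi_\epsilon-1)\nabla u+u\nabla\chi_\epsilon$ together with the cancellation $|\nabla\chi_\epsilon|_P^{2}\,d\mu=|\nabla\chi_\epsilon|_{g_X}^{2}\,T\wedge g_X$ avoids this cross term entirely, so your argument never uses the lower bound on $\eta$. It is in fact more robust than you claim: with only Skoda's estimate $N(r)\le C_0 r^{2}$ the integration by parts gives $\int_\epsilon^R r^{-2}\,dN(r)\le C_0(1+2\log(R/\epsilon))$, and this divided by $\log^{2}(R/\epsilon)$ still tends to $0$ --- so the logarithmic cut-off does not actually need $\nu(T,p)=0$ either, contrary to your closing remark. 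The paper's sharp cut-off, by contrast, genuinely needs $\nu(T,p)=0$ since its dominant term is $\epsilon^{-2}N(\epsilon)$. One imprecision: for the mollification stage the paper simply uses ambient convolution on $\C^k$ in a foliated chart (which lands in $\Dc(\Fc)$ and converges in $H^1(\mu)$ because $\eta$ is bounded above and below on the compact support); your ``transverse regularization by mollifying $\nu$'' is vaguer than necessary and not the way to proceed.
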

\proof
We only need to check the  membership test (ii) in Theorem \ref{T:diffusions_comparisons}.

Recall  that for $r>0,$  $\B_r$ denotes  the ball in $\C^k$ centered at $0$   with radius $r.$
We often denote the unit ball  by $\B,$  i.e.  $\B=\B_1.$
Fix  a nonnegative  smooth  function $\chi:\ \C^k\to  [0,1]$ such that  $\chi(z)=1$ for $|z|\leq 1/2$ and $\chi(z)=0$ for $|z|\geq 1.$
Let $\epsilon_0:= \int_{\C^n} \chi(z)d\vol(z),$  where $\vol(z)$ denotes the Lebesgue measure of $\C^k.$  For $0<\epsilon <1$ consider the function $\chi_\epsilon(z):= \epsilon^{-1}_0\epsilon^{-2k} \chi ( z/\epsilon),$
$z\in\C^k.$  So $ \int \chi_\epsilon(z) d\vol(z)=1.$

Consider  a  countable  or finite   cover $\Uc=(\U_p)_{p\in I}$ of $X$ (of dimension $k$) by  open sets  such that 
\begin{itemize}
 \item[$\bullet$] 
  for each $p\in I,$ $\U_p$ is  the unit ball $\B$ of $\C^k$ in a  suitable  local coordinate system;
\item[$\bullet$] for each $p\in I\setminus E,$ $\U_p$ is  a  flow box and  in a  foliated chart $\U_p \simeq \D\times \T_p,$
with $\D$  as  usual the unit disc and $\T_p$   an open  set in $\C^{k-1},$
and  $2\U_p\cap E=\varnothing,$  where  $2\U_p$ is $\B_2$  in  the above  local coordinate system;
\item[$\bullet$]  for each $a\in E,$   $\U_a\cap E=\{a\}.$ 
\end{itemize}

Let  $u$  be  a measurable   function on $X\setminus E$  such that 
\begin{equation}\label{e:assumption-u} \|u\|_{L^\infty}<\infty,\qquad\||du|_P \|_{L^\infty}<\infty,\qquad \|\Delta_Pu\|_{L^\infty}<\infty.
\end{equation}
 We need  to  prove  that $u$ belongs  to  $ H^1(\mu).$
 Since  $\eta$  is locally  bounded from   above by strictly positive constants on $X\setminus E.$ it is  uniformly bounded from   above  and below by strictly positive constants on
 $\bigcup_{p\in I\setminus E} 2\U_p.$  Consequently, 
using a  partition of unity subordinate to $\Uc$ and \eqref{e:assumption-u} we may assume  that $u$ is  compactly supported in a  single $\U_p.$ 
There are two cases  to consider.  

\noindent {\bf  Case}  $p\not\in E.$

So $u$ is  compactly supported in  a flow box $\U_p.$
For $n\geq 1$  consider the convolution $u_n:=u\star\chi_{1/n},$  where
$$
(u\star\chi_\epsilon)(z):=\int_{\C^k}u(z-w)\chi_\epsilon(w)d\vol(w)\qquad \text{for}\qquad z\in\C^k.
$$
Observe  that $u_n\in\Dc(\Fc).$ Since
 $\eta$    is  uniformly bounded from   above  and below by strictly positive constants on
 $ 2\U_p,$ we infer  from   \eqref{e:assumption-u} that   $\|u_n-u\|_{H^1(\mu)}\to 0$ as  $n$ tends to infinity.
 Hence,   $u$ belongs  to  $ H^1(\mu).$

\noindent {\bf  Case}  $p\in E.$

Let $a$ be  the  unique point $E\cap \U_p.$ 
Using the local coordinate system associated to $\U_p$ we may assume that $a=0$ and $\U_p=\B.$
So $u$ is compactly supported in $\B.$ For $0<\epsilon<1$ consider the  function $v_\epsilon:\ \C^k\to\R$  defined by
$$
v_\epsilon(z):=(1 -\chi(\epsilon^{-1}z)) u(z)\qquad\text{for}\qquad  z\in\C^k.
$$
Observe  that  $v_\epsilon=u$  outside $\B_\epsilon,$  $v_\epsilon=0$  on $\B_{\epsilon/2},$
and  $|u-v_\epsilon|\leq |u|$ everywhere.  Moreover, it follows from \eqref{e:assumption-u} that
$v_\epsilon$ also satisfies  \eqref{e:assumption-u} for every $0<\epsilon <1.$
Since the  support of $v_\epsilon$ does not meet $E,$  we can argue as in  Step 1  to prove  that $v_\epsilon\in H^1(\mu)$ for  every $0<\epsilon <1.$
So  the  proof of the membership  $u\in H^1(\mu)$  will  follow if one can show  that
\begin{equation}\label{e:reduction-u-v}
 \lim\limits_{\epsilon\to 0}\|  u-v_\epsilon\|_{H^1(\mu)}=0.
\end{equation}
On the one hand,  since $\int_X| u|^2 d\mu<\infty$  and $\mu(\{a\})=0,$ we  get 
$$
\int_X| u-v_\epsilon|^2d\mu=\int_{\B_\epsilon} | u-v_\epsilon|^2d\mu\leq \int_{\B_\epsilon}| u|^2d\mu\to 0\quad\text{as}\quad \epsilon\to 0.
$$
On the  other hand,  we have that
$$
\int_X | \nabla( u-v_\epsilon)|^2 d\mu=  \int_X i\partial(u-v_\epsilon)\wedge  \dbar (u-v_\epsilon)\wedge T= \int_{\B_\epsilon} i\partial\big(u(z)\chi(\epsilon^{-1}z)\big)\wedge
\dbar\big (u(z)\chi(\epsilon^{-1}z)\big)
\wedge T(z).$$
The  integral on the  right hand  side is  dominated by a constant times 
\begin{equation*}
 \int_{\B_\epsilon} i\partial u\wedge  \dbar u \wedge T
+\epsilon^{-2} \int_{\B_\epsilon\setminus \B_{\epsilon/2} } |u|^2 T\wedge \beta
+ \epsilon^{-1}\int_{\B_\epsilon\setminus \B_{\epsilon/2} } |d u|\cdot T\wedge\beta
=: I_1+I_2+I_3.
\end{equation*}
Here $\beta:=\ddc\|z\|^2$  (see the notation in Proposition  \ref{P:Skoda}).
By \eqref{e:assumption-u}, $ \|u\|_{L^\infty}<\infty$ and  $ \|\nabla u\|_{L^\infty}<\infty.$  Moreover, $\mu(\{a\})=0.$  So
$$
I_1=\int_{\B_\epsilon} |\nabla u|^2d\mu \lesssim  \int_{\B_\epsilon} d\mu \to  0\quad\text{as}\quad  \epsilon\to 0.
$$
On the other hand, by Proposition  \ref{P:Skoda} and Definition \ref{D:Lelong}, we  have
$$
I_2 \lesssim \epsilon^{-2}\int_{\B_\epsilon }  T\wedge\beta\to 0 \quad\text{as}\quad  \epsilon\to 0,
$$
because  by the hypothesis $\nu(T,a)=0.$
By \eqref{e:length-eucl-vs-Poincare}   and \eqref{e:nabla-vs-dP},  we get that
$$
 |du(z)|={|du(z)|_P\over \eta(z)}={|\nabla u(z)|\over \eta(z)}\quad\text{for}\quad z\in\B.
$$
By the  hypothesis, $\eta(z)\gtrsim  c \epsilon$ for  $z\in {\B_\epsilon\setminus \B_{\epsilon/2} }.$
Therefore,  we  argue  as in  estimating $I_2$ that  for $z\in {\B_\epsilon\setminus \B_{\epsilon/2} },$
$$
I_3 \lesssim \epsilon^{-2}\int_{\B_\epsilon }  T\wedge \beta\to 0\quad\text{as}\quad  \epsilon\to 0.
$$
The proof of  \eqref{e:reduction-u-v} is  thereby completed. This  finishes the proof of the proposition.

%We argue  in the  same  way  with the norm  $\|\cdot\|_{H_P(\mu)}$ for the  proof of the membership  $u\in H_P(\mu).$
%We only give here some details of the proof  of    the following limit in the case  $p\in E$:
%\begin{equation}\label{e:reduction-u-v-H_P}
% \lim\limits_{\epsilon\to 0}\|  u-v_\epsilon\|_{H_P(\mu)}=0.
%\end{equation}%
%We  keep  the above  notation. Using \eqref{e:assumption-u}  and arguing  as in the case $p\in E$  for  the norm $H^1(\mu),$ we see   that
%\begin{eqnarray*}
% \int_X  |\Delta_P  (u-v_\epsilon)|^2 d\mu &=&\int_{\B_\epsilon}    |\Delta_P  (u-v_\epsilon)|^2 d\mu \\
% &\lesssim&  \int_{\B_\epsilon}    |\Delta_P  u|^2 d\mu    +  \int_{\B_\epsilon\setminus \B_{\epsilon/2}}    |\nabla  \chi(\epsilon^{-1}z)|^2 d\mu(z)    
% +  \int_{\B_\epsilon\setminus \B_{\epsilon/2}}    |\Delta_P  \chi(\epsilon^{-1}z)|^2 d\mu(z)\\
% &=& I_4+I_5+I_6. 
%\end{eqnarray*}
%We  can treat two terms $I_4$ and $I_5$ in exactly the same way  as we did for $I_1$ and $I_2$  respectively.

%As for $I_6$  we deduce from Proposition \ref{P:Poincare} and  the definition   of $\chi$ that $|\Delta_P  \chi(\epsilon^{-1}z)|=O\big((\log\epsilon)^2\big).$ Hence, we conclude  as in the same way as we did for $I_3$  that
%$$
%I_6 \lesssim (\log\epsilon)^4 \int_{\B_\epsilon }  T\wedge \beta \lesssim   \epsilon^{-2}\int_{\B_\epsilon }  T\wedge \beta\to 0\quad\text{as}\quad  \epsilon\to 0.
%$$
%So  the  proof of \eqref{e:reduction-u-v-H_P} is  completed.

\endproof

 \begin{corollary}\label{C:diffusions_comparisons_bis}
 If 
 $\Fc=(X,\Lc,E)$ is a Brody hyperbolic compact  singular holomorphic foliation.  Suppose that  all the singularities    are    linearizable as well as weakly  hyperbolic. Then for every 
   positive quasi-harmonic  measure,   the  abstract heat  diffusions  coincide with the leafwise heat diffusions.
   In particular,   the  abstract heat  diffusions are unique, i.e., they are independent of the  quasi-harmonic measures.
 \end{corollary}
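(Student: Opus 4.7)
The strategy is to reduce to Proposition~\ref{P:diffusions_comparisons} by verifying each of its hypotheses for the given quasi-harmonic measure $\mu$. Once this is done, the abstract heat diffusion---built from either $-\Delta_P$ or $-\widetilde{\Delta}_P$---coincides with the leafwise heat diffusion determined by $g_P$, and the independence of the abstract diffusion from $\mu$ follows at once since the leafwise diffusion is intrinsic to the foliation and makes no reference to any measure.

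The geometric hypotheses are handled directly. Linearizable singularities are isolated and $X$ is compact, so $E$ is finite. Fix any ambient Hermitian metric $g_X$ on $X$. Brody hyperbolicity (Definition~\ref{D:uniform_hyperbolic_laminations}) gives a uniform, and in particular local, upper bound for $\eta$ on $X\setminus E$, while the linearizability of the singularities allows one to invoke Proposition~\ref{P:Poincare} to obtain the two-sided estimate $c_1^{-1}\, s\,\lof s \leq \eta(x)\leq c_1\, s\,\lof s$ with $s=\dist(x,E)$; the trivial inequality $s\,\lof s\geq s$ then yields $\eta(x)\geq c\,\dist(x,E)$ for some $c>0$.

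The current $T$ attached to $\mu$ by Theorem~\ref{thm_harmonic_currents_vs_measures} has locally finite Poincar\'e mass near every singular point by Proposition~\ref{P:Poincare_mass}, so compactness of $X$ implies that $\mu$ is globally finite, hence harmonic by Theorem~\ref{thm_harmonic_currents_vs_measures}(3). The remaining hypothesis---and the principal difficulty---is the vanishing of the Lelong numbers $\nu(T,x)$ for $x\in E$. Here Theorem~\ref{T:Lelong}, applied at each linearizable weakly hyperbolic singularity $a\in E$, provides the required vanishing, provided that $T$ does not charge any of the $k$ local invariant hypersurfaces at $a$. To accommodate a general $T$, I would first decompose $T=T_{\mathrm{disc}}+T_{\mathrm{diff}}$ where $T_{\mathrm{disc}}$ is the (necessarily finite, since a linearizable singularity admits at most $k$ local separatrices extendable to global invariant curves) sum of weighted currents of integration on invariant closed analytic curves through $E$, and $T_{\mathrm{diff}}$ is the diffuse remainder. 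Theorem~\ref{T:Lelong} then applies to $T_{\mathrm{diff}}$, while the contribution of $T_{\mathrm{disc}}$ to $\mu$ is a combination of Poincar\'e area measures on invariant hyperbolic Riemann surfaces, on each of which abstract and leafwise diffusions tautologically coincide by the classical single-surface theory. Combining both pieces, Proposition~\ref{P:diffusions_comparisons} yields the desired coincidence for $\mu$, which completes the proof.
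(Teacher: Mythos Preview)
Your approach is essentially the paper's: reduce to Proposition~\ref{P:diffusions_comparisons} and verify each hypothesis using exactly the tools you name (finiteness of $E$ from isolated linearizable singularities and compactness, Brody hyperbolicity for the upper bound on $\eta$, Proposition~\ref{P:Poincare} for $\eta(x)\gtrsim\dist(x,E)$, Proposition~\ref{P:Poincare_mass} and Theorem~\ref{thm_harmonic_currents_vs_measures}(3) for finiteness of $\mu$, and Theorem~\ref{T:Lelong} for the vanishing Lelong numbers). The paper's proof is precisely this chain of citations.

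Where you depart from the paper is in worrying about the ``no mass on invariant hypersurfaces'' clause of Theorem~\ref{T:Lelong}; the paper invokes that theorem without comment. Your instinct here is reasonable, but the decomposition sketch has gaps. First, in dimension $k>2$ the invariant hypersurfaces $\{z_j=0\}$ are unions of many leaves, so a diffuse remainder $T_{\mathrm{diff}}$ can still charge them---splitting off atoms on single leaves does not clear the hypothesis. Second, even in dimension~$2$, the claim that a local separatrix carrying mass of $T$ extends to a global invariant \emph{closed} curve needs an argument (something like Theorem~\ref{T:auxiliary}(a)). Third, and most seriously, ``combining both pieces'' is not automatic: the abstract semigroup on $L^2(\mu)$ is generated by $-\Delta_P$ with domain built from the closure of $\Dc(\Fc)$ in $H^1(\mu)$, and showing this decomposes as $H^1(\mu_{\mathrm{disc}})\oplus H^1(\mu_{\mathrm{diff}})$ requires separating the two supports by smooth cutoffs while keeping $H^1$-norms under control---an approximation step you have not supplied. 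A cleaner route, at least in dimension~$2$, would be to verify the membership test (ii) of Theorem~\ref{T:diffusions_comparisons} directly for $\mu$, handling the separatrix contribution via the finite Poincar\'e area of a hyperbolic cusp.
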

\proof  Since the foliation $\Fc$ is  compact    with only linearizable singularities,
By Proposition  \ref{P:Poincare_mass}, the mass of every positive  quasi-harmonic  measure $\mu$ is  finite. 
Therefore, by Theorem  \ref{thm_harmonic_currents_vs_measures} (3), every positive  quasi-harmonic  measure $\mu$ is harmonic.
Let $T$ be  the   directed  positive  harmonic  current  given by  \eqref{e:m-T}. So $T$  is  positive  $\ddc$-closed current of bidimension $(1,1)$ on $X.$
Since  all the singularities    are    linearizable as well as weakly  hyperbolic,  it follows from Theorem  \ref{T:Lelong} that  $\nu(T,a)=0$ for $a\in E.$
Moreover,  as all the singularities    are    linearizable, we know by  Proposition \ref{P:Poincare} that  $\eta(x)\approx  \dist(x,E)\lof \dist(x,E)\gtrsim  \dist(x,E).$
Hence, the  result follows from Proposition \ref{P:diffusions_comparisons}. 
\endproof

\begin{problem} \rm Let $\Fc=(X,\Lc,E)$ be  a
 a   singular holomorphic foliation, where $E$ is a  subvariety of $X$ with $\codim_X(E)\geq 2.$ Let  $T$ be a  directed positive  harmonic current giving no mass to $\Par(\Fc)\cup E$.  
 Find  sufficient conditions for $T$ and $E$  so that the measure $\Phi(T)$ given by \eqref{e:mu} is  finite  (that is,  by Theorem  \ref{thm_harmonic_currents_vs_measures} (3), $\Phi(T)$ is harmonic). 
 Moreover, when the measure $\Phi(T)$   is harmonic, find sufficient conditions for $T$ and $E$  so that the  abstract heat  diffusions  coincide with the leafwise heat diffusions.
\end{problem}

\begin{problem}
\rm In this  subsection we have studied   the problem of unique  heat diffusions for the  completion of the  space $\Dc(\Fc)$ with respect to the norm $\|\cdot\|_{H^1(\mu)}.$
It seems to be of interet to study the problem  for other  spaces of test functions  with respect to other norms.  
\end{problem}

%%%%%%%%%%%%%%%%%%%%%%%%%%%%%%%%%%%%%%%%%%%%%%%%%%%%%%%%%%%%%%%%%%%%%%%%%%%%%%%%%%%%%%%%%%%%%
\subsection{Heat  equation on holomorphically immersed Riemann surface laminations  }\label{SS:heat_equation_SHF}
%%%%%%%%%%%%%%%%%%%%%%%%%%%%%%%%%%%%%%%%%%%%%%%%%%%%%%%%%%%%%%%%%%%%%%%%%%%%%%%%%%%%%%%%%%%%%

Let  $\Fc=(X,\Lc,E)$   be  a Riemann surface lamination  with  singularities  which is  holomorphically immersed in a
complex manifold $M$ (see Definition  \ref{D:immersed-lamination}.   For simplicity, fix a Hermitian form $g_M$ on $M$. Let $T$ be a positive  $\ddc$-closed current
of bidimension $(1,1) $ %with  compact support 
in $M,$  $T$ is  not necessarily directed by $\Fc.$ 
So,
$T\wedge g_M$ is a positive measure  % with compact support 
in $M.$ %  supported in $X.$
We assume that   there is a $(1,0)$-form $\tau$
defined almost everywhere with respect to $T\wedge g_M$ such that
$\partial T=\tau\wedge T .$ 
In this context, the H{\"o}rmander $L^2$-estimates are proved in
\cite{BerndtssonSibony} for the $\dbar$-equation induced on $T$.
%For  example, when  $(X,\Lc)$ is a compact Riemann surface lamination without   singularities, we can  take an arbitrary
%leafwise smooth Hermitian metric  as $\beta,$  whereas when   $(X,\Lc,E)$ is a compact holomorphic foliations with isolated linearizable  singularities,
%we can  choose the leafwise  Poincar\'e metric $g_P$ as $\beta.$ In both  previous examples, $T$ is an arbitrary directed positive harmonic current.

 We also assume that
  $T\wedge g_M$ is
absolutely continuous with respect to $T\wedge g_P$. In particular, this implies implicitly  that $T$ is  supported in $X$ and $T$ does not give mass to $\Par(\Fc)\cup E.$ 
This condition 
does not depend on the choice of 
$g_M$ and allows us to define 
the operators
 $\nabla^\partial_P$,
$\nabla^{\dbar}_P$ and $\nabla_P$ on $u\in\Dc(M)$ by
$$(\nabla^\partial_P u) T\wedge g_P:=i\partial u\wedge 
\overline \tau\wedge T=i\partial(u\dbar T),\quad
(\nabla^{\dbar}_P u) T\wedge g_P:=-i\dbar u\wedge \tau\wedge T=-i\dbar(u\partial T)$$
and 
$$\nabla_P:=\nabla^\partial_P+\nabla^{\dbar}_P.$$
Define also the
operators $\Delta_P$ and $\widetilde\Delta_P$ on
$u\in\Dc(M)$ by
\begin{equation}\label{e:Laplacian}(\Delta_P u) T\wedge g_P:=i\ddbar u\wedge T 
\quad \mbox{and}\quad \widetilde\Delta_P:=\Delta_P+{1\over 2}\nabla_P. 
 \end{equation}
 Here, for the definition of $\Delta_P$ in  the  first  equation of \eqref{e:Laplacian} we have used the assumption that    
  $T\wedge g_M$  is
absolutely continuous with respect to $T\wedge g_P$.
We  will extend  the  definition of $\Delta_P$  to larger  spaces, suitable  for developing   $L^2$-techniques. To this end, 
consider the measure $\mu$ on $X$ defined by 
\begin{equation}\label{e:m} \mu:= T\wedge g_P\quad\text{on}\quad \Hyp(\Fc)\quad\text{and}\quad \mu:=0\quad\text{on}\quad \Par(\Fc)\cup E.
\end{equation}
Finally, we   assume  that the measure $\mu$ is  finite, or equivalently,  the Poincar\'e mass of $T$ is finite (see Definition \ref{D:Poincare_mass}). 
Consider  the  Hilbert space $L=L^2(T):=L^2(\mu).$ 
We also  introduce  the Hilbert space  $H=H^1(T)\subset L^2(T)$ 
associated with $T$   as the  completion of $\Dc(M)$ with  respect  
to the Dirichlet norm
$$\|u\|^2_{H^1(T)}:=\int |u|^2  T\wedge g_P +i\int \partial u
\wedge \dbar u\wedge T.$$
Since  $\Dc(M\setminus E)\subset  \Dc(M),$ we infer form  the definition of $H^1(T)$ and \eqref{e:m} and  \eqref{e:norm_H1} that if $T$ is directed by $\Fc$  then  $H^1(\mu)\subset H^1(T).$

Observe 
the operators $\nabla^\partial_P$, $\nabla^{\dbar}_P$ and $\nabla_P$ are defined on $H^1(T)$ with values in $L^1(T)$.

Define also for $u,v\in\Dc(M)$ (for simplicity, we only consider real-valued functions)
$$q(u,v):=-\int (\Delta_P u)vT\wedge g_P, \quad e(u,v):=q(u,v)+\int uv T\wedge g_P$$
and
$$\widetilde q(u,v):=-\int (\widetilde \Delta_P u)vT\wedge g_P, \quad 
\widetilde e(u,v):=\widetilde q(u,v)+\int uv T\wedge g_P.$$

We will define later the
domain of $\Delta_P$ and $\widetilde\Delta_P$ which allows us to extend
these identities to more general $u$ and $v$. 
The following lemma also holds for $u,v$ in 
the domain of $\Delta_P$ and $\widetilde\Delta_P$.

\begin{lemma} \label{lemma_e_tilde_c}
We have 
for $u,v\in\Dc(M),$
$$\widetilde q(u,v)=\Re\int  i\partial u\wedge \dbar v \wedge T 
\quad \mbox{and}\quad \int (\widetilde \Delta_P u)v T\wedge g_P = \int
u(\widetilde \Delta_P v) T\wedge g_P.$$
In particular, $\widetilde q(u,v)$ and $\widetilde e(u,v)$ are symmetric in $u,v$ and 
$$\int (\widetilde\Delta_P u) T\wedge g_P=
\int (\Delta_P u) T\wedge g_P=\int (\nabla_P u) T\wedge g_P 
=0 \quad \mbox{for}\quad u\in\Dc(M).$$
\end{lemma}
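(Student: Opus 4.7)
The plan is to mimic the proof of Lemma~\ref{lemma_e_tilde} but, since $T$ is no longer directed by $\Fc$ (so the local decomposition along plaques is unavailable), replace the leafwise Stokes argument by a global integration by parts on $M$ that exploits the relations $\partial T = \tau\wedge T$, $\dbar T = \overline\tau\wedge T$ and $\ddc T=0$.

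First I would compute, for $u,v\in\Dc(M)$, the form $d(v\,\partial u\wedge T)$. Because $v\,\partial u\wedge T$ has bidegree $(k,k-1)$, only its $(k,k)$-part contributes, and using $\dbar\partial u=-\ddbar u$ and $\dbar T=\overline\tau\wedge T$ one gets
\[
d(v\,\partial u\wedge T)=\dbar v\wedge\partial u\wedge T-v\,\ddbar u\wedge T-v\,\partial u\wedge\overline\tau\wedge T .
\]
Stokes' theorem then yields, after multiplication by $i$,
\[
-\!\int(\Delta_P u)v\,T\wedge g_P \;=\; I(u,v)+\int(\nabla^\partial_P u)\,v\,T\wedge g_P,\qquad I(u,v):=\int i\partial u\wedge\dbar v\wedge T . \tag{A}
\]
An entirely analogous computation starting from $d(v\,\dbar u\wedge T)$ (which is $\partial$ of itself modulo zero-bidegree pieces, using $\partial T=\tau\wedge T$) gives
\[
-\!\int(\Delta_P u)v\,T\wedge g_P \;=\; I(v,u)+\int(\nabla^{\dbar}_P u)\,v\,T\wedge g_P . \tag{B}
\]

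Next, averaging (A) and (B) and invoking the definition $\widetilde\Delta_P=\Delta_P+\tfrac12\nabla_P$, I obtain
\[
-\!\int(\widetilde\Delta_P u)v\,T\wedge g_P=\tfrac12\bigl(I(u,v)+I(v,u)\bigr).
\]
A direct conjugation argument using that $u,v,T$ are real shows $\overline{I(u,v)}=I(v,u)$, hence $\tfrac12(I(u,v)+I(v,u))=\Re I(u,v)$. This gives the first identity
$\widetilde q(u,v)=\Re\!\int i\partial u\wedge\dbar v\wedge T$. Since the right-hand side is manifestly symmetric in $u$ and $v$, so are $\widetilde q$ and $\widetilde e$, which is the second asserted identity. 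For the final vanishing statements I would test the formulas against a cutoff $v\in\Dc(M)$ equal to $1$ on a neighbourhood of $\supp u$; then $\dbar v$ vanishes on $\supp(\partial u)$, so $I(u,v)=0$ and $\int(\widetilde\Delta_P u)T\wedge g_P=0$. The identity $\int(\Delta_P u)T\wedge g_P=0$ follows from $\ddc T=0$ applied to the test function $u$, and the same integration-by-parts trick (using $d(u\,\dbar T)$ and $d(u\,\partial T)$ together with $\ddbar T=0$) gives $\int(\nabla_P u)T\wedge g_P=0$.

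The main technical point, and the only place where care is required, is the bookkeeping of bidegrees and signs in the two Stokes computations (A) and (B): one must verify that on the $k$-dimensional manifold $M$ the only surviving pieces of $d(v\,\partial u\wedge T)$ and $d(v\,\dbar u\wedge T)$ are the $(k,k)$-components, and that the $\partial T$, $\dbar T$ terms match the definitions of $\nabla^{\partial}_P$ and $\nabla^{\dbar}_P$. Once (A) and (B) are in hand, the symmetrization and the vanishing assertions are purely formal, and — exactly as in the proof of Lemma~\ref{lemma_e_tilde} — the identities extend by continuity to $u,v$ in the domain of $\Delta_P$ and $\widetilde\Delta_P$ that will be defined afterwards.
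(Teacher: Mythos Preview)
Your proposal is correct and follows essentially the same global integration-by-parts strategy as the paper, exploiting $\partial T=\tau\wedge T$, $\dbar T=\overline\tau\wedge T$ and $\ddc T=0$ in place of the plaque-by-plaque argument of Lemma~\ref{lemma_e_tilde}. The only organizational difference is that the paper takes the real part at the outset (writing $\tfrac12(\nabla_P u)\,vT\wedge g_P=\Re(iv\,\partial u\wedge\overline\tau\wedge T)$) and performs a single Stokes computation on $\partial u\wedge vT$, whereas you derive your identities (A) and (B) separately and average them; both routes land on $\widetilde q(u,v)=\Re\int i\partial u\wedge\dbar v\wedge T$ by the same mechanism.
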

\proof
%Note that $i\ddbar u$ is a real form.
Since $T$ is $\ddc$-closed, the integral of $i\ddbar( u^2)\wedge T$
vanishes. We deduce using Stoke's formula that
\begin{eqnarray*}
\widetilde q(u,v) & = & -\int (\Delta_P u+ {1\over 2}\nabla_P u)  v T\wedge g_P\\
& = &  -\int  i\ddbar u\wedge v T
-\Re\int i \partial u\wedge \overline \tau \wedge vT\\
& = &  -\Re\int  i\ddbar u\wedge v T
-\Re\int i \partial u\wedge \overline \tau \wedge vT\\
& = &  \Re\int i\partial u\wedge \big[\dbar (v T)-v\dbar T\big]=
\Re\int i\partial u\wedge \dbar v\wedge T.
\end{eqnarray*}
This gives the first identity in the lemma.

We also have since $T$ is $\ddc$-closed
$$\int(\nabla_P u) T\wedge g_P=2\Re\int i\partial u\wedge \dbar T=2\Re\int -iu \ddbar T=0.$$
The other assertions are obtained  as in   Lemma \ref{lemma_e_tilde}.
\endproof

\begin{definition}\label{D:Poincare-regular} \rm
Let $T$ be  a positive $\ddc$-closed  bidimension $(1,1)$-current on $M$  such that  there is  a $(1,0)$-form $\tau$  defined  almost everywhere  with respect  to $T\wedge g_M$
such that  $\partial T=\tau\wedge T.$  Then $T$ is called  {\it Poincar\'e-regular} if  there is a   constant $c>0$ such that 
  $i\tau\wedge \overline{\tau}\wedge T\leq c \cdot \, T\wedge g_P.$
  \end{definition}
 The following result  gives a typical example of Poincar\'e-regularity.

\begin{proposition}\label{P:g_P-regular}
Let $T$ be a  positive  $\ddc$-closed  current directed by $\Fc.$ 
Assume  that $T$  does not give mass to $\Par(\Fc).$ Then 

\begin{enumerate}
 \item  $T\wedge g_M$ is
absolutely continuous with respect to $\mu$ defined by \eqref{e:m}.
\item   $T$ is  Poincar\'e-regular. 
 \item Let $\Gc$ be  a  singular holomorphic  foliation on $M$ such that the restriction of $\Gc$ on $X\setminus E$  induces $\Fc.$ If
all points of $E$ are linearizable  singularities of $\Gc,$ then     the Poincar\'e mass of $T$   is  finite.  
\end{enumerate}
 
\end{proposition}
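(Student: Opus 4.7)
Part (1) is a direct consequence of the conformal relation between $g_M$ and $g_P$ along leaves. By \eqref{e:eta}, $g_M=\eta^2 g_P$ on $\Hyp(\Fc)$, where $\eta$ is measurable and finite there. Since $T$ gives no mass to $\Par(\Fc)\cup E$ (it is directed by $\Fc$ and by hypothesis does not charge the parabolic part), the measure $T\wedge g_M$ is concentrated on $\Hyp(\Fc)$, where it equals $\eta^{2}\cdot(T\wedge g_P)=\eta^{2}\mu$. Hence $T\wedge g_M\ll \mu$.

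For part (2), the strategy is to establish the pointwise bound $|\tau|_{g_P}\leq C_0$ along every leaf of $\Fc$, which immediately yields $i\tau\wedge\bar\tau\wedge T\leq C_0^{2}\, T\wedge g_P$ as currents. The key ingredient is a universal Harnack estimate: for any positive harmonic function $h$ on $(\D,g_P)$, one has $|d\log h|_{g_P}\leq C_0$ uniformly on $\D$. This follows from the classical bound $|\nabla h(0)|\leq C h(0)$ (via the mean-value property) combined with the automorphism invariance of both $g_P$ and the class of positive harmonic functions. To globalize, I would use Proposition \ref{P:decomposition} to write $T=\int h_a[\B\times\{a\}]\,d\nu(a)$ in flow boxes, so that $\partial T=\tau\wedge T$ is realized (modulo terms killed by wedging with $T$) by $\tau|_{\text{plaque}_a}=\partial\log h_a$. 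One then analytically continues each positive harmonic $h_a$ along its leaf $L_a$ to a single-valued positive harmonic function $\widetilde h_a$ on the universal cover $\phi_a:\D\to L_a$; the universal Harnack bound gives $|d\log \widetilde h_a|_{g_P^{\D}}\leq C_0$ on $\D$, and since $\phi_a$ is a local isometry between the Poincar\'e metrics on $\D$ and on $L_a$, we obtain $|\partial\log h_a|_{g_P^{L_a}}\leq C_0$ pointwise on the plaque. Integrating against $d\nu$ converts this into the required current inequality.

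For part (3), I would combine Proposition \ref{P:Poincare_mass} with local lower bounds on $\eta$. Since $\Gc$ extends $\Fc$ on $X\setminus E$ and every $a\in E$ is a linearizable singularity of $\Gc$, applying Proposition \ref{P:Poincare_mass} to $\Gc$ on $M$ with the current $T$ gives that the Poincar\'e mass of $T$ is locally finite near each point of $E$. Away from $E$, the function $\eta$ is locally bounded below by a strictly positive constant on $X\setminus E$, so $g_P\lesssim g_M$ on every relatively compact subset of $X\setminus E$, and the local finiteness of $T\wedge g_M$ transfers to $T\wedge g_P$ there. Under the standing compactness assumption (either $X$ compact or $T$ compactly supported, which is the tacit framework of Section \ref{S:Ergodic_theorems}), a finite-cover argument turns these local finiteness statements into the desired global finiteness of the Poincar\'e mass.

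The main obstacle is the analytic-continuation step in Part (2): for a non-extremal current $T$, the local harmonic function $h_a$ is only defined on a plaque and its extension to the universal cover of the leaf requires care. This can be handled either by first decomposing $T$ into extremal directed positive harmonic currents (for which $h_a$ is globally defined on a single leaf) and then using the \emph{universality} of the Harnack constant $C_0$ to sum/integrate without losing the bound, or by a direct monodromy-free continuation argument along the leaf exploiting positivity. The rest of the proof is essentially bookkeeping once this estimate is in hand.
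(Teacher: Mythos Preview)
Your proposal is correct and follows essentially the same strategy as the paper. The core idea for part (2) --- the Harnack-type bound $|\partial\log h|_{g_P}\leq 1$ for positive harmonic functions on $\D$, combined with the identification $\tau=\partial\log h_a$ plaquewise --- is exactly what the paper uses (it proves the bound via the Poisson integral representation and obtains the sharp constant $c=1$, i.e.\ \eqref{e:Poincare-reg}).

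There is one small but instructive difference in how your ``main obstacle'' is handled. Rather than continuing $h_a$ to the full universal cover $\D$, the paper extends $h_a$ only to the finite Poincar\'e disc $\D_R$ by covering $\phi_x(\D_R)$ with finitely many flow boxes and gluing the local decompositions; applying the Harnack lemma on $r\D=\D_R$ then gives $|\partial\log h_a|^2_{g_P}(x)\leq r^{-2}$, and letting $R\to\infty$ (so $r\to 1$) yields the sharp inequality. This sidesteps the global-continuation issue entirely. Your worry about non-extremal currents is also unnecessary: since $T$ is a single global directed current, its local decompositions in adjacent flow boxes are automatically compatible up to a multiplicative constant on each plaque, so the leafwise positive harmonic function is well-defined along the whole leaf without any appeal to extremality or Choquet theory. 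For parts (1) and (3) your arguments match the paper's; your remark that (3) implicitly needs compactness of $X$ to pass from local to global finiteness of the Poincar\'e mass is correct --- the paper simply cites Proposition~\ref{P:Poincare_mass} without further comment.
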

 \proof
In  the  decomposition  \eqref{e:local_presentation} of $T$ in a flow box
$\U\simeq \B\times\T$, we can restrict $\nu$
in order to assume that $h_a\not=0$ for $\nu$-almost every $a$.  
Assertion (1) follows easily from the this decomposition.

We turn to assertion (2). Using again  the  decomposition  \eqref{e:local_presentation} of $T$ in a flow box
$\U\simeq \B\times\T$,  we  see that  $\tau= h_a^{-1}\partial h_a$ on the  plaque passing  through $a\in\T$
for $\nu$-almost every $a$. Then by  the proof of  \cite[Proposition 3]{FornaessSibony10}, we get
\begin{equation}\label{e:Poincare-reg} i\tau\wedge \overline{\tau}\wedge T\leq   T\wedge  g_P.
\end{equation}
Hence,  assertion  (2) follows  (with $c=1$ in Definition \ref{D:Poincare-regular}).

 For  the reader's convenience, we give here a  direct  proof of \eqref{e:Poincare-reg}.
 The  following elementary  result is needed.
 \begin{lemma}\label{L:curvature}
  Let $h$ be  a positive harmonic  function on $\D.$
  Then $$i\partial h(0)\wedge \overline{\partial h(0)}\leq h(0)^2 \cdot\, id\zeta\wedge d\bar\zeta.$$
 \end{lemma}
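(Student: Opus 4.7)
The plan is to reduce the inequality to a classical estimate on the logarithmic derivative of a positive harmonic function via the Poisson representation. Writing $\partial h(0) = h'_\zeta(0)\, d\zeta$, the claim amounts to the scalar bound
\[
\left|\frac{\partial h}{\partial \zeta}(0)\right| \leq h(0),
\]
after which one immediately gets
\[
i\partial h(0)\wedge \overline{\partial h(0)} = \left|\frac{\partial h}{\partial \zeta}(0)\right|^2 i\,d\zeta\wedge d\bar\zeta \leq h(0)^2\, i\,d\zeta\wedge d\bar\zeta.
\]

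First I would pass from $\D$ to $\overline{r\D}$ for $r<1$, which is harmless because it suffices to prove the estimate $|h'_\zeta(0)| \leq r^{-1}h(0)$ on each such $r\D$ and let $r\to 1$. On the closed disc $\overline{\D}$ (say, after rescaling), the Poisson integral formula gives
\[
h(\zeta) = \frac{1}{2\pi}\int_0^{2\pi} \frac{1-|\zeta|^2}{|e^{i\theta}-\zeta|^2}\, h(e^{i\theta})\, d\theta.
\]
A direct differentiation of the Poisson kernel $P(\zeta,\theta) = (1-\zeta\bar\zeta)(e^{i\theta}-\zeta)^{-1}(e^{-i\theta}-\bar\zeta)^{-1}$ in the holomorphic variable $\zeta$, evaluated at $\zeta=0$, yields $\partial_\zeta P(0,\theta) = e^{-i\theta}$, so
\[
\frac{\partial h}{\partial \zeta}(0) = \frac{1}{2\pi}\int_0^{2\pi} e^{-i\theta}\, h(e^{i\theta})\, d\theta.
\]

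Since $h\geq 0$, the triangle inequality gives
\[
\left|\frac{\partial h}{\partial \zeta}(0)\right| \leq \frac{1}{2\pi}\int_0^{2\pi} h(e^{i\theta})\, d\theta = h(0),
\]
the last equality being the mean value property. This establishes the desired scalar bound and hence the lemma. There is no real obstacle here; the only point to be careful about is the computation of $\partial_\zeta P(0,\theta)$ and the preliminary reduction to the boundary-regular case, both of which are routine.
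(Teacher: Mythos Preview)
Your proof is correct and follows essentially the same approach as the paper: both reduce to the scalar bound $|\partial_\zeta h(0)| \le h(0)$ via the Poisson representation and differentiation at the origin. The only cosmetic difference is that the paper works directly with the boundary measure $\lambda$ guaranteed by $h\in H^1(\D)$, whereas you first restrict to $r\D$ and let $r\to 1$; the content is the same.
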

\proof[Proof of Lemma \ref{L:curvature}]
We may assume  without loss of generality that $h(0)=1.$
Since $h$ is  positive harmonic, we see easily that $h$ belongs to the Hardy space $ H^1(\D).$
So there is a finite positive Borel measure $\lambda$ on $\partial \D$ such that $u$ is  the  Poisson integral of $\lambda,$ that is, 
$$
h(z)=\int_{\partial \D} {1-|z|^2\over  |z-\zeta|^2}  d\lambda(\zeta)\quad \text{for}\quad z\in\D.
$$
Acting the derivative $\partial$ and then  evaluating both sides at $z=0,$   we  see easily that
$$
|\partial h(0)|\leq  \int_{\partial \D}  d\lambda(\zeta)=h(0).
$$
This  implies the desired estimate.
\endproof

Now we come back the proof of  \eqref{e:Poincare-reg}. We may assume  without loss of generality that $a\in\Hyp(\Fc).$ Fix  a  point $x$  in the plaque $V_a:=\B\times\{a\},$  we  only need  to show  that 
 $$\big({i\partial h_a\wedge \overline{\partial h_a}\over h_a^2}\big)(x)\leq  g_P(x).$$
For  every $R>0,$   covering  $\phi_x(\D_R)$  by a finite  number of  flow boxes, we can show that there is  a positive harmonic  function $h$ on $\D_R$
such that $h_a\circ \phi_x=h$ on $\D_R\cap \phi_x^{-1} (V_a)  ,$ where $\phi_x$ is  given in \eqref{e:covering_map}.
  Let $0<r<1$ be such that $r\D=\D_R$ by \eqref{e:radii_conversion}.
 Applying Lemma \ref{L:curvature} to  $h$ defined on $\D_R$ and pushing forward, via $\phi_x,$  the  result on $\phi_x(\D_R),$ we get  that
 $$
 {i\partial h_a(x)\wedge \overline{\partial h_a(x)}\over h_a^2(x)}\leq  {1\over r^2} g_P(x).
 $$
 Letting $R$ tend to infinity, we get $r\to 1,$ and hence   the last inequality  implies  the  desired  estimate.
 
 Assertion (3)  is a consequence of Proposition \ref{P:Poincare_mass}.  
  \endproof

We have the following lemma.

\begin{lemma} \label{lemma_e_e_tilde_c}
Assume that $T$ is  of finite Poincar\'e mass.
Then the  bilinear forms $\widetilde q$ and $\widetilde e$ extend
  continuously to $H^1(T)\times H^1(T)$. 
If moreover $T$ is  Poincar\'e-regular, then the same property holds for $q$ and $e$.
Moreover, we have
$q(u,u)=\widetilde q(u,u)$ and $e(u,u)=\widetilde e (u,u)$ for $u\in H^1(T)$. 
\end{lemma}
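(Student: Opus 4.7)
The plan is to verify the continuity of $\widetilde q$ directly from the first identity of Lemma~\ref{lemma_e_tilde_c}, then to control the discrepancy $q-\widetilde q$ using the Poincar\'e-regularity hypothesis, and finally to deduce the diagonal identities $q(u,u)=\widetilde q(u,u)$ and $e(u,u)=\widetilde e(u,u)$ by an integration-by-parts argument applied to $u^2$.

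\textbf{Step 1.} For $u,v\in\Dc(M)$, the first identity of Lemma~\ref{lemma_e_tilde_c} reads $\widetilde q(u,v)=\Re\int i\partial u\wedge\bar\partial v\wedge T$. Since $T$ is a positive current of bidimension $(1,1)$, the map $(\alpha,\beta)\mapsto\int i\alpha\wedge\bar\beta\wedge T$ is a non-negative Hermitian form on $(1,0)$-forms, so Cauchy--Schwarz yields
$$|\widetilde q(u,v)|\leq\Big(\int i\partial u\wedge\bar\partial u\wedge T\Big)^{1/2}\Big(\int i\partial v\wedge\bar\partial v\wedge T\Big)^{1/2}\leq\|u\|_{H^1(T)}\|v\|_{H^1(T)}.$$
Combined with the trivial bound $|\int uv\,T\wedge g_P|\leq\|u\|_{H^1(T)}\|v\|_{H^1(T)}$, this shows that both $\widetilde q$ and $\widetilde e$ are continuous bilinear forms on $\Dc(M)$ for the $H^1(T)$-norm, so they extend uniquely by density to continuous bilinear forms on $H^1(T)\times H^1(T)$.

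\textbf{Step 2.} Now assume $T$ is Poincar\'e-regular, so that $i\tau\wedge\bar\tau\wedge T\leq c\,T\wedge g_P$ for some $c>0$. By the definitions of $\widetilde\Delta_P$, $\nabla^\partial_P$ and $\nabla^{\bar\partial}_P$, for $u,v\in\Dc(M)$ one has
$$q(u,v)-\widetilde q(u,v)=\tfrac12\int(\nabla_P u)\,v\,T\wedge g_P=\tfrac12\int iv\,\partial u\wedge\bar\tau\wedge T-\tfrac12\int iv\,\bar\partial u\wedge\tau\wedge T.$$
Cauchy--Schwarz applied to each term gives
$$\Big|\int iv\,\partial u\wedge\bar\tau\wedge T\Big|^{2}\leq\Big(\int i\partial u\wedge\bar\partial u\wedge T\Big)\Big(\int|v|^{2}\,i\tau\wedge\bar\tau\wedge T\Big),$$
and the Poincar\'e-regularity together with $\int|v|^{2}T\wedge g_P\leq\|v\|_{H^1(T)}^{2}$ bound the second factor by $c\|v\|_{H^1(T)}^{2}$. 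The symmetric term is handled identically, so $|q(u,v)-\widetilde q(u,v)|\leq\sqrt c\,\|u\|_{H^1(T)}\|v\|_{H^1(T)}$ on $\Dc(M)$. Combined with Step~1, this shows that $q$ and $e$ are continuous in the $H^1(T)$-norm and extend continuously to $H^1(T)\times H^1(T)$.

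\textbf{Step 3.} It remains to prove $q(u,u)=\widetilde q(u,u)$, equivalently $\int u(\nabla_P u)\,T\wedge g_P=0$. For $u\in\Dc(M)$, the Leibniz rules $\partial(u^{2})=2u\,\partial u$ and $\bar\partial(u^{2})=2u\,\bar\partial u$ applied to the definitions give $\nabla_P(u^{2})=2u\,\nabla_P u$; substituting this into the last identity of Lemma~\ref{lemma_e_tilde_c} applied to $u^{2}\in\Dc(M)$ yields $\int u(\nabla_P u)\,T\wedge g_P=0$, hence $q(u,u)=\widetilde q(u,u)$ on $\Dc(M)$. By the continuity established above, this identity persists on $H^1(T)$; adding the common term $\int u^{2}\,T\wedge g_P$ gives $e(u,u)=\widetilde e(u,u)$.

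The main subtle point I anticipate is the use of Cauchy--Schwarz with the extra factor $v$ in Step~2: it is crucial that Poincar\'e-regularity lets one absorb $i\tau\wedge\bar\tau\wedge T$ into $T\wedge g_P$ so as to control the second Cauchy--Schwarz factor by the $L^{2}(T)$-part (and \emph{not} the gradient part) of $\|v\|_{H^1(T)}$, which is what makes the bound symmetric in $u$ and $v$.
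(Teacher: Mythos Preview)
Your proof is correct and follows essentially the same route as the paper's. Step~1 uses Lemma~\ref{lemma_e_tilde_c} exactly as the paper does; in Step~2 the paper observes that for real $u,v$ the two terms in your splitting of $\tfrac12\int(\nabla_P u)v\,T\wedge g_P$ are complex conjugates, so it bounds only $\big|\int v\,\partial u\wedge\bar\tau\wedge T\big|$ --- but this is the same Cauchy--Schwarz plus Poincar\'e-regularity estimate you give; in Step~3 the paper rewrites $q(u,u)-\widetilde q(u,u)=\tfrac12\Re\int i\partial(u^2)\wedge\dbar T$ and invokes $\ddbar T=0$ directly via Stokes, which is exactly how the identity $\int(\nabla_P w)\,T\wedge g_P=0$ of Lemma~\ref{lemma_e_tilde_c} (applied by you to $w=u^2$) was proved.
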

\proof
The first assertion is deduced from Lemma
\ref{lemma_e_tilde_c}. Assume that   $T$ is Poincar\'e-regular.
Using Cauchy-Schwarz's inequality, we have for $u,v\in\Dc(M)$
\begin{eqnarray*}
|q(u,v)-\widetilde q(u,v)|^2 & \leq & \Big|\int \partial u \wedge
v\overline\tau \wedge T \Big|^2 \\
& \leq &  \Big(\int i\partial u\wedge \dbar u
\wedge T \Big) \Big(\int iv^2\tau\wedge\overline\tau \wedge T\Big)\\  
& \leq & c \Big(\int i\partial u\wedge \dbar u
\wedge T \Big) \Big(\int v^2 g_P \wedge T\Big)\\
&\leq  & c\|u\|^2_{H^1(T)}\|v\|^2_{L^2(T)},
\end{eqnarray*}
where, in the third line we use the Poincar\'e regularity of $T,$ and in the last line we use the finiteness of the Poincar\'e mass  $\int g_P\wedge T<\infty.$ 
This implies the second assertion. We also have for $u\in\Dc(M)$
\begin{eqnarray*}
q(u,u)-\widetilde q(u,u) & = & \Re\int(\nabla_P^\partial
u)uT\wedge g_P=\Re\int i\partial u\wedge u \overline\tau\wedge T \\
& = & {1\over 2}\Re\int i\partial u^2 \wedge \dbar T=-{1\over 2}\Re\int
iu^2 \ddbar T= 0. 
\end{eqnarray*}
The  identity  $e(u,u)=\widetilde e (u,u)$ follows readily from the last  equality.
\endproof
\begin{remark}\label{R:Dom}\rm
Define the domain 
$\Dom(\pm\widetilde\Delta_P)$ of $\pm\widetilde\Delta_P$
(resp. $\Dom(\pm\Delta_P)$ of $\pm\Delta_P$ when $T$ is Poincar\'e-regular) as the space of $u\in
H^1(T)$ such that $\widetilde q(u,\cdot)$ (resp. $q(u,\cdot)$) extends to a linear
continuous form on $L^2(T)$. When $T$ is Poincar\'e-regular, we
have seen in the proof of Lemma \ref{lemma_e_e_tilde_c} that
$q(u,v)-\widetilde q(u,v)$ is continuous on $H^1(T)\times
L^2(T)$. Therefore, we deduce  from  this discussion  and  Definition \ref{D:Dom-Delta}
and Remark  \ref{R:Dom-Delta} that $$\Dom(\pm\Delta_P)=\Dom(\pm\widetilde\Delta_P)=H_P(T),$$
where  where  $H_P(T)$ is the completion of $\Dc(M)$ for the norm 
\begin{equation*}%\label{e:norm_P-bis}
\|u\|_{H_P(T)}:=\sqrt{\|u\|^2_{L^2(T)}+\|\Delta_P u\|_{L^2(T)}^2}. 
\end{equation*}
If moreover $T$ is directed by $\Fc,$  then we deduce from  the inclusion   $\Dc(M\setminus E)\subset  \Dc(M)$  and \eqref{e:norm_P} that  $H_P(\mu)\subset H_P(T).$
\end{remark}
The following result can be proved  in the same way as    Proposition   \ref{prop_delta_max}.

\begin{proposition} \label{prop_delta_max_c}
Let $T$ be a positive $\ddc$-closed current of bidimension 
$(1,1)$   in  a complex manifold $M$ such that   $T\wedge g_M$ is
absolutely continuous with respect to $T\wedge g_P$  and  the Poincar\'e mass of $T$ is  finite. Then the associated operator
$-\widetilde\Delta_P$ (resp. $-\Delta_P$ when $T$ is
Poincar\'e-regular) is maximal monotone on $L^2(T).$ 
In particular,
it is the infinitesimal generator of semi-groups of contractions on $L^2(T)$ and its graph is closed.
\end{proposition}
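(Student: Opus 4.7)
The plan is to follow the blueprint of the proof of Proposition \ref{prop_delta_max} verbatim, replacing the quasi-harmonic measure $\mu$ by the positive measure $T\wedge g_P$ and using the current-version lemmas \ref{lemma_e_tilde_c} and \ref{lemma_e_e_tilde_c}. As in the earlier proposition, the last assertion (generator of a semi-group of contractions, closed graph) is immediate from the Hille--Yosida theorem \ref{th_hille_yosida} once we have established maximal monotonicity, so the whole task reduces to verifying the two conditions: (a) $\langle Au, u\rangle_{L^2(T)} \geq 0$ on $\Dom(A)$, and (b) for every $f\in L^2(T)$ the equation $u + Au = f$ admits a solution $u\in\Dom(A)$, where $A = -\widetilde\Delta_P$ (resp.\ $A=-\Delta_P$ when $T$ is Poincar\'e-regular).

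For monotonicity, Lemma \ref{lemma_e_tilde_c} gives for $u\in\Dc(M)$
\[
\langle -\widetilde\Delta_P u, u\rangle_{L^2(T)} \;=\; \widetilde q(u,u)\;=\;\Re\int i\partial u\wedge \dbar u\wedge T \;\geq\;0,
\]
and Lemma \ref{lemma_e_e_tilde_c} shows this form extends continuously to $H^1(T)\times H^1(T)$ so the inequality persists on $\Dom(-\widetilde\Delta_P)\subset H^1(T)$. When $T$ is Poincar\'e-regular, Lemma \ref{lemma_e_e_tilde_c} also gives $q(u,u)=\widetilde q(u,u)\geq 0$ on $H^1(T)$, which yields monotonicity of $-\Delta_P$. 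Note that the finiteness of the Poincar\'e mass $\int g_P\wedge T<\infty$ enters here through the inequality $|q(u,v)-\widetilde q(u,v)|\leq c\,\|u\|_{H^1(T)}\|v\|_{L^2(T)}$ established in the proof of Lemma \ref{lemma_e_e_tilde_c}.

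For maximality, the key observation is that the Dirichlet-type norm is exactly reproduced by $\widetilde e$: by Lemma \ref{lemma_e_tilde_c},
\[
\widetilde e(u,u) \;=\; \Re\int i\partial u \wedge \dbar u\wedge T + \int |u|^2\,T\wedge g_P \;=\; \|u\|_{H^1(T)}^2 \qquad\text{for } u\in H^1(T).
\]
Hence $\widetilde e$ is continuous (again by Lemma \ref{lemma_e_e_tilde_c}) and coercive on the Hilbert space $H^1(T)$. The Lax--Milgram theorem \ref{th_lax_milgram} provides, for each $f\in L^2(T)\hookrightarrow H^1(T)^*$, a unique $u\in H^1(T)$ with $\widetilde e(u,v)=\langle f,v\rangle_{L^2(T)}$ for all $v\in H^1(T)$. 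By Definition of $\Dom(-\widetilde\Delta_P)$ (cf.\ Remark \ref{R:Dom}), this $u$ lies in $\Dom(-\widetilde\Delta_P)$ and satisfies $u-\widetilde\Delta_P u = f$, proving maximality. When $T$ is Poincar\'e-regular we apply the same argument to $e$ instead of $\widetilde e$: by Lemma \ref{lemma_e_e_tilde_c}, $e(u,u)=\widetilde e(u,u)\geq \|u\|_{H^1(T)}^2$, and although $e$ is not symmetric, Lax--Milgram only requires continuity and coercivity, both of which hold. This produces $u\in\Dom(-\Delta_P)$ solving $u-\Delta_P u=f$.

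The main technical point, and thus the only potential obstacle, is making sure that all the formal integration-by-parts manipulations used to derive Lemma \ref{lemma_e_tilde_c} and the continuity statement in Lemma \ref{lemma_e_e_tilde_c} are available here; this requires precisely the three standing hypotheses on $T$: the existence of the $(1,0)$-form $\tau$ with $\partial T = \tau\wedge T$, the absolute continuity of $T\wedge g_M$ with respect to $T\wedge g_P$ (which makes the operators $\Delta_P,\widetilde\Delta_P,\nabla_P$ densely defined on $\Dc(M)$ with values in $L^1(T)$), and the finiteness of the Poincar\'e mass (which guarantees $L^2(T)\hookrightarrow L^1(T)$ so the Cauchy--Schwarz estimate controlling $q-\widetilde q$ closes). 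With these tools in hand, no argument in the proof of Proposition \ref{prop_delta_max} is specific to a harmonic measure: the structure is identical, and the proof transfers without modification.
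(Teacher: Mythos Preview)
Your proposal is correct and follows exactly the approach the paper indicates: the paper simply states that Proposition \ref{prop_delta_max_c} ``can be proved in the same way as Proposition \ref{prop_delta_max},'' and you have carried out precisely that transfer, invoking Lemmas \ref{lemma_e_tilde_c} and \ref{lemma_e_e_tilde_c} in place of Lemmas \ref{lemma_e_tilde} and \ref{lemma_e_e_tilde}, then applying Lax--Milgram (Theorem \ref{th_lax_milgram}) and Hille--Yosida (Theorem \ref{th_hille_yosida}) identically.
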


The  following result is   an ergodic theorem associated to the abstract heat
diffusions.
\begin{theorem}\label{T:diffusions} {\rm (Dinh-Nguyen-Sibony \cite{DinhNguyenSibony12}). }
 We  keep the hypothesis of Proposition \ref{prop_delta_max_c}. %assume in addition that the measure $\mu$ is finite.
 Let $S(t)$,
$t\in\R^+$, denote  the semi-group of contractions associated with the operator
  $-\widetilde\Delta_P$ (or $-\Delta_P$  if $T$ is Poincar\'e-regular) which is given by Theorem \ref{th_hille_yosida}.
  Then 
\begin{enumerate}
 \item 
the  
measure $\mu$ is  $S(t)$-invariant (that is,  $\langle S(t)u,\mu\rangle=\langle u,\mu\rangle $ for every $u\in L^p(T)$), and 
$S(t)$ is a positive contraction in $L^p(T)$ for all $1\leq p\leq\infty$ (that is, $\|S(t)u\|_{L^p(T)}\leq  \|u\|_{L^p(T)}$ for every $u\in L^p(T)$); 
\item
for all $u_0\in  L^p(T),$  $1\leq p<\infty$, the average 
$$\frac{1}{R}\int_0^R S(t)u_0 dt$$ 
converges pointwise  $\mu$-almost everywhere
 and also in $L^p(T)$ to 
an $S(t)$-invariant function  $u_0^*$ when $R$ goes to infinity.
Moreover, $u_0^*$ is constant on the leaf $L_a$ for $\mu$-almost every $a$.
If $T$ is an extremal directed current, then $u$ is constant $\mu$-almost everywhere.
\end{enumerate}
\end{theorem}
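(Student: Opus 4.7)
The plan is to adapt the arguments of Theorems \ref{th_heat_real} and \ref{T:heat-real-ergodic} (which handle the harmonic measure setting) to the present current-theoretic setting, using Lemma \ref{lemma_e_tilde_c} and Lemma \ref{lemma_e_e_tilde_c} in place of Lemma \ref{lemma_e_tilde} and Lemma \ref{lemma_e_e_tilde}.

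\textbf{Step 1 (invariance of $\mu$).} Fix $u_0\in\Dc(M)$ and let $u(t,\cdot):=S(t)u_0$. Set $\eta(t):=\langle\mu,u(t,\cdot)\rangle$. By Theorem \ref{th_hille_yosida} $\eta$ is $\Cc^1$ with $\eta'(t)=-\langle\mu,Au(t,\cdot)\rangle$, where $A\in\{-\widetilde\Delta_P,-\Delta_P\}$. Since $u(t,\cdot)\in\Dom(A)$ and $\Dc(M)$ is dense there in graph norm, the identity $\int Au\,d\mu=0$ of Lemma \ref{lemma_e_tilde_c} extends by continuity (using finiteness of the Poincar\'e mass of $T$). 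Hence $\eta'\equiv 0$, giving $\langle\mu,S(t)u_0\rangle=\langle\mu,u_0\rangle$. We will extend this to $u_0\in L^1(T)$ at the end by density once the $L^1$-contraction property is proved.

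\textbf{Step 2 (maximum principle and positivity).} Following Stampacchia's trick as in Theorem \ref{th_heat_real}, fix a smooth bounded $G:\R\to\R^+$ with $G(s)=0$ for $s\le 0$, $G'(s)>0$ for $s>0$, and put $H(s):=\int_0^s G$. For $u_0\in\Dc(M)$ with $u_0\le K$, define $\xi(t):=\int H(u(t,\cdot)-K)\,d\mu$; write $v:=u(t,\cdot)-K$. Then
\[
\xi'(t)=-\int G(v)\,Av\,d\mu.
\]
When $A=-\widetilde\Delta_P$, Lemma \ref{lemma_e_tilde_c} (first identity) gives $\xi'(t)=-\widetilde q(G(v),v)=-\int G'(v)\,i\partial v\wedge\dbar v\wedge T\le 0$. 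When $A=-\Delta_P$, write $\Delta_P=\widetilde\Delta_P-\tfrac12\nabla_P$; the $\widetilde\Delta_P$ term contributes as above, while
\[
\int G(v)\nabla_P v\,T\wedge g_P = \int\nabla_P H(v)\,T\wedge g_P = 0
\]
by the chain rule and the last assertion of Lemma \ref{lemma_e_tilde_c}. In both cases $\xi'\le 0$, $\xi(0)=0$, $\xi\ge 0$, hence $\xi\equiv 0$ and $u(t,\cdot)\le K$. Applying this to $-u_0$ yields $\|S(t)u_0\|_\infty\le\|u_0\|_\infty$; the maximum principle applied to $K=0$ with $-u_0\le 0$ yields $S(t)u_0\ge 0$ whenever $u_0\ge 0$. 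Combining positivity with invariance of $\mu$ gives the $L^1$-contraction; positivity together with $S(t)\mathbf{1}=\mathbf{1}$ (uniqueness in Theorem \ref{th_hille_yosida}) gives the $L^\infty$-contraction; Riesz--Thorin interpolation then yields the $L^p$-contraction for all $1\le p\le\infty$. At this stage the invariance of $\mu$ extends to $L^1(T)$ by density, concluding assertion (1).

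\textbf{Step 3 (ergodic averages).} With $S(t)$ a positive contraction on both $L^1(T)$ and $L^\infty(T)$ fixing constants, the Dunford--Schwartz ergodic theorem \cite[Th.~VIII.7.5]{DunfordSchwartz} yields pointwise $\mu$-a.e.\ convergence of $\frac1R\int_0^R S(t)u_0\,dt$ to an $S(t)$-invariant function $u_0^*$ for $u_0\in L^p(T)$, $1\le p<\infty$; the $L^p$-convergence follows from the von Neumann ergodic theorem. By the closed-graph property of $A$ in $L^2(T)$ and the computation $Au_R=\tfrac1R(u_0-u(R,\cdot))\to 0$ in $L^2(T)$, the limit $u_0^*$ lies in $\Dom(A)$ with $Au_0^*=0$. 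Applying the first identity of Lemma \ref{lemma_e_tilde_c} to $u_0^*\in\Dom(A)\subset H^1(T)$ (Remark \ref{R:Dom}) gives
\[
\int i\partial u_0^*\wedge\dbar u_0^*\wedge T \;=\; \widetilde q(u_0^*,u_0^*) \;=\; 0,
\]
which forces $\partial u_0^*$ to vanish on the leaves for $\mu$-a.e.\ point, i.e.\ $u_0^*$ is leafwise constant $\mu$-almost everywhere.

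\textbf{Step 4 (extremality).} When $T$ is extremal in the cone of directed positive harmonic currents, the decomposition of $T$ on flow boxes (Proposition \ref{P:decomposition}) gives transverse measures $\nu$ which cannot be nontrivially split; together with Theorem \ref{thm_harmonic_currents_vs_measures}, the image measure $\mu$ is ergodic, so every leafwise-measurable function is constant $\mu$-a.e., completing the proof.

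\textbf{Main obstacle.} The delicate point is Step 2 in the non-self-adjoint case $A=-\Delta_P$: the extra first-order term $\tfrac12\nabla_P$ has to be absorbed via the integration-by-parts identity $\int\nabla_P H(v)\,T\wedge g_P=0$, which relies crucially on the Poincar\'e-regularity of $T$ (to make $\nabla_P H(v)$ integrable against $T\wedge g_P$) and on the chain rule for $\nabla_P$ applied to $H\circ v$ with $v\in H^1(T)$. Justifying this chain-rule identity rigorously, and justifying that the extension of Lemma \ref{lemma_e_tilde_c} to the closure $\Dom(A)$ is valid, is the main technical hurdle.
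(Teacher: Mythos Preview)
Your proposal is correct and follows essentially the same approach as the paper. The paper does not give an explicit proof of Theorem \ref{T:diffusions}, but it provides full proofs of the parallel Theorems \ref{th_heat_real} and \ref{T:heat-real-ergodic} in the harmonic-measure setting together with the current-theoretic Lemmas \ref{lemma_e_tilde_c} and \ref{lemma_e_e_tilde_c}; your adaptation of those proofs via these lemmas (including the Stampacchia trick with the chain rule $\nabla_P H(v)=G(v)\nabla_P v$ and the vanishing $\int \nabla_P u\, T\wedge g_P=0$, and the Dunford--Schwartz argument for the ergodic averages) is exactly the intended transfer.
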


The following result gives us the mixing for the operator $-\widetilde\Delta_P$. 
%We can also obtain a pointwise convergence using E.-M. Stein's theorem.

\begin{theorem} 
Under the hypothesis of Theorem  \ref{T:diffusions}, if $S(t)$ is associated 
with $-\widetilde\Delta_P$ and if $T$ is extremal,  
$S(t)u_0\to \langle \mu,u_0\rangle$ in $L^p(T)$ when $t\to \infty$
for every $u_0\in L^p(T)$ with $1\leq p<\infty$.
\end{theorem}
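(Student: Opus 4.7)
The plan is to reduce the mixing statement to a spectral fact for the self-adjoint semigroup $S(t)$ on $L^2(T)$, and then propagate the convergence to $L^p(T)$ using the $L^p$-contraction of $S(t)$ and density of bounded functions (recall that $\mu$ has finite mass). Throughout, for notational convenience I would normalize $\mu(X)=1$; the general case differs only by an explicit constant.

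First, $-\widetilde\Delta_P$ is self-adjoint and non-negative on $L^2(T)$: the form $\widetilde q$ is symmetric by Lemma \ref{lemma_e_tilde_c}, and by Proposition \ref{prop_delta_max_c} the operator $-\widetilde\Delta_P$ is maximal monotone. The spectral theorem thus yields $S(t)=\int_0^\infty e^{-t\lambda}\,dE_\lambda$, and as $t\to\infty$ this converges strongly on $L^2(T)$ to the orthogonal projection $E(\{0\})$ onto $\ker(\widetilde\Delta_P)$. Next I would identify $\ker(\widetilde\Delta_P)$ with the constants: any $u\in\ker(\widetilde\Delta_P)\cap L^2(T)$ satisfies $S(t)u=u$ for all $t\geq 0$, so its Ces\`aro average $\frac{1}{R}\int_0^R S(t)u\,dt$ equals $u$. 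By Theorem \ref{T:diffusions} (2) this average converges $\mu$-almost everywhere to an $S(t)$-invariant function which is leafwise constant, and extremality of $T$ forces it to be constant $\mu$-a.e. Conversely constants are in the kernel, so $E(\{0\})u=\langle\mu,u\rangle\cdot\mathbf{1}$. Consequently $S(t)u\to\langle\mu,u\rangle$ in $L^2(T)$ for every $u\in L^2(T)$.

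Finally, I would extend to $L^p(T)$ with $1\leq p<\infty$. Given $u_0\in L^p(T)$ and $\epsilon>0$, density of $L^\infty(T)$ in $L^p(T)$ (legitimate since $\mu$ is finite) furnishes $w_0\in L^\infty(T)$ with $\|u_0-w_0\|_{L^p(T)}<\epsilon$; then automatically $w_0\in L^2(T)$. By the previous step $S(t)w_0\to\langle\mu,w_0\rangle$ in $L^2(T)$, while Theorem \ref{T:diffusions} (1) gives $\|S(t)w_0\|_{L^\infty(T)}\leq\|w_0\|_{L^\infty(T)}$. For $p\geq 2$, the elementary bound
\[
\|f\|_{L^p(T)}^{p}\leq (2\|w_0\|_{L^\infty(T)})^{p-2}\,\|f\|_{L^2(T)}^{2}
\]
applied to $f=S(t)w_0-\langle\mu,w_0\rangle\mathbf{1}$ converts $L^2$-convergence into $L^p$-convergence; for $1\leq p<2$ H\"older's inequality $\|f\|_{L^p(T)}\leq \mu(X)^{1/p-1/2}\|f\|_{L^2(T)}$ does the same. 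Combining with the $L^p$-contraction of $S(t)$ and $|\langle\mu,u_0-w_0\rangle|\,\mu(X)^{1/p}\leq\mu(X)\|u_0-w_0\|_{L^p(T)}$, a standard $3\epsilon$-argument yields $S(t)u_0\to\langle\mu,u_0\rangle$ in $L^p(T)$.

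The hard part is the $L^2$-identification of $\ker(\widetilde\Delta_P)$ with the constants in the second paragraph, since this is where extremality of $T$ genuinely enters and is tied to the ergodic structure of the lamination (as opposed to the purely functional-analytic steps). This parallels the proof of Theorem \ref{th_mixing} in the quasi-harmonic measure framework; the present setting is technically easier because the finite Poincar\'e mass of $T$ turns $(L^2(T),S(t))$ into a classical self-adjoint semigroup on a finite-measure Hilbert space, allowing a direct spectral treatment rather than a zero-two-law argument.
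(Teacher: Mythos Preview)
Your proof is correct. The paper itself does not give an explicit proof of this theorem; it is stated as the analogue of Theorem~\ref{th_mixing}, whose proof is only referenced to \cite[Theorem~5.12]{DinhNguyenSibony12}. Your spectral-theoretic route is precisely the one the paper sets up: the remark following Lemma~\ref{lemma_e_tilde} (and its counterpart Lemma~\ref{lemma_e_tilde_c}) stresses that self-adjointness is ``the main advantage of $\widetilde\Delta_P$ over $\Delta_P$'', and the end of the proof of Proposition~\ref{prop_delta_max} notes explicitly that $-\widetilde\Delta_P$ is self-adjoint. So the strong convergence $S(t)\to E(\{0\})$ via the spectral theorem is exactly the intended mechanism, and your $L^p$ extension is routine.

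One small simplification: your identification of $\ker(\widetilde\Delta_P)$ with the constants via Ces\`aro averages and Theorem~\ref{T:diffusions}(2) is valid but roundabout. More directly, if $u\in\Dom(\widetilde\Delta_P)$ and $\widetilde\Delta_P u=0$, then $0=\langle -\widetilde\Delta_P u,u\rangle=\widetilde q(u,u)=\int i\partial u\wedge\dbar u\wedge T$ by Lemma~\ref{lemma_e_tilde_c}, so $\nabla u=0$ $\mu$-a.e.; hence $u$ is constant on $\mu$-almost every leaf and, by extremality of $T$, constant $\mu$-a.e. This is exactly the computation hidden inside the proof of Theorem~\ref{T:heat-real-ergodic} (and its analogue Theorem~\ref{T:diffusions}(2)), so invoking that theorem is not wrong, just slightly indirect. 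Your closing remark about the zero-two law concerns Candel's earlier approach, not the paper's; the present argument is already the spectral one.
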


The  following result  is  similar to  Proposition \ref{prop_dense_laplace}.
\begin{proposition} \label{prop_dense_laplace_c}
Let  $T$ be  an 
extremal directed positive $\ddc$-closed  current of finite Poincar{\'e} mass.  Then, the closures of $\Delta_P(\Dc(M))$ and of $\widetilde\Delta_P(\Dc(M))$ in $L^p(T)$, 
$1\leq p\leq 2$, are   the 
hyperplane of $L^p(T)$ defined by the equation $\int v T\wedge g_P=0$.
\end{proposition}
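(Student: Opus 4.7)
The inclusion of both images in the target hyperplane is automatic: by Lemma \ref{lemma_e_tilde_c}, $\int \Delta_P u\, d\mu = \int \widetilde\Delta_P u\, d\mu = 0$ for every $u \in \Dc(M)$, where $\mu = T\wedge g_P$. Hence $\Delta_P(\Dc(M))$ and $\widetilde\Delta_P(\Dc(M))$ both lie in the closed hyperplane $H := \{v \in L^p(T) : \int v\, d\mu = 0\}$. By the Hahn-Banach theorem together with $L^p$-$L^q$ duality (with $1/p + 1/q = 1$; note $q \geq 2$ as $p \leq 2$, so every such $\theta$ belongs to $L^2(T)$), proving equality of the closure with $H$ reduces to showing that any $\theta \in L^q(T)$ satisfying $\int \theta\, \Delta_P u\, d\mu = 0$ (respectively $\int \theta\, \widetilde\Delta_P u\, d\mu = 0$) for every $u \in \Dc(M)$ is $\mu$-almost everywhere constant.

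\textbf{Jordan decomposition via the semigroup.} I treat the $\Delta_P$ case first. Using $(\Delta_P u)\, T\wedge g_P = i\ddbar u \wedge T$, the orthogonality rewrites as $\int i\ddbar u \wedge (\theta T) = 0$ for all $u \in \Dc(M)$, which says that the signed current $\theta T$ is $\ddc$-closed on $M$. To split into positive and negative parts I will adapt Lemma \ref{lemma_harm_pos} to the current setting. Let $S(t)$ denote the contraction semigroup on $L^2(T)$ generated by $-\Delta_P$, provided by Proposition \ref{prop_delta_max_c}, and define its adjoint action on the signed measure $\widehat\mu := \theta\mu$ by $\langle S(t)^*\widehat\mu, u\rangle := \int \theta\, S(t)u\, d\mu$. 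A Hille-Yosida derivative-at-zero computation, combined with the density of $\Dc(M)$ in $\Dom(\Delta_P)$ (as a core, inherited from Remark \ref{R:Dom}), extends the vanishing hypothesis from $\Dc(M)$ to $\Dom(\Delta_P)$, giving $S(t)^*\widehat\mu = \widehat\mu$ for every $t \geq 0$. Since $S(t)$ is a positive contraction by Theorem \ref{T:diffusions}(1), the dual $S(t)^*$ is a positive contraction on signed measures, and the classical lattice argument forces $S(t)^*\widehat\mu^\pm = \widehat\mu^\pm$ for the Jordan decomposition $\widehat\mu = \widehat\mu^+ - \widehat\mu^-$. Differentiating at $t = 0$ yields $\ddc(\theta^\pm T) = 0$; together with Proposition \ref{P:decomposition} applied leafwise, this shows that $\theta^\pm T$ are directed positive $\ddc$-closed currents of finite Poincar\'e mass.

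\textbf{Conclusion via extremality, and main obstacle.} After truncating $\theta$ to $\theta_M := (\theta \wedge M) \vee (-M)$, applying the same semigroup argument to the truncation to keep $\ddc$-closedness, and then affine-rescaling, we may assume $0 \leq \theta \leq 1$. The identity $T = \theta T + (1-\theta)T$ then writes $T$ as a sum of two directed positive $\ddc$-closed currents of finite Poincar\'e mass, so the extremality of $T$ forces $\theta T = \lambda T$ for some $\lambda \in [0,1]$, and hence $\theta = \lambda$ constant $\mu$-a.e. The $\widetilde\Delta_P$ case proceeds identically with the semigroup of $-\widetilde\Delta_P$, or more directly: self-adjointness of $\widetilde\Delta_P$ (Lemma \ref{lemma_e_tilde_c}) converts the hypothesis into $\widetilde\Delta_P\theta = 0$ weakly, whence $S(t)\theta = \theta$ for all $t$, and Theorem \ref{T:diffusions}(2) together with the extremality of $T$ gives $\theta$ constant $\mu$-almost everywhere. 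The main obstacle is the rigorous adaptation of Lemma \ref{lemma_harm_pos} to currents: one must verify both that the positive-contraction lattice argument transfers cleanly to the space of signed currents of the form $\theta T$ with $\theta \in L^2(T)$, and that the truncation step $\theta \leadsto \theta_M$ (for which there is no direct Lemma \ref{L:simplex} analog on currents) indeed preserves $\ddc$-closedness; the semigroup approach is designed precisely to bypass these difficulties.
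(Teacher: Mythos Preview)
Your proof is correct and follows essentially the same strategy as the paper's intended argument (which the text signals by declaring the result ``similar to Proposition \ref{prop_dense_laplace}''): reduce via Hahn--Banach to showing any annihilator $\theta\in L^q(T)\subset L^2(T)$ is constant, then use the semigroup $S(t)$ together with its positivity to split the signed object $\theta T$ (or $\theta\mu$) into positive $\ddc$-closed pieces, and conclude by extremality. The paper's version, transported from Proposition \ref{prop_dense_laplace}, would finish via disintegration of $\mu$ along the level sets of $\theta$ (each slice $\mu_c$ becoming a positive ``harmonic'' piece, forced by extremality to be proportional to $\mu$), whereas you finish via truncation and the direct decomposition $T=\theta T+(1-\theta)T$. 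Both routes rest on the same core (the adaptation of Lemma \ref{lemma_harm_pos}), and your iterated application of the Jordan-decomposition step to handle $(\theta-c)T$ is exactly the analog of Lemma \ref{L:simplex} needed to make the truncation legitimate; you were right to flag this as the delicate point, and the fix you sketch is sound.
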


 %Finally, we state the analogous result of Theorem \ref{T:diffusions_comparisons} in the context of Riemann surface laminations embedded in a complex manifold.
 %It says that  the abstract heat diffusions given by  Theorem \ref{T:diffusions} coincide with   the heat diffusions given in \eqref{e:diffusions}.
 
 %\begin{theorem}\label{T:diffusions_comparisons_c}
 % Under the hypothesis of Theorem  \ref{T:diffusions}, let $S(t)$,
%$t\in\R_+$, denote  the semi-group of contractions associated with the operator
 %  $-\Delta_P$ given by  Theorem \ref{T:diffusions}.
 %  Then $S(t)u=D_tu$ for $u\in L^1(m)$ and $t\in\R^+,$ where $D_t$ is  the heat diffusion given in \eqref{e:diffusions}.
 %\end{theorem}

%%%%%%%%%%%%%%%%%%%%%%%%%%%%%%%%%%%%%%%%%%%%%%%%%%%%%%%%%%%%%%%%%%%%
\subsection{Geometric ergodic theorems}\label{SS:Geometric-ergodic-thms}
%%%%%%%%%%%%%%%%%%%%%%%%%%%%%%%%%%%%%%%%%%%%%%%%%%%%%%%%%%%%%%%%%%%%

In this subsection, we will give an analogue  of  
Birkhoff's ergodic theorem in the  context of a   Riemann surface  lamination $\Fc=(X,\Lc,E)$
with  singularities. 
 Our  ergodic theorem is of  geometric nature  and   it is close to Birkhoff's averaging on orbits of a
dynamical system.   Here  the averaging is  on  hyperbolic  leaves and the  time is the  hyperbolic time.

 Recall  from the Main Notation that
for $0<r<1,$ $r\D$ denotes the disc of center $0$ and of radius $r.$ In the
Poincar{\'e} disc $(\D,g_P),$ 
$r\D$ is also the disc of center $0$ and of radius 
\begin{equation}\label{e:radii_conversion}
R:=\log{1+r\over 1-r}\cdot
\end{equation}
which is  also denoted by $\D_R.$  So $r\D=\D_R.$    %this disc,  and by $\partial \D_R$ its  boundary  which is also the Poincar\'e circle of center $0$ and radius $R.$

Let $\Fc=(X,\Lc,E)$ be a  Riemann surface  lamination with singularities.    
Let $T$ be a  positive harmonic 
 current directed by $\Fc.$ 
Assume that  $T$ has no mass on $\Par(\Fc).$  Consider the  positive  measure $\mu=\Phi(T)$ on $X$  defined by \eqref{e:mu}.  
 Assume in addition  that  
$\mu$ is a  probability measure.

\begin{figure}[h]%
\begin{center}
\def\svgwidth{0.8\columnwidth}
\resizebox{0.7\textwidth}{!}{\input{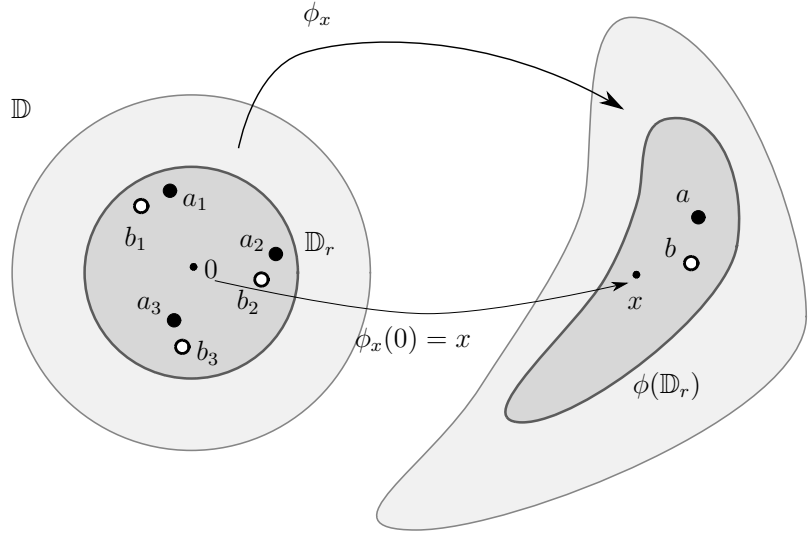}}%
\caption{Consider the restriction of the universal covering map $\phi_x:\  \D_R\to  \phi_x(\D_R)\subset  L_x.$   The set of preimages of a point  $a$  (resp. $b$) $\in \phi_x(\D_R)$  
is  $\{a_1,a_2,a_3\}$  (resp.  $\{b_1,b_2,b_3\}$).
}
\label{fig:Poincare-disc}
\end{center}
\end{figure}

For any point $x\in \Hyp(\Fc),$ 
let $\phi_x:\D\rightarrow L_x$  be given by  \eqref{e:covering_map}.  
  Following  Forn{\ae}ss-Sibony  \cite{FornaessSibony05,FornaessSibony08}, consider, for all $0<R<\infty$, the measure $m_{x,R}$ and  the current $\tau_{x,R}$ given by 
\begin{equation}\label{e:m_x,R}\begin{split}  m_{x,R}&:=\frac{1}{M_R}(\phi_x)_* \big(\log^+ \frac{r}{|\zeta|}g_P\big),\\
 \tau_{x,R}&:=\frac{1}{M_R}(\phi_x)_* \big(\log^+ \frac{r}{|\zeta|}\big).
 \end{split}
 \end{equation}
Here, $\log^+:=\max\{\log,0\}$, $g_P$ denotes as  usual  the Poincar{\'e} metric on $\D$ and 
\begin{equation*}%\label{e:M_R}
M_R:= \int \log^+ \frac{r}{|\zeta|} g_P=\int \log^+ \frac{r}{|\zeta|} \frac{2}{(1-|\zeta|^2)^2}
id\zeta\wedge d\overline\zeta.
\end{equation*}
So, $m_{x,R}$ (resp. $\tau_{x,R}$) is a probability measure  (resp.  a directed  positive  current of bidimension $(1,1)$) which depends on $x,R;$ but 
does not depend on the choice of $\phi_x$. The current 
$\tau_{x,R}$ is  called the {\it Nevanlinna current} of index $r.$

\begin{theorem} \label{T:geometric_ergodic}{\rm (Dinh-Nguyen-Sibony \cite{DinhNguyenSibony12}). }
 We  keep the above hypothesis and notation.
Assume in addition that the current  $T$ is extremal.    
Then for $\mu$-almost every point $x\in X,$ the measure $m_{x,R}$ defined above converges to $\mu$
when $R\to\infty$.  
\end{theorem}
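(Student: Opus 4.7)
The plan is to reduce the $\mu$-a.e.\ weak convergence $m_{x,R}\to\mu$ to the operator ergodic theory of Section \ref{S:Ergodic_theorems}. Since each $m_{x,R}$ and $\mu$ are probability measures, the convergence of $\langle m_{x,R},u\rangle$ to $\langle\mu,u\rangle$ against a bounded measurable $u$ depends only on the mean-zero part $u-\langle\mu,u\rangle$. By Proposition \ref{prop_dense_laplace}, the extremality of $T$ (equivalently, the ergodicity of $\mu$ by Theorem \ref{thm_harmonic_currents_vs_measures}) forces $\Delta_P(\Dc(\Fc))$ to be dense in the hyperplane $\{u\in L^p(\mu):\langle\mu,u\rangle=0\}$ for every $1\le p\le 2$. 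It therefore suffices to verify the convergence on this dense family, together with a uniform device that allows passage to the $L^p$-closure.

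For the dense family, the key identity arises by pulling back to $\D$ via $\phi_x$, applying \eqref{e:Delta_commutation} and \eqref{e:Laplacian_disc} ($(\Delta_P f)g_P=i\ddbar f$), and integrating by parts. Since $\log^+(r/|\zeta|)$ is, up to a multiplicative constant, the Green function of $\Delta_P$ on $\D_R$ with pole at $0$, the standard distributional computation yields $\ddc\log^+(r/|\zeta|)=-\delta_0+\omega_R$ for a probability measure $\omega_R$ supported on $\partial\D_R$. Thus for $v\in\Dc(\Fc)$,
$$M_R\,\langle m_{x,R},\Delta_Pv\rangle=\int_{\D}\log^+\!\Big(\frac{r}{|\zeta|}\Big)\,i\ddbar(v\circ\phi_x)=\pi\bigl(\langle\omega_R,v\circ\phi_x\rangle-v(x)\bigr).$$
The right-hand side is bounded by $2\pi\|v\|_{L^\infty}$, while $M_R\to\infty$ as $R\to\infty$, so $\langle m_{x,R},\Delta_P v\rangle\to 0$ uniformly on $\Hyp(\Fc)$, and in particular in every $L^p(\mu)$.

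The main obstacle, and the genuinely delicate step, is to upgrade this dense mean convergence into $\mu$-a.e.\ weak convergence $m_{x,R}\to\mu$ holding simultaneously for every bounded continuous test function. The classical route is a maximal inequality of the form $\bigl\|\sup_{R>0}|\langle m_{x,R},u\rangle|\bigr\|_{L^p(\mu)}\le C_p\|u\|_{L^p(\mu)}$ for some $p>1$. The plan is to derive it by representing $\log^+(r/|\zeta|)$ as $\pi\int_0^\infty p_\D(0,\cdot,t)\,\chac_{\D_R}\,dt$ plus a controlled boundary remainder, which realizes $\langle m_{x,R},u\rangle$ as a weighted time-average of the leafwise heat-diffusion operators $D_t$ of Subsection \ref{ss:heat_diffusions}; since $D_t$ extends to the $L^p$-contraction semi-group $S(t)$ of Theorem \ref{th_heat_real}, the Hopf--Dunford--Schwartz maximal ergodic inequality (already implicit in Theorem \ref{T:heat-real-ergodic}) then delivers the required bound. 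The hard part is to show that the $\chac_{\D_R}$ cutoff and the resulting boundary correction contribute only a lower-order term that vanishes in the maximal-function estimate. Once the maximal inequality is in hand, combining it with the uniform convergence on $\Delta_P(\Dc(\Fc))$ and a countable dense subfamily of $C_b(X)$ produces a single full-$\mu$-measure set on which $m_{x,R}\to\mu$ weakly, as desired.
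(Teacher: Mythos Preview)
Your proposal is correct and follows essentially the same architecture as the paper's proof: density of $\Delta_P(\Dc(\Fc))$ in the mean-zero hyperplane (Proposition \ref{prop_dense_laplace}), uniform decay of $\langle m_{x,R},\Delta_P v\rangle$ via the Green-function/Jensen identity $\ddc\log^+(r/|\zeta|)$, and a maximal inequality obtained by comparing $B_R$ to time-averages of the heat semigroup.

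Two minor corrections to align with what the paper actually does. First, the maximal inequality used is the weak-type $(1,1)$ estimate $\mu\{\widetilde B u>\epsilon\}\le c\epsilon^{-1}\|u\|_{L^1(\mu)}$ (Lemma \ref{lemma_limsup}), coming from Dunford--Schwartz's Lemma VIII.7.11, not a strong-type $L^p$ bound; this is enough for the standard $E_\epsilon(u)\subset E_{\epsilon/2}(u-\Delta_P v)\cup E_{\epsilon/2}(\Delta_P v)$ splitting. Second, the paper works directly with the \emph{leafwise} diffusions $D_t$ of \eqref{e:diffusions}, showing in Lemma \ref{lemma_heat_delta} that they are $L^p(\mu)$-contractions because $\mu$ is weakly harmonic (Theorem \ref{thm_harmonic_currents_vs_measures}(3)); it does not pass through the abstract semigroup $S(t)$ of Theorem \ref{th_heat_real}, and indeed the coincidence $S(t)=D_t$ is not needed here. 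The ``hard part'' you flag is exactly the paper's inequality \eqref{e:heat_kernel_main_estimate},
\[
\Big|B_R u(a)-\frac{2\pi}{M_R}\int_0^{M_R/2\pi} D_t u(a)\,dt\Big|\le cR^{-1/2}\sqrt{\log R},
\]
which is derived from the Poisson-type identity $\frac{1}{2\pi}\log\frac{1}{|y|}=\int_0^\infty p_\D(0,y,t)\,dt$ in \eqref{e:heat-kernel-disc} together with explicit heat-kernel estimates on $(\D,g_P)$.
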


 To  prove  the theorem,  our main ingredient  is a  delicate estimate on the heat kernel of the Poincar\'e disc
 (inequality \eqref{e:heat_kernel_main_estimate} below, see also   \cite[p. 370, line 8-]{DinhNguyenSibony12}).
 This estimate allows us to deduce  the the  desired result  from 
  the ergodic theorem associated to the  abstract heat
diffusions (Theorem \ref{th_heat_real}). The remainder of the subsection is  devoted  to an outline of our proof.

For  every  $0<R<\infty$, we introduce  the operator $B_R$ on $L^1(\mu)$ by 
$$B_Ru(a):=\frac{1}{M_R}\int_{|\zeta|<1} \log^+
\frac{r}{|\zeta|}(\phi_a)^* (ug_P)=\langle
m_{a,R},u\rangle.$$
Note that for $u\in L^1(\mu)$, the function $B_Ru$ is defined
$\mu$-almost everywhere. So, the convergence in 
Theorem \ref{T:geometric_ergodic} is equivalent to the convergence
$B_Ru(a)\rightarrow \langle \mu,u\rangle$ for $u$ continuous and for $\mu$-almost every $a$.

\begin{proposition}\label{prop_B_L1} 
Under the hypothesis of Theorem \ref{T:geometric_ergodic}, 
for  every $u\in L^1(\mu)$, we have 
$$\int (B_R u) d\mu =\int u d\mu.$$
In particular, $B_R$ is positive and of norm $1$ in $L^p(\mu)$ for all
$1\leq p\leq \infty$.  
\end{proposition}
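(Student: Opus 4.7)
The plan is to reduce the claim to Lemma~\ref{L:mu_Dt_invariant}, which provides the analogous $\mu$-invariance for the unweighted averages $A_R$ defined in \eqref{e:A_R}. The bridge between $B_R$ and $A_R$ is a layer-cake decomposition of the weight. Concretely, for $|\zeta|<r$ one has
\begin{equation*}
\log^{+}\!\frac{r}{|\zeta|}=\int_{|\zeta|}^{r}\frac{ds}{s}=\int_{0}^{r}\chac_{\{|\zeta|<s\}}\,\frac{ds}{s},
\end{equation*}
and the same identity (interpreted as $0$) holds for $|\zeta|\ge r$.

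First, I would fix $a\in\Hyp(\Fc)$, write $\tilde u:=u\circ\phi_a$, and insert the above representation into the defining integral for $M_R B_R u(a)$. Tonelli's theorem (applied to $|u|$, which justifies Fubini for general $u\in L^1(\mu)$ a fortiori through the final identity) yields
\begin{equation*}
M_R\,B_R u(a)=\int_{0}^{r}\frac{1}{s}\Bigl(\int_{|\zeta|<s}\tilde u(\zeta)\,g_P(\zeta)\Bigr)ds=\int_{0}^{r}\frac{\mathcal M_{R(s)}}{s}\,A_{R(s)}u(a)\,ds,
\end{equation*}
where $R(s)$ is the Poincar\'e radius corresponding to the Euclidean radius $s$ via \eqref{e:radii_conversion}, and $\mathcal M_{R(s)}$ is as in \eqref{e:A_R}. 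Applying the same computation to $u\equiv\chac$ gives the normalization
\begin{equation*}
M_R=\int_{0}^{r}\frac{\mathcal M_{R(s)}}{s}\,ds.
\end{equation*}

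Next, I would integrate against $\mu$ and use Fubini together with Lemma~\ref{L:mu_Dt_invariant}, which tells us that $\int A_{R(s)}u\,d\mu=\int u\,d\mu$ for every $s\in(0,r)$. This yields
\begin{equation*}
\int B_R u\,d\mu=\frac{1}{M_R}\int_{0}^{r}\frac{\mathcal M_{R(s)}}{s}\Bigl(\int A_{R(s)}u\,d\mu\Bigr)ds=\Bigl(\frac{1}{M_R}\int_{0}^{r}\frac{\mathcal M_{R(s)}}{s}ds\Bigr)\int u\,d\mu=\int u\,d\mu,
\end{equation*}
which is the main identity.

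Finally, I would deduce the $L^p$ statements. Positivity of $B_R$ is immediate from the definition, since the kernel $\log^{+}(r/|\zeta|)g_P$ is non-negative. Applying the definition to $u\equiv\chac$ gives $B_R\chac=\chac$ pointwise on $\Hyp(\Fc)$, so $\|B_R\|_{L^\infty\to L^\infty}=1$. For $p=1$, the identity applied to $|u|$ yields $\|B_Ru\|_{L^1(\mu)}\le\int B_R|u|\,d\mu=\int|u|\,d\mu$, and equality is attained on constants. Riesz--Thorin interpolation between these two endpoints gives $\|B_R\|_{L^p\to L^p}=1$ for all $1\le p\le\infty$.

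The only technical point worth flagging is the justification of Fubini in the crucial swap of integrals; this is routine since $u$ may be split into positive and negative parts and Tonelli handles each piece (the finite outer integral $M_R<\infty$ together with $u\in L^1(\mu)$ guarantees absolute convergence once the identity is established for $|u|$). There is no deep obstacle here: the substance of the argument already lies in the previously proved Lemma~\ref{L:mu_Dt_invariant}, and the layer-cake decomposition simply transports that conclusion from the logarithmic-weight setting.
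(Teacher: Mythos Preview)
Your proof is correct and follows essentially the same approach as the paper: both express $B_R$ as a weighted average of the operators $A_t$ over $t\le R$ and then invoke Lemma~\ref{L:mu_Dt_invariant}, concluding with interpolation for the $L^p$ norms. The paper states this decomposition as an observation (``$B_R$ can be obtained as an average of the operators $A_t$ on $t\le R$''), whereas you make it explicit via the layer-cake identity for $\log^{+}(r/|\zeta|)$.
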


 \proof
Fix an $R>0$.
The positivity of $B_R$ is clear. Since $B_R$ preserves constant functions, 
its norm in $L^p(\mu)$ is at least equal to 1.
It is also clear that $B_R$ is an operator of norm 1 on $L^\infty(\mu)$. 
If $B_R$ is of norm 1 on $L^1(\mu)$, by interpolation \cite{Triebel}, its norm on $L^p(\mu)$ is also equal to 1. 
So, the second assertion is a consequence of the first one.
 Observe  that  as  in the  proof of  Theorem  \ref{thm_harmonic_currents_vs_measures} (3),  the operator
$B_R$ can be obtained as an average of the operators   $A_t$ on $t\leq R,$ where $A_t$ is  given by \eqref{e:A_R}.
So the  first  assertion follows from   Lemma \ref{L:mu_Dt_invariant}.
\endproof

 \begin{remark}
  \rm In the  above proposition which    relies on  Lemma \ref{L:mu_Dt_invariant}, we make  an essential  use of the assumption that the current $T$ is directed. 
 \end{remark}

We have the following ergodic theorem.

\begin{theorem} \label{th_ergodic_Lp}  
Under the hypothesis of Theorem \ref{T:geometric_ergodic}, 
if $u$ is a function in $L^p(\mu)$, with $1\leq p <\infty$, then
$B_Ru$ converge in $L^p(\mu)$ towards a constant function
$u^*$  when $R\to \infty$. 
\end{theorem}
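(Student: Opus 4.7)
The plan is to combine the density result of Proposition \ref{prop_dense_laplace} with an explicit Jensen-formula computation of $B_R$ acting on the range of $\Delta_P$. First, since $T$ is extremal, Theorem \ref{thm_harmonic_currents_vs_measures}(2) yields that $\mu$ is an ergodic harmonic probability measure, so Proposition \ref{prop_dense_laplace} applies: for $1\leq p\leq 2$, the closure of $\Delta_P(\Dc(\Fc))$ in $L^p(\mu)$ is the hyperplane $H_0:=\{u\in L^p(\mu):\langle\mu,u\rangle=0\}$. Writing $u=\langle\mu,u\rangle\mathbf{1}+u_0$ with $u_0\in H_0$, and using $B_R\mathbf{1}=\mathbf{1}$ together with the $L^p$-contractivity of $B_R$ from Proposition \ref{prop_B_L1}, the problem reduces to showing $B_R(\Delta_P v)\to 0$ uniformly for each $v\in\Dc(\Fc)$.

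For the key computation, fix $v\in\Dc(\Fc)$ and $a\in\Hyp(\Fc)$. By \eqref{e:Laplacian_disc} and \eqref{e:Delta_commutation}, the pull-back satisfies $(\phi_a)^*((\Delta_P v)g_P)=i\ddbar(v\circ\phi_a)$, so
\begin{equation*}
B_R(\Delta_P v)(a)=\frac{1}{M_R}\int_\D \log^+\!\big(r/|\zeta|\big)\cdot i\ddbar(v\circ\phi_a).
\end{equation*}
Since $\log^+(r/|\zeta|)$ is supported in $\overline{r\D}$, only the restriction of $v\circ\phi_a$ (smooth on all of $\D$) to a neighbourhood of $\overline{r\D}$ matters. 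Jensen's formula, which reads $\phi(0)=\int_{|\zeta|=r}\phi\,d\sigma_r-\int_{|\zeta|<r}\log(r/|\zeta|)\,\ddc\phi$ for any $\phi\in C^2$ near $\overline{r\D}$ (where $\sigma_r$ is the normalized circle measure), combined with $i\ddbar=\pi\ddc$, yields
\begin{equation*}
B_R(\Delta_P v)(a)=\frac{\pi}{M_R}\Big[\int_{|\zeta|=r}(v\circ\phi_a)\,d\sigma_r-v(a)\Big].
\end{equation*}
Since $v$ is bounded and a direct calculation in polar coordinates shows $M_R\to\infty$ as $R\to\infty$ (indeed $M_R\sim cR$ for large $R$), we obtain the uniform pointwise bound $\|B_R(\Delta_P v)\|_{L^\infty}\leq 2\pi\|v\|_\infty/M_R\to 0$, which trivially gives convergence in $L^p(\mu)$ for every $p$.

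To lift the conclusion from $1\leq p\leq 2$ to $1\leq p<\infty$, first note that the invariance identity $\int B_R u\,d\mu=\int u\,d\mu$ from Proposition \ref{prop_B_L1} forces the limit $u^*$ to equal the constant $\langle\mu,u\rangle$. For $p>2$, one approximates $u\in L^p(\mu)$ by bounded functions $u_n$; for each bounded $u_n$, the $L^2$-convergence already established plus the uniform bound $\|B_R u_n\|_\infty\leq\|u_n\|_\infty$ upgrades to $L^p$-convergence via dominated convergence along an a.e.-convergent subsequence, and one concludes for $u$ using again the $L^p$-contractivity of $B_R$.

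The main obstacle is the mild technical point that $v\circ\phi_a$ need not be compactly supported in $\D$ (the universal cover $\phi_a$ is generally non-trivial), but this is harmless because $\log^+(r/|\zeta|)$ restricts the integration to the bounded domain $\overline{r\D}$ where $v\circ\phi_a$ is smooth and Jensen's formula applies directly. I note that the approach outlined in the paper takes a different and more delicate route: one expresses $B_R$ as a superposition of the averaging operators $A_s$ of \eqref{e:A_R}, then compares $B_R u$ to the Cesàro average $\frac{1}{T(R)}\int_0^{T(R)}D_t u\,dt$ of the leafwise heat semigroup by means of a precise pointwise estimate on the heat kernel $p_\D(0,\zeta,t)$, and finally invokes the $L^p$ ergodic theorem of Theorem \ref{th_heat_real}; this stronger approach is needed to upgrade the $L^p$ conclusion to the pointwise statement of Theorem \ref{T:geometric_ergodic}.
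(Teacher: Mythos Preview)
Your proof is correct and, contrary to what you suggest in the final paragraph, follows essentially the same approach as the paper: both use Proposition \ref{prop_dense_laplace} to reduce to $u=\Delta_P v$ with $v\in\Dc(\Fc)$ and then Jensen's formula to show $B_R(\Delta_P v)\to 0$ uniformly (the paper phrases this dually as $\langle i\ddbar\tau_{a,R},v\rangle\to 0$, but it is the same computation). The heat-kernel comparison you describe is used in the paper only for the \emph{pointwise} statement of Theorem \ref{T:geometric_ergodic}, not for this $L^p$ result; the only cosmetic difference is that the paper reduces first to $p=1$ (bounded $u$ plus $L^1$-convergence gives $L^p$-convergence) rather than treating $1\leq p\leq 2$ and $p>2$ separately as you do.
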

\begin{proof}
We show that it is enough to consider the case where $p=1$.
By Proposition \ref{prop_B_L1}, it is enough to consider $u$ in a
dense subset of $L^p(\mu)$, e.g. $L^\infty(\mu)$. 
For $u$ bounded, we have $\|B_Ru\|_\infty\leq \|u\|_\infty$. Therefore, if $B_Ru\to u^*$ in $L^1(\mu)$ we have $B_Ru\to u^*$ in $L^p(\mu)$, $1\leq p<\infty$. 

Now, assume that $p=1$. Since $B_R$ preserves constant functions,
by Proposition \ref{prop_dense_laplace} applied to $p=1$, it is enough to consider
$u=\Delta_P v$ with $v\in\Dc(\Fc)$. We have to show that $B_Ru$
converges to 0. 
Note that since $v$
is in $\Dc(\Fc)$, the function $\Delta_P v$ is defined at every point
on $\Hyp(\Fc) $  by the formula $(\Delta_P v)
g_P:=i\ddbar v$ on the leaves (see formulas \eqref{e:Laplacian_disc} and  \eqref{e:Delta_commutation}). 
 
We deduce from \eqref{e:m_x,R} that  $m_{a,R}=\tau_{a,R}\wedge g_P$ and
$$B_Ru(a)= B_R(\Delta_Pv)(a) = \langle  \tau_{a,R},(\Delta_P v) g_P \rangle
= \langle  \tau_{a,R}, i\ddbar v \rangle=\langle i\ddbar \tau_{a,R},v\rangle.$$
Now  we show that the mass of
$i\ddbar \tau_{a,R}$ tends to 0 uniformly on $a$.
Indeed, by Jensen's formula,  we have that
$$M_R\cdot\, i\ddbar \tau_{a,R}=i\ddbar (\phi_a)_* \big(\log^+ \frac{r}{|\zeta|}\big)=(\phi_a)_*(\nu_r) -\delta_a,$$
where $\nu_r$ denotes the Lebesgue measure on the circle $\partial (r\D)$ which is  the circle  with center $0$ and  radius $r,$ and $\delta_a$ is the Dirac mass at $a.$
Since  a direct computation shows that  $M_R-2\pi R=O(1),$  we  get $M_R\rightarrow\infty.$ Hence, it is easy to see 
that the  mass of
$i\ddbar \tau_{a,R}$ 
tends to 0 uniformly on $a$. The result follows.\end{proof}

\begin{lemma} \label{lemma_heat_delta}
The leafwise heat diffusions $D_t$  (see \eqref{e:diffusions})  extends continuously to an operator of norm $1$ on  $L^p(\mu)$ for 
$1\leq p\leq\infty$. Moreover, there is a constant $c>0$ such that for 
all $\epsilon>0$ and $u\in L^1(\mu)$, 
we have 
$$\mu\big\{\widetilde Du>\epsilon\big\}\leq c\epsilon^{-1}\|u\|_{L^1(\mu)},$$
where the operator $\widetilde D$ is defined by 
$$\widetilde Du(a):=\limsup_{R\to\infty}\Big|\frac{1}{R}\int_0^R D_tu(a)dt\Big|.$$
\end{lemma}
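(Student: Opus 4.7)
My plan is to deduce both assertions from the abstract framework of the Hopf--Dunford--Schwartz maximal ergodic theorem for continuous semigroups of contractions, treating $\{D_t\}_{t\ge 0}$ as such a semigroup acting on $L^1(\mu)+L^\infty(\mu)$.

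\medskip

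First I would establish the contractivity of $D_t$ on $L^p(\mu)$ for $1\le p\le\infty$. Since $\mu=\Phi(T)$ is a probability measure and $T$ is a directed positive harmonic current, Theorem~\ref{thm_harmonic_currents_vs_measures}(3) implies that $\mu$ is a harmonic measure; in particular $\mu$ is weakly harmonic, so $\int D_t u\,d\mu=\int u\,d\mu$ for every bounded measurable $u$. Combining the positivity of $D_t$ (clear from the heat kernel formula \eqref{e:diffusions}) with the identity $D_t\mathbf 1=\mathbf 1$ from \eqref{e:semi_group} gives $\|D_t u\|_{L^\infty}\le\|u\|_{L^\infty}$. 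For $u\in L^1(\mu)$, decompose $u=u^+-u^-$; positivity of $D_t$ yields $|D_tu|\le D_t|u|$ almost everywhere, and invariance of $\mu$ together with the positivity $D_t|u|\ge 0$ gives $\|D_tu\|_{L^1}\le \|D_t|u|\|_{L^1}=\int D_t|u|\,d\mu=\int|u|\,d\mu=\|u\|_{L^1}$. Riesz--Thorin interpolation then yields the $L^p$ contraction property for all intermediate $p$, which is the first assertion of the lemma.

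\medskip

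For the maximal inequality, I would observe that by definition
\[
\widetilde Du(a)\ \le\ H u(a):=\sup_{R>0}\Big|\frac{1}{R}\int_0^R D_t u(a)\,dt\Big|,
\]
so it suffices to establish the weak-type $(1,1)$ estimate for the Hopf maximal operator $H$. Since $\{D_t\}_{t\ge 0}$ is a semigroup (by \eqref{e:semi_group}) of positive operators that are simultaneous contractions on $L^1(\mu)$ and $L^\infty(\mu)$ by the preceding paragraph, the continuous-parameter Hopf--Dunford--Schwartz maximal ergodic theorem (see Krengel's monograph \cite{Krengel}) gives a universal constant $c>0$ for which
\[
\mu\{Hu>\epsilon\}\ \le\ \frac{c}{\epsilon}\,\|u\|_{L^1(\mu)}\qquad\text{for every }u\in L^1(\mu),\ \epsilon>0.
\]
Combined with the pointwise domination above, this yields the weak-type estimate for $\widetilde D$.

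\medskip

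The subtlest technical point is verifying that $\{D_t\}$ fits the hypotheses of the continuous-time Hopf--Dunford--Schwartz theorem, the most delicate being strong measurability in $t$ of the map $t\mapsto D_tu$ (needed so that the Bochner integral $\int_0^R D_tu\,dt$ makes sense and agrees with the pointwise average). One can verify this on the dense subspace $\Dc(\Fc)\subset L^1(\mu)$ using the heat-equation initial condition in \eqref{e:heat-equation} together with the semigroup property, and then extend by contractivity to all of $L^1(\mu)$; alternatively, one passes first to the discrete-time Dunford--Schwartz theorem for the single contraction $D_1$ and then fills in the continuous parameter using $D_{t+s}=D_t\circ D_s$ and the contractivity. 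I expect this measurability/strong-continuity bookkeeping to be the only real work, since once the semigroup is verified to satisfy the Dunford--Schwartz hypotheses both conclusions are standard consequences.
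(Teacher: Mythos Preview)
Your proposal is correct and follows essentially the same approach as the paper's own proof. The paper likewise obtains the $L^\infty$ bound from positivity and $D_t\mathbf 1=\mathbf 1$, the $L^1$ bound from Theorem~\ref{thm_harmonic_currents_vs_measures}(3) (harmonicity of $\mu$), interpolates for intermediate $p$, and then invokes the Dunford--Schwartz maximal lemma (Lemma~VIII.7.11 in \cite{DunfordSchwartz}) for the weak-type inequality; the only cosmetic difference is that you cite Krengel for the maximal theorem whereas the paper cites Dunford--Schwartz directly.
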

\proof
Recall from \eqref{e:diffusions}  and \eqref{e:semi_group} that $D_t$ is positive and preserves constant
functions. Its norm on $L^\infty(\mu)$ is equal to 1.
On the  other hand,  by  Theorem  \ref{thm_harmonic_currents_vs_measures} (3), the norm of  $D_t$ on $L^1(\mu)$ is equal to 1.
By interpolation \cite{Triebel}, its norm on $L^p(\mu)$ is also equal to 1. The first assertion follows.

 The second one is a direct consequence of  Lemma VIII.7.11 in Dunford-Schwartz
\cite{DunfordSchwartz}. This lemma says that if $D_t$ is a semi-group
acting on $L^1(\mu)$ for some probability measure $\mu$ such that
$\|D_t\|_{L^1(\mu)}\leq 1$,  $\|D_t\|_{L^\infty(\mu)}\leq 1$ and $t\mapsto D_tu$
is measurable with respect to the Lebesgue measure on $t$, then 
$$\mu\big\{\widetilde Du>\epsilon\big\}\leq
c\epsilon^{-1}\|u\|_{L^1(\mu)},$$
where $\widetilde D$ is defined as above.
\endproof

Consider also  the  operator $\widetilde B$ given by
$$\widetilde B u(a):=\limsup_{R\to\infty}| B_Ru(a)|.$$
We have the following lemma.

\begin{lemma}\label{lemma_limsup}
There is a constant $c>0$ such that for all $\epsilon>0$ and $u\in L^1(\mu)$ we have 
$$\mu\big\{\widetilde Bu>\epsilon \big\} \leq   c\epsilon^{-1} \|u\|_{L^1(\mu)}.$$ 
\end{lemma}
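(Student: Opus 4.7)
The plan is to reduce the claimed maximal inequality for the operators $B_R$ to the already-established maximal inequality for the Ces\`aro-averaged heat diffusion in Lemma \ref{lemma_heat_delta}. The bridge will be a pointwise domination of the form
$$|B_R u(a)| \leq c_1 \cdot \frac{1}{R'}\int_0^{R'} D_t |u|(a)\, dt \qquad\text{for all } a\in\Hyp(\Fc),$$
where $R' = R'(R)$ tends to infinity together with $R$ and $c_1 > 0$ is an absolute constant independent of $u$, $a$ and $R$. Once such an estimate is available, taking $\limsup_{R\to\infty}$ on both sides yields $\widetilde B u \leq c_1\, \widetilde D|u|$ pointwise, and then Lemma \ref{lemma_heat_delta} applied to $|u|\in L^1(\mu)$ gives
$$\mu\{\widetilde B u > \epsilon\} \leq \mu\{\widetilde D|u| > \epsilon/c_1\} \leq c c_1 \epsilon^{-1}\|u\|_{L^1(\mu)},$$
which is the desired conclusion.

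To produce the pointwise comparison we can assume $u\geq 0$ and work upstairs on $\D$. Writing $\tilde u := u\circ\phi_a$ and using the identities \eqref{e:heat_kernel2}--\eqref{e:heat_kernel3} to unfold $p(a,y,t)$, one has
$$D_t u(a) = \int_\D p_\D(0,\zeta,t)\,\tilde u(\zeta)\, g_P(\zeta),$$
whereas by the very definition of $B_R u$,
$$B_R u(a) = \frac{1}{M_R}\int_\D \log^+\frac{r}{|\zeta|}\,\tilde u(\zeta)\, g_P(\zeta).$$
Hence the sought pointwise bound reduces to the purely scalar estimate
\begin{equation}\label{e:heat_kernel_reduction}
\frac{1}{M_R}\log^+\frac{r}{|\zeta|} \leq \frac{c_1}{R'}\int_0^{R'} p_\D(0,\zeta,t)\, dt \qquad\text{for every } \zeta\in\D.
\end{equation}

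The main obstacle is exactly the verification of \eqref{e:heat_kernel_reduction}. It is the same heat-kernel estimate that underlies the proof of Theorem \ref{T:geometric_ergodic} (see \cite[p.~370]{DinhNguyenSibony12}) and it is established by direct comparison based on the explicit formula \eqref{e:heat_kernel}. More precisely, set $\rho := \dist_P(0,\zeta)$; then $\log^+(r/|\zeta|)$ is comparable to $(R-\rho)^+$ (with corrections that are negligible as $R\to\infty$), while $M_R\sim 2\pi R$ as $R\to\infty$, so the left-hand side of \eqref{e:heat_kernel_reduction} is, up to constants, $(R-\rho)^+/R^2$. On the right-hand side, a quantitative analysis of \eqref{e:heat_kernel}, together with the choice $R' \asymp R^2$, gives the matching lower bound $\frac{1}{R'}\int_0^{R'} p_\D(0,\zeta,t)\,dt \gtrsim (R-\rho)^+/R^2$ uniformly in $\zeta$, which is the content of \eqref{e:heat_kernel_reduction}. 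This step is the technical heart of the argument; the rest is a formal consequence combining Lemma \ref{lemma_heat_delta} with the reduction above.
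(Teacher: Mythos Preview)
Your overall strategy --- a pointwise kernel domination of $B_R$ by a Ces\`aro heat average, followed by Lemma~\ref{lemma_heat_delta} --- is sound and is genuinely different from the paper's route. The paper does not prove a one-sided kernel inequality; instead it first decomposes $u=\sum_i u_i$ into bounded pieces with $\|u_i\|_{L^1}\le 4^{-i}$, then for each bounded $u_i$ establishes the \emph{two-sided} estimate \eqref{e:heat_kernel_main_estimate} (whose error term carries an implicit $\|u_i\|_\infty$, hence the need for boundedness), concludes $\widetilde B u_i=\widetilde D u_i$, and applies Lemma~\ref{lemma_heat_delta} to each $u_i$ with threshold $2^{-i-1}\epsilon$. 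Your route, once the kernel inequality is established, is shorter since it works directly in $L^1$ and avoids the decomposition; note however that your scalar inequality is not ``the same'' as \eqref{e:heat_kernel_main_estimate}, though it can be proved by related heat-kernel analysis.

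That said, your justification of the scalar inequality is wrong on two counts. First, the asymptotic $\log^+(r/|\zeta|)\asymp (R-\rho)^+$ is false: writing $|\zeta|=\tanh(\rho/2)$, $r=\tanh(R/2)$, one has for large $\rho<R$ that $\log^+(r/|\zeta|)\approx 2(e^{-\rho}-e^{-R})$, which is exponentially small in $\rho$, not linear; and even your own claimed asymptotics together with $M_R\sim 2\pi R$ would give $(R-\rho)^+/R$, not $(R-\rho)^+/R^2$. Second, the scaling $R'\asymp R^2$ is incorrect; the right choice is $R'\asymp R$. A correct argument runs as follows: since $\log^+(r/|\zeta|)\le \log(1/|\zeta|)=2\pi\int_0^\infty p_\D(0,\zeta,t)\,dt$ by \eqref{e:heat-kernel-disc}, it suffices to show $\int_0^{CR}p_\D(0,\zeta,t)\,dt\ge \gamma\int_0^\infty p_\D(0,\zeta,t)\,dt$ for all $\rho\le R$ and some fixed $\gamma>0$. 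The crude large-time bound $p_\D(0,\zeta,t)\lesssim e^{-t/4}$ for $t\ge 1$ (uniform in $\zeta$, from \eqref{e:heat_kernel}) gives $\int_{CR}^\infty p_\D\,dt\lesssim e^{-CR/4}$, while $\int_0^\infty p_\D\,dt\gtrsim e^{-\rho}\ge e^{-R}$ for $1\le\rho\le R$ (and is bounded below for $\rho\le 1$); choosing $C>4$ makes the tail negligible. This yields the scalar inequality with $R'=CR$ and hence $\widetilde B u\le c_1\widetilde D|u|$, after which Lemma~\ref{lemma_heat_delta} finishes the proof as you indicate.
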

\proof
Since we can write $u=u^+-u^-$ with $\|u\|_{L^1(\mu)}=\|u^+\|_{L^1(\mu)}+\|u^-\|_{L^1(\mu)}$, it is enough to consider $u$ positive with $\|u\|_{L^1(\mu)}\leq 1$. 
Write $u=\sum_{i\geq 0} u_i$ with $u_i$ positive and bounded such that $\|u_i\|_{L^1(\mu)}\leq 4^{-i}$. 
We will show that  $\widetilde Du_i=\widetilde Bu_i$. This, together with 
Lemma \ref{lemma_heat_delta} applied to $u_i$ and to $2^{-i-1}\epsilon$ give the result.

So, in what follows, assume that $0\leq u\leq 1$. We show that $\widetilde Du=\widetilde Bu$. 
This assertion will be  an immediate consequence  of  the  following  estimate
\begin{equation}\label{e:heat_kernel_main_estimate}\Big|B_Ru(a)-\frac{2\pi}{M_R}\int_0^{{M_R\over 2\pi}} D_tu(a)dt\Big|\leq  c R^{-1/2}\sqrt{\log R}
\end{equation}
for $\mu$-almost every $a,$ where  $c$ is a constant independent of $u$ and $a,$ $R.$
Observe that the integrals in the left hand side of  \eqref{e:heat_kernel_main_estimate} can be computed on $\D$ in terms of
$\widehat u:=u\circ \phi_a$ and the Poincar{\'e} metric $g_P$ on $\D$. So, in order to simplify the notation, we will work on $\D$. We have to show that
$$\Big|B_R\widehat u(0)-\frac{2\pi}{M_R}\int_0^{M_R\over 2\pi} D_t\widehat u(0)dt\Big|\leq  cR^{-1/2}\sqrt{\log R}$$
where
$$B_R\widehat u(0):=\frac{1}{M_R}\int_{\D_R} \log{\frac{r}{|\zeta|}}\widehat u g_P
\quad\mbox{and}\quad D_t\widehat u(0):= \int_\D p_\D(0,\cdot,t)\widehat u g_P.$$
   In fact, the delicate  inequality  \eqref{e:heat_kernel_main_estimate}
 follows  from  hard  estimates based on  the following identities 
 for the heat kernel $p_\D(x,y,t)$   on the Poincar\'e disc $(\D,g_P)$  (see \eqref{e:heat_kernel}):  this is a positive function on $\D^2\times \R^+_*,$ 
smooth when $(x,y)$ is outside the diagonal of $\D^2,$ and it satisfies
\begin{equation}\label{e:heat-kernel-disc}\int_\D p_\D(x,y,t) g_P (y)=1 \quad \mbox{and} \quad 
{1\over 2\pi}\log{\frac{1}{|y|}}  =\int_0^\infty p_\D(0,y,t)dt.
\end{equation}
Moreover, the function $p_\D(0,\cdot,t)$ is radial, see e.g. Chavel
\cite[p.246]{Chavel}. 
 \endproof

\noindent{\it Proof of Theorem \ref{T:geometric_ergodic}.}  Let $u$ be a function in $L^1(\mu)$. It is enough to 
show that $B_Ru(a)\to \langle \mu,u\rangle$ for $\mu$-almost every $a$.
Since this is true when $u$ is constant, we can assume   without loss of generality that  $\langle \mu,u\rangle=0$.
Fix a constant $\epsilon>0$ and define 
$E_\epsilon(u):=\big\{\widetilde Bu\geq \epsilon \big\}$.
To prove  the theorem  it suffices  to show  that  $\mu(E_\epsilon(u))=0$.

By Proposition  \ref{prop_dense_laplace},  $\Delta_P(\Dc(\Fc))$ is  dense
in the hyperplane of functions with mean 0 in $L^1(\mu)$.   
Consequently,  for  every $\delta>0$  we  can choose
a smooth function $v$ such that $\|\Delta_P v-u\|_{L^1(\mu)}<\delta$.   
We  have  
$$E_{\epsilon}(u)\subset E_{\epsilon/2}(u-\Delta_Pv)\cup E_{\epsilon/2}(\Delta_Pv).$$
Therefore,
$$\mu\big(E_{\epsilon}(u)\big)\leq \mu\big(E_{\epsilon/2}(u-\Delta_Pv)\big) +
\mu\big(E_{\epsilon/2}(\Delta_Pv)\big).$$
We have
$$B_R(\Delta_Pv)(a)= \langle \tau_{a,R}, i \ddbar v\rangle = \langle i\ddbar \tau_{a,R},  v\rangle.$$
The last integral tends to 0 uniformly on $a$ since the mass of $i\ddbar \tau_{a,R}$ satisfies this property.
Hence, $\mu\big(E_{\epsilon/2}(\Delta_Pv)\big)=0$ for $R$ large enough.

On the other hand, by Lemma  \ref{lemma_limsup},  we  have 
\begin{eqnarray*}
\mu\big(E_{\epsilon/2}(u-\Delta_Pv)\big) &=&
\mu\big(\widetilde B(u-\Delta_P v)>\epsilon/2\big)\\
&\leq& 2c\epsilon^{-1}\|u-\Delta_Pv\|_{L^1(\mu)} \leq 2c\epsilon^{-1}\delta. 
\end{eqnarray*}
Since $\delta$  is  arbitrary,  we  deduce  from the last  estimate  that
$\mu\big(E_{\epsilon}(u)\big)=0$.
This completes the proof of the theorem.
\hfill  $\square$
%%%%%%%%%%%%%%%%%%%%%%%%%%%%%%%%%%%%%%%%%%%%%%%%%%%%%%%%%%%%%%%%%%%%
%%%%%%%%%%%%%%%%%%%%%%%%%%%%%%%%%%%%%%%%%%%%%%%%%%%%%%%%%%%%%%%%%%%%
\section{Unique ergodicity theorems}
\label{S:Unique_ergodicity}
%%%%%%%%%%%%%%%%%%%%%%%%%%%%%%%%%%%%%%%%%%%%%%%%%%%%%%%%%%%%%%%%%%%%
%%%%%%%%%%%%%%%%%%%%%%%%%%%%%%%%%%%%%%%%%%%%%%%%%%%%%%%%%%%%%%%%%%%%

%%%%%%%%%%%%%%%%%%%%%%%%%%%%%%%%%%%%%%%%%%%%%%%%%%%%%%%%%%%%%%%%%%%
\subsection{Case of compact nonsingular  laminations}
\label{SS:Unique_ergodicity_conformal}
%%%%%%%%%%%%%%%%%%%%%%%%%%%%%%%%%%%%%%%%%%%%%%%%%%%%%%%%%%%%%%%%%%%%

A general principle for the unique ergodicity of a lamination $\Fc$ is that  there exists only one  directed positive harmonic  current  (up to a multiplicative constant).   

Garnett establishes the unique ergodicity of the weak stable foliation of the geodesic flow of a
compact surface of constant curvature $-1,$ see \cite[Proposition 5]{Garnett}).

Consider a  suspension  $\Fc_h$ constructed in Example
\ref{Ex:Suspensions}. C. Bonatti and X. Gomez-Mont \cite{BonattiGomez-Mont} prove
 that either the group $h(\pi_1 (S))$ has an invariant probability measure or the
foliation $\Fc_h$ is uniquely ergodic and this is the generic case. They construct a proba-
bility measure on $M_h$ by an appropriate averaging process on the leaves and unique
ergodicity means that the averaging process applied to an arbitrary leaf gives always
the same limit.  

In \cite{DeroinKleptsyn} Deroin and  Kleptsyn   investigate the case of minimal sets of  a singular holomorphic  foliation $\Fc=(X,\Lc,E)$.
Recall  that a set $M\subset  X\setminus E $ is  said to be {\it minimal} if it is leafwise saturated closed subset of $X$ which contains no proper subset  with this property.
Recall   also  the  the  following {\bf  Minimal Set Problem}.
\begin{problem}
 \rm Does  there  exist a $\Fc\in\Fc_d(\P^2)$ with $d>1$ which has  a nontrivial minimal set, i.e. a minimal set which is not an algebraic curve?
\end{problem}
This question seems to be asked by C. Camacho in the  mid 1980's. It still remains open, see \cite[Subsection 5.6]{FornaessSibony08} for a recent discussion. 

 Deroin and  Kleptsyn in \cite{DeroinKleptsyn}  prove  the following  result.
\begin{theorem} Let $\Fc=(X,\Lc,E)$ be a singular holomorphic   foliation  in a compact complex surface $X,$ and $M$ be a minimal set  whose  leaves are all hyperbolic.
Assume that there is  no  positive  closed current directed by  $\Fc$ which is   supported on 
$ M $. Then there exists a unique directed positive harmonic current on $M$ (up to a multiplicative constant).
\end{theorem}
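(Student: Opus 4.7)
Suppose, for contradiction, that two directed positive harmonic currents $T, T'$ on $M$ are not proportional. By the Choquet decomposition (Corollary \ref{cor_choquet}) we may take both $T$ and $T'$ to be extremal, so the associated harmonic probability measures $\mu = \Phi(T)$ and $\mu' = \Phi(T')$ supplied by Theorem \ref{thm_harmonic_currents_vs_measures} are ergodic. Being distinct extremals of the simplex of harmonic probability measures on $M$, they are mutually singular; it therefore suffices to derive a contradiction from the existence of a single extremal $T$ under the hypothesis that the heat semigroup $D_t$ admits more than one invariant probability measure on $M$.

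The strategy is to study the transverse dynamics of leafwise Brownian motion. Since $\dim X = 2$, the transversals are one-dimensional, and for every sample path $\omega \in \Omega_x$ and time $t \geq 0$ the holonomy $\hol_{\omega|[0,t]}$ between small transversals at $\omega(0)$ and $\omega(t)$ has a scalar derivative $a(\omega,t)$ with respect to the ambient Hermitian metric. The process $A(\omega,t) := \log|a(\omega,t)|$ is a subadditive additive cocycle over the shift $\sigma_t$, and since $\bar\mu$ is $\sigma_t$-invariant and ergodic (Theorem \ref{T:Random}), Kingman's subadditive ergodic theorem yields the normal Lyapunov exponent
\begin{equation*}
\lambda(T) \;:=\; \lim_{t \to \infty} \frac{1}{t} A(\omega,t) \;\in\; [-\infty, +\infty),
\end{equation*}
which is $\bar\mu$-almost-surely a constant.

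The analytic heart of the argument is the following dichotomy for $\lambda(T)$. A positive exponent is precluded by the compactness of $X$: uniform boundedness of holonomies in finitely many flow boxes covering $M$ forces $\lambda(T) \leq 0$. The key step is to show that $\lambda(T) = 0$ implies the existence of a directed positive \emph{closed} current supported on $M$, contradicting the hypothesis. For this one reweights the Nevanlinna averages $\tau_{x,R}$ of \eqref{e:m_x,R} by a suitable exponential of the cocycle $A$ along Brownian trajectories so that the $\ddc$-error that measures the failure of $T$ to be closed (controlled by Proposition \ref{P:g_P-regular} through $i\tau\wedge\overline\tau\wedge T \leq T \wedge g_P$) tends to zero under the vanishing-exponent assumption. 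Since $M$ lies in a compact surface, Proposition \ref{P:Poincare_mass} ensures the Poincar\'e masses of the reweighted currents remain uniformly bounded, so a weak limit provides a nonzero directed positive closed current on $M$. The hypothesis therefore forces $\lambda(T) < 0$.

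Strict negativity of $\lambda(T)$ yields transverse contraction: a Pesin-type stable manifold construction for the random dynamical system defined by the holonomy cocycle produces, for $\mu$-almost every $x \in M$, a transverse stable germ along which Brownian holonomy contracts exponentially. By the leafwise ergodic theorem (Theorem \ref{T:heat-real-ergodic}) any $D_t$-invariant bounded function is constant on $\mu$-almost every leaf; combined with the transverse contraction just constructed, it is also locally constant in the transverse direction on $M$. Density of leaves (minimality of $M$) promotes this to a global constant, so the fixed subspace of $D_t$ in $L^2(M,\mu)$ is one-dimensional. Via Theorem \ref{thm_harmonic_currents_vs_measures} this one-dimensionality translates into the uniqueness, up to positive multiplicative constant, of directed positive harmonic currents on $M$, contradicting $\mu \perp \mu'$.

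The principal obstacle is the intermediate step in the dichotomy: extracting a directed positive closed current from a vanishing Lyapunov exponent. This is subtle because it requires controlling the martingale fluctuations of $A(\omega,t)$ finely enough that the exponential renormalization of the Nevanlinna currents kills the $\ddc$-boundary terms in the limit, without collapsing the currents themselves. A secondary but still nontrivial technical point is the Pesin-type construction of transverse stable manifolds in the singular holomorphic setting, which relies on the regularity of the leafwise Poincar\'e metric (Theorem \ref{T:Poincare} and its singular analogue Theorem \ref{T:Poincare_bis}) to guarantee that stable directions depend measurably on the base point in $M$.
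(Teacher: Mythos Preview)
Your overall architecture matches the Deroin--Kleptsyn strategy summarized in the paper: first obtain strict negativity of the transverse Lyapunov exponent for an extremal harmonic current, then exploit transverse contraction to force uniqueness. However, there is a genuine gap in your dichotomy argument.

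The step ``a positive exponent is precluded by the compactness of $X$: uniform boundedness of holonomies in finitely many flow boxes covering $M$ forces $\lambda(T)\leq 0$'' is incorrect. Boundedness of the one-step holonomy derivatives in a finite cover only gives $|A(\omega,t)-A(\omega,s)|\leq C|t-s|$, hence $\lambda(T)\in[-C,C]$; it does not by itself rule out $\lambda(T)>0$. A priori nothing prevents an extremal harmonic measure from sitting on a transversally repelling region. In Deroin--Kleptsyn the inequality $\lambda\leq 0$ (and more precisely the existence of at least one harmonic measure with $\lambda<0$) is obtained not from compactness but from an integration-by-parts identity expressing $\lambda(\mu)$ as an integral of a leafwise Laplacian of $\log$ of the transverse conformal factor, combined with the hypothesis that no transverse invariant measure exists. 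Your proposal skips this analytic input entirely, and the subsequent ``$\lambda=0$ implies directed closed current via reweighted Nevanlinna averages'' is too vague to fill the hole: you have not explained why the exponential reweighting kills the $\ddc$-defect without killing the mass, and Proposition~\ref{P:Poincare_mass} is about linearizable singular points, not about minimal sets away from $E$.

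On the second step, your Pesin-type stable manifold construction plus minimality is a reasonable route, but it differs from what the paper records as the Deroin--Kleptsyn method, namely a direct coupling (``similarities between Brownian motions on different leaves''): one shows that Brownian trajectories starting on nearby leaves remain exponentially close for all time with high probability, so that time averages along leaves coincide, forcing any two ergodic harmonic measures to agree. Your approach would likely work once the Lyapunov-exponent step is repaired, but note that invoking Theorem~\ref{T:Poincare_bis} is unnecessary here since $M$ is a compact lamination away from $E$; Theorem~\ref{T:Poincare} already gives the regularity you need.
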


The first main step of  their proof  is the
existence of at least one directed positive harmonic current whose associated Lyapunov exponent is strictly  negative. The second step  exploits  this and  the similarities between
Brownian motions on different leaves  in order to infer the unique ergodicity.

It is  worthy  noting here  that all  the laminations  considered in this subsection are  compact nonsingular hyperbolic.
We wish to address  the  unique  ergodicity for holomorphic  foliations. However, a general holomorphic foliation  is often singular.
In the presence of singularities, the machinary developed by the previous  authors  do not work. Let us look at  the  simple case where the ambient compact complex manifold
is simply $\P^2.$
%%%%%%%%%%%%%%%%%%%%%%%%%%%%%%%%%%%%%%%%%%%%%%%%%%%%%%%%%%%%%%%%%%%
\subsection{Case of  $\P^2$}
\label{SS:Unique_ergodicity_P2}
%%%%%%%%%%%%%%%%%%%%%%%%%%%%%%%%%%%%%%%%%%%%%%%%%%%%%%%%%%%%%%%%%%%%

In  \cite{FornaessSibony05}   Forn{\ae}ss  and Sibony develop an energy theory for  positive $\ddc$-closed   currents of   bidegree $(1,1)$  on every compact K\"ahler manifold $(X,\omega)$
of arbitrary dimension $k\geq 2.$
This  allows them to define  $\int_X T\wedge T\wedge \omega^{k-2}$ for  every positive $\ddc$-closed current $T$ of bidegree $(1,1)$ on $X.$
This theory applies to directed  positive $\ddc$-closed  currents on singular holomorphic  foliations on compact K\"ahler surfaces. 
A short  digression  will be presented in Section \ref{S:Densities_Unique_Ergodicity}.

In  \cite{FornaessSibony05, FornaessSibony10}   Forn{\ae}ss  and Sibony also  develop a geometric intersection theory 
for directed  positive harmonic  currents on singular holomorphic  foliations on $\P^2.$

Combining these two theories, they obtain  the following remarkable unique   ergodicity result for   singular holomorphic foliations
without  invariant algebraic  curves.

\begin{theorem} \label{T:FornaessSibony}{\rm (Forn{\ae}ss-Sibony \cite{FornaessSibony10}). } Let $\Fc$ be a singular  holomorphic foliation  in $\P^2$ 
whose singularities are all hyperbolic. Assume that $\Fc$ has no invariant
algebraic curve. Then $\Fc$  has a unique directed positive  $\ddc$-closed  
current of mass $1.$ Moreover, this unique current $T$ is not closed.
In particular,  for every point $x$ outside the singularity set of $\Fc,$
the current $\tau_{x,R}$ defined in \eqref{e:m_x,R} converges to $T$
when $R\to\infty$.  
\end{theorem}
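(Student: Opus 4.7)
The plan is to combine three ingredients: existence, non-closedness, and uniqueness. For existence, I would invoke Theorem \ref{T:existence_harmonic_currents}(2) of Berndtsson--Sibony: hyperbolic singularities in dimension $2$ are isolated, so $E$ is finite (hence locally pluripolar) and there is a nonzero directed positive $\ddc$-closed current on $\P^2$, which may be normalized to mass $1$. Non-closedness will be a free consequence of the hypotheses: by Theorem \ref{T:generic}(3) of Brunella, since every singularity is hyperbolic and $\Fc$ has no invariant algebraic curve, $\Fc$ admits no nonzero directed positive closed current. So once uniqueness of the $\ddc$-closed directed current of mass $1$ is established, that current cannot be closed.

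The crux is uniqueness, and here I would follow the Forn{\ae}ss--Sibony geometric intersection strategy. A hyperbolic singularity in dimension $2$ lies in the Poincar\'e domain and is non-resonant, so by Poincar\'e's theorem it is linearizable; hence Proposition \ref{P:Poincare_mass} gives finite Poincar\'e mass of any directed positive $\ddc$-closed current, and Corollary \ref{C:Lelong} (via Theorem \ref{T:Lelong}) gives vanishing Lelong numbers at every singular point. Given two directed positive $\ddc$-closed currents $T_1,T_2$ of mass $1$, the idea is to define a geometric self/cross-intersection $T_1\wedge T_2$ as a positive measure on $\P^2$: in each flow box, use the harmonic decomposition of Proposition \ref{P:decomposition} to write $T_i=\int h^{(i)}_a [V_a]\,d\nu_i(a)$ and then pair two transversal plaque families against each other. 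The vanishing Lelong numbers allow the control of this wedge product across the singular set.

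Next, one sets up an energy-type identity. Since $H^{1,1}(\P^2,\C)=\C[\omega_{\mathrm{FS}}]$, the currents $T_1$ and $T_2$ are cohomologous: $T_1-T_2=dd^c S$ for some current $S$ of bidegree $(0,0)$. Using the $\ddc$-closedness of each $T_i$, Stokes' theorem formally yields
\[
\int T_1\wedge T_2-\int T_2\wedge T_2=\int dd^c S\wedge T_2=0,
\]
and symmetrically, so that the symmetric and ``self'' intersections agree; the same computation applied to $T_1-T_2$ against itself gives
\[
\int (T_1-T_2)\wedge(T_1-T_2)=-2\int i\,\partial S\wedge\dbar S\wedge (\text{positive object}),
\]
a negative/positive Hodge--Riemann-type identity. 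Combining this with positivity of the geometric intersection forces $\partial S$ and $\dbar S$ to vanish along leaves, hence $S$ is leafwise constant. The absence of invariant algebraic curves then propagates this to a global constant (any non-trivial level set of $S$ would yield a directed positive closed current, contradicting Brunella), giving $T_1=T_2$. The main obstacle will be making the intersection $T_1\wedge T_2$ into a well-defined, finite positive measure near each linearizable hyperbolic singularity: leaves accumulate wildly there, and one must quantify the geometric transversality of two harmonic currents in a neighborhood modelled on the linear vector field $\lambda_1 z_1\partial_{z_1}+\lambda_2 z_2\partial_{z_2}$, with $\lambda_1/\lambda_2\notin\R$.

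Finally, the convergence $\tau_{x,R}\to T$ as currents is a soft consequence of uniqueness: by the Jensen formula computation appearing in the proof of Theorem \ref{T:geometric_ergodic}, $M_R\cdot i\ddbar\tau_{x,R}=(\phi_x)_*(\nu_r)-\delta_x$ has uniformly bounded mass while $M_R\to\infty$, so every weak limit of $\tau_{x,R}$ is a positive $\ddc$-closed current. One checks it is directed by $\Fc$ and of mass $1$ (after the natural normalization), so uniqueness forces every cluster value to equal $T$, yielding the stated convergence.
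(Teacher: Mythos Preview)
Your overall architecture (existence via Berndtsson--Sibony, non-closedness via Brunella, convergence of $\tau_{x,R}$ from uniqueness) matches the paper's description, but the core uniqueness argument has two genuine gaps.

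First, the step ``$T_1-T_2=dd^c S$ for some current $S$ of bidegree $(0,0)$'' is wrong. The $\ddc$-lemma applies to $d$-closed currents, and $T_1,T_2$ are only $\ddc$-closed; indeed you are trying to prove they are \emph{not} closed. The correct structural decomposition is the Forn{\ae}ss--Sibony energy representation (Theorem~\ref{T:pot}): $T=\Omega+\partial S+\overline{\partial S}+i\ddbar u$ with $S$ of bidegree $(0,1)$, and the energy $\Ec(T)=\int\dbar S\wedge\partial\overline S$ is what enters the self-intersection identity \eqref{e:mass_bis}. Your formal Stokes computation and the subsequent ``$\partial S,\dbar S$ vanish along leaves'' reasoning collapse once $S$ is a $(0,1)$-form rather than a function.

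Second, your proposed geometric intersection---pairing the plaque families $\int h^{(1)}_a[V_a]\,d\nu_1$ and $\int h^{(2)}_b[V_b]\,d\nu_2$ in a flow box---cannot work as stated: both currents are directed by the \emph{same} foliation, so distinct plaques are disjoint and identical plaques give infinite local intersection. This is precisely why the paper stresses that Forn{\ae}ss--Sibony exploit the transitivity of $\Aut(\P^2)$: they define $T_1\wedge T_2:=\lim_{\epsilon\to 0}T_1\wedge\Phi_\epsilon^*T_2$ for a family $\Phi_\epsilon\in\Aut(\P^2)$ with $\Phi_0=\id$, so that the perturbed current is no longer tangent to $\Fc$ and the wedge product is a genuine positive measure. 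The hard work (and the reason the paper calls the computations ``quite involved'') is controlling this limit near the hyperbolic singularities, where one must show the geometric intersection vanishes; combining this with the energy identity and Hodge--Riemann then forces $T_1=T_2$.
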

 By \cite{Brunella06} 
 (see also  Theorem \ref{T:generic} (3)),
   if  all the singularities of $\Fc\in\Fc_d(\P^2)$ are hyperbolic
and $\Fc$ does not possess any invariant algebraic curve, then $\Fc$ admits no
directed positive closed current. So  the conclusion of Theorem \ref{T:FornaessSibony} is a typical property of the family $\Fc_d(\P^2).$
The proof in  \cite{FornaessSibony10} is  based on two   ingredients.  The first one is the energy theory for positive $\ddc$-closed currents
which we mentioned  previously.
The second one is  a geometric  intersection  calculus for  these  currents. For the  second ingredient,
 the  transitivity of the automorphism group of $\P^2$ is heavily  used. Indeed, they define  the geometric intersection  of two directed positive $\ddc$-closed currents  $T,$ $S$  in $\P^2$ as
 the  positive measure
 $$
 T\wedge S:=\lim_{\epsilon\to 0}  (T\wedge \Phi^*_\epsilon S)\, ,
 $$
 where $\Phi_\epsilon$ is a continuous  family of automorphisms $\Aut(\P^2)$  with $\Phi_0$  the identity. 
 Moreover, the proof is quite technical. The computations needed to estimate the geometric intersections are quite involved.
Using these techniques, P\'erez-Garrand\'es \cite{Perez-Garrandes}  has studied the case where $X$ is a homogeneous compact K\"ahler surface.

The  case  where $\Fc$  possesses  invariant algebraic  curves has recently  been solved  as follows.

\begin{theorem} \label{T:Dinh-Sibony} {\rm (Dinh-Sibony \cite{DinhSibony18}). }
Let $\Fc$ be a singular  holomorphic foliation  in $\P^2$ 
whose singularities are all hyperbolic. Assume that
 $\Fc$
admits a finite number of invariant algebraic curves. Then   any directed   positive $\ddc$-closed  current is a linear combination of the currents of integration on
these curves.  In particular, all directed   positive $\ddc$-closed  currents
are closed.
\end{theorem}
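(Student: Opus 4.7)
The plan is to reduce Theorem \ref{T:Dinh-Sibony} to the no-invariant-curve case handled in Theorem \ref{T:FornaessSibony} via a Siu-type decomposition along the invariant algebraic curves, and then to kill the residual current using the density theory for (possibly non-$\ddc$-closed) currents developed in \cite{DinhNguyenSibony18} and outlined in Section \ref{S:Densities_Unique_Ergodicity}. The starting point is that each invariant algebraic curve $C_i$ gives rise to a directed positive closed current $[C_i]$, hence the conclusion is both plausible and sharp.

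First, let $T$ be a directed positive $\ddc$-closed current on $\P^2$ and let $C_1, \dots, C_m$ be the invariant algebraic curves of $\Fc$. For each $i$, let $\lambda_i \geq 0$ denote the generic Lelong number of $T$ along $C_i$. A Siu-type decomposition for positive $\ddc$-closed $(1,1)$-currents yields
\[
T \;=\; \sum_{i=1}^m \lambda_i [C_i] \;+\; R,
\]
where $R$ is a positive $\ddc$-closed current of bidimension $(1,1)$ with zero generic Lelong number along each $C_i$. Since each $[C_i]$ is itself a directed positive closed current for $\Fc$ and the set of directed positive $\ddc$-closed currents is a closed convex cone, $R$ is again directed by $\Fc$. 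In particular no irreducible component of the support of $R$ coincides with any $C_i$, so the problem reduces to showing that any directed positive $\ddc$-closed current not charging the invariant algebraic curves must vanish.

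Second, I would analyze $R$ locally at singularities. At each hyperbolic singularity $a \in E$, the two local separatrices are the only germs of invariant analytic curves; any such separatrix that extends globally must be contained in some $C_i$, hence is not charged by $R$. By Theorem \ref{T:Lelong} (hyperbolic singularities are weakly hyperbolic) this forces $\nu(R, a) = 0$ at every singular point, and $\nu(R, x) = 0$ at any regular point as well by Corollary \ref{C:Lelong} (applied to the restriction of $R$ to a sufficiently small neighbourhood with no invariant curves in its support). With all Lelong numbers of $R$ vanishing and no invariant algebraic curve in its support, one can now imitate the energy/geometric intersection argument of Forn\ae ss--Sibony in \cite{FornaessSibony10}, regularizing by the transitive action of $\Aut(\P^2)$ to define a geometric self-intersection $R \wedge R$ as a positive measure and to establish a sign inequality of the form $\int R \wedge R \leq 0$. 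Since the cohomology class $\{R\} \in H^{1,1}(\P^2, \R)$ is a nonnegative multiple of the hyperplane class, one has $\int R \wedge R \geq 0$ with equality only if $\{R\} = 0$, i.e.\ $R$ has zero mass. Thus $R = 0$ and $T = \sum \lambda_i [C_i]$, which is closed.

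The main obstacle is the final step: extending the energy and geometric-intersection calculus of \cite{FornaessSibony05,FornaessSibony10} to the residual current $R$. In the no-invariant-curve case of \cite{FornaessSibony10}, the absence of invariant algebraic curves is used essentially when passing to the limit in $T \wedge \Phi_\epsilon^* T$; here, although $R$ itself charges no $C_i$, the nearby perturbations $\Phi_\epsilon^* R$ do interact with the invariant curves $C_i$ (and with the singularities and their separatrices), so one must control the resulting residual masses carefully. This control is provided by the density theory for non-$\ddc$-closed currents of \cite{DinhNguyenSibony18}, which gives a robust framework to define tangent currents and self-intersection measures in this delicate geometric setting. Once this machinery is in place, the dichotomy ``$R$ charges an invariant algebraic curve or $R = 0$'' follows, and combining with Step 1 finishes the proof.
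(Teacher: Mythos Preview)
Your reduction via a Siu-type decomposition to a diffuse residual $R$ (no mass on any invariant curve, hence no mass on any leaf) is correct and is exactly the reduction the paper records: ``we need to show that if $T$ is a positive $\ddc$-closed current directed by $\Fc$ having no mass on any leaf, then $T$ is zero.'' Your local analysis of Lelong numbers is also fine once you observe (as you essentially do) that a separatrix carrying mass would force the corresponding leaf to close up to an algebraic curve.

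The genuine gap is in the final step. The route you outline --- pushing through the Forn\ae ss--Sibony geometric self-intersection $R\wedge R$, or its density-theoretic replacement $R\otimes R$ along the diagonal from \cite{DinhNguyenSibony18} --- does \emph{not} yield $R=0$. That machinery gives exactly the trichotomy of Theorem~\ref{T:Unique_ergodicity_1bis}: the vanishing of the tangent current of $R\otimes R$ along $\Delta$ produces $\{R\}^2=2\Ec(R)\ge 0$, and the difference argument (Lemma~\ref{L:closed}) gives uniqueness \emph{among diffuse currents}. But Property~(c) of Theorem~\ref{T:Unique_ergodicity_1bis} explicitly allows a single diffuse non-closed current to coexist with invariant curves, and in $\P^2$ the class $\{R\}$ is automatically nef and big, so nothing in the self-intersection approach rules this out. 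Your last paragraph acknowledges the obstacle but the proposed fix (densities for $T\otimes T$) is the wrong density theory for this particular conclusion.

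What the actual proof in \cite{DinhSibony18} does --- and what the survey points to --- is a density theory for a single positive $\ddc$-closed current $R$ along the invariant curve $C$, not along the diagonal. One studies the tangent current of $R$ along $C$ (dilating $R$ in the normal direction to $C$); because $R$ is directed and diffuse, the nearby leaves are, near each singular point on $C$, tangent to the separatrices, and away from singularities they do not meet $C$. A careful analysis of this local picture shows the tangent current vanishes. On the other hand its mass is cohomological, equal to $\{R\}\smile\{C\}$. Since $H^{1,1}(\P^2,\R)$ is one-dimensional, both $\{R\}$ and $\{C\}$ are nonnegative multiples of the hyperplane class, so $\{R\}\smile\{C\}=0$ forces $\{R\}=0$ and hence $R=0$. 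This is the step specific to the ``invariant curve present'' situation and is precisely what distinguishes Theorem~\ref{T:Dinh-Sibony} from the mere trichotomy; your proposal does not contain it.
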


 Theorem \ref{T:Dinh-Sibony} is  suprising even in the special  case where $\Fc$  admits the line at infinity $L_\infty$  as an invariant curve.
 Let $\Fc$ be a generic  foliation of a given degree $d>1$ with this property.
 By Khudai-Veronov 
\cite{IlyashenkoYakovenko}, all  leaves  (except $L_\infty$) of $\Fc$ are dense. So by intuition from  Theorem \ref{T:FornaessSibony} one could  expect   that there  should be 
a directed $\ddc$-closed  current with the full support $\P^2.$ However, Theorem \ref{T:Dinh-Sibony}  says that this  intuition is   false.

To prove Theorem   \ref{T:Dinh-Sibony} we need to show that if $T$ is a positive $\ddc$-closed
current directed by $\Fc$ having no mass on any leaf, then $T$ is zero.
For this purpose,   Dinh and Sibony \cite{DinhSibony18} develop a theory of densities of positive $\ddc$-closed $(1,1)$-currents in a compact K\"ahler surface. The  pioneering
theory  that laid down the foundation was previously introduced by these authors in \cite{DinhSibony18b}  in the context of positive closed currents  defined on  compact K\"ahler manifolds. 
Applications of these  theories in   complex dynamics of higher dimension could be found in \cite{DinhNguyenTruong,DinhSibony16, DinhSibony17b} etc.

Theorems \ref{T:FornaessSibony} and \ref{T:Dinh-Sibony} gives the complete  dichotomy  of the unique ergodicity for singular holomorphic  foliations  in  $\P^2$
with hyperbolic  singularities.
 
  \begin{problem}\label{Pr:Pk}\rm
    Are there  any versions of     Theorem \ref{T:FornaessSibony} and    Theorem   \ref{T:Dinh-Sibony} in $\P^k$  with $k>2$  when  we assume that the singularities are all hyperbolic linearizable ? 
  \end{problem}

  \begin{problem}\label{Pr:Pk-bis}\rm
    Find  analogous  versions of     Theorem \ref{T:FornaessSibony} and    Theorem   \ref{T:Dinh-Sibony}  when  the singularities are only linearizable in the  case of $\P^2,$
 and then the general case $\P^k$  with $k>2.$   
  \end{problem}

%%%%%%%%%%%%%%%%%%%%%%%%%%%%%%%%%%%%%%%%%%%%%%%%%%%%%%%%%%%%%%%%%%%
\subsection{Case of compact K\"ahler surfaces}
\label{SS:Unique_ergodicity_Kaehler_surfaces}
%%%%%%%%%%%%%%%%%%%%%%%%%%%%%%%%%%%%%%%%%%%%%%%%%%%%%%%%%%%%%%%%%%%%

Our  recent  work in collaboration with  Dinh and Sibony \cite{DinhNguyenSibony18} gives a complete answer to the unique  ergodicity for singular holomorphic
foliations on general  compact K\"ahler surfaces. 
Our results also hold for bi-Lipschitz laminations (by Riemann surfaces) (without singularities) in $X$. 
Recall that such a  lamination is a compact subset of $X$ which is locally a union of disjoint graphs of holomorphic functions depending in a bi-Lipschitz way on parameters,
see Subsection \ref{SS:positive_ddc_closed_currents}   for a precise local description. 

Let $H^{1,1}(X)$ denote the Dolbeault cohomology
group of real smooth $(1,1)$-forms on $X.$ For  a real smooth closed $(1,1)$-form $\alpha$ on $X,$ let $\{\alpha\}$ be  its  class
in $H^{1,1}(X).$
The cup-product $\smile$ on $H^{1,1} (X) \times H^{1,1} (X)$ is defined by
$$
(\{\alpha\},\{\beta\}) \mapsto \{\alpha\} \smile \{\beta \} :=\int_X\alpha\wedge\beta,
$$
where $\alpha$ and $\beta$ are real smooth closed forms. The last integral depends only on
the classes of $\alpha$  and $\beta.$ The bilinear form $\smile$ is non-degenerate and induces
a canonical  isomorphism  between  $H^{1,1} (X)$ and its  dual  $H^{1,1} (X)^*$  (Poincar\'e duality). In
the definition of $\smile$ one can take $\beta$ smooth and $\alpha$ a current in the sense of
de Rham. So, $H^{1,1} (X)$ can be defined as the quotient of the space of real
closed $(1, 1)$-currents by the subspace of $d$-exact currents. Recall that an $(1, 1)$-current $\alpha$ is real  (resp. $\ddc$-closed)  if $\alpha = \bar \alpha$ (resp. $\ddc\alpha=0$).
Assume that $\alpha$ is a real
$\ddc$-closed $(1, 1)$-current (this  is  the case when, for example,  $\alpha$ is  a directed positive harmonic  current, see 
Theorem  \ref{thm_harmonic_currents_vs_measures} (1)). Then 
by the $\ddc$-lemma, the integral $\int_X \alpha \wedge \beta$ is also independent of the
choice of $\beta$ smooth and closed in a fixed cohomology class. So, using the above isomorphism,
one can associate to  such $\alpha$ a class $\{\alpha\}$ in $H^{1,1} (X).$

 We need to recall some  terminology in K\"ahler geometry. A K\"ahler form on $X$ is a  strictly positive closed smooth  $(1,1)$-form. 
The  {\it  K\"ahler cone} of $X$ is  the  set of  the cohomology classes of  K\"ahler  forms on $X.$ This is a cone in $H^{1,1}(X).$  
 We say  that  a cohomology class in $ H^{1,1}(X)$ is  {\it nef} if  it belongs to the closure of the K\"ahler cone of $X.$
  We say  that  a cohomology class in $ H^{1,1}(X)$ is  {\it big} if it can be represented by a strictly positive closed (not necessarily smooth) $(1,1)$-current.

  Now  we are in the position to state  the first  result of this subsection.
  
\begin{theorem} \label{T:Unique_ergodicity_1}  {\rm (\cite[Theorem 1.1]{DinhNguyenSibony18})}
 Let $\Fc=(X,\Lc,E)$  be a singular holomorphic foliation  with   only hyperbolic singularities or
 a bi-Lipschitz lamination in a compact K\"ahler surface $(X,\omega)$. 
Assume that $\Fc$ admits no directed positive closed current.
Then there exists a  unique  positive  $\ddc$-closed  current $T$ of mass  $1$  directed  by $\Fc.$  
In particular, for an arbitrary point $x\in X\setminus E,$
 $\tau_{x,R} \to T$ in the sense of currents, as $R \to \infty$,  where $\tau_{x,R}$ is   the  Nevanlinna current  defined by \eqref{e:m_x,R}. Moreover, the cohomology class $\{T\}$ of $T$ is nef and big.
 \end{theorem}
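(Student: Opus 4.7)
The plan is to establish the three claims (existence and uniqueness of $T$, equidistribution of $\tau_{x,R}$, and nef + big) by combining a standard averaging construction with the density theory for $\ddc$-closed currents announced in Subsection \ref{S:Densities_Unique_Ergodicity}.

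\textbf{Existence.} For a bi-Lipschitz lamination (no singularities), existence of a nonzero directed positive $\ddc$-closed current $T$ is classical (Garnett, Forn\ae ss--Sibony). In the singular holomorphic case, since all singularities are hyperbolic we have $\Fc$ Brody hyperbolic (Remark \ref{R:Brody_sufficiency}), the local structure near singular points is linearizable, and Proposition \ref{P:Poincare_mass} gives finite Poincar\'e mass. Theorem \ref{T:existence_harmonic_currents} (item (2)) then yields a nonzero directed positive $\ddc$-closed current; normalize its mass to $1$.

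\textbf{Uniqueness.} This is the crux of the proof. I would argue by contradiction: assume $T_1,T_2$ are two distinct directed positive $\ddc$-closed currents of mass $1$. The strategy is to apply the density/intersection theory for $\ddc$-closed $(1,1)$-currents on a compact K\"ahler surface (Section \ref{S:Densities_Unique_Ergodicity}) to give meaning to a tangent number $\kappa(T_1,T_2)$ which plays the role of the geometric intersection $\{T_1\}\smile\{T_2\}$. The key steps are: (i) show $\kappa(T_1,T_2)\ge 0$ using positivity in flow boxes and the local harmonic decomposition of Proposition \ref{P:decomposition}; (ii) show $\kappa(T_i,T_i)\ge 0$ with equality only if $T_i$ is supported on a countable union of leaves, hence gives rise to a directed positive closed current---excluded by hypothesis; (iii) invoke a Hodge index/Cauchy--Schwarz inequality of the form
\[
\kappa(T_1,T_2)^2 \;\ge\; \kappa(T_1,T_1)\,\kappa(T_2,T_2),
\]
combined with the equality $\kappa(T_1,T_1)=\kappa(T_2,T_2)=\kappa(T_1,T_2)$ (forced by $T_1,T_2$ being cohomologous after normalization via the class computation using a K\"ahler form). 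The consequent vanishing of the self-intersection of $T_1-T_2$, plus positivity, forces $T_1=T_2$. The decisive ingredient is that ``no directed positive closed current'' upgrades the inequality $\kappa(T,T)\ge 0$ to strict positivity.

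\textbf{Equidistribution and geometry of $\{T\}$.} For equidistribution, the Nevanlinna currents $\tau_{x,R}$ introduced in \eqref{e:m_x,R} have uniformly bounded mass, as their defining integral is controlled by the Poincar\'e mass (finite by Proposition \ref{P:Poincare_mass}). Any weak limit $T'$ is a directed positive $\ddc$-closed current; it has mass $1$ by a Jensen-type computation as in the proof of Theorem \ref{th_ergodic_Lp} (the $\ddc$ of $\tau_{x,R}$ is $O(M_R^{-1})=O(R^{-1})$). The already proved uniqueness then gives $T'=T$, so the full family converges. For nefness I would show that for any irreducible curve $C\subset X$ one has $\{T\}\smile\{C\}\geq 0$: if $[C]$ is not directed by $\Fc$ then the geometric wedge $T\wedge[C]$ is a well-defined positive measure (leaves meet $C$ transversally outside a pluripolar set, using linearizability at hyperbolic singular points on $C$); if $[C]$ is directed then $C$ is invariant and $[C]$ is a directed positive closed current, contradicting the assumption. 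Combined with $\{T\}^2\ge0$ from Step (ii) above and the K\"ahler criterion on surfaces, this gives nefness. Bigness then amounts to $\{T\}^2>0$, which is exactly the strict positivity of $\kappa(T,T)$ established in the uniqueness argument.

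The main obstacle is step (ii)-(iii) of uniqueness: making precise a well-defined, bilinear, symmetric pairing $\kappa$ on directed positive $\ddc$-closed currents, and establishing the Hodge-index-type inequality. The densities of $\ddc$-closed $(1,1)$-currents developed in \cite{DinhNguyenSibony18} and outlined in Section \ref{S:Densities_Unique_Ergodicity} are tailored precisely for this---this is where virtually all the analytic work concentrates, with particular care at hyperbolic singular points where Theorem \ref{T:Lelong} (vanishing Lelong numbers) is essential to avoid spurious atomic contributions.
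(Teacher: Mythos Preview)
Your outline identifies the right tools---density/tangent-current theory, energy, Hodge--Riemann---but the assembly of the uniqueness argument has a real gap. You assert that $T_1$ and $T_2$ are ``cohomologous after normalization via the class computation using a K\"ahler form''; on a K\"ahler surface with $h^{1,1}(X)>1$ this is false: having mass $1$ only gives $\{T_1\}\smile\{\omega\}=\{T_2\}\smile\{\omega\}$, i.e.\ $\{T_1\}-\{T_2\}$ is \emph{primitive}, not zero. Hence your equalities $\kappa(T_1,T_1)=\kappa(T_2,T_2)=\kappa(T_1,T_2)$ are unavailable, and the Cauchy--Schwarz/Hodge-index step does not conclude. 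Moreover, even if a ``self-intersection of $T_1-T_2$'' vanished, your pairing $\kappa$ is not positive definite on the space of directed $\ddc$-closed currents, so $\kappa(T_1-T_2,T_1-T_2)=0$ would not by itself force $T_1=T_2$.

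The paper's route repairs this as follows. One applies the vanishing of tangent currents (Theorem~\ref{T:main_3}) separately to $T_1$, $T_2$, \emph{and} $T_1+T_2$; the mass formula (Theorem~\ref{T:main_2}) then gives three identities $\int\Omega_j^2=2\Ec(T_j)$ and $\int(\Omega_1+\Omega_2)^2=2\Ec(T_1+T_2)$. A linear combination yields $\int(\Omega_1-\Omega_2)^2=2\int\dbar S\wedge\partial\overline S$ with $S=S_1-S_2$. Since $\{\Omega_1-\Omega_2\}$ is primitive, Hodge--Riemann makes the left side $\le 0$; the right side is $\ge 0$; so $\dbar S=0$ and $T_1-T_2$ is \emph{closed}---not yet zero (this is Lemma~\ref{L:closed}). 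The passage from ``$T_1-T_2$ closed'' to ``$T_1=T_2$'' is a separate step you omit: in each flow box the plaquewise densities of the closed current $T_1-T_2$ are constants, and one splits $T_1-T_2=T^+-T^-$ into global directed positive \emph{closed} currents, contradicting the hypothesis. Finally, your nefness argument via intersection with curves is insufficient on a non-projective K\"ahler surface; the paper instead obtains nefness from the density theory together with the Demailly--P\u aun criterion (Corollary~\ref{c:nef}), and bigness from $\{T\}^2=2\Ec(T)>0$ since $T$ is not closed.
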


The  new  idea in the proof of  Theorem  \ref{T:Unique_ergodicity_1} is  to introduce 
a more flexible  tool 
which is   a  density theory for tensor products of  positive $\ddc$-closed currents. It is  worthy noting that
such tensor  products  are  in general  not $\ddc$-closed. So  these currents go beyond the scope of previous  theories of densitis \cite{DinhSibony18b,DinhSibony18}.
The method allows us to bypass   the assumption of   homogeneity of $X,$  which was frequently used in  \cite{FornaessSibony10}.    
The proof is  more conceptual and also  far less technical. The strategy is as follows. Given a positive $\ddc$-closed current $T$ on a surface $X$, we consider 
the positive current $T\otimes T$ near the diagonal  $\Delta$ of $X\times X$ which, in general, is not $\ddc$-closed.
We study the tangent currents to $T\otimes T$ along the diagonal $\Delta$. 
As one can expect this is related to the self-intersection properties of the current $T$. It turns out that the geometry of the tangent currents is quite simple. 
They are positive closed currents and are the pull-back of positive measures $\vartheta$ on $\Delta$ to the normal bundle of $\Delta$ in $X\times X$.
We relate the mass of $\vartheta$ to a cohomology class of the current $T$ and its energy.

The foliation or lamination enters in the picture to prove that $\vartheta$ is zero when $T$ is directed by a foliation or lamination as above. This is done using the local properties of the foliation or lamination,
the local description of $T$ and in particular, that the singularities are hyperbolic.
The vanishing of $\vartheta$ gives easily the uniqueness using a kind of Hodge-Riemann relations.

Note that in  Theorem  \ref{T:Unique_ergodicity_1}, the current $T$ is necessarily extremal in the cone of all positive $\ddc$-closed currents on $X$. 
Indeed, if $T'$ is such a current and $T'\leq T$, then $T'$ is necessarily directed by the foliation and according to the theorem, 
$T'$ is proportional to $T$. Note also that the nef property of $\{T\}$ is a consequence of a general result of independent interest, see Corollary \ref{c:nef} below. That corollary is a byproduct of our theory of densities of currents.

The following trichotomy  gives us a more complete picture of the strong ergodicity obtained in the  study of  arbitrary compact K\"ahler surfaces. 
For the notion of invariant (closed) analytic curves, see Definition \ref{D:invariant-curve}.

\begin{theorem}\label{T:Unique_ergodicity_1bis}  {\rm (\cite[Theorem 1.2]{DinhNguyenSibony18})} Let $\Fc=(X,\Lc,E)$    be  
a   singular  holomorphic foliation   with   only hyperbolic singularities or a bi-Lipschitz lamination   in a 
   compact K\"ahler surface $(X,\omega)$. Then one and only one of the following three possibilities occurs.
\begin{itemize}
\item[(a)] $\Fc$ admits invariant closed analytic curves and all positive directed $\ddc$-closed $(1,1)$-currents are linear combinations, with non-negative coefficients, of the currents of integration on those curves. In particular, these currents are all closed.
\item[(b)] $\Fc$ admits a directed positive closed $(1,1)$-current $T$ of mass $1$ having no mass on invariant closed analytic curves (this property holds when there is no such a curve).
Every directed positive $\ddc$-closed $(1,1)$-current is closed, and if it has no mass on invariant closed analytic curves, then it has no mass on each single leaf and its cohomology class 
is proportional to $\{T\}$. Moreover, $\{T\}$ is nef  and $\{T\}^2=0$.
\item[(c)] $\Fc$ admits a unique 
directed positive $\ddc$-closed and non-closed $(1,1)$-current $T$ of mass $1$ having no mass on each single leaf. 
Every directed positive $\ddc$-closed $(1,1)$-current is a combination, with non-negative coefficients, of $T$ and the currents of integration on invariant closed analytic curves (if there are any). 
Moreover, $\{T\}$ is nef and big.
\end{itemize}   
\end{theorem}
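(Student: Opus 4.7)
The plan is to leverage Theorem \ref{T:Unique_ergodicity_1} and its underlying density theory of tangent currents. The first step is to produce, for any directed positive $\ddc$-closed $(1,1)$-current $S$, a Siu-type decomposition
\[
S = \sum_i c_i [C_i] + S_0,
\]
where the $C_i$ run over invariant closed analytic curves of $\Fc$ carrying positive generic Lelong number of $S$, the coefficients $c_i\ge 0$ are these Lelong numbers, and $S_0$ is a directed positive $\ddc$-closed current with no mass on any invariant closed analytic curve. Two points require justification: first, that any irreducible analytic curve carrying mass of a directed current is automatically invariant under $\Fc$, which is a local consequence of Proposition \ref{P:decomposition}; and second, that $S_0$ remains $\ddc$-closed, which follows since each $[C_i]$ is positive closed and Skoda's bound (Proposition \ref{P:Skoda}) together with compactness of $X$ make the series absolutely convergent with locally finite support.

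Next, let $\Kc$ be the convex cone of directed positive $\ddc$-closed currents of $\Fc$ with no mass on any invariant closed analytic curve. The trichotomy corresponds to $\Kc=\{0\}$ (yielding (a) at once from the decomposition), $\Kc\ne\{0\}$ consisting entirely of closed currents (case (b)), and $\Kc$ containing a non-closed current (case (c)). In case (c) the plan is to run the proof of Theorem \ref{T:Unique_ergodicity_1} with minimal adjustments on a fixed non-closed $T\in\Kc$ normalized to mass one: this yields that every element of $\Kc$ is a non-negative multiple of $T$, that $\{T\}$ is nef via the density-theoretic corollary alluded to after Theorem \ref{T:Unique_ergodicity_1}, and that $\{T\}^2>0$, hence $\{T\}$ is big, from the strict positivity of the self-intersection produced by the tangent-current analysis when $T$ fails to be closed.

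The decisive tool in both cases (b) and (c) is the vanishing of the tangent measure $\vartheta$ to $T\otimes T$ along the diagonal $\Delta\subset X\times X$. Indeed, any element of $\Kc$ gives no mass to a single leaf, for otherwise the closure of that leaf would be a closed invariant analytic curve by the result of Forn{\ae}ss--Sibony quoted after Definition \ref{D:invariant-curve}. The same argument that produces $\vartheta=0$ in the proof of Theorem \ref{T:Unique_ergodicity_1} then applies. In case (b), where all elements of $\Kc$ are closed, the measure $\vartheta$ coincides with the diagonal push of $T\wedge T$ and its vanishing reads $\{T\}^2 = 0$; a Hodge--Riemann type argument on the quadratic form $(\alpha,\beta)\mapsto \int_X\alpha\wedge\beta$ restricted to the span of classes in $\Kc$ then collapses these classes onto a single ray, giving the stated proportionality of cohomology classes. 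In case (c), closedness of $T$ is precisely what breaks, so $\vartheta=0$ does not translate into $\{T\}^2=0$ but rather into the identification of $\{T\}^2$ with an energy-type integral involving $\partial T$, which is strictly positive and yields bigness.

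The principal obstacle I anticipate is the coexistence, in case (c), of the non-closed generator $T$ with possibly many invariant closed analytic curves: the proof of Theorem \ref{T:Unique_ergodicity_1} is written assuming no directed positive closed current exists at all, whereas here one must allow those coming from invariant curves and only rule out closed currents in $\Kc$. This will require refining the vanishing of $\vartheta$ so as to distinguish the contribution from the non-closed part and from the analytic-curve part of any competitor, in order to pass from cohomological uniqueness to actual uniqueness of $T$ up to positive scalar. A secondary technical difficulty is the absolute convergence of the Siu-type sum in Step 1 when infinitely many invariant closed analytic curves could a priori carry positive mass; Skoda's inequality together with the finiteness of the total mass of $S$ on the compact surface $X$ should make the sum tractable.
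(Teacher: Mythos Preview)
Your overall architecture matches the paper's: the Siu-type decomposition you describe is exactly Theorem \ref{T:auxiliary}, the vanishing of the tangent measure $\vartheta$ is Theorem \ref{T:main_3}, and the mass formula $\|\vartheta\|=\{T\}^2-2\Ec(T)$ from Theorem \ref{T:main_2} is what gives $\{T\}^2=0$ in case (b) and $\{T\}^2>0$ in case (c). The Hodge--Riemann step you sketch for case (b) is also the paper's route. One minor inaccuracy: $\vartheta$ is not literally the diagonal push of $T\wedge T$; what you need is only the mass identity just quoted.

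Where you have a genuine gap is precisely the obstacle you flag in case (c), and your proposed fix (``refining the vanishing of $\vartheta$'') is not how the paper resolves it and it is unclear how it would. The paper's argument is more elementary and does not revisit the tangent-current machinery. First it proves (Lemma \ref{L:closed}, via $\vartheta=0$ applied to $T_1$, $T_2$ and $T_1+T_2$, followed by Hodge--Riemann on the primitive class $\{T_1\}-\{T_2\}$) that for any two mass-one elements $T_1,T_2\in\Kc$ the difference $T_1-T_2$ is \emph{closed}. Then, working in a regular flow box, write each $T_j$ as $\int h^\alpha_j[V_\alpha]\,d\mu_j(\alpha)$; closedness forces $h^\alpha_1 r_1(\alpha)-h^\alpha_2 r_2(\alpha)$ to be constant in the plaque variable, and the Jordan decomposition of this signed transversal measure yields global positive closed directed currents $T^\pm$ with $T_1-T_2=T^+-T^-$ (extension across the finite set $E$ is standard). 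Since $T_1,T_2$ have no mass on single leaves, neither do $T^\pm$, so $T^\pm\in\Kc$. Now fix $T_1$ non-closed; if $\Kc$ contained any closed element, take $T_2$ to be one --- then $T_1=T_2+T^+-T^-$ would be closed, a contradiction. Hence $T^\pm=0$, so $T_1=T_2$, and uniqueness in $\Kc$ follows. This local flow-box step is the missing idea in your proposal.
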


Note  that a general theorem by Jouanolou \cite{Jouanolou78} says that either   there are only finitely many invariant closed curves, or $\Fc$ admits a  first meromorphic integral. When $\Fc$ admits a 
first meromorphic integral,
all leaves are  invariant closed analytic curves, and hence  all directed positive   $\ddc$-closed currents are  closed.

A polynomial vector field in $\C^2$ induces a holomorphic foliation in $\P^2$.
When we fix the maximum of the degrees of its coefficients, if the vector field is generic,
the line at infinity $L_\infty:=\P^2\setminus\C^2$ is an invariant curve, see Ilyashenko-Yakovenko \cite{IlyashenkoYakovenko}. 
The current $[L_\infty]$ is the only directed positive $\ddc$-closed $(1,1)$-current of mass 1 by  Theorem \ref{T:Dinh-Sibony}. 
So Property (a) holds in that case, see \cite{DinhSibony18} for details and also Rebelo \cite{Rebelo} 
for a related result. 

If $\Fc$ is a smooth fibration on $X$, then the directed positive $\ddc$-closed currents are all closed and are generated by the fibers of $\Fc$. They belong to the same cohomology class which is
nef with zero self-intersection. So Property (b) holds in that case. Using a suspension one can also construct examples satisfying Property (b) which are not fibrations, see \cite[Ex.\,1]{FornaessSibonyWold} and replace the circle there by $\P^1$. In such examples, there are two invariant closed curves and infinitely many directed positive closed $(1,1)$-currents of mass 1 having no mass on those curves.

Property (c) implies in particular that  for $x\in X\setminus E,$ a  cluster point of the limit $\tau_{x,R}$  as $R$ tends to infinity  is a current   of the form $\lambda T+\sum\lambda_j[V_j],$
where  $\lambda,\lambda_j\in\R^+$ and $V_j$ are  invariant closed analytic curves.
Property (c) holds for foliations which are, in some sense, generic. There are many examples of such foliations in $\P^2$ without invariant closed analytic curves. The cohomology class of the unique directed $\ddc$-closed $(1,1)$-current here is K\"ahler because $H^2(\P^2,\R)$ is of dimension 1. 
If we blow up the singularities of the foliation, we get examples satisfying the same property and having invariant closed analytic curves. Then, the cohomology class of
the unique directed $\ddc$-closed $(1,1)$-current is no more K\"ahler but it is big. In fact, we have the following general result which is a direct consequence of Theorem \ref{T:Unique_ergodicity_1bis}.

\begin{corollary}
Let $\Fc$ be a singular  holomorphic foliation  with   only hyperbolic singularities or a bi-Lipschitz lamination  in a 
   compact K\"ahler surface $X$. Let $T$ be a positive $\ddc$-closed current directed by $\Fc$ having no mass on invariant closed analytic curves. Then the following properties are equivalent : 
   
\ \     (1) \ $T$ is not closed; \qquad (2) \ $\{T\}$ is big;  \qquad (3) \ $\{T\}^2>0$; \quad and  \quad (4) \ $\{T\}^2\not=0$.
\end{corollary}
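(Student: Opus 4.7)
The plan is to derive the equivalence from the trichotomy of Theorem \ref{T:Unique_ergodicity_1bis} together with the classical characterization of bigness for nef classes on a compact K\"ahler surface. The argument is essentially a case analysis dictated by that trichotomy, and the only input beyond it is the fact that, for a nef $(1,1)$-cohomology class $\alpha$ on a compact K\"ahler surface, one has $\alpha$ big if and only if $\alpha^2>0$.

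First I would eliminate case (a): since $T$ has no mass on any invariant closed analytic curve, and since in case (a) every directed positive $\ddc$-closed current is a non-negative combination of currents of integration on such curves, one concludes $T=0$, and properties (1)--(4) all fail simultaneously.

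In case (b), Theorem \ref{T:Unique_ergodicity_1bis} asserts that every directed positive $\ddc$-closed current is closed, so (1) fails, and that $\{T\}$ is a non-negative multiple of the class $\{T_0\}$ of the reference current $T_0$, which is nef with $\{T_0\}^2=0$. Hence $\{T\}^2=0$, so (3) and (4) fail, and the nef class $\{T\}$ with zero self-intersection cannot be big by the classical characterization mentioned above, so (2) also fails.

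In case (c), the structure statement of Theorem \ref{T:Unique_ergodicity_1bis} together with the hypothesis on $T$ forces $T=\lambda T_0$ for some $\lambda\geq 0$, where $T_0$ is the unique non-closed directed $\ddc$-closed current of mass $1$ having no mass on each single leaf. For $\lambda=0$ everything fails as before; for $\lambda>0$ all four properties hold: (1) because $T_0$ is not closed, (2) because $\{T_0\}$ is nef and big, and (3)--(4) because, once more by the nef/big characterization on surfaces, a nef and big class has strictly positive self-intersection. This exhausts the three cases and completes the equivalence. The main obstacle I anticipate is not conceptual but rather the need to carefully distinguish the current-level and cohomology-level statements of Theorem \ref{T:Unique_ergodicity_1bis} and to invoke cleanly the nef/big equivalence on surfaces — a standard ingredient (following from Demailly's holomorphic Morse inequalities, or from the Hodge index theorem combined with continuity of the volume function) that, once used in both case (b) and case (c), converts the trichotomy into the stated equivalence of (1)--(4).
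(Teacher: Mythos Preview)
Your proposal is correct and follows exactly the approach the paper indicates: the corollary is stated as a direct consequence of Theorem~\ref{T:Unique_ergodicity_1bis}, and your case-by-case analysis of the trichotomy (together with the standard fact that on a compact K\"ahler surface a nef class is big if and only if its self-intersection is positive) is precisely how one unwinds that implication. The only minor addition worth noting is that the nefness of $\{T\}$, which you use implicitly to get $(2)\Leftrightarrow(3)\Leftrightarrow(4)$, is also recorded independently in Theorem~\ref{T:auxiliary}(b) and Corollary~\ref{c:nef}, so one could alternatively argue $(1)\Rightarrow(2)$ directly from Theorem~\ref{T:auxiliary}(b) and reserve the trichotomy only for the converse $(2)\Rightarrow(1)$; but your route through the full trichotomy is equally valid and is what the paper intends.
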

 
Note that the hyperbolicity of the singularities is necessary in this result. The foliation on $\P^2$, given on an affine chart by the holomorphic 1-form 
$x_2dx_1-ax_1dx_2$ with $a\in\R$,  admits a non-hyperbolic singularity at 0 as well as diffuse invariant positive closed $(1,1)$-currents whose cohomology classes are K\"ahler.
See also Corollary \ref{c:nef} and Theorem \ref{T:auxiliary} below which apply for foliations with arbitrary singularities.

 \begin{remark}
  \rm  The results in this  subsection  give a complete answer to  Problem 5.8 in  our previous survey \cite{NguyenVietAnh18d} (see also  \cite{Deroin13}).
 \end{remark}

  \begin{problem}\rm  (Generalization  of Problem \ref{Pr:Pk}).
    Are there  any versions of     Theorems \ref{T:Unique_ergodicity_1} and \ref{T:Unique_ergodicity_1bis} for  singular holomorphic  foliations  with  hyperbolic linearizable  singularities
    on 
 compact K\"ahler manifold $X$ of dimension  $k>2$  ? 
  \end{problem}

  \begin{problem}\rm  (Generalization  of Problem \ref{Pr:Pk-bis}).
    Are there  any versions of     Theorems  \ref{T:Unique_ergodicity_1} and \ref{T:Unique_ergodicity_1bis} for singular holomorphic  foliations  with   linearizable  singularities
 on compact K\"ahler manifold $X$ of dimension  $k>2$  ? 
  \end{problem}

%%%%%%%%%%%%%%%%%%%%%%%%%%%%%%%%%%%%%%%%%%%%%%%%%%%%%%%%%%%%%%%%%%%%%%%%%%%%%%%%
%%%%%%%%%%%%%%%%%%%%%%%%%%%%%%%%%%%%%%%%%%%%%%%%%%%%%%%%%%%%%%%%%%%%%%%%%%%%%%%
\section{Theory of energy, theory of densities and strategy for the  proof of the unique ergodicity}  \label{S:Densities_Unique_Ergodicity}

In this section we outline  the  proof of  Theorem \ref{T:Unique_ergodicity_1bis}. Before  doing so, we   present the main tools need for the proof: Fornaess-Sibony theory of energy and 
our theory of densities for a class of non $\ddc$-closed currents.
We refer the reader to \cite{DinhSibony18b, DinhSibony18} for the case of $\ddc$-closed currents.

%%%%%%%%%%%%%%%%%%%%%%%%%%%%%%%%%%%%%%%%%%%%%%%%%%%%%%%
\subsection{Energy of positive $\ddc$-closed currents}
%%%%%%%%%%%%%%%%%%%%%%%%%%%%%%%%%%%%%%%%%%%%%%%%%%%%%%%
The following result was obtained by  Forn{\ae}ss-Sibony in  \cite{FornaessSibony05}. %, see also Theorem \ref{T:T-rep} below.

\begin{theorem} \label{T:pot}
Let $T$ be a positive $\ddc$-closed current on $X$. Then
it can be represented as 
\begin{equation} \label{e:decompo_posi_har}
T=\Omega+\partial S+\overline{\partial S} + i\ddbar u
\end{equation}
with $\Omega$ a smooth real closed $(1,1)$-form, $S$ a current of bi-degree $(0,1)$ and $u$ a real function in $L^p$ for $p<2.$ 
Moreover, for every such a representation, the currents $\dbar S$ and $\partial\overline S$ do not depend on the choice of $\Omega,S,u$ and they are
forms of class $L^2$, uniquely determined by $T$. 
\end{theorem}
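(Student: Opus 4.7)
The plan is to reduce the theorem to the $\partial\dbar$-lemma for currents on a compact K\"ahler surface, augmented by Hodge theory and the positivity of $T$ for the required regularity.

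For \textbf{existence}, I would first observe that the $(1,2)$-current $\dbar T$ is $d$-closed: $\dbar^2 T=0$ trivially, and $\partial\dbar T=\ddbar T=0$ by the hypothesis $\ddc T=0$. It is also tautologically $\dbar$-exact. The $\partial\dbar$-lemma on the compact K\"ahler manifold then yields a $(0,1)$-current $V$ with $\dbar T=\ddbar V$. Setting $S:=-V$, a direct calculation gives $\dbar(T-\partial S-\overline{\partial S})=\dbar T+\ddbar S=0$, and conjugating (using that $T$ is real) yields $\partial(T-\partial S-\overline{\partial S})=0$ as well. Hence $T-\partial S-\overline{\partial S}$ is a real $d$-closed $(1,1)$-current. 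A short Stokes computation shows that for any smooth closed $(1,1)$-form $\eta$ one has $\int\partial S\wedge\eta=\int\overline{\partial S}\wedge\eta=0$, so its de~Rham class coincides with the $\ddc$-cohomology class $\{T\}$. Choosing a smooth real closed representative $\Omega\in\{T\}$, the $d$-exact $(1,1)$-current $T-\partial S-\overline{\partial S}-\Omega$ equals $i\ddbar u$ for some real $(0,0)$-current $u$ by the classical $\ddc$-lemma for currents.

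For \textbf{uniqueness} of $\dbar S$ and $\partial\overline S$, take two decompositions and denote the differences by $\widetilde\Omega,\widetilde S,\widetilde u$, so that $\widetilde\Omega+\partial\widetilde S+\overline{\partial\widetilde S}+i\ddbar\widetilde u=0$. Applying $\dbar$, using that $\widetilde\Omega$ is closed and $\dbar^2=0$, every term vanishes except $\dbar\partial\widetilde S=-\ddbar\widetilde S$; hence $\ddbar\widetilde S=0$. Thus the $(0,2)$-current $\dbar\widetilde S$ is $\partial$-closed (and automatically $\dbar$-closed), so $d$-closed; it is also tautologically $\dbar$-exact. The $\partial\dbar$-lemma would represent such a current as $\ddbar\gamma$ for some $\gamma$ of bidegree $(-1,1)$, but that bidegree is empty, so $\dbar\widetilde S=0$. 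Thus $\dbar S_1=\dbar S_2$, and taking complex conjugates gives $\partial\overline{S_1}=\partial\overline{S_2}$.

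The \textbf{$L^2$-regularity} of $\dbar S$ (and thence the $L^p$-regularity of $u$ for $p<2$) is where positivity of $T$ enters. The equation $\ddbar S=-\dbar T$, combined with the uniqueness just proved, allows one to realize $\dbar S$ as a Green-operator expression in $\dbar T$ relative to the $\dbar$-Laplacian. The $L^2$-bound on $\dbar S$ therefore reduces to a suitable Sobolev bound on $\dbar T$. For a positive $\ddc$-closed $(1,1)$-current on a compact K\"ahler surface, such a bound is provided by an energy inequality of Chern--Levine--Nirenberg type: integrating by parts against local potentials of $T$ and exploiting $T\wedge\omega\ge 0$ for a fixed K\"ahler form $\omega$ yields an estimate of the form $\int_X i\partial S\wedge\overline{\partial S}\wedge\omega\le C\|T\|$. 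The $L^p$-estimate on $u$ then follows by standard Sobolev embedding applied to the scalar equation $i\ddbar u=T-\Omega-\partial S-\overline{\partial S}$, once $\dbar S$ is known to lie in $L^2$. The main obstacle is precisely this last step: the existence and uniqueness are formal consequences of the $\partial\dbar$-lemma and require no positivity, but upgrading $\dbar S$ from a mere current to an $L^2$ form demands a genuine a~priori estimate that crucially uses $T\ge 0$. This is exactly the content of the Forn\ae ss--Sibony energy theory of \cite{FornaessSibony05}, and without positivity the same decomposition would hold only at the level of distributions.
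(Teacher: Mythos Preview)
The paper does not supply its own proof of this statement; it is quoted as a result of Forn\ae ss--Sibony \cite{FornaessSibony05} and used as a black box. Your outline is in fact the standard argument found there: existence and uniqueness of the decomposition come formally from the $\partial\dbar$-lemma on a compact K\"ahler manifold, exactly as you describe, and the $L^2$ bound on $\dbar S$ is the one place where positivity of $T$ genuinely enters, via an energy estimate. You have correctly located the analytic core of the theorem.

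One small imprecision: the claim that ``the $L^p$-estimate on $u$ then follows by standard Sobolev embedding'' is not quite the right mechanism. What one actually uses is that locally $i\ddbar u$ is the sum of a finite positive measure (coming from $T$), a smooth term, and derivatives of $L^2$ forms; the Green/Newton potential of a finite measure in real dimension $4$ lies in $L^p_{\rm loc}$ for every $p<2$ because the fundamental solution $c\,|x|^{-2}$ does. This is a potential-theory fact rather than a Sobolev embedding, but it does not affect the validity of your outline.
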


Note that the representation \eqref{e:decompo_posi_har} is not unique but the uniqueness of $\dbar S$ and $\partial \overline S$ and their  membership in the space $L^2$
allow Forn{\ae}ss-Sibony in  \cite{FornaessSibony05} 
to define the {\it energy} $\Ec(T)$ of $T$ as
\begin{equation} \label{e:energy}
\Ec (T):=\int_X\dbar S\wedge \partial\overline S.
\end{equation}
This is a non-negative number which is independent of the choice of $\Omega, S$ and $u$.
It is not difficult to see that $\Ec(T)=0$ if and only if $\dbar S=0,$ and   if and only if $T$ is closed, see \cite{FornaessSibony05} for details.
The following  result improves the regularity of the potential $u$ and its gradients.

\begin{theorem} {\rm  (see \cite[Proposition B.4 ]{DinhNguyenSibony18})           }\label{T:T-rep}
There is a representation as in \eqref{e:decompo_posi_har} such that all currents 
$S,\overline S, \partial S, \partial\overline S, \dbar S$, $\dbar\overline S$ are forms of class $L^2$ and $u$ is a   function of class $L^2$  and 
$ \partial u, \dbar u$ are   forms  of class $L^p$ for every $1\leq p<2$. 
\end{theorem}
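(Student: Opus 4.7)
The plan is to start from the decomposition provided by Theorem~\ref{T:pot} and improve the regularity of the pieces by exploiting Hodge theory on the compact K\"ahler surface $X$ together with the inherent gauge freedom of the representation \eqref{e:decompo_posi_har}. First, I would identify the gauge freedom: for any suitably regular complex function $\alpha$, the simultaneous replacement $S \mapsto S + \dbar\alpha$, $\overline S \mapsto \overline S + \partial\overline\alpha$ (which preserves the reality constraint), and $u \mapsto u - 2\,\mathrm{Im}\,\alpha$ yields another valid representation. This follows from the identity $\partial(\dbar\alpha) + \dbar(\partial\overline\alpha) = \dbar\partial(\overline\alpha - \alpha) = 2i\,\ddbar(\mathrm{Im}\,\alpha)$, which is exactly compensated by the modification of the $i\ddbar u$-term.

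Second, I would apply $L^2$-Hodge theory for the $\dbar$-Laplacian $\Delta_{\dbar} := \dbar\dbar^* + \dbar^*\dbar$ acting on $(0,1)$-forms on $X$, together with the Green operator $G$. Since $\dbar S \in L^2$ by Theorem~\ref{T:pot} and elliptic regularity of $G$ gains two derivatives, the $(0,1)$-form $S' := HS + \dbar^* G(\dbar S)$ lies in $W^{1,2}$ modulo a smooth harmonic term. Using the fact that harmonic $(0,2)$-forms on a compact K\"ahler manifold are orthogonal to $\dbar$-exact forms, one checks $\dbar S' = \dbar S$, so that $S - S'$ is $\dbar$-exact and may be written as $\dbar\alpha$ for a suitable function $\alpha$. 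Applying the gauge transformation of Step 1 with this $\alpha$, I replace $(S, u)$ by $(S', u - 2\,\mathrm{Im}\,\alpha)$, landing $S$ in $L^2$; symmetrizing by conjugation puts $\overline S$ in $L^2$ as well. The K\"ahler identity $\Delta_{\partial} = \Delta_{\dbar}$ then guarantees that $\partial S$, $\dbar S$, $\partial \overline S$, $\dbar \overline S$ are all in $L^2$.

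Third, with $S, \overline S$ now in $W^{1,2}$, the function $u$ is determined (modulo additive harmonic pieces) by the equation $i\ddbar u = F$ where $F := T - \Omega - \partial S - \dbar \overline S$ is a real, $\ddc$-closed $(1,1)$-current in the trivial cohomology class, decomposing as a sum of a Radon measure (contributed by $T$) and an $L^2$-form (contributed by $\partial S$ and $\dbar\overline S$). I would solve this equation via the Green operator of the Hodge Laplacian on functions, combined with Calder\'on--Zygmund $L^p$-estimates and the Sobolev embedding on the real $4$-manifold $X$. The $L^2$-part of $F$ yields a $W^{2,2}$-contribution to $u$; for the measure-part, the positivity of $T$ constrains its local potentials to be of quasi-plurisubharmonic type (more precisely, DSH type), whose integrability properties are sharper than those of generic Newton potentials on $\mathbb{R}^4$. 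Combining these estimates delivers $u \in L^2$ and $\partial u, \dbar u \in L^p$ for every $1 \leq p < 2$.

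The main obstacle will be the sharp integrability in the third step: a purely elliptic treatment of the Poisson-type equation $\Delta u = \mathrm{tr}(F)$ with measure data yields only $u \in L^q$ for $q < 2$ and $\nabla u \in L^q$ for $q < 4/3$ on the real $4$-manifold $X$. The upgrade to $u \in L^2$ and $\nabla u \in L^p$ for all $p < 2$ must exploit the refined integrability properties of quasi-plurisubharmonic (and DSH) local potentials of positive $\ddc$-closed currents, rather than merely the general $L^p$-theory. A secondary technical care point is to carry out the Hodge-theoretic modifications of Step~2 consistently in both bigradings, so that the reality condition on the decomposition is preserved at every stage.
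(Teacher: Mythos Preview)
The paper does not contain a proof of this theorem: it is stated with a citation to \cite[Proposition~B.4]{DinhNguyenSibony18} and then used as a black box. There is therefore nothing in the present paper to compare your argument against; any honest comparison would require consulting the appendix of the cited reference.

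That said, your outline is a reasonable strategy and the first two steps are essentially correct. The gauge computation in Step~1 is right (up to harmless signs), and in Step~2 the definition $S':=HS+\dbar^*G(\dbar S)$ does produce a $(0,1)$-form in $W^{1,2}$ with $\dbar S'=\dbar S$; the difference $S-S'$ is $\dbar$-closed with vanishing harmonic projection, hence $\dbar$-exact by the Dolbeault isomorphism for currents, so your gauge transformation applies. One small point: you do not actually need the K\"ahler identity $\Delta_\partial=\Delta_{\dbar}$ to conclude $\partial S'\in L^2$; this follows already from $S'\in W^{1,2}$.

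The genuine gap is Step~3, and you are candid about it. On a real $4$-manifold, solving $\Delta u=\mu$ with $\mu$ a finite measure gives only $u\in L^{2,\infty}$ and $\nabla u\in L^{4/3,\infty}$, which is strictly weaker than the claimed $u\in L^2$ and $\nabla u\in L^p$ for all $p<2$. Your invocation of ``quasi-plurisubharmonic/DSH type'' is the right instinct, but as written it is not an argument: you have not exhibited $u$ (or its local pieces) as a difference of quasi-psh functions, nor explained which integrability theorem for such functions you are invoking. This is where the actual work lies. One route is to observe that, locally, a positive $\ddc$-closed $(1,1)$-current admits potentials that are differences of psh functions (via Proposition~\ref{P:Skoda} and the local $\ddc$-lemma), and then to use the standard fact that psh functions on $\C^n$ lie in $L^p_{\rm loc}$ for every $p<\infty$ with gradients in $L^p_{\rm loc}$ for every $p<2$; but you would still have to reconcile these local potentials with the global $u$ coming from your Hodge-theoretic construction, and control the patching. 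Until that is made precise, Step~3 remains a plan rather than a proof.
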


The  energy  theory  of  Forn{\ae}ss-Sibony has  recently been developed   in some  new directions, see   \cite{DinhNguyen20}.

%%%%%%%%%%%%%%%%%%%%%%%%%%%%%%%%%%%%%%%%%%%%%%%%
\subsection{Tangent  currents of  tensor products of positive $\ddc$-closed currents}   \label{SS:positive_ddc_closed_currents}
%%%%%%%%%%%%%%%%%%%%%%%%%%%%%%%%%%%%%%%%%%%%%%%%%

  Let $(X,\omega)$ be a compact K\"ahler surface.   Let $\Delta:=\{(x,x):\ x\in X\}$ be  the diagonal of $X\times X.$  We identify a chart of $X$ with the unit ball $\B$ in $\C^2.$
On the chart $\B\times \B$ of $X\times X,$  we use two local coordinate systems: the first system is the standard one $(x,y)$ and the second system is $(z,w):=(x-y,y)$ on which 
$\Delta$ is given by the equation $z=0$. The tangent bundles of $X\times X$ and $\Delta$ are denoted by $\Tan(X\times X)$ and $\Tan(\Delta)$. 
The normal vector bundle of $\Delta$ in $X\times X$ is denoted by $\E:=\Tan(X\times X)|_\Delta/\Tan(\Delta)$, where $\Delta$ is also identified to the zero 
section of $\E$. Denote by $\pi:\E\to\Delta$ the canonical projection. The fiberwise multiplication by $\lambda\in\C^*$ on $\E$ is denoted by $A_\lambda$. 
Over $\Delta\cap (\B\times \B)$, with the coordinates $(z,w)$, $\E$ is identified to $\C^2\times \B$, $\pi$ is the projection $(z,w)\mapsto w$ and $A_\lambda$ 
is equal to the map $a_\lambda(z,w):=(\lambda z,w)$.

Consider two positive $\ddc$-closed $(1,1)$-currents $T_1$ and $T_2$  on $X$.
We will study the density of $T_1\otimes T_2$ near the diagonal $\Delta$ of $X\times X$ via a notion of ``tangent cone'' to $T_1\otimes T_2$ along $\Delta$ that we introduce now.

\begin{definition}    \label{D:admissible-maps} 
\rm A {\it smooth admissible map} is a smooth bijective map $\tau$ from a
neighbourhood of $\Delta$ in $X\times X$ to a neighbourhood of $\Delta$ in $\E$ such that
\begin{enumerate}
\item The restriction of $\tau$ to $\Delta$ is the identity map on $\Delta$; in particular,
the restriction of the differential $d\tau$ to $\Delta$ induces a map from
$\Tan(X\times X ) |_\Delta$ to $\Tan(\E)|_\Delta$; since $\Delta$ is pointwise fixed by $\tau$, the differential $d\tau$ also induces two endomorphisms of $\Tan(\Delta )$ and $\E$  respectively;
\item The differential $d\tau (x,x),$ at each point $(x,x) \in \Delta,$ is a $\C$-linear map from the tangent space to $X\times X$ at $(x,x)$ to 
the tangent space to $\E$ at $(x,x)$;
\item  The endomorphism of $\E$, induced by $d\tau$ (restricted to $\Delta$), is the identity map.
\end{enumerate}
\end{definition}

Note that the dependence of $d\tau (x,x)$ in $(x,x) \in  \Delta$ is  in general not holomorphic.
Consider the exponential map from $\E$ to $X\times X$ with respect to any Hermitian metric on $X\times X$. It defines
a smooth bijective map from a neighbourhood of $\Delta$ in $\E$ to  a neighbourhood of $\Delta$ in $X\times X$. The inverse map is smooth and admissible, see also \cite[Lemma 4.2]{DinhSibony18b}.

\begin{figure}[h]%
\begin{center}
\def\svgwidth{0.9\columnwidth}
\resizebox{0.8\textwidth}{!}{\input{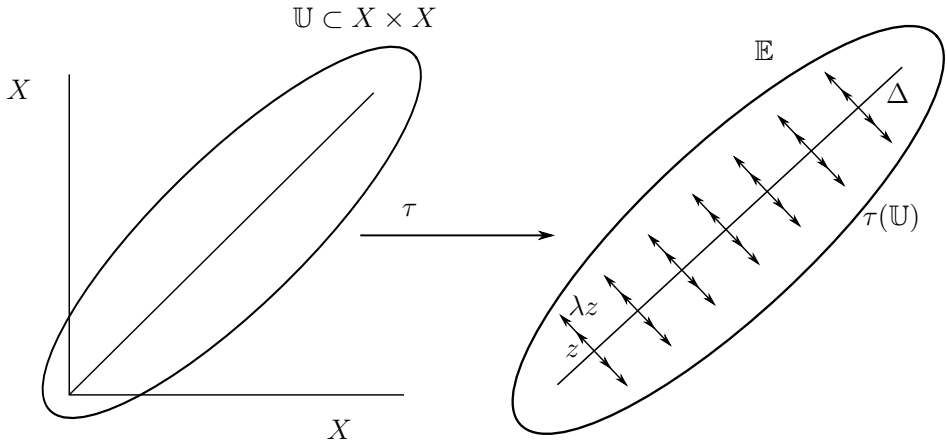}}%
\caption{Admissible map  $\tau$   sends   a
neighbourhood $\U$ of $\Delta$ in $X\times X$ onto a neighbourhood of the  zero section $\Delta$ in $\E.$
The restriction of $\tau$ to $\Delta$  is the identity map on $\Delta; $  the differential $d\tau (x,x)$ at each point $(x,x) \in \Delta$ is a $\C$-linear map from  
$\Tan(X\times X)$ at $(x,x)$ to 
 $\Tan(\E)$ at $(x,x)$;
and the endomorphism of $\E$, induced by $d\tau$ (restricted to $\Delta$), is the identity map.
For $\lambda\in\C^*,$ the  dilatation $A_\lambda$ is the fiberwise multiplication by $\lambda$ on $\E.$  Here  is the illustration
with $\lambda=2.$}  
\label{fig:tangent_currents}
\end{center}
\end{figure}

Let $\tau$ be any smooth admissible map as
above. Define
\begin{equation}\label{e:T_lambda}
(T_1\otimes T_2)_\lambda  := (A_\lambda )_* \tau_*  (T_1\otimes T_2 ).
 \end{equation}
This is a current of degree $4.$ Its domain of definition is some open subset of
$\E$ containing $\Delta$ which increases to $\E$ when $|\lambda|$ increases to infinity. 

Observe that $(T_1\otimes T_2)_\lambda$ is not a $(2, 2)$-current and we cannot speak of its positivity. Moreover,
it is not $\ddc$-closed in general and we cannot speak of its cohomology class.
The present situation is more involved than the case where $T_1$ and $T_2$ are closed because in this case the current
$(T_1\otimes T_2)_\lambda$ is also closed.

By \eqref{e:decompo_posi_har}   we can write for $j\in\{1,2\},$
\begin{equation} \label{e:decompo_posi_har_T12}
T_j=\Omega_j+\partial S_j+\overline{\partial S_j} + i\ddbar u_j,
\end{equation}
where $\Omega_j$ is a closed real smooth $(1,1)$-form, $S_j$ is a current of bi-degree $(0,1)$ and $u_j$ is a real current of bi-degree $(0,0)$. Note that $\partial \overline S_j$ and $\dbar S_j$ are forms of class $L^2$ which are independent of the choice of $\Omega_j,S_j,u_j$.
It turns out that a crucial argument in the proof of Theorem \ref{T:main_2} below is a result on the regularity of the potentials $u_j$ and their gradients, see Theorem \ref{T:T-rep}.

One of the main ingredient of the proof is 
the following theorem.  

\begin{theorem}\label{T:main_2} {\rm (\cite[Theorem 2.2]{DinhNguyenSibony18})}
Let $T_1$ and $T_2$ be  two positive $\ddc$-closed $(1,1)$-currents on a compact K\"ahler surface $(X,\omega)$.   Assume that $T_1$ has no mass on the set $\{\nu(T_2,\cdot)>0\}$ and $T_2$ has no mass on the set $\{\nu(T_1,\cdot)>0\}$. Then, with the above notations, we have the following properties.
\begin{enumerate}
\item The mass of $(T_1\otimes T_2)_\lambda$ on any given compact subset of $\E$ is bounded uniformly on $\lambda$ for $|\lambda|$ large enough. If $\T$ is a cluster value of $(T_1\otimes T_2)_\lambda $ when $\lambda\to\infty ,$ then it
is a positive closed $(2, 2)$-current on $\E$ given by $\T = \pi^* (\vartheta)$ for some
positive measure $\vartheta$ on $\Delta.$ Moreover, if $(\lambda_n )$ is a
sequence tending to infinity such that $(T_1\otimes T_2)_{\lambda_n}  \to \T,$ then $\T$ may depend on $(\lambda_n )$
but it does not depend on the choice of the map $\tau .$
\item
The mass of $\vartheta$ does not depend on the choice of $\T$ and it is given by
\begin{equation*}%\label{e:mass}
\|\vartheta\|= \int_X\Omega_1\wedge \Omega_2-  \int_X \dbar S_1\wedge \partial \overline{S}_2 - \int_X \dbar S_2\wedge \partial\overline{S}_1  .
\end{equation*}
In particular, if $T_1=T_2=T$ with $T=\Omega+\partial S+\dbar\overline S+i\ddbar u$ as in \eqref{e:decompo_posi_har}, then  
\begin{equation}\label{e:mass_bis}
\|\vartheta\|= \int_X\Omega^2-  2\int_X \dbar S\wedge \partial \overline{S} =\int_X\Omega^2-2\Ec (T)  .
\end{equation}
\end{enumerate}
\end{theorem}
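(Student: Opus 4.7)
The plan is to adapt the Dinh--Sibony theory of densities to the present setting, where $T_1\otimes T_2$ is positive but \emph{not} $\ddc$-closed on $X\times X$. The key structural input is Theorem~\ref{T:T-rep}: I would represent each $T_j=\Omega_j+\partial S_j+\overline{\partial S_j}+i\ddbar u_j$ with $\Omega_j$ smooth and closed, all of $S_j,\overline{S_j},\partial S_j,\overline{\partial S_j},\dbar S_j,\dbar\overline{S_j}$ in $L^2$, $u_j\in L^2$, and $\partial u_j,\dbar u_j\in L^p$ for every $1\leq p<2$. Expanding $T_1\otimes T_2$ bilinearly produces sixteen pieces which I will analyze separately under the scaling $(A_\lambda)_*\tau_*$.

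First I would establish the uniform mass bound and identify the form of cluster values. On any compact $K\Subset\E$, testing $(T_1\otimes T_2)_\lambda$ against a smooth cut-off reduces, after changing variables by $A_{1/\lambda}$, to integrating $T_1\otimes T_2$ against a form supported in the shrinking neighbourhood $\tau^{-1}(A_{1/\lambda}K)$ of $\Delta$. The admissibility of $\tau$, and in particular the $\C$-linearity of $d\tau|_\Delta$, ensures that the bidegrees line up so that the smooth and $L^2$ contributions remain bounded. The potential pieces involving $i\ddbar u_j$ are moved across the integral by Stokes: the resulting quantities are controlled by the hypothesis that $T_1$ gives no mass to $\{\nu(T_2,\cdot)>0\}$ (and symmetrically), combined with the Skoda-type local finiteness in Proposition~\ref{P:Skoda}. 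Passing to a cluster $\T$, I would observe that every $A_\mu$ with $\mu\in\C^*$ fixes $\T$, because the infinitesimal dilation-equivariance of $\tau$ implies $(A_\mu)_*(T_1\otimes T_2)_\lambda-(T_1\otimes T_2)_{\lambda\mu}\to 0$ as $\lambda\to\infty$. A positive $(2,2)$-current on the rank-two bundle $\pi:\E\to\Delta$ invariant under all fiberwise dilations is necessarily $\pi^*\vartheta$ for a positive measure $\vartheta$ on $\Delta$. Closedness of $\T$ comes from the fact that $\partial(T_1\otimes T_2)$ and $\dbar(T_1\otimes T_2)$ involve only the $L^2$ forms $\partial S_j,\dbar S_j$ (not the singular $u_j$), which scale to zero in the normal directions; and the independence of $\tau$ follows because two admissible maps agree to first order on $\Delta$, the discrepancy being absorbed by $A_\lambda$ in the limit.

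Next I would compute $\|\vartheta\|$ by testing $\T$ against $\pi^*\omega_\Delta$, the pull-back under $\pi$ of the restriction of $\omega$ to the diagonal. This equals the limit of $\langle (T_1\otimes T_2)_\lambda,\chi\pi^*\omega_\Delta\rangle$ for a suitable cut-off $\chi$ supported on a sufficiently large compact in $\E$. The smooth piece contributes $\int_X\Omega_1\wedge\Omega_2$. The genuine cross-terms $\partial S_1\wedge\overline{\partial S_2}$ and $\overline{\partial S_1}\wedge\partial S_2$ survive and, after one integration by parts together with the identity $\partial\overline{S_j}=\overline{\dbar S_j}$, yield $-\int_X\dbar S_1\wedge\partial\overline{S}_2-\int_X\dbar S_2\wedge\partial\overline{S}_1$, the sign arising from the transfer of derivatives. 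All remaining terms containing at least one $u_j$ vanish in the limit by the Lelong-number hypothesis, and the formula~\eqref{e:mass_bis} in the diagonal case $T_1=T_2=T$ follows immediately from the very definition of energy \eqref{e:energy}.

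The principal obstacle is the control of the singular pieces involving $u_j$ under the non-holomorphic scaling $(A_\lambda)_*\tau_*$. Although each $u_j$ lies in $L^2$, the product $u_1u_2$ is merely $L^1$, and naive scaling near $\Delta$ might concentrate mass at diagonal points where both $T_1$ and $T_2$ have positive Lelong numbers. The hypothesis on mutual Lelong-number sets forbids precisely this, but extracting a quantitative, uniform-in-$\lambda$ estimate will require a delicate truncation along the level sets $\{u_j<-C\}$, combined with Skoda-type bounds on $T_j\wedge i\ddbar u_k$ and the fact that the potentials and their gradients, while not in $L^2$ jointly, still have enough integrability to let their scaled $\ddc$'s dissipate transversally to $\Delta$. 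Making these dissipation estimates rigorous, and verifying that the singular contributions neither accumulate on $\Delta$ nor spoil the closedness of the limit, will be the technical heart of the argument.
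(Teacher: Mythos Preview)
The paper does not contain a proof of this theorem: it is quoted verbatim as \cite[Theorem~2.2]{DinhNguyenSibony18}, and the surrounding text only remarks that the regularity result Theorem~\ref{T:T-rep} is a ``crucial argument'' in its proof. There is therefore no proof in the paper against which to compare your proposal.

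That said, your outline is broadly faithful to the strategy one finds in \cite{DinhNguyenSibony18}: expand $T_1\otimes T_2$ via the representations of Theorem~\ref{T:T-rep}, push forward under $(A_\lambda)_*\tau_*$, and analyze the sixteen bilinear pieces separately, with the Lelong-number hypothesis entering precisely to kill the singular $u_j$-terms. Two points in your sketch would need more care to become a proof. First, the assertion that a positive $(2,2)$-current on $\E$ invariant under all $A_\mu$ is automatically of the form $\pi^*\vartheta$ is not quite right as stated: one also needs to know that the limit current is of bidegree $(2,2)$ (which is not obvious, since $\tau$ is only $\C$-linear along $\Delta$) and has locally finite mass; the argument in \cite{DinhNguyenSibony18} handles this by first showing the limit is closed and positive, then invoking a classification of such invariant currents on vector bundles. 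Second, your treatment of closedness (``$\partial(T_1\otimes T_2)$ involves only the $L^2$ forms'') glosses over the fact that $\tau_*$ does not commute with $\partial,\dbar$ since $\tau$ is not holomorphic; the actual argument must track the defect $\tau_*\partial-\partial\tau_*$ and show it vanishes after scaling, which again uses admissibility to first order. Your identification of the ``principal obstacle'' is accurate and is indeed where most of the work in the cited paper lies.
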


Note that in general $\T$ is not unique as this is already the case for positive
closed currents, see \cite{DinhSibony18b} for details. However, the mass formula shows that if one of such currents is zero then all of them are zero.
We can now introduce the following notion.

\begin{definition} \rm  
Any current $\T$ obtained as in Theorem \ref{T:main_2} is called a {\it tangent
current} to $T_1\otimes T_2$ along the diagonal $\Delta .$
\end{definition}

Combining Theorem  \ref{T:main_2}  with  the  works of Berndtsson--Sibony \cite{BerndtssonSibony}  Demailly-P\u{a}un \cite{DP} and  Siu \cite{Siu}, we obtain  the following result and refer to McQuillan \cite{McQuillan} and  Burns--Sibony \cite{BurnsSibony} for some related results in the foliation setting.

\begin{corollary} \label{c:nef} {\rm (\cite[Corollary 2.4]{DinhNguyenSibony18})}
Let $T$ be a positive $\ddc$-closed $(1,1)$-current of a compact K\"ahler surface $X$. Assume that the set $\{\nu(T,\cdot)>0\}$ is of Hausdorff $2$-dimensional measure $0$.
Then the cohomology class $\{T\}$ of $T$ is nef, and when $T$ is not closed, $\{T\}$ is also big. In particular, if $T$ is a positive closed $(1,1)$-current having no mass on proper analytic subsets of $X$, then $\{T\}$ is nef.
\end{corollary}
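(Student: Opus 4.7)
My plan is to apply Theorem \ref{T:main_2} twice: once diagonally with $T_1=T_2=T$ to extract information about $\{T\}^{2}$ and the energy, and once with $T_2=[C]$ for each irreducible curve $C$ to obtain $\{T\}\cdot\{C\}\geq 0$; nefness and bigness will then follow from the Demailly--P\u{a}un numerical characterization of the K\"ahler cone.

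First I would verify that $T$ satisfies the no-mass hypothesis needed to apply Theorem \ref{T:main_2} with $T_1=T_2=T$, namely that $T$ puts no mass on $\{\nu(T,\cdot)>0\}$. This is where the vanishing of the Hausdorff $2$-dimensional measure enters: coupled with the $L^2$-regularity of the potential $u$ and its gradients from Theorem \ref{T:T-rep}, together with a Siu-type analysis of positive $\ddc$-closed $(1,1)$-currents on a surface, the trace measure $T\wedge\omega$ cannot concentrate on a set of zero $2$-dimensional Hausdorff measure. Once the hypothesis is in force, Theorem \ref{T:main_2}(2) and formula \eqref{e:mass_bis} give
$$\|\vartheta\|=\int_X\Omega^{2}-2\,\Ec(T)=\{T\}^{2}-2\,\Ec(T).$$
Since $\vartheta\geq 0$, we conclude $\{T\}^{2}\geq 2\,\Ec(T)\geq 0$, with strict inequality precisely when $T$ is not closed (recall that $\Ec(T)=0$ iff $T$ is closed).

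Next I would establish nefness by testing against irreducible curves. Fix any irreducible curve $C\subset X$ and apply Theorem \ref{T:main_2} with $T_1=T$ and $T_2=[C]$. The two no-mass hypotheses are verified as follows: since $\{\nu(T,\cdot)>0\}$ has zero $2$-dimensional Hausdorff measure while $C$ has strictly positive one, $[C]$ puts no mass on $\{\nu(T,\cdot)>0\}$; conversely $\{\nu([C],\cdot)>0\}=C$, and if $T$ gave positive mass to $C$ then, by the Siu-type decomposition available for positive $\ddc$-closed currents on a surface, we would have $\nu(T,x)>0$ for a generic $x\in C$, contradicting the hypothesis. Because $[C]$ is closed, the energy terms in the mass formula vanish, leaving
$$\{T\}\cdot\{C\}=\int_X\Omega\wedge[C]=\|\vartheta\|\geq 0.$$
Combining $\{T\}\cdot\{C\}\geq 0$ for every irreducible $C$, the inequality $\{T\}^{2}\geq 0$ from the first step, and $\{T\}\cdot\{\omega\}=\int_X T\wedge\omega> 0$ for a K\"ahler form $\omega$, the perturbations $\{T\}+\varepsilon\{\omega\}$ lie in the positivity cone $\{\alpha:\alpha^{2}>0,\ \alpha\cdot C>0\ \forall C\}$ and on the K\"ahler side of the Hodge index form for every $\varepsilon>0$. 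By Demailly--P\u{a}un, they are K\"ahler; letting $\varepsilon\to 0^{+}$ shows that $\{T\}$ is nef. When $T$ is not closed, the strict inequality $\{T\}^{2}>0$ together with nefness yields bigness, again by Demailly--P\u{a}un.

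Finally, for the ``In particular'' clause, if $T$ is positive closed with no mass on proper analytic subsets, then by Siu's theorem each sublevel set $\{\nu(T,\cdot)\geq c\}$ with $c>0$ is a proper analytic subset; the Siu decomposition of $T$ has no curve component (else $T$ would carry mass on such a curve), so $\{\nu(T,\cdot)>0\}$ reduces to an at most countable set of isolated points, which has zero $2$-dimensional Hausdorff measure, and the main statement applies. The hardest step, I expect, will be the Siu-type input used in the second paragraph to pass from positive mass of $T$ on $C$ to strict positivity of $\nu(T,\cdot)$ on a dense subset of $C$: this is the delicate bridge from integrated to pointwise data, and while it is classical for closed currents, one must be careful in invoking the $\ddc$-closed analog on a surface.
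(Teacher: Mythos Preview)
Your approach is correct and matches the strategy indicated by the paper, which does not spell out the proof but says it follows by combining Theorem~\ref{T:main_2} with Demailly--P\u{a}un, Siu, and Berndtsson--Sibony; your two applications of Theorem~\ref{T:main_2} (self-intersection for $\{T\}^2\geq 2\Ec(T)$, and against $[C]$ for $\{T\}\cdot\{C\}\geq 0$) followed by the Demailly--P\u{a}un criterion are precisely the intended mechanism.

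One sharpening is worth recording: the step you flag as hardest --- showing that if $T$ has positive mass on an irreducible curve $C$ then $\nu(T,\cdot)>0$ on a subset of $C$ of positive area --- does not require a Siu-type decomposition for $\ddc$-closed currents. It follows directly from Skoda's monotonicity (Proposition~\ref{P:Skoda}): the bound $\|T\wedge\omega\|_{B(x,r)}\leq C r^2$ valid for all $x$ and small $r$ forces the restricted measure $(T\wedge\omega)|_C$ to be absolutely continuous with respect to area on $C$; its density $f$ is then positive on a set of positive area, and at any Lebesgue point $x$ of $f$ with $f(x)>0$ one gets $\nu(T,x)\geq f(x)>0$. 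This gives the contradiction with the hypothesis cleanly and avoids invoking any structure theorem in the non-closed case. Likewise, the fact that $T$ itself has no mass on $\{\nu(T,\cdot)>0\}$ follows from the same density bound, since a measure with uniformly bounded $2$-density charges no set of $2$-dimensional Hausdorff measure zero. With these two points in hand, both applications of Theorem~\ref{T:main_2} are fully justified and the rest of your argument (including the connectedness check for Demailly--P\u{a}un via the ray $\{T\}+\epsilon\{\omega\}$ and the reduction of the ``in particular'' clause to the main statement via Siu's theorem) goes through without change.
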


Recall that if
a closed subset $Y$ of a complex surface $X$ is laminated by Riemann
surfaces, then it admits an open covering ${\U_j}$ and on each $\U_j$ there is a homeomorphism $\varphi_j = (h_j , \lambda_ j ) :\ \U_j \cap Y \to  \D \times \T_j$,
where $\T_j$ is a locally compact  metric space and the maps $\varphi_j^{ -1}(z,t)$, with $(z,t)\in\D\times \T_j$, are holomorphic in $z$.
Moreover, on their domains of definition, the transition maps have the form
$$\varphi_k \circ \varphi^{-1}_j (z, t) = \big(h_{jk} (z, t), \lambda_{jk} (t)\big),$$
where $ h_{jk} (z, t)$ is holomorphic with respect to $z$ and $\lambda_{jk}(t)$ do not depend on $z$.
We can choose $\T_j$ as the intersection of a holomorphic disc with $Y$ and $\varphi_j$ such that its restriction to $\T_j$ is the canonical map from $\T_j$ to $\{0\}\times\T_j$. With this choice,
when all $\varphi_j(z,t)$ are bi-Lipschitz maps, we say that the lamination is {\it bi-Lipschitz}.

The following result gives us  the  vanishing  of the  tangent  currents in the setting of foliations and laminations. 
   
\begin{theorem} \label{T:main_3} {\rm (\cite[Theorem 2.5]{DinhNguyenSibony18})}
Let $\Fc$    be  either a singular  holomorphic foliation   with   only hyperbolic singularities,
or a bi-Lipschitz lamination, in a compact K\"ahler surface $X$. 
Then for every  positive $\ddc$-closed  current $T$  directed  by $\Fc$ which  does not give mass to any  invariant closed analytic curve,  zero is  the unique  tangent  current  to $T\otimes T$  along  the diagonal $\Delta.$
\end{theorem}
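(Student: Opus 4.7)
By Theorem~\ref{T:main_2} every tangent current $\T$ to $T\otimes T$ along $\Delta$ has the form $\T=\pi^*\vartheta$ for some positive Borel measure $\vartheta$ on $\Delta$, and the total mass $\|\vartheta\|$ is independent of the choice of $\T$. Since $\Delta$ is compact and $\vartheta$ is positive, the uniqueness of the zero tangent current is equivalent to $\vartheta=0$, and by positivity it suffices to show that $\vartheta$ has no mass on a small neighbourhood of every point $(x_0,x_0)\in\Delta$. The plan is therefore a local analysis, splitting into two cases: the regular point case $x_0\in X\setminus E$ and the hyperbolic singular point case $x_0\in E$.

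For $x_0\in X\setminus E$ I would choose a flow box $\U\simeq\B\times\T\subset\C^2$ around $x_0$ in which the plaques are the horizontal slices $\B\times\{t\}$, and use Proposition~\ref{P:decomposition} combined with the $\ddc$-closedness to represent $T|_\U=\int h_t\,[\B\times\{t\}]\,d\nu(t)$, with $\nu$ a positive Radon measure on $\T$ and $h_t$ positive harmonic functions on $\B$. On $\U\times\U\subset X\times X$ pass to the coordinates $(z,w)=(x-y,y)$, so that the product plaque $[\B\times\{t\}]\otimes[\B\times\{s\}]$ is supported on $\{z_2=t-s,\ w_2=s\}$. Under the dilation $A_\lambda(z,w)=(\lambda z,w)$ this set is sent to $\{z_2=\lambda(t-s),\ w_2=s\}$, which escapes every compact subset of $\E$ as $|\lambda|\to\infty$ whenever $t\neq s$. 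Consequently any subsequential cluster current of $(T\otimes T)_\lambda$ over $\pi^{-1}(x_0,x_0)$ is fed only by the diagonal part of the product measure $d\nu(t)\,d\nu(s)$ on $\T\times\T$, i.e.\ by the atomic part of $\nu$. To close the regular case I would then show that $\nu$ has no atoms: an atom of $\nu$ at $t_0$ forces $T$ to carry positive mass on the local plaque $\B\times\{t_0\}$, and the global $\ddc$-closedness of $T$ propagates this into a non-zero atomic component of $T$ supported on the whole leaf $L$ through that plaque; the closure $\overline L$ is an invariant closed analytic curve (invoking \cite[Proposition~3]{FornaessSibony08} in the singular holomorphic foliation case, with a parallel closure statement in the bi-Lipschitz lamination case), contradicting the assumption that $T$ has no mass on such curves.

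For $x_0\in E$ I would use the hyperbolic linearizing chart to put $\Fc$ into the normal form $\lambda_1 z_1\partial_{z_1}+\lambda_2 z_2\partial_{z_2}$ with $\lambda_1/\lambda_2\notin\R$. The two separatrices $\{z_i=0\}$ are invariant closed analytic curves, so $T$ has no mass on them by hypothesis; Theorem~\ref{T:Lelong} then yields $\nu(T,x_0)=0$, and by Proposition~\ref{P:Skoda} this forces the mass $\|T\|_{\B_r}$ to decay faster than $r^2$ as $r\to 0$. Combining this superlinear decay with the scaling behaviour of $A_\lambda$ on $\E$, a direct mass estimate shows that the mass of $(T\otimes T)_\lambda$ over any given compact subset of $\pi^{-1}$ of a sufficiently small neighbourhood of $x_0$ tends to zero, so $\vartheta$ places no mass near $x_0$.

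The hard part is the atomless step in the regular case: one has to propagate a local atomic component of $T$ consistently through the global, possibly dense orbit of a leaf while respecting the $\ddc$-closed structure, and then to identify the closure of that leaf as an invariant closed analytic curve. In the singular holomorphic setting this is supported by standard closure results for invariant analytic curves, but in the bi-Lipschitz lamination case, where no holomorphic linearization or algebraic closure theorem is available, one must rely on a direct argument using the bi-Lipschitz transition maps together with the global $\ddc$-closedness. The hyperbolic singular case is technically delicate as well, mainly in matching the Skoda–Lelong mass decay with the precise non-trivial scaling induced on the fibres of $\E$ along $\Delta$, but the main conceptual obstruction is ruling out atoms of $\nu$ at regular points.
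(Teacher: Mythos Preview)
Your treatment of the regular case is essentially the content of Proposition~\ref{P:Kaufmann}, and your reduction of the atomless step to Theorem~\ref{T:auxiliary} is correct: if $T$ has no mass on invariant closed analytic curves then it is diffuse, so the transversal measures $\nu$ have no atoms and the flow-box argument goes through.

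The singular case, however, has a real gap. The vanishing of the Lelong number at a hyperbolic singularity $x_0$ (which indeed follows from Theorem~\ref{T:Lelong}) gives only $\|T\|_{\B_r(x_0)}=o(r^2)$, and this decay at the single point $x_0$ does \emph{not} by itself control the mass of $(T\otimes T)_\lambda$ over a compact set of $\E$ projecting onto a neighbourhood of $(x_0,x_0)$. The tangent current along $\Delta$ probes how $T$ intersects itself in a whole shrinking tubular neighbourhood of $\Delta$, so it depends on the distribution of $T$ at \emph{all} nearby points, not just the concentration of mass at $x_0$. In particular your claim that ``a direct mass estimate shows that the mass of $(T\otimes T)_\lambda$ \ldots\ tends to zero'' is unsupported; note also that the hypothesis of Theorem~\ref{T:main_2} is already satisfied without invoking Theorem~\ref{T:Lelong}, since $\{\nu(T,\cdot)>0\}\subset E$ is finite and positive $\ddc$-closed currents of bidimension $(1,1)$ carry no mass on finite sets.

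What the paper actually does at a hyperbolic singularity (Proposition~\ref{P:singularities}) is rather different and substantially harder: it carries out a delicate leaf-by-leaf geometric analysis of $T\otimes T$ in the local model $(\D^2,\Lc,\{0\})$, using the explicit parametrization \eqref{e:leaf_equation} of the leaves and the hyperbolicity $\lambda\notin\R$ in an essential way, and it establishes not the vanishing of the direct limit $(T\otimes T)_\lambda$ but only of the Ces\`aro means $\Ebf((T\otimes T)_{e^s})$ as $s\to\infty$. A further argument is then required: since all tangent currents share the same mass $m$ by Theorem~\ref{T:main_2}(2), and any cluster value of the Ces\`aro means lies in the closed convex hull $\widehat\Kc$ of the set of tangent currents, the vanishing of one such cluster value forces $m=0$. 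Your proposal misses both the need for the leafwise geometric analysis (which is where hyperbolicity of the singularity is genuinely used, beyond merely guaranteeing $\nu(T,x_0)=0$) and the Ces\`aro/convex-hull passage.
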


The last theorem expresses that the current $T\otimes T$ is not too singular along the diagonal of $X\times X$ as its density along the diagonal is  zero.
Now  we outline  the main steps of the proof of Theorem \ref{T:main_3}.

Consider 
a positive $\ddc$-closed $(1, 1)$-current $T$ directed by $\Fc$. 
We can show that if $T$ has positive mass on a leaf,  then this leaf is an invariant closed analytic curve of $\Fc,$  see Theorem  \ref{T:auxiliary} below.
So for Theorem  \ref{T:main_3}, we can assume that  $T$ has no
mass on  each  single leaf of $\Fc$. Using  Proposition   \ref{P:decomposition} and Definition \ref{D:Lelong}, a straightforward computation  shows that 
$\nu(T,x)=0$ for all $x$ outside the singularities of $\Fc$.
Since positive $\ddc$-closed $(1,1)$-currents have no mass on finite sets, we can apply 
Theorem \ref{T:main_2} to the tensor product $T\otimes T$. 

Consider a tangent current $\T$ to  $T\otimes T$ along $\Delta$. With the notation as in the above sections, 
there is a sequence $\lambda_n$ converging to infinity and a positive measure $\vartheta$ on $\Delta\simeq X$ such that 
$$\T=\lim_{n\to\infty} (T\otimes T)_{\lambda_n} = \pi^*(\vartheta).$$ 
We can identify $\vartheta$ with a positive measure on $X$. Recall that by Theorem \ref{T:main_2} the mass $m$ of
$\vartheta$ does not depend on the choice of $\T$.  Using  Definition \ref{D:admissible-maps} and  \eqref{e:T_lambda} outside   the singularities $E,$ we can show the 
following result, see  \cite{Kaufmann} for a related  situation. 

\begin{proposition}\label{P:Kaufmann}  {\rm (\cite[Proposition 4.1]{DinhNguyenSibony18})}
For every choice of the tangent current $\T$, the measure  $\vartheta$  is  supported on the singularities of $\Fc$. 
\end{proposition}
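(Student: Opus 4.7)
The plan is to show that for any test form $\chi$ of bidegree $(2,2)$ supported in $\pi^{-1}(U)$ for an open set $U \Subset X \setminus E$, the pairing $\langle (T \otimes T)_\lambda, \chi \rangle$ vanishes in the limit $\lambda \to \infty$. Since any cluster current equals $\pi^*(\vartheta)$, this will force $\vartheta$ to vanish on $U$, and since $X \setminus E$ is covered by such $U$'s, we conclude $\supp \vartheta \subset E$. We work locally in a flow box $\U \simeq \B \times \T$ centered at a regular point $x_0$; up to refining, we may take $\overline{\U} \subset X \setminus E$ and assume $\tau$ sends a neighborhood of $\Delta|_\U$ biholomorphically onto a neighborhood of the zero section of $\E|_{\Delta \cap \U}$ with differential equal to the identity along the diagonal.

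By the reduction discussed before the statement (together with the result asserting that a leaf carrying positive $T$-mass is necessarily an invariant closed analytic curve), we may assume $T$ has no mass on any single leaf. In the flow box, write the Proposition~\ref{P:decomposition} decomposition $T = \int_\T h_a\,[L_a]\,d\nu(a)$ with $\nu$ \emph{atomless}. Hence $T \otimes T$ on $\U \times \U$ decomposes as
\begin{equation*}
T \otimes T \;=\; \iint h_a(x)\, h_b(y)\,[L_a \times L_b]\,d\nu(a)\,d\nu(b).
\end{equation*}
In coordinates $(z,w)=(x-y,y)$ on the second copy of $\U$, a point on $L_a \times L_b$ has transverse part $z_\perp = a-b$ in the $\T$-direction, so the support of $[L_a \times L_b]$ lies at transverse distance $|a-b|$ from $\Delta$. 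After applying $A_\lambda$, this becomes $|\lambda|\,|a-b|$. Therefore, pairing $(A_\lambda)_\ast \tau_\ast (T \otimes T)$ against $\chi$ supported in $\{|z|<R\}$ reduces to integration over pairs $(a,b)$ with $|a-b| < C R /|\lambda|$.

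The main computation is the change of variables $u = \lambda(x_1 - y_1)$ in the fiber direction along each plaque pair: the Jacobian $|\lambda|^{-2}$ arising from this substitution exactly cancels the factor $|\lambda|^2$ produced by $A_\lambda^*$ acting on the $dz \wedge d\bar z$ part of $\chi$. What remains is a uniformly bounded integrand whose $(a,b)$-domain of integration is the shrinking set $\{|a-b| < CR/|\lambda|\}$. Using local $L^1$-bounds for the harmonic weights $h_a$ (valid uniformly for $a$ in a compact subset of $\T$ by Harnack), one obtains an estimate of the form
\begin{equation*}
\bigl|\langle (T\otimes T)_\lambda,\chi\rangle\bigr| \;\leq\; C\,\|\chi\|_\infty\,(\nu\otimes\nu)\bigl(\{(a,b)\in\T^2:\,|a-b|<CR/|\lambda|\}\bigr).
\end{equation*}
Since $\nu$ has no atoms, Fubini and dominated convergence give $(\nu\otimes\nu)(\{|a-b|<\varepsilon\}) \to 0$ as $\varepsilon\to 0$, so the right-hand side tends to zero. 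Hence $\vartheta(U)=0$.

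The main obstacle I foresee is controlling the admissible map $\tau$ carefully enough that the ``$|\lambda|^2 \times |\lambda|^{-2}$'' cancellation is exact and not merely asymptotic: since $d\tau$ is $\C$-linear on $\Delta$ and equals the identity on $\E$, the deviation from the model computation is $o(1)$ uniformly on compact sets, which is why the heuristic computation above is legitimate. A secondary technical issue is the non-boundedness of $h_a$ near the boundary of a plaque, but by shrinking $\B$ slightly we get the required uniform $L^1$-bounds by Harnack-type estimates for positive harmonic functions on subdiscs; alternatively, one can first regularize $T$ by replacing the plaques by slightly smaller subplaques and pass to the limit. Once these technicalities are handled, the argument is a clean consequence of the atomlessness of the transverse measure.
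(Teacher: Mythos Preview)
Your argument is correct and is the approach the paper has in mind (the survey itself gives no details, referring to \cite[Proposition~4.1]{DinhNguyenSibony18} and \cite{Kaufmann}). One small correction: the harmonic densities $h_a$ need not be bounded uniformly in $a$ without normalization, so your displayed estimate should carry the atomless finite measure $h_a(z_0)\,d\nu(a)$ rather than $\nu$ itself (equivalently, first normalize $h_a(z_0)=1$, after which Harnack gives a uniform sup bound on any $\B'\Subset\B$); the rest of the argument is unchanged. Note also that Theorem~\ref{T:main_2}(1) lets you take the locally holomorphic $\tau(x,y)=(x-y,y)$ in the flow box, so the $|\lambda|^2\times|\lambda|^{-2}$ cancellation is exact and your ``$o(1)$'' worry disappears.
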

 
For any function or more generally a current $f(s)$, depending on the parameter $s>0$, we denote {\it the expectation} of $f(s)$ on the interval 
$(0,s]$ by $\Ebf(f(s))$. This is the mean value of $f$ on the interval $(0,s]$ which is given by the formula
\begin{equation*}% \label{e:Expec}
\Ebf(f(s)):=s^{-1}\int_0^s f( t)dt.
\end{equation*}
 The  difficult part in the proof of  Theorem \ref{T:main_3} is  the  following
 
\begin{proposition}\label{P:singularities} {\rm (\cite[Proposition 4.2]{DinhNguyenSibony18})}
We have 
$$\lim_{s\to\infty} \Ebf((T\otimes T)_{e^s}) =  0$$
in a neighbourhood of each point $(a,a)\in\Delta$, where $a$ is any singular point of $\Fc$.
\end{proposition}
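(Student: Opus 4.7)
The plan is to localise the analysis near a hyperbolic singular point $a$ of $\Fc$ and exploit the hyperbolic linearization available there. Since every hyperbolic singularity in dimension two is holomorphically linearizable (Subsection \ref{SS:Local-theory}), choose coordinates $(z_1,z_2)$ on a polydisc $\U\ni a$ in which the leaves of $\Fc$ are integral curves of the linear vector field $Z=z_1\partial_{z_1}+\mu z_2\partial_{z_2}$ with $\mu\in\C\setminus\R$. The two separatrices $\Sigma_j:=\{z_j=0\}$ are the only invariant closed analytic curves meeting $\U$. By the standing assumption of Theorem \ref{T:main_3} the current $T$ gives no mass to $\Sigma_1\cup\Sigma_2$, and Theorem \ref{T:Lelong} then yields $\nu(T,a)=0$.

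Next I would transfer the problem onto the normal bundle. In the product chart $\U\times\U$ use the linear coordinates $(z,w):=(x-y,y)$, so that $\Delta\cap(\U\times\U)=\{z=0\}$ and the restriction of $\E$ to this piece of $\Delta$ is trivialised as $\C^2_z\times\U_w$. Taking as admissible map $\tau$ the identity in this chart, the rescaling reduces to $A_\lambda(z,w)=(\lambda z,w)$, and one has the explicit formula
\begin{equation*}
(T\otimes T)_\lambda = (a_\lambda)_*\bigl(\tau_*(T\otimes T)\bigr),\qquad a_\lambda(z,w)=(\lambda z,w).
\end{equation*}
After the substitution $\lambda=e^s$, Fubini converts the expectation into a Cesàro mean along the real direction of the one-parameter family $s\mapsto a_{e^s}$:
\begin{equation*}
\Ebf\!\bigl((T\otimes T)_{e^s}\bigr) = \frac{1}{s}\int_0^s (a_{e^t})_*\tau_*(T\otimes T)\,dt.
\end{equation*}
The proof then reduces to showing that the local mass of this Cesàro mean above any fixed neighbourhood of $(a,a)$ tends to zero.

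To estimate that mass I would use the energy decomposition of Theorem \ref{T:T-rep} on a neighbourhood of $a$ to write $T=\Omega+\partial S+\dbar\overline S+i\ddbar u$ with $S,\overline S,\partial S,\partial\overline S,\dbar S,\dbar\overline S\in L^2$, $u\in L^2$ and $\partial u,\dbar u\in L^p$ for every $p<2$. This allows one to expand $T\otimes T$ as a finite sum of tensor products of these regular factors and to compute the action of $(a_\lambda)_*$ on each. The smooth closed piece $\Omega\otimes\Omega$ produces no contribution to any tangent current (its rescalings concentrate trivially on $\Delta$); the cross terms involving $\dbar S$, $\partial\overline S$ and the partial derivatives of $u$ are tensor products of $L^2$ (or $L^p$) forms, and their rescaled masses on any compact subset of $\E$ over a neighbourhood of $(a,a)$ can be bounded by Fubini in the $z$-variable. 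The uniform $L^2$-integrability in $z$, together with the finiteness of the Poincaré mass of $T$ near $a$ supplied by Proposition \ref{P:Poincare_mass}, gives control by an integrable tail which vanishes in Cesàro average as $s\to\infty$. The closed purely $(1,1)$ term $(\partial S+\dbar\overline S)\otimes(\partial S+\dbar\overline S)$ is handled by observing that its rescalings are uniformly bounded in mass but have no accumulation on any fibre of $\pi$, thanks to the non-real ratio $\mu$ which disperses the flow lines along transverse discs.

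The main obstacle will be the last term, involving the products of the potential $u$ and its gradients. Here $T\otimes T$ is not $\ddc$-closed, so the monotone Skoda-type argument used for closed currents is not available. The substitute will be a combination of: (i) the precise asymptotics $\eta(x)\asymp\dist(x,E)\lof\dist(x,E)$ of Proposition \ref{P:Poincare}, which forces a polylogarithmic decay of the Poincaré mass of $T$ on shells $\{2^{-n-1}\le|z_j|<2^{-n}\}$; (ii) the vanishing Lelong number $\nu(T,a)=0$, which upgrades this to superpolynomial decay in the transverse direction through the Skoda bound of Proposition \ref{P:Skoda}; and (iii) the hyperbolicity $\mu\notin\R$, which produces an equidistribution of the flow orbits on small scales and converts the Cesàro average in $t=\log|\lambda|$ into an average over a densely wound one-parameter subgroup of the $\C^*$-action generated by $Z$. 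Combining these ingredients, the rescaled mass above $(a,a)$ is averaged to zero and the proposition follows.
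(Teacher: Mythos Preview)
The paper does not reproduce the full proof; it refers to \cite[Proposition~4.2]{DinhNguyenSibony18} and only indicates that the argument is ``a very delicate analysis of the current $(T\otimes T)_\lambda$'' involving ``a geometric study of the leaves of $\Fc$ near the singular point $a$'' which ``make[s] an essential use of the hyperbolic nature of the point $a$.'' The intended approach therefore works through the \emph{directed} local description of $T$ (the decomposition $T=\int h_t[V_t]\,d\nu(t)$ from Proposition~\ref{P:decomposition}) combined with the explicit leaf parametrisation $\psi_x$ of \eqref{e:leaf_equation}, so that the condition $\mu\notin\R$ acts on the actual plaques of $\Fc$.

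Your plan goes a different route, expanding $T\otimes T$ via the global energy decomposition $T=\Omega+\partial S+\overline{\partial S}+i\ddbar u$ of Theorem~\ref{T:T-rep}, and this is where the gap lies. That decomposition is a decomposition of $T$ as an \emph{undirected} $(1,1)$-current on $X$: the individual pieces $\Omega$, $\partial S$, $\dbar\overline S$, $i\ddbar u$ have no relation to the leaves of $\Fc$. Consequently your step claiming that $(\partial S+\dbar\overline S)\otimes(\partial S+\dbar\overline S)$ ``has no accumulation on any fibre of $\pi$, thanks to the non-real ratio $\mu$ which disperses the flow lines along transverse discs'' is unfounded---$\partial S$ does not live on the leaves, so the hyperbolic eigenvalue ratio cannot act on it. The same applies to point~(iii): the ``equidistribution of the flow orbits'' you invoke is a property of the foliation dynamics generated by $Z$, but the rescaling $a_{e^t}$ acts on the normal bundle $\E$ and sees only the tensor factors, which are blind to that dynamics once you have passed to the undirected decomposition.

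There is also a quantitative issue even before the foliation enters. The $L^2$ regularity of $\dbar S$, $\partial\overline S$ and the $L^p$ regularity of $\partial u$, $\dbar u$ are \emph{global} statements; they give no a priori local control near the singular point, and in particular do not by themselves yield any decay on shrinking shells around $a$. Your ``superpolynomial decay in the transverse direction'' from $\nu(T,a)=0$ is overstated: vanishing Lelong number gives $r^{-2}\|T\|_{\B_r}\to0$, which is sublinear, not superpolynomial. The actual proof needs the fine geometry of the sector $\Pi_x$ and the exponential behaviour of $\psi_x$ (cf.~\eqref{e:leaf_equation}) to convert the Ces\`aro average in $s$ into a genuine cancellation; the energy decomposition alone does not supply this mechanism.
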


To  prove Proposition \ref{P:singularities}, we  carry out a  very delicate analysis of the current $(T\otimes T)_\lambda$ with $\lambda=e^s$  near  the singular point $a.$
This  leads us  to  a geometric study  of the leaves of $\Fc$ near  the singular point $a.$
Here we make  an essential use of the  hyperbolic  nature of the point $a.$
 
\proof[End of the proof of Theorem \ref{T:main_3}]
Let $\Mc$ be the  set of  $2$-current on $\E$ with  measure coefficients. 
We introduce  natural semi-distances $\dist_m$ ($m\in\N$) on  $\Mc$  as follows.
Let $\E_m$ be  an increasing sequence of domains in $\E$ such that $\E_0$ contains 
the image of the zero section of  the vector bundle $\E$   and  $\E_m\Subset \E_{m+1}$ and $\bigcup_{m\in\N} \E_m=\E.$
Recall from Subsection \ref{ss:forms_measures} that $\Dc^2_1(\E)$  (resp.  $\Dc^2_1(\E_m)$)
is  the space of $2$-forms with compact support of class  $\Cc^1$  on $\E$  (resp. on $\E_m$). We fix a  finite    atlas of the vector bundle $\E.$
For  $\alpha\in\Dc^2_1(\E_m),$ let $\|\alpha\|_{\Cc^1}$  be  the
sum of $\Cc^1$-norms of the coefficients of $\alpha$ with respect to the  restriction of the  above atlas   to $\E_m.$
If $\T$ and $\S$  are two  $2$-currents  in $\Mc,$ define for $m\in\N,$
$$
\dist_m(\T,\S):=\sup_{\|\alpha\|_{\Cc^1}\leq 1}\left|\langle \T-\S,\alpha\rangle\right|,
$$
where $\alpha$ is a form in $\Dc^2_1(\E_m).$ 

A current $\T\in\Mc$  may be  regarded as a  $2$-form with measure  coefficients. 
For $m\in\N,$  let  $\|\T\|_m$  denote the  sum of the mass  of the variation of the coefficient  measures
of $\T\in\Mc$ with respect to the  restriction of the  above atlas   to $\E_m.$

We make the following observation. Let $m\in\N$ be a fixed number and  $(\T_n)$   a sequence in $\Mc$ such that $\| T_n\|_m$ 
is uniformly bounded in $n. $ 
If $\T_n|_{\E_m}$  converge weakly to $\T|_{\E_m}$   as $n$ tends to infinity (i.e., $\langle \T_n ,\alpha\rangle\to \langle \T,\alpha\rangle$ for all $\alpha\in\Dc^2(\E_m)$), then
by  Arzel\`a-Ascoli theorem  we get that  $\dist_m(\T_n,\T)\to 0.$ Conversely, if   $\dist_m(\T_n,\T)\to 0,$ then $\T_n|_{\E_m}$  converge  to  $\T|_{\E_m}.$

For  $\Ac,\Bc\subset \Mc,$ let
$$
\dist_m(\Ac,\Bc):=\inf_{\T\in \Ac,\  \S\in \Bc} \dist_m(\T,\S).
$$
We  say that a family $\Bc\subset \Mc$ is  bounded if  for every $m,$  the set $\{\|  \T\|_m:\  \T\in\Bc\}$  is  bounded.
It  is  clear that if  $\Bc$ is  bounded then  its closure  $\overline\Bc$ in $\Mc$ with respect  to the weak topology is also  bounded.

Now consider the family  $\Bc:=\{ (T_1\otimes T_2)_\lambda:\  \lambda\in\C,\ |\lambda|\geq 1\}\subset\Mc.$
By  Theorem \ref{T:main_2} (1), $\Bc$ is  a bounded family. So $\overline\Bc$ is also  bounded.
Let $\Kc$ be the set of all tangent currents $\T.$ Also by  Theorem \ref{T:main_2} (1), $\Kc$ is a closed subset of $\overline\Bc$ and all elements of $\Kc$  are  positive currents on $\E.$
So $\Kc$ is  a compact set in $\Mc.$   Moreover,   applying Theorem \ref{T:main_2} (1) %and using  Arzel\`a-Ascoli theorem  also 
yields that
$$
\lim_{\lambda\to\infty} \dist_m( (T_1\otimes T_2)_\lambda, \Kc)=  0\quad\text{for each}\quad m\in\N.
$$
Otherwise, there would be $m\in\N,$  $\epsilon>0$ and a sequence $(\lambda_n)\to\infty$ such that 
$\dist_m( (T_1\otimes T_2)_{\lambda_n}, \Kc)\geq \epsilon$  and $(T_1\otimes T_2)_{\lambda_n}\to \T\in\Kc$  weakly.
But by the above  observation,  the last limit  would imply that   $\lim_{n\to\infty}\dist_m( (T_1\otimes T_2)_{\lambda_n}, \T)=0,$ which shows, in turn, that $\lim_{n\to\infty}
\dist_m( (T_1\otimes T_2)_{\lambda_n}, \Kc)=0,$
this is  a contradiction.

For  $\lambda=e^s$ with $s>0,$ write 
$$
 \Ebf((T\otimes T)_{e^s}) ={1\over s }\int_0^s  (T\otimes T)_{e^t}dt.
$$
Since  
$\Ebf((T\otimes T)_{e^s}) $ is a convex combination  of $(T\otimes T)_{e^t}$
for $t\in[0,s],$
it follows that
$$
\lim_{s\to\infty} \dist_m( \Ebf((T\otimes T)_{e^s}),\widehat \Kc)=0,
$$
where  $\widehat\Kc$ is the convex hull of $\Kc$ in $\Mc.$  As $\Kc$ is closed,  so is $\widehat\Kc.$  

Let $\T'$ be a limit current of $\Ebf((T\otimes T)_\lambda)$ when $\lambda>0$ tends to infinity. 
We infer  from  the last limit that $\dist_m(\T', \widehat \Kc)=0$ for every $m.$ Hence by the  above observation,
for every $m\in\N,$ there exists $\T_m\in\widehat\Kc$ such that $\T'|_{\E_m}=\T_m|_{\E_m}.$
It is  then easy to see that the Ces\`aro means  ${1\over m}\sum_{j=0}^{m-1} \T_j$  tend weakly to $\T'$ as $m$ tends to infinity.
Since these means belong  to $\widehat\Kc,$
the current $\T'$ belongs also to $\widehat\Kc.$ So by Theorem \ref{T:main_2} (1), we have $\T'=\pi^*(\vartheta')$ for some positive measure $\vartheta'$ of mass $m$ on $\Delta\simeq X$.
By  Propositions \ref{P:Kaufmann} and \ref{P:singularities},  we have $\vartheta'=0$. Therefore, we get $m=0$ and hence, by the mass formula in Theorem \ref{T:main_2}, 
we have $\T=0$ for any choice of $\T$. This proves the vanishing theorem.
\endproof

%%%%%%%%%%%%%%%%%%%%%%%%%%

\subsection{Sketchy  proof of the unique  ergodicity for singular holomorphic foliations} \label{SS:Unique_ergodicity_foliations}

The following result holds in a more general setting but we only state it in the case we use, see also \cite{DinhSibony18, FornaessSibony05}. Here, we do not
need to assume that the singularities of the foliation are hyperbolic.

 \begin{theorem} \label{T:auxiliary} {\rm (\cite[Theorem 2.6]{DinhNguyenSibony18})}
 Let $T$ be a positive $\ddc$-closed $(1,1)$-current, on a  compact K\"ahler surface $X$, which is directed by 
a singular  holomorphic foliation or by a bi-Lipschitz lamination. 
\begin{enumerate}
\item[(a)] If $T$ has a positive mass on a leaf $L$, then $\overline L$ is a closed analytic curve and $\overline L\setminus L$ is contained in the set of singularities of the foliation. Moreover, we can write 
$T=T'+T_{\rm an}$, where $T'$ is a directed positive $\ddc$-closed $(1,1)$-current which is diffuse, i.e. having no mass on each single leaf, and 
$T_{\rm an}$ is a finite or countable combination,  with non-negative coefficients, of currents of integration on invariant closed analytic curves. 
\item[(b)] Assume that $T$ gives no mass to any invariant closed analytic  curve. Then $T$ is diffuse and its cohomology class $\{T\}$ is nef. Moreover, $\{T\}$ is also big when $T$ is not closed.
\end{enumerate}
 \end{theorem}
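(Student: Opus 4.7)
\emph{Plan.} The strategy is to establish (a) via a local slicing analysis of $T$ in flow boxes and to deduce (b) from (a) together with Corollary~\ref{c:nef}. For (a), I would first cover $X\setminus E$ by a locally finite family of flow boxes $\{\U_\alpha\simeq \B_\alpha\times\T_\alpha\}$ and slice $T$ as in Proposition~\ref{P:decomposition}:
\[
T\big|_{\U_\alpha} \;=\; \int_{\T_\alpha} h_a^{(\alpha)}\,[\B_\alpha\times\{a\}]\,d\nu_\alpha(a),
\]
with $h_a^{(\alpha)}$ positive harmonic on $\B_\alpha$ for $\nu_\alpha$-a.e.\ $a$. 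Splitting each $\nu_\alpha=\nu_\alpha^{\mathrm c}+\nu_\alpha^{\mathrm a}$ into continuous and atomic parts, the continuous parts glue into a diffuse positive $\ddc$-closed current $T'$ directed by $\Fc$, while the atomic parts single out countably many leaves $L_j$ of $\Fc$ carrying positive $T$-mass (countability being forced by $\|T\|<\infty$). For each such $L_j$, summing the atomic contributions from the flow boxes visited by $L_j$ yields a positive $\ddc$-closed current $T_j$ supported on $\overline{L_j}$, giving the decomposition $T=T'+\sum_j T_j$.

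\emph{Analyticity step.} The heart of (a) is to show that each $\overline{L_j}$ is a closed analytic curve with $\overline{L_j}\setminus L_j\subset E$ and $T_j=c_j[\overline{L_j}]$. I would first prove that the set of plaques of $L_j$ inside each flow box $\U_\alpha$ is discrete in $\T_\alpha$: a cluster point of the corresponding atoms $\{a_k\}$ would, via a Harnack-type lower bound on the weights $\nu_\alpha(\{a_k\})\int_{\B_\alpha}h_{a_k}^{(\alpha)}g_P$ together with the compatibility of these weights across overlapping flow boxes, produce infinitely many atoms of comparable mass inside a relatively compact portion of $\U_\alpha$, contradicting $\|T\|_{\U_\alpha}<\infty$. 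This forces $\overline{L_j}\setminus L_j\subset E$. Proposition~\ref{P:extension} then extends $T_j$ trivially through the finite set $E$, and a Remmert--Stein--Bishop type argument (using that $L_j$ is a Riemann surface of locally finite area in $X\setminus E$ accumulating only at the isolated points of $E$) gives analyticity of $\overline{L_j}$. Finally, the classification of positive $\ddc$-closed currents supported on an irreducible analytic curve forces $T_j=c_j[\overline{L_j}]$ with $c_j>0$, making $\overline{L_j}$ an invariant closed analytic curve.

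\emph{Part (b).} If $T$ gives no mass to any invariant closed analytic curve, then the decomposition in (a) forces $T_{\rm an}=0$, so $T=T'$ is diffuse. For the nefness and bigness of $\{T\}$ I would invoke Corollary~\ref{c:nef}, which reduces to showing that $\{x\in X:\nu(T,x)>0\}$ has vanishing $2$-Hausdorff measure. Diffuseness combined with the local slicing yields $\nu(T,x)=0$ at every regular point $x\in X\setminus E$, since any positive Lelong number at such a point would require an atomic contribution in the transversal decomposition. Hence $\{\nu(T,\cdot)>0\}\subset E$, which is finite in the singular holomorphic foliation case and empty in the bi-Lipschitz case. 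Corollary~\ref{c:nef} then delivers both the nefness of $\{T\}$ and its bigness when $T$ is not closed.

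\emph{Main obstacle.} The delicate part is the discreteness/analyticity step in (a): obtaining a coherent positive lower bound on the atomic weights along a single positive-mass leaf $L_j$ across overlapping flow boxes and ruling out accumulation in the non-singular part. The interplay between the positive-harmonicity of the slice factors $h_a^{(\alpha)}$, the Harnack comparison between adjacent plaques, and the finiteness of the transversal measures $\nu_\alpha$ is the technical heart of the argument.
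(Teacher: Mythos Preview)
The survey paper does not prove Theorem~\ref{T:auxiliary}; it is quoted from \cite{DinhNguyenSibony18} (with references also to \cite{DinhSibony18,FornaessSibony05}) and only used as a black box in the proof of Lemma~\ref{L:closed} and Theorem~\ref{T:Unique_ergodicity_1bis}. So there is no in-paper argument to compare against, and your outline has to be judged on its own merits.

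Your treatment of part~(b) is fine: once (a) gives diffuseness, the slicing of Proposition~\ref{P:decomposition} forces $\nu(T,x)=0$ at every regular point, so $\{\nu(T,\cdot)>0\}\subset E$ is finite (or empty in the bi-Lipschitz case), and Corollary~\ref{c:nef} applies verbatim. The atomic/continuous splitting of the transversal measures and the gluing of the atomic piece into leafwise currents $T_j\le T$ is also standard and correct.

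The gap is in your discreteness argument for~(a). You claim that if the plaques of $L_j$ in a flow box $\U_\alpha$ accumulate at a regular point, then ``a Harnack-type lower bound on the weights $\nu_\alpha(\{a_k\})\int_{\B_\alpha} h^{(\alpha)}_{a_k}g_P$'' forces infinitely many atoms of comparable mass. But Harnack only compares values of the \emph{same} harmonic function on a single plaque; it says nothing across different plaques of $L_j$. The globally glued positive harmonic density $H$ on $L_j$ (with $T|_{L_j}=H[L_j]$) may well decay to $0$ along the sequence of plaques $P_k$: the Harnack chain connecting $P_k$ to $P_{k+1}$ inside $L_j$ has hyperbolic length that typically tends to infinity, so the Harnack constant blows up. Thus $\sum_k\int_{P_k}H\,g_X<\infty$ is perfectly compatible with infinitely many clustering plaques, and your contradiction does not close. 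In particular, you have not established that $L_j$ has locally finite area in $X\setminus E$, which is the hypothesis you need before invoking Remmert--Stein--Bishop.

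A route that does close (and is the one taken in \cite{DinhNguyenSibony18,DinhSibony18}) replaces this local mass count by a Lelong-number argument: $T_j=H[L_j]$ is a positive $\ddc$-closed $(1,1)$-current on the compact K\"ahler surface $X$ with $\nu(T_j,q)\ge H(q)>0$ at every $q\in L_j$. One then uses the Siu-type analyticity of the super-level sets $\{\nu(T_j,\cdot)\ge c\}$ for positive $\ddc$-closed $(1,1)$-currents on compact K\"ahler surfaces (cf.\ \cite{DinhSibony18} and the forthcoming \cite{DinhNguyen20}). For small $c>0$ this set is a one-dimensional analytic subset containing a nonempty open piece of $L_j$; being $\Fc$-invariant it must contain the whole leaf, hence $\overline{L_j}$ is analytic and $T_j=c_j[\overline{L_j}]$ follows from the structure of positive $\ddc$-closed currents supported on an irreducible curve. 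If you want to salvage your approach, this is the missing global ingredient you should insert in place of the Harnack step.
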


 The first step of our proof  consists in proving the following lemma.
 
\begin{lemma}\label{L:closed}  {\rm (\cite[Lemma 2.7]{DinhNguyenSibony18})}
Let $\Fc$ be either a singular holomorphic foliation  with only  hyperbolic  singularities, or a bi-Lipschitz
lamination    in  a compact K\"ahler surface $(X,\omega).$ 
Let $T_1$ and $T_2$ be two positive $\ddc$-closed currents of mass $1$ directed by $\Fc$  such that
neither of them gives mass to any  invariant closed analytic curve.
Then $T_1-T_2$ is   a closed current. If both $T_1$ and $T_2$ are closed, then we have $\{T_1\}^2=\{T_2\}^2=\{T_1\}\smallsmile\{T_2\}=0$.
\end{lemma}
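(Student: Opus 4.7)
The plan is to run the tangent current machinery of Subsection \ref{SS:positive_ddc_closed_currents} on the tensor products $T_i\otimes T_j$, extract a sign comparison for the self-intersection $\{T_1-T_2\}^2$, and close with the Hodge index theorem. First I would verify that Theorem \ref{T:main_2} applies to each pair $(T_i,T_j)$ with $i,j\in\{1,2\}$. By Theorem \ref{T:auxiliary}(b), the hypothesis that neither $T_i$ charges an invariant closed analytic curve forces both $T_1$ and $T_2$ to be diffuse; using the local structure from Proposition \ref{P:decomposition} one checks that $\{\nu(T_i,\cdot)>0\}$ lies in the singular set $E$ of $\Fc$, which is finite in the foliated case and empty in the bi-Lipschitz case. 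Since a positive $\ddc$-closed $(1,1)$-current on a complex surface cannot charge a finite set, $T_i$ gives no mass to $\{\nu(T_j,\cdot)>0\}$, so Theorem \ref{T:main_2} applies and produces tangent currents $\pi^*\vartheta_{ij}$ whose common mass $\|\vartheta_{ij}\|$ is non-negative and is given by the explicit formula in part~(2) of that theorem.

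Next I would apply the vanishing Theorem \ref{T:main_3} to $T_1$, $T_2$, and to the sum $T_1+T_2$, which is still positive $\ddc$-closed, directed by $\Fc$, and carries no mass on invariant closed analytic curves; this yields $\|\vartheta_{11}\|=\|\vartheta_{22}\|=\|\vartheta_{T_1+T_2}\|=0$. Fixing simultaneous representations as in \eqref{e:decompo_posi_har}, with $\Omega_i$ smooth closed and $S_i$ of bi-degree $(0,1)$ enjoying the $L^2$-regularity of Theorem \ref{T:T-rep}, a direct bilinear expansion of the mass formula gives
\[
\|\vartheta_{T_1+T_2}\|=\|\vartheta_{11}\|+2\|\vartheta_{12}\|+\|\vartheta_{22}\|,
\]
and since every summand is non-negative I conclude that $\|\vartheta_{12}\|=0$ as well. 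Translated through the explicit mass formula, these vanishings read
\[
\{T_i\}^2=2\Ec(T_i)=2\int_X \dbar S_i\wedge \partial\overline{S}_i\quad (i=1,2), \qquad \{T_1\}\smile\{T_2\}=\int_X \dbar S_1\wedge \partial\overline{S}_2+\int_X \dbar S_2\wedge \partial\overline{S}_1.
\]

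The crucial step, and the one I expect to drive the proof, is to combine these identities with the Hodge index theorem. A short algebraic manipulation yields the key formula
\[
\{T_1-T_2\}^2 \;=\; 2\int_X \dbar(S_1-S_2)\wedge \partial\overline{(S_1-S_2)},
\]
whose right-hand side is non-negative because pointwise $\dbar\sigma\wedge\partial\overline{\sigma}$ is a non-negative multiple of the volume form for any $L^2$ form $\sigma$ of bi-degree $(0,1)$. On the other hand, the unit-mass condition gives $\{T_1-T_2\}\smile\{\omega\}=0$, so the Hodge index theorem on the compact K\"ahler surface $X$ forces $\{T_1-T_2\}^2\leq 0$. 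The two bounds collapse to $\{T_1-T_2\}^2=0$, hence to $\dbar(S_1-S_2)=0$ almost everywhere. A direct computation from \eqref{e:decompo_posi_har} gives $\dbar T_i=-\partial\dbar S_i$, so $\dbar(T_1-T_2)=0$, and the reality of $T_1-T_2$ then upgrades this to $d(T_1-T_2)=0$, which is the first assertion.

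For the second assertion, if $T_1$ and $T_2$ are both closed then $\Ec(T_i)=0$, equivalently $\dbar S_i=0=\partial\overline{S}_i$ by the characterization of closedness recorded right after Theorem \ref{T:pot}. Plugging into the identities displayed in the previous paragraph immediately gives $\{T_i\}^2=0$ and $\{T_1\}\smile\{T_2\}=0$. The whole argument rests on the two opposite bounds on $\{T_1-T_2\}^2$: the energy positivity is intrinsic to the representation \eqref{e:decompo_posi_har}, while the Hodge index theorem uses the mass-one normalization in an essential way, and neither ingredient can be dropped.
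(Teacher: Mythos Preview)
Your proof is correct and follows essentially the same route as the paper's: apply the vanishing Theorem \ref{T:main_3} to $T_1$, $T_2$, and $T_1+T_2$, combine with the mass formula of Theorem \ref{T:main_2} to obtain $\{T_1-T_2\}^2=2\int_X\dbar(S_1-S_2)\wedge\partial\overline{(S_1-S_2)}\geq 0$, and conclude via the Hodge--Riemann relations since $\{T_1-T_2\}\smile\{\omega\}=0$. The only cosmetic difference is that the paper reaches the key identity through the polarization $(\Omega_1-\Omega_2)^2=2\Omega_1^2+2\Omega_2^2-(\Omega_1+\Omega_2)^2$ without isolating $\|\vartheta_{12}\|=0$, whereas you extract that cross term explicitly; the algebra is the same.
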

\proof
Since   both $T_1 $ and  $ T_2$ do not give mass to any  invariant closed analytic curve, it follows from Theorem \ref{T:auxiliary} that  
$\nu(T_1,x)=\nu(T_2,x)=0$ for all $x$ outside the singularities of $\Fc.$ 
Since $T_1$ and $T_2$ do not  give mass to this finite set, we  see that $T_1$ and $T_2$  satisfy the assumption of Theorem \ref{T:main_2}.

By  \eqref{e:decompo_posi_har_T12} and Stokes' theorem, we have (the second integral is the mass of $T_j$ which is assumed to be 1)
\begin{equation}\label{e:Omega_omega}
 \int_X \Omega_j\wedge \omega= \int_X T_j\wedge \omega=1\quad\text{for}\quad j=1,2.
\end{equation}
Applying  Theorems \ref{T:main_2} and \ref{T:main_3} to  each one of the  three directed  positive $\ddc$-closed currents $T_1,$ $T_2$ and $T_1+T_2$,  we obtain that all
 $T_1\otimes T_1,$  $T_2\otimes T_2$ and   $(T_1+T_2)\otimes( T_1+T_2)$     admit  zero as the unique  tangent current along the  diagonal $\Delta.$
This, combined with \eqref{e:decompo_posi_har_T12} and \eqref{e:mass_bis}, implies that
\begin{equation}\label{e:Omega_square}
\begin{split}
 \int_X  \Omega_1^2=2\int_X\dbar S_1\wedge \partial\overline S_1, \quad&    \int_X  \Omega_2^2=2\int_X\dbar S_2\wedge \partial\overline S_2  \\
\text{and} \quad  \int_X  (\Omega_1+\Omega_2)^2&=2\int_X\dbar (S_1+S_2)\wedge \partial (\overline S_1+\overline S_2).
 \end{split}
\end{equation}
If both $T_1$ and $T_2$ are closed, we deduce from the discussion after \eqref{e:energy} 
that $\dbar S_1=\dbar S_2=0$ and hence all integrals in \eqref{e:Omega_square}
vanish. This implies $\{T_1\}^2=\{T_2\}^2=\{T_1\}\smallsmile\{T_2\}=0$ as stated in the second assertion of the lemma.

Let $T:=T_1-T_2,$ $\Omega:=\Omega_1-\Omega_2,$   $S:=S_1-S_2$ and $u:=u_1-u_2.$ We infer  from \eqref{e:decompo_posi_har_T12} and \eqref{e:Omega_omega} that 
\begin{equation} \label{e:decompo_posi_har_T}
T=\Omega+\partial S+\overline{\partial S}+i\ddbar u\quad\text{and}\quad\int_X \Omega\wedge \omega=0.
\end{equation}
Moreover, it follows from  \eqref{e:Omega_square} that
\begin{equation} \label{e:integral}
\int_X  \Omega^2 =    \int_X  (\Omega_1 -\Omega_2)^2 =   
2\int_X  \Omega_1^2+2\int_X  \Omega_2^2- \int_X  (\Omega_1+\Omega_2)^2\\
 = 2 \int_X\dbar S\wedge \partial\overline S.
 \end{equation}

On one hand, since $\dbar S$ is an $L^2$  $(0,2)$-form,  the  current $\dbar S\wedge \partial\overline S= \dbar S\wedge \overline{\dbar S}  $ is a positive measure. So the last integral in \eqref{e:integral} is non-negative and  it vanishes if only if $\dbar S=0$  almost everywhere.
On the other hand, since  we know by \eqref{e:decompo_posi_har_T}  that $\int_X \Omega\wedge \omega=0,$ the cohomology class  of $\Omega$ is  a primitive class of $H^{1,1}(X,\R).$ Therefore, it follows
from  the  classical Hodge--Riemann  theorem that  the first integral in \eqref{e:integral} is non-positive, see e.g. \cite{Voisin}.
We conclude that  $\dbar S=0$  almost everywhere.
This and \eqref{e:decompo_posi_har_T} imply that $dT=0.$  The proof of the lemma is thereby completed.
\endproof

\proof[End of the proof of Theorem \ref{T:Unique_ergodicity_1bis} (see also  \cite{FornaessSibony05})]
We only consider the case of a foliation because the case of a lamination can be obtained in the same way.
It is clear that not more than one property in the theorem holds.
By Theorem \ref{T:existence_harmonic_currents}, there exists a positive $\ddc$-closed current $T_1$ directed by $\Fc.$ We can assume that Property (a) in the theorem does not hold. So we can find a current $T_1$ of mass 1 which has no mass on each single leaf of $\Fc$, see Theorem \ref{T:auxiliary}. We show that either Property (b) or (c) holds.

\medskip\noindent
{\bf Case 1.} Assume that there is such a current $T_1$ which is not closed. We show that the foliation satisfies Property (c) in the theorem.
By Theorem \ref{T:auxiliary}, the class $\{T_1\}$ is nef and big. It remains to prove the 
uniqueness of $T_1$. Assume by contradiction that there is another positive $\ddc$-closed current $T_2$ of mass 1 directed by $\Fc$. 
If there is such a current which is closed, then we assume that $T_2$ is closed.
So we have 
$$\int_X T_1\wedge \omega=\int_XT_2\wedge \omega=1.$$
We need to find a contradiction.  

Consider a flow box $\U$ away from the  set of singularities $E$ that we identify with $\D\times \Sigma,$  $\D$ being as usual the unit disc and $\Sigma$ being a transversal of $\U.$  By  Proposition \ref{P:decomposition}, we have
$$T_j = \int_\Sigma h^\alpha_j [V_\alpha ]d\mu_j (\alpha),\quad j = 1, 2,$$
where  for $\alpha\in\Sigma,$  $[V_\alpha]$  is the current of integration  on the plaque $V_\alpha\simeq  \D\times \{\alpha \}.$
Let $\mu = \mu_1 + \mu_2$ and write $\mu_j = r_j \mu$ with a non-negative bounded function $r_j\in L^\infty(\mu)$.  Then we have
$$T_1 - T_2 = \int_\Sigma  \big(h^\alpha_1 r_1 (\alpha) - h^\alpha_2 r_2 (\alpha) \big)[V_\alpha]d\mu(\alpha).$$
Since we know by Lemma  \ref{L:closed} that $T_1 - T_2$ is a  closed  current, 
$h^\alpha_1 r_1 (\alpha) - h^\alpha_2 r_2 (\alpha)$ is constant, for $\mu$-almost every $\alpha$,  that we will denote by $c(\alpha)$. 

We decompose $c(\alpha)\mu(\alpha)$ on the space of plaques $\Sigma$ and obtain that
$ c(\alpha)\mu(\alpha) = \nu_1 -\nu_2$ for
 mutually singular positive measures $\nu_1$ and $\nu_2$. Then
$$T_1 - T_2 = [V_\alpha ]\nu_1 (\alpha) -[V_\alpha ]\nu_2 (\alpha) = T^+ - T^-$$
for positive closed currents $T^\pm.$ These currents fit together to a global positive closed
currents on $X \setminus E.$ Observe  that the mass of $T^\pm$ is  bounded  by   the mass of $T_1+T_2$.  So  the mass of $T^\pm$ is bounded near $E$. Since $E$ is a finite set,
$T^\pm$ extend as positive closed currents through $E,$  see \cite{Sibony,Skoda}. 
Recall that positive $\ddc$-closed currents of bidimension $(1,1)$  have no mass on finite sets. Therefore,
since we assumed above that $T_1\not=T_2$, we have either $T^+\not=0$ or $T^-\not=0$. It follows from our choice of $T_2$ that $T_2$ is closed and hence $T_1$ is closed as well. 
This is a contradiction which shows that such a current $T_2$ as above does not exist.

\medskip\noindent
{\bf Case 2.} Assume now that all directed positive $\ddc$-closed $(1,1)$-currents are closed. Consider arbitrary directed positive closed $(1,1)$-currents $T_1$ and  $T_2$ of mass 1 which are diffuse. 
So by Theorem \ref{T:auxiliary} applied to $T_1,T_2$, the classes $\{T_1\}$ and $\{T_2\}$ are nef. By Lemma \ref{L:closed}, we have
$\{T_1\}^2=\{T_2\}^2=\{T_1\}\smallsmile\{T_2\}=0$. 
We show that Property (b) in the theorem holds. It is enough to show that $\{T_1\}=\{T_2\}$. 

Since $T_1$ and $T_2$ are of mass 1, we have $(\{T_1\}-\{T_2\})\smallsmile \{\omega\}=0$. So $\{T_1\}-\{T_2\}$ is a primitive class in the Hodge cohomology group $H^{1,1}(X,\R)$ of $X$. By the classical Hodge-Riemann theorem, we have $(\{T_1\}-\{T_2\})^2<0$ unless 
$\{T_1\}-\{T_2\}=0$, see e.g. \cite{Voisin}. Using that $\{T_1\}^2=\{T_2\}^2=\{T_1\}\smallsmile\{T_2\}=0$, we deduce that $\{T_1\}=\{T_2\}$. This ends the proof of the theorem.
\endproof

\proof[End of the proof of Theorem \ref{T:Unique_ergodicity_1}]
We only consider the case of a foliation because the case of a lamination can be proved in the same way.
By hypothesis, the foliation has no invariant closed analytic curve. Moreover, by
Theorem \ref{T:Unique_ergodicity_1bis}, Property (c) in that theorem holds. It follows that 
the foliation admits a unique directed positive $\ddc$-closed current $T$ of mass 1.
This current is not closed and $\{T\}$ is nef and big.
Since  every cluster point of $\tau_{x,R}$  as $R$ tends to infinity is  a positive  $\ddc$-closed current of mass $1,$
 $\tau_{x,R}$ converges necessarily to $T$  as $R$ tends to infinity.   
\endproof

%%%%%%%%%%%%%%%%%%%%%%%%%%%%%%%%%%%%%%%%%%%%%%%%%%%%%%%%%%%%%%%%%%%%%%%%%%%%%%%%%%%%%%%%%%%%%%%%%%%%%%%%
%%%%%%%%%%%%%%%%%%%%%%%%%%%%%%%%%%%%%%%%%%%%%%%%%%%%%%%%%%%%%%%%%%%%%%%%%%%%%%%%%%%%%%%%%%%%%%%%%%%%%%%%
\section{Lyapunov--Oseledec theory for  Riemann surface laminations} \label{S:Lyapunov}
%%%%%%%%%%%%%%%%%%%%%%%%%%%%%%%%%%%%%%%%%%%%%%%%%%%%%%%%%%%%%%%%%%%%%%%%%%%%%%%%%%%%%%%%%%%%%%%%%%%%%%%%
%%%%%%%%%%%%%%%%%%%%%%%%%%%%%%%%%%%%%%%%%%%%%%%%%%%%%%%%%%%%%%%%%%%%%%%%%%%%%%%%%%%%%%%%%%%%%%%%%%%%%%%%

The purpose of this section is to present some  recent results obtained in our   works \cite{NguyenVietAnh17a,NguyenVietAnh17b}.
 
%%%%%%%%%%%%%%%%%%%%%%%%%%%%%%%%%%%%%%%%%%%%%%%%%%%
\subsection{Cocycles}
%%%%%%%%%%%%%%%%%%%%%%%%%%%%%%%%%%%%%%%%%%%%%%%%%%%
The notion of (multiplicative)  cocycles   have been   introduced 
in  \cite{NguyenVietAnh17a}  for ($N$-dimensional real or complex)  laminations. For  the sake of simplicity  we  only formulate this notion  for Riemann surface laminations
with singularities in this article.  In the rest of the section   
 we make  the following convention: $\K$ denotes either the  field $\R$ or $\C.$
Moreover,  given any  integer $d\geq 1,$  $\GL(d,\K)$   denotes  the general linear group of degree $d$ over $\K$
and $\P^d(\K)$ denotes the  $\K$-projective space of dimension $d.$
%which encode  the topological  properties of the lamination.  
\begin{definition}\label{D:cocycle} {\rm (Nguyen \cite[Definition 3.2]{NguyenVietAnh17a}). }\rm
Let $\Fc=(X,\Lc,E)$ be a Riemann surface lamination with  singularities and $\Omega:=\Omega(\Fc)$ be its sample-path space.
 A {\it $\K$-valued   cocycle} (of rank $d$)   is
  a   map  
$\mathcal{A}:\ \Omega\times \R^+ \to  \GL(d,\K)      $
such that\\  
(1)  ({\it identity law})  
$\mathcal{A}(\omega,0)=\id$  for all $\omega\in\Omega ;$\\
(2) ({\it homotopy law}) if  $\omega_1,\omega_2\in \Omega_x$ and $t_1,t_2\in \R^+$ such that 
     $\omega_1(t_1)=\omega_2(t_2)$
and $\omega_1|_{[0,t_1]}$ is  homotopic  to  $\omega_2|_{[0,t_2]}$ (that is, the path $\omega_1|_{[0,t_1]}$ can be  deformed  continuously on  $L_x$ to the path  $\omega_2|_{[0,t_2]},$ 
the two endpoints of $\omega_1|_{[0,t_1]}$  being kept fixed  during the deformation), then 
$$
\mathcal{A}(\omega_1,t_1)=\mathcal{A}(\omega_2,t_2);
$$
(3) ({\it multiplicative law})    $\mathcal{A}(\omega,s+t)=\mathcal{A}(\sigma_t(\omega),s)\mathcal{A}(\omega,t)$  for all  $s,t\in \R^+$ and $\omega\in \Omega$  (see \eqref{e:shift} for $\sigma_t$);\\
(4) ({\it measurable law})  the {\it local expression} of $\mathcal{A}$ on each  laminated  chart is   Borel measurable.
Here,   the {\it local expression} of  $\mathcal A$ on the laminated chart 
$\Phi:  \U\to \D\times \T,$  is the map
$A:\  \D\times \D\times \T\to\GL(d,\K)$  defined  by
$$
A(y,z,t):=\mathcal A(\omega,1),
$$
where  $\omega$ is  any leafwise path  such that $\omega(0)=\Phi^{-1}(y,t),$ $\omega(1)=\Phi^{-1}(z,t)$ 
and  $\omega[0,1]$ is  contained in the simply connected  plaque   $\Phi^{-1}(\cdot,t).$   
 \end{definition}
 
 A cocycle  $\mathcal A$ on a  smooth Riemann surface lamination with singlarities  $\Fc$  is called    {\it   smooth}  if,
for each laminated chart $\Phi$ as  above,  the local expression   $A$ of $\mathcal A$  is   smooth with respect to  $(y,z)$
 and its  partial  derivatives of any  total order 
 with respect to $(y,z)$ are jointly continuous   in $(y,z,t).$
  
 The  cocycles of rank $1$  have been  investigated  by several  authors (see, for example, Candel \cite{Candel03}, Deroin \cite{Deroin05}, etc.).   
 The  holonomy cocycle (or equivalently the normal  derivative cocycle)  of the  regular part $(X\setminus E,\Lc)$ of a singular holomorphic  foliation   $\Fc=(X,\Lc,E)$ with $\dim_\C X=n$ is
 a typical   example of  $\C$-valued cocycles of rank $n-1.$   These   cocycles   capture  the topological aspect of the considered  foliations. Moreover, 
we can produce  new  cocycles from the old ones by performing some basic operations such as the wedge  product and the tensor product
(see \cite[Section 3.1]{NguyenVietAnh17a}). 
% Note that  a cocycle on $X$   induces, by restriction,  an inherited   cocycle to   each    leafwise saturated, closed   subset $M\subset X.$  
%In  this  introductory  notes,  we  are mainly concerned  with cocycles which  behave in a tempered manner 
%relative to the   metric $g_P,$ for example, 

%%%%%%%%%%%%%%%%%%%%%%%%%%%%%%%%%%%%%%%%%%%%%%%%%%%
\subsection{Oseledec multiplicative ergodic theorem}
%%%%%%%%%%%%%%%%%%%%%%%%%%%%%%%%%%%%%%%%%%%%%%%%%%%

Now  we are in the position to state the Oseledec multiplicative  ergodic theorem for  Riemann surface laminations with singularities.
 \begin{theorem} \label{T:VA_general}  {\rm (Nguyen \cite[Theorem 3.11]{NguyenVietAnh17a}).}
 Let $\Fc=(X,\Lc,E)$ be  a    Riemann surface lamination with singlarities. 
Let  $\mu$ be a  harmonic measure which is  also  ergodic.
 %Let  $\G$ be either $\N t_0$ or $\R^+,$  where  $t_0>0$ is  a given  number.
Consider  a    cocycle
$\mathcal{A}:\ \Omega\times \R^+ \to  \GL(d,\K)    .  $   Assume that the following integrability condition  is  satisfied for some real number $t_0>0:$  
\begin{equation}\label{e:integrability}
\int_{x\in X}\big(\int_{\Omega_x} \sup_{t\in[0,t_0]}\log^+\|\mathcal A(\omega,t)\|dW_x(\omega)\big)d\mu(x)<\infty,
\end{equation}
where $\log^+:=\max(0,\log).$
Then  there exist  a leafwise saturated  Borel  set $Y\subset X$ of  total $\mu$-measure  and a number $m\in\N$  together with $m$ integers  $d_1,\ldots,d_m\in \N$  such that
the following properties hold:
\begin{itemize}
\item[(i)] For   each $x\in Y$  
 there   exists a  decomposition of $\K^d$  as  a direct sum of $\K$-linear subspaces 
$$\K^d=\oplus_{i=1}^m H_i(x),
$$
 such that $\dim H_i(x)=d_i$ and  $\mathcal{A}(\omega, t) H_i(x)= H_i(\omega(t))$ for all $\omega\in  \Omega_x$ and $t\in \R^+.$   
Moreover,  $x\mapsto  H_i(x)$ is   a  measurable map from $  Y $ into the Grassmannian of $\K^d.$
For each $1\leq i\leq m$ and each $x\in Y,$ let $V_i(x):=\oplus_{j=i}^m H_j(x).$  Set $V_{m+1}(x)\equiv \{0\}.$
\item[
(ii)]  There   are real numbers 
$$\chi_m<\chi_{m-1}<\cdots
<\chi_2<\chi_1,$$
  and   for  each $x\in Y,$ there is a set $F_x\subset \Omega_x$ of total $W_x$-measure such that for every $1\leq i\leq m$ and  every  $v\in V_i(x)\setminus V_{i+1}(x)$
 and every  $\omega\in F_x,$
\begin{equation}
\label{e:Lyapunov}
\lim\limits_{t\to \infty, t\in \R^+} {1\over  t}  \log {\| \mathcal{A}(\omega,t)v   \|\over  \| v\|}  =\chi_i.    
\end{equation}
Moreover, 
\begin{equation}
\label{e:Lyapunov_max}
\lim\limits_{t\to \infty, t\in \R^+} {1\over  t}  \log {\| \mathcal{A}(\omega,t)  \|}  =\chi_1    
\end{equation}
 for  each $x\in Y$  and for   every  $\omega\in F_x.$ 
\end{itemize}
Here    $\|\cdot\|$  denotes the standard   Euclidean norm of $\K^d.$  
 \end{theorem}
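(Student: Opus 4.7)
The plan is to reduce the theorem to the classical discrete-time Oseledec multiplicative ergodic theorem applied to the sample-path space $(\Omega,\bar{\mu})$ equipped with the shift $\sigma_{t_0}$, and then transfer the resulting structure from $\Omega$ back to the base $X$ using the probabilistic structure of the Wiener measure together with the homotopy law of the cocycle.

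As a first step, I would invoke Theorem~\ref{T:Random}: since $\mu$ is harmonic and ergodic, $\bar{\mu}$ is $\sigma_t$-invariant for every $t\in\R^+$ and ergodic for every $t>0$. Disintegrating $\bar{\mu}=\int W_x\,d\mu(x)$ as in \eqref{eq_formula_bar_mu} and applying Fubini's theorem to the hypothesis \eqref{e:integrability} yield
\[
\int_\Omega\sup_{t\in[0,t_0]}\log^+\|\mathcal{A}(\omega,t)\|\,d\bar{\mu}(\omega)<\infty,
\]
so the map $B(\omega):=\mathcal{A}(\omega,t_0)$ satisfies $\log^+\|B\|\in L^1(\bar{\mu})$. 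The multiplicative law in Definition~\ref{D:cocycle} gives $B(\sigma_{t_0}^{n-1}\omega)\cdots B(\omega)=\mathcal{A}(\omega,nt_0)$, so the classical one-sided Oseledec theorem applies to $(\Omega,\bar{\mu},\sigma_{t_0},B)$. It produces, on a $\sigma_{t_0}$-invariant set of full $\bar{\mu}$-measure, an integer $m$, real numbers $\chi_1>\cdots>\chi_m$ (obtained after dividing the discrete exponents by $t_0$), and a decreasing measurable filtration $\K^d=V_1(\omega)\supset\cdots\supset V_m(\omega)\supset V_{m+1}(\omega)=\{0\}$ satisfying the equivariance $B(\omega)V_i(\omega)=V_i(\sigma_{t_0}\omega)$ together with $\lim_{n\to\infty}\frac{1}{nt_0}\log\|\mathcal{A}(\omega,nt_0)v\|=\chi_i$ for $v\in V_i(\omega)\setminus V_{i+1}(\omega)$. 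Ergodicity of $\sigma_{t_0}$ makes $m$, the $\chi_i$ and the dimensions $d_i=\dim V_i(\omega)/V_{i+1}(\omega)$ $\bar{\mu}$-a.e.\ constants.

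The central and most delicate step is to show that this filtration descends to a leafwise-saturated filtration on $X$, i.e.\ that for $\bar{\mu}$-a.e.\ $\omega$ the subspace $V_i(\omega)$ depends only on $\omega(0)$. For this I would exploit the Markov structure of $W_x$ together with the homotopy law: the equivariance iterates to $V_i(\omega)=\mathcal{A}(\omega,nt_0)^{-1}V_i(\sigma_{nt_0}\omega)$ for every $n$, while conditioned on $\omega(nt_0)=y$ the tail $\sigma_{nt_0}\omega$ is $W_y$-distributed and independent of $\omega|_{[0,nt_0]}$. A variant of Kolmogorov's $0$--$1$ law for the Brownian tail $\sigma$-algebra, combined with the uniqueness of the Oseledec filtration as the one satisfying the prescribed growth rates, then forces $V_i(\omega)$ to be $W_x$-almost surely constant for $\mu$-a.e.\ $x$; this common value defines $V_i(x)$. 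The invariance $\mathcal{A}(\omega,t)V_i(x)=V_i(\omega(t))$ for every $\omega\in\Omega_x$ and $t\in\R^+$ then follows from the equivariance on $\Omega$ combined with the homotopy law, which guarantees that $\mathcal{A}(\omega,t)$ is insensitive to continuous deformations of $\omega|_{[0,t]}$ so the common value does not depend on the path chosen. A measurable selection argument produces complementary subspaces $H_i(x)\subset V_i(x)$ with $V_i(x)=H_i(x)\oplus V_{i+1}(x)$, giving the decomposition in assertion~(i); measurability of $x\mapsto H_i(x)$ into the Grassmannian is routine.

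Finally, I would upgrade the asymptotic \eqref{e:Lyapunov} from discrete times $nt_0$ to continuous $t\in\R^+$. For $t\in[nt_0,(n+1)t_0]$, the multiplicative law yields
\[
\bigl|\log\|\mathcal{A}(\omega,t)v\|-\log\|\mathcal{A}(\omega,nt_0)v\|\bigr|\le 2g(\sigma_{nt_0}\omega),
\]
where $g(\omega):=\sup_{s\in[0,t_0]}\log^+\|\mathcal{A}(\omega,s)\|$ lies in $L^1(\bar{\mu})$ by the computation above. Birkhoff's ergodic theorem applied to $g$ and $\sigma_{t_0}$ gives $g(\sigma_{nt_0}\omega)/n\to 0$ $\bar{\mu}$-a.e., so the continuous-time limit in \eqref{e:Lyapunov} exists and coincides with $\chi_i$; formula \eqref{e:Lyapunov_max} follows by applying this to a generic basis of $\K^d$ and observing that the largest exponent dominates the operator norm. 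The hardest part of the whole argument is the descent step: it is not a purely ergodic-theoretic fact, because the shift $\sigma_{t_0}$ on $\Omega$ is noninvertible and genuinely probabilistic, so one must combine the Markov structure of Wiener measure with the topological homotopy invariance peculiar to cocycles on laminations, and this mixture is exactly what distinguishes the foliated version from the classical Oseledec theorem.
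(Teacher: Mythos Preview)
The paper does not prove this theorem; it merely states it and attributes it to \cite[Theorem~3.11]{NguyenVietAnh17a}, the author's earlier memoir, remarking only that the result there is formulated in the greater generality of $l$-dimensional laminations with general leafwise Riemannian metrics. So there is no proof in the present paper to compare your proposal against.

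That said, your outline has the right skeleton but contains a real gap. The classical one-sided Oseledec theorem applied to $(\Omega,\bar\mu,\sigma_{t_0},B)$ yields only the equivariant \emph{filtration} $V_1(\omega)\supset\cdots\supset V_m(\omega)$, not an equivariant direct-sum decomposition. You write that ``a measurable selection argument produces complementary subspaces $H_i(x)$'', but an arbitrary measurable choice of complements will not satisfy the equivariance $\mathcal{A}(\omega,t)H_i(x)=H_i(\omega(t))$ demanded in assertion~(i). Producing an honest invariant Oseledec \emph{splitting} over a non-invertible base is not routine: it typically requires passing to a natural two-sided extension, and in the present setting this is where the lift to the covering lamination $\widetilde\Fc$ (whose leaves are discs), the reversibility of leafwise Brownian motion with respect to the harmonic measure, and the homotopy law of the cocycle all enter essentially. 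This is precisely the substantive work carried out in the memoir, and dismissing it as a selection argument is a genuine gap. A secondary technical point: your continuous-time upgrade bounds the oscillation by $2g(\sigma_{nt_0}\omega)$ with $g=\sup_{s\in[0,t_0]}\log^+\|\mathcal{A}(\cdot,s)\|$, but the \emph{lower} bound on $\log\|\mathcal{A}(\omega,t)v\|$ needs control of $\|\mathcal{A}(\sigma_{nt_0}\omega,s)^{-1}\|$, which the stated integrability hypothesis \eqref{e:integrability} does not directly provide.
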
  
%The proof of Theorem \ref{T:VA}  follows  essentially  from   that of \cite[Theorem 3.11]{NguyenVietAnh1}. 
The  above  result  is  the counterpart, in the context of  Riemann surface laminations with singularities,
of the classical  Oseledec   multiplicative ergodic theorem for maps   (see \cite{KatokHasselblatt,Oseledec}).
In fact, Theorem 3.11 in  \cite{NguyenVietAnh17a} is  much more general than  Theorem \ref{T:VA_general}.
Indeed, the former is formulated for $l$-dimensional  laminations endowed with    leafwise Riemannian metrics, which satisfy some
reasonable geometric   conditions.
  
 Assertion  (i) above tells us that the  Oseledec  decomposition exists for all points $x$ in  a leafwise saturated Borel  set
of  total $\mu$-measure and that this  decomposition is holonomy invariant.
Observe that  the Oseledec  decomposition  in  (i)  depends only on $x\in Y,$ in particular, it 
does not depend on paths $\omega\in\Omega_x.$

The decreasing  sequence  of  subspaces  of $\K^d$ given by assertion (i):
$$
\{0\}\equiv V_{m+1}(x)\subset V_m(x)\subset \cdots \subset V_1(x)=\K^d
$$
is  called the {\it Lyapunov filtration} associated to $\mathcal A$  at a given point $x\in Y.$ 
 
 The   numbers  $\chi_m<\chi_{m-1}<\cdots
<\chi_2<\chi_1$ given by  assertion (ii) above are called  the {\it  Lyapunov exponents} of the cocycle $\mathcal{A}$ with respect to the harmonic measure $\mu.$
  It follows   from formulas (\ref{e:Lyapunov}) and (\ref{e:Lyapunov_max}) above  that these characteristic numbers measure heuristically the  expansion rate of
  $\mathcal A$ along different vector-directions $v$ and  along leafwise Brownian trajectories.
In other words,  the stochastic formulas (\ref{e:Lyapunov})-(\ref{e:Lyapunov_max})  highlight the  dynamical  character  of the Lyapunov exponents.

%%%%%%%%%%%%%%%%%%%%%%%%%%%%%%%%%%%%%%%%%%%%%%%%%%%%%%%%%%%%%%%%%%%%%%%%%%%%%%%%%%%%%%%%%%%%%%%%%%%%%%%%%%%%%%%%%%%%%%%%
\subsection{Estimates of the Lyapunov exponents of compact smooth hyperbolic  laminations}\label{SS:estimates_of_Lyapunov_exponents}
%%%%%%%%%%%%%%%%%%%%%%%%%%%%%%%%%%%%%%%%%%%%%%%%%%%%%%%%%%%%%%%%%%%%%%%%%%%%%%%%%%%%%%%%%%%%%%%%%%%%%%%%%%%%%%%%%%%%%%%%

Let $\mathcal{A}:\ \Omega\times \R^+ \to  \GL(d,\K)$ be a smooth cocycle  defined on a  compact smooth hyperbolic  Riemann surface lamination (without singularities) $\Fc=(X,\Lc).$ 
Observe that  the map $\mathcal{A}^{*-1}:\ \Omega\times \R^+ \to  \GL(d,\K),$ defined by    
  $\mathcal{A}^{*-1}(\omega,t):= \big (\mathcal A(\omega,t)\big)^{*-1},$ is  also a cocycle,
where  $A^*$ (resp. $A^{-1}$) denotes as  usual the transpose (resp. the inverse)  of a square  matrix $A.$ 

We define  two functions  $\bar\delta(\mathcal A),\ \underline\delta(\mathcal A):\ X\to\R$ as well  as four quantities  $\bar\chi_{\max}( \mathcal A ),$
$ \underline\chi_{\max}( \mathcal A ),$    $ \bar\chi_{\min}( \mathcal A ),$
$\underline\chi_{\min}( \mathcal A )$
   as  follows.
  Fix  a point $x\in  X,$ an element $u\in\K^d\setminus \{0\}$  and     a  simply connected plaque $K$   of $\Fc$ passing through
$x.$
Consider   the  function  $f_{u,x}:\  K\to \R$ defined by
\begin{equation}\label{eq_function_f}
f_{u,x}(y):= \log {\| \mathcal A(\omega,1)u \|\over  \| u\|} ,\qquad  y\in K,\ u\in\K^d\setminus\{0\},
\end{equation}
where  $\omega\in \Omega$ is any path  such that $\omega(0)=x,$ $\omega(1)=y$ 
and that $\omega[0,1]$ is  contained in $K.$  Then define
\begin{equation}\label{eq_formulas_delta}
\bar \delta(\mathcal A)(x):=\sup_{u\in \K^d:\ \|u\|=1} (\Delta_P f_{u,x})(x)\ \ \text{and}\  \
 \underline \delta(\mathcal A)(x):=\inf_{u\in \K^d:\ \|u\|=1} (\Delta_P f_{u,x})(x),\\
\end{equation}
where $\Delta_P$ %(resp. $D_t$)
 is, as  usual,   the  Laplacian   %(resp. the diffusion operators) 
on the leaf $L_x$ induced  by the leafwise Poincar\'e metric  $g_P$    (see formulas \eqref{e:Laplacian_disc} and \eqref{e:Delta_commutation}).
We also define
\begin{equation}\label{eq_formulas_chi}
\begin{split}
\bar\chi_{\max}=\bar\chi_{\max} (\mathcal A)&:=\int_X \bar\delta(\mathcal A)  (x)   d\mu(x),\\
\underline\chi_{\max}=\underline\chi_{\max} (\mathcal A)&:=\int_X \underline\delta(\mathcal A)  (x)   d\mu(x);\\
\underline\chi_{\min}=\underline\chi_{\min}( \mathcal A )&:=-    \bar\chi_{\max}(\mathcal A^{*-1}) ,\\
\bar\chi_{\min}=\bar\chi_{\min}( \mathcal A )&:=-   \underline\chi_{\max}(\mathcal A^{*-1}) .
\end{split}
\end{equation}
Note that our functions  $\bar\delta,$ $\underline\delta$ are  the multi-dimensional generalizations  of the operator $\delta$ introduced by Candel \cite{Candel03}.

We are in the position to state  effective  integral estimates on the Lyapunov  exponents. 

 \begin{theorem}  \label{T:VA_smooth}  {\rm (Nguyen \cite[Theorem 3.12]{NguyenVietAnh17a}).}
 Let $(X,\Lc)$ be   a      compact  smooth  lamination by hyperbolic Riemann surfaces. 
 Let $\mu$ be a   harmonic  measure which is  ergodic.
 %Let  $\G$ be either $\N t_0$ or $\R^+,$  where  $t_0>0$ is  a given  number. 
Let
$\mathcal{A}:\ \Omega\times \R^+ \to  \GL(d,\K)      $ be  a smooth cocycle.  
 Let    
$$\chi_m<\chi_{m-1}<\cdots
<\chi_2<\chi_1$$
be the Lyapunov exponents of the cocycle $\mathcal A$ with respect  to $\mu,$ given by Theorem \ref{T:VA_general}.
Then  the following  inequalities hold $$
\underline\chi_{\max}\leq \chi_1\leq \bar\chi_{\max} \quad\text{and}\quad
\underline\chi_{\min} \leq \chi_m\leq \bar\chi_{\min} .$$ %Moreover, the limits  in formula  (\ref{eq_formulas_delta})  exist. 
\end{theorem}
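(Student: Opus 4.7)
The plan is first to reduce the bounds on $\chi_m$ to those on $\chi_1$ by duality. The Oseledec decomposition for $\mathcal A^{*-1}$ has exponents $-\chi_m > \cdots > -\chi_1$ (obtained by dualizing the filtration $V_i(x)$ and tracking the action of $\mathcal A^{*-1}$), so $\chi_m(\mathcal A) = -\chi_1(\mathcal A^{*-1})$. Applying the upper and lower bounds for $\chi_1$ to the dual cocycle then yields $\underline\chi_{\min} \leq \chi_m \leq \bar\chi_{\min}$ directly from the definitions of $\underline\chi_{\min}, \bar\chi_{\min}$ in \eqref{eq_formulas_chi}. Hence it is enough to prove $\underline\chi_{\max} \leq \chi_1 \leq \bar\chi_{\max}$.

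The key local identity is that for every nonzero $v \in \K^d$, every $y \in X$, and every $z$ in the simply connected plaque $K$ through $y$,
\[
(\Delta_P f_{v,y})(z) \in [\,\underline\delta(\mathcal A)(z),\, \bar\delta(\mathcal A)(z)\,].
\]
Indeed, concatenating $\omega_{y,z}$ with $\omega_{z, z'}$ and invoking the homotopy law gives $f_{v,y}(z') = f_{v,y}(z) + f_{v_z, z}(z')$ in a neighborhood of $z$, where $v_z := \mathcal A(\omega_{y,z}, 1) v$; thus $(\Delta_P f_{v,y})(z) = (\Delta_P f_{v_z, z})(z)$, and the inequality above follows from the definitions \eqref{eq_formulas_delta}. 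Using the homotopy law again, $f_{v,y}$ lifts coherently to a smooth function $\tilde f_{v,y}$ on the universal cover $\D$ (via $\phi_y$) with $\tilde f_{v,y}(0) = 0$, and the same pointwise bound persists on $\D$ with $\bar\delta, \underline\delta$ pulled back.

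Next I would compute the expected one-step growth. Since $\mathcal A$ is smooth and $X$ is compact, $\bar\delta, \underline\delta \in L^\infty(X)$, and $\tilde f_{v,y}$ grows at most linearly in the Poincar\'e distance, which matches the decay of $p_\D$ in \eqref{e:heat_kernel}. The pointwise heat equation then yields
\[
D_1 \tilde f_{v,y}(0) = \tilde f_{v,y}(0) + \int_0^1 D_s(\Delta_P \tilde f_{v,y})(0)\, ds = \int_0^1 D_s(\Delta_P \tilde f_{v,y})(0)\, ds.
\]
The left hand side equals $\int_{\Omega_y} \log(\|\mathcal A(\omega, 1) v\|/\|v\|)\, dW_y$ by construction of $W_y$. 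On the right, the deck-transformation identity \eqref{e:heat_kernel2} shows that $D_s$ applied to lifts of functions from $L_y$ agrees with $D_s$ on $L_y$, so the pointwise estimate on $\Delta_P \tilde f_{v,y}$ descends to
\[
\int_0^1 (D_s \underline\delta)(y)\, ds \leq \int_{\Omega_y} \log\frac{\|\mathcal A(\omega, 1) v\|}{\|v\|}\, dW_y \leq \int_0^1 (D_s \bar\delta)(y)\, ds.
\]
Iterating via the multiplicative law,
\[
\log \frac{\|\mathcal A(\omega, n) v\|}{\|v\|} = \sum_{k=0}^{n-1} f_{v_k(\omega),\, \omega(k)}(\omega(k+1)), \qquad v_k(\omega) := \mathcal A(\omega, k) v,
\]
and conditioning on $\omega|_{[0,k]}$ via the Markov property of leafwise Brownian motion (so $\sigma_k \omega$ is $W_{\omega(k)}$-distributed given the past) applies the one-step bound to each summand. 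Integrating against $\bar\mu$ and using the $D_t$-invariance of the harmonic measure $\mu$ (Theorem \ref{T:Akcoglu}) collapses every $D_k \bar\delta$-average to $\int_X \bar\delta\, d\mu = \bar\chi_{\max}$, giving
\[
\int_X \int_{\Omega_x} \log \frac{\|\mathcal A(\omega, n) v(x)\|}{\|v(x)\|}\, dW_x\, d\mu(x) \leq n\, \bar\chi_{\max},
\]
for any nonzero measurable choice $x \mapsto v(x)$, with the symmetric lower bound in terms of $\underline\chi_{\max}$.

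To finish, a Fubini argument produces a fixed nonzero $v_0 \in \K^d$ with $v_0 \notin V_2(x)$ for $\mu$-a.e.\ $x$ (possible since each $V_2(x)$ has dimension strictly less than $d$ and $x \mapsto V_2(x)$ is measurable). Oseledec (Theorem \ref{T:VA_general}) then gives $\tfrac{1}{n} \log \|\mathcal A(\omega, n) v_0\| \to \chi_1$ for $\bar\mu$-a.e.\ $\omega$; dividing the iterated bound by $n$ and applying Fatou's lemma (legitimate because the uniform $L^\infty$ bounds on $\bar\delta, \underline\delta$ control the negative parts) yields $\underline\chi_{\max} \leq \chi_1 \leq \bar\chi_{\max}$. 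The hardest part will be the rigorous justification of the pointwise heat identity $D_1 \tilde f_{v,y}(0) = \int_0^1 D_s(\Delta_P \tilde f_{v,y})(0)\, ds$ on $\D$: the function $\tilde f_{v,y}$ is smooth but not bounded, so one must work in a class of smooth functions of controlled Poincar\'e growth and verify absolute convergence against $p_\D$, together with an interchange of limit and integral. This is feasible because $\Delta_P \tilde f_{v,y}$ is globally bounded on $\D$ (uniformly in $y$) and $p_\D(0, \cdot, s)$ decays strongly at infinity in the Poincar\'e metric.
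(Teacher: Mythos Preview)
The paper does not give a proof of this theorem; it is a survey result quoted from \cite[Theorem~3.12]{NguyenVietAnh17a}, and the surrounding text only says that the argument ``improve[s] the method of Candel in \cite{Candel03}'' and that the integrability condition follows from compactness and heat-kernel estimates. Your proposal is precisely a higher-rank version of Candel's argument: pointwise bounds on $\Delta_P f_{v,x}$ via the conversion rule, the Dynkin-type identity $D_1\tilde f_{v,y}(0)=\int_0^1 D_s(\Delta_P\tilde f_{v,y})(0)\,ds$ (this is exactly Lemma~\ref{L:D_t_Delta}, and your functions satisfy its hypotheses because $|d\tilde f_{v,y}|_P$ and $\Delta_P\tilde f_{v,y}$ are uniformly bounded by the cocycle change-of-basepoint rule \eqref{e:change_spec}--\eqref{e:change_spec_bis} together with smoothness and compactness), Markov iteration, and $D_t$-invariance of $\mu$. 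The duality reduction for $\chi_m$ is also the standard one and matches the definitions \eqref{eq_formulas_chi}. So your strategy is the intended one.

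One point to tighten: the appeal to Fatou's lemma at the end is not quite enough as stated. Boundedness of $\bar\delta,\underline\delta$ controls the \emph{integrals} $\frac{1}{n}\int\log\|\mathcal A(\omega,n)v_0\|\,d\bar\mu$, not the pointwise negative parts of the integrand, and Fatou for $\limsup$ needs a dominating integrable function from above. The clean fix is uniform integrability: for any unit $v$,
\[
-\tfrac{1}{n}\sum_{k=0}^{n-1}\log^+\|\mathcal A(\sigma_k\omega,1)^{-1}\|\ \le\ \tfrac{1}{n}\log\|\mathcal A(\omega,n)v\|\ \le\ \tfrac{1}{n}\sum_{k=0}^{n-1}\log^+\|\mathcal A(\sigma_k\omega,1)\|,
\]
and both sides are Birkhoff averages of $L^1(\bar\mu)$ functions (by the integrability condition, which holds automatically here), hence converge in $L^1$. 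This sandwiches $\frac{1}{n}\log\|\mathcal A(\omega,n)v_0\|$ between two uniformly integrable sequences, so it is itself uniformly integrable; combined with the almost-everywhere convergence to $\chi_1$ from Oseledec, this gives $L^1$ convergence and the inequalities $\underline\chi_{\max}\le\chi_1\le\bar\chi_{\max}$ follow immediately from your iterated bound.
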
 
 
 %It is worthy noting that under  the assumption of Theorem \ref{T:VA_smooth},  we know by  Theorem  \ref{thm_harmonic_currents_vs_measures}  (3) that every quasi-harmonic measure is  harmonic.
 This theorem generalizes some  results of Candel \cite{Candel} and Deroin  \cite{Deroin05}  who  treat the case $d=1.$
 Under the assumption of   Theorem  \ref{T:VA_smooth}, the integrability  condition  \eqref{e:integrability}
   follows from some  well-known  estimates of the heat kernels of the Poincar\'e disc and  the fact that  the lamination is  
 compact and is  without singularities. In fact, we improve  the method of Candel in \cite{Candel03}.

 %%%%%%%%%%%%%%%%%%%%%%%%%%%%%%%%%%%%%%%%%%%%%%%%%%%%%%%%%%%%%%%%%%%%%%%%%%%%%%%%%%%%%%%%%%%%%%%%%%%%%%%%%%
 %%%%%%%%%%%%%%%%%%%%%%%%%%%%%%%%%%%%%%%%%%%%%%%%%%%%%%%%%%%%%%%%%%%%%%%%%%%%%%%%%%%%%%%%%%%%%%%%%%%%%%%%%%
 
 \section{Applications}
 \label{S:Applications}

 %%%%%%%%%%%%%%%%%%%%%%%%%%%%%%%%%%%%%%%%%%%%%%%%%%%%%%%%%%%%%%%%%%%%%%%%%%%%%%%%%%%%%%%%%%%%%%%%%%%%%%%%%%%
 %%%%%%%%%%%%%%%%%%%%%%%%%%%%%%%%%%%%%%%%%%%%%%%%%%%%%%%%%%%%%%%%%%%%%%%%%%%%%%%%%%%%%%%%%%%%%%%%%%%%%%%%%%
 
 %%%%%%%%%%%%%%%%%%%%%%%%%%%%%%%%%%%%%%%%%%%%%%%%%%%%%%%%%%%%%%%%%%%%%%%%%%%%%%%%%%%%%%%%%%%%%%%%%
 \subsection{Lyapunov exponent of a singular holomorphic foliation on a compact projective surface}
 \label{SS:Lyapunov}
 %%%%%%%%%%%%%%%%%%%%%%%%%%%%%%%%%%%%%%%%%%%%%%%%%%%%%%%%%%%%%%%%%%%%%%%%%%%%%%%%%%%%%%%%%%%%%%%%%%

We recall   the holonomy cocycle  of a singular holomorphic  foliation $\Fc=(X,\Lc,E)$ on a Hermitian  complex surface $(X,g).$
For each point $x\in X\setminus E,$ let $\Tan_x(X)$ (resp.  $\Tan_x(L_x)\subset \Tan_x(X)$) be    the tangent space  of $X$ (resp. $L_x$) at $x.$
 For every transversal    $\S$ at a point $x$ (that is, $\S$ is  complex submanifold of a flow box and $\S$ is transverse to every leaf of that flow box and $x\in \S$),
 let  $\Tan_x(\S)$ denote the tangent space of $\S$ at $x.$  

Now fix a point $x\in X\setminus E$ and a path $\omega\in\Omega_{x}$  and a time $t\in \R^+,$ and  let $y:=\omega(t).$
Fix  a transversal $\S_x$ at $x$ (resp. $\S_y$  at $y$) such that the complex line
$\Tan_{x}(\S_x)$ is the  orthogonal complement  of the complex line $\Tan_{x}(L_{x})$ in the Hermitian space $(\Tan_x(X),g(x))$
(resp.        $\Tan_{y}(\S_y)$ is the  orthogonal complement  of $\Tan_{y}(L_{y})$ in $(\Tan_{y}(X),g(y))$). 
Let  $\hol_{\omega,t}$  be the  holonomy map along the path  $\omega|_{[0,t]}$ from   an open  neighborhood of $x$ in $\S_x$   onto
 an open  neighborhood of $y$ in $\S_y,$  that is, let
 $$
 \hol_{\omega,t}:=\hol_\gamma,
 $$
 where  $\gamma:\ [0,1]\to  L_x$ is the  path  given by $\gamma(s):=\omega(ts)$ for $s\in[0,1]$  (see Definition \ref{D:holonomy_map}).
 
 The  derivative  $D \hol_{\omega,t}:\
\Tan_{x}(\S_x)\to \Tan_{y}(\S_y) $ induces 
the so-called {\it holonomy cocycle} $\mathcal H:\ \Omega\times\R^+\to\R^+$ given by  
\begin{equation}\label{e:hol_cocycle_dist}\mathcal H(\omega,t):=
  \|D \hol_{\omega,t}(x)\|.
  \end{equation} %mentioned  in  Subsection  \ref{subsection_set-up}.
The last  map depends  only on the path  $\omega|_{[0,t]},$ in fact, it
depends only on the  homotopy class of  this path.
 In particular, it is  independent of   the choice  of transversals  $\S_x$ and $\S_y.$ 
 We see easily that
 $$
\mathcal H(\omega,t) =\lim_{z\to x,\ z\in \S_x} \dist(\hol_{\omega,t}(z), y)/ \dist(z,x).
  $$
 On the other hand, we note the following multiplicative property   which  is  an immediate consequence  of the  definition of $
\mathcal H(\omega,t),$ 
\begin{equation} \label{eq_multiplicative_cocycles}
 \mathcal{H}(\omega,t+s)   
=  \mathcal{H}(\omega,t)   \mathcal{H}(\sigma_t(\omega),s)  ,\qquad t,s\in\R^+, \omega\in\Omega,
\end{equation}
  where  $\sigma_t:\  \Omega\to\Omega$ is the   shift-transformation
    given by 
\eqref{e:shift}.

 The  holonomy cocycle (or equivalently, the normal derivative cocycle)  of a foliation  
is  a very important object  which  encodes
 dynamical  as well as  geometric and analytic informations    of  the  foliation.
 Exploring  this  object  allows  us  to understand  more  about the  foliation itself.
 
  The following fundamental  question  arises  naturally: %  (see  also the  discussion  in \cite{Deroin13}):

\smallskip

\noindent {\bf Question. }{\it Can one  define  the  Lyapunov exponents of %an extremal
an ergodic harmonic  measure $\mu$ on a  compact singular holomorphic hyperbolic  foliation $\Fc=(X,\Lc,E)$ ?}

\smallskip

By  Theorem \ref{thm_harmonic_currents_vs_measures},  this  question can be rephrased  for  directed harmonic  currents on the foliation.
We have recently obtained  the  following  affirmative  answer  to this  question for generic  foliations in dimension two.

\begin{theorem}\label{T:VA}  {\rm  (\cite[Theorem 1.1]{NguyenVietAnh18b}).}
Let $\Fc=(X,\Lc,E)$ be  a  holomorphic  foliation  by Riemann surfaces defined on a    Hermitian compact complex projective surface $X$ satisfying the following  two conditions:

$\bullet$  its singularities  $E$ are all hyperbolic;

 $\bullet$ $\Fc$ is  Brody hyperbolic (see Definition \ref{D:uniform_hyperbolic_laminations}).
 
 Let $\mu$ be  a harmonic  measure  which does not give mass to any  invariant  analytic curve.
Assume, in addition, that $\mu$ is  ergodic. 
%i.e,  if $T=T_1+T_2$ for some  positive harmonic  currents $T_1,$ $T_2$ directed by $\Fc,$ then $T_j=\lambda_j T$ for some $\lambda_j\in \R^+.$

   Then    \begin{enumerate}
   \item $\mu$ admits
the (unique) {\bf Lyapunov exponent} $\chi(\mu)$  given by the  formula
\begin{equation}\label{e:Lyapunov_exp}
\chi(\mu):= \int_X\big(\int_{\Omega} \log \|\mathcal{H}(\omega,1)\| dW_x(\omega)\big)d\mu(x).  
\end{equation}

\item
  For  $\mu$-almost  every $x\in X\setminus E,$  we have
 $$
 \lim\limits_{t\to \infty} {1\over  t} \log  \| \mathcal H(\omega,t) \|=\chi(\mu)  
 $$
  for    almost every path  $\omega\in\Omega$ with respect to $W_x.$
  \end{enumerate}
\end{theorem}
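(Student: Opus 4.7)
\medskip

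\noindent\textbf{Proof proposal.}
The plan is to view $\mathcal{H}$ as a rank $1$ cocycle and to combine the Oseledec theorem (Theorem \ref{T:VA_general}) with the Random Ergodic Theorem (Theorem \ref{T:Random}). Since $X$ has complex dimension $2$ and the leaves are Riemann surfaces, each transversal is of complex dimension $1$; hence the derivative of the holonomy along a path yields a $\GL(1,\C)$-valued cocycle $\mathcal{A}$ whose operator norm coincides with $\mathcal{H}$. Being of rank $1$, the Oseledec decomposition in Theorem \ref{T:VA_general} has a unique block, so as soon as the integrability condition \eqref{e:integrability} is verified, a single Lyapunov exponent $\chi(\mu)$ exists and equation \eqref{e:Lyapunov_max} yields assertion (2) of the theorem.

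The heart of the matter is checking \eqref{e:integrability}, i.e.\ that
\[
\int_X\Bigl(\int_{\Omega_x}\sup_{t\in[0,t_0]}\log^+\|\mathcal{H}(\omega,t)\|\,dW_x(\omega)\Bigr)d\mu(x)<\infty.
\]
The plan is as follows. First, I would work in the linearizing coordinates around each hyperbolic singularity $a\in E$ provided by Definition~\ref{D:singularities} and use the sharp estimates of Proposition~\ref{P:Poincare}, namely $c_1^{-1}s\,\lof s\leq \eta(x)\leq c_1 s\,\lof s$ with $s=\dist(x,E)$, to compare the leafwise Euclidean derivative of holonomy with its Poincar\'e-normalized counterpart. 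Brody hyperbolicity provides the uniform upper bound on $\eta$ away from $E$ (Definition~\ref{D:uniform_hyperbolic_laminations}), so the only potential singular behaviour of $\log^+\|\mathcal{H}(\omega,t)\|$ comes from paths that wander close to $E$. Second, I would use the heat kernel estimates on $(\D,g_P)$ in \eqref{e:heat_kernel} (together with the comparison \eqref{e:heat_kernel2}) to show that a Brownian path in time $t\leq t_0$ has Poincar\'e displacement whose exponential moments are finite; combined with the linearization, this yields pointwise bounds of the form $\sup_{t\leq t_0}\log^+\|\mathcal{H}(\omega,t)\|\lesssim 1+|\log\dist(\omega(0),E)|\cdot\Psi(\omega)$ with $\Psi$ integrable with respect to $W_x$ uniformly in $x$. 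Third, to integrate this against $\mu$, I would use that $\mu$ gives no mass to invariant analytic curves, so by Corollary~\ref{C:Lelong} the associated directed positive $\ddc$-closed current $T$ with $\mu=T\wedge g_P$ has vanishing Lelong numbers at each $a\in E$; Proposition~\ref{P:Poincare_mass} and Skoda's Proposition~\ref{P:Skoda} then give the finiteness of the $\mu$-integral of $|\log\dist(\cdot,E)|$.

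Once \eqref{e:integrability} is established, Theorem~\ref{T:VA_general} applied to $\mathcal{A}$ furnishes a single real number $\chi(\mu)$ and formula \eqref{e:Lyapunov_max} delivers assertion~(2). To obtain the integral representation \eqref{e:Lyapunov_exp}, I would invoke the multiplicative property \eqref{eq_multiplicative_cocycles}, which rewrites
\[
\log\mathcal{H}(\omega,n)=\sum_{k=0}^{n-1}F(\sigma_k\omega),\qquad F(\omega):=\log\mathcal{H}(\omega,1),
\]
and apply Birkhoff's ergodic theorem to the shift $\sigma_1$ acting on $(\Omega,\bar\mu)$. By Theorem~\ref{T:Random}(i)--(ii), $\bar\mu$ is $\sigma_1$-invariant and ergodic (since $\mu$ is a weakly harmonic ergodic measure, which it is by Theorem~\ref{thm_harmonic_currents_vs_measures}(3)). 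Birkhoff then gives $n^{-1}\log\mathcal{H}(\omega,n)\to\int_\Omega F\,d\bar\mu$ for $\bar\mu$-a.e.\ $\omega$; unwinding the definition \eqref{eq_formula_bar_mu} of $\bar\mu$ yields exactly the right-hand side of \eqref{e:Lyapunov_exp}. Finally, the passage from integer to real time follows from the uniform bound on $\sup_{t\in[n,n+1]}|\log\mathcal{H}(\omega,t)-\log\mathcal{H}(\omega,n)|$ coming again from \eqref{e:integrability}. The main obstacle is the integrability step: controlling $\mathcal{H}$ along Brownian excursions that enter neighbourhoods of the singular points is delicate, and it is precisely here that the three hypotheses (hyperbolicity of singularities, Brody hyperbolicity, and no mass on invariant curves) are jointly indispensable.
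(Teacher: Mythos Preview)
Your overall architecture is right: $\mathcal H$ is a rank~$1$ cocycle, so once the integrability condition \eqref{e:integrability} holds, Theorem~\ref{T:VA_general} yields a single exponent and assertion~(2), and the formula \eqref{e:Lyapunov_exp} follows via the multiplicative law and the $\sigma_1$-ergodicity of $\bar\mu$. You have also correctly isolated the reduction of \eqref{e:integrability} to the ``new integrability condition''
\[
\int_X |\log\dist(x,E)|\,d\mu(x)<\infty,
\]
which is exactly the paper's Step~1 (carried out via Proposition~\ref{P:holonomy} and Proposition~\ref{P:Poincare}).

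The genuine gap is in your third step. You assert that Proposition~\ref{P:Poincare_mass}, Skoda's Proposition~\ref{P:Skoda}, and the vanishing of Lelong numbers (Corollary~\ref{C:Lelong}) deliver $\int_X |\log\dist(x,E)|\,d\mu(x)<\infty$. They do not. The paper states explicitly (see \eqref{e:known_estimate}) that the known mass estimates near linearizable singularities, \emph{including} the vanishing of Lelong numbers, only yield
\[
\int_X |\log\dist(x,E)|^{1-\delta}\,d\mu(x)<\infty\qquad\text{for every }\delta>0,
\]
and that \eqref{e:necessary_integrability} is precisely the borderline case these methods cannot reach. Near a hyperbolic singularity one has $g_P\approx \bigl(s\,\lof s\bigr)^{-2}g_X$ with $s=\dist(x,E)$; combining this with the decay of $\|T\|_{\B_s}$ coming from $\nu(T,a)=0$ gives integrability of any exponent strictly below~$1$, but the endpoint requires a quantitative \emph{speed} of convergence $\nu(T,a,s)\to 0$ that Skoda-type arguments do not supply. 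The paper's Step~2 obtains this speed by an entirely different mechanism: a cohomological comparison of intersections $T\wedge[\Cf]$ and $T\wedge[\Df]$ with auxiliary algebraic curves of high degree, made possible by the projectivity of $X$ (the seven observations $(i)$--$(vii)$ in Subsection~\ref{SS:Existence-Lyapunov}). This global algebro-geometric argument is the core of \cite{NguyenVietAnh18b} and is not recoverable from local Lelong-number considerations; without it your integrability claim fails and the proof does not close.
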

 %The  above theorem   also  give  stochastic formulas for $\lambda(T).$
 In fact, assertion (1) is a  consequence of  the so-called  integrability of the  holonomy cocycle of singular holomorphic foliations.
Assertion (2) says that  
the  characteristic  number $\chi(\mu)$    measures  heuristically   the exponential  rate of  convergence  of leaves toward each other  along  leafwise Brownian trajectories
 (see Candel  \cite{Candel03},  Deroin \cite{Deroin05} for the nonsingular case).   
 Therefore, Theorem \ref{T:VA} gives a  dynamical  characterization of $\chi(\mu).$
 
 %%%%%%%%%%%%%%%%%%%%%%%%%%%%%%%%%%%%%%%%%%%%%%%%%%%%%%%%%%%%%%%%%%%%%%%%%%%%%%%%%%%
 
 \subsection{Sketchy proof of the  existence of Lyapunov exponent} \label{SS:Existence-Lyapunov}
 
 %%%%%%%%%%%%%%%%%%%%%%%%%%%%%%%%%%%%%%%%%%%%%%%%%%%%%%%%%%%%%%%%%%%%%%%%%%%%%%%%%%

 Prior to the  sketchy  proof of Theorem \ref{T:VA}, we discuss how to deal with the singularities.
 
   To study $\Fc$ near  a hyperbolic  singularity $a\in E,$ we use the following {\bf local model} introduced in \cite{DinhNguyenSibony14a}.
In this model, a neighborhood of $a$ is identified with the  bidisc $\D^2,$ and the restriction of $\Fc$  to $\D^2,$  i.e., the leaves of $(\D^2,\Lc,\{0\})$  coincide with 
the restriction to $\D^2$ of the   
 integral curves of a  vector field 
 \begin{equation}\label{e:Z}Z(z,w) = z {\partial\over \partial z}
+ \lambda w{\partial\over \partial w}\quad\text{with some}\quad \lambda\in\C\setminus \R. 
\end{equation}
 Notice that
if we flip $z$ and $w,$ we replace $\lambda$ by $\lambda^{-1}.$ Since $(\Im\lambda)( \Im\lambda^{-1})<0,$ we
may assume   below that the axes are chosen so that $\Im \lambda > 0.$

For $x=(z,w)\in \D^2\setminus\{0\}$, define the holomorphic map  $\psi_x:\C\rightarrow\C^2\setminus\{0\}$ 
\begin{equation}\label{e:leaf_equation}
\psi_x(\zeta):=\Big( ze^{i\zeta},we^{i\lambda\zeta}\Big)\quad \mbox{for}\quad \zeta\in\C.
\end{equation}
It is easy to see that $\psi_x(\C)$ is the integral curve of $Z$ which contains
$\psi_x(0)=x$.
Write $\zeta=u+iv$ with $u,v\in\R$. The domain $\Pi_x:=\psi_x^{-1}(\D^2)$ in $\C$ is defined by the inequalities
$$(\Im \lambda)u+(\Re\lambda)v >\log|w|\qquad \text{and}\qquad v > \log|z| .$$
So, $\Pi_x$ defines a sector $\S_x$ in $\C.$ It contains $0$ since $\psi_x(0)=x$.  
The leaf of $\Fc$ through $x$ contains  the  Riemann surface 
\begin{equation}\label{e:Riemann_surface_L_x}
\widehat L_x:=\psi_x(\Pi_x)\subset L_x.
\end{equation}
In particular, the leaves in a singular flow box are parametrized   using holomorphic maps $\psi_x:\Pi_x\to L_x.$

 %Now  let  $\Fc=(X,\Lc,E)$ be a  foliation on a   Hermitian compact complex surface $(X,g_X).$
%Assume as usual that $E$  is  finite  and   all  points of $E$  are  linearizable. 
%Let $\dist$ be the  distance on $X$ induced  by the ambient  metric $g_X.$
Now  we  fix  a  finite cover $\Uc$  of flow boxes on $X.$ 
We only consider flow boxes which are biholomorphic to $\D^2$. A {\it regular flow box} $\U$ is a flow box with  foliated chart $\Phi:\ \U\to \B\times\Sigma$ outside the singularities, where 
$\B$ and $\Sigma$ are open sets in $\C.$ 
 For each  $\alpha\in\Sigma,$ the  Riemann surface $V_\alpha:=\Phi^{-1}( \B\times\{\alpha\})$ is called a plaque  of $\U.$ 
{\it Singular flow boxes} are identified to their models $(\D^2,\Lc,\{0\})$ as 
described above. 
 For  $\U:=\D^2$ and  $s>0,$  let
 $s\U:= (s\D)^2.$
For each singular point $a\in E$, we fix a singular flow box $\U_a$ such that $2\U_a\cap 2\U_{a'}=\varnothing$ if $a,a'\in E$ with $a\not=a'$.
We also cover $X\setminus \cup_{a\in E} \U_a$ by a finite 
number of regular flow boxes $(\U_q)$ such that  each $\overline \U_q$ is contained in a larger regular flow box $\U'_q$ with $\U'_q\cap \cup_{a\in E} (1/2)\U_a =\varnothing.$ 
Thus  we obtain a  finite  cover $\Uc:=(\U_p)_{p\in I} $ of $X$  consisting  of  regular  flow boxes $(\U_p)_{p\in I\setminus E}$ and  singular ones $(\U_a)_{a\in E}.$  

Let $g_X$ be  a Hermitian  metric on $X$
 and let $\dist$ denote the  distance  on $X$ induced by $g_X.$
We often suppose without loss of generality that  the ambient metric $g_X$ coincides with the standard Euclidean metric on each singular flow box $2\U_a\simeq 2\D^2,$ $a\in E.$

 Now we  discuss  
the proof of Theorem \ref{T:VA}.   It  consists of two steps.  
In the  first step  we show that    Theorem \ref{T:VA}  follows  from
the new integrability condition  \eqref{e:necessary_integrability}.
\begin{equation}\label{e:necessary_integrability}
 \text{(new integrability  condition):}\qquad \int_X| \log \dist(x,E)| \cdot d\mu(x)<\infty.
 \end{equation}
 This new condition has  the advantage over  the old one \eqref{e:integrability}, since  the former does not involve the somewhat complicating  Wiener measures, and hence  it is
 easier to handle than the latter.

For  this purpose  we  study  the behavior of the holonomy cocycle
near the  singularities  with respect to  the leafwise Poincar\'e metric $g_P.$  
 The  following result gives an explicit  expression for  $\mathcal H$ near a singular point $a$ 
 using the above local model  $(\D^2,\Lc,\{0\}).$    
\begin{proposition}\label{P:holonomy} {\rm (\cite[Proposition 3.1]{NguyenVietAnh18b})} Let $\D^2$ be endowed with the  Euclidean metric.
For each  $x=(z,w)\in\D^2,$ consider the function  $\Phi_x:\  \Pi_x\to\R^+$   as follows.
For $\zeta\in \Pi_x,$ consider a path $\omega\in \Omega$ (it always exists since $\Pi_x$ is convex and $0\in\Pi_x$ as $x\in\D^2$) such  that
$$
\omega(t)= \psi_x(t\zeta)= (z e^{i \zeta t},we^{i\lambda\zeta t})\subset \D^2
$$
for all $t\in [0,1]$ (see \eqref{e:leaf_equation} above). 
Define $\Phi_x(\zeta):=  
\mathcal H(\omega,1).$
Then 
$$
\Phi_x(\zeta)=|e^{i \zeta}||e^{i \lambda\zeta}|{\sqrt{| z|^2+|\lambda w|^2}\over \sqrt{ | ze^{i\zeta}|^2+|\lambda we^{i \lambda\zeta}|^2           }}. 
$$
\end{proposition}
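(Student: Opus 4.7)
The plan is to reduce the computation of $\mathcal{H}(\omega,1)$ to a straightforward linear algebra calculation in $\C^2$, and then identify the algebraic simplification that yields the stated closed form. First, I will set up the orthogonal transversals at $x$ and at $y := \psi_x(\zeta)$. The leaf direction at $x$ is spanned by $\psi_x'(0) = (iz, i\lambda w)$, so $\Tan_x\S_x$ is the orthogonal complex line, generated by $u := (\overline{\lambda w}, -\bar z)$, with $|u|^2 = |z|^2 + |\lambda w|^2$. Likewise, at $y = (ze^{i\zeta}, we^{i\lambda\zeta})$ the leaf direction is spanned by $e_{L_y} = (ze^{i\zeta}, \lambda w e^{i\lambda\zeta})$ and $\Tan_y\S_y$ is its Hermitian orthogonal complement.

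Next, I will use the explicit parametrization in \eqref{e:leaf_equation} to describe how leaves vary with initial condition. For a transverse perturbation $x_s := x + su$ with $u$ as above, the leaf $L_{x_s}$ is parametrized by $\psi_{x_s}(\zeta') = ((z+su_1)e^{i\zeta'}, (w+su_2)e^{i\lambda\zeta'})$, and the image $\hol_{\omega,1}(x_s)$ is obtained by choosing $\zeta_s$ close to $\zeta$ such that $\psi_{x_s}(\zeta_s) \in \S_y$. Differentiating at $s=0$, one gets $\partial_s \psi_{x_s}(\zeta_s)|_{s=0} = (u_1 e^{i\zeta}, u_2 e^{i\lambda\zeta}) + \zeta'(0)\psi_x'(\zeta)$, and the condition that the result lies in $\Tan_y\S_y$ means precisely that $D\hol_{\omega,1}(u)$ is the orthogonal projection of $v := (u_1 e^{i\zeta}, u_2 e^{i\lambda\zeta}) = (\overline{\lambda w}\,e^{i\zeta}, -\bar z\, e^{i\lambda\zeta})$ onto $\Tan_y\S_y$. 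Since both transversals are complex lines, $\mathcal{H}(\omega,1) = |D\hol_{\omega,1}(u)|/|u|$.

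Finally, I will carry out the algebraic simplification. Using the abbreviations $X := |z|^2$, $Y := |\lambda w|^2$, $a := |e^{i\zeta}|^2$, $b := |e^{i\lambda\zeta}|^2$, one computes
\[
|v|^2 = Ya + Xb, \qquad |e_{L_y}|^2 = Xa + Yb, \qquad \langle v, e_{L_y}\rangle = \bar\lambda\bar w\bar z\,(a-b),
\]
so that
\[
|P_{\S_y}v|^2 = |v|^2 - \frac{|\langle v, e_{L_y}\rangle|^2}{|e_{L_y}|^2} = \frac{(Ya+Xb)(Xa+Yb) - XY(a-b)^2}{Xa+Yb}.
\]
The key identity $(Ya+Xb)(Xa+Yb) - XY(a-b)^2 = (X+Y)^2 ab$ (expand both sides) then gives $|P_{\S_y}v|^2 = (X+Y)^2 ab/(Xa+Yb)$, and dividing by $|u|^2 = X+Y$ and taking square roots yields exactly the formula of the proposition. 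I expect no real obstacle here: the genuine content is the identification of $D\hol_{\omega,1}$ with the orthogonal projection of the leaf variation, after which the computation is routine, with the cancellation $(Ya+Xb)(Xa+Yb) - XY(a-b)^2 = (X+Y)^2 ab$ being the only non-obvious step.
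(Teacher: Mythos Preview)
Your argument is correct: identifying $D\hol_{\omega,1}(u)$ with the orthogonal projection of the first-variation vector $v=(u_1e^{i\zeta},u_2e^{i\lambda\zeta})$ onto $\Tan_y\S_y$ is exactly the right move, and your algebraic verification via the identity $(Ya+Xb)(Xa+Yb)-XY(a-b)^2=(X+Y)^2ab$ is clean. Note that this survey paper does not itself give a proof of Proposition~\ref{P:holonomy}; it simply quotes the result from \cite[Proposition~3.1]{NguyenVietAnh18b}, so there is no in-paper argument to compare against, but your computation is the natural one.
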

 
 Roughly speaking, Step 1
 quantifies  the  expansion speed of the hololomy  cocycle in terms
 of the ambient metric $g_X$ when  one travels  along   unit-speed geodesic rays.
 The first   main ingredient
 is Proposition \ref{P:holonomy}.
 The  second main ingredient of the  first  step 
  is a  detailed  analysis of  the  behaviour  of the leafwise Poincar\'e metric near hyperbolic  singularities
 which  had previously been  carried  out  in  \cite{DinhNguyenSibony12,DinhNguyenSibony14a,DinhNguyenSibony14b} and which culminates in Proposition \ref{P:Poincare}.

  The  second main step is  then devoted to  the proof of  inequality \eqref{e:necessary_integrability}.
 %In this  article, this  estimate  is  often  referred to as  {\it the integrability  condition}.
  The main difficulty is that  known  estimates (see, for example,  \cite{DinhNguyenSibony12})
% and Forn\ae ss-Sibony \cite{FornaessSibony3})
 on the behavior of $T$
near  linearizable  singularities,    only  give a  weaker  inequality
  \begin{equation} \label{e:known_estimate}
\int_X | \log \dist(x,E)|^{1-\delta} d\mu(x)<\infty, \qquad\forall \delta>0.
  \end{equation}
  So \eqref{e:necessary_integrability} is  the limiting case of  \eqref{e:known_estimate}.
  
  To prove \eqref{e:necessary_integrability}, by Theorem  \ref{thm_harmonic_currents_vs_measures} (1) write $\mu=\Phi(T)$
  for  a directed  positive  harmonic   
  current $T$ giving no mass to $\Par(\Fc).$   Moreover, such  a current  $T$ may be identified with a  directed  positive $\ddc$-closed
  current giving no mass to $\Par(\Fc)$  by   Proposition \ref{P:extension} and Definition \ref{D:Directed_hamonic_currents_with_sing}.
  
  The proof of \eqref{e:known_estimate}  relies on  the finiteness of the Lelong number of $T$  at
  every   point which has been   established in Proposition \ref{P:Skoda}. 
 Recall that Theorem  \ref{T:Lelong}   sharpens the last estimate
by showing that the Lelong number  of $T$ vanishes at every   hyperbolic singular point $x\in E.$  
Nevertheless, even this better estimate does not suffice to prove \eqref{e:necessary_integrability}. 
So  the main difficulties in the proof are (1) the information of $T$ near the singularities is
poor and (2) the Poincar\'e metric on leaves needs to be controlled near the singularities.

The situation near the singularities is not homogeneous.
  The new idea  in \cite{NguyenVietAnh18b} is  that we use   a cohomological argument
  which  exploits  fully  the assumption that $X$ is  projective.
  This  assumption  imposes a stronger mass-clustering  condition on harmonic  currents  than \eqref{e:known_estimate}.
  We need to divide
the space $X$ into small cells where one uses different techniques to get the desired estimates.
Some auxiliary quantities depending on $T$ are introduced and used to get a control, good
enough, near the singularities. The final result is obtained using both a careful analysis
on singular boxes and a global argument (roughly since the current sits in a projective
surface, it cannot have a transcendental behavior). Delicate  approximations are
needed to deduce the global estimates from the local ones.

Now  we  explain in more details  our proof of  the integrability condition  \eqref{e:necessary_integrability}.
Our approach is  based on a cohomological  invariance which  says roughly that
if two algebraic  curves $\Cf $ and $\Df$ on $X$ are cohomologous, that  is, they  are in the same cohomology class in $H^{1,1}(X)$  (for example,  if they have the same algebraic  degree  when $X=\P^2$), then  under  suitable  assumptions, we can define the wedge-product
$T\wedge[\Cf],$ $T\wedge [\Df]$   which are finite positive Borel measures and their masses are equal, i.e,
\begin{equation}\label{e:coho_inv_intro}
\int_X T\wedge[\Cf] =\int_X T\wedge [\Df].
\end{equation}
Before  going further,  let us   explain     why  equality \eqref{e:coho_inv_intro} could be  true.
Since $\Cf $ and $\Df$ on $X$ are cohomologous on $X$, the  $\partial\overline\partial$-lemma for compact K\"ahler manifolds
provides us an integrable  function  $u$ on $X$  such that
$$
[\Cf]-[\Df]=i \partial\overline\partial u\qquad \text{in the sense of currents.} 
$$
So  we can write
$$
\int_X T\wedge[\Cf] -\int_X T\wedge [\Df]=\int_X T\wedge i \partial\overline\partial u.
$$
The function $u$  is, in general, not smooth near $\Cf$ and $\Df.$ However, if we  could consider it like a smooth function,
Stokes' theorem would turn the  right hand side of the last line into the following integral
$$
\int_X u (i \partial\overline\partial T ) =0,
$$
where the  last equality holds since  the $\ddc$-closedness  of $T$ implies that $i \partial\overline\partial T=0.$
Therefore, we may expect equality \eqref{e:coho_inv_intro} to hold.

 Resuming  the sketchy proof of  the integrability condition  \eqref{e:necessary_integrability}, let $x_0\in E$ and  fix a  coordinate system $(z,w)$ around $x_0$ such that
the two separatrices of the hyperbolic singular point $x_0$  are $\{z=0\}$ and $\{w=0\}.$
Then we can show that the vanishing of the Lelong number of $T$ at  $0$   is  equivalent to 
the following convergence 
\begin{equation}\label{e:reduction_intro}
\int_{\B(0,r)}  T\wedge[z=r] \to  0 \qquad\text{as}\qquad r\to 0,
\end{equation}
where   $\B(0,r)$ is the ball in $X$ with center $x_0=0$ and radius $r.$
More importantly, 
  the integrability  condition \eqref{e:necessary_integrability}  is  somehow  equivalent to the statement that
the  convergence \eqref{e:reduction_intro}
has,  in a certain very  weak sense, a speed   of order $|\log{ r}|^{-\delta}$  as  $r\to 0$ for some $\delta>0.$
 Note, however, that this speed   does  not at all mean that $\int_{\B(0,r)}  T\wedge[z=r]=O(|\log{ r}|^{-\delta}).$  

Now suppose for  the sake of simplicity that $X=\P^2$ and $N\in\N$ is  large  enough. We  choose an algebraic curve $\Cf$ of degree $N$ which looks like the analytic curve $\{z=w^N\}$ near $0.$
 We also   choose an algebraic curve $\Df$ of degree $N$ which looks like the analytic curve $\{r=z-w^N\}$ near $0.$  The following  seven
  observations play  a key role in our approach, where  $0<\delta<1$ is   an exponent independent of $r$
and $N,$ $0<r<r_0$ with $r_0>0$ a fixed small number.

$(i)$   Outside  a  small ball  $\B(0,r_0),$
  the analytic curve $\{z=w^N\}$  (and hence  the algebraic curve $\Cf$) falls into a tubular neighborhood
with size $O(r^\rho)$  of
  the analytic curve $\{r=z-w^N\}$ (and hence the algebraic curve $\Df$), where $\rho$ is  a real number depending on $N$
 with $0<\rho\leq 1.$ So we may expect
  $$
  \int_{X\setminus\B(0,r_0)}  T\wedge[\Cf] =\int_{X\setminus\B(0,r_0)} T\wedge [\Df]+O(r^\rho).
  $$
 
$(ii)$ Outside 
  the ball   $\B(0,r^{1/N}|\log{r}|^{3/N})$ and inside the small ball  $\B(0,r_0),$  
 the   analytic  curve $\{r=z-w^N\}$ (and hence the  algebraic curve $\Df$)  behaves like  the analytic curve $\{z=w^N\}$  (and hence the  algebraic curve $\Cf$) while intersecting the two curves
 with a general leaf.  Indeed, when $|w|\geq r^{1/N}|\log{r}|^{3/N},$  
 we have $r\ll |w|^N.$ So we may expect
  $$
  \int_{ \B(0,r_0)\setminus \B(0,r^{1/N}|\log{r}|^{3/N})} T\wedge [\Cf]=
\int_{ \B(0,r_0)\setminus \B(0,r^{1/N}|\log{r}|^{3/N})} T\wedge [\Df]
+O(|\log r|^{-\delta}).
  $$

 $(iii)$ The corona
     $\A_{r,N}:=\B(0,r^{1/N}|\log{r}|^{3/N})\setminus \B(0,r^{1/N}|\log{r}|^{-3/N}) $
     is,  in some  sense,   small and it may be considered as  negligible.
 So we may expect
$$
  \int_{ \A_{r,N}} T\wedge [\Cf]=O((\log r)^{-\delta})\quad\text{and}\quad 
\int_{  \A_{r,N} } T\wedge [\Df]
=O(|\log r|^{-\delta}).
  $$
  
 $(iv)$ Our next  observation is   the following  partition of $X$  for $0<r\ll 1:$
\begin{equation*}%\label{e:partition}
X=\big(X\setminus\B(0,r_0)\big)\coprod \big(\B(0,r_0)\setminus   \B(0,r^{1/N}|\log r|^{3/N} )\big)\coprod \A_{r,N}\coprod  
\B(0,r^{1/N}|\log r|^{-3/N}).
\end{equation*}
This  allows us to decompose both integrals of \eqref{e:coho_inv_intro}
 into corresponding  pieces. 
 
Consequently, when the degree $N$ is  sufficiently high, by taking into account the   observations $(i)$-$(ii)$-$(iii)$-$(iv)$,
 and using  \eqref{e:coho_inv_intro},  we  see that
 $$
  \int_{\B(0,r^{1/N}|\log{r}|^{-3/N}) } T\wedge [\Cf]-
\int_{ \B(0,r^{1/N}|\log{r}|^{-3/N})  } T\wedge [\Df]
=O(|\log r|^{-\delta}).
  $$

$(v)$ Inside 
  the ball   $\B(0,r^{1/N}|\log{r}|^{-3/N}),$ the analytic  curve  $\{z=w^N\}$ (and hence the algebraic curve  $\Cf$)    clusters around $0,$ in a  certain sense,  much more often 
than the analytic  curve  $\{z=r\}$ (and hence the algebraic curve  $\Df$). Indeed,     we see   in the equation $z=w^N$ that both $z$ and $w$ can tend to $0,$  whereas in the  equation $z=r,$
only $w$ could tend to $0.$ So we may expect that  in a certain sense,  
$$
     \int_{\B(0,r^{1/N}|\log{r}|^{-3/N}) } T\wedge [\Df] \ll
\int_{ \B(0,r^{1/N}|\log{r}|^{-3/N})  } T\wedge [\Cf]
 .
  $$
 This, combined  with the  estimate  obtained just at the end of  $(iv)$,  implies that both integrals
 $$
 \int_{\B(0,r^{1/N}|\log{r}|^{-3/N}) } T\wedge [\Cf] \quad\text{and}\quad
\int_{ \B(0,r^{1/N}|\log{r}|^{-3/N})  } T\wedge [\Df]
 $$
 admit, in a certain sense, a speed   of order $|\log{ r}|^{-\delta}.$
   
$(vi)$ Inside 
  the ball   $\B(0,r^{1/N}|\log{r}|^{-3/N}),$ 
 the   analytic  curve $\{r=z-w^N\}$ (and hence the  algebraic curve $\Df$)  behaves like  the analytic curve $\{z=r\}$ while intersecting the two curves
 with a general leaf. Indeed, when $|w|\leq r^{1/N}|\log{r}|^{-3/N},$  
 we have $|w|^N\ll r.$
 So we may expect
$$
  \int_{\B(0,r^{1/N}|\log{r}|^{-3/N}) } T\wedge [\Df]-
\int_{ \B(0,r^{1/N}|\log{r}|^{-3/N})  } T\wedge [z=r]
=O(|\log r|^{-\delta}).
  $$
 This, together with the  estimate just obtained at the end of  $(v),$ 
yields  that
 $$
  \int_{ \B(0,r^{1/N}|\log{r}|^{-3/N})  } T\wedge [z=r]
  $$
has, in a certain sense, a speed   of order $|\log{ r}|^{-\delta}.$ 
 
$(vii)$ Our last observation is that one can show  that there is  a constant $c_N>1$ independent of $r$  such that
$$ c^{-1}_N \int_{\B(0,r^{1/N})}  T\wedge[z=r]\leq \int_{\B(0,r)}  T\wedge[z=r]\leq  c_N \int_{\B(0,r^{1/N})}  T\wedge[z=r].$$
 This, together with the  estimate just obtained at the end of  $(vi),$ 
implies  that
 $$
  \int_{ \B(0,r )  } T\wedge [z=r]
  $$
 admits, in a certain sense, a speed   of order $|\log{ r}|^{-\delta}.$
 Hence, we get the convergence with speed   \eqref{e:reduction_intro}. This is  what  we  are looking for.

In fact, the factor  $|\log{r}|^{3/N}$ appearing in the above  observations  comes  from the degeneration of the 
Poincar\'e metric $g_P$ relative to the ambient metric $g_X$  (see formula \eqref{e:eta} and Proposition \ref{P:Poincare}).
Moreover, the larger  the degree  $N$ is,
the more  evident  the  mass-clustering phenomenon in the previous observation  becomes.  
 %In other words, they  give an information  on how  mass-clustering around $0.$ 

Our approach underlines several  tasks.  On the one hand, we need to define a geometric intersection
of a directed positive harmonic  current with a  singular analytic curve defined on a neighborhood of a singular point of the foliation.
On the other hand, we need to approximate  some (local) analytic curves by global algebraic ones.   
The assumption of projectivity  of $X$ is needed in order to ensure a good supply of algebraic  curves.
  
\begin{remark}\rm The  condition  of Brody hyperbolicity  seems to be indispensable  for the   integrability of the  holonomy cocycle. Indeed,
 Hussenot  \cite[Theorem A]{Hussenot} finds out    the  following  remarkable property
 for  a class of  Ricatti foliations $\Fc$ on $\P^2.$
For every $x\in\P^2$ outside invariant curves of every foliation in this class, it holds that
   $$
 \limsup\limits_{t\to \infty} {1\over  t} \log  \| \mathcal A(\omega,t) \|= \infty 
 $$ for almost every path $\omega\in\Omega_x$ with respect to the Wiener measure at $x$ which lives
 on the leaf passing through $x.$ By     Theorem  \ref{T:generic}, these foliations  are hyperbolic since all their  singular points    have nondegenerate linear part.
 Nevertheless, neither of them is Brody hyperbolic because  they all contain integral curves which are some images of $\P^1$  (see  Remark  \ref{R:Brody_sufficiency}). 
   \end{remark}
   
   \begin{remark}\label{R:projective}
    \rm If  one can prove  the   new integrability condition  \eqref{e:necessary_integrability}  without  using that $X$  is  projective,
    then  Theorem \ref{T:VA}   will hold  without this assumption.
   \end{remark}

   \begin{remark}\rm There is  some growing interest in the study  of 
    Lyapunov exponents for surface group representations  (see \cite{DeroinDujardin}  and the references therein). 
   \end{remark}

  \begin{problem}\rm 
Does  Theorem \ref{T:VA}   still hold if 
the ambient compact projective manifold $X$ is of dimension $>2$ ? 
  \end{problem}

  \begin{problem}\rm 
Does   Theorem \ref{T:VA}   still hold if 
the ambient compact  surface $X$ is only  K\"ahler ?  If this is true,  then  it is  a  good question to investigate  the general case  of ambient compact  K\"ahler manifolds of  dimension $>2.$
  \end{problem}

  \begin{problem}\rm 
Does   Theorem \ref{T:VA}   still hold if the  singularities of $\Fc$ are  merely linearizable ?  
  \end{problem}
 %%%%%%%%%%%%%%%%%%%%%%%%%%%%%%%%%%%%%%%%%%%%%%%%%%%%%%%%%%%%%%%%%%%%%%%%%%%%%%%%%%%%%%%%%%%%%%%%%
 \subsection{Negativity and  cohomological formulas of Lyapunov exponent}
 \label{SS:Lyapunov_bis}
 %%%%%%%%%%%%%%%%%%%%%%%%%%%%%%%%%%%%%%%%%%%%%%%%%%%%%%%%%%%%%%%%%%%%%%%%%%%%%%%%%%%%%%%%%%%%%%%%%%

 Suppose now  that $\Fc=(X,\Lc,E)$    is  
a   singular  holomorphic foliation   with   only hyperbolic singularities  in a    compact projective surface $X$ 
such  that $\Fc$ admits no directed positive closed current.
So the assumptions of both Theorem  \ref{T:VA} and \ref{T:Unique_ergodicity_1}  are fulfilled.
 By  Theorem \ref{T:Unique_ergodicity_1}, let $T$ be  the unique  directed positive  $\ddc$-closed   current $T$ whose the Poincar\'e mass  is  equal to $1.$ 
 So  by  Theorems  \ref{thm_harmonic_currents_vs_measures},  the  measure $\mu=\Phi(T)$ given by Definition \ref{D:Phi} is 
 the unique  probability  harmonic measure of $\Fc.$ 
 \begin{definition}\label{D:Lyapunov}\rm
 The {\it Lyapunov  exponent}  of  the foliation $\Fc,$  denoted by $\chi(\Fc),$  is by  definition, the  real number  $\chi(\mu)$ given by  Theorem \ref{T:VA}.
 \end{definition}
 
 When  we  explore the dynamical system  associated to a foliation $\Fc,$  the sign of its Lyapunov   exponent is a crucial   information.
 Indeed, the positivity/negativity of $\chi(\Fc)$ corresponds to the   repelling/attracting   character of a typical leaf  along a typical Brownian trajectory.    
 Here is  our   second main result. %The   new  terminology and  notation appearing in this  theorem   will be  explained  in Section
   %\ref{S:background} below.
%%%%%%%%%%%%%%%%%%%%%%%%%%%%%%%%%%%%%%%%%%%%%%%%%%%%%%%%%%%%%%%%%%%%%%%%%%%%%%%%%%%%%%%%%%%%%%%%%%%%%%%%%%%%%%%%%%%%%%%%%

\begin{theorem}\label{T:Negative_exponent}  {\rm  (\cite[Theorem B]{NguyenVietAnh18c}) } 
 Let $\Fc=(X,\Lc,E)$    be  
a   singular  holomorphic foliation   with   only hyperbolic singularities  in a    compact projective surface $X$ 
such  that $\Fc$ admits no directed positive closed current.
Then  $\chi(\Fc)$ is a negative  nonzero real number.
\end{theorem}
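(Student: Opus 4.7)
\medskip

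\noindent\textbf{Proof proposal for Theorem \ref{T:Negative_exponent}.}

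The plan is to derive a cohomological formula for $\chi(\Fc)$ and then exploit the rigidity of the unique current $T$ established in Theorem \ref{T:Unique_ergodicity_1}. First I would rewrite the integrand in the Lyapunov formula \eqref{e:Lyapunov_exp} in terms of the leafwise geometry. Using Proposition \ref{P:holonomy} and the description of the holonomy cocycle via the normal bundle $N_\Fc$ of the foliation, on each regular flow box the function $x \mapsto \int_\Omega \log\|\mathcal{H}(\omega,1)\|\,dW_x(\omega)$ can be expressed (up to a harmonic defect) as the value at time $1$ of the leafwise heat semigroup applied to $\tfrac{1}{2}\log\|v\|^2_h$, where $v$ is a local holomorphic section of $N_\Fc$ and $h$ a smooth Hermitian metric on $N_\Fc$. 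Under Corollary \ref{C:diffusions_comparisons_bis}, the abstract heat diffusion associated to $\mu$ agrees with the leafwise one, so invariance of $\mu$ under $S(t)$ (Theorem \ref{th_heat_real}) lets me compute
\[
\chi(\Fc) \;=\; \tfrac{1}{2}\int_X \Delta_P \log\|v\|^2_h \,d\mu \;=\; \tfrac{1}{2}\int_{X\setminus E} \ddc \log\|v\|^2_h \wedge T.
\]
After globalizing via a partition of unity, the curvature form $-\ddc\log\|v\|^2_h$ on regular pieces represents the Chern class $c_1(N_\Fc)$, and the resulting identity should take the shape
\[
\chi(\Fc)\;=\;-\tfrac{1}{2}\,\{T\}\smallsmile c_1(N_\Fc) \;+\; (\text{contributions at }E).
\]

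Next I would handle the terms near each hyperbolic singularity using the local model \eqref{e:Z}--\eqref{e:leaf_equation}: Proposition \ref{P:holonomy} gives an explicit log-expansion of $\|\mathcal{H}\|$ in terms of $|z|$, $|w|$ and $\lambda$, and the integrability bound \eqref{e:necessary_integrability} (established during the proof of Theorem \ref{T:VA}) guarantees that the local boundary integrals converge absolutely and can be evaluated. The key point is that the Baum--Bott/Camacho--Sad residues at a hyperbolic singularity, weighted by the Lelong-type densities of $T$, are \emph{non-positive} because $\lambda\notin\mathbb{R}$ forces $\Re\lambda\cdot\Re\lambda^{-1}\leq 0$; vanishing of Lelong numbers at $E$ (Theorem \ref{T:Lelong}) ensures the residual boundary terms vanish. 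Using the adjunction $c_1(N_\Fc) = c_1(X) - c_1(T_\Fc)$ and the fact that invariant closed analytic curves would give directed positive closed currents (which by hypothesis do not exist), I would rewrite the right-hand side as a strictly positive multiple of the self-intersection $\{T\}\smallsmile\{T\}$ minus a nonnegative singular contribution, giving $\chi(\Fc)\leq -\alpha\,\{T\}^2$ for some $\alpha>0$. Because $\Fc$ has no directed positive closed current, Theorem \ref{T:Unique_ergodicity_1} places us in case (c) of Theorem \ref{T:Unique_ergodicity_1bis}, so $\{T\}$ is nef and big; by the corollary following that theorem (equivalence of bigness with $\{T\}^2>0$ and with $T$ being non-closed), we get $\{T\}^2>0$, hence $\chi(\Fc)<0$.

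The main obstacle I expect is the rigorous treatment of the singular locus: the form $\ddc\log\|v\|^2_h$ is not globally defined on $X$ and blows up at $E$, so the naive integration by parts fails, while $T$ itself has Poincar\'e mass that is only finite thanks to the delicate estimates of Proposition \ref{P:Poincare_mass} and Theorem \ref{T:Lelong}. To make the computation rigorous I would work on $X_\varepsilon := X \setminus \bigcup_{a\in E} \mathbb{B}(a,\varepsilon)$, apply Stokes' theorem there, and control the boundary integrals over $\partial\mathbb{B}(a,\varepsilon)$ as $\varepsilon\to 0$ using the explicit parametrization \eqref{e:leaf_equation} together with the log-integrability \eqref{e:necessary_integrability}. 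A secondary technical difficulty is that the connection between $\chi(\Fc)$ and the cup product $\{T\}^2$ is only an inequality rather than an equality once the singular contribution is dropped; confirming that this inequality remains \emph{strict} — and in particular that $\chi(\Fc)$ does not vanish — is the heart of the theorem and will require combining the strict positivity of $\{T\}^2$ with the non-vanishing of at least one local contribution from a singularity whose Camacho--Sad residue is nonzero.
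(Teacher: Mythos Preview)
Your overall two-step strategy — first derive a cohomological formula for $\chi(\Fc)$ in terms of $c_1(\Nor(\Fc))\smile\{T\}$, then use the bigness of $\{T\}$ from Theorem~\ref{T:Unique_ergodicity_1bis}(c) — matches the paper's route through Theorem~\ref{T:Coho_exponent}, and the paper does not give a detailed proof of the negativity itself (it is cited to \cite{NguyenVietAnh18c}). But your execution has two genuine problems.

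First, in the passage to the cohomological identity you understate the analytic difficulty near $E$. You appeal to the bound \eqref{e:necessary_integrability} to control the boundary integrals; the paper stresses explicitly (Remark~\ref{e:crucial-point} and Proposition~\ref{P:Lyapunov_vs_kappa}) that this is \emph{not} enough: the curvature density $\kappa$ behaves like $(\lof\dist(x,E))^2$ along the diagonal $|z|\approx|w|$, so one needs the $\mu$-integrability of the stronger weight $W$ in \eqref{e:W}, not merely of $\lof\dist(\cdot,E)$. The paper's proof obtains the clean formula $\chi(\mu)=-c_1(\Nor(\Fc))\smile\{T\}$ with \emph{no} residual singular contributions, precisely because the approximation $g_\epsilon\to g^\perp_X$ in \eqref{e:kappa_epsilon}--\eqref{e:inside} is dominated pointwise by $|\kappa|$ and the latter is integrable. (Incidentally, your final constant $-\tfrac12$ should be $-1$: since $\tfrac12\ddc\log\|v\|^2_h=-c_1(\Nor(\Fc),h)$, the $\tfrac12$ is already absorbed.) Also, your sign claim ``$\lambda\notin\R$ forces $\Re\lambda\cdot\Re\lambda^{-1}\le 0$'' is false: writing $\lambda=a+ib$ gives $\Re\lambda\cdot\Re\lambda^{-1}=a^2/|\lambda|^2\ge 0$, so whatever residue argument you had in mind cannot rest on that inequality.

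Second, and more seriously, the step ``rewrite the right-hand side as a strictly positive multiple of $\{T\}\smile\{T\}$ minus a nonnegative singular contribution, giving $\chi(\Fc)\le-\alpha\{T\}^2$'' is the heart of the matter and is left unsupported. The cohomological formula produces $-c_1(\Nor(\Fc))\smile\{T\}$, and nothing in the adjunction $c_1(\Nor(\Fc))=c_1(\Tan X)-c_1(\Tan\Fc)$, nor the absence of invariant curves, tells you that $c_1(\Nor(\Fc))\smile\{T\}$ dominates a positive multiple of $\{T\}^2$. The Camacho--Sad and Baum--Bott formulae relate $c_1(\Nor(\Fc))\cdot[C]$ to $C^2$ for an \emph{invariant curve} $C$, but $T$ here is diffuse and not closed, so no such index formula applies to $T$ directly. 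You need an independent argument that $c_1(\Nor(\Fc))\smile\{T\}>0$ (for instance, a positivity property of $\Nor(\Fc)$ paired with a nef and big class, or a direct analytic argument from the sign of $\kappa$ as in Proposition~\ref{P:laplacian_hol}); simply invoking $\{T\}^2>0$ does not close the gap.
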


Roughly speaking,  Theorem \ref{T:Negative_exponent}  says that in the sense of ergodic theory, generic leaves have  the  tendancy to wrap together
towards the support of the unique probability  harmonic measure.
%and  in the context of 
% but under  the assumption  that  there is
%an   exceptional minimal set.
 
Recall from  Subsection \ref{SS:Local-theory} that the foliation $\Fc$ is given by  an open covering $\{\U_j\}$ of $X$ and  holomorphic vector fields $v_j\in H^0(\U_j,\Tan(X))$ 
with isolated singularities (i.e. isolated zeroes) such that
$$
v_j=g_{jk}v_k\qquad\text{on}\qquad \U_j\cap \U_k
$$
for some nonvanishing holomorphic functions $g_{jk}\in H^0(\U_j\cap \U_k, \Oc^*_X).$

The  functions $g_{jk}$ form a multiplicative cocycle and hence give a  cohomology class in $H^1(X,\Oc^*_X),$ that is  a holomorphic line  bundle on $X.$
This is  the {\it cotangent bundle }  $\Cotan(\Fc)$ of $\Fc,$  Its  dual $\Tan(\Fc),$ represented by the inverse  cocycle $\{g_{jk}^{-1}\}$ is called the {\it tangent  bundle } of $\Fc.$    
 For a complex  line  bundle $\E$ over $X,$ let  $c_1(\E)$ denote the cohomology Chern class of $\E.$
 This is an element  in $H^{1,1}(X).$

% Now we are under the assumption of Theorem  \ref{T:VA} and we want to  discuss  the geometric  aspect of $\chi(\mu).$ 

 The next  result gives cohomological formulas for $\chi(\mu)$ and $\|\mu\|$ in terms of the geometric quantity $T$ and some   characteristic classes
 of $\Fc.$
\begin{theorem}\label{T:Coho_exponent} {\rm  (\cite[Theorem A]{NguyenVietAnh18c}) }
Under the  assumption of Theorem  \ref{T:VA},
the following  identities hold
   \begin{subequations}\label{e:thm_A}
     \begin{alignat}{1}
\chi(\mu)&=  - c_1(\Nor(\Fc))\smile\{T\},\label{e:coho_exponent}\\ 
 \|\mu\|&= c_1(\Cotan(\Fc))\smile\{T\}\label{e:coho_mass}.
  \end{alignat}
\end{subequations}
Here  $\Nor(\Fc):=\Tan(X)/\Tan(\Fc)$ %and $K_\Fc:=T^*\Fc$ 
stands for the normal bundle %and  the canonical bundle
of $\Fc,$  where $\Tan(X)$ (resp. $\Tan(\Fc)$ and $\Cotan(\Fc)$) is  as  usual the  tangent bundle of $X$ (resp. the tangent bundle  and the  cotangent bundle of $\Fc$).
\end{theorem}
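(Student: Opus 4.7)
My plan is to derive both identities \eqref{e:coho_exponent} and \eqref{e:coho_mass} from a common template: compare a natural \emph{singular} Hermitian metric on one of the line bundles $\Tan(\Fc)$, $\Nor(\Fc)$ associated with $\Fc$ to a \emph{smooth} reference metric, express the discrepancy as $\ddc$ of a $\log$-potential, and then eliminate that potential against $T$ via integration by parts using $\ddc T=0$. The whole strategy hinges on the integrability condition $\int_X|\log\dist(x,E)|\,d\mu<\infty$ established in the course of Theorem \ref{T:VA} and sketched in Subsection \ref{SS:Existence-Lyapunov}.

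For the mass identity \eqref{e:coho_mass}, I would fix a smooth Hermitian metric $h$ on $\Tan(\Fc)$ and a local nonvanishing holomorphic section $v$ of $\Tan(\Fc)$. Pulling back by a universal covering $\phi_x:\D\to L_x$ of a hyperbolic leaf and using the identity $\ddc \log(1-|\zeta|^2)=-g_P/(2\pi)$, a direct computation shows that the leafwise $\ddc$ of $\log|v|^2_{g_P}$ is a positive constant multiple of $g_P$ itself. Writing
\[
\log|v|^2_{g_P}=\log|v|^2_h+\phi,\qquad \phi:=\log\bigl(|v|^2_{g_P}/|v|^2_h\bigr),
\]
(noting that $\phi$ is intrinsic, i.e.\ independent of the frame $v$, and defined on $\Hyp(\Fc)$), and invoking Poincar\'e--Lelong to identify $-\ddc\log|v|^2_h$ with a smooth representative of $c_1(\Tan(\Fc),h)$, yields the leafwise distributional identity $g_P = -c_1(\Tan(\Fc),h)\bigl|_{L}+\ddc_{L}\,\phi$ (absorbing the numerical constant into the normalization). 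Wedging against $T$, integrating over $X$, and using $\ddc T=0$ to kill the $\ddc\phi$ term by Stokes gives $\|\mu\| = -c_1(\Tan(\Fc))\smile\{T\}=c_1(\Cotan(\Fc))\smile\{T\}$.

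For \eqref{e:coho_exponent}, I would fix a smooth Hermitian metric $h_{\Nor}$ on $\Nor(\Fc)$ and, for $x\in X\setminus E$ and a nonzero $u\in\Nor(\Fc)_x$, transport $u$ along the leaf via the holonomy cocycle to obtain a nowhere-vanishing holomorphic section of the pull-back of $\Nor(\Fc)$ to the universal cover of $L_x$. Poincar\'e--Lelong applied to that section gives the leafwise identity
\[
\ddc_y\,\log\|\mathcal{H}(\omega,1)\|^2_{h_{\Nor}} = -c_1(\Nor(\Fc),h_{\Nor})\bigl|_L(y)
\]
as $y$ varies in a simply connected plaque through $x$. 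The plan next is to establish a Candel-type identity
\[
\chi(\mu) = c\int_{X} \bigl(\Delta_{P}\log\|\mathcal{H}\|^{2}\bigr)(x)\,d\mu(x)
\]
where $\Delta_{P}$ denotes the leafwise Laplacian of the local function $y\mapsto \log\|\mathcal{H}(\omega_{x\to y},1)\|^{2}$; this should follow from the ergodic identification of $\chi(\mu)$ in Theorem \ref{T:VA}, the multiplicative cocycle law \eqref{eq_multiplicative_cocycles}, and the $D_{t}$-invariance of $\mu$ supplied by Theorem \ref{th_heat_real} together with Corollary \ref{C:diffusions_comparisons_bis}, along exactly the lines of Candel \cite{Candel03} and Theorem \ref{T:VA_smooth}. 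Using $d\mu = T\wedge g_{P}$ on $\Hyp(\Fc)$ to rewrite $\int (\Delta_{P}\cdot)\,d\mu$ as $\int T\wedge\ddc(\cdot)$, and then inserting the Poincar\'e--Lelong identity above, produces $\chi(\mu) = -c_{1}(\Nor(\Fc))\smile\{T\}$ after the normalization is absorbed.

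The main obstacle in both arguments is the rigorous justification of the Stokes step, because the potentials $\phi$ and $\log\|\mathcal{H}\|^{2}$ are singular at $E$. Indeed, Proposition \ref{P:Poincare} gives $\eta(x)\asymp s\cdot\lof s$ with $s=\dist(x,E)$, hence $|\phi(x)| = O(|\log\dist(x,E)|)$, and a similar bound controls the transported holonomy along leaves. My plan is to truncate by smooth cut-off functions $\rho_{\varepsilon}$ vanishing in an $\varepsilon$-neighborhood of $E$, apply the classical Stokes theorem to $\rho_{\varepsilon}\phi$ and to the analogous truncation of $\log\|\mathcal{H}\|^{2}$, and pass to the limit $\varepsilon\to 0$. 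The two inputs that should suffice to control the error terms $\int T\wedge d\rho_{\varepsilon}\wedge d^{c}\phi$ are the $L^{1}(\mu)$-integrability of $\phi$ coming from \eqref{e:necessary_integrability}, which handles the bulk contribution, and the vanishing of the Lelong numbers of $T$ at the hyperbolic singularities supplied by Theorem \ref{T:Lelong}, which dominates the boundary flux of $d\rho_{\varepsilon}$ against $T$. Implementing this cut-off argument rigorously, in the spirit of the delicate local analysis carried out in Subsection \ref{SS:Existence-Lyapunov}, is the technical heart of the proof.
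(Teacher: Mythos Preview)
Your template---compare a singular Hermitian metric to a smooth reference and kill $\ddc$ of the log-potential against the $\ddc$-closed $T$---matches the paper's, and the argument for \eqref{e:coho_mass} runs essentially as you describe. The gap is in \eqref{e:coho_exponent}: you assert that condition \eqref{e:necessary_integrability} suffices for the Stokes step, but the curvature density $\kappa(x):=(\Delta_P\kappa_x)(0)$ (your $\Delta_P\log\|\mathcal H\|$) satisfies, by Proposition \ref{P:laplacian_hol},
\[
|\kappa(z,w)| \ \asymp\ \frac{|z|^2|w|^2}{(|z|^2+|w|^2)^2}\,\bigl(\lof\|(z,w)\|\bigr)^2,
\]
hence is of order $(\lof\dist(x,E))^2$ on the region $|z|\approx|w|$---strictly more singular than $\lof\dist(x,E)$. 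Condition \eqref{e:necessary_integrability} only gives $\lof\dist(\cdot,E)\in L^1(\mu)$, so neither your ``Candel-type identity'' $\chi(\mu)=\int\kappa\,d\mu$ nor the convergence $\int\rho_\epsilon\,T\wedge\ddc f\to\int T\wedge\ddc f$ in the cutoff step follows; the paper flags this explicitly (Remark \ref{e:crucial-point}). The missing idea is a \emph{sign} bootstrap: near the singularities $\kappa\leq 0$ (Proposition \ref{P:laplacian_hol}), so from the Dynkin-type identity $F_1(x)=\int_0^1(D_s\kappa)(x)\,ds$ together with $\int F_1\,d\mu<\infty$ (which \emph{does} follow from \eqref{e:necessary_integrability} via \eqref{e:growth_F}) and the $D_s$-invariance of $\mu$, Proposition \ref{P:Lyapunov_vs_kappa} extracts $\kappa\in L^1(\mu)$ and in fact $W\in L^1(\mu)$.

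A second, methodological difference: the paper does not use cutoffs $\rho_\epsilon$. It regularizes the \emph{metric} instead, replacing the singular weight $f\approx\tfrac12\log(|z|^2+|\lambda w|^2)$ of $g^\perp_X$ by the smooth $f_\epsilon\approx\tfrac12\log(|z|^2+|\lambda w|^2+\epsilon^2)$. For each $\epsilon$ the cup product equals $\int_X c_1(\Nor(\Fc),g_\epsilon)\wedge T$ by ordinary Stokes, and the limit $\epsilon\to0$ is Lebesgue dominated convergence with domination $|\kappa_\epsilon|\leq c^2|\kappa|+c^2$ (Lemma \ref{L:laplacian_hol_bis}) and majorant $|\kappa|\in L^1(\mu)$ supplied by the bootstrap above. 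Your cutoff route can in principle be completed once $\kappa\in L^1(\mu)$ is in hand, but the metric regularization is cleaner: it avoids the boundary term $\int f\,T\wedge\ddc\rho_\epsilon$, which carries an extra factor $\lof\epsilon$ that vanishing Lelong numbers alone do not absorb.
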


 Now  we apply   the above results to  the family $\Fc_d(\P^2)$ of  singular holomorphic  foliations  on $\P^2$ with  a given degree $d>1,$
 which was previously  introduced in  Theorem  \ref{T:generic}.
Consequently, Theorem %s \ref{T:Negative_exponent} and
\ref{T:Coho_exponent}   give us the following result.

\begin{corollary}\label{C:Lyapunov_P2}
 Let $\Fc=(\P^2,\Lc,E)$    be  
a   singular  foliation by curves on the complex projective plane $\P^2.$ Assume that
all the singularities  are hyperbolic and  that  $\Fc$ has no invariant algebraic  curve. 
 Then  
 \begin{equation}\label{e:univ_exponent}\chi(\Fc)=-{d+2\over d-1}.
 \end{equation}
 
\end{corollary}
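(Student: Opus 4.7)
The plan is to apply Theorem \ref{T:Coho_exponent} directly, after verifying that its hypotheses hold in this setting and after computing the Chern classes $c_1(\Cotan(\Fc))$ and $c_1(\Nor(\Fc))$ for a degree $d$ foliation on $\P^2$. The Poincar\'e mass normalization will turn the ratio of the two cohomology formulas into a pure number depending only on $d$.

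First I would check the hypotheses. By Theorem \ref{T:generic}(2), the hyperbolicity of all singularities (which in dimension two implies non-degeneracy) yields Brody hyperbolicity of $\Fc$. By Theorem \ref{T:generic}(3), the hyperbolicity of the singularities together with the absence of invariant algebraic curves implies that $\Fc$ carries no nontrivial directed positive closed current. Hence the hypotheses of Theorem \ref{T:Unique_ergodicity_1} are met: there exists a unique directed positive $\ddc$-closed current $T$ of Poincar\'e mass $1$, and its class $\{T\}\in H^{1,1}(\P^2,\R)$ is nef and big. By Theorem \ref{thm_harmonic_currents_vs_measures}, the corresponding harmonic probability measure $\mu=\Phi(T)$ is the unique one, so $\chi(\Fc)=\chi(\mu)$ in the sense of Definition \ref{D:Lyapunov} and $\|\mu\|=1$.

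Next I would compute the relevant Chern classes. Let $H\in H^{1,1}(\P^2,\R)$ denote the hyperplane class, normalized so that $H\smile H=1$. For $\Fc\in\Fc_d(\P^2)$, the standard identification of $\Fc_d(\P^2)$ with a Zariski open subset of a projective space (see the discussion preceding Theorem \ref{T:generic}) shows that the foliation is defined by a twisted vector field, giving $\Tan(\Fc)\cong\Oc_{\P^2}(1-d)$, so that
\[
c_1(\Cotan(\Fc))=(d-1)H.
\]
From the exact sequence $0\to \Tan(\Fc)\to\Tan(\P^2)\to\Nor(\Fc)\otimes \Ic_E\to 0$ together with $c_1(\Tan(\P^2))=3H$ (the singularities $E$ form a zero-dimensional scheme and do not affect the first Chern class computation at the level of $H^{1,1}$), one obtains
\[
c_1(\Nor(\Fc))=c_1(\Tan(\P^2))-c_1(\Tan(\Fc))=3H-(1-d)H=(d+2)H.
\]

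Finally, I would plug these into Theorem \ref{T:Coho_exponent}. Dividing (\ref{e:coho_exponent}) by (\ref{e:coho_mass}) and using $\|\mu\|=1$,
\[
\chi(\Fc)=\chi(\mu)=-\frac{c_1(\Nor(\Fc))\smile\{T\}}{c_1(\Cotan(\Fc))\smile\{T\}}
=-\frac{(d+2)\,H\smile\{T\}}{(d-1)\,H\smile\{T\}}=-\frac{d+2}{d-1}.
\]
The cancellation is legitimate because $H$ is a K\"ahler class on $\P^2$ and $\{T\}$ is a nef, big class (not just a nonzero nef class), which forces $H\smile\{T\}>0$. The only nontrivial input is Theorem \ref{T:Coho_exponent}, which we are taking as given; beyond that the argument reduces to a Chern class bookkeeping on $\P^2$, so I do not anticipate any serious obstacle—the main point is simply to confirm that the normalization conventions used for $\{T\}$, $\chi(\mu)$, and the Chern classes are mutually consistent.
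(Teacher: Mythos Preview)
Your proposal is correct and follows precisely the route the paper indicates: the paper simply states that Corollary \ref{C:Lyapunov_P2} follows from Theorem \ref{T:Coho_exponent}, and you have supplied exactly the expected details---verifying the hypotheses via Theorem \ref{T:generic} and Theorem \ref{T:Unique_ergodicity_1}, computing $c_1(\Cotan(\Fc))=(d-1)H$ and $c_1(\Nor(\Fc))=(d+2)H$ from the standard line-bundle identifications for degree-$d$ foliations on $\P^2$, and dividing the two formulas of Theorem \ref{T:Coho_exponent} using the Poincar\'e-mass normalization $\|\mu\|=1$.
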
  

 So for  a generic  foliation $\Fc$ of a given degree $d>1$ in $\P^2,$  we have $\chi(\Fc)=-{d+2\over d-1}.$

 \begin{remark}
  \rm  Theorem \ref{T:Negative_exponent} gives a complete answer to  Problem 7.7 in  our previous survey \cite{NguyenVietAnh18d} (see also Hussenot \cite{Hussenot}).
 \end{remark}

\begin{remark} \rm It is  of interest  to  know  whether  Theorems  \ref{T:Negative_exponent} and \ref{T:Coho_exponent} still hold if $X$ is merely a compact K\"ahler surface. This is definitely the case
 if  we can relax  the projectivity assumption in 
Theorem \ref{T:VA}, see Remark \ref{R:projective}. 
 The reader may find in  \cite{DinhSibony18b,NguyenVietAnh18c}  some other open questions in the ergodic theory of singular holomorphic foliations.
\end{remark}

\begin{problem}  \rm Study  the  sign of  Lyapunov  exponents for  a singular holomorphic  foliation on a compact projective  manifold of dimension $k>2$ (if possible,   more generally
on a compact K\"ahler  manifolds),  when   the  singularities  of the foliations are all  hyperbolic linearizable.
Can  we relax  the assumption that  the singularities are hyperbolic linearizable ?
\end{problem}

\begin{problem}  \rm Study  the existence  and  the  sign of  Lyapunov  exponents for  a singular holomorphic  foliation on a compact projective  manifold of dimension $k>2$ (if possible,   more generally
on a compact K\"ahler  manifolds),  when   the  singularities   are of dimension $\geq 1.$
\end{problem}

 %%%%%%%%%%%%%%%%%%%%%%%%%%%%%%%%%%%%%%%%%%%%%%%%%%%%%%%%%%%%%%%%%%%%%%%%%%%%%%%%%%%%%%%%%%%%%%%%%
 \subsection{Normal  bundle, curvature  density   and Lyapunov exponent}
 \label{SS:Normal-bundle}
 %%%%%%%%%%%%%%%%%%%%%%%%%%%%%%%%%%%%%%%%%%%%%%%%%%%%%%%%%%%%%%%%%%%%%%%%%%%%%%%%%%%%%%%%%%%%%%%%%%

In this subsection we provide  preparatory results  needed  the proof of  formula \eqref{e:Lyapunov_exp}.
This  preparation gives  a  bridge  between  the  geometric aspect (normal bundle, curvature density) and the dynamical aspect (Lyapunov exponent) of a holomorphic  foliation  
emphasizing   the role of the  singularities.

Let $(L , h)$ be a singular Hermitian holomorphic line bundle on $X.$
If $e_L$ is a holomorphic frame of $L$ on some open set $U \subset X,$
then the function $\varphi $ defined  by  $|e_L |^2_h = \exp{( -2\varphi)} $  is called the {\it local weight} of the metric $h$ with respect to $e_L .$ 
If   the local weights $\varphi$ are in $L_\loc^1(U),$  then {\it (Chern) curvature current} of $(L,h)$ denoted by $c_1 (L , h)$ 
is  given by $c_1 (L , h)|_U = \ddc \varphi.$  This is   a $(1,1)$-closed current. Its class in $H^{1,1}(X)$ is  called the Chern class of $L.$
If  we fix a  smooth  Hermitian metric $h_0$ on $L,$ then every  singular metric $h$ on $L$ can be written $h=e^{-2\varphi} h_0$ for some  function $\varphi.$
We say that $\varphi$ is the {\it global weight} of $h$ with respect to $h_0.$ Clearly,  $c_1(L,h)=c_1(L,h_0)+\ddc\varphi.$

 Consider the normal bundle $\Nor(\Fc) = \Tan(X)/\Tan(\Fc)$ of $\Fc.$
 For $x\in X\setminus E$ and a  vector $u_x\in \Tan_x(X),$ let 
 $[u_x]$ denotes its class in $\Nor_x(\Fc).$
 We also identify
 $[u_x]$ with the  set $u_x +\Tan_x(\Fc)\subset \Tan_x(X)$
  Note that   a (local) smooth  section  $w$ of  $\Nor(\Fc)$ can be locally written as  $w_x=[u_x]$ for some
  smooth vector field $u.$
 
 Consider the  following  metric 
$g^\perp_X$ on the normal bundle $\Nor(\Fc):$  
 \begin{equation}\label{e:tran_metric}
  \|w_x\|_{g^\perp_X}:=\min_{u_x\in [w_x]} \|u_x\|_{g_X},\qquad\text{for}\qquad  w_x\in \Nor(\Fc)_x,\ x\in X\setminus E.
 \end{equation}
 Note  that $u_x$ achieving  the minimum in  \eqref{e:tran_metric} is uniquely determined by $[w_x].$
 $g^\perp_X$ is  called   {\it  transversal metric} associated with $\Fc$ and  the ambient metric $g_X.$

 Fix a  smooth Hermitian metric $g_0$ on $\Nor(\Fc).$ There is  a  global weight  function $\varphi$ on $X$  such that
 $  g^\perp_X=e^{-2\varphi} g_0. $
 %Consider  the  new metric ${\hat g}^\perp_X$ on  $\Nor(\Fc)$  defined  by 
 %$$  {\hat g}^\perp_X=e^{-2\varphi}. $$
 %In other words,   ${\hat g}^\perp_X$ is  $  g^\perp_X$  when  we regard $h_0$  as  the  unit metric.  

 Next, 
 we recall some notions and results from \cite[Section 9.1]{NguyenVietAnh17a}.
 Fix a  point $x\in X$   and let
   $\phi_{x}:\ \D\to L=L_{x}$   be the universal covering map    given in (\ref{e:covering_map}).    
    Consider  
      the  function  $\kappa_x:\  \D\to \R$ defined by
\begin{equation}\label{e:specialization}
\kappa_x(\zeta):= \log \| {\mathcal H}(\phi_x\circ\omega,1) \|,\qquad  \zeta \in \D,
\end{equation}
where  $\omega\in \Omega_0$ is any path  such that  $\omega(1)=\zeta.$ 
This   function is  well-defined   because $\mathcal H(\phi_x\circ \omega,t)$ depends only on the homotopy class of the path $\omega|_{[0,t]}$ and $\D$ is   simply connected.
Following \cite{NguyenVietAnh17a},
  $\kappa_x$ is  said  to be  the {\it specialization}  of the holonomy cocycle  $\mathcal H$ at $x.$

 The  following  two conversion rules  for changing  specializations in the  same leaf   are useful (see  \cite {NguyenVietAnh17a}). 
For this  purpose  let $y\in L_x$ and pick $\xi\in \phi_x^{-1}(y).$
Since  the holonomy cocycle is  multiplicative (see \eqref{eq_multiplicative_cocycles}), the  first  conversion rule (see \cite[identity (9.6)] {NguyenVietAnh17a}) states that 
\begin{equation}\label{e:change_spec}
\kappa_y\big({\zeta-\xi\over  1-\zeta\bar\xi}\big)=\kappa_x(\zeta)-\kappa_x(\xi),\qquad \zeta\in\D.
\end{equation}
Consequently, since $\Delta_P$ is invariant  with respect to the automorphisms of $\D,$  it follows that
\begin{equation}\label{e:change_spec_bis}
\Delta_P \kappa_y(0)=\Delta_P\kappa_x(\xi).
\end{equation}
  
By   \cite[identities (9.5) and  (9.8)]{NguyenVietAnh17a}, we have that
\begin{equation}\label{e:varphi_n_diffusion}
\kappa_x(0)=0  \quad\textrm{and}\quad  \E_x[\log {\|\mathcal H(\bullet,t)\|    }  ] =  
 (D_t \kappa_x)(0),\qquad t\in\R^+,
\end{equation}
where  $(D_t)_{t\in\R^+}$ is the family of  diffusion  operators associated with  $(\D,g_P).$

Consider the function $\kappa:\ X\setminus E\to\R$  defined by
\begin{equation}\label{e:kappa}
\kappa(x):= (\Delta_P\kappa_x)(0)\quad\text{for}\quad x\in X\setminus E.
\end{equation}

Now let $L= \Nor(\Fc).$
Suppose that $L$ is  trivial over a flow box $\U\simeq \B\times \Sigma,$ i.e.
$L|_\U\simeq \U\times\C\simeq \B\times\Sigma\times\C.$
Consider the holomorphic section $e_L$ on $\U$ defined by $e_L(x):=(x,1).$
Let $\varphi$ be the local weight of $(L,g^\perp_X)$ with respect to $e_L.$
 Equality \eqref{e:hol_cocycle_dist} is  rewritten as follows
 \begin{equation*}
 {\|e_L(y)\|_{g^\perp_X}\over \|e_L(x)\|_{g^\perp_X}    }  ={e^{-\varphi(y)}\over  e^{-\varphi(x)}} ,
  \end{equation*}
where $x$  and $y$ are  on the same plaque  in the    flow box $\U\simeq \B\times \Sigma.$
Consequently,
\begin{equation}\label{e:hol_cocycle_dist_bis}
\ddcy \log\|\mathcal H(\omega,t)\||_{L_x}=\Big(\ddcy \log\big({\|e_L(y)\|_{g^\perp_X}\over \|e_L(x)\|_{g^\perp_X}    }\big)\Big)|_{L_x}= -\ddcy \varphi(y)|_{L_x}.
\end{equation}
Combining   \eqref{e:specialization},  
   \eqref{e:change_spec}, \eqref{e:change_spec_bis}, \eqref{e:kappa} and \eqref{e:hol_cocycle_dist_bis}, we obtain that
 \begin{equation}\label{e:kappa-varphi}
 \kappa(x)=-(\Delta_P\varphi)(x)\qquad\text{for}\qquad x\in X\setminus E.
 \end{equation}
 The function $\kappa$ is  said  to be  the {\it  curvature density} of $\Nor(\Fc).$

 For $x=(z,w)\in\C^2,$ let
 $\|x\|:=\sqrt{|z|^2+|w|^2}$ be the  standard Euclidean norm of $x.$ Recall  that
 $\lof(\cdot):=1+|\log(\cdot)|.$  
 Using  Propositions \ref{P:holonomy} and \ref{P:Poincare},  we obtain the  following result which gives  precise variations up to order $2$ of   $\mathcal H$ near a singular point $a$ 
 using the local model  $(\D^2,\Lc,\{0\})$   discussed  in \eqref{e:Z}. 
 %%%%%%%%%%
\begin{proposition}\label{P:laplacian_hol}   {\rm  (\cite[Lemma 3.2]{NguyenVietAnh19})}
 Let $\D^2$ be  endowed with the Euclidean metric. Then
there is a constant $c>1$  such that
for every  $x=(z,w)\in({1\over2}\D)^2,$  we have that  
\begin{eqnarray*}
c^{-1}    \lof {\|(z,w)\|}&\leq& |d \kappa_x(0)|_P\leq c\lof {\|(z,w)\|},\\
-c  {|z|^2|w|^2\over (|z|^2+|w|^2)^2}  (\lof {\|(z,w)\|})^2&\leq& \Delta_P \kappa_x(0)\leq -c^{-1} {|z|^2|w|^2\over (|z|^2+|w|^2)^2}  (\lof {\|(z,w)\|})^2,
\end{eqnarray*}
where the function $\kappa_x$ is defined in \eqref{e:specialization}.
\end{proposition}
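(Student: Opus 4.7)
The plan is to reduce both estimates at $0 \in \D$ to explicit Euclidean computations on $\Pi_x$ via the holomorphic leaf parametrization $\psi_x : \Pi_x \to \widehat L_x$ from \eqref{e:leaf_equation}, and then to combine the explicit formula for the holonomy cocycle in Proposition \ref{P:holonomy} with the sharp control on $\eta$ in Proposition \ref{P:Poincare}. Since $\phi_x : (\D, g_P) \to (L_x, g_P)$ is a local isometry near $0$, both $|d\kappa_x|_P(0)$ and $\Delta_P \kappa_x(0)$ coincide with the Poincar\'e gradient and Laplacian at $x \in L_x$ of the locally well-defined function $y \mapsto \log \|\mathcal{H}(\omega, 1)\|$, where $\omega$ is any leafwise path from $x$ to $y$ in a small neighborhood of $x$. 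By Proposition \ref{P:holonomy}, reading this function via the conformal parametrization $\psi_x$, it becomes exactly $\log \Phi_x$.

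Next I will compute the conformal factor relating $\psi_x^* g_P$ to the Euclidean metric on $\Pi_x$. Since $\psi_x$ is holomorphic, $\psi_x^* g_P = \rho(t)^2\, i\, dt \wedge d\bar t$ with $\rho(0) = \|\psi_x'(0)\|_{g_P}$. Differentiating \eqref{e:leaf_equation} gives $\psi_x'(0) = (iz, i\lambda w)$, so $\|\psi_x'(0)\|_{g_X}$ is comparable to $\|(z,w)\|$ with constants depending only on $|\lambda|$. Using the identity $g_X = \eta^2 g_P$ together with the two-sided bound $\eta(x) \approx \|(z,w)\|\, \lof \|(z,w)\|$ from Proposition \ref{P:Poincare}, one deduces that $\rho(0)$ is comparable to $(\lof \|(z,w)\|)^{-1}$, uniformly for $(z,w) \in ((1/2)\D)^2$. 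The standard conformal transformation rules then yield
\begin{equation*}
|d\kappa_x|_P(0) = \rho(0)^{-1}\, |d(\log \Phi_x)|_{\mathrm{Eucl}}(0), \qquad \Delta_P \kappa_x(0) = C\,\rho(0)^{-2}\, \Delta_{\mathrm{Eucl}}(\log \Phi_x)(0),
\end{equation*}
up to a universal positive constant $C$.

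Finally I will compute the Euclidean gradient and Laplacian of $f := \log \Phi_x$ at $t = 0$ directly from the formula in Proposition \ref{P:holonomy}. Writing $\lambda = \beta + i\alpha$ with $\alpha > 0$ and setting $A := |z|^2$, $B := |\lambda|^2|w|^2$, a first- and second-order expansion at $(u,v) = (0,0)$ gives
\begin{equation*}
|df|^2_{\mathrm{Eucl}}(0) = \frac{\alpha^2 A^2 + (B + \beta A)^2}{(A+B)^2}, \qquad \Delta_{\mathrm{Eucl}} f(0) = -\frac{2\,|\lambda - 1|^2\, AB}{(A+B)^2}.
\end{equation*}
The first expression is bounded above and below by positive constants depending only on $\lambda$: restricted to the compact set $\{A + B = 1\}$, the numerator is a continuous function that cannot vanish, because $\alpha A = 0$ (with $\alpha > 0$) together with $B + \beta A = 0$ force $A = B = 0$. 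The second expression is comparable to $-|z|^2 |w|^2/(|z|^2 + |w|^2)^2$, absorbing the fixed factor $|\lambda|^2$ into the constants. Combining these bounds with the conformal factor completes the two estimates. The main obstacle is the second-order computation: one must carry the expansion of $\log(A e^{-2v} + B e^{-2\alpha u - 2\beta v})$ through to second order carefully enough to isolate the cancellation producing the universal combination $\alpha^2 + (\beta-1)^2 = |\lambda - 1|^2$, whose nonvanishing, guaranteed by $\lambda \notin \R$, is precisely what ensures the lower bound on $|\Delta_P \kappa_x(0)|$ with constants uniform in $x \in ((1/2)\D)^2$.
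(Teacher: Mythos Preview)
Your proposal is correct and follows essentially the same approach the paper indicates: the text preceding the proposition explicitly says it is obtained ``using Propositions \ref{P:holonomy} and \ref{P:Poincare}'', and the closely related Lemma \ref{L:laplacian_hol_bis} is proved in the paper by exactly your method---pull back via $\psi_x$, perform the Euclidean computation producing the factor $|\lambda-1|^2|z|^2|w|^2/(|z|^2+|\lambda w|^2)^2$, observe $|d\psi_x(0)|\approx\|(z,w)\|$, and convert via Proposition \ref{P:Poincare}. Your explicit second-order expansion and the nonvanishing argument for the gradient are the details the paper suppresses.
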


 Following  Proposition  \ref{P:laplacian_hol}, consider  a weight function $W:\ X\setminus E\to \R^+$ as follows. Let $x\in X\setminus E.$
 If $x$ belongs to  a regular flow box then $W(x):=1.$
 Otherwise, if  $x=(z,w)$ belongs to a singular flow box $\U_a,$ $a\in E,$  which is  identified with the local model with coordinates $(z,w),$ then
 \begin{equation}\label{e:W}
  W(x):=   \lof{\|(z,w)\|}+{|z|^2|w|^2\over (|z|^2+|w|^2)^2}  (\lof {\|(z,w)\|})^2.
 \end{equation}
Note that $$1\leq \lof \dist(x,E) \leq W(x)\leq 2 (\lof \dist(x,E))^2.$$

%Next, we make  use of Lemma  \ref{L:laplacian_hol},  the new integrability  condition \eqref{e:necessary_integrability} and  delicate  inequalities of  heat diffusions  \eqref{e:diffusions}. Consequently,
% we obtain the  following  result  relates the Lyapunov exponent $\lambda(\mu)$ to the function $\kappa$ defined in \eqref{e:kappa}. It plays  the key role in the proof
% of Theorem \ref{T:Negative_exponent}. 
%\begin{proposition}\label{P:Lyapunov_vs_kappa}
% Under the  hypotheses and notations  of Theorem \ref{T:VA},
 %\begin{enumerate}
 %\item Assume in addition that  $g_X$  is  equal to the Euclidean metric in a local model near  every  singular point of $\Fc.$ Then the integral
 %$\int_X|\kappa(x)|d\mu(x)$ is  bounded  and the following identity holds
 %$$
 %\lambda(T)=\int_X\kappa(x)d\mu(x).
 %$$
 %\item Without  the above  additional assumption,  the last identity still holds.
 %\end{enumerate}
%  the integrals
% $\int_X|\kappa(x)|d\mu(x)$  and $\int_X W(x)d\mu(x)$  are  bounded,  and the following identity holds
% $$
% \chi(\mu)=\int_X\kappa(x)d\mu(x).
% $$ 
%\end{proposition}

Inspired by  Definition 8.3  in Candel \cite{Candel03},  we have the  following
\begin{definition}\label{D:moderate_function} \rm A real-valued function $h$ defined on   $\D$
 is called    {\it weakly moderate} 
  if there is a   constant $c>0$ such that
$$
\log |h(\xi)-h(0)|\leq  c\,\dist_P(\xi,0)+c,\qquad  \xi\in \D.
$$  
  \end{definition}
  \begin{remark}
   \rm
   The notion of  weak moderateness is  weaker than the notion of moderateness
   given   in \cite{Candel03,NguyenVietAnh17b}.
  \end{remark}

 The usefulness of weakly moderate functions is illustrated by 
 the following Dynkin type formula.
\begin{lemma}\label{L:D_t_Delta} 
Let $f\in\Cc^2(\D)$  be such that  $f,$ $|df|_P$ (see the  definition after \eqref{e:heat_kernel}) and $\Delta_P f$  (see \eqref{e:Laplacian_disc})  are weakly moderate functions.   Then 
$$
(D_tf)(0) - f(0)=\int_0^t (D_s \Delta_P  f) (0) ds,\qquad t\in\R^+.
$$ 
\end{lemma}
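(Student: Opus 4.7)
The plan is to establish the identity by differentiating $t \mapsto (D_t f)(0)$ in $t$ and showing that its derivative equals $(D_t \Delta_P f)(0)$, after which integration from $0$ to $t$ and the initial condition $D_0 f = f$ (see \eqref{e:semi_group}) will give the result. Write
\[
(D_s f)(0) = \int_\D p_\D(0,\zeta,s)\, f(\zeta)\, g_P(\zeta),
\]
and observe that the heat equation \eqref{e:heat-equation} gives $\partial_s p_\D = \Delta_{P,\zeta}\, p_\D$ away from $\zeta=0$. Formally, the strategy is to interchange $\partial_s$ with the integral, use the heat equation, and then integrate by parts twice to move the Laplacian from $p_\D$ onto $f$, thereby obtaining
\[
\frac{d}{ds}(D_s f)(0)\;=\; \int_\D p_\D(0,\zeta,s)\, \Delta_P f(\zeta)\, g_P(\zeta)\;=\;(D_s \Delta_P f)(0).
\]

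The first step is to justify the $t$-differentiation under the integral sign. For this I would combine the uniform estimates on $\partial_t p_\D(0,\zeta,t)$ on compact intervals $[t_0/2,2t_0]$ (of the type already used in Lemma~\ref{L:estimates_kernel}, Case (2), with the sharper Gaussian-type decay in $\rho := \dist_P(0,\zeta)$ coming from \eqref{e:heat_kernel}) with the weakly moderate growth $|f(\zeta)| \leq C e^{c\rho}$ from Definition~\ref{D:moderate_function}. Since the heat kernel decays faster than any exponential in $\rho$, Lebesgue dominated convergence applies and gives differentiability in $s$ with
\[
\frac{d}{ds}(D_s f)(0) \;=\; \int_\D \Delta_{P,\zeta}\, p_\D(0,\zeta,s)\, f(\zeta)\, g_P(\zeta).
\]

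The second and main step is the double integration by parts. Fix $s>0$ and choose a smooth cutoff $\chi_R$ with $\chi_R \equiv 1$ on $\D_{R-1}$, $\chi_R \equiv 0$ outside $\D_R$ and $|d\chi_R|_P$, $|\Delta_P \chi_R|$ bounded independently of $R$. Applying Green's identity on $\D_R$ to the pair $(p_\D(0,\cdot,s), \chi_R f)$ produces
\[
\int_\D (\Delta_{P,\zeta} p_\D)\,(\chi_R f)\, g_P
\;-\; \int_\D p_\D\, \Delta_P(\chi_R f)\, g_P
\;=\; 0,
\]
since $p_\D(0,\cdot,s)$ is smooth outside $\zeta=0$ and $\chi_R$ is compactly supported (the singularity at $\zeta=0$ is handled via the integral formulation of the heat equation \eqref{e:heat-equation}, i.e.\ the fact that $p_\D$ is the fundamental solution). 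Expanding $\Delta_P(\chi_R f) = \chi_R \Delta_P f + 2 \langle \nabla \chi_R, \nabla f \rangle_{g_P} + f \Delta_P \chi_R$, and using the weakly moderate bounds on $f$, $|df|_P$ and $\Delta_P f$ together with the heat-kernel decay recalled above, the two ``cross'' terms involving derivatives of $\chi_R$ live in the annular region $\D_R \setminus \D_{R-1}$ and go to $0$ as $R \to \infty$; the term $\chi_R \Delta_P f$ converges by dominated convergence to $\Delta_P f$. This yields the desired identity $\frac{d}{ds}(D_s f)(0) = (D_s \Delta_P f)(0)$.

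The last step is routine: the function $s \mapsto (D_s \Delta_P f)(0)$ is continuous in $s \in \R^+$ (using again the dominated convergence argument applied to the weakly moderate function $\Delta_P f$), so we may integrate from $0$ to $t$ and invoke $(D_0 f)(0) = f(0)$ to obtain
\[
(D_t f)(0) - f(0) = \int_0^t (D_s \Delta_P f)(0)\, ds.
\]
The principal obstacle lies in the second step: controlling the boundary terms of Green's identity and justifying the interchange of differentiation and integration in a setting where $f$, $|df|_P$ and $\Delta_P f$ are only assumed to grow at most exponentially in the Poincar\'e distance. This is precisely where weak moderateness (rather than mere $\Cc^2$ regularity) is decisive, being compatible with the sharper decay estimates on $p_\D$ and its $\zeta$-derivatives that are already developed, in a related context, in the proofs of Lemmas~\ref{L:estimates_kernel}--\ref{L:estimates_kernel-bis}.
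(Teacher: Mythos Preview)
The paper does not supply a proof of this lemma; it is stated as a Dynkin-type formula and then used without further justification (this is a survey, and the result is borrowed from the author's earlier work \cite{NguyenVietAnh18c,NguyenVietAnh17b}). So there is no ``paper's own proof'' to compare against here.

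That said, your plan is the standard one and is essentially correct. The weak moderateness of $f$, $|df|_P$ and $\Delta_P f$ gives at most exponential growth $O(e^{c\rho})$ in $\rho=\dist_P(0,\zeta)$, while for each fixed $s>0$ the heat kernel $p_\D(0,\zeta,s)$ and its $s$- and $\zeta$-derivatives decay like $e^{-\rho^2/(4s)}$ (this is read off from formula~\eqref{e:heat_kernel} exactly as in the proof of Lemma~\ref{L:estimates_kernel}). Hence all the dominated-convergence and boundary-term arguments go through, and the cutoff/Green scheme you outline is the right way to make the integration by parts rigorous. One minor clarification: for $s>0$ the kernel $p_\D(0,\cdot,s)$ is smooth on all of $\D$, including at $\zeta=0$; the initial singularity only appears in the limit $s\to 0^+$, so your parenthetical about ``handling the singularity at $\zeta=0$'' in the Green identity step is unnecessary. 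The genuine work, as you correctly identify, is the control of the annular boundary contributions as $R\to\infty$, and that is exactly what the weak-moderateness hypothesis is designed to ensure.
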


Using  Propositions \ref{P:holonomy}, \ref{P:laplacian_hol}   and \ref{P:Poincare} as well as  Lemma \ref{L:D_t_Delta}, we 
 obtain  an estimate on  the  expansion rate up to order $2$ of  $\mathcal H(\omega,\cdot)$ in terms of   $\dist_P(\cdot,0)$
  and  the  distance  $\dist(x,E).$  
\begin{proposition}\label{P:expansion_rate}
  There  is a  constant $c>0$ such that for every $x\in X\setminus E$ and every
  $ \xi\in\D,$
 \begin{eqnarray*}  \big |  \kappa_x(\xi) -\kappa_x(0)   \big |
 &\leq & c \lof \dist(x,E)\ \cdot\  \exp{\big (c\, \dist_P(\xi, 0)\big)}, \\
  \big |  |d\kappa_x(\xi) |_P-|d\kappa_x(0)|_P   \big |
& \leq & c \lof \dist(x,E)\ \cdot\  \exp{\big (c\, \dist_P(\xi,0)\big)}, \\
  \big |  \Delta_P\kappa_x(\xi )-\Delta_P\kappa_x(0)   \big |
 &\leq & c (\lof \dist(x,E))^2\ \cdot\  \exp{\big (c\, \dist_P(\xi, 0)\big)}.
\end{eqnarray*}  
\end{proposition}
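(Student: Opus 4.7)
The plan is to reduce each of the three inequalities to the pointwise base-point estimates of Proposition \ref{P:laplacian_hol} by sliding the reference point along the leaf via the change-of-specialization identity \eqref{e:change_spec}, and then to pay the price of this sliding with a Gr\"onwall-type comparison between leafwise Poincar\'e displacement and displacement of the ambient quantity $\dist(\cdot,E)$.

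First I would fix $x \in X \setminus E$ and $\xi \in \D$, set $y := \phi_x(\xi)$, and let $T_\xi(\eta) := (\eta+\xi)/(1+\eta\bar\xi)$ be the M\"obius automorphism of $\D$ sending $0$ to $\xi$. Identity \eqref{e:change_spec} reads $\kappa_y = \kappa_x \circ T_\xi - \kappa_x(\xi)$, and since $T_\xi$ preserves $g_P$, taking Poincar\'e gradients and Laplacians at $0$ gives
$$|d\kappa_x|_P(\xi) = |d\kappa_y|_P(0), \qquad \Delta_P\kappa_x(\xi) = \Delta_P\kappa_y(0).$$
Proposition \ref{P:laplacian_hol} then yields $|d\kappa_y|_P(0) \lesssim \lof\dist(y,E)$ and $|\Delta_P\kappa_y(0)| \lesssim (\lof\dist(y,E))^2$ inside a singular flow box, while away from the singularities both quantities are uniformly bounded by compactness of $X$ combined with the Brody hyperbolicity of $\Fc$ (which controls $\eta$ from above).

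Next I would establish the geometric comparison
$$\lof\dist(y,E) \le C\,(\lof\dist(x,E))\,e^{c\,\dist_P(\xi,0)} \qquad (\star)$$
along the unit-speed Poincar\'e geodesic $\gamma:[0,R] \to \D$ from $0$ to $\xi$, with $R = \dist_P(\xi,0)$. Setting $u(s) := \dist(\phi_x(\gamma(s)),E)$, the identity $g_X = \eta^2 g_P$ and Proposition \ref{P:Poincare} yield $|u'(s)| \le \eta(\phi_x\circ\gamma(s)) \le c\,u(s)\,\lof u(s)$ almost everywhere. Setting $v := \lof u$ one obtains $|v'| \le cv$ on sub-intervals where $u < 1$, while on those where $u \ge 1$ the quantity $v$ is bounded by compactness of $X$. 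Gr\"onwall then gives $v(R) \le v(0)\,e^{cR}$, which is $(\star)$.

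With $(\star)$ in hand, the second and third inequalities follow by the triangle inequality, since the term at $0$ is controlled by $\lof\dist(x,E)$ directly via Proposition \ref{P:laplacian_hol} and the term at $\xi$ is controlled by $\lof\dist(y,E)$ through the M\"obius identity and then by $(\star)$ (with the factor $(\lof\dist(y,E))^2$ producing only an inflation of the constant $c$ in the exponent for the third inequality). The first inequality reduces, via $\kappa_x(0)=0$ from \eqref{e:varphi_n_diffusion}, to bounding $|\kappa_x(\xi)| \le \int_0^R |d\kappa_x|_P(\gamma(s))\,ds$; applying the pointwise bound $|d\kappa_x|_P(\gamma(s)) \lesssim (\lof\dist(x,E))\,e^{cs}$ obtained from the M\"obius identity at time $s$ together with $(\star)$, and integrating, gives the desired exponential growth in $R$. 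The hard part will be $(\star)$: Proposition \ref{P:Poincare}'s bound $\eta \le c\,s\,\lof s$ is sharp only when $s$ is small, so the Gr\"onwall step must be patched across the transitions between a neighbourhood of $E$ and its complement, and the patching needs to remain uniform in the number of such transitions (which a priori is controlled only by $R$).
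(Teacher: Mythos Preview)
Your approach is correct and matches the paper's sketch: the paper does not give a detailed proof but only records that the proposition follows from Propositions \ref{P:holonomy}, \ref{P:laplacian_hol}, \ref{P:Poincare} (and cites Lemma \ref{L:D_t_Delta}, though that lemma is really a \emph{consequence} of the weak moderateness established here rather than an ingredient). Your concern about patching in the Gr\"onwall step $(\star)$ is unfounded: since $\lof u \geq 1$ always and $\lof u(s_0)=1$ whenever $u(s_0)=1$, it suffices to run the differential inequality $v'\le cv$ only on the final sub-interval $[s_0,R]$ where $u<1$ (taking $s_0$ the last time $u\ge 1$, or $s_0=0$), and the bound $v(R)\le v(s_0)e^{c(R-s_0)}\le v(0)e^{cR}$ follows with no dependence on the number of crossings.
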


 We  keep the hypotheses and notation of Theorem \ref{T:VA}.
 For $t\in\R^+,$ consider  the function $F_t:\  X\setminus E \to\R$ defined by
 \begin{equation}\label{e:F}
  F_t(x):= \int_{\Omega} \log \|\mathcal{H}(\omega,t)\| dW_x(\omega)  \qquad\text{for}\qquad x\in X\setminus E.
 \end{equation}
By  \eqref{e:Lyapunov_exp}
 the  Lyapunov exponent $\chi(\mu)$ can be rewritten as
\begin{equation}\label{e:Lyapunov_exp_bis}
\chi(\mu):= \int_X F_1(x)d\mu(x)={1\over t}\int_X F_t(x)d\mu(x).
\end{equation} 

By Proposition 3.3 and Lemma 4.1 in \cite{NguyenVietAnh18b}, we infer that
 there is a constant $c>0$ such that
  \begin{equation}\label{e:growth_F}
   |F_1(x)|\leq  c  \lof \dist(x,E)\qquad \text{for}\qquad  x\in X\setminus E.
 \end{equation}

 %%%%%%%%%%%%%%%%%%%%%%%%%%%%%%%%%%%%%%%%%%%%%%%%%%%%%%%%%%%%%%%%%%%%%%%%%%%%%%%%%%%%%%%%%%%%%%%%%
 \subsection{Sketchy proof of the cohomological formula  of Lyapunov exponent}
 \label{SS:Lyapunov_bis_bis}
 %%%%%%%%%%%%%%%%%%%%%%%%%%%%%%%%%%%%%%%%%%%%%%%%%%%%%%%%%%%%%%%%%%%%%%%%%%%%%%%%%%%%%%%%%%%%%%%%%%

 We only  give  here the proof of  formula \eqref{e:Lyapunov_exp} under the  assumption that  the ambient metric $g_X$ on $X$
 is  equal to the Euclidean metric in a local model near  every  singular point of $\Fc.$  For the proof of  the general case,  see  \cite{NguyenVietAnh18c}.

The  following  result  relates the Lyapunov exponent $\chi(T)$ to the function $\kappa$ defined in \eqref{e:kappa}. It plays  the key role in the proof of  formula \eqref{e:Lyapunov_exp}. 
%In the  sequel, $\mu$ is the measure    given by \eqref{e:mu}.
\begin{proposition}\label{P:Lyapunov_vs_kappa}{\rm (\cite[Proposition 4.6]{NguyenVietAnh18c}) }
 Under the  hypotheses and notations  of Theorem \ref{T:VA},
 %\begin{enumerate}
 %\item Assume in addition that  $g_X$  is  equal to the Euclidean metric in a local model near  every  singular point of $\Fc.$ Then the integral
 %$\int_X|\kappa(x)|d\mu(x)$ is  bounded  and the following identity holds
 %$$
 %\chi(T)=\int_X\kappa(x)d\mu(x).
 %$$
 %\item Without  the above  additional assumption,  the last identity still holds.
 %\end{enumerate}
  the integrals
 $\int_X|\kappa(x)|d\mu(x)$  and $\int_X W(x)d\mu(x)$  are  bounded,  and the following identity holds
 $$
 \chi(\mu)=\int_X\kappa(x)d\mu(x).
 $$ 
\end{proposition}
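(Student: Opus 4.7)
My plan is to reduce the identity $\chi(\mu)=\int\kappa\,d\mu$ to a Dynkin-type formula on each leaf, then collapse the time integral using the harmonicity of $\mu$. The integrability assertions will follow from Proposition~\ref{P:laplacian_hol} together with a careful mass-distribution estimate for $T$ in the angular sectors around the hyperbolic singularities.

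More precisely, I would begin by observing that, by \eqref{e:varphi_n_diffusion}, $F_t(x)=D_t\kappa_x(0)$, and by \eqref{e:change_spec_bis} the function $g_x:=\Delta_P\kappa_x$ on $\D$ is nothing but the lift of $\kappa$ through the covering $\phi_x$; in particular $(D_s g_x)(0)=(D_s\kappa)(x)$, where on the right $D_s$ is the leafwise heat diffusion on $X$. Proposition~\ref{P:expansion_rate} shows that each of $\kappa_x$, $|d\kappa_x|_P$, $\Delta_P\kappa_x$ is weakly moderate in the sense of Definition~\ref{D:moderate_function} (the bounds grow only exponentially in $\dist_P(\xi,0)$, with constants depending on $\dist(x,E)$), so Lemma~\ref{L:D_t_Delta} applies and gives, since $\kappa_x(0)=0$,
\begin{equation*}
F_t(x)=D_t\kappa_x(0)-\kappa_x(0)=\int_0^t(D_s g_x)(0)\,ds=\int_0^t(D_s\kappa)(x)\,ds.
\end{equation*}
Under the hypotheses of Theorem~\ref{T:VA}, $\mu$ is a finite positive quasi-harmonic measure, hence harmonic by Theorem~\ref{thm_harmonic_currents_vs_measures}(3); moreover the abstract and leafwise heat diffusions coincide by Corollary~\ref{C:diffusions_comparisons_bis}, so $\int D_s\kappa\,d\mu=\int\kappa\,d\mu$ for every $s\geq 0$. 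Integrating the previous display against $\mu$ and invoking Fubini (justified once $\kappa\in L^1(\mu)$) together with \eqref{e:Lyapunov_exp_bis} then yields
\begin{equation*}
\chi(\mu)=\frac{1}{t}\int_X F_t\,d\mu=\frac{1}{t}\int_0^t\!\!\int_X (D_s\kappa)(x)\,d\mu(x)\,ds=\int_X\kappa\,d\mu.
\end{equation*}

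The main obstacle, and the place where all the real work lies, is proving $\int_X W\,d\mu<\infty$, which by the pointwise bound $|\kappa(x)|\leq cW(x)$ from Proposition~\ref{P:laplacian_hol} also delivers $\int_X|\kappa|\,d\mu<\infty$. Outside a fixed neighbourhood of $E$ the function $W$ is bounded, so the issue is local around each hyperbolic singularity $a\in E$. In the local model \eqref{e:Z} one has
\begin{equation*}
W(x)=\lof\|(z,w)\|+\frac{|z|^2|w|^2}{(|z|^2+|w|^2)^2}\bigl(\lof\|(z,w)\|\bigr)^2,
\end{equation*}
whose first summand is integrable against $\mu$ by the new integrability condition \eqref{e:necessary_integrability}. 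The second summand is delicate: the factor $(\lof\|(z,w)\|)^2$ alone is not controlled by \eqref{e:necessary_integrability}, but the weight $\frac{|z|^2|w|^2}{(|z|^2+|w|^2)^2}$ vanishes on both separatrices $\{z=0\},\{w=0\}$, and $T$ has vanishing Lelong number at $a$ by Theorem~\ref{T:Lelong} while $\mu$ gives no mass to either separatrix (they are closed invariant curves). My plan is to decompose a bidisc neighbourhood of $a$ into the dyadic annuli $\max(|z|,|w|)\sim 2^{-n}$ and further into angular wedges $\min(|z|,|w|)/\max(|z|,|w|)\sim 2^{-k}$, and to estimate the $T$-mass of each wedge by reusing the cohomological/slicing argument sketched in Subsection~\ref{SS:Existence-Lyapunov}: comparing $T\wedge[z=r]$ in a thin annulus to $T\wedge[\Cf]$ for a suitable algebraic curve $\Cf$ tangent to the leaves at high order in the radial direction. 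Against the weight $2^{-2k}n^2$ arising from the second summand of $W$, the resulting double sum over $(n,k)$ converges geometrically in $k$ and only polynomially in $n$, so summability is inherited from \eqref{e:necessary_integrability}. This furnishes the required $L^1(\mu)$-bound for $W$, and hence for $\kappa$, completing the proof.
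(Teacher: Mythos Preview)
Your derivation of the identity via the Dynkin formula (Lemma~\ref{L:D_t_Delta}) and the $D_s$-invariance of $\mu$ matches the paper's route exactly. The divergence is entirely in how you obtain $\kappa\in L^1(\mu)$ (equivalently $W\in L^1(\mu)$), and here you miss the paper's key trick and your proposed substitute has a real gap.

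The paper does \emph{not} prove $\int W\,d\mu<\infty$ by a direct mass estimate. Instead it reverses your order of argument: the pointwise Dynkin identity $F_1(x)=\int_0^1(D_s\kappa)(x)\,ds$ holds for every $x$ without any Fubini, and one already knows $F_1\in L^1(\mu)$ from \eqref{e:growth_F} and \eqref{e:necessary_integrability}. The decisive observation is that by Proposition~\ref{P:laplacian_hol} the function $\kappa$ is bounded \emph{above} on $X\setminus E$ (indeed $\kappa\leq 0$ near each singularity). Choosing $c$ with $c-\kappa\geq 0$, Tonelli plus the $D_s$-invariance of $\mu$ on nonnegative functions give
\[
\int_X(c-\kappa)\,d\mu=\int_0^1\!\int_X D_s(c-\kappa)\,d\mu\,ds=\int_X\!\int_0^1 D_s(c-\kappa)\,ds\,d\mu=c-\int_X F_1\,d\mu<\infty,
\]
so $\kappa\in L^1(\mu)$; the comparison $|\kappa|\asymp$ (second summand of $W$) from Proposition~\ref{P:laplacian_hol} together with \eqref{e:necessary_integrability} for the first summand then yields $W\in L^1(\mu)$. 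This sign-bootstrap is precisely the ``full use of the above boundedness and the upper-bound of $\kappa(x)$'' alluded to in the sketchy proof and in Remark~\ref{e:crucial-point}.

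Your alternative---a dyadic/angular decomposition with weights $2^{-2k}n^2$---does not close as stated. The angular factor $\frac{|z|^2|w|^2}{(|z|^2+|w|^2)^2}$ is $\asymp 1$ on the diagonal sector $|z|\approx|w|$ (i.e.\ $k=0$), so summing geometrically in $k$ buys nothing there: you are left needing $\sum_n n^2\,(\text{$\mu$-mass at radial scale }2^{-n}\text{ in the sector }k=0)<\infty$, whereas \eqref{e:necessary_integrability} only gives $\sum_n n\cdot(\text{total $\mu$-mass at scale }2^{-n})<\infty$. Remark~\ref{e:crucial-point} says exactly this: in the region $|z|\approx|w|$ the weight behaves like $(\lof\dist(x,E))^2$, which \eqref{e:necessary_integrability} does \emph{not} control. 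To salvage your route you would have to prove an honest angular concentration estimate for the $T$-mass near the separatrices, which amounts to a genuine sharpening of the already hard cohomological argument of Subsection~\ref{SS:Existence-Lyapunov}; the paper avoids this entirely via the sign-bootstrap above.
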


\begin{remark}\label{e:crucial-point}
 \rm
 In fact,  the $\mu$-integrability of the weight $W$  in  Proposition \ref{P:Lyapunov_vs_kappa} implies
    that of the curvature density $\kappa.$ 
 This  is a
 crucial point of the proof of  formula \eqref{e:Lyapunov_exp}.
 The novelty of this  proposition  is  that the weight $W(x)$ behaves like $(\lof \dist(x,E))^2$   when  $x=(z,w)$ satisfies $|z|\approx|w|,$  whereas      
 the new integrability condition \eqref{e:necessary_integrability} only provides the $\mu$-integrability  of  the less  singular  weight $\lof \dist(x,E).$
 However, for the sake of simplicity, we do not give  a full argument of this important point  in  the sketchy proof below.
\end{remark}

\proof[Sketchy proof]   
By Proposition \ref{P:expansion_rate}, $\kappa_x,$
$|d\kappa_x|_P$ and $\Delta_P\kappa_x$ are weakly moderate functions  on $\D.$
Consequently, applying Lemma \ref{L:D_t_Delta} yields that
\begin{equation}\label{e:D_t_Delta}
(D_1\kappa_x)(0) - \kappa_x(0)=\int_0^1 (D_s (\Delta_P \kappa_x) ) (0) ds.
\end{equation}
By \eqref{e:varphi_n_diffusion}  and \eqref{e:F}, the left-hand side of \eqref{e:D_t_Delta} is 
 equal to
$$\E_x[\log \mathcal H(\omega,1)]=F_1(x),$$  %=t\chi(\Fc)$$
which is  finite   because of \eqref{e:growth_F}.
On the other hand, by \eqref{e:change_spec} and \eqref{e:kappa}, the right-hand side  of \eqref{e:D_t_Delta} can be rewritten as
$$
\int_0^1 (D_s  \kappa) (x) ds.
$$
Consequently, integrating  both sides of   \eqref{e:D_t_Delta}  with respect to  $\mu,$ we get that
\begin{equation}\label{e:F_t_vs_kappa}
\int_X  F_1(x)d\mu(x)=\int_X\big(\int_0^1 (D_s \kappa)(x)  ds\big)d\mu(x).
\end{equation}
Since  we know by  \eqref{e:Lyapunov_exp_bis}, \eqref{e:growth_F} and \eqref{e:integrability}  that the left intergral is  
bounded and is  equal to $\chi(T),$ it follows that  right-side  double integral is also bounded. 
 
Next, we make a full  use  of   the  above boundedness and the  upper-bound of $\kappa(x)$ for $x$  close to the singularities (see Proposition \ref{P:laplacian_hol}).
Consequently, we can show that $\int_X|\kappa(x)|d\mu(x)<\infty$ and  $\int_X W(x)d\mu(x)<\infty.$  We do not  give here  the full explanation for the last  delicate argument, but refer the reader
to  \cite[Proposition 4.6]{NguyenVietAnh18c} for more details.

Hence,  we infer from this  and the fact that $\mu$ is weakly harmonic    that
$$\int_X\big(\int_0^1 (D_s \kappa)(x)  ds\big)d\mu(x)=\int_0^1\big(\int_X (D_s \kappa)(x) d\mu(x)\big)   ds= \int_0^1\big(\int_X  \kappa(x)d\mu(x)\big)ds=\int_X  \kappa(x)d\mu(x).$$
This, combined with \eqref{e:F_t_vs_kappa}
and  \ref{e:Lyapunov_exp_bis}, implies $$
 \chi(\mu)=\int_X  F_1(x)d\mu(x)     =\int_X\kappa(x)d\mu(x).
 $$
\endproof

\begin{corollary}\label{C:Lyapunov_vs_kappa}
 Under the  hypotheses and notations  of Theorem \ref{T:VA},   the following identities hold
 $$
  \kappa g_P=-c_1(\Nor(\Fc),g^\perp_X)\quad\text{on}\quad X\setminus E,\qquad\text{and}\qquad \int_X\kappa(x)d\mu(x)=-\int_X c_1(\Nor(\Fc),g^\perp_X)\wedge T.
 $$
\end{corollary}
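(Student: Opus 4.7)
The plan is to derive the pointwise identity first and then integrate against $T$. Recall from equation \eqref{e:kappa-varphi} that if $\varphi$ denotes the local weight of the singular Hermitian metric $g^\perp_X$ on $\Nor(\Fc)$ with respect to a chosen holomorphic frame $e_L$ over a flow box $\U\subset X\setminus E$, then $\kappa(x)=-(\Delta_P\varphi)(x)$ for every $x\in\U\setminus E$. On the other hand, by the very definition of the curvature current, $c_1(\Nor(\Fc),g^\perp_X)|_{\U}=\ddc\varphi$.

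The first identity then follows from the leafwise identity \eqref{e:Laplacian_disc}--\eqref{e:Delta_commutation}, which gives $(\Delta_P\varphi)\cdot g_P=\ddc\varphi|_{L_x}$ (in the normalization used throughout the paper). Consequently,
\begin{equation*}
\kappa(x)\cdot g_P(x)=-(\Delta_P\varphi)(x)\cdot g_P(x)=-\ddc\varphi|_{L_x}=-c_1(\Nor(\Fc),g^\perp_X)|_{L_x}
\end{equation*}
as leafwise $(1,1)$-forms, for every $x\in \U\setminus E$. Since $\kappa$ is intrinsically defined and independent of the frame $e_L$, this identity patches over flow boxes and yields the first assertion on all of $X\setminus E$.

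For the integral identity I would use that $\mu=T\wedge g_P$ on $X\setminus E$ and $\mu(E)=0$, see \eqref{e:mu}, together with the $\mu$-integrability of $\kappa$ furnished by Proposition \ref{P:Lyapunov_vs_kappa}. Writing the Lyapunov integral in terms of $T$ and plugging in the first identity,
\begin{equation*}
\int_X\kappa(x)\,d\mu(x)=\int_{X\setminus E}\kappa(x)\,T\wedge g_P=\int_{X\setminus E}T\wedge(\kappa\,g_P)=-\int_{X\setminus E}T\wedge c_1(\Nor(\Fc),g^\perp_X).
\end{equation*}
Since $T$ gives no mass to $E$, the integral over $X\setminus E$ coincides with the integral over $X$, which is the desired identity.

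The main obstacle will be making rigorous sense of the last integral, since $g^\perp_X$ degenerates near the singular set $E$ (the normal direction becomes tangential as the leaves approach $E$), so that $c_1(\Nor(\Fc),g^\perp_X)=\ddc\varphi$ is a priori only a current on $X\setminus E$ with potentially wild mass behavior near $E$. The key point is that, via the first identity, the pairing $T\wedge c_1(\Nor(\Fc),g^\perp_X)$ is \emph{tautologically} equal to the signed measure $-\kappa\,\mu$; the $L^1(\mu)$-control on $\kappa$ and on the weight $W$ guaranteed by Proposition \ref{P:Lyapunov_vs_kappa} (see the discussion in Remark \ref{e:crucial-point}) ensure that this measure has finite total variation. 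This interpretation bypasses any need to analyze $c_1(\Nor(\Fc),g^\perp_X)$ globally as a bona fide current across $E$, and makes both sides of the second identity simultaneously meaningful and equal.
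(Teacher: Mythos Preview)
Your proof is correct and follows essentially the same route as the paper: derive the pointwise identity on flow boxes from \eqref{e:kappa-varphi} together with the leafwise relation $(\Delta_P\varphi)g_P=\ddc\varphi|_{L_x}$, then integrate against $T$ using $\mu=T\wedge g_P$ and the $L^1(\mu)$-bound on $\kappa$ from Proposition~\ref{P:Lyapunov_vs_kappa}. Your added remarks on how the integral $\int_X c_1(\Nor(\Fc),g^\perp_X)\wedge T$ is to be interpreted near $E$ are a welcome clarification but do not alter the argument.
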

\proof
 By \eqref{e:kappa-varphi}  we obtain that
 $$
 \kappa g_P=-(\Delta_P\varphi) g_P= -\ddc \varphi =-c_1(\Nor(\Fc),g^\perp_X)\qquad\text{on}\qquad \U.
 $$
 The first identity follows.  
Since 
 $\int_X|\kappa(x)|d\mu(x)<\infty$ by Proposition \ref{P:Lyapunov_vs_kappa},
  Integrating both sides of the first identity  over $X\setminus  E$ gives the second identity.  
\endproof

\proof[Proof of the first identity of   Theorem \ref{T:Coho_exponent} in a special case]
We  only consider  the special case where the ambient metric $g_X$ 
is  equal to the Euclidean metric in a local model near  every  singular point of $\Fc.$
Fix  a  smooth Hermitian metric $g_0$ on the normal bundle $\Nor(\Fc)$ of $\Fc.$
%The  ambient metric $g_X$   on $TX$  induces the transverse  Hermitian  metric $g_X^\perp$    on $N_\Fc:=TX/T_\Fc.$
So  there is  a global weight function $f:\ X\to [-\infty,\infty)$ such that $g_X^\perp=g_0\exp{(-2f)}.$
We know that the weight function $f$ is   smooth outside $E.$ Now  we investigate  the behavior of $f$ near a singular point $a\in E.$
 
Consider  the local holomorphic  section $e_L$ given by  $(z,w)\mapsto {\partial\over\partial z}$ of $\Tan(X)$  over  $\U_a\simeq\D^2.$  
This  section induces  a holomorphic  section $\tilde e_L(x)=e_L(x)/\Tan(\Fc)_x$ of $\Nor(\Fc)$ over $\U_a.$

Our special assumption on   the ambient  metric  $g_X$ implies that on $\U_a,$ $g_X$  coincides with the Euclidean  metric. Therefore,
we  have, for $x=(z,w)\in\D\times (\D\setminus\{0\}),$
$$
\exp(-\varphi(x))=|\tilde e_L(x)|_{g^\perp_X}={1\over\sqrt{|z|^2+|\lambda w|^2} }|\lambda w|.
$$
Hence,   for $x=(z,w)\in\D^2\setminus\{(0,0)\},$
\begin{equation}\label{e:curvature_Nor}c_1(\Nor(\Fc), g^\perp_X)(x)=\ddc\varphi(x)=\ddczw\log{\sqrt{|z|^2+|\lambda w|^2}}.
\end{equation}
 Moreover,  in the local model with coordinates $(z,w)$ associated to the singular point
$E\ni a\simeq (0,0)\in\D^2,$ it follows  from \eqref{e:curvature_Nor} that
\begin{equation*}
c_1(\Nor(\Fc),g_0)+\ddc f= c_1(\Nor(\Fc), g^\perp_X)(x)=\ddc\varphi(x)={1\over 2}\ddczw\log{(|z|^2+|\lambda w|^2)}.
\end{equation*}
Consequently,   for  $x=(z,w)\in \U_a\simeq \D^2,$
\begin{equation}\label{e:f_smoothness}f(x)={1\over 2}\log{(|z|^2+|\lambda w|^2 ) }+\text{a smooth function in}\ x.
\end{equation}
Using  a finite partition of the unity on $X$ and  using \eqref{e:f_smoothness}),  we can construct  a  family  of smooth  functions  $(f_\epsilon)_{0<\epsilon\ll 1}$ on $X$  such that
$f_\epsilon$ converges uniformly to $f$ in $\Cc^2$-norm on  each regular flow box as $\epsilon\to 0$ and that
 in a local model with coordinates $(z,w)$ associated to each singular point
$a\in E,$  
\begin{equation}\label{e:f_epsilon}f_\epsilon-{1\over2}\log{(|z|^2+|\lambda w|^2  +\epsilon^2)}=f-{1\over2}\log{(|z|^2+|\lambda w|^2 )}\qquad\text{on}\qquad \D^2.
\end{equation}
For every $0<\epsilon\ll 1$ we endow  $\Nor(\Fc)$  with  the  metric $g_\epsilon:=g_0\exp{(-2f_\epsilon)}.$
Since $g_\epsilon$ is  smooth and the current $T$ is  $\ddc$-closed, it follows that
\begin{equation}\label{e:cohomo_g_eps}
c_1(\Nor(\Fc))\smile \{T\}=\int_Xc_1(\Nor(\Fc),g_\epsilon)\wedge T.
\end{equation}
 Let $\kappa_\epsilon:\ X\setminus E\to\R$ be the function defined by 
 \begin{equation}\label{e:kappa_epsilon}
 -c_1(\Nor(\Fc),g_\epsilon)(x)|_{ L_x}= \kappa_\epsilon(x)g_P(x).
\end{equation}
 This, combined with \eqref{e:mu}, implies that
\begin{equation}\label{e:kappa_epsilon_bis}
 -c_1(\Nor(\Fc),g_\epsilon)\wedge T= \kappa_\epsilon d\mu.
\end{equation}
Since $f_\epsilon$ converges uniformly to $f$ in $\Cc^2$-norm on    compact subsets of $X\setminus E$  as $\epsilon\to 0,$ 
it follows that  $\kappa_\epsilon$ converge pointwise to $\kappa$ $\mu$-almost everywhere.
Hence, we  get that
\begin{equation*}
-\int_{X\setminus(\bigcup_{a\in E}  \U_a)}c_1(\Nor(\Fc),g_\epsilon)\wedge T=\int_{X\setminus(\bigcup_{a\in E}  \U_a)}\kappa_\epsilon(x)d\mu(x)
\to \int_{X\setminus(\bigcup_{a\in E}  \U_a)}\kappa(x) d\mu(x)\quad\text{as}\quad \epsilon\to 0.
\end{equation*}
We will  show that  on each  singular flow box $\U_a\simeq \D^2,$
\begin{equation}\label{e:inside}
-\int_{ \U_a}c_1(\Nor(\Fc),g_\epsilon)\wedge T\to \int_{  \U_a} \kappa(x)d\mu(x)\qquad\text{as}\qquad \epsilon\to 0.
\end{equation}
Taking   \eqref{e:inside} for granted, we combine it with  the previous limit and get that
$$
-\int_{X}c_1(\Nor(\Fc),g_\epsilon)\wedge T\to \int_X\kappa(x)d\mu(x)=-\int_{X}c_1(\Nor(\Fc),g^\perp_X)\wedge T  \qquad\text{as}\qquad \epsilon\to 0,
$$
where the last equality follows from  Corollary \ref{C:Lyapunov_vs_kappa}.
We deduce from this and \eqref{e:cohomo_g_eps} that
$$
-c_1(\Nor(\Fc))\smile \{T\}=\int_X\kappa(x)d\mu(x).
$$
By  Proposition \ref{P:Lyapunov_vs_kappa}, the right hand side is $\chi(\mu).$
Hence, the last   equality  implies the desired identity of the theorem.

Now  it remains to prove \eqref{e:inside}.
We need the  following result which gives a precise behaviour of $\kappa_\epsilon$ near a singular point $a$ 
 using the local model  $(\D^2,\Lc,\{0\})$ introduced in  Subsection   \ref{SS:Existence-Lyapunov}. 
\begin{lemma}\label{L:laplacian_hol_bis}  
There is a constant $c>1$  such that for every $0<\epsilon\ll 1$ and 
for every  $x=(z,w)\in({1\over2}\D)^2,$  we have that  
\begin{multline*}
-c\big(  {|z|^2|w|^2\over (|z|^2+|w|^2+\epsilon^2)^2 }+  (|z|^2+|w|^2)\big)  (\lof {\|(z,w)\|})^2\leq  \kappa_\epsilon(x)\\
\leq \big(-c^{-1} {|z|^2|w|^2\over (|z|^2+|w|^2+\epsilon^2)^2} +c(|z|^2+|w|^2)\big)  (\lof {\|(z,w)\|})^2.
\end{multline*}  
\end{lemma}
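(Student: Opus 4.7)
The plan is to derive an explicit closed-form expression for $\kappa_\epsilon(x)$ in the singular flow box $\U_a\simeq\D^2$ and then extract the stated bounds from Proposition~\ref{P:Poincare}. The first step is to simplify the curvature. Combining \eqref{e:f_epsilon}, \eqref{e:f_smoothness} and \eqref{e:curvature_Nor}, the smooth corrections cancel symmetrically between $f_\epsilon$ and $f$, yielding the clean identity
\[
c_1(\Nor(\Fc),g_\epsilon) \;=\; \tfrac{1}{2}\,\ddc\log\bigl(|z|^2+|\lambda w|^2+\epsilon^2\bigr)
\qquad \text{on } \U_a,
\]
with no residual smooth remainder to track.

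Next, for $x=(z,w)\in ((1/2)\D)^2\setminus\{(0,0)\}$, I would parameterize the leaf $L_x$ by $\psi_x(\zeta)=(ze^{i\zeta},we^{i\lambda\zeta})$ as in \eqref{e:leaf_equation}, set $h(\zeta):=|ze^{i\zeta}|^2+|\lambda we^{i\lambda\zeta}|^2+\epsilon^2$, and compute directly at $\zeta=0$:
\[
\partial_\zeta\partial_{\bar\zeta}\log h(0) \;=\; \frac{|z|^2|\lambda w|^2\,|1-\lambda|^2+\epsilon^2\bigl(|z|^2+|\lambda|^2|\lambda w|^2\bigr)}{\bigl(|z|^2+|\lambda w|^2+\epsilon^2\bigr)^2}.
\]
This is the $\epsilon$-regularized version of the calculation that underlies Proposition~\ref{P:laplacian_hol}, which it recovers in the limit $\epsilon\to 0$.

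To convert to $\kappa_\epsilon$, I would use the running hypothesis that $g_X$ is Euclidean on the local model to obtain $\psi_x^*g_P|_0=\eta(x)^{-2}(|z|^2+|\lambda w|^2)\,id\zeta\wedge d\bar\zeta$, and then compare $(1,1)$-forms on both sides of \eqref{e:kappa_epsilon} at $x$ to read off the scalar factor
\[
\kappa_\epsilon(x) \;=\; -\frac{\eta(x)^2}{2\pi\,(|z|^2+|\lambda w|^2)}\cdot\frac{|z|^2|\lambda w|^2|1-\lambda|^2+\epsilon^2\bigl(|z|^2+|\lambda|^2|\lambda w|^2\bigr)}{\bigl(|z|^2+|\lambda w|^2+\epsilon^2\bigr)^2}.
\]
The estimates now follow from the two-sided bound $\eta(x)^2\asymp (|z|^2+|w|^2)(\lof\|(z,w)\|)^2$ of Proposition~\ref{P:Poincare}, together with $|\lambda|,|1-\lambda|\asymp 1$ since $\lambda\in\C\setminus\R$ is fixed. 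The ratio $\eta(x)^2/(|z|^2+|\lambda w|^2)\asymp (\lof\|(z,w)\|)^2$, and the dominant term $|z|^2|w|^2/(|z|^2+|w|^2+\epsilon^2)^2$ in the numerator produces the principal contribution claimed in the lemma.

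The main obstacle is handling the additive $\epsilon^2$-regularization term in the numerator and showing it can be absorbed into the allowed envelope $(|z|^2+|w|^2)(\lof\|(z,w)\|)^2$ uniformly in $\epsilon$. This requires a careful case analysis separating the regimes $\epsilon^2\lesssim |z|^2+|w|^2$ and $\epsilon^2\gtrsim |z|^2+|w|^2$, exploiting in each case a cancellation between the factor $\epsilon^2/(|z|^2+|w|^2+\epsilon^2)$ and the $(|z|^2+|w|^2)$ already built into $\eta^2$. A parallel but lighter argument handles the harmless discrepancy between $|z|^2+|\lambda w|^2$ and $|z|^2+|w|^2$ in the denominators, completing both the upper and lower bounds.
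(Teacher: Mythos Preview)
Your overall strategy matches the paper's: pull back $c_1(\Nor(\Fc),g_\epsilon)$ to the leaf via $\psi_x$, compute at $\zeta=0$, and convert the resulting $id\zeta\wedge d\bar\zeta$-coefficient into $\kappa_\epsilon$ through Proposition~\ref{P:Poincare}. The paper, however, does not invoke your cancellation of the smooth parts; it keeps the residual smooth $(1,1)$-form (equal to $c_1(\Nor(\Fc),g_0)+\ddc s$) and simply bounds its pull-back by $O(|z|^2+|w|^2)\,id\zeta\wedge d\bar\zeta$, using $|d\psi_x(0)|\asymp\|(z,w)\|$. That remainder is the paper's source of the $(|z|^2+|w|^2)(\lof\|(z,w)\|)^2$ piece of the envelope. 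For the principal term, the paper's ``straightforward computation'' records only
\[
-\frac{|\lambda-1|^2}{4\pi}\,\frac{|z|^2|w|^2}{(|z|^2+|\lambda w|^2+\epsilon^2)^2}\,id\zeta\wedge d\bar\zeta,
\]
omitting precisely the $\epsilon^2\bigl(|z|^2+|\lambda|^2|\lambda w|^2\bigr)$ contribution in the numerator that you (correctly) retain.

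The genuine gap is in your last step. The extra term you found contributes to $|\kappa_\epsilon|$ an amount of order
\[
(\lof\|(z,w)\|)^2\cdot\frac{\epsilon^2\,(|z|^2+|w|^2)}{(|z|^2+|w|^2+\epsilon^2)^2},
\]
and you propose to absorb this into $(|z|^2+|w|^2)(\lof\|(z,w)\|)^2$ uniformly in $\epsilon$. That cannot be done: take $z=0$ and $0<|w|\ll\epsilon$; then the extra term is $\asymp(\lof|w|)^2\,|w|^2/\epsilon^2$ while the target is $(\lof|w|)^2\,|w|^2$, so the ratio is $1/\epsilon^2$, unbounded as $\epsilon\to0$. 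Your ``cancellation between $\epsilon^2/(|z|^2+|w|^2+\epsilon^2)$ and the $(|z|^2+|w|^2)$ built into $\eta^2$'' buys only one factor of $|z|^2+|w|^2$, not the two needed. So the case analysis you sketch cannot close the lower bound in the stated form. The paper sidesteps this by not producing the $\epsilon^2$-numerator term at all; your more careful computation has in fact exposed an issue that the paper's proof glosses over, but the fix you outline does not resolve it.
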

\proof
Since  $g_\epsilon=g_0\exp{(-2f_\epsilon)}$ we get  that
$$
c_1(\Nor(\Fc),g_\epsilon)=c_1(\Nor(\Fc),g_0)+\ddc f_\epsilon=\ddc f_\epsilon+\text{a smooth $(1,1)$-form independent of $\epsilon$}.
$$
This,  together  with  \eqref{e:f_smoothness}, \eqref{e:f_epsilon} and \eqref{e:kappa_epsilon}, imply that
\begin{equation*}
 \kappa_\epsilon(x)g_P(x)= -{1\over 2} \ddc\log{(|z|^2+|\lambda w|^2+\epsilon^2)}(x)|_{ L_x}+\text{a smooth $(1,1)$-form independent of $\epsilon$}.
\end{equation*}
 Using the parametrization \eqref{e:leaf_equation} the pull-back of the first term of  the right-hand side  by $\psi_x$ is 
\begin{equation*}
 -{1\over 2}\ddc\log{(|ze^{i\zeta}|^2+|\lambda we^{i\lambda\zeta}|^2+\epsilon^2)}(0),
\end{equation*}
whereas  the pull-back of the second term of  the right-hand side  by $\psi_x$ is   $O(|z|^2+|w|^2) d\zeta\wedge d\bar\zeta.$ 
A straightforward computation  shows that the former  expression is  equal to
\begin{equation*}
-{|\lambda-1|^2\over 4\pi}  {|z|^2|w|^2\over (|z|^2+|\lambda w|^2+\epsilon^2)^2} id\zeta\wedge d\bar\zeta.
\end{equation*}  
On the other hand,  by \eqref{e:leaf_equation} we get that
$$
|d\psi_x(0)|\approx \|(z,w)\|=\dist(x,E).
$$
Using the last two estimates and applying Proposition \ref{P:Poincare}, the result follows.
\endproof

We resume the proof of  \eqref{e:inside}. By Proposition \ref{P:laplacian_hol} and Lemma \ref{L:laplacian_hol_bis}, there is  a  constant $c>1$ such that for $(z,w)\in\U_a\simeq\D^2$ that
$$
|\kappa_\epsilon(z,w)|\leq c \big({|z|^2|w|^2\over (|z|^2+|w|^2+\epsilon^2)^2} +(|z|^2+|w|^2)\big) (\lof {\|(z,w)\|})^2\leq c^2 |\kappa(z,w)|+c^2.$$
Recall from Proposition \ref{P:Lyapunov_vs_kappa} that  $\int_{\U_a}|\kappa(x)|d\mu(x)<\infty.$ 
On the other hand,  $\kappa_\epsilon$ converge pointwise to $\kappa$ $\mu$-almost everywhere as $\epsilon\to 0.$
Consequently, by Lebesgue dominated convergence, 
$$
\lim\limits_{\epsilon\to 0}\int_{\U_a} \kappa_\epsilon d\mu  =\int_{\U_a} \kappa d\mu.
$$
This and  \eqref{e:kappa_epsilon} imply   \eqref{e:inside}.
The proof of  formula  \eqref{e:Lyapunov_exp}  is thereby completed in the special case where the ambient metric $g_X$ 
is  equal to the Euclidean metric in a local model near  every  singular point of $\Fc.$
\endproof
%%%%%%%%%%%%%%%%%%%%%%%%%%%%%%%%%%%%%%%%%%%%%%%%%%%%%%%%%%%%%%%%%%%%%%%%%%%%%%%%%%%%%%%
\subsection{Geometric   characterization of Lyapunov exponents}
%%%%%%%%%%%%%%%%%%%%%%%%%%%%%%%%%%%%%%%%%%%%%%%%%%%%%%%%%%%%%%%%%%%%%%%%%%%%%%%%%%%%%%%%

To find  a  geometric  interpretation  of these  characteristic  quantities,
 our idea  consists    in replacing the Brownian trajectories by the more appealing objects, namely, the {\it     unit-speed   geodesic rays.}  These paths   are  parameterized  by  their  length
 (with respect to the leafwise Poincar\'e metric $g_P$).   Therefore, we characterize 
 the Lyapunov exponents  in terms of  the  expansion rates of $\mathcal A$ along the geodesic rays.
 %
%The   first  motivation  comes from  some natural  foliations  obtained  as the suspension of a representation.
% The  theory
%of Teichm\"uler geodesic flows provides numerous examples of such  foliations.
%Before discussing this  issue in more details 
 
 Let $\Fc=(X,\Lc,E)$ be  a   Riemann surface lamination with singularities.
 Recall from (\ref{e:covering_map}) that
 $(\phi_x)_{x\in \Hyp(\Fc)}$ is  a given family of  universal covering maps $\phi_x:\ \D\to L_x$  with $\phi_x(0)=x.$
 For every $x\in \Hyp(\Fc),$  
  the  set of all unit-speed geodesic rays $\omega:\ [0,\infty)\to L_x$ starting at $x$ (that is,    $\omega(0)=x$),     can  be described   by  the family  $(\gamma_{x,\theta})_ {\theta\in [0,1)},$ where
 \begin{equation}\label{eq_geodesics}
 \gamma_{x,\theta}(R):=  \phi_x(e^{2\pi i\theta}r_R),\qquad  R\in\R^+,
 \end{equation}
 and  $r_R$  is uniquely determined  by the  equation $r_R\D=\D_R $  (see \eqref{e:radii_conversion}).
 The path $\gamma_{x,\theta}$ is called the {\it  unit-speed geodesic ray}   at $x$ with the leaf-direction $\theta.$
 Unless  otherwise  specified, the {\it space of leaf-directions} $[0,1)$ is  endowed with the Lebesgue measure. 
% So {\it  almost every leaf-direction  $\theta$}  is  taken with respect   to this measure.
 This space   is  visibly identified, via the map $[0,1)\ni\theta\mapsto e^{2\pi i\theta},$ with the unit circle $\partial \D$ endowed with the normalized rotation measure.

 Set $\Omega:=\Omega(\Fc)$  as usual.  We  introduce the following   notions of expansion  rates for  cocycles.
 
  \begin{definition}\label{D:expansion_rate}\rm
 Let $\mathcal A:\  \Omega\times\R^+\to  \GL(d,\K)$ be a $\K$-valued   cocycle and  $R>0$  a  time and $x$  a point  in $\Hyp(\Fc).$ 

  The {\it expansion rate}  of $\mathcal A$  at $x$ in the leaf-direction $\theta$  at time  $R$ along the vector $v\in \K^d\setminus\{0\}$ 
 is the number
 $$\Ec(x,\theta,v,R):={1\over  R}  \log {\| \mathcal A(\gamma_{x,\theta},R)v   \|\over  \| v\|}.$$ 
 
 The {\it  expansion rate}  of $\mathcal A$   at  $x$ in the leaf-direction $\theta$ 
  at time  $R$
 is
 \begin{equation*}\begin{split}
 \Ec(x,\theta,R):= \sup\limits_{v \in \K^d\setminus\{0\}} \Ec(x,\theta,v,R)&=\sup_{v \in \K^d\setminus\{0\} } {1\over  R}  \log {\| \mathcal A(\gamma_{x,\theta},R)v   \|\over  \| v\|}\\
 &=
 {1\over  R}  \log {\| \mathcal A(\gamma_{x,\theta},R)  \|}.
 \end{split}
 \end{equation*}

 Given a $\K$-vector subspace   $\{0\}\not=H\subset \K^d,$ the {\it expansion rate}  of $\mathcal A$  at  $x$ at time  $R$
 along the vector space  $H$  
 is the interval $\Ec(x,H,R):=[a,b],$ where
 $$
 a:=  \inf_{v \in H\setminus \{0\}} \int_0^1   \Big ({1\over  R}  \log {\| \mathcal A(\gamma_{x,\theta},R)v   \|\over  \| v\|}\Big) d\theta\ \text{and}\
 b:=  \sup_{v \in H\setminus \{0\}} \int_0^1   \Big ({1\over  R}  \log {\| \mathcal A(\gamma_{x,\theta},R)v   \|\over  \| v\|}\Big) d\theta.
 $$
 \end{definition} 
  Notice  that    $ 
 \Ec(x,\theta,v,R) $ (resp. $
 \Ec(x,\theta,R)$)  expresses  geometrically the  expansion rate (resp. the maximal expansion rate) of the  cocycle 
when one travels along the unit-speed geodesic ray $\gamma_{x,\theta}$ up to time $R.$  
Moreover, the integral $\int_0^1 d\theta$ means that we  take  the average over all  possible directions $\theta.$
  On the other hand, $ 
 \Ec(x,H,R) $   represents the  smallest closed interval which  contains all numbers
$$\int_0^1   \Big ({1\over  R}  \log {\| \mathcal A(\gamma_{x,\theta},R)v   \|\over  \| v\|}\Big) d\theta,
$$
where  $v$ ranges over $H\setminus \{0\}.$
Note that  the above integral is the average of  the expansion rate  of the  cocycle 
when one travels along the unit-speed geodesic rays along the vector-direction $v\in H$   from $x$  to the  Poincar\'e circle  with radius $R$  and  center $x$ spanned  on $L_x.$ 

We say  that a sequence of  intervals $[a(R),b(R)]\subset \R$ indexed by $R\in\R^+$ converges to  a number $\chi\in \R$ and write $\lim_{R\to\infty} [a(R),b(R)]=\chi,$ if   
  $\lim_{R\to\infty} a(R)= \lim_{R\to\infty} b(R)=\chi.$
  
  Now  we are able  to state the main result of this  subsection.
  
  \begin{theorem} \label{T:Geometric_Lyapunov} {\rm (Nguyen \cite{NguyenVietAnh17b}).}
   Let $\Fc=(X,\Lc)$ be  a compact smooth hyperbolic Riemann surface lamination and $\mu$  a  harmonic measure
   which is also ergodic. 
Consider  a   smooth  cocycle
$\mathcal{A}:\ \Omega\times \R^+ \to  \GL(d,\K)  .    $ 
Then there is a leafwise saturated Borel set $Y$  of total $\mu$-measure 
which satisfies the conclusion of  Theorem \ref{T:VA_general}
and  the following additional geometric  properties: 
 \begin{itemize}
\item[(i)] For each $1\leq i\leq m$ and  for each $x\in Y,$ there is a  set $G_x\subset [0,1)$ of total Lebesgue measure 
 such that     for each  $v\in V_i(x)\setminus V_{i+1}(x),$ 
\begin{equation*}%\label{e:Lyapunov_geometric}
 \lim_{R\to\infty}\Ec(x,\theta,v,R)= \chi_i,\qquad \theta\in G_x.
\end{equation*}
Moreover, the maximal Lyapunov exponent $\chi_1$ satisfies
 \begin{equation*}%\label{e:Lyapunov_geometric_max}
 \lim_{R\to\infty}\Ec(x,\theta,R)= \chi_1,\qquad \theta\in G_x.
\end{equation*}  
\item[(ii)]  For  each  $1\leq i\leq m$ and each $x\in Y,$ 
\begin{equation*}%\label{e:Lyapunov_geometric_hard}
\lim_{R\to\infty}\Ec(x,H_i(x),R)=\chi_i.
\end{equation*} 
  \end{itemize}
Here
$\K^d=\oplus_{i=1}^m H_i(x),$ $x\in Y,$ is  the Oseledec  decomposition given by Theorem \ref{T:VA_general}   and      $\chi_m<\chi_{m-1}<\cdots
<\chi_2<\chi_1$ are the  corresponding     Lyapunov exponents.
\end{theorem}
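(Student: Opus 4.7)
The plan is to lift the problem to the unit tangent bundle of the lamination and apply an ergodic theorem for the foliated geodesic flow. More precisely, let
\begin{equation*}
E^1 := \{(x,\theta) : x \in \Hyp(\Fc),\ \theta \in [0,1)\}
\end{equation*}
and let $\{g_R\}_{R \in \R^+}$ be the foliated geodesic flow defined by $g_R(x,\theta) := (\gamma_{x,\theta}(R), \theta_R)$, where $\theta_R$ is the leaf-direction at $\gamma_{x,\theta}(R)$ obtained by parallel transport of the initial unit tangent vector along $\gamma_{x,\theta}$. Since the lamination is compact, smooth, and hyperbolic, $E^1$ is a compact metric space and $g_R$ is a continuous flow. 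The first step is to construct a $g_R$-invariant probability measure $\hat\mu$ on $E^1$ that projects to $\mu$ under $E^1 \to X$ and whose disintegration on each fiber $\{x\} \times [0,1)$ is the normalised Lebesgue measure $d\theta$. Following the framework of Garnett, Candel, and Deroin, such a $\hat\mu$ can be built from the directed positive harmonic current $T$ associated with $\mu$ via $T \wedge g_P$; the invariance is a reformulation of the fact that Poincar\'e angular measure is preserved by the geodesic flow on each hyperbolic leaf. Ergodicity of $\mu$ combined with the transitivity of the rotation action on each circular fiber then yields ergodicity of $\hat\mu$ with respect to $g_R$.

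\textbf{Step 2.} Once $\hat\mu$ is constructed, pull back the cocycle $\mathcal A$ to a cocycle over the flow $\{g_R\}$ by setting $\widehat{\mathcal A}(x,\theta,R) := \mathcal A(\gamma_{x,\theta}, R)$; the cocycle identity is inherited from Definition \ref{D:cocycle}. Smoothness of $\mathcal A$ and compactness of $E^1$ give the integrability $\log^+ \|\widehat{\mathcal A}(\cdot,\cdot,1)\| \in L^\infty(\hat\mu)$. Apply the Oseledec multiplicative ergodic theorem for flows (as in Ruelle's version, which is a continuous-time analogue of Theorem \ref{T:VA_general}) to produce flow-Lyapunov exponents $\chi^{\rm geo}_m < \cdots < \chi^{\rm geo}_1$ and a measurable Oseledec filtration $V^{\rm geo}_i(x,\theta)$ defined $\hat\mu$-almost everywhere, so that for $v \in V^{\rm geo}_i \setminus V^{\rm geo}_{i+1}$,
\begin{equation*}
\lim_{R \to \infty} \Ec(x,\theta,v,R) = \chi^{\rm geo}_i \quad\text{and}\quad \lim_{R \to \infty} \Ec(x,\theta,R) = \chi^{\rm geo}_1.
\end{equation*}

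\textbf{Step 3.} The crucial identification step is to show $\chi^{\rm geo}_i = \chi_i$ and $V^{\rm geo}_i(x,\theta) = V_i(x)$ (independent of $\theta$), matching the Brownian spectrum of Theorem \ref{T:VA_general}. For the filtration: since $V_i(x)$ is intrinsically determined by the Brownian asymptotics \eqref{e:Lyapunov} and is holonomy-invariant along leaves, while $V^{\rm geo}_i(x,\theta)$ is $g_R$-invariant, a comparison argument using the fact that for $\mu$-almost every $x$ the Brownian Wiener measure $W_x$ and the Lebesgue measure $d\theta$ both see the same leafwise asymptotic exponents (through the Fatou-type convergence of Brownian motion on $\D$ to $\partial \D$ with uniform distribution) forces both filtrations to coincide. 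For the exponents: integrating the subadditive limit $\frac{1}{R}F_R$ against $\hat\mu$ and using $g_R$-invariance together with Fubini over $d\mu \otimes d\theta$ gives $\chi^{\rm geo}_1 = \chi_1$, and an inductive argument through the wedge-product cocycles recovers all $\chi_i$. Finally, Fubini applied to $d\hat\mu = d\mu \otimes d\theta$ extracts the pointwise statement: there is a $\mu$-full-measure set $Y$ such that for each $x \in Y$ the fiberwise set $G_x \subset [0,1)$ of good directions is of full Lebesgue measure, which gives (i); statement (ii) then follows from (i) by dominated convergence on the compact unit sphere of $H_i(x)$, using the uniform integrability estimate $\log^+\|\widehat{\mathcal A}(\cdot,\cdot,R)\| \leq R \cdot C$ on $E^1$.

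\textbf{Main obstacle.} The hardest part will be the identification $V^{\rm geo}_i(x,\theta) = V_i(x)$ together with $\chi^{\rm geo}_i = \chi_i$. The geodesic flow and Brownian motion produce a priori different asymptotic decompositions of $\K^d$, and their coincidence is not formal: it requires precisely the kind of comparison between the heat-kernel average (which, via \eqref{e:heat_kernel}, is a rapidly decreasing radial integral) and the angular Lebesgue average of radial limits. Making this rigorous amounts to a Kaimanovich-type shadowing argument relating Wiener measure with Lebesgue measure on directions, and is where the smoothness hypothesis on the cocycle and the compactness of the lamination are crucially used.
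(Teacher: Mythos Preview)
Your Step 1 contains a genuine error: the measure $\hat\mu$ you construct, with disintegration $d\theta$ on each circular fiber, is \emph{not} invariant under the foliated geodesic flow unless the harmonic measure $\mu$ happens to be holonomy-invariant (i.e.\ comes from a directed positive \emph{closed} current). On a single leaf, the geodesic flow on $T^1\D$ preserves the Liouville measure $g_P\times d\theta$, but in the local decomposition $T=\int h_a[\B\times\{a\}]\,d\nu(a)$ of Proposition~\ref{P:decomposition} your $\hat\mu$ restricts to $h_a\,g_P\times d\theta$ on each plaque; this is $g_R$-invariant only when $h_a$ is constant. The correct flow-invariant lift of a harmonic measure uses the Poisson representation of $h_a$ and involves a twisted measure on $\D\times\partial\D$ rather than $d\theta$ on the circle, so your Oseledec theorem for the geodesic flow is applied to the wrong measure. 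Moreover, even with the right $\hat\mu$, your ergodicity argument (``ergodicity of $\mu$ plus transitivity of rotations'') does not give ergodicity of the flow; the rotation action and the flow do not commute.

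Your passage from (i) to (ii) is also insufficient. Part (ii) concerns $\inf_{v}$ and $\sup_{v}$ over the unit sphere of $H_i(x)$ of the $\theta$-averaged expansion rate, and pointwise convergence in $v$ (which is all that (i) plus dominated convergence in $\theta$ yields) does not imply that these extrema converge, since there is no uniformity in $v$. The paper proceeds quite differently: for (i) it does not build a geodesic-flow Oseledec theory at all, but uses quantitative shadowing of Brownian paths by geodesic rays (the boundary-behavior results of Lyons and Cranston on hyperbolic Brownian motion) to transfer the conclusion of Theorem~\ref{T:VA_general} directly from $W_x$-typical Brownian paths to Lebesgue-typical geodesic directions; for (ii) it invokes a Ledrappier-type characterization of the Lyapunov spectrum in terms of heat diffusions of the cocycle, and then a separate geometric estimate (related to the proof of Theorem~\ref{T:geometric_ergodic}) comparing those diffusions with the geodesic expansion rates. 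Your identified ``main obstacle'' is indeed the heart of the matter, but the paper resolves it by shadowing rather than by a second application of Oseledec.
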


%The  above theorem demonstrate  the  equality  between  two
%apparently different quantities. On the one side  we  have  the  Lyapunov exponents, which are of dynamical nature.
%The  quantity  on the  other side  is the limit of expansion rates  as the hyperbolic time tends to infinity
%which are of geometric flavor.
 
 Theorem  \ref{T:Geometric_Lyapunov} gives a geometric meaning to  
 the stochastic formulas (\ref{e:Lyapunov})--(\ref{e:Lyapunov_max}).
 %It is  worthy  noting that in this theorem,  $X$ is  not necessarily compact. 

 Let  $\Fc=(X,\Lc,E)$ be  a transversally smooth (resp.   holomorphic)  singular  foliation by    Riemann surfaces
in a Riemannian manifold (resp. Hermitian complex  manifold) $X.$
Consider  a leafwise saturated    set $M \subset X\setminus E$  which is compact in $X$ whose  leaves are all hyperbolic.  
So the  restriction of the foliation  $(X\setminus E,\Lc)$ to $M$ gives an inherited compact
  smooth/transversally holomorphic  hyperbolic Riemann lamination $(M,\Lc|_M).$ Moreover,
the   holonomy  cocycle of $(X\setminus E,\Lc)$ induces, by restriction, an inherited smooth cocycle on $(M,\Lc|_M).$
Hence, Theorem \ref{T:Geometric_Lyapunov} applies  to  the latter cocycle.   
In particular, when $(X,\Lc,E)$ is a singular  holomorphic     foliation  on a compact Hermitian complex  manifold $X$ of dimension $k,$   
 the last theorem
applies to the induced  holonomy cocycle of rank $k-1$ associated with every  minimal set  $M$ whose leaves are all  hyperbolic.

The proof of Theorem  \ref{T:Geometric_Lyapunov} (i)  relies on the theory of  Brownian trajectories  on hyperbolic spaces.
More  concretely, 
some quantitative results on  the boundary behavior of   Brownian trajectories  
by  Lyons \cite{Lyons} and Cranston \cite{Cranston} and on  
the shadow of   Brownian trajectories   by geodesic rays  
are  our main  ingredients.
%and Sullivan's logarithm law for geodesics \cite{Sullivan82}.
This 
  allows us to replace a  Brownian  trajectory  by a unit-speed geodesic ray with uniformly  distributed leaf-direction.
Hence,   Part (i) of  Theorem  \ref{T:Geometric_Lyapunov}  will follow from  Theorem \ref{T:VA_general}. 

To establish Part (ii) of Theorem \ref{T:Geometric_Lyapunov} we  need  two steps.  
In the  first step we  adapt   to our context 
the  so-called {\it Ledrappier  type  characterization  of Lyapunov  spectrum} which  was introduced in  \cite{NguyenVietAnh17a}.  
 This  allows us to show that
 a  similar version of  Part (ii)  of Theorem \ref{T:Geometric_Lyapunov}  holds  when  the expansion  rates in terms of  geodesic rays
are replaced by some  heat diffusions associated  with the  cocycle.
The  second step shows that  the above  heat diffusions can be  approximated by  the expansion  rates. 
%In other words, we replace analytic objects by geometric  ones.
To  this end  we    establish a new  geometric estimate on the heat diffusions
which   relies on  the  proof of the geometric Birkhoff ergodic theorem  (Theorem \ref{T:geometric_ergodic}).

\begin{problem} \rm Is  Theorem \ref{T:Geometric_Lyapunov} still true if
   $\Fc=(X,\Lc)$  is  the whole regular part $(X'\setminus E', \Lc'|_{X'\setminus E'})$
of a  singular holomorphic  foliation $\Fc'=(X',\Lc' ,E')$ by hyperbolic Riemann surfaces on a compact complex manifold $X'$ and $\mathcal A$ is  the  holonomy cocycle  ? 
We  can begin  with  the case  where $X'$ is a  surface.
 \end{problem}

 %%%%%%%%%%%%%%%%%%%%%%%%%%%%%%%%%%%%%%%%%%%%%%%%%%%%%%%%%%%%%%%%%%%%%%%%%%%%%%%%%%%%%%%%%%%%%%%%%%%%%%%%%%%%%%%%%%%%%%%%%%%%%%%
 %%%%%%%%%%%%%%%%%%%%%%%%%%%%%%%%%%%%%%%%%%%%%%%%%%%%%%%%%%%%%%%%%%%%%%%%%%%%%%%%%%%%%%%%%%%%%%%%%%%%%%%%%%%%%%%%%%%%%%%%%%%%%%%
 %%%%%%%%%%%%%%%%%%%%%%%%%%%%%%%%%%%%%%%%%%%%%%%%%%%%%%%%%%%%%%%%%%%%%%%%%%%%%%%%%%%%%%%%%%%%%%%%%%%%%%%%%%%%%%%%%%%%%%%%%%%%%%%

\bigskip

\small
\selectlanguage{english}

\end{document}